\numberwithin{equation}{subsection}
\theoremstyle{plain}
\newtheorem{theorem}[equation]{Theorem}
\newtheorem{proposition}[equation]{Proposition}
\newtheorem{observation}[equation]{Observation}
\newtheorem{lemma}[equation]{Lemma}
\newtheorem{corollary}[equation]{Corollary}
\newtheorem{theoremint}{Theorem}
\newtheorem*{fibdescent}{Fibrational descent}
\newtheorem*{lt1}{LT1}
\newtheorem*{lt2}{LT2}
\theoremstyle{definition}
\newtheorem{definition}[equation]{Definition}
\newtheorem{example}[equation]{Example}
\newtheorem{notation}[equation]{Notation}
\newtheorem{remark}[equation]{Remark}
\newtheorem{construction}[equation]{Construction}
\let\scr=\mathcal
\let\phi=\varphi
\let\into=\hookrightarrow
\DeclareSymbolFont{lettersA}{U}{txmia}{m}{it}
\DeclareRobustCommand*{\varmathbb}[1]{\gdef\F@ntPrefix{m@thbbch@r}%
	\@EachCharacter #1\@EndEachCharacter}
\long\def\DoLongFutureLet #1#2#3#4{%
	\def\@FutureLetDecide{#1#2\@FutureLetToken
		\def\@FutureLetNext{#3}\else
		\def\@FutureLetNext{#4}\fi\@FutureLetNext}
	\futurelet\@FutureLetToken\@FutureLetDecide}
\def\DoFutureLet #1#2#3#4{\DoLongFutureLet{#1}{#2}{#3}{#4}}
\def\@EachCharacter{\DoFutureLet{\ifx}{\@EndEachCharacter}%
	{\@EachCharacterDone}{\@PickUpTheCharacter}}
\def\m@keCharacter#1{\csname\F@ntPrefix#1\endcsname}
\def\@PickUpTheCharacter#1{\m@keCharacter{#1}\@EachCharacter}
\def\@EachCharacterDone \@EndEachCharacter{}
\DeclareMathSymbol{\m@thbbch@rA}{\mathord}{lettersA}{129}
\DeclareMathSymbol{\m@thbbch@rB}{\mathord}{lettersA}{130}
\DeclareMathSymbol{\m@thbbch@rC}{\mathord}{lettersA}{131}
\DeclareMathSymbol{\m@thbbch@rD}{\mathord}{lettersA}{132}
\DeclareMathSymbol{\m@thbbch@rE}{\mathord}{lettersA}{133}
\DeclareMathSymbol{\m@thbbch@rF}{\mathord}{lettersA}{134}
\DeclareMathSymbol{\m@thbbch@rG}{\mathord}{lettersA}{135}
\DeclareMathSymbol{\m@thbbch@rH}{\mathord}{lettersA}{136}
\DeclareMathSymbol{\m@thbbch@rI}{\mathord}{lettersA}{137}
\DeclareMathSymbol{\m@thbbch@rJ}{\mathord}{lettersA}{138}
\DeclareMathSymbol{\m@thbbch@rK}{\mathord}{lettersA}{139}
\DeclareMathSymbol{\m@thbbch@rL}{\mathord}{lettersA}{140}
\DeclareMathSymbol{\m@thbbch@rM}{\mathord}{lettersA}{141}
\DeclareMathSymbol{\m@thbbch@rN}{\mathord}{lettersA}{142}
\DeclareMathSymbol{\m@thbbch@rO}{\mathord}{lettersA}{143}
\DeclareMathSymbol{\m@thbbch@rP}{\mathord}{lettersA}{144}
\DeclareMathSymbol{\m@thbbch@rQ}{\mathord}{lettersA}{145}
\DeclareMathSymbol{\m@thbbch@rR}{\mathord}{lettersA}{146}
\DeclareMathSymbol{\m@thbbch@rS}{\mathord}{lettersA}{147}
\DeclareMathSymbol{\m@thbbch@rT}{\mathord}{lettersA}{148}
\DeclareMathSymbol{\m@thbbch@rU}{\mathord}{lettersA}{149}
\DeclareMathSymbol{\m@thbbch@rV}{\mathord}{lettersA}{150}
\DeclareMathSymbol{\m@thbbch@rW}{\mathord}{lettersA}{151}
\DeclareMathSymbol{\m@thbbch@rX}{\mathord}{lettersA}{152}
\DeclareMathSymbol{\m@thbbch@rY}{\mathord}{lettersA}{153}
\DeclareMathSymbol{\m@thbbch@rZ}{\mathord}{lettersA}{154}
\def\AA{\scr A}
\def\BB{\scr B}
\def\CC{\scr C}
\def\DD{\scr D}
\def\GG{\scr G}
\def\II{\scr I}
\def\JJ{\scr J}
\def\SS{\scr S}
\def\XX{\scr X}
\def\YY{\scr Y}
\def\Spc{\scr S}
\def\bcat{\mathbb}
\def\bAA{\bcat A}
\def\bBB{\bcat B}
\def\bCC{\bcat C}
\def\bDD{\bcat D}
\def\bEE{\bcat E}
\def\bII{\bcat I}
\def\bJJ{\bcat J}
\def\bSS{\bcat S}
\def\bXX{\bcat X}
\def\bYY{\bcat Y}
\def\colimcone{\triangleright}
\def\limcone{\triangleleft}
\DeclareMathOperator{\Cat}{\mathfrak{C}\!\operatorname{at}}
\DeclareMathOperator{\CAT}{\widehat{\mathfrak{C}\!\operatorname{at}}}
\DeclareMathOperator{\Catidem}{\Cat^{\operatorname{idem}}}
\DeclareMathOperator{\CCat}{{\Cat_2}}
\DeclareMathOperator{\CCatm}{\mathfrak{m}\!\Cat_2}
\DeclareMathOperator{\CCats}{\mathfrak{s}\!\Cat_2}
\DeclareMathOperator{\CCAT}{\CAT_2}
\newcommand{\LTop}{\operatorname{Top}^{\operatorname{L}}}
\newcommand{\LTTop}{\operatorname{2Top}^{\operatorname{L}}}
\DeclareMathOperator{\Fun}{Fun}
\DeclareMathOperator{\mFun}{\mathfrak{m}Fun}
\DeclareMathOperator{\FUN}{\mathbb{F}un}
\DeclareMathOperator{\PSh}{PSh}
\DeclareMathOperator{\Ind}{Ind}
\DeclareMathOperator{\Shv}{Sh}
\DeclareMathOperator{\Loc}{Loc}
\DeclareMathOperator{\Grpd}{Grpd}
\DeclareMathOperator{\Grp}{Grp}
\DeclareMathOperator{\Sml}{Small}
\DeclareMathOperator{\Free}{Free}
\DeclareMathOperator{\Cofree}{Cofree}
\DeclareMathOperator{\Cart}{Cart}
\DeclareMathOperator{\eCart}{{Cart}_E}
\DeclareMathOperator{\Cocart}{Cocart}
\DeclareMathOperator{\Fib}{Fib}
\DeclareMathOperator{\FIB}{\mathbb{F}\!\operatorname{ib}}
\DeclareMathOperator{\BFib}{BFib}
\newcommand{\AR}{\operatorname{AR}}
\newcommand{\ARplax}{\AR^{\oplax}}
\newcommand{\ARlax}{\AR^{\lax}}
\DeclareMathOperator{\Adj}{Adj}
\DeclareMathOperator{\Mnd}{Mnd}
\DeclareMathOperator{\const}{const}
\DeclareMathOperator{\pr}{pr}
\DeclareMathOperator{\id}{id}
\DeclareMathOperator{\ev}{ev}
\DeclareMathOperator{\res}{res}
\DeclareMathOperator{\glue}{gl}
\DeclareMathOperator{\Un}{Un}
\newcommand{\cart}{{\operatorname{cart}}}
\newcommand{\fin}{{\operatorname{fin}}}
\newcommand{\cocart}{{\operatorname{cocart}}}
\newcommand{\cc}{{\operatorname{cc}}}
\newcommand{\op}{{\operatorname{op}}}
\newcommand{\lax}{{\operatorname{lax}}}
\newcommand{\epsilonlax}{{\epsilon\!\operatorname{lax}}}
\newcommand{\Eepsilonlax}{{E\text{-}}\epsilon\!\operatorname{lax}}
\newcommand{\oplax}{{\operatorname{oplax}}}
\newcommand{\elax}{{E\text{-}\!\operatorname{lax}}}
\newcommand{\eoplax}{{E\text{-}\!\operatorname{oplax}}}
\newcommand{\laxoplax}{{\operatorname{(op)lax}}}
\newcommand{\elaxoplax}{{E\text{-}\!\operatorname{(op)lax}}}
\newcommand{\arglax}[1]{#1\text{-}\operatorname{lax}}
\newcommand{\argoplax}[1]{#1\text{-}\operatorname{oplax}}
\newcommand{\cocont}[1]{{#1\operatorname{-cc}}}
\newcommand{\cont}{\operatorname{cont}}
\newcommand{\core}{\simeq}
\newcommand{\co}{{\operatorname{co}}}
\newcommand{\coop}{{\operatorname{coop}}}
\newcommand{\fixed@sra}{$\vrule height 2\fontdimen22\textfont2 width 0pt\rightarrow$}
\newcommand{\shortarrowup}[1]{%
  \mathrel{\text{\rotatebox[origin=c]{65}{\fixed@sra}}}
}
\newcommand{\shortarrowdown}[1]{%
  \mathrel{\text{\rotatebox[origin=c]{250}{\fixed@sra}}}
}
\newcommand{\upslash}{\!\shortarrowup{1}}
\newcommand{\downslash}{\!\shortarrowdown{1}}
\let\lim=\relax
\DeclareMathOperator{\lim}{lim}
\DeclareMathOperator{\colim}{colim}
\newcommand{\Hom}{\underline{\operatorname{Hom}}}
\newcommand{\Nat}{{\operatorname{Nat}}}
\newcommand{\Over}[2]{#1_{/#2}}
\newcommand{\LaxUnder}[2]{#1_{#2\nearrow}}
\newcommand{\LaxOver}[2]{#1_{\nearrow#2}}
\newcommand{\OplaxUnder}[2]{#1_{#2\swarrow}}
\newcommand{\OplaxOver}[2]{#1_{\swarrow#2}}
\newcommand{\orientedtimeslr}{\mathbin{\overleftarrow{\times}}}
\newcommand{\orientedtimesrl}{\mathbin{\overrightarrow{\times}}}
\def\lra{\longrightarrow}
\def\lla{\longleftarrow}
\def\hra{\hookrightarrow}
\def\llra{\def\arraystretch{.1}\begin{array}{c} \lra \\ \lla \end{array}}
\NewDocumentCommand{\Gen}{m o}{%
	\IfNoValueTF{#2}{%
		\langle #1\rangle%
	}{%
		\langle #1\rangle_{#2}%
	}%
}
\g@addto@macro\bfseries{\boldmath}
\title{($\infty,2$)-topoi and descent}
\author{Fernando Abell\'an}
\author{Louis Martini}
\date{\today}
\begin{document}
\begin{abstract}
	We set the foundations of a theory of Grothendieck $(\infty,2)$-topoi based on the notion of \emph{fibrational descent}, which axiomatizes both the existence of 
	a classifying object for
	fibrations internal to an $(\infty,2)$-category as well as the exponentiability of these fibrations. As our main result, we prove a 2-dimensional version of Giraud's theorem which characterizes $(\infty,2)$-topoi as those $(\infty, 2)$-categories that appear as localizations of $\mathfrak{C}\!\operatorname{at}$-valued presheaves in which the localization functor preserves certain partially lax finite limits which we call \emph{oriented pullbacks}. We develop the basics of a theory of partially lax Kan extensions internal to an $(\infty,2)$-topos, and we show that every $(\infty,2)$-topos admits an internal version of the Yoneda embedding. Our general formalism recovers the theory of categories internal to a $(\infty,1)$-topos (as develop by the second author and Sebastian Wolf) as a full sub-$(\infty,2)$-category of the $(\infty,2)$-category of $(\infty,2)$-topoi. As a technical ingredient, we prove general results on the theory of presentable $(\infty,2)$-categories, including lax cocompletions and 2-dimensional versions of the adjoint functor theorem, which might be of independent interest.
\end{abstract}

\maketitle

\tableofcontents

\section{Introduction}\label{sec:intro}

\subsection*{Motivation}
Topos theory, originally developed by Grothendieck and Verdier in \cite{GVTopos}, emerged as a generalization of sheaf theory which provided the natural habitat for more sophisticated algebro-geometric invariants such as étale cohomology and has found since then plentiful applications in geometry. Independently, the notion of a topos emerged from the works of Lawvere and Tierney (\cite{LT}) as a model for intuitionistic set theory, i.e.\ an abstraction of the category of sets. From this point of view, topoi can be viewed as mathematical universes in which constructive first order logic can be interpreted, so that consequently the topos axioms can be regarded as an axiomatic system for intuitionistic mathematics itself.

In modern, homotopy-coherent mathematics, the concept of a topos has been largely replaced by that of an $(\infty,1)$-topos as developed by Lurie and Joyal, \cite{LurieHTT}. The key innovation is that the latter notion incorporates \emph{homotopical} information, so that an $(\infty,1)$-topos does not only provide a model for intuitionistic set theory, but also for an (intuitionistic) theory of \emph{homotopy types}. The most well-known incarnation of the latter is \emph{homotopy type theory} (\cite{hottbook}), which has been shown by Shulman to have a model in every $(\infty,1)$-topos (\cite{shulman2019}). In other words, the $(\infty,1)$-topos axioms can be viewed as an axiomatic system for homotopy-coherent mathematics.

The atomic concept in homotopy type theory is that of (synthetic) \emph{$\infty$-groupoids}. In other words, the native language of an $(\infty,1)$-topos is \emph{undirected}: a priori, it is entirely agnostic about non-invertible paths. If one wishes to incorporate  directedness into this framework, one is forced to build an additional simplicial layer on top of the groupoidal theory. To a certain degree, this phenomenon is already present in classical topos theory: here the atomic concept is that of a (synthetic) \emph{set}. Hence, if one wishes to incorporate directedness by means of interpreting the theory of \emph{posets} in a topos, one is required to work with certain reflexive graphs, i.e.\ truncated simplicial objects internal to that topos. For applications in which the basic objects of interest are directed, i.e.\  $(\infty,1)$-categories rather than $\infty$-groupoids, such an indirect approach seems impractical. Rather, one would desire to have a framework in which the atomic objects already contain a notion of directedness, i.e.\ can be interpreted as (synthetic) $(\infty,1)$-categories. By now, there are several promising approaches that aim at providing such a framework, both as a syntactic theory (\cite{riehlshulman,simplicialhomotopy}) as well as through a categorical model (\cite{riehl2021, formalization}). 

What is notably absent in either of these approaches is the topos-theoretic point of view: as outlined above, the $(\infty,1)$-topos axioms themselves can be regarded as an axiomatic framework for the study of synthetic $\infty$-groupoids, so that a possible strategy for developing a theory of synthetic $(\infty,1)$-categories could be to identify a directed, i.e.\ $(\infty,2)$-categorical analogue of these axioms. This would then lead to an $(\infty,2)$-categorical version of $(\infty,1)$-topos theory for which the atomic concept is that of synthetic $(\infty,1)$-categories instead of synthetic $\infty$-groupoids. The goal of this work is to provide such an axiomatic system, giving rise to the notion of an \emph{$(\infty,2)$-topos}, and to investigate in which way the resulting theory can be used to argue synthetically about $(\infty,1)$-categories.

\subsection*{$(\infty,2)$-Topoi}

Just as (co)cartesian  fibrations are central to the study of $(\infty,1)$-categories, one should anticipate that a synthetic theory of fibrations will play a prominent role in synthetic $(\infty,1)$-category theory. Our approach to $(\infty,2)$-topoi takes a rather radical stance on this view and can be summarized with the following slogan:
\begin{displayquote}
	“An $(\infty,2)$-topos is a presentable $(\infty,2)$-category where internal fibrations satisfy a local-to-global principle.”
\end{displayquote}
This local-to-global principle, which we call \emph{fibrational descent}, is one of the main contributions of this project and is inspired by the descent axiom for $(\infty,1)$-topoi, due to Lurie and Rezk (\cite[Section~6.13]{LurieHTT}). Before we can give a description of our axiom, let us first explain what we mean by a fibration internal to an $(\infty,2)$-category. To that end, it will be useful to recast the notion of a cocartesian fibration of $(\infty,1)$-categories from an $(\infty,2)$-categorical perspective:

Let $p \colon \XX \to \CC$ be a functor of $(\infty,1)$-categories and consider the commutative triangle
\[\begin{tikzcd}
	\XX && {\Free^{0}_{\CC}(\XX) } \\
	& \CC
	\arrow["\eta_p",from=1-1, to=1-3]
	\arrow[from=1-1, to=2-2]
	\arrow[from=1-3, to=2-2]
\end{tikzcd}\]
where $\Free^{0}_{\CC}(\XX)\simeq \Fun([1],\CC)\times_{\CC}\XX$ denotes the free cocartesian fibration (see \cite{GHN}) on the functor $p$ and $\eta_p$ is the functor that carries $x\in\XX$ to $\id_{p(x)}\in\Free_{\CC}^0(\XX)$. Then it follows that $p$ is a cocartesian fibration if and only if $\eta_p$ admits a relative left adjoint $\ell_p$.\footnote{This is a classical observation due to Street, see \cite{streetfib}.} Similarly, given a functor $f \colon \XX \to \YY$ between cocartesian fibrations over $\CC$, it turns out that $f$ preserves cocartesian edges if and only if the commutative diagram
\[
 	\begin{tikzcd}
 		\XX \arrow[d,"\eta_p"]  \arrow[r,"f"]  &  \arrow[d,"\eta_q"] \YY \\
 		\Free^{0}_{\CC}(\XX) \arrow[r,"\Free^{0}_{\CC}(f)"] & \Free^{0}_{\CC}(\XX)
 	\end{tikzcd}
 \] 
 is (vertically) left adjointable. Now a crucial observation to make is that $\Free_{\CC}^0(\XX)$ fits into a universal laxly commuting diagram 
 \[\begin{tikzcd}
	{\Free^{0}_{\CC}(\XX)} & \XX \\
	\CC & \CC.
	\arrow[from=1-1, to=1-2]
	\arrow[from=1-1, to=2-1]
	\arrow[Rightarrow,shorten <=10pt, shorten >=10pt, from=1-2, to=2-1]
	\arrow["p", from=1-2, to=2-2]
	\arrow["\id"', from=2-1, to=2-2]
\end{tikzcd}\]
In other words, $\Free^{0}_{\CC}(\XX)$ can be expressed as a certain partially (op)lax limit that we refer to as an \emph{oriented pullback}. Consequently, every $(\infty,2)$-category $\bCC$ with oriented pullbacks admits an internal theory of \emph{0-fibrations} which reduce to cocartesian fibrations in the case where $\bCC=\Cat_{(\infty,1)}$. Likewise, one can develop an internal theory of \emph{1-fibrations}, generalising cartesian fibrations in $\Cat$. Both theories are fully $(\infty,2)$-functorial under strong pullbacks, which can be expressed by the fact that there are functors
\[
	\Fib_{\bCC}^0 \colon \bCC^\op \to \Cat_{(\infty,2)}, \enspace \enspace \Fib_{\bCC}^{1} \colon \bCC^{\coop} \to \Cat_{(\infty,2)},
\]
which assign to each object $c \in \bCC$ the corresponding $(\infty,2)$-category of 0- and 1-fibrations, respectively.

Using this internal theory of fibrations, we can now formulate the fibrational descent axiom. Recall from \cite[Section~6.13]{LurieHTT} that for a (presentable) $(\infty,1)$-category $\CC$, descent simply means that the functor
\begin{equation*}
	\CC_{/-}\colon\CC^\op\to\Cat_{(\infty,1)}
\end{equation*}
preserves small limits. One should think of the two functors $\Fib_{\bCC}^0$ and $\Fib_{\bCC}^1$ that we introduced above as $(\infty,2)$-categorical versions of $\CC_{/-}$. Hence, fibrational descent asserts that $\Fib_{\bCC}^0$ preserves (small) partially oplax limits: for every decomposition $c=\colim^{\elax}_{i\in \bII} c_i$ in $\bCC$ (where $E$ is some marking of the indexing $(\infty,2)$-category $\bII$), the canonical map
\begin{equation*}\label{eq:FibComparisonMap}\tag{$\ast$}
	\Fib^0_{\bCC}(c)\to \lim^{\eoplax}_{i\in\bII}\Fib^0_{\bCC}(c_i)
\end{equation*}
is an equivalence. Dually, fibrational descent also asserts that $\Fib_{\bCC}^1$ preserves (small) partially lax limits. However, it turns out that this is not quite enough: in the $(\infty,1)$-categorical context, descent automatically implies that colimits are universal in $\CC$, i.e.\ that every map in $\CC$ is exponentiable. The correct $(\infty,2)$-categorical translation of this property is exponentiability of 0- and 1-fibration in $\bCC$, but this does not automatically follow from asserting that $\Fib_{\bCC}^0$ and $\Fib_{\bCC}^1$ preserve partially (op)lax limits. Rather, this property will be satisfied once we additionally impose that the inverse of the equivalence in \eqref{eq:FibComparisonMap} takes on a specific form: its inverse is required to act by carrying a compatible family $(x_i\to c_i)_{i\in \bII}$ of $0$-fibrations to its $E$-lax colimit $\colim^{\elax}_{i\in\bII} x_i\to c$ in $\bCC$, and likewise for $1$-fibrations.

These two statements combined now comprise our axiom of fibrational descent. Using the language of \emph{marked cartesian transformations}, which is a suitable $(\infty,2)$-categorical analogue of the notion of cartesian transformation found in \cite[Definition 6.1.3.1]{LurieHTT}, this axiom can be phrased as follows:

\begin{fibdescent}
	Let $\bCC$ be a presentable $(\infty,2)$-category. We say that $\bCC$ satisfies $0$-\emph{fibrational descent}
	if for every marked $(\infty,2)$-category $(\bII,E)$, the following conditions hold:
	\begin{itemize}
			\item[F1)] Let $\overline{\alpha}\colon\overline p \xRightarrow{} \overline q$ be a natural transformation
			between $E$-lax cones
			$\overline{p},\overline{q}\colon \bII^{\colimcone}_{\lax} \to \bCC$ (see \cref{rem:conesimplified}) such that the restriction $\alpha\colon p\Rightarrow q$ of $\overline{\alpha}$ along the inclusion $\bII\into \bII^\colimcone_{\lax}$ is $(E,0)$-cartesian and such that $\overline{q}$ is an $E$-lax colimit cone.
			Then
			the following are equivalent:
			\begin{enumerate}
				\item The functor $\overline{p}$ defines an $E$-lax colimit cone.
				\item The natural transformation $\overline{p} \xRightarrow{}\overline{q}$ is 
				$(E^{\colimcone}_{\lax},0)$-cartesian.
			\end{enumerate}
			\item[F2)] Let $\overline\alpha\colon \overline{p} \Rightarrow \overline{k}$ and $\overline{\beta}\colon \overline{k} \Rightarrow \overline{q}$ be 
			natural transformations between lax colimit cones $\overline{p},\overline{k},\overline{q}\colon \bII^\colimcone_{\epsilonlax}\to \bCC$ such that the restrictions $\alpha \circ \beta$ and $\beta$ of $\overline{\alpha}\circ\overline{\beta}$ and $\overline{\beta}$, respectively, along the inclusion $\bII\into\bII^\colimcone_{\lax}$ are $(E,0)$-cartesian. 
			Then $\alpha$ is point-wise a map of $0$-fibrations if and only if $\overline{\alpha}$ is.
		\end{itemize}
		Dually, we say that $\bCC$ satisfies \emph{$1$-fibrational descent} if $\bCC^\co$ satisfies $0$-fibrational descent. We say that $\bCC$ satisfies \emph{fibrational descent} if it satisfies both $0$- and $1$-fibrational descent.
\end{fibdescent}
 Finally, we define $(\infty,2)$-topoi to be presentable $(\infty,2)$-categories (which is a technical condition ensuring that they admit all partially lax colimits whilst not being too large) that satisfy fibrational descent.

A substantial reason why $(\infty,1)$-topoi are convenient to work with is that they can be defined in multiple ways, each one shedding light onto a different aspect of the theory. We will provide a similar characterisation of $(\infty,2)$-topoi: in terms of fibrational descent as explained above, via an $(\infty,2)$-categorical version of the \emph{Lawvere-Tierney axioms}, and finally via \emph{Giraud's theorem}.

\begin{description}
	\item[Lawvere-Tierney axioms] Let $u \colon \colim_{\bII}^{\elax}a_i=a \to c$ be a morphism in $\bCC$, and consider a pullback diagram
	\[
	\begin{tikzcd}
		a \times_c x \arrow[d,"\pi"] \arrow[r] & x \arrow[d,"p"] \\
		a \arrow[r,"u"] & c  
	\end{tikzcd}
	\]
	where $p$ is 0-fibration. Then fibrational descent yields an equivalence $a \times_c x \simeq \colim_{\bII}^{\elax}(a_i \times_c x)$. Therefore, we conclude that the pullback functor
	\[
	p^{*} \colon \bCC_{/c} \to \bCC_{/x}
	\]
	preserves partially lax colimits. Invoking the machinery of presentable $(\infty,2)$-categories (cf. \cref{subsec:adjointFunctorTheorems}), we conclude that $p^*$ admits a right adjoint (and similary in the case where $p$ is a 1-fibration). This situation is summarized in the following statement:
	\begin{lt1}
	If $\bCC$ is an $(\infty,2)$-topos then fibrations are exponentiable (cf. \cref{def:exponentiability}).
	\end{lt1}
	Note how this statement stands in contrast to $(\infty,1)$-topoi where \emph{all} morphisms are exponentiable, but this cannot be true in an $(\infty,2)$-topos as it already fails in $\Cat$, which ought to be the archetypical $(\infty,2)$-topos.
	
	Let $(-)^{\leq 1} \colon \Cat_{(\infty,2)} \to \Cat_{(\infty,1)}$ be the 2-functor which sends an $(\infty,2)$-category to its underlying $(\infty,1)$-category. Ignoring certain set-theoretical technicalities (which we address in \cref{subsec:LTaxioms}), it follows from fibrational descent (invoking again presentability) that the presheaves $(\Fib_{\bCC}^{0})^{\leq 1}$ and $(\Fib_{\bCC}^1)^{\leq 1}$ are representable by objects $\Omega^{0},\Omega^{1} \in \bCC$. In other words, we find:
	\begin{lt2}
		Any $(\infty,2)$-topos $\bCC$ admits classifying objects for the theory of fibrations.
	\end{lt2}
	It turns out that a presentable $(\infty,2)$-category satisfying \textbf{LT1} and \textbf{LT2} is already an $(\infty,2)$-topos, so that these two conditions can serve as an alternative axiomatic approach to $(\infty,2)$-topos theory.
	\item[Giraud's theorem] An $(\infty,1)$-topos can be characterized as an $(\infty,1)$-category $\CC$ that admits a presentation as an accessible, left exact Bousfield localisation of an $(\infty,1)$-categories of presheaves on a small $(\infty,1)$-category. Likewise, we will show that an $(\infty,2)$-category $\bCC$ is an $(\infty,2)$-topos precisely if there is an adjunction
	\[
	L \colon \FUN(\bII^\op,\Cat_{(\infty,1)}) \llra \bCC \colon R
	\]
	where $R$ is fully faithful and accessible (meaning $R^{\leq 1}$ is an accessible functor of $(\infty,1)$-categories) and where $L$ preserves \emph{oriented pullbacks}: lax versions of pullbacks commuting up to non-invertible 2-cells. Note that in light of the fact that oriented pullbacks are the building blocks for the theory of internal fibrations, such a statement should not come as a surprise.
\end{description}

  We can summarize the different perspectives on $(\infty,2)$-topos theory as follows:
	\begin{theoremint}\label{thmint:def}
		Let $\bCC$ be a presentable $(\infty,2)$-category. Then the following are equivalent:
		\begin{enumerate}
			\item The $(\infty,2)$-category $\bCC$ satisfies {fibrational descent}.
			\item Fibrations in $\bCC$ are exponentiable, and for sufficiently large regular cardinals $\kappa$, there exist classifiers $\Omega^{\epsilon,\kappa} \in \bCC$ for relatively $\kappa$-compact $\epsilon$-fibrations, where $\epsilon \in \{0,1\}$.
			\item There exists a small $(\infty,2)$-category $\bII$ and an adjunction
			\[
				L \colon \FUN(\bII,\Cat_{(\infty,1)}) \llra \bCC \colon R
			\]
			such that $R$ is fully faithful and accessible and $L$ preserves oriented pullbacks.
		\end{enumerate}
		If $\bCC$ satisfies any of these conditions we say that $\bCC$ is an $(\infty,2)$-topos.
	\end{theoremint}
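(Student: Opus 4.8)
The plan is to prove the two equivalences $(1)\Leftrightarrow(2)$ and $(1)\Leftrightarrow(3)$ separately, regarding fibrational descent $(1)$ as the central condition. The implication $(1)\Rightarrow(2)$ is essentially the argument indicated in the introduction, upgraded to account for size. Fibrational descent makes $\Fib^0_{\bCC}$ send $E$-lax colimits to $E$-oplax limits (and dually for $\Fib^1_{\bCC}$); specialising to pullback along a fixed $0$-fibration $p$ shows that each base-change functor $p^*\colon\bCC_{/c}\to\bCC_{/x}$ preserves partially lax colimits, whence the adjoint functor theorem for presentable $(\infty,2)$-categories (\cref{subsec:adjointFunctorTheorems}) furnishes a right adjoint --- this is exponentiability, \textbf{LT1}. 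For \textbf{LT2}, descent exhibits $(\Fib^{\epsilon}_{\bCC})^{\leq 1}$ as a limit-preserving presheaf of $(\infty,1)$-categories; restricting to relatively $\kappa$-compact $\epsilon$-fibrations yields an accessible, hence representable, presheaf whose representing object is the classifier $\Omega^{\epsilon,\kappa}$. The only genuine labour here is the cardinality bookkeeping needed to cut down to $\kappa$-compact objects for $\kappa$ large enough.

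For Giraud's theorem, the direction $(1)\Rightarrow(3)$ uses presentability to present $\bCC$ as a localisation $L\dashv R$ of some $\FUN(\bII,\Cat_{(\infty,1)})$ with $R$ fully faithful and accessible; the content is that $L$ preserves oriented pullbacks, which I would deduce from the fact that an oriented pullback is the universal laxly commuting square computing the free $0$-fibration, together with descent's identification of such data with $\Fib^0_{\bCC}$-data that the adjunction respects. For $(3)\Rightarrow(1)$ I would first establish that the presheaf $(\infty,2)$-category $\FUN(\bII,\Cat_{(\infty,1)})$ itself satisfies fibrational descent --- computing the relevant fibration functors objectwise reduces this to descent in $\Cat_{(\infty,1)}$, the archetypal $(\infty,2)$-topos --- and then show that descent is inherited by the localisation. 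Here full faithfulness of $R$ detects fibrations of $\bCC$ inside the presheaf category, while preservation of oriented pullbacks by $L$ makes the formation of fibrations and of $E$-lax colimits compatible with the adjunction, so that the comparison map \eqref{eq:FibComparisonMap} for $\bCC$ is obtained by transporting the one upstairs through $L$.

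The remaining implication $(2)\Rightarrow(1)$ is the hardest and is where I expect the main obstacle to lie: reconstructing full fibrational descent from exponentiability and the mere existence of classifiers. This is the $(\infty,2)$-analogue of the fact that local cartesian closedness together with object classifiers forces descent in the $(\infty,1)$-setting. The strategy is to use the classifiers $\Omega^{\epsilon,\kappa}$ to rewrite \eqref{eq:FibComparisonMap} as a statement about mapping $(\infty,2)$-categories: over a decomposition $c=\colim^{\elax}_{i\in\bII}c_i$, a compatible family of relatively $\kappa$-compact $\epsilon$-fibrations is the same datum as a lax cone into $\Omega^{\epsilon,\kappa}$, and one must identify such a cone with a single fibration over $c$, with exponentiability guaranteeing that the $E$-lax colimit of the pieces is again a fibration whose fibres are computed correctly by pullback. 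The two genuine difficulties are (i) promoting the $\kappa$-compact statement to arbitrary fibrations by a filtered colimit over $\kappa$, and (ii) carrying out, uniformly in the marked indexing category $(\bII,E)$, the marked-cartesian-transformation bookkeeping that verifies the equivalence of $(E,0)$-cartesianness of the restricted transformation with $(E^{\colimcone}_{\lax},0)$-cartesianness of the cone --- this is exactly the combinatorial content of conditions F1 and F2, which has no counterpart in the $(\infty,1)$-categorical proof.
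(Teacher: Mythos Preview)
Your architecture matches the paper's: the equivalences $(1)\Leftrightarrow(2)$ and $(1)\Leftrightarrow(3)$ are established separately, with $(3)\Rightarrow(1)$ going through the base case $\Cat$, then functor categories, then localisations. However, you have inverted the difficulty of two of the implications.

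The implication $(2)\Rightarrow(1)$, which you flag as hardest, is in fact short. There is no need for your difficulty (i), a filtered colimit over $\kappa$ to pass to arbitrary fibrations: the axiom of fibrational descent is quantified over \emph{small} marked diagrams $(\bII,E)$, so for any fixed such diagram and any $(E,\epsilon)$-cartesian transformation $\alpha$ one simply chooses $\kappa$ large enough that $\bII$ is $\kappa$-small and $\alpha$ is pointwise $\kappa$-compact. Then the classifier $\Omega^{\epsilon,\kappa}$ supplies a unique $(E^{\colimcone}_{\epsilonlax},\epsilon)$-cartesian extension $\overline{\alpha}$, and exponentiability of the fibration $\overline{p}(\ast)\to\overline{q}(\ast)$ forces $\overline{p}$ to be a colimit cone. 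The bookkeeping you worry about in (ii) is absorbed into this representability statement; F2 is handled by the same mechanism.

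By contrast, the implication $(1)\Rightarrow(3)$ is where the genuine work lies, and your sketch underestimates it. Presentability gives an adjunction $L\dashv R$ with $\PSh_{\Cat}(\bCC^\kappa)$, and the restriction of $L$ along Yoneda is just the inclusion $\bCC^\kappa\hookrightarrow\bCC$, which trivially preserves oriented pullbacks once $\kappa$ is chosen so that $\kappa$-compacts are closed under them. The real content is the bootstrap (the paper's \cref{prop:preGiraud}): from preservation of oriented pullbacks on representables to preservation everywhere. This requires a multi-step argument --- free fibrations over a representable base, pullbacks of fibrations along maps of representables, then general bases via descent, then general pullbacks, then free fibrations over a general base --- and uses fibrational descent in both source and target at several points. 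Your one-line gesture at ``descent's identification of such data'' does not capture this.

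Finally, for $(3)\Rightarrow(1)$ the reduction to $\Cat$ is correct, but the base case that $\Cat$ itself satisfies fibrational descent (the paper's \cref{thm:cat2topos}) is a genuine theorem requiring the straightening/unstraightening dictionary to translate $(E,\epsilon)$-cartesian transformations into fibrations over the unstraightened total category; it is not a triviality.
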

We would like to mention that in upcoming work by Loubaton, an additional characterization of $(\infty, 2)$-topoi is expected to appear, based on the notion of effective congruences in $(\infty, n)$-categories.

\subsection*{Synthetic $(\infty,1)$-category theory in an $(\infty,2)$-topos}
By design, the notion of an $(\infty,2)$-topos ought to capture and axiomatise the main structural properties of the $(\infty,2)$-category $\Cat_{(\infty,1)}$, in the same way as the notion of an $(\infty,1)$-topos is an axiomatisation of the properties of the $(\infty,1)$-category $\Spc$ of $\infty$-groupoids. In other words, the objects of an $(\infty,2)$-topos $\bCC$ can be regarded as synthetic $(\infty,1)$-categories, and one should expect that a fair amount of the theory of $(\infty,1)$-categories can be developed in this context. In this paper, we do not aim to give a systematic treatment of such an internal theory of $(\infty,1)$-categories, but rather focus on a few fundamental building blocks of that theory which can be derived from our axiom of fibrational descent: 
\begin{description}
	\item[$\infty$-Groupoids in an $(\infty,2)$-topos] Any $(\infty,2)$-topos $\bCC$ comes equipped with an intrinsic notion of $\infty$-groupoids: an object $x\in\bCC$ is an internal $\infty$-groupoid if it is $(\infty,0)$-truncated, i.e.\ if the presheaf $\bCC(-,x)$ takes values in $\Spc$. One obtains a full sub-2-category $\Grpd(\bCC)\subset\bCC$ of internal $\infty$-groupoids. By design, this is an $(\infty,1)$-category, and the inclusion admits a left adjoint $\lvert-\rvert$ as well as a right adjoint $(-)^\core$ on the level of the underlying $(\infty,1)$-categories. Thus, one can make sense of both the \emph{groupoidification} $\lvert c\rvert$ and the  \emph{groupoid core} $c^\core$ of an object $c\in\bCC$.
	
	An interesting aspect of our theory is that the groupoid core functor $(-)^\core$ can behave in a somewhat unexpected way: there are examples (see \cref{rem:emptycores}) of $(\infty,2)$-topoi $\bCC$ that have a non-trivial object $c\in\bCC$ (meaning that $c$ is not the initial object) with an empty groupoid core, i.e.\ where $c^\core$ is the initial object. This phenomenon is an obstruction for the validity of several classical theorems from higher category that rely on the ability to check certain properties of $(\infty,1)$-categories (or functors between $(\infty,1)$-categories) object-wise. For example, it is no longer true internal to an arbitrary $(\infty,2)$-topos that equivalences between fibrations can be detected object-wise, or that essentially surjective and fully faithful functors are automatically equivalences. This shows that the theory of $(\infty,2)$-topoi is substantially more general than most approaches to synthetic $(\infty,1)$-category theory, and that only a fraction of the latter can be interpreted internally in an $(\infty,2)$-topos.
	
	\item[Directed univalence]
	Every $(\infty,2)$-topos admits an internal mapping functor $\underline{\bCC}(-,-) \colon \bCC^\op \times \bCC \to \bCC$ which assings to each pair of synthetic $(\infty,1)$-categories its associated synthetic $(\infty,1)$-category of functors. On the other hand, for every $c \in \bCC$, we can construct an internal mapping groupoid by means of the oriented pullback
	\[\begin{tikzcd}
	{\Hom_c(x,y)} & \ast \\
	\ast & c.
	\arrow[from=1-1, to=1-2]
	\arrow[from=1-1, to=2-1]
	\arrow["y", from=1-2, to=2-2]
	\arrow[Rightarrow,shorten <=9pt, shorten >=6pt, from=2-1, to=1-2]
	\arrow["x"', from=2-1, to=2-2]
\end{tikzcd}\]
    Since every synthetic $(\infty,1)$-category $a$, determines a unique map from terminal object to one of the classifiers $a \colon \ast \to \Omega^{\epsilon}$, it is natural to ask what is the relationship between the groupoidal core of the synthetic functor category $\underline{\bCC}(a,b)^{\core}$, and the internal mapping groupoid object $\Hom_{\Omega^{\epsilon}}(a,b)$ of the classifyer. As expected, our formalism allows us to demonstrate (see \cref{thm:Univalence}) that both constructions are naturally equivalent thereby establishing that $(\infty, 2)$-topoi satisfy a directed analogue of the univalence axiom from homotopy type theory (see \cite{hottbook} for more details).
	
	\item[Synthetic Kan extensions]
	Given a morphism $f \colon c \to d$ in an $(\infty,2)$-topos and a fibration $p \colon x \to c$ then it is possible to functorially produce fibrations $f_!(p) \colon f_!x \to d$ and $f_*(p) \colon f_* x \to d$ yielding suitably internal versions of left and right Kan extensions respectively (cf. \cref{thm:basechangetheorem} and \cref{thm:kaninternal}). A surprisingly pleasant feature of $(\infty,2)$-topoi, is that they allow for a theory of partially lax Kan extensions. Given a class of morphisms $\Xi=\{a_i \to c\}_{i \in I}$ we define a map between fibrations $p\colon x \to c,\enspace q \colon y \to c$ to be $\Xi$-lax if its becomes an  morphism of fibrations after pulling back along each $a_i \to c$ in $\Xi$. It turns out (cf. \cref{thm:laxkan}) that given a fibration $p \colon x \to c$, one can functoriality construct a fibration $f_!^{\Xi}(p) \colon f_!^{\Xi}x \to d$ (resp. $f_*^{\Xi}(p)\colon f_*^{\Xi}x \to d$) with the property that the data of morphism of fibrations out of $f_!^{\Xi}(p)$ corresponds precisely to the data of an $\Xi$-lax morphism out of $p$ (and dually for $f_*^{\Xi}(p)$).
	
	\item[The Yoneda embedding]
	 Let $\CC$ be an $(\infty,1)$-category, and let $\Cart(\CC)$ denote the $(\infty,1)$-category of cartesian fibrations over $\CC$. The Grothendieck construction induces a natural equivalence 
	 \[
	 	\Fun(\AA, \Cart(\CC)) \simeq \BFib_{(\AA, \CC)}^{0},
	 \]
	where the later denotes the $(\infty,1)$-category of bifibrations (also studied under the name curved-orthofibrations in \cite{TwoVariable}. In particular, the bifibration 
	$\ev_0 \times \ev_1 \colon \Fun([1], \CC) \to \CC \times \CC$ defines a functor $h_\CC \colon\CC \to \Cart(\CC)$, which corresponds to the Yoneda embedding. In \cref{subsec:yoneda}, we introduce an internal version of the theory of bifibrations, and demonstrate that fibrational descent ensures that the functor
	 \[
	 	\BFib^{0}_{(-,c)} \colon \bCC^{\op} \to \CCat, \enspace a \mapsto  \BFib^{0}_{(a,c)},
	 \]
	 preserves partially (op)lax limits. Therefore, we obtain, again ignoring set theoretical issues, an object $\underline{\Fib}_{/c}^{1}$ representing the functor $\left(\BFib^{0}_{(-,c)}\right)^{\leq 1}$. We then establish (cf. \cref{thm:Yoneda}) the following :
	 \begin{theoremint}\label{theoremint:yoneda}
	 	The canonical map induced by the universal property of the cotensor $\ev_0 \times \ev_1 \colon c^{[1]} \to c \times c$, defines a bifibration and thus yields a morphism 
	 	\[
	 		h_c \colon c \to \underline{\Fib}_{/c}^{1}
	 	\]
	 	which is fully faithful.
	 \end{theoremint}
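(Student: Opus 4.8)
The plan is to prove fully faithfulness of $h_c$ through the standard criterion for a morphism in an $(\infty,2)$-category: the square
\[
\begin{tikzcd}
c^{[1]} \arrow[r, "h_c^{[1]}"] \arrow[d, "{\ev_0\times\ev_1}"'] & (\underline{\Fib}^1_{/c})^{[1]} \arrow[d, "{\ev_0\times\ev_1}"] \\
c\times c \arrow[r, "h_c\times h_c"] & \underline{\Fib}^1_{/c}\times\underline{\Fib}^1_{/c}
\end{tikzcd}
\]
should be cartesian, or equivalently that for every pair of generalized points $x,y\colon a\to c$ the induced comparison of internal mapping groupoids $\Hom_c(x,y)\to\Hom_{\underline{\Fib}^1_{/c}}(h_cx,h_cy)$ is an equivalence in $\bCC$. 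I will work with the second, pointwise-in-$a$ formulation, since it is this one that interacts directly with the representability of $\underline{\Fib}^1_{/c}$.

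First I would unwind the right-hand side. Since $\underline{\Fib}^1_{/c}$ represents $(\BFib^0_{(-,c)})^{\leq 1}$, a map $a\to\underline{\Fib}^1_{/c}$ is a bifibration over $(a,c)$, and the two composites $h_cx,h_cy\colon\ast\to\underline{\Fib}^1_{/c}$ correspond to the representable fibrations $c_{/x},c_{/y}\to c$ obtained by restricting the arrow bifibration $c^{[1]}\to c\times c$ in one of its variables; each of these is an oriented pullback of $\id_c$ along the point in question. By the universal property of the oriented pullback defining $\Hom$, the object $\Hom_{\underline{\Fib}^1_{/c}}(h_cx,h_cy)$ is the one whose $a$-points compute the mapping space $\Map_{\BFib^0_{(a,c)}}$ between the base changes of $c_{/x}$ and $c_{/y}$ to $(a,c)$. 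The decisive input is then the internal directed Yoneda lemma: mapping out of a representable fibration $c_{/x}$ into a $1$-fibration $E\to c$ is computed by the fiber $E_x$, and applying this with $E=c_{/y}$ produces exactly $\Hom_c(x,y)$, naturally in $a$ and compatibly with the comparison map induced by $h_c$.

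To prove this internal Yoneda lemma I would use the adjunction-theoretic description of fibrations recalled at the start of the paper: a fibration is a map whose comparison $\eta_p$ to the corresponding free fibration admits a relative adjoint, and the representable fibration $c_{/x}$ is the universal fibration equipped with a lift of $x$ selecting $\id_x$. Promoting this universal property to a statement internal to $\bCC$ shows that a morphism of fibrations out of $c_{/x}$ is freely determined by the image of $\id_x$, which yields the evaluation equivalence. Here exponentiability of fibrations (\textbf{LT1}) guarantees that the relevant mapping objects in $\BFib^0_{(a,c)}$ exist and are computed fiberwise, while fibrational descent ensures that the assignment $a\mapsto\Hom_{\underline{\Fib}^1_{/c}}(h_cx,h_cy)$ is compatible with the partially (op)lax limits presenting the oriented pullbacks, so that the pointwise equivalences assemble into an equivalence of objects of $\bCC$.

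I expect the main obstacle to be exactly this internal Yoneda computation—producing the equivalence $\Map_{\BFib^0_{(a,c)}}(c_{/x},c_{/y})\simeq\Hom_c(x,y)$ as an object of $\bCC$, natural in the generalized point $a$, rather than merely as an equivalence of mapping spaces. The subtlety, emphasized in the introduction, is that in a general $(\infty,2)$-topos groupoid cores may be trivial and equivalences between fibrations cannot be detected object-wise, so the comparison cannot be checked by evaluating at the points of $c$. The equivalence must therefore be extracted from the universal properties themselves: the cotensor structure on $c^{[1]}$, the oriented-pullback presentation of the representable fibrations, and the relative adjunction characterizing fibrations, all upgraded to hold internally and naturally in $a$.
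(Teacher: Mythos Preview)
Your high-level strategy --- verifying that the square involving $c^{[1]}$ and $(\underline{\Fib}^{1}_{/c})^{[1]}$ is cartesian, and doing so representably for each $a\in\bCC$ --- matches the paper exactly. But your reformulation in terms of internal mapping groupoids $\Hom_c(x,y)$ is off: these objects are only defined for global points $x,y\colon\ast\to c$, and the fully-faithfulness criterion is an \emph{external} statement about mapping spaces in the $1$-category $\bCC(a,c)$, not about internal objects of $\bCC$. What you actually need to show is that $\bCC(a,c)^{[1]}\to P(a)$ is an equivalence of $1$-categories, where $P(a)$ is the fibre of $(\BFib^{0}_{/(a,c)})^{[1]}\to\BFib^{0}_{/(a,c)}\times\BFib^{0}_{/(a,c)}$ over the pair $(\Free^{1}_c(f),\Free^{1}_c(g))$.

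The paper does not isolate or invoke an ``internal directed Yoneda lemma''; it computes $P(a)$ directly. An object of $P(a)$ is a morphism $\varphi\colon\Free^{1}_c(f)\to\Free^{1}_c(g)$ in $\BFib^{0}_{/(a,c)}$. The paper applies the free-fibration adjunction (\cref{prop:freeFibrationIsFreeFibration}) to identify $\varphi$ with a map $\hat\varphi\colon a\to\Free^{1}_c(g)$ in $\bCC_{/c}$, then the strong/lax slice adjunction (\cref{prop:semiLaxPullbackRightAdjoint}) to identify $\hat\varphi$ with a lax triangle over $c$; the bifibration condition that $\varphi$ commutes with the projection to $a$ forces the top edge of the triangle to be $\id_a$, leaving exactly a $2$-cell $f\Rightarrow g$. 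The same pair of adjunctions handles morphisms in $P(a)$. This \emph{is} the Yoneda computation you are gesturing at, but carried out with two concrete adjunctions already established in \cref{sec:internfib}, rather than through a separately-stated internal lemma. Your plan to ``use the adjunction-theoretic description of fibrations'' would, if made precise, reproduce this argument; the exponentiability and descent inputs you mention are not actually needed for this step.
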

\end{description}

\subsection*{($\infty$,1)-localic $(\infty,2)$-topoi}
It is a standard fact in the theory of $(\infty,1)$-topoi that for a given $(\infty,1)$-topos $\XX$, its full subcategory of $(n-1)$-truncated objects, $\tau_{\leq n-1}\XX$, is an $(n,1)$-topos for $0 \leq n \leq \infty$. Moreover, the assignment $\XX \mapsto \tau_{\leq n-1}\XX$ admits a fully faithful left adjoin that carries an $(n,1)$-topos $\YY$ to the associated  \emph{$(n,1)$-localic} $(\infty,1)$-topos $\Shv(\YY)$ of \emph{sheaves} on $\YY$. As one of our main results, we study a categorification of this adjunction: as an elementary application of fibrational descent (see \cref{thm:underlying1topos}), we show that for every $(\infty,2)$-topos $\bCC$, the full subcategory $\Grpd(\bCC)$ of synthetic $\infty$-groupoids, which are by definition the $(\infty,0)$-truncated objects in $\bCC$, forms an $(\infty,1)$-topos. Conversely, to every $(\infty,1)$-topos $\XX$ we may assign the associated \emph{$(\infty,1)$-localic} $(\infty,2)$-topos $\Shv_{\Cat_{(\infty,1)}}(\XX)$ of \emph{sheaves} on $\XX$, i.e.\ of $\Cat_{(\infty,1)}$-valued presheaves on $\XX$ that preserve strong limits. We then obtain the following categorification of the localic reflection theorem:
 \begin{theoremint}\label{theoremint:localic}
	The functor $\Shv_{\Cat_{(\infty,1)}}(-)$ exhibits the $(\infty,2)$-category of $(\infty,1)$-topoi as a coreflective sub-2-category of the $(\infty,2)$-category of $(\infty,2)$-topoi, where the associated coreflection functor is given by $\Grpd(-)$.
\end{theoremint}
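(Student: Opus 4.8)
The plan is to exhibit $\Shv_{\Cat_{(\infty,1)}}(-)$ as left adjoint to $\Grpd(-)$ by directly verifying the universal property, and then to show that the unit of the resulting adjunction is an equivalence, which upgrades the pair to a coreflection. First I would set up both functors at the level of the $(\infty,2)$-category $\ILTop$ of $(\infty,2)$-topoi and the $(\infty,2)$-category of $(\infty,1)$-topoi. On the one side, for an $(\infty,2)$-topos $\bCC$, \cref{thm:underlying1topos} already tells us that $\Grpd(\bCC)$ is an $(\infty,1)$-topos; I would record that $\Grpd(-)$ is functorial by noting that a morphism of $(\infty,2)$-topoi preserves $(\infty,0)$-truncated objects (being a left-exact, lax-colimit-preserving functor), so it restricts along the inclusion $\Grpd(\bCC)\subset\bCC$. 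On the other side, for an $(\infty,1)$-topos $\XX$, I would check that $\Shv_{\Cat_{(\infty,1)}}(\XX)$ — the $\Cat_{(\infty,1)}$-valued presheaves on $\XX$ preserving strong limits — is again an $(\infty,2)$-topos; this should follow from \cref{thmint:def}(3) by presenting $\XX$ itself as a left-exact localization of presheaves and transporting that presentation one categorical level up, checking that the localization functor preserves oriented pullbacks.

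The core of the argument is the adjunction isomorphism: for an $(\infty,1)$-topos $\XX$ and an $(\infty,2)$-topos $\bCC$, I would construct a natural equivalence
\[
	\Map_{\ILTop}\bigl(\Shv_{\Cat_{(\infty,1)}}(\XX),\,\bCC\bigr)\;\simeq\;\Map\bigl(\XX,\,\Grpd(\bCC)\bigr),
\]
where the right-hand mapping space is taken in $(\infty,1)$-topoi. The strategy here is that a geometric morphism out of $\Shv_{\Cat_{(\infty,1)}}(\XX)$ is determined by its restriction to the representables (via the universal property of sheaves/presheaves), and any such functor into $\bCC$ that preserves the relevant limits must land, on the groupoidal part, in $\Grpd(\bCC)$. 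Concretely, I expect to factor a morphism $\Shv_{\Cat_{(\infty,1)}}(\XX)\to\bCC$ through its effect on constant sheaves, matching it with a geometric morphism $\XX\to\Grpd(\bCC)$; conversely, given the latter, I would extend it back up by a lax/strong Kan extension using that $\Shv_{\Cat_{(\infty,1)}}(\XX)$ is freely generated as an $(\infty,2)$-topos over the $(\infty,1)$-topos $\XX$. This freeness is precisely where the oriented-pullback-preservation in \cref{thmint:def}(3) does the work.

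Finally, to see that the adjunction is a \emph{coreflection} — i.e.\ that the counit (or, in the stated direction, the unit) is an equivalence — I would compute $\Grpd\bigl(\Shv_{\Cat_{(\infty,1)}}(\XX)\bigr)$ and identify it with $\XX$. The key point is that the $(\infty,0)$-truncated objects of $\Shv_{\Cat_{(\infty,1)}}(\XX)$ are exactly those $\Cat_{(\infty,1)}$-valued sheaves that take values in $\infty$-groupoids, i.e.\ the ordinary sheaves of spaces on $\XX$, which reconstitute $\XX$ itself; this is a truncation computation that should reduce to the fact that $\Grpd$ inside $\Cat_{(\infty,1)}$-presheaves is computed pointwise and commutes with the limit conditions defining sheaves.

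The hard part will be the adjunction equivalence, and specifically the claim that a morphism of $(\infty,2)$-topoi out of $\Shv_{\Cat_{(\infty,1)}}(\XX)$ is rigidly determined by groupoidal data on $\XX$; making this precise requires care about which limits a morphism of $(\infty,2)$-topoi preserves (strong pullbacks versus oriented/lax limits) and a correct handling of the accessibility and set-theoretic size issues deferred throughout the paper, so that the fully faithful accessible right adjoints compose and restrict correctly. The truncation and functoriality steps, by contrast, I expect to be comparatively routine once the framework of \cref{thm:underlying1topos} and \cref{thmint:def} is in hand.
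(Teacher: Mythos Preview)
Your proposal is correct and follows essentially the same route as the paper: establish that $\Shv_{\Cat}(\XX)$ is a 2-topos via Giraud's theorem, verify the mapping-space equivalence $\LTTop(\Shv_{\Cat}(\XX),\bCC)\simeq\LTop(\XX,\Grpd(\bCC))$ by restricting along the Yoneda embedding and extending back via the universal property, and check that the unit $\XX\to\Grpd(\Shv_{\Cat}(\XX))$ is an equivalence by identifying the groupoid objects in $\Shv_{\Cat}(\XX)$ with space-valued sheaves. The paper makes the middle step precise by passing to a small site $\XX^\kappa$, embedding both mapping categories into a square over $\PSh_{\Cat}(\XX^\kappa)$ to get fully-faithfulness, and then invoking \cref{prop:preGiraud} to show essential surjectivity; your phrase ``freeness over the $(\infty,1)$-topos $\XX$'' is exactly this, though you should say \emph{representables} rather than ``constant sheaves'' when describing the restriction step.
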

The synthetic $(\infty,1)$-category theory of $(\infty,1)$-localic $(\infty,2)$-topoi, i.e.\ those that are in the essential image of the functor $\Shv_{\Cat_{(\infty,1)}}(-)$, has been extensively studied by the second-named author and Sebastian Wolf (\cite{Martini2022}, \cite{MWPres},\cite{MWIntTop}). Their work shows that the synthetic $(\infty,1)$-category theory of $(\infty,1)$-localic $(\infty,2)$-topoi behaves quite classically -- essentially every higher categorical argument can be shown to be valid this context. The $(\infty,1)$-localic approximation of an $(\infty,2)$-topos can thus be regarded as a measure to what extent the internal logic of this $(\infty,2)$-topos exhibits non-classical behaviour.

More precisely, if $\bCC$ is an $(\infty,2)$-topos and if
\begin{equation*}
	L\colon\Shv_{\Cat_{(\infty,1)}}(\Grpd(\bCC))\leftrightarrows \bCC
\end{equation*}
is the counit of the adjunction in \cref{theoremint:localic}, then $L$ admits a right adjoint $R$ that sends $c\in\bCC$ to the sheaf $\bCC(\iota(-),c)$ (where $\iota\colon\Grpd(\bCC)\subset\bCC$ denotes the inclusion). Moreover, the counit of the adjunction $L\dashv R$ can be computed as the canonical map
\begin{equation*}
	\colim^{\elax}_{(x\to c)\in\LaxOver{\Grpd(\bCC)}{c}} x\to c
\end{equation*}
in which $\LaxOver{\Grpd(\bCC)}{c}$ is the lax slice $(\infty,2)$-category of synthetic $\infty$-groupoids in $\bCC$ over $c$ and where $E$ is the collection of maps that are contained in the \emph{strong} slice $\Over{\Grpd(\bCC)}{c}\subset\LaxOver{\Grpd(\bCC)}{c}$. One may regard the above map as the canonical approximation of $c$ by a lax colimit of synthetic $\infty$-groupoids. Thus, the $(\infty,1)$-localic approximation of $\bCC$ measures to what extend an object $c\in\bCC$ can be built as lax colimit of synthetic $\infty$-groupoids. In the most extreme case, where $c$ has an empty groupoid core, the left-hand side of the above map is the initial object in $\bCC$, so that the above map gauges how much information about $c$ is contained in its groupoid core.

\subsection*{Relation to previous work}
\begin{description}
	\item[Presentable $(\infty,2)$-categories]
	The more general theory of presentable $\scr{V}$-enriched categories has be studied by Mazel-Gee and Stern in \cite{secondaryK}, as well as by Heine (\cite{heineweighted}). Our approach to presentable $(\infty,2)$-categories does not rely in the formalism of enriched $\infty$-categories and emphasizes the use of partially (op)lax limits over the use of the (equivalent) theory of weighted (co)limits. 
	\item[Strict 2-topoi]  The notion of an strict 2-topos has been explored in the literature, most notably in the work of Weber (\cite{Web}), and further developed by Mesiti (\cite{mesiti}) and Helfer (\cite{helfer}). These approaches differ fundamentally from ours in that they postulate (or prove) the existence of an “op” duality, similar to the involution $\Cat \to \Cat^\co$, which sends a category to its opposite. In \cref{ex:monad} we give an example of an $(\infty,2)$-topos where such involution cannot possibly exist. In fact, we expect this to be the generally the case outside the class of $(\infty,1)$-localic $(\infty,2)$-topoi where a well-defined notion of “op” involution always exists (cf. \cref{prop:localicinvolutive}).
\end{description}

\subsection*{Structure of the paper}
 We commence the paper reviewing the basic ingredients of the theory $(\infty,2)$-categories which will be needed throughout this work in \cref{sec:prelim}. In \cref{sec:colimits}, we proceed to establish several results in the theory of partially lax colimits, including a decomposition theorem for colimit diagrams (cf. \cref{prop:diagramdecomposition}) and some basic results about oriented pullbacks (cf. \cref{subsec:oriented}). Later on in \cref{sec:kancocompletion}, we study $(\infty,2)$-categorical Kan extensions (using results from the first-named author) which we apply to the study of free cocompletions (cf. \cref{sec:freeCocompletion}). \cref{sec:2presentable} is devoted to the study of presentable $(\infty,2)$-categories. We conclude \cref{sec:2presentable} by establishing 2-dimensional analogues of Freyd's adjoint functor theorem. In \cref{sec:internfib} we develop the theory of fibrations internal to an $(\infty,2)$-category admitting oriented pullbacks. 

 Once the preliminary theory is built, we present the axiom of fibrational descent in \cref{sec:fibdescent} and explore its consequences. We give general procedures to produce examples of $(\infty,2)$-topoi in \ref{subsec:const} and spend the rest of the chapter proving the equivalent characterizations of $(\infty,2)$-topoi discussed in the introduction. Our study of synthetic category theory is performed in \cref{sec:synth} including internal $\infty$-groupoids, directed univalence, synthetic Kan extensions and the synthetic version of Yoneda's lemma. We conclude this work in \cref{sec:localic} with our study of $(\infty,1)$-localic $(\infty,2)$-topoi.

\subsection*{A notational remark}
 From this point on, we will systematically drop the terminology $(\infty,2)$-category (resp. $(\infty,1)$-category) and simply write 2-category (resp. 1-category). 
\subsection*{Acknowledgements}
We would like to thank Thomas Blom, Rune Haugseng and David Kern for helpful conversations. We also thank Denis-Charles Cisinski for encouraging us to explore the notion of directed univalence. Finally, we would like to thank Felix Loubaton for openly discussing his own research on $(\infty,n)$-topoi with us and for pointing a further consequence of the 2-categorical Giraud's theorem (see \cref{rem:felix}).


\section{Preliminaries}\label{sec:prelim}

\subsection{Background on 2-categories}
 In this section we gather basic definitions in the theory of 2-categories from a model independent perspective. There are several implementations (models) for the theory of 2-categories which will not play an important role in this paper. The key takeaway from the existence of such models for us can be summarized as follows:
 \begin{itemize}
 	\item There exists a large 2-category of small 2-categories $\CCat$ (resp.\  very large 2-category of large 2-categories $\widehat{\CCat}$). 
 	\item There exist certain elementary shapes (\cref{def:theta2}) which allows us to build every 2-category as certain colimits thereof.
 	\item There exist several truncation functors relating the theory of 1-categories and the theory of 2-categories (cf. \cref{def:underlying1cat} and \cref{def:localization}).
 \end{itemize}
  At the end of this section we review some formal properties of the Gray tensor product which will be key for our study of partially (op)lax colimits in \cref{sec:colimits}. We refer to the reader to \cite{GHLGray} for a more detailed discussion (within the model of scaled simplicial sets) and to \cite{Abellan2023} for a comparison of different models of the Gray tensor product.

 \begin{notation}
 	Every 2-category $\bCC$ comes equipped with a mapping category functor $\bCC^{\op} \times \bCC \to \Cat$ which will usually be denoted as $\bCC(-,-)$. We detail below some exceptions to this convention:
 	\begin{itemize}
 		\item If $\bCC=\Cat$ (the 2-category of small 1-categories) we will use the notation $\Fun(-,-)$. Similarly, we will use the notation $\Fun_{/\CC}$ for the mapping category functor of the slice $\Cat_{/\CC}$.
 		\item If $\bCC=\CCat$ we will also use the notation $\Fun(-,-)$. However, in some situations it will be important to consider the 2-category of functors, which will be denoted as $\FUN(-,-)$.
 		\item Similarly if $\bCC=\FUN(\bII,\bCC)$ (see above), we will denote by $\Nat_{\bII,\bCC}(-,-)$ the corresponding mapping category functor. 
 	\end{itemize}
 \end{notation}

 \begin{definition}\label{def:subcategories}
 	Let $f \colon \bCC \to \bDD$ be a functor of 2-categories. 
 	\begin{itemize}
 		\item We say that $f$ is fully faithful (or that $\bCC$ is a full sub-2-category of $\bDD$) if for every $x,y \in \bCC$ the induced functor on mapping categories
 		\[
 			\bCC(x,y) \xrightarrow{\simeq} \bDD(fx,fy)
 		\]
 		is an equivalence.
 		\item We say that $f$ is locally fully faithful (or that $\bCC$ is a locally full sub-2-category of $\bDD$) if for every $x,y \in \bCC$ the induced functor on mapping categories
 		\[
 			\bCC(x,y) \xrightarrow{} \bDD(fx,fy)
 		\]
 		is fully faithful.
 	\end{itemize}
 \end{definition}

 \begin{definition}\label{def:conserfunctor}
   Let $f \colon \bCC \to \bDD$ be a functor of 2-categories. We say that $f$ is conservative on 2-morphisms if given a 2-morphism $\alpha$ then it follows that $\alpha$ is invertible if and only if $f(\alpha)$ is invertible. More generaly given a family of functors $\{f_i \colon \bCC_i \to \bDD\}_{i \in I}$ we say that $\{f_i\}_{i\in I}$ are jointly conservative on 2-morphisms if given a 2-morphism $\alpha$ then it follows that $\alpha$ is invertible if and only if each $f_i(\alpha)$ is invertible.
 \end{definition}

 \begin{remark}\label{rem:conservativepullback}
   It follows from  \cite[Observation 2.6.16]{AGH24} that a functor is conservative on 2-morphisms if and only if it is right orthogonal to the canonical localization map $C_2 \to [1]$ from the walking 2-morphism to the walking arrow.
 \end{remark}

 \begin{definition}\label{def:ffinternal}
 	A morphism $f \colon c \to d$ in a 2-category $\bCC$ is said to be \emph{fully faithful} if the natural transformation $\bCC(-,c) \to \bCC(-,d)$ is point-wise fully faithful.
 \end{definition}

 \begin{definition}\label{def:coandop}
 	There are two canonical involutions that can be applied to 2-categories. Given $\bCC \in \CCat$ we can consider $\bCC^{\op}$ which is specified by the requirement $\bCC^{\op}(x,y) \simeq \bCC(y,x)$. One can also define $\bCC^{\co}$ which characterised by $\bCC^{\co}(x,y)=\bCC(x,y)^{\op}$. Observe that in contrast to $(-)^{\op}$ the $\co$-duality satisfies $\CC^{\co} \simeq \CC$ whenever $\CC$ is a 1-category.
 \end{definition}

\begin{definition}\label{def:underlying1cat}
	There exists an adjunction of large 2-categories,
	\[
		\iota \colon \Cat \llra  \CCat \colon (-)^{\leq 1}
	\]
	where the left adjoint is the canonical inclusion. We call $\bCC^{\leq 1}$, the \emph{underlying 1-category} of $\bCC$.
\end{definition}

\begin{definition}\label{def:localization}
	There exists an adjunction of large 2-categories
	\[
		|-|_1 \colon \CCat \llra \Cat \colon \iota
	\]
	where the right adjoint is given by the canonical inclusion and left adjoint sends a 2-category $\bCC$ to the 1-category $|\bCC|_1$ obtained by inverting every 2-morphism. We call $|\bCC|_1$, the \emph{1-truncation} of $\bCC$.
\end{definition}

\begin{definition}\label{def:theta2}
  Let $[n]([i_{1}],\dots,[i_{n}])$ be the (strict) 2-category
    with objects $0,\ldots,n$ and mapping categories
    \[ [n]([i_{1}],\dots,[i_{n}])(s,t) =
      \begin{cases}
        \emptyset, & s > t, \\
        [i_{s+1}] \times \cdots \times [i_{t}], & s \leq t,                   
      \end{cases}
    \]
    and composition given by the obvious isomorphisms
    \[[n]([i_{1}],\dots,[i_{n}])(s,t) \times [n]([i_{1}],\dots,[i_{n}])(t,u) \cong [n]([i_{1}],\dots,[i_{n}])(s,u).\] The category
    $\Theta_{2}$ is the full sub-1-category of $\CCat^{\leq 1}$ on these objects,
    and the restricted Yoneda embedding
    \[ \CCat^{\leq 1} \to \Fun(\Theta_{2}^{\op}, \SS)\] identifies $\CCat^{\leq 1}$
    with the full subcategory satisfying certain Segal and
    completeness conditions introduced by Rezk~\cite{RezkThetaN}.
 \end{definition}

\begin{definition}\label{def:kappasmall2cat}
  We say that a 2-category $\bCC$ is $\kappa$-small, where $\kappa$ denotes a regular cardinal, if there exists a $\kappa$-small  1-category $\II$ and a diagram $d \colon \II \to \Theta_2$ such that 
  \[
    \colim_{\II} (\iota_{\Theta}d) \simeq \bCC
  \]
  where $\iota_{\Theta} \colon \Theta_2 \to \CCat$ is the canonical inclusion.
\end{definition}

\begin{remark}
	We observe that the restriction of $|-|_1$ to $\Theta_2$ yields a functor $\Theta_2 \to \Delta$. In particular, it follows that a 1-category $\AA$ is $\kappa$-small in the sense of \cref{def:kappasmall2cat} if and only if there exists a diagram $d \colon \II \to \Delta$ where $\II$ is $\kappa$-small (in the usual sense) such that $\colim_{\II} (\iota_{\Delta}d) \simeq \AA$ where $\iota_{\Delta} \colon \Delta \to \Cat$ is the canonical inclusion.
\end{remark}

\subsubsection{Lax natural transformations and the Gray tensor product}

\begin{definition}\label{def:scaledcats}
	 A \emph{scaled} 2-category is 2-category $\bCC$ together with
  a collection $S$ of functors (the scaling) $S \subset \Fun([2] ,\bCC)^{\simeq}$, such that $S$
  contains the subgroupoid $S_{\bCC}^{\flat}$ of triangles
  \[
    \begin{tikzcd}[column sep=tiny,row sep=small]
      {} & y \arrow{dr}{g} \\
      x \arrow{ur}{f} \arrow{rr} & & z
    \end{tikzcd}
  \] where either $f$ or $g$ is an equivalence. We write
  $\bCC^{\flat} := (\bCC, S_{\bCC}^{\flat})$ for $\bCC$
  equipped with this minimal scaling, and
  $\bCC^{\natural} = (\bCC, S_{\bCC}^{\natural})$ for $\bCC$
  equipped with the maximal scaling consisting of all commuting
  triangles in $\bCC$. A functor of scaled 2-categories $f: (\bCC,S) \to (\bDD,T)$ is an oplax unital functor (see \cite{Abellan2023} for details) $\bII \to \bSS$ such that $f(S) \subseteq T$. We denote by $\CCats$ the 2-category of scaled 2-categories.
\end{definition}

\begin{remark}\label{rem:2morphlocalization}
	There is a functor $\iota \colon \CCat \to \CCats$ which sends $\bCC$ to the $\bCC^{\natural}$. The functor $\iota$ admits a left adjoint $|-|_{\mathfrak{s}} \colon \CCats \to \CCat$. 
\end{remark}

\begin{definition}\label{def:unmarkedgray}
	 Suppose $\bAA$ and $\bBB$ are 2-categories. Let $S$ be the set of
  commuting triangles in $\bAA \times \bBB$ of the form
  \[\left(\begin{tikzcd}
        & {a_1} &&& {b_1} \\
        {a_0} && {a_2,} &  {b_0} && {b_2}
        \arrow["{f_{01}}", from=2-1, to=1-2]
        \arrow["{f_{12}}", from=1-2, to=2-3]
        \arrow["{f_{02}}"', from=2-1, to=2-3]
        \arrow["{g_{01}}", from=2-4, to=1-5]
        \arrow["{g_{02}}"', from=2-4, to=2-6]
        \arrow["{g_{12}}", from=1-5, to=2-6]
      \end{tikzcd}\right)\]
  where either $f_{12}$ or $g_{01}$ is an equivalence. The \emph{Gray
    tensor product} $\bAA \otimes \bBB$, is defined as $|(\bAA \times \bBB,S)|_{\mathfrak{s}}$, see \cref{rem:2morphlocalization}.
\end{definition}

\begin{definition}\label{def:marked2category}
	A marked 2-category $(\bII,E)$ is a pair consisting in a 2-category $\bII$ together with a collection of 1-morphisms $E \subset \Fun([1],\bII)^{\simeq}$ containing every equivalence. We define a functor $f \colon (\bII,E) \to (\bSS,E')$ of marked 2-categories to be an ordinary functor $\bII \to \bSS$ such that $f(E) \subseteq E'$. We denote by $\CCatm$ the 2-category of marked 2-categories and by $\mFun(-,-)$ the corresponding mapping category functor.
\end{definition}

\begin{definition}\label{def:flatsharpmarking}
	Given a 2-category $\bII$ there are two canonical ways of regarding $\bII$ as a marked 2-category: We can mark \emph{every} 1-morphism obtaining a marked 2-category which we dentoe $(\bII,\sharp)$. Dually, we can only mark the equivalences in which case we will use the notation $(\bII,\natural)$.
\end{definition}

\begin{remark}
	The assignment $\bII \mapsto (\bII,\natural)$ induces a fully faithful functor $\CCat \to \CCatm$.
\end{remark}

\begin{definition}\label{def:markedgray}
	Let $(\bAA,E)$ and $(\bBB,F)$ be marked 2-categories. We define the marked Gray tensor product $\bAA \otimes_{E,F} \bBB$ as the following pushout 
	\[
    \begin{tikzcd}
      \coprod_{S} [1] \otimes [1] \ar[r] \ar[d] & \coprod_{S} [1]
      \times [1] \ar[d] \\
      \bAA \otimes \bBB \ar[r] & \bAA \otimes_{E,F} \bBB,
    \end{tikzcd}
  \]
  where $S$ is the set of equivalence classes of pairs of maps
  $f \colon [1] \to \bAA$, $g \colon [1] \to \bBB$ such that either $f$
  lies in $E$ or $g$ lies in $F$, and neither $f$ nor $g$ is
  invertible.
\end{definition}

\begin{proposition}\label{proposition:markedgraycolim}
  Let $(\bAA, E)$ be a marked 2-category. Then the functors
  \[ - \otimes_{\natural,E} \bAA,\, \bAA \otimes_{E,\natural}
    - \colon \CCat^{\leq 1} \to \CCat^{\leq 1} \]
  preserve colimits.
\end{proposition}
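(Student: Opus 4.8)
The plan is to deduce the statement from the corresponding, easier, fact for the \emph{unmarked} Gray tensor product together with the pushout presentation of \cref{def:markedgray}. First I would reduce to a single functor: the two functors are interchanged by the symmetry of the Gray tensor product up to the duality involutions $(-)^{\op}$, $(-)^{\co}$ on $\CCat$, which are colimit-preserving autoequivalences, so it suffices to treat $G:=(-\otimes_{\natural,E}\bAA)$, the case of $\bAA\otimes_{E,\natural}-$ being entirely analogous. Since $\CCat^{\leq1}$ is presentable, it is enough to show that $G$ preserves coproducts, filtered colimits, and pushouts. The first two are immediate: the generator $[1]$ is connected and compact in $\CCat^{\leq1}$, so $\bBB\mapsto\Hom([1],\bBB)$ preserves coproducts and filtered colimits, hence so do the correction terms indexed by $S(\bBB)$ in \cref{def:markedgray}; the unmarked tensor does as well, and pushouts commute with coproducts and filtered colimits. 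The entire content is therefore the preservation of pushouts.

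The main input is that $-\otimes\bAA$ preserves all colimits, being one half of the biclosed monoidal structure furnished by the Gray tensor product (see \cite{GHLGray,Abellan2023}). I would then reinterpret the pushout of \cref{def:markedgray}: since $[1]\otimes[1]\to[1]\times[1]$ is the map inverting the unique non-invertible $2$-cell of the lax square, the marked tensor $\bBB\otimes_{\natural,E}\bAA$ is precisely the localization of $\bBB\otimes\bAA$ that inverts the $2$-cells of all lax squares indexed by $S(\bBB)$, namely those attached to pairs $(f,g)$ with $f\colon[1]\to\bBB$ and $g\in E$. Write $W(\bBB)$ for this class of $2$-cells.

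Now fix a pushout $\bBB=\bBB_1\sqcup_{\bBB_0}\bBB_2$ and compare $G(\bBB)$ with $P:=G(\bBB_1)\sqcup_{G(\bBB_0)}G(\bBB_2)$. Because $-\otimes\bAA$ preserves pushouts and a localization is itself a pushout (gluing in the cells $[1]\times[1]$ along $[1]\otimes[1]$), interchanging colimits identifies $P$ with the localization of $\bBB\otimes\bAA$ at the class $W(\bBB_1)\cup W(\bBB_2)$ of $2$-cells coming from the two pieces. Thus the comparison map $P\to G(\bBB)$ is the map between two localizations of $\bBB\otimes\bAA$, one inverting $W(\bBB_1)\cup W(\bBB_2)$ and the other inverting the a priori larger class $W(\bBB)$; it is an equivalence as soon as inverting the former already inverts the latter. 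This is the crux of the argument and the step I expect to be the main obstacle. It rests on the multiplicativity of the Gray tensor product: every morphism of the pushout $\bBB$ is a composite of morphisms coming from $\bBB_1$ and $\bBB_2$, and the lax-square $2$-cell attached to a composite $(f_n\circ\cdots\circ f_1,\,g)$ is the horizontal pasting of the lax-square $2$-cells attached to the $(f_j,g)$. Each of the latter lies in $W(\bBB_1)\cup W(\bBB_2)$ and is therefore inverted, and a pasting of invertible $2$-cells is invertible; hence every $2$-cell in $W(\bBB)$ is already inverted after localizing at $W(\bBB_1)\cup W(\bBB_2)$. Carrying this out requires a careful analysis of how the $2$-cells of $\bBB\otimes\bAA$ decompose under the cellular presentation of the Gray tensor product, which is the only genuinely technical ingredient. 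The coproduct case is the degenerate version in which no new composites arise, so that $W(\bBB_1\sqcup\bBB_2)=W(\bBB_1)\sqcup W(\bBB_2)$ on the nose.

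Finally, I note that if one has available a biclosed structure for the \emph{marked} Gray tensor product, whose internal hom classifies $E$-marked (op)lax natural transformations, then $G$ is a left adjoint and the statement is immediate; the argument above is the hands-on alternative that uses only the pushout definition and the biclosedness of the unmarked tensor.
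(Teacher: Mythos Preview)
The paper's proof is a single citation to Gagna--Harpaz--Lanari, whose content is essentially what you describe in your final paragraph: the partially (op)lax functor 2-categories exist and serve as right adjoints to $-\otimes_{\natural,E}\bAA$ and $\bAA\otimes_{E,\natural}-$, so colimit preservation is automatic. Your route via the pushout presentation of \cref{def:markedgray} is a genuinely different, more self-contained alternative.

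Your argument is essentially correct but has one gap you should close. The crux of the pushout case is the assertion that every $1$-morphism $f$ in $\bBB=\bBB_1\sqcup_{\bBB_0}\bBB_2$ is equivalent to a composite of $1$-morphisms in the images of the $\bBB_i$. You state this as if it were obvious, but for $(\infty,2)$-categories it is not: morphism spaces in homotopy pushouts are in general complicated. The claim does hold, and the quickest justification is that the homotopy-$1$-category functor $\mathrm{ho}_1\colon\CCat^{\leq1}\to\mathrm{Cat}_1$ is a left adjoint and therefore preserves pushouts; in ordinary $1$-categories every morphism in a pushout is manifestly a finite composite of morphisms from the factors, and since equivalence classes of $1$-morphisms in $\bBB$ are detected in $\mathrm{ho}_1\bBB$, the claim follows. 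With this in hand, the pasting decomposition of $\sigma_{f_n\cdots f_1,g}$ into the $\sigma_{f_j,g}$ is visible already in $[n]\otimes[1]$, and your argument completes. The coproduct and filtered-colimit cases are fine once one notes (as you implicitly do) that the localization is insensitive to replacing the indexing set $S(\bBB)$ by the full set $\Map([1],\bBB)\times E$, which does commute with these colimits since $[1]$ is compact and connected.

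In short: the paper outsources the work to the construction of the right adjoint and gets a one-line proof; your approach trades that for a longer but more elementary argument, provided you supply the $\mathrm{ho}_1$ step above.
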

\begin{proof}
  This follows from Corollary 4.1.10 in \cite{GagnaHarpazLanariLaxLim}.
\end{proof}

\begin{construction}\label{const:laxfunctorcat}
  Since $(\CCat)^{\leq 1}$ is a presentable 1-category, it follows from
  \cref{proposition:markedgraycolim} that for a marked 2-category $(\bAA,E)$,
  the functors $- \otimes_{\natural,E} \bAA$ and
  $\bAA \otimes_{E,\natural} -$ have right adjoints (see \cref{lem:adjunctions1vs2dimensional} below). We denote
  these by $\FUN(\bAA,-)^{\elax}$ and
  $\FUN(\bAA,-)^{\eoplax}$, respectively, so that we have natural
  equivalences on cores,
  \[ \Fun(\bXX \otimes_{\natural,E} \bAA, \bBB)^{\core} \simeq \Fun(\bXX,
    \FUN(\bAA,\bBB)^{\elax})^{\core},\]
   \[ \Fun(\bAA \otimes_{E,\natural} \bXX, \bBB)^{\core} \simeq \Fun(\bXX,
    \FUN(\bAA,\bBB)^{\eoplax})^{\core}.
  \]
  We write
  $\Nat^{\elaxoplax}_{\bAA,\bBB}(F,G)$ for the mapping categories in
  $\FUN(\bAA,\bBB)^{\elaxoplax}$ between functors $F,G$.
\end{construction}

\begin{construction}\label{const:functoriallaxfunct}
	There exist functors
	\[
		\left(\CCatm^{\leq 1}\right)^{\op} \times \CCat^{\leq 1} \to \CCat^{\leq 1}, \quad ((\bII,E),\bDD) \mapsto \FUN(\bII,\bDD)^{\elaxoplax},
	\]
	whose existence follows formally from \cref{proposition:markedgraycolim}.
\end{construction}

\begin{definition}\label{def:elaxcones}
	Let $(\bII,E)$ be a marked 2-category. We define $\bII^{\colimcone}_{\elax}$ and $\bII^{\limcone}_{\elax}$ as the pushouts
	\[
		\begin{tikzcd}
		   \bII \arrow[d,swap,"\{1\} \times \bII"] \arrow[r] & {[0]} \arrow[d,"\ast"] \\
			{[1]} \otimes_{\natural,E}\bII  \arrow[r] & \bII^{\colimcone}_{\elax}
		\end{tikzcd} \quad \quad 
		\begin{tikzcd}
		   \bII \arrow[d,swap,"\{0\} \times \bII"] \arrow[r] & {[0]} \arrow[d,"\ast"] \\
			{[1]} \otimes_{\natural,E}\bII  \arrow[r] & \bII^{\limcone}_{\elax}.
		\end{tikzcd}
	\]
	We define similarly $\bII^{\colimcone}_{\eoplax}$ and $\bII^{\limcone}_{\eoplax}$ by reversing the order of the Gray tensor product in the relevant pushouts. We will refer to the point selected by the right-most vertical map in the pushouts above as the \emph{cone point}.
\end{definition}

\begin{remark}\label{rem:conesimplified}
	To ease the notation we will simply denote $\bII^{\colimcone}_{\elaxoplax}$ by $\bII^{\colimcone}_{\laxoplax}$ whenever the choice of marking is clear from the context.
\end{remark}

\begin{remark}\label{rem:inclusionconeff}
	The canonical functor $\bII \to \bII^{\colimcone}_{\elaxoplax}$ induced by $\{0\} \to [1]$ (resp.\ $\bII^{\limcone}_{\elaxoplax}$ and $\{1\} \to [1]$) is fully faithful. To see this, we can implement this map using scaled simplicial sets (\cite{LurieGoodwillie}) and verify that the functor above is fully faithful after applying the rigidification functor $\mathfrak{C}^{\mathbf{sc}}$, cf. \cite[Definition 3.1.10, Theorem 4.2.2.]{LurieGoodwillie}. Unraveling the definitions, one sees that the corresponding map on mapping marked simplicial sets is an isomorphism and so the result holds.
\end{remark}

\subsection{Fibrations of 2-categories and the Grothendieck construction}
The goal of this section is to collect the main definitions regarding (cartesian) fibrations of 2-categories which are an efficient method of enconding functors with values in $\CCat$ as we will see in \cref{thm:str}.

\begin{definition}\label{definition:cartenriched}
  A functor of 2-categories $\pi \colon \bEE \to \bBB$ is
  \emph{cartesian-enriched} if
  \begin{enumerate}
  \item For all $x,y \in \bEE$, the functor $\bEE(x,y) \to \bBB(\pi x, \pi y)$ is a cartesian fibration of 1-categories.
  \item For all $x,y,z \in \bEE$, the commutative square
    \[
      \begin{tikzcd}
        \bEE(x,y) \times \bEE(y,z) \arrow{r} \arrow{d} & \bEE(x,z) \arrow{d} \\
        \bBB(\pi x, \pi y) \times \bBB(\pi y, \pi z) \arrow{r} & \bBB(\pi x, \pi z),
      \end{tikzcd}
    \]
    where the horizontal morphisms are given by composition, is a
    morphism of cartesian fibrations (i.e the top horizontal functor
    preserves cartesian morphisms).
  \end{enumerate}
  If $\pi \colon \bEE \to \bBB$ and $\pi' \colon \bEE' \to \bBB'$ are cartesian-enriched, we say that a commutative square of 2-categories
  \[
    \begin{tikzcd}
      \bEE \arrow{r}{\psi} \arrow{d}[swap]{\pi} & \bEE' \arrow{d}{\pi'} \\
      \bBB \arrow{r}{\phi} & \bBB'
    \end{tikzcd}
  \]
  is a \emph{morphism of cartesian-enriched functors} if for all $x, y \in \bEE$, the commutative square
  \[
    \begin{tikzcd}
      \bEE(x,y) \arrow{r} \arrow{d} & \bEE'(\psi x, \psi y) \arrow{d} \\
      \bBB(\pi x, \pi y) \arrow{r} & \bBB'(\pi' \psi x, \pi' \psi y)
    \end{tikzcd}
  \]
  is a morphism of cartesian fibrations.
  Dually, if $\pi^{\co}$ is cartesian-enriched, we say that $\pi$ is \emph{cocartesian-enriched}, and define morphisms of cocartesian-enriched functors similarly. 
\end{definition}

\begin{definition}
  We say that $\pi \colon \bEE \to \bBB$ is \emph{right-enriched} (or \emph{left-enriched}) if \[\bEE(x,y) \to \bBB(\pi x, \pi y)\] is a right (or left) fibration of 1-categories for all $x, y \in \bEE$.
\end{definition}

\begin{definition}
  Let $\pi \colon \bEE \to \bBB$ be a functor of 2-categories. If
  $\overline{f} \colon x \to y$ is a morphism in $\bEE$ lying over
  $f \colon a \to b$ in $\bBB$, then we say that $\overline{f}$ is a
  \emph{$p$-cocartesian} morphism if for every
  object $z \in \bEE$ over $c \in \bBB$, the commutative square of
  1-categories
  \[
    \begin{tikzcd}
      \bEE(y,z) \arrow{r}{\overline{f}^{*}} \arrow{d} & \bEE(x,z) \arrow{d} \\
      \bBB(b,c) \arrow{r}{f^{*}} & \bBB(a,c)
    \end{tikzcd}
  \]
  is a pullback. Dually, we say that $\overline{f}$ is a \emph{$p$-cartesian} morphism if all the squares
  \[
    \begin{tikzcd}
      \bEE(z,x) \arrow{r}{\overline{f}_{*}} \arrow{d} & \bEE(z,y) \arrow{d} \\
      \bBB(c,a) \arrow{r}{f_{*}} & \bBB(c,b)
    \end{tikzcd}
  \]
  are pullbacks. We say that $\bEE$ has all \emph{$p$-cocartesian lifts}
  of some class $T$ of morphisms in $\bBB$ if given a morphism
  $f \colon a \to b$ in $T$ and an object $x$ in $\bEE$ over $a$, there
  exists a $p$-cocartesian morphism $\overline{f} \colon x \to y$
  lying over $f$; similarly, we say that $\bEE$ has all
  \emph{$p$-cartesian lifts} of $T$ if the dual condition holds.
\end{definition}

\begin{notation}
  It will sometimes be notationally convenient to use the terms
  $0$- and $1$-cartesian fibrations for cocartesian and
  cartesian fibrations of 1-categories, respectively. Similarly, we will sometimes refer
  to $0$- and $1$-cartesian morphisms in both 1-categories and
  2-categories.
\end{notation}

\begin{definition}
  A functor $\pi \colon \bEE \to \bBB$ of 2-categories is a \emph{$(0,1)$-fibration} if:
  \begin{enumerate}
  \item $\bEE$ has $p$-cocartesian lifts of all morphisms in $\bBB$.
  \item $p$ is cartesian-enriched.
  \end{enumerate}
  If $\pi \colon \bEE \to \bBB$ and $\pi' \colon \bEE' \to \bBB'$ are $(0,1)$-fibrations, we say that a commutative square of 2-categories
  \[
    \begin{tikzcd}
      \bEE \arrow{r}{\psi} \arrow{d}[swap]{\pi} & \bEE' \arrow{d}{\pi'} \\
      \bBB \arrow{r}{\phi} & \bBB'
    \end{tikzcd}
  \]
  is a \emph{morphism of $(0,1)$-fibrations} if
  \begin{enumerate}
  \item the square is a morphism of
    cartesian-enriched functors,
  \item $\psi$ takes $\pi$-cocartesian morphisms to $\pi'$-cocartesian
    ones.
  \end{enumerate}
  Similarly, we have the notions
  of $(i,j)$-fibrations and their morphisms for all choices of
  $i,j \in \{0,1\}$, where $p \colon \bEE \to \bBB$ is an $(i,j)$-fibration if it is
  $j$-cartesian-enriched and $\bEE$ has all $p$-$i$-cartesian lifts
  over $\bBB$. We write $\FIB^{(i,j)}(\bBB)$ for the locally full sub-2-category (cf. \cref{def:subcategories}) of
  $\CCat_{/\bBB}$ containing the $(i,j)$-fibrations, the morphisms of
  $(i,j)$-fibrations, and all 2-morphisms among these. 
\end{definition}

\begin{remark}
	For an $(i,j)$-fibration $\pi: \bEE \to \bBB $, it follows that $\pi^{\op}$ is a $(1-i,j)$-fibration. Similarly, $\pi^{\co}$ is an $(i,1-j)$-fibration.
\end{remark}

\begin{observation}\label{obs:base1cat}
  Suppose that $\BB$ is a 1-category.  Then a functor of 2-categories
  $p \colon \bEE \to \BB$ is automatically cartesian-enriched, so $p$ is a $(0,1)$-fibration if and only if $\bEE$
  has $p$-cocartesian lifts of morphisms in $\BB$, in which case
  $p$ is also a $(0,0)$-fibration. To emphasize this, we will just
  call such functors $0$-fibrations (and, in the dual case,
  $1$-fibrations).
\end{observation}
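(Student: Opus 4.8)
The plan is to deduce the entire statement from a single elementary fact about $1$-categorical fibrations: \emph{if $\DD$ is a discrete $1$-category (i.e.\ a set), then every functor of $1$-categories $G\colon\CC\to\DD$ is simultaneously a cartesian and a cocartesian fibration, and a morphism of $\CC$ is $G$-cartesian if and only if it is $G$-cocartesian if and only if it is an equivalence.} Indeed, over a discrete base every morphism of $\CC$ lies over an identity, so $\CC$ splits as the coproduct $\coprod_{d\in\DD}\CC_d$ of its fibres; identities then furnish (co)cartesian lifts of the only morphisms of $\DD$, and feeding the discrete base into the pullback criterion that defines (co)cartesian morphisms identifies these distinguished morphisms with the equivalences.

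First I would observe that since $\BB$ is a $1$-category, each mapping category $\BB(px,py)$ is discrete (a set), so that for all $x,y\in\bEE$ the comparison functor $\bEE(x,y)\to\BB(px,py)$ is a functor of $1$-categories with discrete target. By the fact above, condition~(1) of \cref{definition:cartenriched} then holds automatically: this functor is a cartesian fibration. For condition~(2) I would check that the composition functor $\bEE(x,y)\times\bEE(y,z)\to\bEE(x,z)$ preserves cartesian morphisms. Over the discrete bases $\BB(px,py)\times\BB(py,pz)$ and $\BB(px,pz)$, the cartesian morphisms of both the source and the target are exactly the equivalences; since any functor preserves equivalences, the composition functor automatically carries cartesian morphisms to cartesian morphisms. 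Hence $p$ is cartesian-enriched, with no hypothesis on $p$ whatsoever. Applying the same argument to $p^{\co}\colon\bEE^{\co}\to\BB^{\co}$, and using that $\BB^{\co}\simeq\BB$ for the $1$-category $\BB$ (see \cref{def:coandop}), I conclude that $p$ is also cocartesian-enriched, again unconditionally.

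It then remains to assemble the definitions. Recall that $p$ is a $(0,1)$-fibration precisely when it is cartesian-enriched and $\bEE$ admits $p$-cocartesian lifts of all morphisms of $\BB$; since cartesian-enrichment is automatic, this reduces to the existence of $p$-cocartesian lifts, which is the asserted equivalence. Under this hypothesis $p$ is in addition cocartesian-enriched (shown above) and has the required cocartesian lifts, so it also satisfies the definition of a $(0,0)$-fibration. I expect the only load-bearing point to be the characterisation of the (co)cartesian morphisms over a discrete base as exactly the equivalences: the existence of lifts is a triviality (identities suffice), but verifying condition~(2) genuinely uses that these distinguished morphisms are the equivalences rather than merely that identities lift. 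It is worth flagging that \emph{discreteness} of the $\BB(px,py)$ — not merely invertibility of the $2$-cells of $\BB$ — is what is being used, as this is precisely the feature that makes the cartesian fibration condition vacuous.
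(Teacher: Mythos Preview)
Your overall strategy is right, but there is a terminological slip that matters. In this paper (see the notational remark at the end of \cref{sec:intro}), a ``1-category'' means an $(\infty,1)$-category; hence for $\BB$ a 1-category the mapping categories $\BB(px,py)$ are $\infty$-groupoids (spaces), not discrete sets. Your coproduct decomposition $\CC\simeq\coprod_{d\in\DD}\CC_d$ therefore fails in general (take $\DD$ a nontrivial connected space), and your closing claim that \emph{discreteness} rather than mere invertibility of 2-cells is the operative hypothesis is precisely backwards.

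Fortunately the argument survives with ``$\infty$-groupoid'' in place of ``discrete'': if $\DD$ is an $\infty$-groupoid, then every morphism of $\DD$ is an equivalence, and equivalences always admit (co)cartesian lifts (namely equivalences in $\CC$, which are automatically (co)cartesian by the pullback criterion). Moreover, over a groupoidal base the (co)cartesian morphisms are still exactly the equivalences, for the same reason you give (the bottom map in the defining pullback square is an equivalence). With this correction your verification of conditions~(1) and~(2) of \cref{definition:cartenriched} goes through unchanged, as does the passage to $p^{\co}$. The paper states this observation without proof, so there is no alternative argument to compare against.
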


\begin{theorem}[Nuiten~\cite{Nuiten}, Abell\'an--Stern~\cite{ASI23}]\label{thm:str}
  There are equivalences of 2-categories
  \begin{align*}
  	\FIB^{(0,1)}(\bBB) \xrightarrow{\simeq} \FUN(\bBB, \CCat), & & \FIB^{(1,0)}(\bBB) \xrightarrow{\simeq} \FUN(\bBB^\op, \CCat),
  \end{align*}
  \begin{align*}
  	\FIB^{(0,0)}(\bBB) \xrightarrow{\simeq} \FUN(\bBB^{\co}, \CCat), & & \FIB^{(1,1)}(\bBB) \xrightarrow{\simeq} \FUN(\bBB^\coop, \CCat),
  \end{align*}
  which is contravariantly natural in $\bBB$ with respect to pullback
  on the left and composition on the right.
\end{theorem}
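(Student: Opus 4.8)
The plan is to construct mutually inverse straightening and unstraightening $2$-functors and to establish the equivalence in a single case, say that of $(0,1)$-fibrations, deducing the other three formally. The reduction uses the two involutions of $\CCat$: by the remark following the definition of $(i,j)$-fibrations, the assignment $\pi\mapsto\pi^{\op}$ exchanges the first index (sending $(i,j)$-fibrations over $\bBB$ to $(1-i,j)$-fibrations over $\bBB^{\op}$) and $\pi\mapsto\pi^{\co}$ exchanges the second (landing over $\bBB^{\co}$). On the representing side these correspond to reindexing the base together with the canonical self-dualities of $\CCat$, which on the underlying $1$-category of $\CCat$ act by $\CC\mapsto\CC^{\op}$ and by the identity, respectively. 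Chasing these identifications matches $\FIB^{(1,0)}(\bBB)$, $\FIB^{(0,0)}(\bBB)$ and $\FIB^{(1,1)}(\bBB)$ with $\FUN(\bBB^{\op},\CCat)$, $\FUN(\bBB^{\co},\CCat)$ and $\FUN(\bBB^{\coop},\CCat)$, respectively, so it suffices to treat $\FIB^{(0,1)}(\bBB)\simeq\FUN(\bBB,\CCat)$.

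For the $(0,1)$-case I would adopt the representability point of view: the functor $\bBB\mapsto\FIB^{(0,1)}(\bBB)$, contravariant in $\bBB$ via pullback, is to be identified with $\FUN(-,\CCat)$, the universal element being a universal $(0,1)$-fibration $\UU\to\CCat$ whose fibre over a $1$-category $\CC$ is $\CC$ itself (a $2$-categorical avatar of $\CCat_{\ast/}\to\CCat$). Unstraightening is then $\Un(F)=\UU\times_{\CCat}\bBB$, the pullback along $F\colon\bBB\to\CCat$, and one must check this is a $(0,1)$-fibration: the cocartesian lifts of morphisms of $\bBB$ come from the action of $F$ on objects and $1$-morphisms, while the cartesian-enrichment of each $\Un(F)(x,y)\to\bBB(\pi x,\pi y)$, compatibly with composition, records precisely how $F$ acts on the mapping categories $\bBB(\pi x,\pi y)$. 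Conversely, straightening sends a $(0,1)$-fibration $\pi$ to the assignment $b\mapsto\pi^{-1}(b)$, with $1$-functoriality given by cocartesian transport and $2$-dimensional functoriality obtained by applying the $1$-categorical straightening equivalence of \cite{LurieHTT} fibrewise to the cartesian fibrations on mapping categories.

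The main obstacle is to show that $\St$ and $\Un$ are inverse \emph{as $2$-functors}, i.e.\ that they induce equivalences on the mapping categories of $\FIB^{(0,1)}(\bBB)$ and $\FUN(\bBB,\CCat)$, not merely bijections on equivalence classes of objects. This is exactly the point at which the two layers of the fibration structure have to be reconciled: a $2$-morphism between morphisms of $(0,1)$-fibrations is data lying over the mapping categories of $\bBB$, and it must be matched with a modification between natural transformations of $\CCat$-valued functors. Condition $(2)$ in the definition of cartesian-enrichment --- the compatibility of the mapping-category fibrations with composition --- is precisely the coherence that forces this correspondence to respect the interchange law, and verifying it rigorously is cleanest in an explicit model, e.g.\ presenting the fibrations by scaled simplicial sets and adapting Lurie's combinatorial straightening; this is the route carried out in \cite{Nuiten} and \cite{ASI23}.

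Finally, the asserted contravariant naturality in $\bBB$ --- pullback of fibrations on the left against precomposition $F\mapsto F\circ\phi$ on the right --- follows once the construction is organised functorially. Since $\Un$ is defined by pulling back the fixed universal fibration $\UU\to\CCat$, the compatibility with a change of base $\phi\colon\bBB'\to\bBB$ reduces to the coherence of iterated pullbacks, which is again discharged in the chosen model; dually, composition with $\phi$ on $\FUN(-,\CCat)$ is manifestly functorial, so the base-case equivalence upgrades to a natural one, and the reduction of the first paragraph then propagates naturality to all four cases.
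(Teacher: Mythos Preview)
The paper does not prove this theorem: it is stated as a result of Nuiten and Abell\'an--Stern and cited without argument. There is therefore no ``paper's own proof'' to compare your proposal against.

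That said, your outline is a reasonable description of the shape such an argument takes, and the reduction to a single variance via the $\op$/$\co$ involutions is correct. However, your proposal is not a self-contained proof: at the crucial step --- showing that straightening and unstraightening are mutually inverse as $2$-functors, with the coherences on mapping categories matching up --- you explicitly defer to the model-categorical constructions in \cite{Nuiten} and \cite{ASI23}. This is precisely the content of the theorem, so what you have written is an informed summary of the strategy rather than an independent argument. If the intent was to supply a proof where the paper omits one, you would need to actually carry out the model-specific verification (e.g.\ in scaled simplicial sets) rather than cite the references the theorem already attributes.
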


\begin{remark}
	The theorem above can be sharpened to obtain a Grothendieck construction which takes into account partially (op)lax natural transformations between functors. We refer the reader to \cite{AGH24} for the details.
\end{remark}

\begin{proposition}\label{prop:conservative}
  Let $\bCC$ be a 2-category. Then for every 2-category $\bDD$ the functors,
  \[
    \iota_{c}^* \colon \FUN(\bCC,\bDD)^{\laxoplax} \to \FUN([0],\bDD)^{\laxoplax},
  \]
  given by restricting along the maps $\iota_c \colon [0] \to \bCC$ (where $\iota_c$ is indexed over the set of maps $[0]\to \bCC$), are jointly conservative on 2-morphisms (cf. \cref{def:conserfunctor}).
\end{proposition}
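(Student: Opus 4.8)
The plan is to combine the invertibility criterion of \cref{rem:conservativepullback} with a transposition across the Gray-tensor adjunction of \cref{const:laxfunctorcat}. Throughout I write $p \colon C_2 \to [1]$ for the localization of the walking $2$-morphism at its unique noninvertible $2$-cell $\phi$, and $\iota \colon \bCC_0 = \coprod_{c}[0] \to \bCC$ for the inclusion of the objects, so that each $\iota_c^*$ is evaluation at $c$ (using $\FUN([0],\bDD)\simeq\bDD$). It suffices to treat the $E$-lax case, the $E$-oplax case being strictly dual. The implication ``$\Gamma$ invertible $\Rightarrow$ each $\iota_c^*\Gamma$ invertible'' is immediate since each $\iota_c^*$ is a functor of $2$-categories and hence preserves invertible $2$-morphisms; so by \cref{def:conserfunctor} it remains to prove the converse, that a $2$-morphism $\Gamma$ of $\FUN(\bCC,\bDD)^{\elax}$ all of whose components $\iota_c^*\Gamma$ are invertible is itself invertible.

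First I would transpose. A $2$-morphism $\Gamma$ is classified by a map $C_2 \to \FUN(\bCC,\bDD)^{\elax}$, which under \cref{const:laxfunctorcat} corresponds to a functor $\widetilde\Gamma \colon C_2 \otimes_{\natural,E} \bCC \to \bDD$. By the defining property of the localization $p$ underlying \cref{rem:conservativepullback}, $\Gamma$ is invertible if and only if its classifying map factors through $p$; by naturality of the adjunction equivalence in the first variable this is equivalent to $\widetilde\Gamma$ factoring through the induced map $j \colon C_2\otimes_{\natural,E}\bCC \to [1]\otimes_{\natural,E}\bCC$. Dually, the contravariant functoriality of \cref{const:functoriallaxfunct} along $\iota_c \colon [0]\to\bCC$ identifies $\iota_c^*\Gamma$ with the restriction of $\widetilde\Gamma$ along $C_2 = C_2\otimes_{\natural,E}[0] \to C_2\otimes_{\natural,E}\bCC$; hence $\iota_c^*\Gamma$ is invertible precisely when $\widetilde\Gamma$ sends the $2$-cell $\phi_c \coloneqq \phi\otimes c$ to an invertible $2$-morphism of $\bDD$. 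The goal thus becomes: if $\widetilde\Gamma$ inverts every $\phi_c$, then it factors through $j$.

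This is a formal consequence of the key lemma that $j$ \emph{is} the localization of $C_2\otimes_{\natural,E}\bCC$ at the object-indexed family $\{\phi_c\}_{c\in\bCC_0}$. Granting it, the universal property of localization applies directly: a functor out of $C_2\otimes_{\natural,E}\bCC$ factors (essentially uniquely) through $j$ exactly when it carries each $\phi_c$ to an invertible $2$-morphism, which is our hypothesis on $\widetilde\Gamma$. To prove the lemma I would realise both Gray tensors through the $\mathfrak s$-localization presentation of \cref{def:unmarkedgray} and \cref{rem:2morphlocalization}, writing each as $|(-\times\bCC, S)|_{\mathfrak s}$, and observe that $p\times\bCC$ is itself the localization of $C_2\times\bCC$ inverting exactly the cells $\{\phi\times c\}$. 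Together with the colimit-preservation of the Gray tensor from \cref{proposition:markedgraycolim}, this reduces the lemma to comparing the two scalings and checking that passing from $C_2$ to $[1]$ adds precisely the $2$-cells $\{\phi_c\}$ to the class being inverted.

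The main obstacle is exactly this scaling comparison. The delicate point is the behaviour of the Gray interchange $2$-cells: for each $1$-morphism $f$ of $\bCC$ the two parallel $1$-morphisms $a,b\colon 0 \to 1$ of $C_2$ produce two distinct interchange cells in $C_2\otimes_{\natural,E}\bCC$, and one must verify that inverting the family $\{\phi_c\}$ merely \emph{identifies} these two cells (recovering the single interchange cell of $[1]\otimes_{\natural,E}\bCC$) without inverting any further $2$-morphism. Equivalently, one must confirm that the class of $2$-cells inverted by $j$ lies in the saturation of $\{\phi_c\}$, so that no data beyond the components is needed to detect invertibility. This is a concrete but somewhat fiddly bookkeeping exercise in the chosen model of the Gray tensor product, and it is the only genuinely technical ingredient of the argument.
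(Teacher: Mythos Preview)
Your strategy is sound and genuinely different from the paper's. You transpose across the Gray-tensor adjunction and reduce the statement to a localization lemma about $j\colon C_2\otimes_{\natural,E}\bCC\to[1]\otimes_{\natural,E}\bCC$; the paper instead reduces to the case $\bDD=\Cat$ via a Yoneda-type embedding $\FUN(\bCC,\bDD)^{\laxoplax}\hookrightarrow\FUN(\bCC\times\bDD^{\op},\Cat)^{\laxoplax}$ (locally fully faithful, hence conservative on $2$-morphisms) and then invokes the fibrational description of $\FUN(\bCC,\Cat)^{\laxoplax}$ from \cite{AGH24}, where invertibility of $2$-morphisms is visibly detected fibrewise. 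The paper's route is shorter precisely because it outsources the hard work to the straightening equivalence; yours is more self-contained but must grapple with the combinatorics of the Gray tensor directly.

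That said, your proposal has a genuine gap: the ``fiddly bookkeeping exercise'' you defer is the entire content of the argument. You correctly identify that one must show the saturation of $\{\phi_c\}_{c}$ under the $2$-category operations contains every $2$-cell inverted by $j$, and you correctly flag the interchange cells as the delicate point. But you do not indicate \emph{how} to carry this out, and it is not obvious: for a $1$-morphism $f\colon c\to c'$ in $\bCC$, the two interchange cells $\sigma_{a,f}$ and $\sigma_{b,f}$ in $C_2\otimes\bCC$ are related by whiskerings of $\phi_c$ and $\phi_{c'}$, and one must check that inverting the latter forces $\sigma_{a,f}$ and $\sigma_{b,f}$ to become \emph{equal} (not merely isomorphic) without collapsing anything else. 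This requires either an explicit computation in a model (e.g.\ scaled simplicial sets, tracking the generating anodyne maps) or a generators-and-relations presentation of $C_2\otimes\bCC$. Absent that, the proof is incomplete. If you want to salvage the approach, one clean way is to exhibit the square
\[
\begin{tikzcd}
\coprod_{c} C_2 \arrow[r]\arrow[d] & C_2\otimes_{\natural,E}\bCC \arrow[d,"j"] \\
\coprod_{c} [1] \arrow[r] & {[1]}\otimes_{\natural,E}\bCC
\end{tikzcd}
\]
as a pushout; since $-\otimes_{\natural,E}\bCC$ preserves colimits (\cref{proposition:markedgraycolim}), this reduces to showing the analogous square with $\bCC$ replaced by $\bCC_0\to\bCC$ is a pushout in $\CCat$, but that still requires model-level input.
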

\begin{proof}
  If $\bDD=\Cat$ the claim follows easily from \cite[Theorem 3.5.4]{AGH24} since invertible 2-morphisms in the fibrational picture are clearly detected pointwise. More generally, we invoke \cite[Corollary 2.9.4.]{AGH24} to obtain a commutative diagram
  \[
    \begin{tikzcd}
      \FUN(\bCC,\bDD)^{\laxoplax} \arrow[d,"\iota_c"] \arrow[r] & \FUN(\bCC \times \bDD^{\op},\Cat)^{\laxoplax} \arrow[d,"\iota_c \times \id"] \\
       \FUN([0],\bDD)^{\laxoplax} \arrow[r] & \FUN([0] \times \bDD^{\op},\Cat)^{\laxoplax}
    \end{tikzcd}
  \]
  where the horizontal functors are conservative on 2-morphisms since they are inclusions of locally full sub-2-categories. The functors $\iota_c \times \id$ are jointly conservative on 2-morphisms by our previous discussion. The result now follows.
\end{proof}

\begin{definition}\label{def:oplaxarrow}
	Let $\bCC$ be a 2-category. The oplax arrow 2-category of $\bCC$ is defined as $\ARplax_{\bCC}=\FUN([1],\bCC)^{\oplax}$, dually, the lax arrow 2-category of $\bCC$ is defined as $\ARlax_{\bCC}=\FUN([1],\bCC)^{\lax}$. It then follows that (see \cite[Theorem 3.0.7]{GHLFib})
	\begin{itemize}
		\item restriction along $\{i\} \to [1]$ induces an $(\overline{i},i)$-fibration $\ev_{i}\colon \ARplax_{\bCC} \to \bCC$ where $\overline{i}=1-i$;
		\item restriction along $\{i\} \to [1]$ induces an $(\overline{i},\overline{i})$-fibration $\ev_{i}\colon \ARlax_{\bCC} \to \bCC$.
	\end{itemize}
\end{definition}

\begin{definition}\label{def:strongarrow}
	Let $\bCC$, we define $\AR_{\bCC}=\FUN([1],\bCC)$ and note that $\AR_{\bCC} \to \AR_{\bCC}^{\laxoplax}$ is locally fully faithful. 
\end{definition}

\begin{definition}\label{def:laxslices}
	Let $\bCC$ be a 2-category and let $c \in \bCC$. We define lax versions of the slice construction by means of the following pullback squares
  \[
  	\begin{tikzcd}
  		\bCC_{c \upslash} \arrow[r] \arrow[d] & \ARplax_{\bCC} \arrow[d,"\ev_0"] \\
  		{[0]} \arrow[r,"c"] & \bCC{,}
  	\end{tikzcd} \quad \quad \quad \begin{tikzcd}
  		\bCC_{c \downslash} \arrow[r] \arrow[d] & \ARlax_{\bCC} \arrow[d,"\ev_0"] \\
  		{[0]} \arrow[r,"c"] & \bCC{,}
  	\end{tikzcd}
  \]
  \[
  	\begin{tikzcd}
  		\bCC_{ \upslash c} \arrow[r] \arrow[d] & \ARplax_{\bCC} \arrow[d,"\ev_1"] \\
  		{[0]} \arrow[r,"c"] & \bCC{,}
  	\end{tikzcd} \quad \quad \quad \begin{tikzcd}
  		\bCC_{ \downslash c} \arrow[r] \arrow[d] & \ARlax_{\bCC} \arrow[d,"\ev_1"] \\
  		{[0]} \arrow[r,"c"] & \bCC{.}
  	\end{tikzcd}
  \]
  The lax slices come equipped with:
  \begin{itemize}
   	\item a functor $\bCC_{c\upslash } \to \bCC$ induced by $\ev_1$ which is a $(0,1)$-fibration;
   	\item a functor $\bCC_{c\downslash } \to \bCC$ induced by $\ev_1$ which is a $(0,0)$-fibration;
   	\item a functor $\bCC_{\upslash c } \to \bCC$ induced by $\ev_0$ which is a $(1,0)$-fibration;
   	\item a functor $\bCC_{\downslash c} \to \bCC$ induced by $\ev_0$ which is a $(1,1)$-fibration.
   \end{itemize} 
\end{definition}

\begin{remark}
	It follows from \cref{thm:str} and \cite[Theorem 3.17]{AS23} that:
	\begin{itemize}
	 	\item the $(0,1)$-fibration $\bCC_{c\upslash } \to \bCC$ is classified by the functor $\bCC(c,-) \colon \bCC \to \Cat$;
	 	\item the $(0,0)$-fibration $\bCC_{c\downslash } \to \bCC$ is classified by the functor $\bCC(c,-)^{\op} \colon \bCC^{\co} \to \Cat$; 
	 	\item the $(1,0)$-fibration  $\bCC_{\upslash c } \to \bCC$ is classified by the functor $\bCC(-,c) \colon \bCC^{\op} \to \Cat$;
	 	\item the $(1,1)$-fibration $\bCC_{\downslash c} \to \bCC$ is classified by the functor $\bCC(-,c)^{\op} \colon \bCC^{\coop} \to \Cat$.
	 \end{itemize} 
\end{remark}

\begin{remark}
	The notation chosen for the lax slices is designed to give a description of the 1-morphisms. More precisely, an object in $\bCC_{\upslash c}$ is given by a map $u \colon  x \to c$ and a morphism from $u$ to $v \colon y \to c$ can be represented by triangle
	\[\begin{tikzcd}[column sep=small]
	x && y \\
	& c
	\arrow[from=1-1, to=1-3]
	\arrow[""{name=0, anchor=center, inner sep=0}, "u"', from=1-1, to=2-2]
	\arrow["v", from=1-3, to=2-2]
	\arrow[shorten <=8pt, shorten >=16pt, Rightarrow, from=0, to=1-3].
\end{tikzcd}\]
which commutes up to (non-invertible) 2-morphism. Similarly, a 1-morphism in $\bCC_{\downslash c}$ is given by a laxly commuting triangle where the associated 2-cell points in the other direction.
\end{remark}

\begin{definition}
	We define the strong slice $\bCC_{/c}$ as the locally full sub-2-category of $\bCC_{\upslash c}$ on those triangles above which commute and similarly for the remaining lax slices.
\end{definition}

\begin{proposition}\label{prop:carttransun}
	Let us consider a commutative diagram of $(i,j)$-fibrations
	\[
		\begin{tikzcd}[column sep=small]
			\bXX \arrow[rr,"\pi"] \arrow[dr,"p"'] & & \bYY \arrow[dl,"q"] \\
			& \bSS &
		\end{tikzcd}
	\]
	such that :
	\begin{itemize}
		\item[i)] the functor $\pi$ is a morphism of $(i,j)$-fibrations;
		\item[ii)] for every $s \in \bSS$ the induce functor on fibres $\bXX_s \to \bYY_s$ is a $(i,j)$-fibration;
		\item[iii)] for every morphism $s_{0} \to s_{1}$ in $\bSS$ then the associated diagram
		 \[
		 	 \begin{tikzcd}
		 	 	\bXX_{s_{i}} \arrow[r] \arrow[d] & \bXX_{s_{1-i}} \arrow[d] \\
		 	 	\bYY_{s_{i}} \arrow[r] & \bYY_{t_{1-i}}
		 	 \end{tikzcd}
		 \]
		 is a morphism of $(i,j)$-fibrations.
	\end{itemize}
 Then it follows that $\pi$ is a $(i,j)$-fibration.
\end{proposition}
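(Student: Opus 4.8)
The plan is to reduce the statement to the two defining conditions of an $(i,j)$-fibration and to verify each of them from the hypotheses on fibres and transition functors. First I would invoke the two dualities $(-)^{\op}$ and $(-)^{\co}$, which exchange $i\leftrightarrow 1-i$ and $j\leftrightarrow 1-j$ respectively and are compatible with all of the data in the statement, in order to reduce to the case $(i,j)=(0,1)$. Thus $p,q$ and $\pi$ are $(0,1)$-fibrations, and it remains to show that $\pi$ is cartesian-enriched and that $\bXX$ admits $\pi$-cocartesian lifts of every morphism of $\bYY$.

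For cartesian-enrichment, fix $x,x'\in\bXX$ and set $s=p(x)$, $s'=p(x')$. The mapping-category functors assemble into a commuting triangle over $\bSS(s,s')$ whose two legs $\bXX(x,x')\to\bSS(s,s')$ and $\bYY(\pi x,\pi x')\to\bSS(s,s')$ are cartesian fibrations (since $p$ and $q$ are cartesian-enriched) and whose top functor preserves cartesian morphisms (since $\pi$ is a morphism of cartesian-enriched functors). I would then apply the standard $1$-categorical criterion for a functor between cartesian fibrations over a common base: such a functor is itself a cartesian fibration as soon as (1) the induced functor on the fibre over each $f\colon s\to s'$ is a cartesian fibration, and (2) the transition functors along morphisms of $\bSS(s,s')$ are morphisms of cartesian fibrations. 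Using the cocartesian lift of $f$ and the fact that $\pi$ preserves cocartesian morphisms, I identify the fibre over $f$ with the mapping-category functor $\bXX_{s'}(f_!x,x')\to\bYY_{s'}(f_!\pi x,\pi x')$ of $\pi_{s'}$, which is a cartesian fibration by hypothesis (ii); the transition functor along a $2$-morphism $\alpha\colon f\Rightarrow g$ is precomposition with the induced map $f_!x\to g_!x$, and it preserves cartesian morphisms by the composition-compatibility clause of the cartesian-enrichment of $\pi_{s'}$ (condition (2) of \cref{definition:cartenriched}). This shows $\bXX(x,x')\to\bYY(\pi x,\pi x')$ is a cartesian fibration; the remaining composition-compatibility for $\pi$ follows from the explicit description of the $\pi$-cartesian morphisms provided by the criterion, together with the cartesian-enrichment of $p$, $q$ and $\pi_{s'}$.

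For the existence of lifts, take a morphism $\phi\colon\pi x\to w$ in $\bYY$ lying over $f\colon s\to s'$ in $\bSS$. I would first choose a $p$-cocartesian lift $\bar f\colon x\to f_!x$ of $f$; since $\pi$ preserves cocartesian morphisms, $\pi(\bar f)$ is a $q$-cocartesian lift of $f$, and the universal property of cocartesian morphisms lets me factor $\phi$ through it, producing $\phi'\colon\pi(f_!x)\to w$ in the fibre $\bYY_{s'}$. As $\pi_{s'}$ is a $0$-fibration by (ii), I choose a $\pi_{s'}$-cocartesian lift $\tilde\phi'\colon f_!x\to z$ of $\phi'$ and set $\tilde\phi=\tilde\phi'\circ\bar f$, which lies over $\phi$. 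To see that $\tilde\phi$ is $\pi$-cocartesian, I verify that for every $u\in\bXX$, writing $s''=p(u)$, the outer rectangle
\[
\begin{tikzcd}[column sep=large]
\bXX(z,u)\arrow[r,"\tilde\phi'^{*}"]\arrow[d] & \bXX(f_!x,u)\arrow[r,"\bar f^{*}"]\arrow[d] & \bXX(x,u)\arrow[d]\\
\bYY(\pi z,\pi u)\arrow[r] & \bYY(\pi f_!x,\pi u)\arrow[r] & \bYY(\pi x,\pi u)
\end{tikzcd}
\]
is a pullback, by showing each inner square is. The right square is a pullback because the $p$-cocartesianness of $\bar f$ and the $q$-cocartesianness of $\pi(\bar f)$ exhibit both of its rows as pullbacks along $f^{*}\colon\bSS(s',s'')\to\bSS(s,s'')$, so the relevant pullback may be computed in two stages through $\bYY$. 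The left square is checked fibrewise over $\bSS(s',s'')$: its fibre over $g\colon s'\to s''$ is the square of mapping categories in $\bXX_{s''}$ and $\bYY_{s''}$ associated to $g_!\tilde\phi'\colon g_!f_!x\to g_!z$, and this is a pullback because $g_!\tilde\phi'$ is $\pi_{s''}$-cocartesian. This last point is exactly where hypothesis (iii) enters: it guarantees that the pushforward $g_!\colon\bXX_{s'}\to\bXX_{s''}$ is a morphism of $(0,1)$-fibrations and hence carries the $\pi_{s'}$-cocartesian morphism $\tilde\phi'$ to a $\pi_{s''}$-cocartesian one.

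The main obstacle is the verification that the constructed lift $\tilde\phi$ is $\pi$-cocartesian, and more precisely the step of propagating the fibrewise cocartesianness of $\tilde\phi'$ out of the single fibre $\bXX_{s'}$ to all of $\bXX$; hypothesis (iii) is indispensable here, as it is the only input ensuring compatibility of the pushforward functors with the relative cocartesian structure. The analogous bookkeeping — tracking which morphisms are relatively cartesian through the composition functors — is likewise what makes the composition-compatibility clause of the enrichment the most delicate part of that step.
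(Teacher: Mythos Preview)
Your proof is correct and follows the same overall strategy as the paper's: reduce to $(i,j)=(0,1)$, build the candidate $\pi$-cocartesian lift as the composite of a $p$-cocartesian edge with a fibrewise $\pi_{s'}$-cocartesian edge, and verify cocartesianness by a pullback-pasting argument. There are two minor differences in execution. For cartesian-enrichment you supply an explicit argument via the $1$-categorical analogue of the proposition applied to the mapping-category functors over $\bSS(s,s')$, whereas the paper simply defers to \cite[Lemma~3.4.7]{AGH24}. For the existence of lifts you verify \emph{global} $\pi$-cocartesianness directly, checking the pullback square for arbitrary $u\in\bXX$ by passing to fibres over $\bSS(s',s'')$ and invoking hypothesis (iii) to see that $g_!\tilde\phi'$ remains fibrewise cocartesian; the paper instead uses (iii) to observe that the class of such composite lifts is closed under composition, so that it suffices to verify \emph{local} $\pi$-cocartesianness (i.e.\ the pullback square only for $u$ in the fibre $\bXX_y$), which it does via a three-square decomposition. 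Both routes are valid and the underlying computations are the same; the paper's reduction to the local condition is slightly slicker, while your direct verification is more self-contained.
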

\begin{proof}
	We deal with the case $(i,j)=(0,1)$ without loss of generality. The fact that $f$ is cartesian-enrinched follows by adapting the proof of \cite[Lemma 3.4.7]{AGH24}. To finish the proof, we show that $\pi$ has enough $0$-$\pi$-cartesian 1-morphisms.

	 Let $\hat{x} \in \bXX$ and consider a morphism $f \colon \pi(\hat{x})=x \to y.$ It is easy to see that we can produce a morphism $\hat{f}: \hat{x} \to \hat{y}$ over $f$ such that
	 \[
	 	  \begin{tikzcd}
	 	   &     t  \arrow[dr,"v"] & \\
	 	  	\hat{x} \arrow[ur,"u"] \arrow[rr,swap,"\hat{f}"] & & \hat{y}
	 	  \end{tikzcd}
	 \]
   where $u$ is $0$-$p$-cartesian and $v$ is a $0$-cartesian edge for the functor $\bXX_{qy} \to \bYY_{qy}$. As a consequence of iii), it follows that the class of morphisms which can be expressed as a composition of a $0$-$p$-cartesian edge and a fibrewise $0$-cartesian edge are stable under composition. Therefore, it will be enough to show that $\hat{f}$ is locally $0$-$p$-cartesian (see 3.3 in \cite{AGH24}). For $\ell \in \bXX_{y}$, we wish to show that we have a pullback diagram
   \[
   	  \begin{tikzcd}
   	  	\bXX_{y}(\hat{y},\ell) \arrow[r,"(-)\circ \hat{f}"]  \arrow[d] & \bXX(\hat{x},\ell) \arrow[d] \\
   	  	{[0]} \arrow[r,"f"]  & \bYY(x,y).
   	  \end{tikzcd}
   \]
   To verify this claim we consider the commutative diagram
   \[\begin{tikzcd}
	{\bXX_{y}(\hat{y},\ell)} & {\bXX_{qy}(t,\ell)} & {\bXX(\hat{x},\ell)} \\
	{[0]} & {\bYY_{qy}(\pi(t),y)} & {\bYY(x,y)} \\
	& {[0]} & {\bSS(qx,qy)}
	\arrow["{(-)\circ v}", from=1-1, to=1-2]
	\arrow[from=1-1, to=2-1]
	\arrow["{(-)\circ  u}", from=1-2, to=1-3]
	\arrow[from=1-2, to=2-2]
	\arrow[from=1-3, to=2-3]
	\arrow["{\pi(v)}"', from=2-1, to=2-2]
	\arrow["{(-)\circ \pi(u)}"', from=2-2, to=2-3]
	\arrow[from=2-2, to=3-2]
	\arrow[from=2-3, to=3-3]
	\arrow["{q(f)}"', from=3-2, to=3-3]
\end{tikzcd}\]
and observe that the two squares on the right are pullbacks since $p$ and $q$ are $(0,1)$-fibrations and the edge $u$ (and its image under $\pi$) is $0$-cartesian. The left square is a pullback since $\bXX_{qy} \to \bYY_{qy}$ is also a $(0,1)$-fibration and $v$ is a $0$-cartesian edge. This concludes the proof.
\end{proof}

\subsection{Adjunctions in a 2-category}
Recall that an adjunction of $2$-categories is a functor from the free  walking adjunction $\Adj \to \CCat$, which specifies a pair of functors
\[
	L \colon \bCC \llra \bDD \colon R
\]
and natural transformations $LR \to \id$ and $\id \to RL$ satisfying the usual triangular identities. We refer the reader to \cite{AGH24} for the relevant proofs and a more extensive treatment.

\begin{proposition}
	A functor of 2-categories $L \colon \bCC \to \bDD$ is a left adjoint if and only if for every $d \in \bDD$ the functor
	\[
		\bCC^{\op} \to \Cat,\quad x \mapsto \bDD(L(x),d)
	\]
	is representable. Dually, a functor of 2-categories $R \colon \bDD \to \bCC$ is a right adjoint if and only if for every $c \in \bCC$ the functor
	\[
		\bDD \to \Cat,\quad x \mapsto \bCC(c,R(x))
	\]
	is correpresentable.\qed
\end{proposition}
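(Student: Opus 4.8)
The plan is to reduce the statement to two inputs that are available from the treatment of adjunctions in \cite{AGH24}: the $2$-categorical Yoneda lemma (the Yoneda embedding $y_{\bCC}\colon\bCC\to\FUN(\bCC^\op,\Cat)$ is fully faithful) and the hom-characterization of adjunctions (a functor $\Adj\to\CCat$ exhibiting $L\dashv R$ is the same datum as a natural equivalence of mapping category functors $\bDD(L(-),-)\simeq\bCC(-,R(-))$ of type $\bCC^\op\times\bDD\to\Cat$). I will prove only the first assertion, as the second follows by replacing $\bCC,\bDD$ with their opposites. Size issues arising from $\bCC$ being large are handled by passing to a larger universe, and will be suppressed.

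For the forward direction, suppose $L\dashv R$. The unit and counit induce a natural equivalence of two-variable mapping category functors $\bDD(L(-),-)\simeq\bCC(-,R(-))$; specializing to a fixed $d\in\bDD$ exhibits $x\mapsto\bDD(L(x),d)$ as equivalent to $\bCC(-,R(d))$, hence representable. For the converse, let $L^*\colon\FUN(\bDD^\op,\Cat)\to\FUN(\bCC^\op,\Cat)$ denote restriction along $L$, and assemble the given pointwise data into the single functor $\hat G:=L^*\circ y_{\bDD}\colon\bDD\to\FUN(\bCC^\op,\Cat)$, $d\mapsto\bDD(L(-),d)$. By hypothesis every value $\hat G(d)$ lies in the essential image of $y_{\bCC}$. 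Since $y_{\bCC}$ is fully faithful, this essential image is a full sub-$2$-category onto which $y_{\bCC}$ restricts to an equivalence $e\colon\bCC\xrightarrow{\simeq}\bCC'$, and because $\bCC'$ is full and contains every $\hat G(d)$, the functor $\hat G$ corestricts through the inclusion $\bCC'\hookrightarrow\FUN(\bCC^\op,\Cat)$. Setting $R:=e^{-1}\circ\hat G\colon\bDD\to\bCC$ then yields a natural equivalence $y_{\bCC}\circ R\simeq\hat G$, that is, a natural equivalence $\bCC(-,R(-))\simeq\bDD(L(-),-)$ of functors $\bCC^\op\times\bDD\to\Cat$ (naturality in the first variable being built into the target $\FUN(\bCC^\op,\Cat)$).

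The final step is to invoke the hom-characterization of adjunctions from \cite{AGH24}: the two-variable natural equivalence just produced is exactly the data needed to promote the pair $L,R$ to a functor $\Adj\to\CCat$ witnessing $L\dashv R$. I expect this last step to be the main obstacle. In contrast to the $1$-categorical case, recovering the fully coherent adjunction datum encoded by $\Adj$—the unit, counit, and all higher triangle coherences—from a mere equivalence of hom-functors is genuinely nontrivial and relies essentially on the theory developed in \cite{AGH24}; by comparison, the forward implication and the functoriality of $R$ obtained via Yoneda are formal. (An alternative route would encode $L$ via the comma $2$-category and reinterpret pointwise representability as the existence of a terminal object in each fibre, but the Yoneda argument above is the most economical.)
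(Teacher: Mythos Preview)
Your proposal is correct. The paper omits the proof entirely: the proposition is stated with a trailing \qed\ and the surrounding text refers the reader to \cite{AGH24} for proofs, so there is no argument in the paper to compare against. Your reduction to the 2-categorical Yoneda lemma together with the hom-characterization of adjunctions is the standard route, and your acknowledgement that the nontrivial content lies in the last step (promoting a natural equivalence $\bDD(L(-),-)\simeq\bCC(-,R(-))$ to an honest functor $\Adj\to\CCat$) is accurate; this is precisely the input being outsourced to \cite{AGH24}.
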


\begin{remark}
	Let $L \colon \bCC \to \bDD$ be a functor with associated $0$-fibration (see \cref{obs:base1cat}) $\pi \colon \mathbb{F} \to [1]$. Then it follows $L$ is a left adjoint if and only if $\pi$ is a 1-fibration as well.
\end{remark}

\begin{definition}
	Let $c \in \bCC$ and let
	\[
		\begin{tikzcd}
			x \arrow[rr,"u"] \arrow[dr,"p",swap] & & y \arrow[dl,"q"] \\
			& c & 
 		\end{tikzcd}
	\]
  be a commmutative triangle. We say that $u \colon x \to y$ is a relative left (resp.\ right) adjoint if it defines a left adjoint (resp.\ right adjoint) in $\bCC_{/c}$. 
\end{definition}

\begin{definition}\label{def:reflections}
	A map $x\to y$ in $\bCC$ is said to be a \emph{left inclusion} (resp.\ \emph{right inclusion}) if it is a fully faithful right (resp.\ left) adjoint. In this case, its left (resp.\ right) adjoint is referred to as a \emph{left reflection} (resp.\ \emph{right reflection}).
\end{definition}



\section{(Op)lax limits and colimits}\label{sec:colimits}
In the section, we will gather the necessary results in the theory of partially (op)lax limits which will be needed in the coming sections. We begin in \cref{subsec:LimitsDefinitions} with the definitions and a few basic results. In \cref{subsec:decompositionColimits}, we discuss how partially (op)lax colimits can be decomposed into simpler shapes. In \cref{subsec:filteredColimits}, we collect some facts about $\kappa$-filtered colimits in 2-categories. Lastly, we discuss oriented pullbacks in \cref{subsec:oriented}.

\subsection{Definitions and basic results}
\label{subsec:LimitsDefinitions}

\begin{definition}\label{def:constant}
	We denote by $\underline{(-)} \colon \bCC \to \Fun(\bII,\bCC)$ the diagonal functor, i.e.\ the one that transposes to the projection $\bII \times \bCC \to \bCC $ onto the second factor. We call this functor the \emph{constant diagram} functor.
\end{definition}

\begin{definition}\label{def:elaxlimit}
	Let $F: \bII \to \bCC$ be a functor and suppose that we are given a marking $E$ on $\bII$. We say that $c \in \bCC$ is:
	\begin{itemize}
		\item the $\elaxoplax$ limit colimit of $F$, denoted by $\lim^{\elaxoplax}_\bCC F$, if there exists an equivalence of functors
		\[
		\bCC(c,\lim^{\elaxoplax}_\bCC F) \simeq \Nat_{\bII,\bCC}^{\elaxoplax}(\underline{c},F)
		\]
		which is natural in $c$;
		\item the $\elaxoplax$ colimit of $F$, denoted by $\colim^{\elaxoplax}_\bCC F$, if there exists an equivalence of functors
		\[
		\bCC(\colim^{\elaxoplax}_{\bCC} F, c)\simeq \Nat_{\bII,\bCC}^{\elaxoplax}(F,\underline{c})
		\]
		which is natural on $c$.
	\end{itemize}
\end{definition}

\begin{remark}
	We will refer to $\elaxoplax$ natural transformations $F \to \underline{b}$ as $\elaxoplax$ cones (with tip $b$) (resp $\underline{b} \to F$). Note that producing such a cone is equivalent to giving a functor $T \colon \bII_{\elaxoplax}^{\colimcone} \to \bCC$ (resp. $\bII_{\elaxoplax}^{\limcone} $) such that $T(\ast)=b$ (see \cref{def:elaxcones}).
\end{remark}

\begin{definition}\label{def:sharpmarkingcolim}
	We say that a partially (op)lax (co)limit is a \emph{strong} (co)limit if it is indexed by a marked 2-category of the form $(\bII,\sharp)$ (cf.\  \cref{def:flatsharpmarking}). In this case we will use the notation $\lim_{\bII}F$ (resp. $\colim_{\bII}F$) and  omit the superscript $\sharp$-$\laxoplax$.
	
	Similarly if our marked category is of the form $(\bII,\natural)$ we will use the notation $\lim_{\bII}^{\laxoplax}F$ (resp $\colim_{\bII}^{\laxoplax}F$).
\end{definition}

\begin{remark}\label{rem:limfunctor}
	Suppose that $\bCC$ admits all partially (op)lax (co)limits indexed by a marked 2-category $(\bII,E)$. Then it follows that we have adjunctions
	\[
	\underline{(-)} \colon \bCC \llra \FUN(\bII,\bCC)^{\elaxoplax} \colon \lim^{\elaxoplax}_\bII, \quad \quad \colim^{\elaxoplax}_\bII \colon \FUN(\bII,\bCC)^{\elaxoplax} \llra \bCC \colon \underline{(-)},
	\]
	given by taking $\elaxoplax$ (co)limits.
\end{remark}

\begin{proposition}\label{prop:adjpreservelimtis}
	Let $L \colon \bCC \llra \bDD \colon R$ be an adjunction. Then the left adjoint $L$ preserves all partially (op)lax colimits which happen to exists in $\bCC$. Dually, the right adjoint $R$ preserves all partially (op)lax limits which happen to exist in $\bDD$.
\end{proposition}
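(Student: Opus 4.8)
The plan is to verify the universal properties of Definition \ref{def:elaxlimit} directly, using the 2-categorical Yoneda lemma to reduce each claim to an identification of mapping categories, and then letting the adjunction hypothesis supply the crucial identification. Fix a marking $(\bII,E)$. For the colimit statement, suppose $F\colon\bII\to\bCC$ admits an $\elaxoplax$ colimit $c:=\colim^{\elaxoplax}_\bCC F$; I want to show that $L(c)$ is the $\elaxoplax$ colimit of $L\circ F$ in $\bDD$. By Yoneda it suffices to produce an equivalence $\bDD(L(c),d)\simeq\Nat^{\elaxoplax}_{\bII,\bDD}(L\circ F,\underline d)$, natural in $d\in\bDD$ and compatible with the canonical comparison functor. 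I would assemble this from the chain
\[
\bDD(L(c),d)\simeq\bCC(c,R(d))\simeq\Nat^{\elaxoplax}_{\bII,\bCC}(F,\underline{R(d)}),
\]
where the first equivalence is the defining adjunction $L\dashv R$ and the second is Definition \ref{def:elaxlimit}. Since the constant-diagram functor commutes with postcomposition, we have $\underline{R(d)}\simeq R\circ\underline d$, so everything reduces to an identification $\Nat^{\elaxoplax}(F,R\circ\underline d)\simeq\Nat^{\elaxoplax}(L\circ F,\underline d)$.

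The heart of the argument is to establish that the adjunction $L\dashv R$ postcomposes to an adjunction
\[
L_\ast\colon\FUN(\bII,\bCC)^{\elaxoplax}\ \leftrightarrows\ \FUN(\bII,\bDD)^{\elaxoplax}\colon R_\ast,
\]
where $L_\ast=L\circ(-)$ and $R_\ast=R\circ(-)$. The identification needed above is then exactly the mapping-category incarnation $\Nat^{\elaxoplax}(L_\ast F,G)\simeq\Nat^{\elaxoplax}(F,R_\ast G)$ of this adjunction, specialized to $G=\underline d$. To produce $L_\ast\dashv R_\ast$ I would postcompose the unit $\eta\colon\id\Rightarrow R\circ L$ and counit $\varepsilon\colon L\circ R\Rightarrow\id$ to obtain 2-cells $\eta_\ast\colon\id\Rightarrow R_\ast L_\ast$ and $\varepsilon_\ast\colon L_\ast R_\ast\Rightarrow\id$ in the respective functor 2-categories, and check that the triangle identities survive postcomposition. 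This is precisely where the 2-functoriality of the assignment $\bDD\mapsto\FUN(\bII,\bDD)^{\elaxoplax}$ from \cref{const:functoriallaxfunct} is invoked: one needs postcomposition to be compatible with the $\elaxoplax$-natural 2-cells so that whiskering and the triangle identities are preserved.

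The limit statement then follows by the symmetric computation, without passing to any duals. Given $G\colon\bII\to\bDD$ with $d:=\lim^{\elaxoplax}_\bDD G$ and arbitrary $c\in\bCC$, one computes
\[
\bCC(c,R(d))\simeq\bDD(L(c),d)\simeq\Nat^{\elaxoplax}(\underline{L(c)},G)\simeq\Nat^{\elaxoplax}(\underline c,R\circ G),
\]
using $\underline{L(c)}\simeq L_\ast\underline c$ together with the same adjunction $L_\ast\dashv R_\ast$, and concludes via Definition \ref{def:elaxlimit} and Yoneda that $R(d)\simeq\lim^{\elaxoplax}_\bCC(R\circ G)$. Throughout, I would keep track of naturality in the representing variable so as to identify the genuine colimit cocone (resp.\ limit cone), rather than merely an abstract equivalence of underlying objects.

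\textbf{Main obstacle.} The only nonformal point is promoting $L\dashv R$ to the adjunction $L_\ast\dashv R_\ast$ on the partially (op)lax functor 2-categories. The 1-categorical analogue is trivial, but here one must verify that the unit and counit 2-cells and the triangle identities are genuinely transported by postcomposition in the presence of the (op)lax naturality data --- equivalently, that $\FUN(\bII,-)^{\elaxoplax}$ is 2-functorial and not merely a functor into $\CCat^{\leq 1}$. Once this is secured, the remainder is a routine assembly of natural equivalences.
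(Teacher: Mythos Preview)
Your proposal is correct and follows essentially the same approach as the paper: establish the postcomposition adjunction $L_\ast\dashv R_\ast$ on $\FUN(\bII,-)^{\elaxoplax}$ and then run the evident chain of natural equivalences. The paper's proof is terser---it simply asserts the existence of $L_\ast\dashv R_\ast$ and writes down the chain---whereas you explicitly flag the 2-functoriality of $\FUN(\bII,-)^{\elaxoplax}$ as the nonformal input; this is a fair point, since \cref{const:functoriallaxfunct} only records it as a functor into $\CCat^{\leq 1}$, but the paper treats it as understood.
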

\begin{proof}
	We give the proof for the left adjoint and leave the remaining case to the reader since it is formally dual. 
	
	To show our claim we observe that postcomposition with the adjunction $L \dashv R$ yields an adjunction for every marked 2-category $(\bII,E)$
	\[
	L_* \colon \FUN(\bII, \bCC)^{\elaxoplax} \llra \FUN(\bII, \bDD)^{\elaxoplax} \colon R_*.
	\]
	Given $F \colon \bII \to \bCC$ admiting an $\elaxoplax$ colimit we can produce the following chain of natural equivalences
	\[
	\Nat^{\elaxoplax}_{\bII,\bDD}(L_*F,\underline{d}) \simeq \Nat^{\elaxoplax}_{\bII,\bCC}(F,\underline{Rd})\simeq \bCC\left(\colim^{\elaxoplax}_{\bCC}F,Rd\right) \simeq \bDD\left(L\left(\colim^{\elaxoplax}_{\bCC}F\right),d\right)
	\]
	which concludes the proof.
\end{proof}

\begin{proposition}\label{prop:colimfunctorcat}
	Let $\bCC$ be a 2-category admitting partially (op)lax limits of shape $(\bII,E)$ (resp. colimits). Then for every 2-category $\bSS$ the functor 2-category $\FUN(\bSS,\bCC)$ admits partially (op)lax limits of shape $(\bII,E)$ (resp. colimits). 
\end{proposition}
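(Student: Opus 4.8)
The plan is to show that these partially lax (co)limits exist in $\FUN(\bSS,\bCC)$ and are computed pointwise over $\bSS$. I will treat the case of $E$-lax colimits of shape $(\bII,E)$ in detail; the limit case and the oplax variants follow by the same argument after applying the involutions $(-)^{\op}$ and $(-)^{\co}$ of \cref{def:coandop}, which exchange limits with colimits and the lax with the oplax variants and satisfy $\FUN(\bSS,\bCC)^{\op}\simeq\FUN(\bSS^{\op},\bCC^{\op})$ and $\FUN(\bSS,\bCC)^{\co}\simeq\FUN(\bSS^{\co},\bCC^{\co})$. By \cref{rem:limfunctor} together with the characterisation of adjoints by objectwise (co)representability recorded in \cref{subsec:adjointFunctorTheorems}, it suffices to produce, for every diagram $F\colon\bII\to\FUN(\bSS,\bCC)$, an object corepresenting the functor $G\mapsto\Nat^{\elax}_{\bII,\FUN(\bSS,\bCC)}(F,\underline{G})$, and to check that these corepresenting objects assemble into a left adjoint of the constant-diagram functor $\underline{(-)}\colon\FUN(\bSS,\bCC)\to\FUN(\bII,\FUN(\bSS,\bCC))^{\elax}$.

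Next I would produce the candidate corepresenting object as a genuine functor, not merely a pointwise assignment. Transposing $F$ under the cartesian closed structure of $\CCat$ gives a functor $\widetilde{F}\colon\bSS\to\FUN(\bII,\bCC)$; composing with the canonical comparison $\FUN(\bII,\bCC)\to\FUN(\bII,\bCC)^{\elax}$ (the higher analogue of the locally fully faithful map $\AR_{\bCC}\to\AR^{\laxoplax}_{\bCC}$ from \cref{def:strongarrow}) and with the colimit functor $\colim^{\elax}_{\bII}\colon\FUN(\bII,\bCC)^{\elax}\to\bCC$ supplied by \cref{rem:limfunctor}, I obtain a functor
\[
\colim^{\elax}_{\bII}F:=\Bigl(\bSS\xrightarrow{\widetilde{F}}\FUN(\bII,\bCC)\to\FUN(\bII,\bCC)^{\elax}\xrightarrow{\ \colim^{\elax}_{\bII}\ }\bCC\Bigr)\in\FUN(\bSS,\bCC),
\]
whose value at $s\in\bSS$ is the $E$-lax colimit $\colim^{\elax}_{\bII}F_{\bullet}(s)$ formed in $\bCC$. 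Being a composite of functors, this is manifestly functorial in $s$, so the pointwise colimits genuinely assemble into an object of $\FUN(\bSS,\bCC)$.

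It then remains to verify the universal property, i.e.\ to exhibit a natural equivalence of mapping categories
\[
\Nat_{\bSS,\bCC}\!\left(\colim^{\elax}_{\bII}F,\,G\right)\simeq\Nat^{\elax}_{\bII,\FUN(\bSS,\bCC)}(F,\underline{G}).
\]
The key structural input, and the step I expect to be the main obstacle, is an \emph{interchange} statement: an $E$-lax natural transformation $F\Rightarrow\underline{G}$ valued in the strong functor $2$-category $\FUN(\bSS,\bCC)$ is the same datum as a family, strongly $2$-natural in $s\in\bSS$, of $E$-lax natural transformations $F_{\bullet}(s)\Rightarrow\underline{G(s)}$ in $\bCC$. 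Granting this, the pointwise universal property of $\colim^{\elax}_{\bII}F_{\bullet}(s)$ in $\bCC$ identifies such a family with a strongly $s$-natural family of maps $\colim^{\elax}_{\bII}F_{\bullet}(s)\to G(s)$, that is, with a strong natural transformation $\colim^{\elax}_{\bII}F\to G$, which recovers the left-hand side. The delicate point is that this interchange cannot be obtained from the naive Fubini-type identification $\FUN(\bSS,\FUN(\bII,\bCC)^{\elax})\simeq\FUN(\bII,\FUN(\bSS,\bCC))^{\elax}$, which is \emph{false} in general: already the marked Gray tensor $[1]\otimes_{\natural,E}[1]$ need not agree with the strict square $[1]\times[1]$ when the nontrivial edge of the second factor is unmarked. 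Instead I would establish the interchange directly from the defining adjunctions of the marked Gray tensor product, probing both sides by mapping in an arbitrary test $2$-category $\bX$: via \cref{const:laxfunctorcat} and cartesian closedness, both corepresented functors $\Fun(\bX,-)^{\core}$ reduce to cores of the form $\Fun\bigl((\bX\otimes_{\natural,E}\bII)\times\bSS,\bCC\bigr)^{\core}$, so that the comparison becomes an equivalence on cores for all $\bX$. Letting $\bX$ range over $[0]$, $[1]$ and the walking $2$-morphism, and invoking the joint conservativity of \cref{prop:conservative}, one upgrades these equivalences of cores to the required equivalence of mapping categories, which is exactly the adjunction $\colim^{\elax}_{\bII}\dashv\underline{(-)}$ in $\FUN(\bSS,\bCC)$.
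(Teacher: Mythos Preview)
Your overall architecture matches the paper's: construct the candidate pointwise colimit as the composite $\bSS\to\FUN(\bII,\bCC)\to\FUN(\bII,\bCC)^{\elax}\xrightarrow{\colim}\bCC$, then verify the universal property via an interchange identification. The paper does exactly this, but it handles the interchange by invoking \cite[Proposition~2.9.1]{AGH24}, which supplies a fully faithful functor
\[
j\colon \FUN(\bII,\FUN(\bSS,\bCC))^{\elaxoplax}\hookrightarrow \FUN(\bSS,\FUN(\bII,\bCC)^{\elaxoplax})
\]
whose essential image consists precisely of those functors $\bSS\to\FUN(\bII,\bCC)^{\elaxoplax}$ factoring through $\FUN(\bII,\bCC)$. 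Since both the constant-diagram functor and your $\widetilde{F}$ land in this image, the required equivalence of mapping categories follows immediately from full faithfulness of $j$.

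The gap in your proposal is that your direct argument for this interchange does not go through. You claim that probing both sides with a test $2$-category $\bX$ reduces each to $\Fun\bigl((\bX\otimes_{\natural,E}\bII)\times\bSS,\bCC\bigr)^{\core}$. This is correct for the source side $\FUN(\bII,\FUN(\bSS,\bCC))^{\elax}$, but for the target side $\FUN(\bSS,\FUN(\bII,\bCC)^{\elax})$ the same unwinding gives $\Fun\bigl((\bX\times\bSS)\otimes_{\natural,E}\bII,\bCC\bigr)^{\core}$, and $(\bX\times\bSS)\otimes_{\natural,E}\bII$ is \emph{not} equivalent to $(\bX\otimes_{\natural,E}\bII)\times\bSS$ in general---this is exactly the failure of the Gray tensor to be cartesian that you yourself flagged. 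So your probing argument, read as a comparison of the ambient functor $2$-categories, reproduces the false Fubini statement rather than circumventing it. What is actually true (and what \cite[Proposition~2.9.1]{AGH24} proves) is that $j$ is fully faithful, so the mapping categories agree only on objects factoring through $\FUN(\bII,\bCC)$; this is a genuine result and cannot be obtained by comparing cores of the ambient categories. Your invocation of \cref{prop:conservative} is also misplaced: that result concerns joint conservativity of evaluation-at-objects on $2$-morphisms, not a criterion for detecting equivalences of $1$-categories by probing with $[0]$, $[1]$, $C_2$.
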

\begin{proof}
	We will only deal with limits since the case of colimits is formally dual. Observe that our assumptions guarantee the existence of an adjunction
	\[
	L_* \colon \FUN(\bSS,\bCC) \llra \Fun(\bSS,\FUN(\bII,\bCC)^{\elaxoplax}) \colon R_*
	\]
	where the left adjoint is given by post-composition with the constant diagram functor and the right adjoint is given by precomposition along the $\elaxoplax$-limit functor. By \cite[Proposition 2.9.1]{AGH24} there exists a fully faithful functor
	\[
	j \colon \FUN(\bII,\FUN(\bSS,\bCC))^{\elaxoplax} \to \Fun(\bSS,\FUN(\bII,\bCC)^{\elaxoplax}) 
	\]
	whose essential image consists in those functors in the target which factor as
	\[
	\bSS \to \FUN(\bII,\bCC) \to \FUN(\bII,\bCC)^{\elaxoplax}.
	\]
	We claim that $R_* \circ j$ is the desired right adjoint to our constant diagram functor. To see this we note that $L_*$ factors through the essential image of $j$ and that it can be identified with the constant diagram functor. The result now follows easily.
\end{proof}

\begin{remark}\label{rem:cat2complete}
	The 2-category of 2-categories $\CCat$ admits all partially (op)lax (co)limits by Proposition 4.1.5 and Proposition 4.1.8 in \cite{AGH24}. We summarize the construction of partially (op)lax (co)limits below:

	Let $F \colon \bCC \to \CCat$ be a functor where $(\bCC,E)$ is a marked 2-category and let $p \colon \bXX \to \bCC^{\epsilon}$ be the associated $(i,j)$-fibration where $\epsilon \in \{\emptyset,\op,\co,\coop\}$ according to the parameters $i,j$. Then:
	\begin{itemize}
		\item The $\elax$ colimit of $F$ is obtained by taking the associated $(0,j)$-fibration $\bXX \to \bCC^{\epsilon}$ and localising at the collection of $j$-cartesian 2-morphisms as well as at the collection of $0$-cartesian 1-morphisms living over $E$.
		\item The $\eoplax$ colimit of $F$ is obtained by taking the associated $(1,j)$-fibration $\bXX \to \bCC^{\epsilon}$ and localising at the collection of $j$-cartesian 2-morphisms as well  as at the collection of $1$-cartesian 1-morphisms living over $E$.
		\item The $\elax$ limit of $F$ is obtained as the full sub-2-category of the 2-category of sections of the $(0,j)$-fibration $\bXX \to \bCC^{\epsilon}$ given by those maps $s: \bCC^{\epsilon} \to \bXX$ which sends every 2-morphism to a $j$-cartesian 2-morphism and every marked 1-morphism of a $0$-cartesian morphism.
		\item The $\eoplax$ limit of $F$ is obtained as the full sub-2-category of the 2-category of sections of the $(1,j)$-fibration $\bXX \to \bCC^{\epsilon}$ given by those maps $s: \bCC^{\epsilon} \to \bXX$ which sends every 2-morphism to a $j$-cartesian 2-morphism and every marked 1-morphism of a $1$-cartesian morphism.
	\end{itemize}
	We observe that it follows from \cref{def:underlying1cat} and \cref{def:localization} that if $F$ takes values in $\Cat$ then the descriptions of the partially (op)lax (co)limits retrieve the corresponding (co)limit in $\Cat$.
\end{remark}

\begin{proposition}\label{prop:yonedacont}
	Let $\bCC$ be a 2-category. Then the Yoneda embedding $h_{\bCC} \colon \bCC \to \PSh_{\Cat}(\bCC)$ preserves all partially (op)lax limits which exist in $\bCC$.
\end{proposition}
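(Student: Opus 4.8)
The plan is to reduce the statement first to a pointwise computation in $\PSh_{\Cat}(\bCC)=\FUN(\bCC^{\op},\Cat)$ and then to the defining universal property of partially (op)lax limits. Since $\Cat$ admits all partially (op)lax limits by \cref{rem:cat2complete}, \cref{prop:colimfunctorcat} guarantees that $\PSh_{\Cat}(\bCC)$ does as well; moreover, the proof of \cref{prop:colimfunctorcat} exhibits the limit functor on a functor 2-category as precomposition with $\lim^{\elaxoplax}_\bII$ along the fully faithful transposition $j$, so that for any diagram $G\colon\bII\to\PSh_{\Cat}(\bCC)$ one has $(\lim^{\elaxoplax}_\bII G)(x)\simeq\lim^{\elaxoplax}_\bII(G(-)(x))$ for every $x\in\bCC$. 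In other words, the evaluation functors preserve partially (op)lax limits, and since equivalences of $\Cat$-valued presheaves are detected after evaluation at each object, it suffices to check the comparison map pointwise.

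Concretely, fix a marked 2-category $(\bII,E)$ and a diagram $F\colon\bII\to\bCC$ whose partially (op)lax limit $c=\lim^{\elaxoplax}_\bII F$ exists. Applying $h_{\bCC}$ to the universal cone yields a comparison morphism $h_{\bCC}(c)\to\lim^{\elaxoplax}_\bII(h_{\bCC}\circ F)$, which by the previous paragraph I must show becomes an equivalence after evaluating at each $x\in\bCC$. Its source evaluates to $\bCC(x,c)=\bCC(x,\lim^{\elaxoplax}_\bII F)$, which by \cref{def:elaxlimit} is naturally equivalent to $\Nat^{\elaxoplax}_{\bII,\bCC}(\underline{x},F)$, while its target evaluates, by pointwise computation, to the limit $\lim^{\elaxoplax}_\bII(\bCC(x,F(-)))$ formed in $\Cat$. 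The whole assertion therefore reduces to a natural equivalence
\[
	\Nat^{\elaxoplax}_{\bII,\bCC}(\underline{x},F)\;\simeq\;\lim^{\elaxoplax}_\bII\big(\bCC(x,F(-))\big),
\]
compatible with the comparison map; equivalently, each representable $\bCC(x,-)\colon\bCC\to\Cat$ must send the limit cone of $F$ to a limit cone.

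To establish this I would unwind both sides through the Grothendieck construction. By \cref{rem:cat2complete} together with \cref{thm:str}, the right-hand limit is the category of sections of the fibration classified by $\bCC(x,F(-))$ (over the appropriate variance of $\bII$) that carry every $E$-marked 1-morphism to a cartesian 1-morphism and every 2-morphism to a cartesian 2-morphism. On the other hand, a partially (op)lax natural transformation $\underline{x}\Rightarrow F$---equivalently, a functor out of the cone $\bII^{\limcone}_{\elaxoplax}$ of \cref{def:elaxcones} restricting to $F$ and to $x$ at the cone point---is precisely a compatible family of morphisms $x\to F(i)$ equipped with (op)lax structure 2-cells that are invertible exactly over $E$. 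Testing with $[0]$ and $[1]$ identifies the objects and morphisms of $\Nat^{\elaxoplax}_{\bII,\bCC}(\underline{x},F)$ with exactly these cartesian-over-$E$ sections and the cartesian natural transformations between them, giving the desired equivalence; naturality in $x$ and compatibility with the comparison map are then immediate from the constructions. I expect this matching of partially (op)lax cones with cartesian sections to be the only substantive point, the rest being formal. The lax and oplax statements are treated simultaneously thanks to the symmetric role of the Gray tensor product, and the two variances of the cone construction in \cref{def:elaxcones} likewise account for the $\co$-dual formulations.
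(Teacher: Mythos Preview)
Your proof is correct and arrives at the same core identification as the paper, but packages it differently. The paper argues entirely on the level of mapping categories in $\PSh_{\Cat}(\bCC)$: it chains Yoneda's lemma with the definition of the limit to get
\[
\Nat_{\bCC^\op,\Cat}\bigl(h_{\bCC}(-),h_{\bCC}(\lim^{\elaxoplax}_{\bII}D)\bigr)\simeq\Nat_{\bII,\bCC}^{\elaxoplax}(\underline{(-)},D),
\]
and then invokes \cite[Corollary~2.8.13]{AGH24} to further identify the right-hand side with $\Nat_{\bII,\PSh_{\Cat}(\bCC)}^{\elaxoplax}(\underline{h_{\bCC}(-)},h_{\bCC}D)$, which is the universal property of $\lim^{\elaxoplax}_{\bII}h_{\bCC}D$. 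Your route instead evaluates pointwise and reduces to the identity $\Nat^{\elaxoplax}_{\bII,\bCC}(\underline{x},F)\simeq\lim^{\elaxoplax}_\bII\bCC(x,F(-))$; this is the same identity the paper obtains by citation, only with $h_{\bCC}$ and pointwise limits unwound. What you gain is self-containedness: you do not rely on an external result about compatibility of Yoneda with (op)lax functor categories. What you pay is the Grothendieck-construction unwinding in your last paragraph, which is genuine work (essentially reproving a special case of the cited corollary) and which you only sketch. The paper's version is shorter precisely because it outsources that step.
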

\begin{proof}
	Let $(\bII,E)$ be a marked 2-category and let $D \colon \bII \to \bCC$ be a diagram. Then it follows that we have natural equivalences
	\[
	\Nat_{\bCC^\op,\Cat}\left((h_{\bCC}(-),h_{\bCC}(\lim^{\elaxoplax}_{\bII}D)\right) \simeq \bCC\left(-,\lim_{\bII}^{\elaxoplax}D \right) \simeq \Nat_{\bII,\bCC}^{\elaxoplax}\left(\underline{(-)},D\right)
	\]
	As a consequence of \cite[Corollary 2.8.13]{AGH24} we obtain a natural equivalence
	\[
	\Nat_{\bCC^\op,\Cat}\left((h_{\bCC}(-),h_{\bCC}(\lim^{\elaxoplax}_{\bII}D)\right) \simeq \Nat_{\bII,\bCC}^{\elaxoplax}\left(\underline{(-)},D\right) \simeq \Nat_{\bII,\PSh_{\Cat}(\bCC)}^{\elaxoplax}(\underline{h_{\bCC}},h_{\bCC}D).
	\]
	The previous discussion together with the universal property of the limit implies that we have a canonical morphism $h_{\bCC}(\lim^{\elaxoplax}_{\bII}D) \to \lim^{\elaxoplax}_{\bII}h_{\bCC}D$. Invoking again the Yoneda lemma, we see that our map is point-wise an equivalence, which concludes the proof.
\end{proof}

\begin{proposition}\label{prop:limitscommute}
	Let $\bCC$ be a 2-category which admits partially (op)lax limits (resp. colimits) of shape $(\bII,E)$ and suppose that we are given another marked 2-category $(\bSS,T)$ and a functor $F \colon \bSS \to \FUN(\bII,\bCC)^{\elaxoplax}$ such that the $T\text{-}\laxoplax$ limit of $F$ exists and such that $\bCC$ admits partially (op)lax colimits indexed by $(\bSS,T)$ (resp. colimits). Then we have an equivalence in $\bCC$
	\[
	\lim^{\elaxoplax}_{\bII}\lim^{T\text{-}\laxoplax}_{\bSS} F \simeq \lim^{T\text{-}\laxoplax}_{\bSS} \lim^{\elaxoplax}_{\bII} F,
	\]
	and similarly for colimits.
\end{proposition}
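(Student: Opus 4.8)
The plan is to deduce the commutation from the adjoint-functor formalism of \cref{rem:limfunctor} rather than from an explicit manipulation of cones. The crucial observation is that, since $\bCC$ admits partially (op)lax limits of shape $(\bII,E)$, forming these limits assembles into a functor
\[
	\lim^{\elaxoplax}_{\bII} \colon \FUN(\bII,\bCC)^{\elaxoplax} \to \bCC
\]
which is \emph{right adjoint} to the constant diagram functor $\underline{(-)}$. Right adjoints preserve all partially (op)lax limits that happen to exist in their source by \cref{prop:adjpreservelimtis}, so the whole statement reduces to applying this preservation property to a single, well-chosen limit.

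Concretely, I would argue as follows. By hypothesis the $T\text{-}\laxoplax$ limit of $F \colon \bSS \to \FUN(\bII,\bCC)^{\elaxoplax}$ exists in the 2-category $\FUN(\bII,\bCC)^{\elaxoplax}$. Applying the right adjoint $\lim^{\elaxoplax}_{\bII}$ and invoking \cref{prop:adjpreservelimtis}, with the ambient diagram indexed by $(\bSS,T)$, yields
\[
	\lim^{\elaxoplax}_{\bII}\Bigl(\lim^{T\text{-}\laxoplax}_{\bSS} F\Bigr) \simeq \lim^{T\text{-}\laxoplax}_{\bSS}\Bigl(\lim^{\elaxoplax}_{\bII}\circ F\Bigr).
\]
Since $\lim^{\elaxoplax}_{\bII}\circ F$ is, by definition, the diagram $\lim^{\elaxoplax}_{\bII} F \colon \bSS \to \bCC$ obtained by post-composing $F$ with the limit functor, the right-hand side is exactly $\lim^{T\text{-}\laxoplax}_{\bSS}\lim^{\elaxoplax}_{\bII} F$; in particular its existence in $\bCC$ is automatic from the preservation. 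This gives the desired equivalence. The colimit case is formally dual: one instead uses that $\colim^{\elaxoplax}_{\bII}$ is a \emph{left} adjoint and that left adjoints preserve partially (op)lax colimits, again by \cref{prop:adjpreservelimtis}.

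The only point requiring genuine care is checking that the preservation statement of \cref{prop:adjpreservelimtis} truly applies here: that proposition asserts that a right adjoint preserves limits indexed by an \emph{arbitrary} marked 2-category, and I must confirm that $\lim^{T\text{-}\laxoplax}_{\bSS} F$ is exactly such a limit in the source $\FUN(\bII,\bCC)^{\elaxoplax}$ of $\lim^{\elaxoplax}_{\bII}$, namely a partially (op)lax limit indexed by $(\bSS,T)$, whose existence is among the hypotheses. No compatibility between the two markings $E$ and $T$ is needed, since they act on independent indexing categories; the only bookkeeping is to keep the two limit operations and their markings separate throughout. Once $\lim^{\elaxoplax}_{\bII}$ is recognised as a right adjoint, no further argument with explicit cones is required.
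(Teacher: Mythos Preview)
Your argument is correct and is essentially the same as the paper's, just packaged one level higher: the paper unrolls the universal properties directly by computing $\bCC(-,c)$ (respectively $\bCC(c,-)$) of both iterated (co)limits and identifying each with $\Nat^{T\text{-}\laxoplax}_{\bSS,\FUN(\bII,\bCC)^{\elaxoplax}}(F,\underline{c})$, then invoking Yoneda, whereas you cite \cref{prop:adjpreservelimtis}, whose proof is exactly that computation. Your observation that the existence of $\lim^{T\text{-}\laxoplax}_{\bSS}\lim^{\elaxoplax}_{\bII}F$ in $\bCC$ already follows from preservation, so the hypothesis that $\bCC$ admits $(\bSS,T)$-limits is redundant, is a small sharpening over the paper's formulation.
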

\begin{proof}
	We exclusively prove the case of colimits since the case of limits is formally dual. Let $c \in \bCC$, and consider the following chain of natural equivalences
	\[
	\bCC\left(\colim^{\elaxoplax}_{\bII}\colim^{T\text{-}\laxoplax}_{\bSS} F,c\right) \simeq \Nat^{\elaxoplax}_{\bII,\bCC}\left(\colim^{T\text{-}\laxoplax}_{\bSS} F, \underline{c}\right) \simeq \Nat^{T\text{-}\laxoplax}_{\bSS,\Fun(\bII,\bCC)^{\elaxoplax}}\left(F, \underline{c}\right),
	\]
	\[
	\bCC\left(\colim^{T\text{-}\laxoplax}_{\bSS}\colim^{\elaxoplax}_{\bII} F,c\right) \simeq \Nat_{\bSS,\bCC}^{T\text{-}\laxoplax}\left(\colim^{\elaxoplax}_{\bII}F,\underline{c}\right) \simeq \Nat^{T\text{-}\laxoplax}_{\bSS,\Fun(\bII,\bCC)^{\elaxoplax}}\left(F, \underline{c}\right).
	\]
	The desired equivalence now follows from the Yoneda lemma.
\end{proof}

\begin{definition}\label{def:final}
	Let $f \colon (\bII,E) \to (\bJJ,T)$ be a functor of marked 2-categories and let $\bCC$ be a 2-category.
	\begin{itemize}
		\item  We say that the functor $f$ is \emph{$\laxoplax$ final} if for every 2-category $\bCC$ and every diagram $D \colon \bJJ \to \bCC$, then the $T\text{-}\laxoplax$ colimit of $D$ exists if and only if the $\elaxoplax$ colimit of $D\circ f$ does and the canonical comparison map
		\[
			\colim_{\bII}^{\elaxoplax}(D\circ f) \xrightarrow{\simeq} \colim_{\bJJ}^{T\text{-}\laxoplax}D
		\]
		(whenever defined)
		is an equivalence.
		\item We say that the functor $f$ is \emph{$\laxoplax$ initial} if for every 2-category $\bCC$ and every diagram $D \colon \bJJ \to \bCC$, then the $T\text{-}\laxoplax$ limit of $D$ exists if and only if the $\elaxoplax$ limit of $D\circ f$ does and the canonical comparison map
		\[
			\lim_{\bJJ}^{T\text{-}\laxoplax}D \xrightarrow{\simeq} \lim_{\bII}^{\elaxoplax}(D\circ f)
		\]
		(whenever defined)
		is an equivalence.
	\end{itemize}
\end{definition}

\begin{remark}
	The notion of final functor of 2-categories has been studied by the Stern and the first author in \cite{AS23} where a characterization of final functors is provided in the spirit of Quillen's Theorem A.
\end{remark}

\begin{proposition}\label{prop:locfinal}
	Let $p \colon \bCC \to \bDD$ be a functor of 2-categories which is obtained by localizing at a certain collection of 1 and 2-morphisms. Then $p$ is $\laxoplax$-final and $\laxoplax$-initial with respect to the collection of 1-morphisms inverted by $p$.
\end{proposition}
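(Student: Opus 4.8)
The plan is to deduce everything from the universal property of $p$ as a localization, applied not to $\bCC$ itself but to the colimit/limit cone $2$-categories of \cref{def:elaxcones}. I will spell out $\laxoplax$-finality; the case of $\laxoplax$-initiality is formally dual (replacing $(-)^{\colimcone}$ by $(-)^{\limcone}$ and cones by cocones), and the lax versus oplax variants are identical upon using the two orders of the Gray tensor product. Throughout, write $E$ for the collection of $1$-morphisms inverted by $p$ and $W$ for the full collection of $1$- and $2$-morphisms at which $p$ localizes; since $p$ sends $E$ to equivalences, it defines a map of marked $2$-categories $p\colon(\bCC,E)\to(\bDD,\natural)$, where $\natural=p_!E$ is the pushforward marking.

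Fix a target $2$-category $\bEE$ and a diagram $D\colon\bDD\to\bEE$. By \cref{def:elaxlimit} and the Yoneda lemma, the assertion that $\colim^{\elax}_{\bCC}(D\circ p)$ exists exactly when $\colim^{\lax}_{\bDD}D$ does and that the comparison map of \cref{def:final} is an equivalence will follow once I produce a natural equivalence
\[
	\Nat^{\elax}_{\bCC,\bEE}(D\circ p,\underline{c})\simeq \Nat^{\lax}_{\bDD,\bEE}(D,\underline{c}), \qquad c\in\bEE,
\]
since the two sides then corepresent the same functor of $c$. Using the identification of an $E$-lax cone with a functor out of the cone $2$-category (\cref{def:elaxcones}), both sides are $2$-categories of functors out of a cone: the left-hand side consists of functors $\bCC^{\colimcone}_{\elax}\to\bEE$ restricting to $D\circ p$ with cone point sent to $c$, and the right-hand side of functors $\bDD^{\colimcone}_{\lax}\to\bEE$ restricting to $D$ with cone point $c$. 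Restriction along the functor $p^{\colimcone}\colon\bCC^{\colimcone}_{\elax}\to\bDD^{\colimcone}_{\lax}$ induced by functoriality of \cref{def:elaxcones} supplies a comparison which is manifestly natural in $c$.

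The crux is therefore to show that $p^{\colimcone}$ is itself a localization, namely at the image of $W$ under $\bCC\into\bCC^{\colimcone}_{\elax}$. Granting this, the universal property of localization realizes $\FUN(\bDD^{\colimcone}_{\lax},\bEE)$ as the full sub-$2$-category of $\FUN(\bCC^{\colimcone}_{\elax},\bEE)$ on the functors inverting the image of $W$. Now any functor restricting to $D\circ p$ on $\bCC$ inverts the image of $W$, precisely because $D\circ p$ inverts $W$ (as $p$ does). Hence every such cone descends uniquely along $p^{\colimcone}$, and since the localization equivalence is fully faithful on all mapping data this descent is exactly the equivalence of cone $2$-categories displayed above, including $2$-morphisms.

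To establish that $p^{\colimcone}$ is a localization I would argue that the two ingredients of the cone construction preserve localizations. First, $\bCC^{\colimcone}_{\elax}$ is the pushout of $[0]\leftarrow\bCC\to [1]\otimes_{\natural,E}\bCC$. A localization at a collection of morphisms is a single pushout of the elementary cells inverting $1$- and $2$-morphisms (for the latter, the map $C_2\to[1]$ of \cref{rem:conservativepullback}), so \cref{proposition:markedgraycolim} shows that $[1]\otimes_{\natural,E}(-)$ carries $p$ to a localization $[1]\otimes_{\natural,E}\bCC\to[1]\otimes_{\natural,\natural}\bDD$ at the image of $W$. Second, localizations are stable under pushout, so the map induced on pushouts by the levelwise localization of spans---with legs $[0]\to[0]$, $\bCC\to\bDD$, and $[1]\otimes_{\natural,E}\bCC\to[1]\otimes_{\natural,\natural}\bDD$---is again a localization at the image of $W$, as required. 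The point that will require the most care, and which I expect to be the main obstacle, is the interaction of the marked Gray tensor product (\cref{def:markedgray}) with the change of marking $E\mapsto\natural=p_!E$: one must verify that the assignment $(\bCC,E)\mapsto[1]\otimes_{\natural,E}\bCC$ preserves colimits in the marked variable, and not merely in the underlying unmarked $2$-category as recorded in \cref{proposition:markedgraycolim}, so that it does indeed send the marked localization $(\bCC,E)\to(\bDD,\natural)$ to a localization with the correct target marking.
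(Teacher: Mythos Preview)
Your strategy is genuinely different from the paper's, and the gap you flag is real. The paper does not attempt to show that $p^{\colimcone}\colon\bCC^{\colimcone}_{\elax}\to\bDD^{\colimcone}_{\lax}$ is a localization. Instead it invokes a fibrational criterion for $\laxoplax$-finality due to Abell\'an--Stern \cite[Theorem~3.31]{AS23}: it suffices to show that for every $(i,j)$-fibration $\pi\colon\bXX\to\bDD$ with $1$-categorical fibres, restriction along $p$ induces an equivalence
\[
\FUN_{/\bDD}(\bDD,\bXX)\;\xrightarrow{\ \simeq\ }\;\FUN^{\diamond}_{/\bDD}(\bCC,\bXX),
\]
where $\diamond$ restricts to sections sending $E$ to $i$-cartesian edges. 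This follows immediately from the universal property of $p$ once one notes that any section $\bCC\to\bXX$ over $\bDD$ sending $E$ to cartesian edges automatically inverts all of $W$: the $2$-morphisms because $\pi$ is conservative on $2$-morphisms (its fibres are $1$-categories), and the $1$-morphisms in $E$ because a cartesian edge over an equivalence is itself an equivalence. No analysis of cone $2$-categories or marked Gray tensors is needed.

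Your route can likely be completed, but the obstacle you identify is not addressed by anything in the paper: \cref{proposition:markedgraycolim} only gives cocontinuity of $[1]\otimes_{\natural,E}(-)$ for \emph{fixed} $E$, whereas you need $(\bCC,E)\mapsto[1]\otimes_{\natural,E}\bCC$ to be cocontinuous as a functor on $\CCatm$. Even granting that, you must still argue that the pushout $p^{\colimcone}$ localizes precisely at the image of $W$ under $\bCC\hookrightarrow\bCC^{\colimcone}_{\elax}$ and not at some larger class living in the cone direction; otherwise the step ``any cone restricting to $D\circ p$ inverts the image of $W$'' does not suffice to conclude that such cones descend. Concretely, $[1]\otimes p$ is a pushout along maps of the form $[1]\otimes[1]\to[1]$ and $[1]\otimes C_2\to[1]\otimes[1]$, which invert not only the two copies $\{0,1\}\times W$ but also the lax $2$-cells over edges of $W$; you would need to check that these extra cells are already inverted in $[1]\otimes_{\natural,E}\bCC$ (which requires $E$ to contain all $1$-morphisms of $W$, as you have arranged) and that nothing further survives to the cone. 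The paper's fibrational argument sidesteps all of this bookkeeping.
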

\begin{proof}
	Let $E$ by the collection of 1-morphisms inverted by $p$. By \cite[Theorem 3.31]{AS23} it will be enough to show that given an $(i,j)$-fibration $\pi \colon\bXX \to \bDD$ whose fibres are 1-categories then restriction along $p$ induces an equivalence of 2-categories
	\[
		\FUN_{/\bDD}(\bDD,\bXX) \xrightarrow{\simeq} \FUN^{\diamond}_{/\bDD}(\bCC,\bXX),
	\]
	where we denote by “$\diamond$“ the full sub-2-category on those morphisms which send the edges in $E$ to $i$-cartesian edges. This follows easily from the universal property of the localization after noting that:
	\begin{itemize}
		\item the fibration $\pi \colon \bXX \to \bDD$ detects invertible 2-morphisms, so that consequently the 2-morphisms in $\bCC$ which are inverted by $p$ are sent to invertible 2-morphisms in $\bXX$;
		\item every 1-morphism inverted by $p$ is sent to an $i$-cartesian edge in $\bXX$ lying over an equivalence and is consequently itself invertible.\qedhere
	\end{itemize}
\end{proof}

\subsection{Decomposition of (op)lax colimits}
\label{subsec:decompositionColimits}
In this section, we discuss how one can decompose (op)lax colimits in a 2-category into simpler shapes.

\begin{definition}\label{def:grayslice}
	For a 2-category $\bCC$, we let $\CCatm^{\laxoplax}_{\upslash \bCC}$ be the 2-category obtained as the unstraightening of the functor 
	\[
	\left(\CCatm^{\leq 1}\right)^{\op} \to \CCat,\quad (\bII,E) \mapsto \FUN(\bII,\bCC)^{\elaxoplax},
	\]
	 arising from \cref{const:functoriallaxfunct}. We observe that the inclusion of the fibre at $[0] \in \CCatm$ yields a functor $\bCC \hra \CCatm^{\laxoplax}_{\upslash \bCC}$.
\end{definition}

\begin{lemma}\label{lem:globallimits}
	Let $\bCC$ be a 2-category which admits all partially (op)lax limits. Then the functor $\bCC \to \CCatm^{\laxoplax}_{\upslash \bCC}$ (cf.\ \cref{def:grayslice}) admits a left adjoint
	\[
	\mathfrak{C}\!\operatorname{olim} \colon  \CCatm^{\laxoplax}_{\upslash \bCC} \to \bCC, \quad \quad F \mapsto \colim^{\elaxoplax}_{\bII}F.
	\]
	which we call the \emph{global colimit functor}. 
\end{lemma}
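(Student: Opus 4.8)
The plan is to establish the adjunction through the corepresentability criterion for right adjoints recorded earlier in the section (the proposition characterising right adjoints via corepresentable functors). Here the candidate right adjoint is the inclusion $\iota \colon \bCC \to \CCatm^{\laxoplax}_{\upslash \bCC}$ of the fibre over $[0]$ described in \cref{def:grayslice}, and $\mathfrak{C}\!\operatorname{olim}$ is to be recovered as its left adjoint. Thus it suffices to show that for every object $F$ of $\CCatm^{\laxoplax}_{\upslash \bCC}$ — that is, a marked 2-category $(\bII,E)$ together with a diagram $F \colon \bII \to \bCC$ — the functor
\[
\CCatm^{\laxoplax}_{\upslash \bCC}(F, \iota(-)) \colon \bCC \to \Cat, \quad c \mapsto \CCatm^{\laxoplax}_{\upslash \bCC}(F, \iota(c))
\]
is corepresentable, in a way that is natural in $F$.

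The heart of the argument is an explicit computation of this mapping category. Since $\CCatm^{\laxoplax}_{\upslash \bCC}$ is by construction the unstraightening of the functor $(\CCatm^{\leq 1})^{\op} \to \CCat$, $(\bII,E) \mapsto \FUN(\bII,\bCC)^{\elaxoplax}$, a morphism out of $F$ (lying over $(\bII,E)$) into $\iota(c)$ (lying over the fibre at $[0]$) unwinds to the datum of a morphism $(\bII,E) \to [0]$ in $\CCatm^{\leq 1}$ together with a morphism $F \to \underline{c}$ in $\FUN(\bII,\bCC)^{\elaxoplax}$, where $\underline{c}$ denotes the restriction of $c$ along the chosen base map. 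Because $[0]$ is terminal in $\CCatm^{\leq 1}$, the space of base maps is contractible and the restriction of $c$ is precisely the constant diagram $\underline{c}\colon \bII \to \bCC$. Unravelling the mapping-category structure of the unstraightening over this contractible space of base maps, I would obtain a natural equivalence
\[
\CCatm^{\laxoplax}_{\upslash \bCC}(F, \iota(c)) \simeq \Nat^{\elaxoplax}_{\bII,\bCC}(F, \underline{c}).
\]

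From here the conclusion is formal. By the defining universal property of the $\elaxoplax$ colimit (\cref{def:elaxlimit}), and using that the relevant $\elaxoplax$ colimit exists in $\bCC$ by hypothesis, the right-hand side is naturally equivalent to $\bCC(\colim^{\elaxoplax}_{\bII} F, c)$. This exhibits $\colim^{\elaxoplax}_{\bII} F$ as a corepresenting object for the functor $\CCatm^{\laxoplax}_{\upslash \bCC}(F, \iota(-))$, so the criterion guarantees that $\iota$ is a right adjoint. Tracing the corepresenting objects through the construction then identifies the induced left adjoint with the assignment $F \mapsto \colim^{\elaxoplax}_{\bII} F$, which is exactly the asserted global colimit functor.

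The main obstacle I expect is the middle step: rigorously justifying the identification of the unstraightening's mapping category with $\Nat^{\elaxoplax}_{\bII,\bCC}(F, \underline{c})$, together with the naturality of this equivalence in both $c$ and $F$. One must verify that mapping into the fibre over the terminal base object $[0]$ genuinely collapses the base contribution, leaving only the fibrewise category of $\elaxoplax$ natural transformations into the constant diagram, and that this identification is compatible with composition and with 2-morphisms so that corepresentability holds at the level of functors rather than merely objectwise. Once this 2-categorical bookkeeping is in place, the appeals to the universal property of the colimit and to the adjunction criterion are routine.
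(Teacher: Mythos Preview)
Your proposal is correct and follows essentially the same approach as the paper: both arguments reduce to identifying the mapping category $\CCatm^{\laxoplax}_{\upslash \bCC}(F,\iota(c))$ with $\Nat^{\elaxoplax}_{\bII,\bCC}(F,\underline{c})$ and then invoking the universal property of the $\elaxoplax$ colimit. The paper simply asserts this equivalence in one line, whereas you spell out why it holds by unwinding the unstraightening and using that $[0]$ is terminal in $\CCatm^{\leq 1}$; your added detail about naturality is prudent but not a different strategy.
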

\begin{proof}
	Let $c \in \bCC$ and let $t_c \colon ([0],\sharp) \to \bCC$ be its image in $\CCatm^{\laxoplax}_{\upslash \bCC}$. Then there exists a natural equivalence
	\[
	\CCatm^{\laxoplax}_{\upslash \bCC} (F,t_c) \simeq \Nat^{\elaxoplax}_{\bII,\bCC}(F,\underline{c}),
	\]
	which shows the claim.
\end{proof}

\begin{remark}\label{rem:notsogloballimits}
	Let us suppose that $\bCC$ admits only a specific class of partially (op)lax colimits which we view as a full sub-1-category $\XX \to (\CCatm)^{\leq 1}$ which contains the terminal 2-category. Then we can adapt the proof of \cref{lem:globallimits} to show that the functor 
	\[
	\bCC \to \CCatm^{\laxoplax}_{\upslash \bCC} \times_{(\CCatm)^{\leq 1}} \XX,
	\]
	admits a left adjoint.
\end{remark}

\begin{proposition}\label{prop:diagramdecomposition}
	Let $(\bII,E)$ be a marked 2-category and consider a diagram $d \colon \bII \to \bCC$. Suppose further that we are given a diagram $\tau \colon \JJ \to \CCatm$ indexed by a 1-category $\JJ$ so that the strong colimit of $\tau$ is $(\bII,E)$ and that $\bCC$ admits strong colimits of shape $\JJ$. Denote $(\bII_j,E_j)= \tau(j)$ and let $d_j$ be the restriction of $d$ along the canonical map $\bII_j \to \bII$. If the partially (op)lax colimits of $d$ and each of the $d_j$ exists in $\bCC$, then we have an equivalence
	\[
	\colim_{\bII}^{\elaxoplax}d \simeq \colim_{\JJ}\colim_{\bII_j}^{E_{j}\text{-}\laxoplax}d_j.
	\]
\end{proposition}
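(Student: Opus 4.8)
The plan is to prove the equivalence by the $2$-categorical Yoneda lemma, i.e.\ by producing for every $c\in\bCC$ a natural equivalence between the two mapping categories $\bCC(\colim_{\JJ}\colim_{\bII_j}^{E_j\text{-}\laxoplax}d_j,c)$ and $\bCC(\colim_{\bII}^{\elaxoplax}d,c)$, and checking that it is induced by the canonical comparison map. That comparison map comes from the structure maps $\bII_j\to\bII$: restricting the universal $\elaxoplax$ cone on $d$ along $\bII_j\to\bII$ yields an $E_j$-$\laxoplax$ cone on $d_j$ with tip $\colim_{\bII}^{\elaxoplax}d$, hence by \cref{def:elaxlimit} a map $\colim_{\bII_j}^{E_j\text{-}\laxoplax}d_j\to\colim_{\bII}^{\elaxoplax}d$; these are natural in $j\in\JJ$ and assemble, via the universal property of the strong colimit $\colim_{\JJ}$, into the required morphism.

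First I would unfold both sides. Since the outer colimit is strong and indexed by a $1$-category, its universal property (\cref{def:sharpmarkingcolim}, \cref{def:elaxlimit}) together with that of each inner colimit gives
\[
\bCC\Big(\colim_{\JJ}\colim_{\bII_j}^{E_j\text{-}\laxoplax}d_j,\,c\Big)\simeq \lim_{j\in\JJ^{\op}}\bCC\Big(\colim_{\bII_j}^{E_j\text{-}\laxoplax}d_j,\,c\Big)\simeq \lim_{j\in\JJ^{\op}}\Nat^{E_j\text{-}\laxoplax}_{\bII_j,\bCC}(d_j,\underline c),
\]
whereas the other side is simply $\Nat^{\elaxoplax}_{\bII,\bCC}(d,\underline c)$. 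Thus everything reduces to the claim
\begin{equation*}\tag{$\star$}
\Nat^{\elaxoplax}_{\bII,\bCC}(d,\underline c)\simeq \lim_{j\in\JJ^{\op}}\Nat^{E_j\text{-}\laxoplax}_{\bII_j,\bCC}(d_j,\underline c),
\end{equation*}
naturally in $c$; this is the assertion that the functor $\FUN(-,\bCC)^{\elaxoplax}$ of \cref{const:functoriallaxfunct} sends the colimit $(\bII,E)=\colim_{\JJ}(\bII_j,E_j)$ to a limit, evaluated on the relevant mapping categories.

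To prove $(\star)$ I would pass to the cone $2$-categories of \cref{def:elaxcones}: an $\elaxoplax$ cone on $d$ with tip $c$ is precisely a functor $\bII^{\colimcone}_{\elaxoplax}\to\bCC$ restricting to $d$ and sending the cone point to $c$, so $\Nat^{\elaxoplax}_{\bII,\bCC}(d,\underline c)$ is the fibre of $\Fun(\bII^{\colimcone}_{\elaxoplax},\bCC)$ over $(d,c)$, and similarly for each $\bII_j$. The decisive point is that the cone construction is compatible with the colimit decomposition. Indeed, by \cref{def:elaxcones} the $2$-category $\bII^{\colimcone}_{\elax}$ is the pushout of $[0]\leftarrow\bII\to [1]\otimes_{\natural,E}\bII$ that collapses $\{1\}\times\bII$ to the single cone point; using $\bII=\colim_{\JJ}\bII_j$ together with the key equivalence $[1]\otimes_{\natural,E}\bII\simeq\colim_{\JJ}\big([1]\otimes_{\natural,E_j}\bII_j\big)$, this exhibits $\bII^{\colimcone}_{\elax}$ as the colimit of $j\mapsto (\bII_j)^{\colimcone}_{E_j\text{-}\lax}$ formed in pointed marked $2$-categories, i.e.\ with all the individual cone points identified to the common one (the oplax case is dual). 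Mapping this pointed colimit into $(\bCC,c)$ and taking the fibre over the family $(d_j)_j$ — which corresponds to $d$ under $\bII=\colim_{\JJ}\bII_j$ — then produces exactly the right-hand side of $(\star)$, and by construction the resulting identification is the one induced by the comparison map.

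The main obstacle is the highlighted equivalence $[1]\otimes_{\natural,E}\bII\simeq\colim_{\JJ}([1]\otimes_{\natural,E_j}\bII_j)$, namely that the marked Gray tensor product preserves colimits in the \emph{marked} variable. This is not literally \cref{proposition:markedgraycolim}, which fixes the marked factor and varies the $\natural$-marked one, so I would argue from \cref{def:markedgray} directly: the unmarked Gray tensor $[1]\otimes(-)$ preserves colimits, and the marked tensor is obtained from it by the pushout along $\coprod_{S}[1]\otimes[1]\to\coprod_{S}[1]\times[1]$, whose indexing set $S$ of non-invertible marked pairs decomposes as $\colim_{\JJ}S_j$ because a $1$-morphism of $\bII=\colim_{\JJ}\bII_j$ lying in the colimit marking $E$ is, up to the identifications of the colimit, induced from a marked morphism of some $\bII_j$. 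Since pushouts commute with colimits, the claimed colimit preservation follows. The remaining care lies in checking that all these identifications are compatible with the comparison map, so that $(\star)$ holds as a canonical equivalence and not merely an abstract isomorphism of cone categories.
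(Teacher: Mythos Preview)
Your reduction to $(\star)$ via Yoneda is correct and is the heart of the matter. The paper's proof ultimately relies on the same equivalence $\FUN(\bII,\bCC)^{\elaxoplax}\simeq\lim_{\JJ^\op}\FUN(\bII_j,\bCC)^{E_j\text{-}\laxoplax}$, though it packages things differently: rather than arguing directly, it introduces the global colimit functor $\mathfrak{C}\!\operatorname{olim}$ of \cref{lem:globallimits} as a left adjoint and then verifies that $\tau$ lifts to a colimit cone in the Grothendieck construction $\CCatm^{\laxoplax}_{\upslash\bCC}$ by computing mapping spaces there as oriented pullbacks. Your direct route is more elementary and avoids that machinery.

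There is, however, a gap in your cone $2$-category step. The identification of $\Nat^{\elaxoplax}_{\bII,\bCC}(d,\underline c)$ with the fibre of $\Fun(\bII^{\colimcone}_{\elaxoplax},\bCC)$ over $(d,c)$ only holds at the level of objects: a morphism in that fibre is a strong natural transformation between cone functors which restricts to the identity on $\bII$ and on the cone point, and any such transformation is forced to be the identity. So the fibre is discrete and does not recover the modifications that constitute the morphisms of $\Nat^{\elaxoplax}$. The fix is to skip the cones entirely: your final paragraph already contains the real argument, namely that the marked Gray tensor $\bXX\otimes_{\natural,-}(-)\colon\CCatm\to\CCat$ preserves colimits, whence $\FUN(-,\bCC)^{\elaxoplax}$ carries colimits in $\CCatm$ to limits of $2$-categories, and $(\star)$ follows by taking mapping categories in that limit. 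You are right that \cref{proposition:markedgraycolim} as stated only treats the unmarked variable; the colimit preservation in the marked variable follows from the biclosed structure on the marked Gray tensor established in the same source \cite{GagnaHarpazLanariLaxLim}, which is cleaner to invoke than the $S$-decomposition you sketch (where one must worry about marked edges becoming invertible in the colimit, identifications across different $j$, and so on).
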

\begin{proof}
	Let $\XX \to \CCatm^{\leq 1}$ be as in \cref{rem:notsogloballimits} so that we have an adjunction 
	\[
	\mathfrak{C}\!\operatorname{olim}_{\XX} \colon \CCatm^{\laxoplax}_{\upslash \bCC} \times_{(\CCatm)^{\leq 1}} \XX \llra \bCC .
	\]
	It is clear that the colimit cone for $\tau$ can be lifted to a diagram $\Psi \colon \JJ^{\colimcone} \to \CCatm^{\laxoplax}_{\upslash \bCC} \times_{(\CCatm)^{\leq 1}} \XX$. Therefore, since $\mathfrak{C}\!\operatorname{olim}_{\XX}$ is a left adjoint it will be enough to show that $\Psi$ is a (strong) colimit cone. Let us point out that since $\bCC$ admits strong limits of shape $\JJ$ it will be enough to show that $\Psi$ is a colimit cone in the 1-category $\left(\CCatm^{\laxoplax}_{\upslash \bCC} \times_{\CCatm^{\leq 1}} \XX\right)^{\leq 1}= \GG$.
	 Let $d \colon \bII \to \bCC$ and $G \colon \bYY \to \bCC$ be objects in $\GG$. Then it follows that we can compute the mapping space between these two objects as the partially oplax limit of the cospan
	\[
	\begin{tikzcd}
		\GG(d,G) \arrow[d] \arrow[r] & \mFun(\bII,\bYY)^{\core} \arrow[d,"G \circ -"] \\
		{[0]} \arrow[r,"d"] & \Fun(\bII,\bCC)^{\elaxoplax}
	\end{tikzcd}
	\]
	where we are marking the bottom horizontal morphism. The compatibility of the cartesian product and the Gray tensor product with colimits (\cref{proposition:markedgraycolim}) implies that the diagram above is equivalent to
	\[
	\begin{tikzcd}
		\GG(d,G) \arrow[d] \arrow[r] & \left(\lim_{\JJ^\op}\mFun(\bII_j,\bYY)\right)^{\core}\arrow[d,"G \circ -"] \\
		{[0]} \arrow[r,"d"] & \lim_{\JJ^\op}\Fun(\bII_j,\bCC)^{E_{j}\text{-}\laxoplax}
	\end{tikzcd}
	\]
	We invoke \cref{prop:limitscommute} to obtain a natural equivalence $\GG(d,G) \simeq \lim_{\JJ^{\op}}\GG(d_i,G)$ which identifies $d$ as the desired colimit. The claim now follows.
\end{proof}

\begin{proposition}\label{prop:minimalshapescolim}
	Let $\bCC$ and $\bDD$ be 2-categories admitting $\kappa$-small partially (op)lax (co)limits and consider a functor $F \colon \bCC \to \bDD$. Then $F$ preserves $\kappa$-small partially (op)lax (co)limits if and only if it preserves $\kappa$-small strong (co)limits indexed by 1-categories and (op)lax (co)limits indexed by $[1]$.
\end{proposition}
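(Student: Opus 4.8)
The plan is to reduce everything to the decomposition theorem \cref{prop:diagramdecomposition} together with a cellular presentation of marked 2-categories. The ``only if'' direction is immediate, since $\kappa$-small strong colimits indexed by $1$-categories and $[1]$-indexed (op)lax colimits are in particular $\kappa$-small partially (op)lax colimits. By duality, applying the involutions $(-)^{\op}$ and $(-)^{\co}$ which exchange limits with colimits and lax with oplax, it suffices to treat the case of lax colimits. I would introduce the class $\mathcal{G}$ of those $\kappa$-small marked 2-categories $(\bII,E)$ for which $F$ preserves all $(\bII,E)$-indexed partially lax colimits of $\kappa$-small diagrams (all of which exist by hypothesis on $\bCC,\bDD$), and first record that $\mathcal{G}$ is closed under $\kappa$-small strong $1$-categorical colimits: if $(\bII,E)=\colim_{\JJ}\tau$ with $\JJ$ a $\kappa$-small $1$-category and each $\tau(j)\in\mathcal{G}$, then \cref{prop:diagramdecomposition} rewrites the $(\bII,E)$-colimit as the strong $\JJ$-colimit of the $\tau(j)$-colimits; since $F$ preserves each $\tau(j)$-colimit and, by assumption, $\kappa$-small strong $1$-categorical colimits, reapplying the decomposition in $\bDD$ shows $(\bII,E)\in\mathcal{G}$.

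Next comes generation. By \cref{def:kappasmall2cat} every $\kappa$-small 2-category is a $\kappa$-small $1$-categorical colimit of $\Theta_2$-cells $[n]([i_1],\dots,[i_n])$; the Segal decomposition expresses each such cell as an iterated pushout of the cells $[1]([i])$ along objects, and each $[1]([i])$ is in turn an iterated pushout of copies of the walking $2$-cell $C_2=[1]([1])$ along copies of the arrow $[1]=[1]([0])$. All of these are $\kappa$-small strong $1$-categorical colimits in $\CCatm$ (carrying the markings), so by the closure property it suffices to show that $[0]$, the marked and unmarked arrow, and the variously marked walking $2$-cell $C_2$ lie in $\mathcal{G}$. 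The case $[0]$ is trivial; $([1],\sharp)$ is a strong colimit indexed by the $1$-category $[1]$ and $([1],\natural)$ is exactly the lax $[1]$-colimit, so both are covered directly by the two hypotheses.

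The crux is $C_2$. I would use that, in $\CCatm$, collapsing two parallel edges of the oplax square $Q=[1]\otimes_{\natural,\natural}[1]$ exhibits the walking $2$-cell as a pushout $C_2\simeq [1]^{\colimcone}_{\natural}\amalg_{[1]}[0]$, while $[1]^{\colimcone}_{\natural}\simeq Q\amalg_{[1]}[0]$ by \cref{def:elaxcones}; both are $\kappa$-small strong $1$-categorical colimits, so by closure the problem reduces to showing $Q\in\mathcal{G}$. For $Q$ I would invoke the exponential adjunction of \cref{const:laxfunctorcat}, pairing the Gray tensor against the lax functor $2$-category, together with the Fubini statement \cref{prop:limitscommute}, to identify a $Q$-indexed lax colimit with an $[1]$-indexed lax colimit of $[1]$-indexed lax colimits; since $F$ preserves $[1]$-indexed lax colimits by hypothesis, it preserves this iterate and hence the $Q$-colimit. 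The various markings on the two parallel $1$-morphisms of $C_2$ are realized by inserting $([1],\natural)$ or $([1],\sharp)$ in the corresponding slots, all of which are available.

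The main obstacle is precisely this last reduction. Note that a strong $1$-categorical colimit of $1$-categories remains a $1$-category (the inclusion $\Cat\into\CCat$ is a left adjoint, cf. \cref{def:underlying1cat}), so the genuine, non-invertible $2$-cell of $C_2$ cannot be produced from any strong $1$-categorical colimit of $1$-dimensional cells; it must instead be generated by nesting the single lax $[1]$-colimit. The delicate point is therefore to verify that the Gray-tensor exponential law indeed computes the oplax-square colimit as an iterated lax colimit \emph{with the correct markings}, so that preservation of $[1]$-indexed lax colimits genuinely forces preservation of the $C_2$-indexed ones.
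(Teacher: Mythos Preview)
Your overall strategy—introducing a class $\mathcal{G}$ closed under $\kappa$-small strong $1$-categorical colimits via \cref{prop:diagramdecomposition}, and generating every $\kappa$-small marked $2$-category from $[0]$, $([1],\natural)$, $([1],\sharp)$, and $C_2$—is exactly what underlies the paper's argument. The difference lies entirely in how $C_2$ is handled, which you correctly flag as the crux.

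The paper dispatches $C_2$ in one line: there is an (op)lax \emph{final} functor $[1]\to C_2$ (selecting one of the two parallel arrows), so by \cref{def:final} any $C_2$-indexed (op)lax colimit agrees with the $[1]$-indexed (op)lax colimit of the restriction, and $F$ preserves the latter by hypothesis. Your route through the Gray square $Q=[1]\otimes[1]$ can in principle be made to work, but the citation of \cref{prop:limitscommute} is off: that proposition commutes two colimits already posed in $\bCC$ and in a functor $2$-category; it does not decompose a $Q$-indexed lax colimit as an iterated $[1]$-indexed one. The identity you actually need, $\colim^{\lax}_{Q} D\simeq\colim^{\lax}_{[1]}\bigl(\colim^{\lax}_{[1]}\circ\hat{D}\bigr)$, comes instead from applying the Gray adjunction of \cref{const:laxfunctorcat} twice together with associativity of the Gray tensor. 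So your argument is salvageable but circuitous, with a misattributed step at the key point; the final-map observation renders all of the $Q$-analysis unnecessary.
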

\begin{proof}
	We prove it in the case of colimits, so that the case of limits follows by taking opposite categories. The conditions are clearly necessary. To see that they are indeed sufficient, we note that if $F$ preserves (op)lax colimits indexed by $[1]$ it also preserves (op)lax limits indexed by $C_2$ since we can always find a final map $[1] \to C_2$. The result follows from \cref{prop:diagramdecomposition}.
\end{proof}

\subsection{Compact objects and filtered colimits in 2-categories}
\label{subsec:filteredColimits}

	

\begin{definition}\label{def:compactobject}
	An object $c \in \bCC$ is said to be $\kappa$-compact if and only if the functor
	\begin{equation*}
		\bCC(c,-)\colon\bCC \to \Cat
	\end{equation*} 
	preserves strong colimits indexed by a $\kappa$-filtered 1-category.
\end{definition}

\begin{lemma}\label{lem:1dimensionalsuffices}
	Let $c \in \bCC$ and let $[1] \otimes c$ denotes the lax colimit of the functor $[1] \to [0] \xrightarrow{c} \bCC$. Then $c$ is $\kappa$-compact if and only if $c$ and $[1] \otimes c$ are $\kappa$-compact in $\bCC^{\leq 1}$.
\end{lemma}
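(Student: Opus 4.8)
The plan is to verify the defining condition on $c$---that $\bCC(c,-)\colon\bCC\to\Cat$ preserve $\kappa$-filtered strong colimits (Definition \ref{def:compactobject})---one simplicial degree at a time, so as to transfer it to the underlying $1$-category. The key elementary fact I would use is that a map $X\to Y$ of $1$-categories is an equivalence as soon as it is an equivalence on cores and on arrow spaces: in the Segal presentation this says that $X\to Y$ is an equivalence precisely when $\Map([0],X)\to\Map([0],Y)$ and $\Map([1],X)\to\Map([1],Y)$ are, since the Segal condition reconstructs all higher degrees from degrees $0$ and $1$. Thus preservation of a strong colimit by $\bCC(c,-)$ is equivalent to its preservation by the two composites $\Map([0],\bCC(c,-))$ and $\Map([1],\bCC(c,-))$.

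First I would identify these composites with corepresentable functors on $\bCC^{\leq 1}$. By Definition \ref{def:underlying1cat} one has $\bCC^{\leq 1}(c,d)\simeq\bCC(c,d)^{\core}=\Map([0],\bCC(c,d))$, which disposes of degree $0$. For degree $1$, I would invoke the universal property of the lax tensor: since $[1]\otimes c$ is the lax colimit of the constant diagram $[1]\to\bCC$ at $c$, Definition \ref{def:elaxlimit} yields
\[
	\bCC([1]\otimes c,d)\simeq\Nat^{\lax}_{[1],\bCC}(\underline c,\underline d)\simeq\Fun([1],\bCC(c,d)),
\]
the last step because a lax transformation between constant $[1]$-diagrams is exactly a morphism of the mapping category $\bCC(c,d)$, i.e.\ an object of $\Fun([1],\bCC(c,d))$. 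Passing to cores gives $\bCC^{\leq 1}([1]\otimes c,d)\simeq\Map([1],\bCC(c,d))$; the lax/oplax distinction is immaterial here, since both produce the same arrow space after taking cores.

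Next I would bring in two colimit computations. On the target, $[0]$ and $[1]$ are compact objects of $\Cat$, so $\Map([0],-)$ and $\Map([1],-)$ preserve $\kappa$-filtered colimits and hence commute with the colimit $\colim_{i}\bCC(c,d_i)$ formed in $\Cat$. On the source, I would show $(-)^{\leq 1}$ sends strong colimits to colimits: for a strong colimit $\colim_{\bII}d$ the chain
\[
	\bCC^{\leq 1}(\colim_{\bII}d,x)\simeq\Nat_{\bII,\bCC}(d,\underline x)^{\core}\simeq\big(\lim_{\bII^{\op}}\bCC(d_i,x)\big)^{\core}\simeq\lim_{\bII^{\op}}\bCC^{\leq 1}(d_i,x)
\]
exhibits $\colim_{\bII}d$ as the colimit of $d$ in $\bCC^{\leq 1}$, the last equivalence using that the core functor, being right adjoint to $\Spc\hookrightarrow\Cat$, preserves limits. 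Assembling the three ingredients gives the per-colimit statement: $\bCC(c,-)$ preserves a given strong $\kappa$-filtered colimit if and only if both $\bCC^{\leq 1}(c,-)$ and $\bCC^{\leq 1}([1]\otimes c,-)$ preserve the induced $\kappa$-filtered colimit in $\bCC^{\leq 1}$. The implication from $\kappa$-compactness of $c$ and $[1]\otimes c$ in $\bCC^{\leq 1}$ to $\kappa$-compactness of $c$ is then immediate, since each strong $\kappa$-filtered colimit is of this form.

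The hard part is the converse implication, where one must test $c$ and $[1]\otimes c$ against an arbitrary $\kappa$-filtered colimit in $\bCC^{\leq 1}$ rather than only those underlying a strong colimit in $\bCC$. The crux is therefore to upgrade the source-side computation to an equivalence of classes: whenever $d\colon\bII\to\bCC$ admits a colimit $L$ in $\bCC^{\leq 1}$, this $L$ must already be a strong colimit in $\bCC$. I would attempt this by promoting the core-level identification to the full mapping categories, checking the pseudonatural universal property $\bCC(L,x)\simeq\lim_{\bII^{\op}}\bCC(d_i,x)$ degree by degree: the degree-$0$ part is the core computation already in hand, and the degree-$1$ part follows once one knows that $[1]\otimes(-)$ commutes with the colimit, so that arrow spaces glue as well. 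This again reduces everything to the $\Map([0],-)$ and $\Map([1],-)$ comparisons, closing the argument.
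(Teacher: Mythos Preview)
Your treatment of the ``if'' direction (the direction the paper actually uses, cf.\ \cref{rem:generators}) is correct and is exactly the paper's approach: test the comparison map $\colim_i\bCC(c,s_i)\to\bCC(c,s)$ in degrees $0$ and $1$, use compactness of $[1]$ in $\Cat$ to identify degree $1$ with a mapping space out of $[1]\otimes c$, and use that strong $\kappa$-filtered colimits in $\bCC$ induce $\kappa$-filtered colimits in $\bCC^{\leq 1}$ (your step~5). The paper's written proof only establishes this direction.

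You are right that the converse requires more, but your proposed fix is circular. To upgrade a $\bCC^{\leq 1}$-colimit $L=\colim_i d_i$ to a strong colimit in $\bCC$, your degree-$1$ check requires $\bCC^{\leq 1}([1]\otimes L,x)\simeq\lim_i\bCC^{\leq 1}([1]\otimes d_i,x)$, i.e.\ that $[1]\otimes(-)\colon\bCC^{\leq 1}\to\bCC^{\leq 1}$ preserve this colimit. The only evident reasons for that are either that $L$ is already a strong colimit in $\bCC$ (so the two colimit operations commute), or that $\bCC$ has cotensors $(-)^{[1]}$ making $[1]\otimes(-)$ a left adjoint on $\bCC^{\leq 1}$; neither is assumed. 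Your final sentence asserts the loop closes, but it does not: you have merely rephrased the missing step. The converse \emph{does} hold once one assumes $\bCC$ admits strong $\kappa$-filtered colimits, since then your step~5 forces the two classes of $\kappa$-filtered colimits to coincide, and the per-colimit biconditional you already established applies directly.
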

\begin{proof}
	Note that we only need to show that for every $\kappa$-filtered colimit $\colim_{\II}s_i=s$ in $\bCC$ the following maps of spaces
	\[
	\colim_{\II}\bCC(c, s_i)^{\core} \to \bCC(c,s)^{\core}, \quad \quad  \Cat([1],\colim_{\II}\bCC(c, s_i))^{\core} \to \Fun([1],\bCC(c,s))^{\core}
	\]
	are equivalences. The first map follows directly from the hypothesis. For the second map we use that $[1]$ is $\aleph_0$-compact in $\Cat^{\leq 1}$  to identify the map in question with
	\[
	\colim_{\II}\bCC([1] \otimes c,s_i)^{\core} \to \bCC([1]\otimes c,s)^{\core}
	\]
	which is also an equivalence by assumption.
\end{proof}

\begin{remark}\label{rem:generators}
	As a direct consequence of the previous lemma we find that the image of the functor $\Delta \to \Cat$ factors through the full sub-2-category on $\aleph_0$-compact objects. More generally, since 2-categories can be modelled using $\Theta_2$-spaces we see that the inclusion $\Theta_2 \hra \CCat$ factors through $\aleph_0$-compact objects as well.  
\end{remark}

\begin{proposition}\label{prop:filteredcolimitscommute}
	Let $\II$ be a $\kappa$-filtered 1-category. Then the (strong) colimit functor $\colim_{\II} \colon\FUN(\II,\CCat) \to \CCat$, preserves partially (op)lax limits indexed by $\kappa$-small 2-categories.
\end{proposition}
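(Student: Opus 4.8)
The plan is to reduce to two elementary shapes via \cref{prop:minimalshapescolim} and then dispatch each by a "filtered colimits commute with finite limits" argument. First note that $\CCat$ (by \cref{rem:cat2complete}) and $\FUN(\II,\CCat)$ (by \cref{prop:colimfunctorcat}) both admit all $\kappa$-small partially (op)lax limits, so \cref{prop:minimalshapescolim} applies to $F=\colim_\II$: it suffices to show that $\colim_\II$ preserves (a) $\kappa$-small \emph{strong} limits indexed by $1$-categories and (b) (op)lax limits indexed by $[1]$. Since limits in $\FUN(\II,\CCat)$ are computed pointwise (the argument of \cref{prop:colimfunctorcat}), in each case I must compare the colimit of a pointwise limit with the corresponding limit of the colimits.

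For (a) I would work in the model of $2$-categories as complete Segal $\Theta_2$-spaces, i.e.\ the reflective localization $\CCat^{\leq 1}\subset \Fun(\Theta_2^\op,\SS)$ (\cref{def:theta2}). The Segal and completeness conditions are expressed as equivalences onto \emph{finite} limits, so the localizing maps have $\aleph_0$-compact source and target; hence $\CCat^{\leq 1}$ is closed under $\kappa$-filtered colimits, and the strong $\kappa$-filtered colimit $\colim_\II$ is computed objectwise on $\Theta_2$. Being reflective, $\CCat^{\leq 1}$ is also closed under limits, which are likewise computed objectwise. The desired commutation then reduces, objectwise in $\SS$, to the classical fact that $\kappa$-filtered colimits commute with $\kappa$-small limits.

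For (b) the key is to realise the (op)lax limit over $[1]$ as a strong pullback built from the lax arrow construction. For $f\colon \bAA\to\bBB$ there is a natural equivalence $\lim^{\lax}_{[1]}f\simeq \bAA\times_{\bBB,\ev_0}\ARlax_{\bBB}$, the oriented pullback of $f$ along $\ev_0\colon \ARlax_{\bBB}\to\bBB$ (cf.\ \cref{def:oplaxarrow} and \cref{def:laxslices}), and dually for the oplax limit. By part (a), $\colim_\II$ commutes with the strong pullback, so it remains to show that $\ARlax_{(-)}=\FUN([1],-)^{\lax}$ preserves $\kappa$-filtered colimits. By \cref{const:laxfunctorcat} this functor is right adjoint to the Gray tensor $-\otimes[1]=-\otimes_{\natural,\natural}[1]$ (both markings are trivial, cf.\ \cref{def:markedgray}), which preserves all colimits by \cref{proposition:markedgraycolim}. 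A right adjoint preserves $\kappa$-filtered colimits once its left adjoint preserves $\kappa$-compact objects, so I would verify the latter: every $\kappa$-compact $2$-category is a retract of a $\kappa$-small colimit of cells of $\Theta_2$ (\cref{def:kappasmall2cat}), the functor $-\otimes[1]$ preserves colimits and retracts, and for each cell $\theta\in\Theta_2$ the Gray tensor $\theta\otimes[1]$ is a finite colimit of cells of $\Theta_2$, hence $\aleph_0$-compact by \cref{rem:generators}.

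The main obstacle is step (b): whereas (a) and the reductions are the familiar commutation of filtered colimits with finite limits, the lax $[1]$-limit is \emph{not} objectwise in the $\Theta_2$-space model, so it genuinely relies on the identification with an oriented pullback together with the compactness statement that $-\otimes[1]$ preserves $\kappa$-compact objects. The point requiring the most care is the equivalence $\lim^{\lax}_{[1]}f\simeq\bAA\times_{\bBB,\ev_0}\ARlax_{\bBB}$, functorially in the $\II$-indexing and compatibly with cones; this I would check by matching lax cones $(g\colon T\to\bAA,\ h\colon T\to\bBB,\ \gamma\colon fg\Rightarrow h)$ with maps into the pullback through the defining adjunction of \cref{const:laxfunctorcat}.
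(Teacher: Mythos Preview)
Your reduction via \cref{prop:minimalshapescolim} and your treatment of part (a) are exactly the paper's approach (what the paper calls ``clear'' unpacks to your $\Theta_2$-space argument). The issue is in part (b): your identification
\[
\lim\nolimits^{\lax}_{[1]} f \;\simeq\; \bAA \times_{\bBB,\ev_0} \ARlax_{\bBB}
\]
is not correct. Computing the lax limit via sections as in \cref{rem:cat2complete}, a morphism between sections of the cocartesian fibration over $[1]$ is a \emph{strong} natural transformation, so morphisms in $\lim^{\lax}_{[1]} f$ are honest commutative squares in $\bBB$, not lax ones. Equivalently, by \cref{rem:freeFibExplicitly} and \cref{cor:pullbackoffree} the oriented pullback is a strong pullback along the \emph{cotensor} $\bBB^{[1]}$ of \cref{cor:cotensoring}, which in $\CCat$ is the strong arrow $2$-category $\AR_{\bBB}=\FUN([1],\bBB)$, not $\ARlax_{\bBB}=\FUN([1],\bBB)^{\lax}$. (For $1$-categories $\bBB$ the two coincide, which may be the source of the confusion; for genuine $2$-categories they differ.)

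Consequently the relevant adjunction is not the Gray tensor pair but the tensor/cotensor pair $[1]\otimes(-)\dashv(-)^{[1]}$ of \cref{prop:tensoring} and \cref{cor:cotensoring}. To finish you need $(-)^{[1]}$ to preserve $\kappa$-filtered colimits, i.e.\ that $[1]\otimes\theta$ is $\kappa$-compact in $\CCat^{\leq 1}$ for each $\theta\in\Theta_2$. This is precisely the content of \cref{rem:generators}: by \cref{lem:1dimensionalsuffices}, $\theta$ being $\aleph_0$-compact in $\CCat$ means that both $\theta$ and $[1]\otimes\theta$ are $\aleph_0$-compact in $\CCat^{\leq 1}$. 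That is why the paper's proof simply cites \cref{prop:minimalshapescolim} and \cref{rem:generators}. Once you replace $\ARlax_{\bBB}$ by the cotensor $\bBB^{[1]}$ your argument becomes correct and collapses to the paper's.
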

\begin{proof}
	It is clear that $\colim_{\II}$ preserves strong limits indexed by a $\kappa$-small 1-category. The result now follows from \cref{prop:minimalshapescolim} and \cref{rem:generators}.
\end{proof}

\begin{corollary}\label{cor:smallcolimofcompacts}
	Let $\bCC$ be a 2-category, and let $d \colon \bII \to \bCC$ be a diagram where $(\bII,E)$ is a $\kappa$-small marked 2-category. Suppose that for every $i \in \bII$ the object $d(i)$ is $\kappa$-compact. Then $\colim^{\elaxoplax}_{\bII} d$ is also $\kappa$-compact.
\end{corollary}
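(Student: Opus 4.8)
The plan is to verify the defining property of $\kappa$-compactness from \cref{def:compactobject} directly for $c:=\colim^{\elaxoplax}_{\bII}d$, by expressing the mapping functor $\bCC(c,-)$ as a $\kappa$-small partially (op)lax limit of the functors $\bCC(d(i),-)$ and then invoking \cref{prop:filteredcolimitscommute}, which says that $\kappa$-filtered colimits commute with $\kappa$-small partially (op)lax limits.

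First I would record the key formula. By the universal property of the $E$-lax colimit (\cref{def:elaxlimit}) there is an equivalence $\bCC(c,x)\simeq\Nat^{\elaxoplax}_{\bII,\bCC}(d,\underline{x})$, natural in $x\in\bCC$. To compute the right-hand side pointwise, observe that for fixed $x$ the functor $\bCC(-,x)\colon\bCC^{\op}\to\Cat$ factors as the composite $\ev_x\circ h_{\bCC^{\op}}$ of the $\bCC^{\op}$-Yoneda embedding $h_{\bCC^{\op}}\colon\bCC^{\op}\to\PSh_{\Cat}(\bCC^{\op})\simeq\FUN(\bCC,\Cat)$ followed by evaluation at $x$. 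The first functor preserves partially (op)lax limits by \cref{prop:yonedacont} (applied to $\bCC^{\op}$), and the second does as well since partially (op)lax limits in a functor $2$-category are computed pointwise (\cref{prop:colimfunctorcat} and its proof). Under the duality $\bCC\simeq(\bCC^{\op})^{\op}$ the $E$-lax colimit of $d$ becomes a partially (op)lax limit in $\bCC^{\op}$ indexed by the $\kappa$-small marked $2$-category $(\bII^{\op},E^{\op})$, so applying $\bCC(-,x)$ yields a natural equivalence
\[
	\bCC(c,-)\simeq\lim^{E^{\op}\text{-}\laxoplax}_{\bII^{\op}}\bCC(d(-),-)
\]
of functors $\bCC\to\Cat$.

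With this formula the result is formal. Let $s\simeq\colim_{\II}s_\alpha$ be a $\kappa$-filtered strong colimit in $\bCC$, indexed by a $\kappa$-filtered $1$-category $\II$. Since each $d(i)$ is $\kappa$-compact, the map $\colim_{\alpha}\bCC(d(i),s_\alpha)\to\bCC(d(i),s)$ is an equivalence for every $i\in\bII$. As $(\bII^{\op},E^{\op})$ is $\kappa$-small, \cref{prop:filteredcolimitscommute} permits interchanging $\colim_{\alpha}$ with the partially (op)lax limit over $\bII^{\op}$, giving
\begin{align*}
	\colim_{\alpha}\bCC(c,s_\alpha)
	&\simeq\colim_{\alpha}\lim^{E^{\op}\text{-}\laxoplax}_{\bII^{\op}}\bCC(d(-),s_\alpha)
	\simeq\lim^{E^{\op}\text{-}\laxoplax}_{\bII^{\op}}\colim_{\alpha}\bCC(d(-),s_\alpha)\\
	&\simeq\lim^{E^{\op}\text{-}\laxoplax}_{\bII^{\op}}\bCC(d(-),s)
	\simeq\bCC(c,s),
\end{align*}
and one checks this composite is the canonical comparison map. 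Hence $\bCC(c,-)$ preserves $\kappa$-filtered colimits, i.e.\ $c$ is $\kappa$-compact.

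The step that requires the most care is the identification of $\Nat^{\elaxoplax}_{\bII,\bCC}(d,\underline{x})$ with the pointwise limit $\lim^{E^{\op}\text{-}\laxoplax}_{\bII^{\op}}\bCC(d(-),x)$: one must correctly dualize the marked indexing $2$-category $(\bII,E)$ so that the $E$-lax colimit in $\bCC$ really does become a partially (op)lax limit in $\bCC^{\op}$ to which \cref{prop:yonedacont} applies, matching the legs carrying noninvertible $2$-cells with the marking. Once this bookkeeping is settled, the remaining steps are a formal consequence of \cref{prop:filteredcolimitscommute} together with the $\kappa$-compactness of each $d(i)$.
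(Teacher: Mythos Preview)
Your proof is correct and follows essentially the same approach as the paper, which simply cites \cref{prop:yonedacont}, \cref{prop:filteredcolimitscommute}, and \cref{prop:limitscommute}. You have unpacked this one-line argument in detail: the Yoneda embedding for $\bCC^{\op}$ (i.e.\ \cref{prop:yonedacont}) gives the identification $\bCC(c,-)\simeq\lim^{E^{\op}\text{-}\laxoplax}_{\bII^{\op}}\bCC(d(-),-)$, and then \cref{prop:filteredcolimitscommute} lets you commute the $\kappa$-filtered colimit past this $\kappa$-small partially (op)lax limit; your use of \cref{prop:colimfunctorcat} for the pointwise computation plays the role of the paper's reference to \cref{prop:limitscommute}.
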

\begin{proof}
	Combine \cref{prop:yonedacont}, \cref{prop:filteredcolimitscommute} and \cref{prop:limitscommute}.
\end{proof}

\begin{lemma}\label{lem:compact2cats}
	If $\bII$ is a $\kappa$-small 2-category (cf.\ \cref{def:kappasmall2cat}), then $\bII$ is $\kappa$-compact.
\end{lemma}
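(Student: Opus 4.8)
The plan is to reduce the claim to the two facts already assembled in this subsection: that the generating cells of $\Theta_2$ are $\aleph_0$-compact, and that $\kappa$-small partially (op)lax colimits of $\kappa$-compact objects are again $\kappa$-compact. First I would unwind \cref{def:kappasmall2cat}: a $\kappa$-small 2-category $\bII$ comes by definition equipped with a $\kappa$-small $1$-category $\II$ and a diagram $d \colon \II \to \Theta_2$ together with an equivalence $\colim_{\II}(\iota_{\Theta} d) \simeq \bII$ in $\CCat$. Viewing $\II$ as a marked $2$-category with the $\sharp$-marking (so that the colimit in question is strong), and recalling from the remark following \cref{def:kappasmall2cat} that $\II$ is then $\kappa$-small as a $2$-category, this exhibits $\bII$ as a $\kappa$-small strong colimit of the objects $\iota_{\Theta}(d(i)) \in \Theta_2 \subseteq \CCat$.

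Next I would observe that each vertex $\iota_{\Theta}(d(i))$ of this diagram is $\aleph_0$-compact in $\CCat$: this is exactly the content of \cref{rem:generators}, which records that the inclusion $\Theta_2 \hookrightarrow \CCat$ factors through the $\aleph_0$-compact objects (ultimately a consequence of \cref{lem:1dimensionalsuffices}). Since $\aleph_0$-compact objects are \emph{a fortiori} $\kappa$-compact for every regular cardinal $\kappa \geq \aleph_0$ — every $\kappa$-filtered diagram being in particular $\aleph_0$-filtered — each $\iota_{\Theta}(d(i))$ is $\kappa$-compact. Finally, I would apply \cref{cor:smallcolimofcompacts} to the diagram $\iota_{\Theta} d \colon \II \to \CCat$: its indexing shape $\II$ is a $\kappa$-small (marked) $2$-category and all of its values are $\kappa$-compact, so the corollary yields at once that $\colim_{\II}(\iota_{\Theta} d) \simeq \bII$ is $\kappa$-compact, which is the assertion.

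The only step demanding genuine care — and what I would flag as the main obstacle were the supporting results not already in place — is the $\aleph_0$-compactness of the generating cells in $\Theta_2$; once \cref{rem:generators} (and hence \cref{lem:1dimensionalsuffices}) is granted, everything else is a purely formal closure argument through \cref{cor:smallcolimofcompacts}, and no further computation is needed.
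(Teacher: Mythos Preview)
Your proof is correct and follows exactly the approach of the paper, which also deduces the result immediately from the definition of $\kappa$-smallness together with \cref{rem:generators} and \cref{cor:smallcolimofcompacts}. You have simply spelled out in detail what the paper compresses into a single sentence.
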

\begin{proof}
	The result follows immediately from the definition of $\kappa$-smallness (see also \cref{rem:generators}) together with \cref{cor:smallcolimofcompacts}.
\end{proof}


\begin{proposition}\label{prop:2catsasfilteredcolimits}
	Let $\bII$ be a 2-category. Then $\bII$ can be expressed as a $\kappa$-filtered colimit of $\kappa$-compact 2-categories.
\end{proposition}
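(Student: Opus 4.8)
The plan is to exploit the description of $\CCat^{\leq 1}$ from \cref{def:theta2} as an accessible localization $L\colon\Fun(\Theta_2^{\op},\SS)\to\CCat^{\leq 1}$ of the presheaf $1$-category on $\Theta_2$, together with the fact (\cref{rem:generators}) that the objects of $\Theta_2$ are $\aleph_0$-compact, hence $\kappa$-compact, $2$-categories. Before anything else I would record the compatibility of colimits that makes the statement meaningful: for a diagram $F\colon\KK\to\bCC$ indexed by a $1$-category $\KK$, the strong colimit computed in $\CCat$ agrees with the colimit taken in the underlying $1$-category $\CCat^{\leq 1}$. Indeed, the space of strong cones under $F$ with tip $c$ is $\lim_{\KK^{\op}}\bCC(F(-),c)^{\core}$ --- the invertible structure $2$-cells of a strong cone being exactly the coherence homotopies assembled by the limit over $\KK^{\op}$ of the mapping groupoids --- and this is the defining universal property of the $1$-categorical colimit (compare the description of strong colimits in \cref{rem:cat2complete}). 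In particular $\kappa$-filtered strong colimits are computed in $\CCat^{\leq 1}$, and the notion of $\kappa$-small $2$-category from \cref{def:kappasmall2cat} is unambiguous.

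Fix a regular cardinal $\kappa$. Since $L$ is a left adjoint it preserves colimits and carries representables to the objects $\iota_{\Theta}(\theta)$ with $\theta\in\Theta_2$; as every presheaf is a colimit of representables, every $2$-category $\bII$ admits a presentation $\bII\simeq\colim_{\KK}(\iota_{\Theta}G)$ for a diagram $G\colon\KK\to\Theta_2$ indexed by a (possibly large) $1$-category $\KK$ --- concretely one may take $\KK=(\Theta_2)_{/\bII}$ with $G$ the forgetful diagram. I would then reorganise this colimit as a $\kappa$-filtered colimit of its $\kappa$-small pieces. Let $P$ be the poset of $\kappa$-small full sub-$1$-categories $\KK'\subseteq\KK$, ordered by inclusion; since $\kappa$ is regular, the union of fewer than $\kappa$ members of $P$ again lies in $P$, so $P$ is $\kappa$-filtered, and by the standard fact that any colimit is the $\kappa$-filtered colimit of its restrictions to $\kappa$-small subcategories we obtain
\[
	\bII\;\simeq\;\colim_{\KK}(\iota_{\Theta}G)\;\simeq\;\colim_{\KK'\in P}\ \colim_{\KK'}(\iota_{\Theta}\,G|_{\KK'}).
\]
For each $\KK'\in P$ the term $\colim_{\KK'}(\iota_{\Theta}\,G|_{\KK'})$ is a strong colimit over the $\kappa$-small $1$-category $\KK'$ of a diagram valued in $\Theta_2$, hence is a $\kappa$-small $2$-category in the sense of \cref{def:kappasmall2cat}; by \cref{lem:compact2cats} it is therefore $\kappa$-compact. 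Thus the displayed equivalence exhibits $\bII$ as a $\kappa$-filtered colimit of $\kappa$-compact $2$-categories, as required.

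The step requiring the most care is the first one: ensuring that the colimit presentations harvested from the presheaf model --- which are a priori $1$-categorical colimits in $\CCat^{\leq 1}$ --- are genuinely \emph{strong} colimits in $\CCat$, so that both the hypothesis of \cref{def:kappasmall2cat} and the conclusion matching \cref{def:compactobject} are correctly interpreted. Once the identification of $1$-categorically indexed strong colimits with colimits in $\CCat^{\leq 1}$ is in place, the remainder is the routine manipulation expressing an arbitrary colimit as a $\kappa$-filtered colimit of $\kappa$-small subcolimits, while the passage from $\kappa$-smallness to $\kappa$-compactness is supplied verbatim by \cref{lem:compact2cats}.
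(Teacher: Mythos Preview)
Your decomposition strategy is natural, but there is a genuine gap. You take $P$ to be the poset of ``$\kappa$-small full sub-$1$-categories'' of $\KK=(\Theta_2)_{/\bII}$ and then invoke \cref{def:kappasmall2cat} and \cref{lem:compact2cats} to conclude that each partial colimit $\colim_{\KK'}(\iota_\Theta\,G|_{\KK'})$ is $\kappa$-compact. But a full subcategory of $\KK$ on fewer than $\kappa$ objects need not be a $\kappa$-small $\infty$-category: the mapping space in $(\Theta_2)_{/\bII}$ from $(\theta,f)$ to $(\theta',f')$ is a disjoint union over the finite set $\Theta_2(\theta,\theta')$ of path spaces in $\Fun(\theta,\bII)^\simeq$, and these have no reason to be $\kappa$-small for the given $\kappa$. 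Your justification that ``the union of fewer than $\kappa$ members of $P$ again lies in $P$'' uses only the object count, confirming that this is the reading you intend; under that reading $\KK'$ is not known to be $\kappa$-small, \cref{def:kappasmall2cat} does not apply, and the $\kappa$-compactness of the partial colimits is unproved.

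The paper circumvents this by never attempting to chop up $(\Theta_2)_{/\bII}$. It passes directly to the full sub-$1$-category $(\kappa\!\CCat)^{\leq 1}\subset(\CCat)^{\leq 1}$ spanned by the $\kappa$-compact $2$-categories. This contains $\Theta_2$ and is therefore dense, so $\bII$ is the colimit of the tautological diagram over $(\kappa\!\CCat)^{\leq 1}_{/\bII}$; and since $\kappa$-compact objects are closed under $\kappa$-small colimits by \cref{cor:smallcolimofcompacts}, this slice admits $\kappa$-small colimits and is hence $\kappa$-filtered. No control on the size of an auxiliary indexing category is needed. Your approach could in principle be repaired by first writing $\KK$ as a $\kappa$-filtered colimit of genuinely $\kappa$-small $1$-categories, but that amounts to rerunning the paper's argument one categorical level down.
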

\begin{proof}
	Let $j \colon (\kappa\!\CCat)^{\leq 1} \hra (\CCat)^{\leq 1}$ denote the full sub-1-category spanned by the $\kappa$-compact 2-categories, and observe that by \cref{rem:generators} we have that $\iota \colon \Theta_2 \hra (\kappa\!\CCat)^{\leq 1}$. Thus, since $\Theta_2$ is dense in $(\CCat)^{\leq 1}$, we find that $j$ is dense. Consequently, every 2-category $\bII$ is the colimit of the functor
	\[
	(\kappa\!\CCat)^{\leq 1}_{/\bII} \to (\kappa\!\CCat)^{\leq 1} \to (\CCat)^{\leq 1}.
	\]
	Since \cref{cor:smallcolimofcompacts} implies that $(\kappa\!\CCat)^{\leq 1}_{/\bII}$ has $\kappa$-small colimits and is therefore $\kappa$-filtered, the claim follows.
\end{proof}

\subsection{Oriented pullbacks}\label{subsec:oriented}

In this section, we introduce a class of finite partially (op)lax limits, which we call \emph{oriented pullbacks}, serving as suitable lax analogues of ordinary (strong) pullbacks. Oriented pullbacks form the foundation of the theory of fibrations internal to a 2-category (see \cref{sec:internfib}) and as we will later see, play the same role in 2-topos theory as finite limits do in 1-topos theory (see \cref{thm:giraud}). 

\begin{definition}
	Let $\Lambda^2_2$ denote the 1-category $0 \to 2 \leftarrow 1$ (i.e. the walking cospan) and consider the marked category $(\Lambda^2_2,P_0)$ where we mark the edge $0 \to 2$. Given a cospan $F \colon \Lambda^2_2 \to \bCC$, we refer to the object $F(0) \orientedtimesrl_{F(2)}F(1)= \lim^{P_0\text{-}\oplax}_{\Lambda^2_2}F$ as the \emph{oriented pullback} of $F$. Note that the oriented pullback fits into a universal laxly commutative square of the form
	\begin{equation}\label{eq:opullback}
		\begin{tikzcd}
	F(0) \orientedtimesrl_{F(2)}F(1) & {F(1)} \\
	{F(0)} & {F(2)}.
	\arrow[from=2-1, to=2-2]
	\arrow[from=1-2, to=2-2]
	\arrow[from=1-1, to=2-1]
	\arrow[from=1-1, to=1-2]
	\arrow[Rightarrow, shorten <=15pt, shorten >=15pt, from=2-1, to=1-2]
\end{tikzcd}
	\end{equation}
\end{definition}

\begin{remark}
	Let $(\Lambda^2_2,P_1)$ denote the marked category where we now mark the edge $1 \to 2$ in $\Lambda^2_2$. Denoting the $P_1\text{-}\oplax$ limit as $F(0)\orientedtimeslr_{F(2)} F(1)$ we obtain again a canonical laxly commutative square
	\[
		\begin{tikzcd}
	F(0) \orientedtimeslr_{F(2)}F(1) & {F(1)} \\
	{F(0)} & {F(2)}.
	\arrow[from=2-1, to=2-2]
	\arrow[from=1-2, to=2-2]
	\arrow[from=1-1, to=2-1]
	\arrow[from=1-1, to=1-2]
	\arrow[Rightarrow, shorten <=15pt, shorten >=15pt, from=1-2, to=2-1]
\end{tikzcd}
	\]
	In addition, there exists equivalences in $\bCC$
	\[
		F(0) \orientedtimeslr_{F(2)}F(1) \simeq \lim^{P_0\text{-}\lax} F, \enspace \enspace F(0) \orientedtimesrl_{F(2)}F(1) \simeq \lim^{P_1\text{-}\lax} F,
	\]
	which shows that definition of oriented pullbacks is canonical up to reindexing.
\end{remark}

\begin{construction}\label{cons:freefib}
	We introduce an important class of oriented pullbacks which will play a significant in the following sections, namely,
	\[
	\begin{tikzcd}
	\Free_c^{0}(x) & {x} \\
	{c} & {c},
	\arrow[from=2-1, to=2-2]
	\arrow[from=1-2, to=2-2]
	\arrow[from=1-1, to=2-1]
	\arrow[from=1-1, to=1-2]
	\arrow[Rightarrow, shorten <=8pt, shorten >=6pt, from=1-2, to=2-1]
\end{tikzcd}
\quad \quad \quad 
		\begin{tikzcd}
	\Free_{c}^{1}(x)  & {x} \\
	{c} & {c},
	\arrow[from=2-1, to=2-2]
	\arrow[from=1-2, to=2-2]
	\arrow[from=1-1, to=2-1]
	\arrow[from=1-1, to=1-2]
	\arrow[Rightarrow, shorten <=8pt, shorten >=6pt, from=2-1, to=1-2]
\end{tikzcd} 
	\]
	which are obtained by taking oriented pullbacks (note the different orientations) along the identity map. We call $\Free_{c}^{(\epsilon)}(x)$ the \emph{free $\epsilon$-fibration} where $\epsilon \in \{0,1\}$. We refer the reader to \cref{sec:internfib} (and \cref{prop:freeFibrationIsFreeFibration} for a justification of our choice of terminology) where we will develop a theory of internal fibrations in a 2-category. 

	If the map $p \colon x \to c$ is chosen to be the identity then we have equivalences $\Free_{c}^{0}(x) \simeq \Free_{c}^{1}(x)$ and we will use the notation $c^{[1]}$.
\end{construction}

\begin{lemma}\label{lem:characterizationffcotensors}
	Let $\bCC$ be a 2-category admiting (op)lax limits of \emph{constant} diagrams $[1] \to \bCC$ and products. Given a morphism $f \colon c \to d$, then it follows that $f$ is fully faithful (cf.\ \cref{def:ffinternal}) if and only if it induces a pullback square
	\[
		\begin{tikzcd}
			c^{[1]} \arrow[d] \arrow[r] & d^{[1]}\arrow[d] \\
			c \times c \arrow[r] & d \times d.
		\end{tikzcd}
	\]
	where the vertical maps are determined by the canonical inclusions $d_i \colon [0] \to [1]$ for $i \in \{0,1\}$.

\end{lemma}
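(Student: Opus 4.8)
The plan is to reduce the statement to a fact about $1$-categories by mapping out of an arbitrary object. A cone in $\bCC$ is a limit cone precisely when $\bCC(x,-)$ carries it to a limit cone in $\Cat$ for every $x$ (and pullbacks always exist in $\Cat$, so no existence issue arises on the target side). Hence the given square is a pullback in $\bCC$ if and only if, for every $x\in\bCC$, the induced functor
\[
\bCC(x,c^{[1]}) \to \bCC(x,d^{[1]})\times_{\bCC(x,d)\times\bCC(x,d)}\bigl(\bCC(x,c)\times\bCC(x,c)\bigr)
\]
is an equivalence of $1$-categories, where I have used that $\bCC(x,-)$ preserves the products $c\times c$, $d\times d$. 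So everything reduces to computing the mapping categories into $c^{[1]}$ and identifying this comparison functor.

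First I would identify the cotensor. Recall from \cref{cons:freefib} that $c^{[1]}$ (which exists by the hypothesis on (op)lax limits of constant $[1]$-diagrams) is the oriented pullback of the cospan $c\xrightarrow{\id}c\xleftarrow{\id}c$. Since the Yoneda embedding preserves partially (op)lax limits by \cref{prop:yonedacont}, and oriented pullbacks are computed pointwise in $\PSh_{\Cat}(\bCC)$, there is a natural equivalence
\[
\bCC(x,c^{[1]}) \core \bCC(x,c)\orientedtimesrl_{\bCC(x,c)}\bCC(x,c).
\]
The right-hand side is the oriented pullback of a pair of identity functors in $\Cat$, i.e.\ the comma category $\id\downarrow\id$, which is the arrow category $\Ar(\bCC(x,c))$, its two projections to $\bCC(x,c)$ being source and target; these match the maps $c^{[1]}\to c$ induced by $d_0,d_1\colon[0]\to[1]$. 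Writing $g:=\bCC(x,f)$ for the functor induced by postcomposition with $f$, the comparison functor above becomes the canonical functor $\Ar(\bCC(x,c)) \to \Ar(\bCC(x,d))\times_{\bCC(x,d)\times\bCC(x,d)}\bigl(\bCC(x,c)\times\bCC(x,c)\bigr)$.

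It then remains to prove the purely $1$-categorical assertion: for a functor $g\colon A\to B$, the square with corners $\Ar(A),\Ar(B),A\times A,B\times B$ and vertical source–target maps is a pullback if and only if $g$ is fully faithful. The cleanest argument uses that $\Ar(A)\to A\times A$ is the two-sided fibration classified, under straightening, by the mapping-space functor $A^{\op}\times A\to\Spc$; pulling back along $g\times g$ classifies $(a_0,a_1)\mapsto\Map_B(ga_0,ga_1)$, so the comparison functor corresponds to the natural transformation $\Map_A(-,-)\to\Map_B(g-,g-)$ induced by $g$, which is an equivalence exactly when $g$ is fully faithful. Combining the three steps, the original square is a pullback if and only if $\bCC(x,f)$ is fully faithful for every $x$, which by \cref{def:ffinternal} is precisely the assertion that $f$ is fully faithful in $\bCC$. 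I expect the main obstacle to be the bookkeeping in the second step: verifying that the two projections of the oriented pullback of identities really match the source/target maps, and hence that the Yoneda-image square is the expected arrow-category square with the correct identification of $g$; once the comparison functor is pinned down, the $1$-categorical equivalence is standard.
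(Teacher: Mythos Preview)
Your proposal is correct and follows essentially the same approach as the paper: reduce to $\bCC=\Cat$ via the Yoneda embedding (using \cref{prop:yonedacont} to identify $\bCC(x,c^{[1]})$ with the arrow category of $\bCC(x,c)$), and then verify the purely $1$-categorical statement that the arrow-category square is a pullback iff the functor is fully faithful. The paper's proof is simply the one-line version of what you have spelled out.
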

\begin{proof}
	The claim is clear if $\bCC=\Cat$, so that the general case follows from Yoneda's lemma in light of \cref{prop:yonedacont}.
\end{proof}

\begin{lemma}
	Let $\bCC$ be a 2-category admiting oriented pullbacks. Then $\bCC$ admits (op)lax limits indexed by $[1]$ and we have a commutative diagrams
	\[
		\begin{tikzcd}
			\FUN({[1]},\bCC)^{\laxoplax} \arrow[dr,"\lim^{\laxoplax}_{[1]}",swap] \arrow[r] & \FUN(\Lambda^2_2,\bCC)^{P_{0}\text{-}\laxoplax} \arrow[d,"\lim^{P_{0}\text{-}\laxoplax}_{\Lambda^2_2}"] \\
			& \bCC
		\end{tikzcd}
	\]
	where the top horizontal morphism is obtained by restricting along the map $\varphi \colon (\Lambda^2_2, P) \to ([1],\natural)$ which collapses the edge $0 \to 2$ onto $\{1\}$.
\end{lemma}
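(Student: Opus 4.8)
The plan is to recognize the restriction functor $\varphi^{*}$ as arising from a localization, and then to feed this into \cref{prop:locfinal}. Concretely, I would first observe that $\varphi \colon (\Lambda^2_2, P) \to ([1], \natural)$ exhibits $[1]$ as the localization of the cospan $\Lambda^2_2 = (0 \to 2 \leftarrow 1)$ at the single marked edge $a \colon 0 \to 2$ (here $P = P_0$ in the notation of the oriented pullback definition). Indeed, $\varphi$ sends $0, 2 \mapsto 1$ and $1 \mapsto 0$, carrying $a$ to $\id_1$ and the remaining generator $b \colon 1 \to 2$ to the nondegenerate arrow of $[1]$; thus $a$ is, up to equivalence, the only non-invertible $1$-morphism that $\varphi$ sends to an equivalence. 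Granting the localization statement, \cref{prop:locfinal} yields at once that $\varphi$ is $\laxoplax$-initial with respect to the marking $P = P_0$ (the collection of $1$-morphisms inverted by $\varphi$), with target carrying the trivial marking $\natural$.

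To verify the localization claim I would check the universal property at the level of mapping $2$-categories. For any $2$-category $\bDD$, precomposition $\varphi^{*}$ lands in the full sub-$2$-category $\FUN(\Lambda^2_2, \bDD)^{a\text{-inv}} \subseteq \FUN(\Lambda^2_2, \bDD)$ of cospans whose $a$-leg is invertible, since $\varphi^{*}G$ has $a$-leg $\id$. I would then produce an inverse by restricting along the complementary inclusion $\psi \colon [1] \to \Lambda^2_2$ that picks out $b \colon 1 \to 2$: one computes directly that $\psi^{*}\varphi^{*} \simeq \id$, while for an $a$-invertible cospan $c_0 \xrightarrow{u} c_2 \xleftarrow{v} c_1$ the equivalence $u$ furnishes an equivalence of cospans to $\varphi^{*}\psi^{*}$ applied to it, so $\varphi^{*}\psi^{*} \simeq \id$ on the $a$-invertible part. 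Since these equivalences are witnessed $2$-naturally in $\bDD$, this identifies $\varphi$ as the localization at $a$. (Equivalently, one may present $\Lambda^2_2[a^{-1}]$ as the pushout $\Lambda^2_2 \cup_{[1]} [0]$ collapsing $a$, which is $[1]$.)

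With $\laxoplax$-initiality established, \cref{def:final} gives, for every diagram $D \colon [1] \to \bCC$, that $\lim^{\laxoplax}_{[1]} D$ exists precisely when the $P_0\text{-}\laxoplax$ limit of $D \circ \varphi$ does, and that the canonical comparison map between them is an equivalence. But $D \circ \varphi \colon \Lambda^2_2 \to \bCC$ is a cospan, so its $P_0\text{-}\laxoplax$ limit is exactly an oriented pullback (in the lax case, the reversed orientation, which is again an oriented pullback by the remark after the definition in \cref{subsec:oriented}). Since $\bCC$ is assumed to admit oriented pullbacks, both $(\mathrm{op})$lax limits over $[1]$ exist, establishing the first claim; and as the comparison map of \cref{def:final} is natural in $D$, the pointwise equivalence upgrades to the asserted commutativity of the triangle, whose top edge is the restriction $\varphi^{*}$ supplied by \cref{const:functoriallaxfunct}.

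The main obstacle is the first step: pinning down the localization precisely at the $2$-categorical level, namely confirming that $\varphi$ inverts no $1$-morphism other than $a$ (up to equivalence) and that the universal property holds for $2$-categorical—not merely $(\infty,1)$-categorical—targets, so that \cref{prop:locfinal} applies with exactly the marking $E = P_0$. Once the localization is identified, the remainder is a formal unwinding of \cref{def:final} together with the definition of oriented pullbacks.
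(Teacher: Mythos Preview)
Your proposal is correct and takes essentially the same approach as the paper: the paper's proof is the single sentence ``immediate after noting that the map $\varphi$ is initial,'' and you have supplied precisely the missing justification by recognizing $\varphi$ as the localization of $\Lambda^2_2$ at the marked edge $0\to 2$ and then invoking \cref{prop:locfinal}. Your pushout description $\Lambda^2_2 \cup_{[1]} [0] \simeq [1]$ is the cleanest way to verify the localization claim, and the remainder (existence of the (op)lax $[1]$-limit and commutativity of the triangle) is, as you say, a direct unwinding of \cref{def:final}.
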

\begin{proof}
	The proof is immediate after noting that the map $\varphi$ is initial.
\end{proof}

\begin{proposition}\label{prop:pasting}
	Let $\bCC$ be a 2-category and consider a diagram of the form
	\[\begin{tikzcd}
	x & b \orientedtimesrl_{c}z & z \\
	a & b & c
	\arrow[from=1-1, to=1-2]
	\arrow[from=1-1, to=2-1]
	\arrow[from=1-2, to=1-3]
	\arrow[from=1-2, to=2-2]
	\arrow[from=1-3, to=2-3]
	\arrow[from=2-1, to=2-2]
	\arrow[Rightarrow, shorten <=10pt, shorten >=10pt, from=2-2, to=1-3]
	\arrow[from=2-2, to=2-3]
\end{tikzcd}\]
where the right square is an oriented pullback and the left square commutes. Then the composite square is an oriented pullback if and only if the left square is a strong pullback.
\end{proposition}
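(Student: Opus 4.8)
The plan is to reduce to the case $\bCC=\Cat$ via the Yoneda embedding, exactly as in the proof of \cref{lem:characterizationffcotensors}, and then to carry out an explicit computation with comma categories.

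Both conditions in the statement assert that a canonically induced comparison morphism is an equivalence: that the outer square is an oriented pullback says the comparison $x\to a\orientedtimesrl_c z$ is an equivalence, while that the left square is a strong pullback says the comparison $x\to a\times_b(b\orientedtimesrl_c z)$ is an equivalence. Since a morphism of $\bCC$ is an equivalence precisely when it is detected by all representable functors $\bCC(w,-)$, and since by \cref{prop:yonedacont} the Yoneda embedding preserves partially (op)lax limits---so that each $\bCC(w,-)\colon\bCC\to\Cat$ sends oriented pullbacks to oriented pullbacks, strong pullbacks to strong pullbacks, and strictly commuting squares to strictly commuting squares---it suffices to prove the biconditional after applying $\bCC(w,-)$ for every $w$. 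As the resulting statement is an equivalence verified in $\Cat$ for each $w$ separately, the quantifier over $w$ passes through, and we are reduced to the case $\bCC=\Cat$.

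In $\Cat$ (writing $A,B,C,Z,X$ for the categories obtained by applying $\bCC(w,-)$, and $G\colon B\to C$, $H\colon Z\to C$, $F\colon A\to B$ for the induced functors), the oriented pullback $B\orientedtimesrl_C Z$ is the comma category of triples $(b,z,\phi\colon Gb\to Hz)$ together with its projections to $B$ and $Z$. The crux of the argument is the canonical isomorphism
\[
	A\times_B (B\orientedtimesrl_C Z)\;\cong\;A\orientedtimesrl_C Z,
\]
obtained by noting that an object of the left-hand side is a pair $(a,(b,z,\phi))$ with $Fa=b$, which is exactly the datum of a triple $(a,z,\phi\colon G(Fa)\to Hz)$, i.e.\ an object of the comma category of the composite cospan $A\xrightarrow{GF}C\xleftarrow{H}Z$; the identical bookkeeping matches morphisms, and one checks that the structure $2$-cells correspond. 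Granting this, the comparison functor of the outer square factors as
\[
	X\xrightarrow{\;\lambda\;}A\times_B(B\orientedtimesrl_C Z)\xrightarrow{\;\cong\;}A\orientedtimesrl_C Z,
\]
where $\lambda$ is the comparison functor of the left square into the strong pullback; indeed both composites send an object of $X$ to the triple recording its images in $A$ and $Z$ together with the structure $2$-cell of its image in $B\orientedtimesrl_C Z$, which is precisely the pasted $2$-cell of the outer square. Since the second arrow is an isomorphism, the outer comparison is an equivalence if and only if $\lambda$ is, which is the desired biconditional.

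The one step that genuinely requires care---and hence the main obstacle---is the verification of the comma-category identification together with the factorization of the outer comparison at the level of morphisms and $2$-cells; once the orientation conventions for $\orientedtimesrl$ are pinned down this is a direct unwinding, but it is where all the content lies, the reduction to $\Cat$ being purely formal.
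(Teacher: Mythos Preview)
Your proof is correct and follows the same overall strategy as the paper: reduce to $\bCC=\Cat$ via \cref{prop:yonedacont}, then verify the claim there. The only difference is in how the verification in $\Cat$ is carried out. You establish the identification $A\times_B(B\orientedtimesrl_C Z)\cong A\orientedtimesrl_C Z$ by a direct unwinding of comma categories, whereas the paper instead rewrites the oriented pullback as a \emph{strong} pullback
\[
\begin{tikzcd}
a \orientedtimesrl_{c}z \arrow[d] \arrow[r] & \Fun({[1]},c) \arrow[d] \\
a \times z \arrow[r] & c \times c
\end{tikzcd}
\]
and then invokes the ordinary pasting law for strong pullbacks. The paper's route avoids the explicit bookkeeping with objects, morphisms, and $2$-cells that you flag as the main obstacle; your route is more hands-on but entirely equivalent in content.
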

\begin{proof}
	It is clear that by construction we always have a map $\Psi \colon x \to a \orientedtimesrl_{c}z$ so our claim reduces to showing $\Psi$ is invertible if and only if $x$ is a (strong) pullback. Invoking \cref{prop:yonedacont} we can reduce to the case of $\bCC=\Cat$. After some elementary manipulations we see that $ a \orientedtimesrl_{c}z$ can be obtained as the strong pullback
	\[
		\begin{tikzcd}
			a \orientedtimesrl_{c}z \arrow[d] \arrow[r] & \Fun({[1]},c) \arrow[d] \\
			 a \times z \arrow[r] & c \times c.
		\end{tikzcd}
	\]
 The claim now follows easily from the usual pasting law for strong pullbacks.
\end{proof}

\begin{corollary}\label{cor:pullbackoffree}
	Every oriented pullback can be obtained as a strong pullback of a free fibration (cf.\ \cref{cons:freefib}).\qed
\end{corollary}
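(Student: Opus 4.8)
The plan is to deduce this immediately from the pasting law of \cref{prop:pasting}. Given an arbitrary oriented pullback $a \orientedtimesrl_c z$ associated to a cospan $a \xrightarrow{f} c \xleftarrow{g} z$, I would first recall from \cref{cons:freefib} that the free fibration $\Free_c^1(z)$ is itself an oriented pullback: it is the oriented pullback of the cospan $c \xrightarrow{\id} c \xleftarrow{g} z$, equipped with its structure map $\Free_c^1(z) \to c$ given by the left leg.

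Next I would form the strong pullback $a \times_c \Free_c^1(z)$ of this structure map along $f \colon a \to c$, and assemble the data into a horizontally composable rectangle whose right-hand square is the defining oriented pullback square of $\Free_c^1(z)$ and whose left-hand square is this strong pullback. \cref{prop:pasting} then applies verbatim: the right square is an oriented pullback and the left square is a strong pullback, so the outer composite is again an oriented pullback.

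Finally I would identify the outer composite square with the defining square of $a \orientedtimesrl_c z$: its cospan is $a \xrightarrow{f} c \xleftarrow{g} z$ and the mediating $2$-cell has the correct orientation, so the universal property yields an equivalence $a \orientedtimesrl_c z \simeq a \times_c \Free_c^1(z)$, exhibiting the oriented pullback as a strong pullback of the free fibration $\Free_c^1(z) \to c$. The dual orientation $\orientedtimeslr$ is handled identically with $\Free_c^0$ in place of $\Free_c^1$. I do not expect any genuine obstacle, since the statement is a formal consequence of the pasting law; the only point requiring care is the bookkeeping of $2$-cell orientations needed to confirm that $c \orientedtimesrl_c z$ is realized by $\Free_c^1$ rather than $\Free_c^0$.
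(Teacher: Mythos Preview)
Your proposal is correct and matches the paper's approach exactly: the corollary is marked with \qed{} immediately after the statement, indicating it is a direct consequence of \cref{prop:pasting}, and your argument spells out precisely this deduction by specializing to $b=c$ with the identity map. Your identification of $c \orientedtimesrl_c z$ with $\Free_c^1(z)$ and the bookkeeping of $2$-cell orientations are accurate.
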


\begin{proposition}\label{prop:mapcatlaxarrow}
	Let $\bCC$ be a 2-category and let $f \colon x \to y$ and $g \colon a \to b $ be 1-morphisms in $\bCC$. Then we have oriented pullback squares
	\[
		\begin{tikzcd}
	\AR_{\bCC}^{\lax}(f,g) & {\bCC(x,a)} \\
	{\bCC(y,b)} & {\bCC(x,b)},
	\arrow["{\bCC(f,b)}",swap,from=2-1, to=2-2]
	\arrow["{\bCC(x,g)}",from=1-2, to=2-2]
	\arrow[from=1-1, to=2-1]
	\arrow[from=1-1, to=1-2]
	\arrow[Rightarrow, shorten <=8pt, shorten >=6pt, from=1-2, to=2-1]
\end{tikzcd} \enspace
	\begin{tikzcd}
	\AR_{\bCC}^{\oplax}(f,g) & {\bCC(x,a)} \\
	{\bCC(y,b)} & {\bCC(x,b)}.
	\arrow["{\bCC(f,b)}",swap,from=2-1, to=2-2]
	\arrow["{\bCC(x,g)}",from=1-2, to=2-2]
	\arrow[from=1-1, to=2-1]
	\arrow[from=1-1, to=1-2]
	\arrow[Rightarrow, shorten <=8pt, shorten >=6pt, from=2-1, to=1-2]
\end{tikzcd}
	\]
\end{proposition}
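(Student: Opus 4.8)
The plan is to verify both oriented pullback squares by comparing functors of points over test $1$-categories, reducing the statement to the concrete behaviour of the Gray tensor product and to the comma-category model of oriented pullbacks in $\Cat$. Since a $1$-category is determined by its core-valued nerve $[n]\mapsto\Fun([n],-)^{\core}$, it suffices to exhibit, naturally in $[n]\in\Delta$ and compatibly with the canonical comparison map, an equivalence between $\Fun([n],\AR_{\bCC}^{\lax}(f,g))^{\core}$ and $\Fun\bigl([n],\bCC(y,b)\orientedtimeslr_{\bCC(x,b)}\bCC(x,a)\bigr)^{\core}$. I focus on the lax square; the oplax one is formally dual, obtained by applying $(-)^{\co}$, which interchanges $\orientedtimeslr$ with $\orientedtimesrl$ and $\lax$ with $\oplax$.

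For the right-hand side I would unwind the universal property of the oriented pullback. By definition $\bCC(y,b)\orientedtimeslr_{\bCC(x,b)}\bCC(x,a)$ is a partially lax limit in $\Cat$ of the cospan $\bCC(y,b)\xrightarrow{\bCC(f,b)}\bCC(x,b)\xleftarrow{\bCC(x,g)}\bCC(x,a)$, so by \cref{def:elaxcones} a functor $[n]\to\bCC(y,b)\orientedtimeslr_{\bCC(x,b)}\bCC(x,a)$ is a lax cone out of the constant diagram $\underline{[n]}$. Unwinding such a cone, it is the data of two functors $p\colon[n]\to\bCC(y,b)$ and $q\colon[n]\to\bCC(x,a)$ together with a natural transformation $\bCC(f,b)\circ p\Rightarrow\bCC(x,g)\circ q$, the direction being dictated by the orientation of the square. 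Objectwise (via \cref{rem:cat2complete}, which identifies this limit with an honest comma category) this is precisely a family, indexed by objects and morphisms of $[n]$, of triples $(\alpha_1\colon y\to b,\ \alpha_0\colon x\to a,\ \sigma\colon g\alpha_0\Rightarrow\alpha_1 f)$.

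For the left-hand side I would rewrite the mapping category $\AR_{\bCC}^{\lax}(f,g)=\Nat^{\lax}_{[1],\bCC}(f,g)$ through the Gray tensor product. A functor $[n]\to\FUN([1],\bCC)^{\lax}(f,g)$ is the same as a functor out of the collage $2$-category $[1]([n])$ of \cref{def:theta2} (two objects, with mapping category $[n]$ between them) into $\FUN([1],\bCC)^{\lax}$ sending the two objects to $f$ and $g$. By the defining adjunction of the lax functor $2$-category (\cref{const:laxfunctorcat}), this is the data of a functor $[1]([n])\otimes_{\natural,\natural}[1]\to\bCC$ whose two ``endpoint'' arrows are fixed to be $f$ and $g$. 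The heart of the argument is then to compute this Gray tensor: using \cref{def:unmarkedgray} and \cref{def:markedgray} one checks that $[1]([n])\otimes_{\natural,\natural}[1]$ has objects mapping to $x,y,a,b$, the arrows $f$ and $g$, an $[n]$-indexed family of arrows $x\to a$ and $y\to b$, and a universal non-invertible Gray $2$-cell comparing the two composites $x\to b$. Reading off a functor to $\bCC$ with fixed endpoints thus produces exactly the same triples $(\alpha_1,\alpha_0,\sigma)$ as on the right-hand side, with the lax $2$-cell $\sigma\colon g\alpha_0\Rightarrow\alpha_1 f$ matching the orientation $\orientedtimeslr$.

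Matching these two descriptions naturally in $[n]$, and checking that the identifications are compatible with the projections to $\bCC(y,b)$ and $\bCC(x,a)$ and with the structural $2$-cell so that the comparison is the canonical one, shows that the square is an oriented pullback. I expect the main obstacle to be the explicit computation of $[1]([n])\otimes_{\natural,\natural}[1]$, and in particular verifying that its universal $2$-cell points in the direction reproducing a \emph{lax} rather than oplax transformation: getting this orientation right is exactly what separates the two squares. The bookkeeping can be kept minimal by probing only with $[n]=[0]$ and $[n]=[1]$, which already pins down objects and morphisms of the mapping category and hence the equivalence.
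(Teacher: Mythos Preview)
Your strategy is sound and close in spirit to the paper's: both reduce the claim to a Gray tensor computation and both ultimately verify the identification by probing with $[n]$ for $n\in\{0,1\}$. The organization, however, is genuinely different. The paper works with the oplax case and does \emph{not} compute $[1]([n])\otimes[1]$ directly. Instead it first identifies $\AR_{\bCC}^{\oplax}(f,g)$ as a fibre of $\FUN([1]\otimes[1],\bCC)^{\oplax}$ over $(f,g)$, and then uses the pushout decomposition
\[
  [1]\otimes[1]\;\simeq\; P\amalg_{\partial C_2} C_2
\]
(where $P=[2]\amalg_{\{0,2\}}[2]$) to split the problem: the $P$-factor yields the product $\bCC(x,a)\times\bCC(y,b)$, and the $C_2$-factor produces an auxiliary category $Q$ that one must then identify with $\bCC(x,b)^{[1]}$. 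Only this last identification is checked level-wise at $n=0,1$, and it reduces to showing that certain explicit collapses of $[1]\otimes[1]\otimes[1]$ and $C_2\otimes[1]$ are equivalent.

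The advantage of the paper's route is that the pushout cleanly isolates the one non-trivial piece (the $2$-cell direction) so that the level-wise check is about a single comparison map $Q\to\bCC(x,b)^{[1]}$. Your direct approach requires describing $[1]([n])\otimes[1]$ in full, and your sketch of it is incomplete for $n\ge 1$: beyond the $(n{+}1)$-many arrows $x\to a$ and $y\to b$ there are also the $2$-morphisms among them coming from morphisms of $[n]$, and an entire $[n]$-indexed family of Gray $2$-cells (with their compatibilities), not a single one. This is exactly the bookkeeping the paper's pushout decomposition is designed to avoid. Your approach would go through once that description is made precise, but the paper's factorisation makes the verification shorter and more transparent.
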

\begin{proof}
	We deal without loss of generality with the case of the oplax arrow 2-category. We start by observing that by definition we have a strong pullback square
	\[
		\begin{tikzcd}
			\AR_{\bCC}^{\oplax}(f,g) \arrow[r] \arrow[d] & \Fun([1],\Fun([1],\bCC)^\oplax)^\oplax \arrow[d] \\
			{[0]} \arrow[r,"f \times g"] & \Fun([1],\bCC)^\oplax \times \Fun([1],\bCC)^\oplax
		\end{tikzcd}
	\]
	Moreover, it also follows directly from the defintions that we have an equivalence of 2-categories 
	\[
		\FUN([1],\FUN([1],\bCC)^\oplax)^{\oplax} \simeq \FUN([1] \otimes [1],\bCC)^\oplax.
	\] 
	Let $\partial C_2$ be the underlying 1-category of the walking 2-morphism $C_2$ and denote $[2] \coprod_{\{0\}\coprod \{2\}}[2]=P$. Then it follows that we have a pushout square
	\[
		\begin{tikzcd}
			\partial C_2 \arrow[r] \arrow[d] & C_2 \arrow[d] \\
			P \arrow[r] & {[1]} \otimes {[1]}.
		\end{tikzcd}
	\]
	Moreover, since the Gray tensor product is compatible with (strong) colimits we obtain a pullback diagram
	\[
		\begin{tikzcd}
			\FUN([1]\otimes [1],\bCC)^\oplax \arrow[r] \arrow[d] & \FUN(C_2,\bCC)^\oplax \arrow[d] \\
			\FUN(P,\bCC)^\oplax \arrow[r] & \FUN(\partial C_2,\bCC)^\oplax.
		\end{tikzcd}
	\]
	Passing to the corresponding fibres we obtain another pullback diagram
	\[
		\begin{tikzcd}
			\AR_{\bCC}^\oplax(f,g) \arrow[r] \arrow[d] & Q \arrow[d] \\
			\bCC(x,a)\times \bCC(y,b) \arrow[r,swap] & \bCC(x,b) \times \bCC(x,b).
		\end{tikzcd}
	\]
	where $Q= \FUN(C_2,\bCC)^\oplax \times_{\bCC \times \bCC}(\{x\} \times\{b\})$ and the bottom horizontal map is given by $\bCC(x,g) \times \bCC(f,b)$. Note that $Q$ is a 1-category by \cref{prop:conservative}. To finish the proof, we will identify the right-most vertical functor with $\ev_0 \times \ev_1 \colon \bCC(x,b)^{[1]} \to \bCC(x,b) \times \bCC(x,b)$.

	There exists a canonical map $k \colon [1] \otimes [1] \to C_2$ which collapses the edges $\{i\}\otimes [1]$ for $i \in \{0,1\}$. This in turn yields a map 
	\[
		\widetilde{\Psi} \colon \FUN(C_2,\bCC)^{\oplax} \to \FUN([1]\otimes [1],\bCC)^{\oplax} \simeq \FUN([1],\FUN([1],\bCC)^{\oplax})^{\oplax}.
	\]
	Let us consider the following pullback diagram
	\[
		\begin{tikzcd}
			\bCC(x,b)^{[1]} \arrow[r] \arrow[d] & \FUN({[1]},\FUN({[1]},\bCC)^{\oplax})^{\oplax} \arrow[d] \\
			{[0]} \arrow[r] & \FUN({[1]},\bCC)^{\oplax} \times \FUN({[1]},\bCC)^{\oplax}
		\end{tikzcd}
	\]
	where the right-most vertical map is given by $\FUN({[1]},\ev_0)^{\oplax} \times \FUN({[1]},\ev_1)^{\oplax}$ and the bottom horizontal map selects the constant functors on $x$ and $b$ respectively. To see that this pullback diagram is precisely given by $\bCC(x,b)^{[1]}$ we observe that $\Fun([1],-)^{\oplax}$ preserves limits and consequently we have that the pullback of the diagram above is given by
	\[
		\Fun([1],\bCC(x,b))^{\oplax} \simeq \bCC(x,b)^{[1]}.
	\]
	Unraveling the definitions, we see that $\widetilde{\Psi}$ descends to a map $\Psi \colon Q \to \bCC(x,b)^{[1]}$ which commutes with the projections to $\bCC(x,b) \times \bCC(x,b)$. Our last verification will consist in showing that we have an equivalence of spaces
	\[
		\Gamma_n \colon \Fun([n],Q)^{\simeq} \to \Fun([n],\bCC(x,b)^{[1]})^{\simeq}, \enspace \text{ for } n \in \{0,1\}.
	\]
	We proceed by cases:
	\begin{itemize}
		\item[$n=0$)] Then it follows that $\Fun([0],Q)^{\simeq}= \Fun(C_2,\bCC)^{\simeq}$ and that $\Fun([0],\bCC(x,y)^{[1]}) \simeq \Fun(L,\bCC)^{\simeq}$ where $L$ is obtained from $[1] \otimes [1]$ by collapsing the edges $\{i\}\otimes [1]$ for $i \in \{0,1\}$. We conclude that $\Gamma_0$ is obtained by restriction along the equivalence $L \simeq C_2$.
		\item[$n=1$)] Let $N$ be obtained from $C_2 \otimes [1]$ by collapsing the edges $\{i\}\otimes [1]$ for $i \in \{0,1\}$ and let $M$ be obtained from $[1] \otimes [1] \otimes [1]$ by collapsing $\{i\}\otimes ([1]\otimes [1])$ for $i \in \{0,1\}$ to point. Let $\partial N$ be defined similarly by replacing $C_2$ with $\partial C_2$. As before, $\Gamma_1$ is obtained by restrictiong along the canonical map $M \to N$ which we claim is an equivalence. 

		Let $h_{j}\colon [2] \to P$ for $j\in\{1,2\}$ be the canonical morphisms in the colimit cone defining $P$. We define $U_1$ from $[2] \otimes [1]$ by collapsing $\{0 \to 1\} \otimes [1]$, as well as  $\{2\} \otimes [1]$, to a point. Similarly, we construct $\partial U$ from $[1]\otimes [1]$ by collapsing the edges $\{i\} \otimes [1]$ for $i \in \{0,1\}$. We further define $U_2$ by collapsing $\{1 \to 2\} \otimes [1]$ as well as $\{0\}\otimes [1]$ to a point. It is now easy to verify that the maps
		\[
			r_1 \colon [2] \otimes [1] \to [1] \otimes [1], \enspace (i,j) \mapsto \begin{cases}
				(0,j), \enspace \text{ if } i=0,\\
				(1-i,j), \enspace \text{ else,}
			\end{cases}
		\]
		\[
			r_2 \colon [2] \otimes [1] \to [1] \otimes [1], \enspace (i,j) \mapsto \begin{cases}
				(1,j), \enspace \text{ if } i= 2,\\
				(i,j), \enspace \text{ else,}
			\end{cases}
		\]
		descend to equivalences $U_i \xrightarrow{\simeq} \partial U$. Since the Gray tensor product is compatible with colimits we obtain a pushout square
		\[\begin{tikzcd}
	{\partial N} & N \\
	{U_1 \coprod_{\{0\}\coprod\{2\}}U_2} & M.
	\arrow[from=1-1, to=1-2]
	\arrow[from=1-1, to=2-1]
	\arrow[from=1-2, to=2-2]
	\arrow[from=2-1, to=2-2]
\end{tikzcd}\]
 The previous computation reveals that $U_1 \coprod_{\{0\}\coprod \{2\}}U_2 \simeq \partial N$ and so $\Gamma_1 \colon M \to N$ is an equivalence.

	\end{itemize}
This concludes the proof.
\end{proof}

\begin{corollary}\label{cor:mapcatstrongarrow}
	Let $\bCC$ be a 2-category and let $f \colon x \to y$ and $g \colon a \to b $ be 1-morphisms in $\bCC$. Then we have a strong pullback square
	\[
		\begin{tikzcd}
	\AR_{\bCC}(f,g) & {\bCC(x,a)} \\
	{\bCC(y,b)} & {\bCC(x,b)}.
	\arrow["{\bCC(f,b)}",swap,from=2-1, to=2-2]
	\arrow["{\bCC(x,g)}",from=1-2, to=2-2]
	\arrow[from=1-1, to=2-1]
	\arrow[from=1-1, to=1-2]
\end{tikzcd}
	\]
	of categories.\qed
\end{corollary}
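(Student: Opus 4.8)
The plan is to obtain the corollary from \cref{prop:mapcatlaxarrow} by cutting the oriented (oplax) pullback down to its locus of invertible structure $2$-cells, which is exactly the strong pullback. The whole argument takes place in $\Cat$, since the four corners are genuine mapping categories, so no further Yoneda reduction is needed beyond what is already built into \cref{prop:mapcatlaxarrow}.

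First I would use \cref{def:strongarrow}: the comparison $\AR_{\bCC}\to\ARplax_{\bCC}$ is the identity on objects and locally fully faithful, so for the fixed $f,g$ it yields a fully faithful inclusion $\AR_{\bCC}(f,g)\hookrightarrow\ARplax_{\bCC}(f,g)$. The crux of this step is to pin down the essential image. Since $\AR_{\bCC}=\FUN([1],\bCC)$ arises from the marked Gray tensor with the $\sharp$-marking on $[1]$, while $\ARplax_{\bCC}=\FUN([1],\bCC)^{\oplax}$ uses the $\natural$-marking (cf.\ \cref{const:laxfunctorcat,def:sharpmarkingcolim}), a $1$-morphism $f\to g$ lies in the strong mapping category precisely when its structure $2$-cell, indexed by the marked edge $0\to1$, is invertible.

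Next I would invoke \cref{prop:mapcatlaxarrow} to identify $\ARplax_{\bCC}(f,g)$ with the oriented pullback $\bCC(y,b)\orientedtimesrl_{\bCC(x,b)}\bCC(x,a)$, whose objects are triples $(\beta\colon y\to b,\ \alpha\colon x\to a,\ \phi\colon\beta f\Rightarrow g\alpha)$; under the identification of the previous step, the full subcategory $\AR_{\bCC}(f,g)$ consists of exactly those triples for which $\phi$ is invertible. It then remains to recognize this invertibility locus as the strong pullback of the cospan $\bCC(f,b),\bCC(x,g)$. Reusing the presentation from the proof of \cref{prop:pasting}, the oriented pullback is the strong pullback of $\bCC(y,b)\times\bCC(x,a)\to\bCC(x,b)\times\bCC(x,b)$ against $\ev_0\times\ev_1\colon\Fun([1],\bCC(x,b))\to\bCC(x,b)\times\bCC(x,b)$, whereas the strong pullback of the original cospan is the analogous pullback taken against $\Fun(J,\bCC(x,b))$, with $J$ the free-living isomorphism. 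As $\Fun(J,\bCC(x,b))\hookrightarrow\Fun([1],\bCC(x,b))$ is the full subcategory on the invertible arrows, the strong pullback is precisely the full subcategory of the oriented pullback on those objects with invertible $\phi$. Combining the three observations produces the desired strong pullback square.

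The step I expect to be the main obstacle is the identification in the second paragraph: carefully matching the two flavours of invertibility, namely that a morphism of the strong arrow category is the same datum as an oplax morphism with invertible structure $2$-cell, and that under \cref{prop:mapcatlaxarrow} this condition coincides with invertibility of $\phi$. Everything else is either cited or a standard fact about $\Cat$ (the homotopy pullback is the iso-comma category).
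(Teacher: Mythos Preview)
Your approach is correct and is essentially what the paper intends: the corollary carries a bare \texttt{\textbackslash qed}, so it is meant to follow immediately from \cref{prop:mapcatlaxarrow} by passing to the locus where the structure $2$-cell is invertible, exactly as you describe. The identification you flag as the main obstacle is unproblematic here, since the proof of \cref{prop:mapcatlaxarrow} already exhibits the oriented pullback as the strong pullback of $\bCC(x,a)\times\bCC(y,b)$ against $\bCC(x,b)^{[1]}\to\bCC(x,b)\times\bCC(x,b)$, and replacing $\bCC(x,b)^{[1]}$ by its full subcategory of equivalences (equivalently $\bCC(x,b)$ via the diagonal) yields the claim.
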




\section{Kan extensions and free cocompletions}\label{sec:kancocompletion}
In this chapter, we develop a version of the theory of Kan extensions and free cocompletions for 2-categories. In \cref{sec:Kanext}, we built upon previous work of the first-named author to derive a general existence theorem of Kan extension for functors between 2-categories. We use this result in \cref{sec:freeCocompletion} to deduce the universal property of 2-categories of categorical presheaves as free cocompletions. We use this universal property in \cref{sec:tensoring} to show that every 2-category with sufficiently many colimits is tensored over $\Cat$ in an essentially unique way.

\subsection{Kan extensions in 2-category theory}\label{sec:Kanext}

\begin{definition}\label{def:oplaxComplete}
	A (large) 2-category $\bCC$ is said to be \emph{(op)lax complete} if $\bCC$ admits all partially (op)lax small limits, and a functor $f\colon \bCC\to\bDD$ is \emph{(op)lax continuous} if it preserves all partially (op)lax small limits that exist in $\bCC$. We simply call a 2-category \emph{complete} if it is both lax and oplax complete, and we similarly call a functor \emph{continuous} if it is both lax and oplax continuous. 
	
	Dually, a (large) 2-category $\bCC$ is said to be \emph{(op)lax cocomplete} if $\bCC$ admits all partially (op)lax small colimits, and a functor $f\colon \bCC\to\bDD$ is \emph{(op)lax cocontinuous} if it preserves all partially (op)lax small colimits that exist in $\bCC$. Likewise, we call a 2-category \emph{cocomplete} if it is both lax and oplax cocomplete, and we call a functor \emph{cocontinuous} if it is both lax and oplax cocontinuous.
\end{definition}

\begin{example}\label{ex:presheavesComplete}
	If $\bII$ is a 2-category, then $\PSh_{\Cat}(\bII)$ is both complete and  cocomplete (see \cref{rem:cat2complete} and \cref{prop:colimfunctorcat}). Moreover, the Yoneda embedding $h_{\bII}\colon\bII\into\PSh_{\Cat}(\bII)$ is continuous (see \cref{prop:yonedacont}).
\end{example}

\begin{definition}\label{def:RKE}
	Let $f\colon\bCC\to\bDD$ be a functor of 2-categories and let $\bEE$ be an arbitrary 2-category. Then the functor $f_\ast$ of \emph{right Kan extension along $f$} is defined as the right adjoint of the restriction map
	\begin{equation*}
		f^\ast\colon\FUN(\bDD,\bEE)\to\FUN(\bCC,\bEE),
	\end{equation*}
	provided that it exists.
	
	Dually, the functor $f_!$ of \emph{left Kan extension along $f$} is defined as the left adjoint of $f^\ast$, provided that it exists.
\end{definition}

\begin{proposition}\label{prop:coYoneda}
	Let $\bII$ be a 2-category and let $F\colon\bII^\op\to \Cat$ be a presheaf. Let $p\colon \LaxOver{\bII}{F}\to \bII$ be the associated  $(1,0)$-fibration, and let $\cart$ denote the marking on $\LaxOver{\bII}{F}$ given by the cartesian edges. Then the canonical map
	\begin{equation*}
		{\colim}^{\arglax{\cart}}_{\LaxOver{\bII}{F}}(h_{\bII}p)\to F
	\end{equation*}
	is an equivalence. Similarly, let $p^\prime\colon \OplaxOver{\bII}{F}\to\bII$ be the $(1,1)$-fibration associated to $F(-)^\op$. Then the canonical map
	\begin{equation*}
		\colim^{\argoplax{\cart}}_{\OplaxOver{\bII}{F}} (h_{\bII}p^\prime)\to F
	\end{equation*}
	is an equivalence.
\end{proposition}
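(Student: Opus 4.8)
The plan is to prove the first statement and deduce the second by $\co$-duality: the functor $\OplaxOver{\bII}{F}\to\bII$ is the $(1,1)$-fibration classified by $F(-)^\op$, and passing to $\bII^\co$ interchanges $(1,0)$- and $(1,1)$-fibrations together with the $\lax$/$\oplax$ decorations while sending $F$ to $F(-)^\op$, so the two statements are formally equivalent. I will therefore concentrate on the lax case. First I would pin down the comparison map. The $(1,0)$-fibration $p$ carries a tautological $\cart$-lax cocone $h_\bII p\Rightarrow\underline F$: on an object $x=(j,\xi)$ of $\LaxOver{\bII}{F}$ (with $j=p(x)$ and $\xi\in F(j)$) it is the map $h_\bII(j)=\bII(-,j)\to F$ classified by $\xi$ under the Yoneda lemma, and its lax-naturality datum on a morphism covering $f\colon j\to j'$ is the comparison supplied by the $F$-action; on the $\cart$-marked edges, where $\xi=f^\ast\xi'$, this datum is invertible, as required. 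By the universal property of the lax colimit (\cref{def:elaxlimit}) this cocone induces the canonical map in question, and it remains to show it is an equivalence.

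Since equivalences in the functor $2$-category $\PSh_{\Cat}(\bII)=\FUN(\bII^\op,\Cat)$ are detected pointwise, and each evaluation $\ev_i\colon\PSh_{\Cat}(\bII)\to\Cat$ is a left adjoint (its right adjoint being the right Kan extension $i_\ast$ along $i\colon[0]\to\bII^\op$), \cref{prop:adjpreservelimtis} lets me commute $\ev_i$ past the colimit (cf.\ also \cref{prop:colimfunctorcat}). Using the identity $\ev_i\circ h_\bII=\bII(i,-)$, the problem reduces to proving that for every $i\in\bII$ the induced map
\[
\colim^{\arglax{\cart}}_{\LaxOver{\bII}{F}}\bII(i,p(-))\to F(i)
\]
is an equivalence in $\Cat$.

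To compute the left-hand side I would invoke the fibrational model of \cref{rem:cat2complete}: as $\bII(i,p(-))$ takes values in $\Cat$, its $\cart$-lax colimit is obtained from the total space of the associated cocartesian fibration by localizing at the cartesian $2$-morphisms and at the cocartesian $1$-morphisms lying over $\cart$-marked edges. By \cref{thm:str}, together with the identification (recorded after \cref{def:laxslices}) of the lax under-slice $\bII_{i\upslash}\to\bII$ as the $(0,1)$-fibration classified by $\bII(i,-)$, this total space is the pullback $\bXX:=\LaxOver{\bII}{F}\times_\bII\bII_{i\upslash}$, whose objects lying over $j$ are pairs $(\xi\in F(j),\ \alpha\colon i\to j)$. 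I would then exhibit the collapse functor $\Phi\colon\bXX\to F(i)$, $(\xi,\alpha)\mapsto\alpha^\ast\xi$, and prove that it presents $F(i)$ as the localization of $\bXX$ at precisely those two classes of morphisms; tracing through the construction of the cocone identifies the resulting equivalence with the comparison map above.

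The main obstacle is this last identification of $F(i)$ as a localization of $\bXX$: it is the genuinely $2$-dimensional incarnation of the co-Yoneda lemma, and it cannot be reduced to a purely $1$-categorical density statement because the cartesian $2$-cells must be inverted as well. I would handle it either by passing to an explicit model (rigidifying via $\mathfrak{C}^{\mathbf{sc}}$ as in \cref{rem:inclusionconeff}, where the localization becomes a tractable computation on marked simplicial mapping sets), or by checking the universal property of the localization directly against test $2$-categories: $\Phi$ sends each cocartesian edge over a $\cart$-marked morphism and each cartesian $2$-cell to an equivalence, it is essentially surjective, and restriction along $\Phi$ is an equivalence onto the functors inverting the prescribed morphisms. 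Once this is established, the two displayed equivalences combine to give the claim, and the oplax statement follows by the duality noted at the outset.
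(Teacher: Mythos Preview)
Your reduction via $\co$-duality for the second statement matches the paper. For the first statement, however, your route is genuinely different from the paper's and leaves the hardest step open.

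The paper does not compute pointwise. Instead it invokes an identification (from \cite[Corollary~4.2.4]{AGH24}) of the partially lax colimit with a \emph{weighted} colimit,
\[
{\colim}^{\arglax{\cart}}_{\LaxOver{\bII}{F}}(h_{\bII}p)\;\simeq\;{\colim}^{F} h_{\bII},
\]
and then the universal property of weighted colimits together with Yoneda gives, for every $G\in\PSh_{\Cat}(\bII)$,
\[
\Nat({\colim}^{F} h_{\bII},\,G)\;\simeq\;\Nat\bigl(F,\,\Nat(h_{\bII}(-),G)\bigr)\;\simeq\;\Nat(F,G),
\]
which finishes immediately. The entire argument is three lines once the lax-colimit/weighted-colimit dictionary is available.

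Your strategy---evaluate pointwise, identify the $(0,1)$-unstraightening as the pullback $\bXX=\LaxOver{\bII}{F}\times_{\bII}\bII_{i\upslash}$, and exhibit $F(i)$ as the required localization of $\bXX$---is correct in outline, and the reductions you make (pointwise detection, \cref{prop:adjpreservelimtis}, \cref{rem:cat2complete}) are all legitimate. But you yourself flag the localization step as ``the main obstacle'' and only gesture at two possible attacks without carrying either out. That step is not routine: $\bXX$ is a genuine $2$-category, and verifying that $\Phi$ satisfies the universal property of localization at \emph{both} the cartesian $2$-cells and the cocartesian lifts of $\cart$-edges requires real work (essentially a $2$-categorical finality/density statement). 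So as written your proof has a gap at exactly the point where the content lies.

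The comparison is instructive: the weighted-colimit formalism packages precisely this localization computation into a universal property that interacts cleanly with Yoneda, which is why the paper's proof is so short. Your approach would eventually recover the same content, but at the cost of reproving by hand what the weighted-colimit machinery already encodes.
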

\begin{proof}
	We begin by showing the first statement.
	By \cite[Corollary~4.2.4]{AGH24}, we have an identification
	\begin{equation*}
		{\colim}^{\arglax{\cart}}_{\LaxOver{\bII}{F}}(h_{\bII}p)\simeq{\colim}^F h_{\bII},
	\end{equation*}
	where $\colim^F h_{\bII}$ is the $F$-weighted colimit of $h_{\bII}$. Together with the fact that the universal property of weighted colimits and Yoneda's lemma give rise to equivalences
	\begin{equation*}
		\Nat_{\bII^\op,\Cat}({\colim}^F h_{\bII}, G)\simeq \Nat_{\bII^\op,\Cat}(F, \Nat_{\bII^\op,\Cat}(h_{\bII}(-), G)) \simeq  \Nat_{\bII^\op,\Cat}(F, G)
	\end{equation*}
	that are natural in $G\in\PSh_{\Cat}(\bII)$, this proves the claim.
	
	As for the second statement, observe that $(p^\prime)^\co\colon (\OplaxOver{\bII}{F})^\co\to\bII^\co$ is the $(1,0)$-fibration that straightens to $F(-)^\op\colon(\bII^\co)^\op\to\Cat$. Thus the first part of the proof implies that the canonical map
	\begin{equation*}
		\colim^{\arglax{\cart}}_{(\OplaxOver{\bII}{F})^\co}(h_{\bII^\co} (p^\prime)^\co)\to F(-)^\op
	\end{equation*}
	is an equivalence in $\PSh_{\Cat}(\bII^\co)$. In light of the commutative diagram 
	\begin{equation*}
		\begin{tikzcd}
			& \bII^\co\arrow[dr, hookrightarrow, "h_{\bII^\co}"]\arrow[dl, "h_{\bII}^\co"', hookrightarrow]&\\
			\PSh(\bII)^\co\arrow[rr, "\simeq"] && \PSh(\bII^\co)
		\end{tikzcd}
	\end{equation*}
	in which the horizontal equivalence acts by carrying a presheaf $G$ on $\bII$ to $G(-)^\op$, the claim follows.
\end{proof}

\begin{lemma}\label{lem:productYoneda}
	Let $\bCC$ and $\bEE$ be 2-categories. Then there is a commutative square
	\begin{equation*}
		\begin{tikzcd}
			\bCC\times\bDD\arrow[r, "h_{\bCC\times\bDD}"]\arrow[d, "h_{\bCC}\times h_{\bDD}"] & \PSh_{\Cat}(\bCC\times\bDD)\\
			\PSh_{\Cat}(\bCC)\times\PSh_{\Cat}(\bDD)\arrow[r, "\pr_0^\ast\times\pr_1^\ast"] & \PSh_{\Cat}(\bCC\times\bDD)\times\PSh_{\Cat}(\bCC\times\bDD).\arrow[u, "-\times -"]
		\end{tikzcd}
	\end{equation*}
\end{lemma}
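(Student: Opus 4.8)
The plan is to produce a canonical natural transformation filling the square and then show it is an equivalence by a pointwise calculation on mapping categories. Write $\pr_0\colon\bCC\times\bDD\to\bCC$ and $\pr_1\colon\bCC\times\bDD\to\bDD$ for the two projections. For a fixed object $(c,d)$, applying $\pr_0$ to $1$- and $2$-morphisms yields, for every $(a,b)$, a functor $(\bCC\times\bDD)((a,b),(c,d))\to\bCC(a,c)$, and these assemble into a map of presheaves $h_{\bCC\times\bDD}(c,d)\to\pr_0^\ast h_\bCC(c)$; similarly $\pr_1$ gives $h_{\bCC\times\bDD}(c,d)\to\pr_1^\ast h_\bDD(d)$. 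By the universal property of the binary product in $\PSh_\Cat(\bCC\times\bDD)$ these combine into a single comparison map
\[
h_{\bCC\times\bDD}(c,d)\longrightarrow \pr_0^\ast h_\bCC(c)\times\pr_1^\ast h_\bDD(d),
\]
which is natural in $(c,d)$ because the projections are $2$-functorial; this is exactly the $2$-cell required to fill the square.

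It then remains to check that this comparison is an equivalence, and since equivalences in $\PSh_\Cat(\bCC\times\bDD)=\FUN((\bCC\times\bDD)^\op,\Cat)$ are detected by evaluation at objects, it suffices to check this after evaluating at each $(a,b)$. Here I would use that binary products in the functor $2$-category are computed pointwise (cf.\ \cref{ex:presheavesComplete}), so that the right-hand side evaluates to $\bCC(a,c)\times\bDD(b,d)$, while the left-hand side evaluates to $(\bCC\times\bDD)((a,b),(c,d))$.

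The calculation then reduces to exhibiting
\[
(\bCC\times\bDD)((a,b),(c,d))\xrightarrow{\ \simeq\ }\bCC(a,c)\times\bDD(b,d)
\]
as the equivalence induced by the projections. This is the statement that mapping categories in a product of $2$-categories decompose as a product, which follows directly from the universal property of $\bCC\times\bDD$ in $\CCat$: functors into a product out of each of the generating shapes $[0]$, $[1]$ and $C_2$ (cf.\ \cref{def:theta2}) correspond to pairs of functors into the two factors, and this identification on objects, $1$-morphisms and $2$-morphisms is precisely the one induced by $\pr_0$ and $\pr_1$.

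The main obstacle is the coherence of this last decomposition: one must know not merely that the two categories $(\bCC\times\bDD)((a,b),(c,d))$ and $\bCC(a,c)\times\bDD(b,d)$ are equivalent for each choice of objects, but that the comparison map assembled above is a $2$-natural equivalence of functors, so that the pointwise equivalences glue to an equivalence of presheaves and, after varying $(c,d)$, to the invertible $2$-cell filling the square. Since every part of the comparison is built from the $2$-functorial projections together with the universal property of products, the required naturality is inherited from these, and no further coherence data needs to be checked by hand.
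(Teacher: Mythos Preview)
Your argument is correct and is the standard direct verification: build the comparison map from the projections and the universal property of the product, then check it is an equivalence pointwise using that mapping categories in a product of $2$-categories are products of mapping categories. The paper does not actually give its own proof of this lemma; it simply refers to \cite[Lemma~6.2.6]{Martini2022}, so there is nothing substantive to compare against. Your proof is exactly the kind of argument one would expect behind that citation.
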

\begin{proof}
	As in the proof of \cite[Lemma~6.2.6]{Martini2022}.
\end{proof}

\begin{lemma}\label{lem:PresheavesProductBilinear}
	For any 2-category $\bCC$,  the product functor $-\times -\colon \PSh_{\Cat}(\bCC)\times\PSh_{\Cat}(\bCC)\to\PSh_{\Cat}(\bCC)$ preserves partially (op)lax colimits in both variables.
\end{lemma}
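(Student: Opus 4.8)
The plan is to reduce the statement to the corresponding fact in $\Cat$ by working objectwise, and then to deduce that $\Cat$-level statement from the observation that the cartesian product with a fixed object is a left adjoint of 2-categories.

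First I would fix a presheaf $G\in\PSh_{\Cat}(\bCC)$ and show that $-\times G$ preserves partially (op)lax colimits; since the product is symmetric, the same argument applies to the other variable. Let $(\bII,E)$ be a marked 2-category and let $F\colon\bII\to\PSh_{\Cat}(\bCC)$ be a diagram, abbreviated $i\mapsto F_i$. Because $\PSh_{\Cat}(\bCC)=\FUN(\bCC^\op,\Cat)$ admits all partially (op)lax colimits (\cref{ex:presheavesComplete}, \cref{prop:colimfunctorcat}), the colimit $\colim^{\elaxoplax}_{\bII}(F_i\times G)$ exists, and applying $-\times G$ to the colimit cocone produces a canonical comparison map
\begin{equation*}
	\theta\colon \colim^{\elaxoplax}_{\bII}(F_i\times G)\to\Big(\colim^{\elaxoplax}_{\bII}F_i\Big)\times G,
\end{equation*}
and the whole task is to prove that $\theta$ is an equivalence.

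Next I would reduce the verification that $\theta$ is an equivalence to a pointwise statement. For each $c\in\bCC$ the evaluation functor $\ev_c\colon\PSh_{\Cat}(\bCC)\to\Cat$ is restriction along $c\colon[0]\to\bCC^\op$; it is simultaneously a left and a right adjoint, its adjoints being left and right Kan extension along $c$. Hence by \cref{prop:adjpreservelimtis} it preserves partially (op)lax colimits as well as products, the latter being computed objectwise. Since equivalences in $\PSh_{\Cat}(\bCC)$ are detected pointwise (a map of $\Cat$-valued presheaves is an equivalence if and only if it is one objectwise), it will be enough to show that each $\ev_c(\theta)$ is an equivalence; and under the identifications just described $\ev_c(\theta)$ becomes precisely the comparison map
\begin{equation*}
	\colim^{\elaxoplax}_{\bII}\big(F_i(c)\times G(c)\big)\to\Big(\colim^{\elaxoplax}_{\bII}F_i(c)\Big)\times G(c)
\end{equation*}
in $\Cat$. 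Everything therefore comes down to the claim that, for a fixed object $D\in\Cat$, the functor $-\times D\colon\Cat\to\Cat$ preserves partially (op)lax colimits. For this I would invoke the cartesian closure of $\Cat$: the natural equivalence $\Fun(C\times D,E)\simeq\Fun(C,\Fun(D,E))$ exhibits the internal-hom functor $\Fun(D,-)$ as a right adjoint to $-\times D$ at the level of mapping categories, so $-\times D$ is a left adjoint of 2-categories, and \cref{prop:adjpreservelimtis} then yields that it preserves all partially (op)lax colimits.

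The hard part will be the reduction step rather than the $\Cat$-level input: one must be sure that both the colimits and the products in play are genuinely computed objectwise in $\PSh_{\Cat}(\bCC)$, so that $\ev_c$ transforms $\theta$ into the $\Cat$-level comparison map above. This is exactly what licenses checking the equivalence objectwise, and it rests on the evaluation functors being left adjoints (for colimits) and right adjoints (for products), together with the pointwise detection of equivalences in functor 2-categories. Once this bookkeeping is secured, the genuinely 2-categorical ingredient—that cartesian product with a fixed category is a 2-left-adjoint—makes the $\Cat$-level statement immediate.
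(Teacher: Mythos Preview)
Your approach is correct and genuinely different from the paper's. The paper works on the fibrational side of the straightening equivalence $\PSh_{\Cat}(\bCC)\simeq\FIB^{(1,0)}(\bCC)$: if $p\colon\LaxOver{\bCC}{F}\to\bCC$ unstraightens $F$, then $-\times F$ is identified with the composite $p_!\,p^\ast$ of pullback along $p$ followed by postcomposition with $p$; since $p_!\dashv p^\ast$ and $p^\ast$ itself has a right adjoint (by a result from \cite{Abellan2023}), both factors are left adjoints and hence cocontinuous. Your pointwise reduction to $\Cat$ together with the cartesian closure of $\Cat$ is more elementary and avoids both the Grothendieck construction and the external input on right adjoints to pullback; the paper's argument, by contrast, makes the cocontinuity visible as an adjunction at the level of $\PSh_{\Cat}(\bCC)$ itself.

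One caveat on your justification: you argue that $\ev_c$ preserves partially (op)lax colimits because it is a left adjoint, its right adjoint being right Kan extension along $c$. But the general existence theorem for such Kan extensions (\cref{thm:existenceRKE}) is proved \emph{after} the present lemma and in fact invokes it, so this phrasing is circular as stated. The content you need is simply that partially (op)lax (co)limits in $\FUN(\bCC^\op,\Cat)$ are computed pointwise, which is exactly what the construction in \cref{prop:colimfunctorcat} gives and does not depend on \cref{lem:PresheavesProductBilinear}. Replacing the Kan-extension sentence with a direct appeal to pointwise computation removes the circularity and leaves the rest of your argument intact.
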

\begin{proof}
	It suffices to show that for a fixed presheaf $F\colon\bCC^\op\to\Cat$, the functor $-\times F$ is cocontinuous.
	Let $p\colon\LaxOver{\bCC}{F}\to\bCC$ be the unstraightening of $F$. Then $-\times F$ can be identified with the composition
	\begin{equation*}
		\Fib^{(1,0)}(\bCC)\xrightarrow{p^\ast} \Fib^{(1,0)}_{\arglax{\cart}}(\LaxOver{\bCC}{F})\xrightarrow{p_!} \Fib^{(1,0)}(\bCC),
	\end{equation*}
	where $p^\ast$ is given by pullback along $p$ and $p_!$ is the forgetful functor. Now $p_!$ is left adjoint to $p^\ast$ and $p^\ast$ itself has a right adjoint by \cite[Theorem~4.33]{Abellan2023}, hence the claim follows.
\end{proof}

\begin{theorem}\label{thm:existenceRKE}
	Let $f\colon\bCC\to\bDD$ be a functor, and let $\bEE$ be a 2-category which admits either partially oplax limits indexed by $(\OplaxUnder{\bCC}{d}, \cocart)$ or partially lax limits indexed by $(\LaxUnder{\bCC}{d}, \cocart)$, for each $d\in\bDD$.
	Then the functor
	\begin{equation*}
		f^\ast\colon \FUN(\bDD,\bEE)\to\FUN(\bCC,\bEE)
	\end{equation*}
	admits a right adjoint $f_\ast$. In particular, this right adjoint exists if $\bCC$ is small, $\bDD$ is locally small and $\bEE$ is (op)lax complete. Moreover, if $f$ is fully faithful, then $f_\ast$ is fully faithful as well.
\end{theorem}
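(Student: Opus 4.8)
The plan is to realize $f_\ast$ through the classical pointwise formula for the right Kan extension and then to verify the defining universal property directly; I treat the lax hypothesis, the oplax case being entirely dual. Fix $g\colon\bCC\to\bEE$, and for each $d\in\bDD$ let $\pi_d\colon\LaxUnder{\bCC}{d}\to\bCC$ be the projection out of the lax under-category, equipped with the marking $\cocart$ of its cocartesian edges over $\bCC$. Set
\begin{equation*}
	(f_\ast g)(d):=\lim^{\arglax{\cocart}}_{\LaxUnder{\bCC}{d}}(g\circ\pi_d),
\end{equation*}
which exists by hypothesis. A morphism $d\to d^\prime$ induces by precomposition a functor $\LaxUnder{\bCC}{d^\prime}\to\LaxUnder{\bCC}{d}$ over $\bCC$ respecting the $\cocart$-marking, hence a comparison map on limits; the coherence of this assignment, which promotes the pointwise values to an actual functor $f_\ast g\colon\bDD\to\bEE$, is most cleanly packaged by viewing the under-categories as a single fibration over $\bDD$ and taking a relative limit. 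If $\bCC$ is small and $\bDD$ is locally small then each $\LaxUnder{\bCC}{d}$ is small, so the formula makes sense whenever $\bEE$ is (op)lax complete; this yields the ``in particular'' clause.

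It then remains to produce a natural equivalence of mapping categories
\begin{equation*}
	\Nat_{\bDD,\bEE}(h,f_\ast g)\simeq\Nat_{\bCC,\bEE}(f^\ast h,g),
\end{equation*}
as $h$ ranges over $\FUN(\bDD,\bEE)$, since by the representability criterion for left adjoints this exhibits $f_\ast$ as the right adjoint of $f^\ast$. To obtain it I would use the universal property of the limits defining $f_\ast g$ together with the fact that each corepresentable functor $\bEE(h(d),-)$ preserves partially lax limits (\cref{prop:yonedacont}) to rewrite the left-hand side as a nested partially lax limit, first over $\LaxUnder{\bCC}{d}$ and then over $d\in\bDD$, and interchange the two using \cref{prop:limitscommute}. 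This presents the mapping category as a single partially lax limit indexed by the 2-category assembled from the under-categories $\LaxUnder{\bCC}{d}$ along $\bDD$. The main obstacle is to identify this glued index 2-category, with its marking, up to a lax initial functor (in the sense of \cref{def:final}) with $\bCC$, so that the limit collapses to exactly $\Nat_{\bCC,\bEE}(f^\ast h,g)$: this is the two-dimensional incarnation of the classical fact that the projection from the Grothendieck construction of the comma categories $d\downarrow f$ is initial, and carrying it out requires the Quillen-style criterion for initial functors of \cite{AS23} together with a careful bookkeeping of the $\cocart$-markings, so that precisely the lax coherence data of a transformation $f^\ast h\to g$ is reproduced.

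For the last assertion, recall that a right adjoint is fully faithful exactly when the counit of the adjunction is invertible; here the counit is the natural map $f^\ast f_\ast\to\id$ on $\FUN(\bCC,\bEE)$. Evaluating at $c\in\bCC$ yields
\begin{equation*}
	(f^\ast f_\ast g)(c)=(f_\ast g)(f(c))=\lim^{\arglax{\cocart}}_{\LaxUnder{\bCC}{f(c)}}(g\circ\pi_{f(c)}).
\end{equation*}
When $f$ is fully faithful, the equivalences $\bDD(f(c),f(c^\prime))\simeq\bCC(c,c^\prime)$ show that the object $(c,\id_{f(c)})$ is initial in $\LaxUnder{\bCC}{f(c)}$; more precisely, the inclusion $[0]\to(\LaxUnder{\bCC}{f(c)},\cocart)$ selecting it is lax initial in the sense of \cref{def:final}. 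The limit therefore reduces to the value $g(c)$, so the counit is pointwise an equivalence and $f_\ast$ is fully faithful.
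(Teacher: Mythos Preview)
Your outline follows the classical pointwise strategy, and the formula you write down is correct (it matches \cref{rem:formulaRKE}). However, two of the steps you flag as routine are exactly the places where the $2$-categorical argument has real content, and as written they are gaps rather than exercises.

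First, you assemble the values $(f_\ast g)(d)$ into a functor by appealing to ``a single fibration over $\bDD$ and taking a relative limit''. In the $2$-categorical setting this is not a packaging issue one can wave away: producing a genuine $2$-functor $\bDD\to\bEE$ from pointwise data requires either a coherent limit construction or an external argument, and neither is supplied. Second, in the universal-property step you yourself identify ``the main obstacle'' (showing that the glued index $2$-category is lax-initially equivalent to $\bCC$, with the correct markings) and leave it unresolved. That is the heart of the matter; invoking \cite{AS23} in principle is not the same as carrying out the verification.

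The paper sidesteps both difficulties by a different mechanism. It does not build $f_\ast$ directly. Instead it Yoneda-embeds $\bEE\hookrightarrow\PSh_{\CAT}(\bEE)$ and uses that the right adjoint $(\id\times f)_\ast$ on presheaf $2$-categories already exists by \cite[Theorem~4.33]{Abellan2023}; the work is then reduced to a \emph{pointwise representability} check, namely that $(\id\times f)_\ast(\bEE(-,F(-)))(-,d)$ is a representable presheaf for each $d$. This is where \cref{prop:coYoneda}, \cref{lem:productYoneda} and \cref{lem:PresheavesProductBilinear} enter, and the computation yields exactly your limit formula --- but now as the representing object of something already known to be functorial, so no coherence needs to be assembled by hand. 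For full faithfulness, the paper again avoids the initiality argument you sketch: it reduces to $\bEE=\Cat$ (since $f_\ast$ was obtained by restricting $(\id\times f)_\ast$) and checks $f^\ast f_\ast\simeq\id$ by a direct Yoneda computation using $f^\ast h_{\bDD^\op}f(-)\simeq h_{\bCC^\op}(-)$. Your counit argument via an initial object in $\LaxUnder{\bCC}{f(c)}$ is morally right, but would still require verifying lax-initiality in the sense of \cref{def:final}, which you do not do.
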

\begin{proof}
	We begin by showing that $f_\ast$ exists. To that end, note that we have a commutative square
	\begin{equation*}
		\begin{tikzcd}
			\FUN(\bDD,\bEE)\arrow[r, "f^\ast"]\arrow[d, hookrightarrow] & \FUN(\bCC, \bEE)\arrow[d, hookrightarrow]\\
			\FUN(\bEE^\op\times\bDD, \CAT)\arrow[r, "(\id\times f)^\ast"] & \FUN(\bEE^\op\times\bCC, \CAT)
		\end{tikzcd}
	\end{equation*}
	where the vertical maps are induced by postcomposition with the Yoneda embedding $\bEE\into \PSh_{\CAT}(\bEE)$. By \cite[Theorem~4.33]{Abellan2023}, the lower horizontal map has a right adjoint $(\id\times f)_\ast$, so that it suffices to show that this functor restricts to a right adjoint of $f^\ast$. Thus, we have to show that if $F\colon \bCC\to\bEE$ is a functor and if we set $G=\bEE(-, F(-))\colon \bEE^\op\times \bCC\to \CAT$, then $(\id\times f)_\ast(G)(-,d)$ is representable for every $d\in\bDD$. By Yoneda's lemma and \cref{lem:productYoneda}, we may compute
	\begin{align*}
		(\id \times f)_\ast(G)(d)&\simeq \Nat_{\bEE^\op\times\bCC,\CAT}((\id\times f)^\ast h_{\bEE^\op\times\bDD}(-,d), G)\\
		&\simeq \Nat_{\bEE^\op\times\bCC,\CAT}(\pr_0^\ast h_{\bEE}(-)\times\pr_1^\ast f^\ast h_{\bDD^\op}(d), G).
	\end{align*}
	Now by \cref{prop:coYoneda}, we have an equivalence
	\begin{equation*}
		f^\ast h_{\bDD^\op}(d)\simeq {\colim}^{\arglax{\cart}}_{\LaxOver{\bCC^\op}{d}}(h_{\bCC^\op}p),
	\end{equation*}
	where $p\colon \LaxOver{\bCC^\op}{d}\to\bCC^\op$ is the $(1,0)$-fibration classified by $f^\ast h_{\bDD^\op}(d)\colon\bCC\to\Cat$. Thus, by combining \cref{lem:PresheavesProductBilinear} with the fact that $\pr_1^\ast$ is a left adjoint (using again \cite[Theorem~4.33]{Abellan2023}), it follows that we obtain equivalences
	\begin{align*}
		(\id\times f)_\ast(G)(d)&\simeq {\lim}^{\argoplax{\cocart}}_{\OplaxUnder{\bCC}{d}}\Nat_{\bEE^\op\times\bCC,\CAT}(\pr_0^\ast h_{\bEE}(-)\times \pr_1^\ast h_{\bCC^\op}p(-), G)\\
		&\simeq {\lim}^{\argoplax{\cocart}}_{\OplaxUnder{\bCC}{d}}\Nat_{\bEE^\op\times\bCC,\CAT}(h_{\bEE\times\bCC^\op}(-, p(-)), G)\\
		&\simeq {\lim}^{\argoplax{\cocart}}_{\OplaxUnder{\bCC}{d}}G(-, p(-))\\
		&\simeq {\lim}^{\argoplax{\cocart}}_{\OplaxUnder{\bCC}{d}}\bEE(-, Fp(-)).
	\end{align*}
	Consequently, the presheaf $(\id\times f)_\ast(G)(d)$ is a partially oplax limit of a $(\OplaxUnder{\bCC}{d},\cocart)$-indexed diagram of representables. If $\bEE$ admits limits of this shape, we thus find (by \cref{ex:presheavesComplete}) that this presheaf is representable. If $\bEE$ however admits partially lax limits indexed by $(\LaxUnder{\bCC}{d},\cocart)$, we use the equivalence
	\begin{equation*}
		f^\ast h_{\bDD^\op}(d)\simeq \colim^{\argoplax{\cart}}_{\OplaxOver{\bCC^\op}{d}}(h_{\bCC^\op} p^\prime)
	\end{equation*}
	from \cref{prop:coYoneda} instead in the above argument, and thus deduce that the presheaf $(\id\times f)_\ast(g)(d)$ is a partially lax limit of a $(\LaxUnder{\bCC}{d},\cocart)$-indexed diagram of representables and hence representable in this case as well.
	Thus, the claim follows in both cases.
	
	We finish the proof by showing the second statement, i.e.\ that $f_\ast$ is fully faithful whenever $f$ is. Since the first part of the proof shows that $f_\ast$ arises as the restriction of $(\id\times f)_\ast$, it suffices to show that the latter is fully faithful. In other words, we may assume without loss of generality that $\bEE=\Cat$. In this case, Yoneda's lemma implies that we have an equivalence
	\begin{equation*}
		f_\ast(F)\simeq \Nat_{\bCC,\Cat}(f^\ast h_{\bDD^\op}(-), F)
	\end{equation*}
	that is natural in $F\in\PSh_{\Cat}(\bCC^\op)$. Consequently, we may compute
	\begin{equation*}
		f^\ast f_\ast(F)\simeq \Nat_{\bCC,\Cat}(f^\ast h_{\bDD^\op}f(-), F)\simeq \Nat_{\bCC,\Cat}(h_{\bDD^\op}(-), F)\simeq F,
	\end{equation*}
	where the equivalence $f^\ast h_{\bDD^\op}f(-)\simeq h_{\bDD^\op}$ follows from $f$ being fully faithful. As the above chain of equivalences is natural in $F$, we conclude that $f^\ast f_\ast\simeq \id$. It is well known that this already implies that $f_\ast$ is fully faithful, as desired.
\end{proof}

\begin{remark}\label{rem:formulaRKE}
	In the situation of \cref{thm:existenceRKE}, the proof shows that if $F\colon \bCC\to\bEE$ is a functor and $d\in\bDD$ is an arbitrary object, we may compute
	\begin{equation*}
		f_\ast(F)(d)\simeq{\lim}^{\argoplax{\cocart}}_{\OplaxUnder{\bCC}{d}} Fp
	\end{equation*}
	where $p\colon \OplaxUnder{\bCC}{d}\to\bCC$ is the $(0,0)$-fibration classified by $\bDD(d, f(-))^\op\colon\bCC^\co\to\Cat$, provided that this limit exists in $\bEE$. Moreover, we may extract from \cite[Definition~4.32]{Abellan2023} that if $G\colon \bDD\to\bEE$ is a functor, the adjunction unit $G\to f_\ast f^\ast(G)$ evaluates at $d\in\bDD$ to the canonical map
	\begin{equation*}
		G(d)\simeq {\lim}^{\argoplax{\cocart}}_{\OplaxUnder{\bDD}{d}} Gq\to {\lim}^{\argoplax{\cocart}}_{\OplaxUnder{\bCC}{d}} Gfp
	\end{equation*}
	(where $q\colon \OplaxUnder{\bDD}{d}\to \bDD$ is the $(0,0)$-fibration classified by $\bDD(d,-)^\op$)
	that is determined by the map $\OplaxUnder{\bCC}{d}\to\OplaxUnder{\bDD}{d}$.
	
	Likewise, if $\bEE$ is lax complete, we may compute
	\begin{equation*}
		f_\ast(F)(d)\simeq{\lim}^{\arglax{\cocart}}_{\LaxUnder{\bCC}{d}} Fp^\prime,
	\end{equation*}
	where $p^\prime\colon \LaxUnder{\bCC}{d}\to\bCC$ is the $(0,1)$-fibration classified by $\bDD(d, f(-))$, provided that this limit exists in $\bEE$. Furthermore, the adjunction unit $G\to f_\ast f^\ast(G)$ evaluates at $d\in\bDD$ to the canonical map
	\begin{equation*}
		G(d)\simeq {\lim}^{\arglax{\cocart}}_{\LaxUnder{\bDD}{d}} Gq^\prime\to {\lim}^{\arglax{\cocart}}_{\LaxUnder{\bCC}{d}} Gfp^\prime
	\end{equation*}
	(where $q^\prime\colon \LaxUnder{\bDD}{d}\to \bDD$ is the $(0,1)$-fibration classified by $\bDD(d,-)$)
	that is determined by the map $\LaxUnder{\bCC}{d}\to\LaxUnder{\bDD}{d}$.
\end{remark}

By passing to opposite 2-categories, \cref{thm:existenceRKE} implies:
\begin{corollary}\label{cor:existenceLKE}
	Let $f\colon\bCC\to\bDD$ be a functor, and let $\bEE$ be a 2-category which admits either partially oplax colimits indexed by $(\OplaxOver{\bCC}{d}, \cart)$ or partially lax colimits indexed by $(\LaxOver{\bCC}{d}, \cart)$, for each $d\in\bDD$.
	Then the functor
	\begin{equation*}
	f^\ast\colon \FUN(\bDD,\bEE)\to\FUN(\bCC,\bEE)
	\end{equation*}
	admits a left adjoint $f_!$. In particular, this left adjoint exists if $\bCC$ is small, $\bDD$ is locally small and $\bEE$ is (op)lax cocomplete. Moreover, if $f$ is fully faithful, then $f_!$ is fully faithful as well.\qed
\end{corollary}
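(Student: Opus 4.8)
The plan is to deduce this entirely formally from \cref{thm:existenceRKE} by passing to opposite $2$-categories, so that no argument is needed beyond a careful bookkeeping of dualities.

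First I would record the duality principle for adjunctions: a $2$-functor $L$ is left adjoint to $R$ if and only if $R^\op$ is left adjoint to $L^\op$; equivalently, $L^\op$ is \emph{right} adjoint to $R^\op$. Applying $(-)^\op$ to the restriction functor and using the identification $\FUN(\bII,\bEE)^\op\simeq\FUN(\bII^\op,\bEE^\op)$ (valid because a strong functor $\bII^\op\to\bEE^\op$ is the same datum as a strong functor $\bII\to\bEE$, with natural transformations reversed), one sees that $(f^\ast)^\op$ is identified with $(f^\op)^\ast$, where $f^\op\colon\bCC^\op\to\bDD^\op$. Consequently $f^\ast$ admits a left adjoint if and only if $(f^\op)^\ast$ admits a right adjoint, and I can then simply invoke \cref{thm:existenceRKE} for the functor $f^\op$ and the target $\bEE^\op$, setting $f_!:=\big((f^\op)_\ast\big)^\op$.

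The substance of the proof is then to match the hypotheses. The key dictionary is that $(-)^\op$ exchanges limits with colimits, slices with coslices (so that $(\OplaxOver{\bCC}{d})^\op\simeq\LaxUnder{\bCC^\op}{d}$ and $(\LaxOver{\bCC}{d})^\op\simeq\OplaxUnder{\bCC^\op}{d}$), and cartesian with cocartesian markings. I would verify these via \cref{def:elaxcones}: since $(-)^\op$ commutes with pushouts and reverses the order of the marked Gray tensor product, one obtains $(\bII^{\colimcone}_{\eoplax})^\op\simeq(\bII^\op)^{\limcone}_{\elax}$, so that an $E$-oplax colimit in $\bEE$ is computed by an $E$-lax limit in $\bEE^\op$ (and, symmetrically, an $E$-lax colimit by an $E$-oplax limit). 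Feeding this through, the hypothesis that $\bEE$ admits oplax colimits indexed by $(\OplaxOver{\bCC}{d},\cart)$ becomes exactly the hypothesis that $\bEE^\op$ admits lax limits indexed by $(\LaxUnder{\bCC^\op}{d},\cocart)$, while the hypothesis involving lax colimits over $(\LaxOver{\bCC}{d},\cart)$ becomes the one involving oplax limits over $(\OplaxUnder{\bCC^\op}{d},\cocart)$. Thus the two alternative hypotheses of \cref{thm:existenceRKE} for $f^\op$ are precisely the two alternatives assumed here, merely interchanged.

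Finally, the remaining clauses are immediate under the same duality: $\bCC$ is small (resp.\ $\bDD$ is locally small) iff $\bCC^\op$ is small (resp.\ $\bDD^\op$ is locally small), and $\bEE$ is (op)lax cocomplete iff $\bEE^\op$ is (op)lax complete, so the ``in particular'' assertion follows from that of \cref{thm:existenceRKE}. For full faithfulness, $f$ is fully faithful iff $f^\op$ is (as $\bCC^\op(x,y)=\bCC(y,x)$), whence $(f^\op)_\ast$ is fully faithful by the theorem, and full faithfulness is preserved by $(-)^\op$, so $f_!=\big((f^\op)_\ast\big)^\op$ is fully faithful as claimed. The only genuinely non-formal point — and hence the main thing to get right — is the compatibility of $(-)^\op$ with the marked Gray tensor product and the cone constructions of \cref{def:elaxcones}, i.e.\ the order-reversal that forces the lax/oplax decorations (and the two alternative hypotheses) to swap; once this is pinned down, the corollary is a formal consequence.
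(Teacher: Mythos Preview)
Your proposal is correct and takes essentially the same approach as the paper: the paper's entire proof consists of the sentence ``By passing to opposite 2-categories, \cref{thm:existenceRKE} implies:'' followed by a \qed, and you have spelled out precisely this duality argument, including the bookkeeping of how the lax/oplax slice decorations and the cart/cocart markings swap under $(-)^\op$.
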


\begin{remark}\label{rem:formulaLKE}
	In the situation of \cref{cor:existenceLKE}, if $F\colon \bCC\to\bEE$ is a functor and $d\in\bDD$ is an arbitrary object, we may compute 
	\begin{equation*}
		f_!(F)(d)\simeq{\colim}^{\arglax{\cart}}_{\LaxOver{\bCC}{d}} Fp,
	\end{equation*}
	where $p\colon \LaxOver{\bCC}{d}\to\bCC$ is the  $(1,0)$-fibration classified by $\bDD(f(-), d)$, provided that this colimit exists in $\bEE$. Moreover, if $G\colon \bDD\to\bEE$ is a functor, the adjunction counit $f_! f^\ast(G)\to G$ evaluates at $d\in\bDD$ to the canonical map
	\begin{equation*}
		{\colim}^{\arglax{\cart}}_{\LaxOver{\bCC}{d}} Gfp\to {\colim}^{\arglax{\cart}}_{\LaxOver{\bDD}{d}} Gq\simeq G(d)
	\end{equation*}
	(where $q\colon \LaxOver{\bDD}{d}\to \bDD$ is the $(1,0)$-fibration classified by $\bDD(-, d)$)
	that is determined by the map $\LaxOver{\bCC}{d}\to\LaxOver{\bDD}{d}$.
	
	Likewise, we may compute
	\begin{equation*}
		f_!(F)(d)\simeq{\colim}^{\argoplax{\cart}}_{\OplaxOver{\bCC}{d}} Fp^\prime,
	\end{equation*}
	where $p^\prime\colon \OplaxOver{\bCC}{d}\to\bCC$ is the  $(1,1)$-fibration classified by $\bDD(f(-), d)^\op$, provided that this colimit exists in $\bEE$. Moreover, if $G\colon \bDD\to\bEE$ is a functor, the adjunction counit $f_! f^\ast(G)\to G$ evaluates at $d\in\bDD$ to the canonical map
	\begin{equation*}
		{\colim}^{\argoplax{\cart}}_{\OplaxOver{\bCC}{d}} Gfp^\prime\to {\colim}^{\argoplax{\cart}}_{\OplaxOver{\bDD}{d}} Gq^\prime\simeq G(d)
	\end{equation*}
	(where $q^\prime\colon \OplaxOver{\bDD}{d}\to \bDD$ is the $(1,1)$-fibration classified by $\bDD(-, d)^\op$)
	that is determined by the map $\OplaxOver{\bCC}{d}\to\OplaxOver{\bDD}{d}$.
\end{remark}

\begin{example}\label{ex:LKEPresheaves}
	\cref{cor:existenceLKE} implies in particular that for any functor $f\colon \bCC\to\bDD$ between small 2-categories, the functor $f_!\colon\PSh_{\Cat}(\bCC)\to\PSh_{\Cat}(\bDD)$ exists. Moreover, the computation
	\begin{align*}
	\Nat_{\bDD^\op,\Cat}(f_! h_{\bCC}(-), G) &\simeq \Nat_{\bCC^\op,\Cat}(h_{\bCC}(-), f^\ast(G))\\
		&\simeq f^\ast(G)\\
		&\simeq \Nat_{\bDD^\op,\Cat}(h_{\bDD}f(-), G)
	\end{align*}
	that is natural in $G\in\PSh_{\Cat}(\bDD)$ provides us with an equivalence $f_!h_{\bCC}\simeq h_{\bDD}f$.
\end{example}

\begin{theorem}\label{thm:laxcones}
	Let $(\bII,E)$ be a marked 2-category and let $\bCC$ be a 2-category which admits partially oplax colimits of shape $(\bII,E)$. Then the left Kan extension along $\iota \colon \bII \hra \bII_{\elaxoplax}^{\colimcone}$ exists, is fully faithful and has as essential image the full sub-2-category on $E$-(op)lax colimit cones. 
\end{theorem}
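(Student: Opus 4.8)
We treat the oplax case throughout; the lax case is entirely analogous (replace the oplax slices below by their lax counterparts, or pass to the $\co$-dual). Write $\iota\colon\bII\hra\bII^{\colimcone}_{\eoplax}$ for the cone inclusion. By \cref{rem:inclusionconeff} the functor $\iota$ is fully faithful, so the plan is to first establish existence of $\iota_!$ and then read off the remaining assertions: once $\iota_!$ is known to exist, \cref{cor:existenceLKE} immediately gives that it is fully faithful and that the unit $\id\to\iota^\ast\iota_!$ is an equivalence, so that the restriction of $\iota_!F$ to $\bII$ recovers $F$. It therefore remains to prove existence and to identify the essential image.

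For existence I would invoke \cref{cor:existenceLKE}, which reduces the problem to checking that $\bCC$ admits the colimit ${\colim}^{\argoplax{\cart}}_{\OplaxOver{\bII}{d}}Fp$ for every $d\in\bII^{\colimcone}_{\eoplax}$, where $p\colon\OplaxOver{\bII}{d}\to\bII$ is the $(1,1)$-fibration classified by $\bII^{\colimcone}_{\eoplax}(\iota(-),d)^\op$. For $d=\iota(j)$ with $j\in\bII$, full faithfulness of $\iota$ identifies this fibration with the oplax slice $\OplaxOver{\bII}{j}$, which has $\id_j$ as a final object with respect to the cartesian marking (equivalently, a coYoneda argument as in \cref{prop:coYoneda} applies); hence the colimit exists and equals $F(j)$, consistent with $\iota^\ast\iota_!\simeq\id$. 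The decisive case is $d=\ast$, where the central step is to establish an equivalence of marked 2-categories
\[
	(\OplaxOver{\bII}{\ast},\cart)\;\simeq\;(\bII,E)
\]
compatible with the projections to $\bII$, under which $Fp$ becomes $F$. I would prove this by unwinding the pushout of \cref{def:elaxcones} defining $\bII^{\colimcone}_{\eoplax}$ from the marked Gray tensor product $\bII\otimes_{E,\natural}[1]$ of \cref{def:markedgray}: the morphisms $\iota(i)\to\ast$ are precisely the images of the morphisms $(i,0)\to(j,1)$ of the Gray tensor after collapsing $\bII\times\{1\}$, and a computation of mapping categories shows that these assemble into the $(1,1)$-fibration over $\bII$ which, via \cref{thm:str}, is classified by the presheaf whose Grothendieck construction is $\bII$ with the edges of $E$ marked cartesian. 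Granting this identification, \cref{rem:formulaLKE} yields $\iota_!(F)(\ast)\simeq{\colim}^{\eoplax}_{\bII}F$, which exists by hypothesis, so $\iota_!$ exists.

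For the essential image I would argue as follows. By the description of the adjunction counit in \cref{rem:formulaLKE}, the structure maps $F(i)\to\iota_!(F)(\ast)$ assemble into the canonical $E$-oplax colimit cone, so $\iota_!F$ is by construction an $E$-oplax colimit cone; hence the essential image is contained in the full sub-2-category of such cones. Conversely, let $G\colon\bII^{\colimcone}_{\eoplax}\to\bCC$ be an $E$-oplax colimit cone and set $F:=\iota^\ast G$. Then $G(\ast)$ together with its cone is a colimit cone for $F$, so the canonical comparison $\colim^{\eoplax}_{\bII}F\to G(\ast)$ is an equivalence; since the induced natural transformation $\iota_!(\iota^\ast G)\to G$ is the identity on $\bII$ (using $\iota^\ast\iota_!\simeq\id$) and this equivalence at $\ast$, it is objectwise an equivalence and hence an equivalence of functors. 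Thus $G$ lies in the essential image. Because $\iota_!$ is fully faithful, its essential image is a full sub-2-category, and the two inclusions just established show that it coincides with the full sub-2-category spanned by the $E$-oplax colimit cones.

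I expect the identification $(\OplaxOver{\bII}{\ast},\cart)\simeq(\bII,E)$ to be the main obstacle: it is the one point at which the combinatorics of the marked Gray tensor product and the cone pushout must be matched on the level of mapping categories, with the cartesian marking made to correspond to $E$, against the Grothendieck correspondence of \cref{thm:str}. Everything else is a formal manipulation of the universal properties of Kan extensions (\cref{cor:existenceLKE}, \cref{rem:formulaLKE}) and of partially oplax colimits.
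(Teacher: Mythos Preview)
Your overall architecture matches the paper's, but the ``central step'' you single out is actually false, and this is a genuine gap rather than a technicality. The claimed equivalence of marked 2-categories
\[
(\OplaxOver{\bII}{\ast},\cart)\;\simeq\;(\bII,E)
\]
does not hold. Take $\bII=[1]$ with the minimal marking $E=\natural$. Unwinding the pushout defining $[1]^{\colimcone}_{\oplax}$ from $[1]\otimes[1]$, one finds that the mapping category from $0$ to the cone point $\ast$ is $[1]$ (there is the direct map and the composite $0\to 1\to\ast$, related by a non-invertible $2$-cell), so the slice $\OplaxOver{\bII}{\ast}$ has three objects and cannot be equivalent to $[1]$. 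More generally, the fibre of $\OplaxOver{\bII}{\ast}\to\bII$ over $i$ is (a localisation of) the under-category $\Under{\bII}{i}$, not a point; your sentence ``the morphisms $\iota(i)\to\ast$ are precisely the images of the morphisms $(i,0)\to(j,1)$'' is correct, but collapsing $\bII\times\{1\}$ does \emph{not} identify these for different $j$---it only makes the edges of $E$ become equivalences in the resulting mapping category.

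The paper's proof fixes exactly this point: instead of an equivalence one exhibits a \emph{final} functor $(\bII,E)\to(\bII(\ast),E')$, namely the obvious section of the projection $p$ (sending $i$ to the canonical map $i\to\ast$), and invokes \cite[Theorem~3.17]{AS23} to check finality. This is enough to transport the colimit formula $\iota_!(F)(\ast)\simeq{\colim}^{E'\text{-}\laxoplax}_{\bII(\ast)}Fp$ to ${\colim}^{\elaxoplax}_{\bII}F$. Once you replace ``equivalence'' by ``final section'' in your key step, the rest of your argument for existence and for the essential image goes through essentially unchanged, and in fact lines up with the paper's proof.
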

\begin{proof}
	It is clear that $\iota_{!}$ will be fully faithful the moment it exists by \cref{cor:existenceLKE}. We start by considering the pullback diagram
	\[
		\begin{tikzcd}
			\bII(j)=\left(\bII^{\colimcone}_{\elaxoplax}\right)_{ \upslash j}\times_{\bII^{\colimcone}_{\elaxoplax}}\bII  \arrow[d] \arrow[r] & \left(\bII^{\colimcone}_{\elaxoplax}\right)_{ \upslash j} \arrow[d] \\
			\bII \arrow[r] &\bII^{\colimcone}_{\elaxoplax}
		\end{tikzcd}
	\]
	and note that $\bII(j)$ can be identified with $\bII_{ \upslash j}$ (cf.\ \cref{rem:inclusionconeff}) whenever $j\neq  *$ where $*$ denotes the cone point. Our first goal is to show is that $\bCC$ admits colimits of shape $\bII(j)$. If $j \neq *$ then this is clear by \cite[Theorem 3.17]{AS23} since then the functor $[0] \to \bII(j)$ selecting the identity map is final (cf.\ \cref{def:final}). For the remaining case, we use the description of the straightening functor \cite[Definition 3.5.1]{LurieGoodwillie}  to see that $\bII(*) \to \bII$ is the $(1,0)$-fibration classifying the functor
	\[
	 	\bII^{\op } \to \Cat, \enspace i \mapsto L_E(\bII_{i\upslash})
	\] 
	where $L_E(\bII_{i\upslash})$ is the localization at all 2-morphisms and at those $0$-cartesian morphisms whose image in $\bII$ belongs to $E$. It also follows from \cite[Theorem 3.17]{AS23} that we have a final map $(\bII,E) \to (\bII(*),E')$ where $E'$ is the collection of $1$-cartesian morphims whose image in $\bII$ belongs to $E$. We conclude that $\iota_!$ exists by \cref{cor:existenceLKE}.

	To finish the proof we need to show that a functor $\hat{F} \colon  \bII_{\elaxoplax}^{\colimcone} \to \bCC$ defines an $\elaxoplax$ colimit cone for $F\simeq \iota^* \hat{F}$ if and only if $\hat{F} \simeq \iota_{!} F$. Let $\iota_!F \to \hat{F}$ be the morphism obtained via the adjunction from the equivalence $F\simeq \iota^* \hat{F}$. Since $\iota_{!}$ is fully-faithful it follows that for every $j \neq *$ the map $\iota_! F(j) \to \hat{F}(j)$ is an equivalence. Let us suppose that $\hat{F}$ is a colimit cone, then it follows that the canonical cone for $\colim_{\bII}^{\elaxoplax} F$ defines a cone for the diagram 
	\[
		\bII(*) \xrightarrow{p} \bII \xrightarrow{F} \bCC.
	\]
	In particular, the map $\iota_!F(*)\simeq \colim_{\bII}^{E'\text{-}\laxoplax}F \circ p  \to \colim_{\bII}^{\elaxoplax}F $ is induced by such cone. It follows from our previous discussion that we have a section of $p$ which is final, so $p$ must itself be final and so the map above is an equivalence. We conclude that we can identify $\iota_! F$ with a colimit cone. The result follows.
\end{proof}

\subsection{cocompletion}
\label{sec:freeCocompletion}

\begin{definition}\label{def:smallPresheaves}
	Let $\bII$ be a (not necessarily small) 2-category and let $F\colon \bII^\op\to\CAT$ be a presheaf. We say that $F$ is \emph{(op)lax small} if there is a small marked 2-category $(\bCC, E)$ and a functor $d\colon \bCC\to \bII$ such that $F\simeq \colim^{\elaxoplax}_{\bCC}(h_{\bII}d)$. We denote by $\Sml_{\CAT}^{\laxoplax}(\bII)$ the full sub-2-category of $\PSh_{\CAT}(\bII)$ that is spanned by the (op)lax small presheaves.
\end{definition}
\begin{remark}\label{rem:representablesAreSmall}
	In the situation of \cref{def:smallPresheaves}, every representable presheaf is trivially (op)lax small. Hence the Yoneda embedding induces an inclusion $\bII\into \Sml_{\CAT}^{\laxoplax}(\bII)$.
\end{remark}

\begin{proposition}\label{prop:smallPresheavesReflection}
	Let $\bII$ be an (op)lax cocomplete 2-category. Then the Yoneda embedding $h\colon \bII\into \Sml_{\CAT}^{\laxoplax}(\bII)$ admits a left adjoint $\lambda$.
\end{proposition}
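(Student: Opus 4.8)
The plan is to verify the corepresentability criterion for right adjoints: by the characterization of adjunctions in a 2-category (the proposition in the subsection on adjunctions), the fully faithful inclusion $h\colon \bII \to \Sml_{\CAT}^{\laxoplax}(\bII)$ admits a left adjoint if and only if for every (op)lax small presheaf $F$, the functor $\bII \to \Cat$ given by $b \mapsto \Sml_{\CAT}^{\laxoplax}(\bII)(F, h(b))$ is corepresentable. I would show that it is corepresented by the (op)lax colimit $\colim^{\elaxoplax}_{\bCC} d$ formed in $\bII$, which exists precisely because $\bII$ is (op)lax cocomplete; this simultaneously yields the formula $\lambda(F) \simeq \colim^{\elaxoplax}_{\bCC} d$, whence the well-definedness of $\lambda$ (independence of the presentation) follows from uniqueness of corepresenting objects.

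First I would fix a presentation $F \simeq \colim^{\elaxoplax}_{\bCC}(h_{\bII} d)$ with $(\bCC, E)$ a small marked 2-category and $d\colon \bCC \to \bII$. Since $\Sml_{\CAT}^{\laxoplax}(\bII) \subset \PSh_{\CAT}(\bII)$ is a full sub-2-category, since $h(b)$ lies in it (\cref{rem:representablesAreSmall}), and since the colimit defining $F$ is computed in $\PSh_{\CAT}(\bII)$, the universal property of (op)lax colimits (\cref{def:elaxlimit}) gives an equivalence
\[
	\Sml_{\CAT}^{\laxoplax}(\bII)(F, h(b)) \simeq \Nat^{\elaxoplax}_{\bCC, \PSh_{\CAT}(\bII)}(h_{\bII} d,\, \underline{h(b)}),
\]
natural in $b$, where $\underline{h(b)}$ denotes the constant diagram.

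The key step is to identify the right-hand side with $\Nat^{\elaxoplax}_{\bCC, \bII}(d, \underline{b})$. For this I would use that the Yoneda embedding $h_{\bII}$ is fully faithful together with the fact that postcomposition with a fully faithful functor induces a fully faithful functor $(h_{\bII})_\ast\colon \FUN(\bCC, \bII)^{\elaxoplax} \to \FUN(\bCC, \PSh_{\CAT}(\bII))^{\elaxoplax}$ on partially (op)lax functor 2-categories; since $(h_{\bII})_\ast$ carries $\underline{b}$ to $\underline{h(b)}$, full faithfulness on the relevant mapping categories produces the desired equivalence. Finally, the universal property of the (op)lax colimit $\colim^{\elaxoplax}_{\bCC} d$ in $\bII$ (using cocompleteness) gives
\[
	\Nat^{\elaxoplax}_{\bCC, \bII}(d, \underline{b}) \simeq \bII\big(\colim^{\elaxoplax}_{\bCC} d,\, b\big),
\]
naturally in $b$. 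Chaining the three equivalences exhibits $b \mapsto \Sml_{\CAT}^{\laxoplax}(\bII)(F, h(b))$ as corepresented by $\colim^{\elaxoplax}_{\bCC} d$, which completes the verification.

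The main obstacle I anticipate is justifying that postcomposition with the fully faithful $h_{\bII}$ is again fully faithful on the partially (op)lax functor 2-categories, i.e.\ that (op)lax natural transformations into a constant diagram at $h(b)$ are detected on the mapping categories of $\bII$. This is intuitively clear, since (op)lax transformations and their modifications are assembled from components and naturality 2-cells living in the mapping categories of the target, all of which $h_{\bII}$ reproduces faithfully; making it precise requires unwinding the Gray-tensor description of $\Nat^{\elaxoplax}$ from \cref{const:laxfunctorcat} (or invoking the corresponding functoriality of marked functor categories). A minor secondary point is tracking naturality in $b$ throughout, which holds because each of the three equivalences is natural in the target variable.
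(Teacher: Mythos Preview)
Your proposal is correct and is essentially the same argument as the paper's, only unpacked more explicitly: the paper defines $\lambda(F)=\colim^{\elaxoplax}_{\bCC} d$, produces the unit $\eta\colon F\to h\lambda(F)$, and asserts that the composite $\bII(\lambda(F),-)\simeq\Sml_{\CAT}^{\laxoplax}(\bII)(h\lambda(F),h(-))\xrightarrow{\eta^\ast}\Sml_{\CAT}^{\laxoplax}(\bII)(F,h(-))$ is an equivalence by the universal property of partially (op)lax colimits, which is exactly your chain of three equivalences. The ``obstacle'' you flag (that postcomposition with the fully faithful $h_{\bII}$ is fully faithful on $\FUN(-,-)^{\elaxoplax}$) is precisely what the paper is implicitly invoking when it says the claim ``follows readily''.
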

\begin{proof}
	Let $F$ be an (op)lax small presheaf, so that there is a marked small 2-category $(\bCC, E)$ and a functor $d\colon \bCC\to \bII$ such that $F\simeq \colim^{\elaxoplax}_{\bCC}(h_{\bII}d)$.
	Since $\bII$ is (op)lax cocomplete, we may define $\lambda(F)=\colim^{\elaxoplax}_{\bCC}(d)$, so that we obtain a natural map $\eta\colon F\to h\lambda(F)$. It now follows readily from the universal property of partially lax colimits that the composition
	\begin{equation*}
		\bII(\lambda(F), -)\simeq \Sml_{\CAT}^{\laxoplax}(\bII)(h\lambda(F), h(-))\xrightarrow{\eta^\ast}\Sml_{\CAT}^{\laxoplax}(\bII)(F, h(-))
	\end{equation*}
	is an equivalence. Hence the claim follows.
\end{proof}

\begin{theorem}\label{thm:laxCocompletion}
	For any small 2-category $\bII$, the Yoneda embedding $h\colon\bII\into\PSh_{\Cat}(\bII)$ exhibits $\PSh_{\Cat}(\bII)$ as the free (op)lax cocompletion of $\bII$, in the sense that $\PSh_{\Cat}(\bII)$ is (op)lax cocomplete and the functor of left Kan extension $(h_{\bII})_!\colon \FUN(\bII,\bEE)\into \FUN(\PSh_{\Cat}(\bII),\bEE)$ induces an equivalence
	\begin{equation*}
		\FUN(\bII,\bEE) \simeq \FUN^{\cocont{\laxoplax}}(\PSh_{\Cat}(\bII),\bEE)
	\end{equation*}
	for any (op)lax cocomplete 2-category $\bEE$, where the right-hand side is the full sub-2-category of $\FUN(\PSh_{\Cat}(\bII), \bEE)$ spanned by the (op)lax cocontinuous functors.
\end{theorem}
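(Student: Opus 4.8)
The plan is to prove the universal property through the adjunction $(h_{\bII})_!\dashv h_{\bII}^\ast$ between left Kan extension and restriction, identifying the essential image of $(h_{\bII})_!$ with the (op)lax cocontinuous functors. First I would record that $\PSh_{\Cat}(\bII)$ is (op)lax cocomplete by \cref{ex:presheavesComplete}. Since $\bII$ is small, $\PSh_{\Cat}(\bII)$ is locally small and $\bEE$ is (op)lax cocomplete, so \cref{cor:existenceLKE} guarantees that $(h_{\bII})_!\colon\FUN(\bII,\bEE)\to\FUN(\PSh_{\Cat}(\bII),\bEE)$ exists, and it is fully faithful because $h_{\bII}$ is. Consequently the unit $\id\to h_{\bII}^\ast(h_{\bII})_!$ is an equivalence, and $(h_{\bII})_!$ restricts to an equivalence onto its essential image; the whole statement thus reduces to showing that this essential image is exactly $\FUN^{\cocont{\laxoplax}}(\PSh_{\Cat}(\bII),\bEE)$.

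The central computation is to identify $(h_{\bII})_!F$ with a functor of weighted colimits. For a presheaf $P$, the pointwise formula of \cref{rem:formulaLKE} gives $(h_{\bII})_!F(P)\simeq\colim^{\arglax{\cart}}_{\LaxOver{\bII}{P}}Fp$, where $p$ is the $(1,0)$-fibration classified by $P\simeq\PSh_{\Cat}(\bII)(h_{\bII}(-),P)$; by \cite[Corollary~4.2.4]{AGH24} (as used in the proof of \cref{prop:coYoneda}) this is the $P$-weighted colimit $\colim^P F$, naturally in $P$, with the dual expression handling the oplax case. The universal property of weighted colimits then supplies natural equivalences $\bEE(\colim^P F,e)\simeq\Nat_{\bII^\op,\Cat}(P,\bEE(F(-),e))$, exhibiting $\colim^{(-)}F\simeq(h_{\bII})_!F$ as left adjoint to the functor $e\mapsto\bEE(F(-),e)$. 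Applying \cref{prop:adjpreservelimtis}, I conclude that $(h_{\bII})_!F$ preserves all partially (op)lax colimits, so the essential image of $(h_{\bII})_!$ is contained in $\FUN^{\cocont{\laxoplax}}(\PSh_{\Cat}(\bII),\bEE)$.

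For the reverse inclusion, I would take an (op)lax cocontinuous $G$ and show the counit $(h_{\bII})_!h_{\bII}^\ast G\to G$ is an equivalence. By the co-Yoneda lemma \cref{prop:coYoneda}, the canonical cone exhibits $P\simeq\colim^{\arglax{\cart}}_{\LaxOver{\bII}{P}}(h_{\bII}p)$ as a partially lax colimit of representables. Applying the cocontinuous functor $G$ and using $h_{\bII}^\ast G=Gh_{\bII}$ yields $G(P)\simeq\colim^{\arglax{\cart}}_{\LaxOver{\bII}{P}}Gh_{\bII}p$, which by the previous paragraph is precisely $(h_{\bII})_!(h_{\bII}^\ast G)(P)$; tracing the description of the counit in \cref{rem:formulaLKE} (induced by the map $\LaxOver{\bII}{P}\to\LaxOver{\PSh_{\Cat}(\bII)}{P}$) identifies this equivalence with the counit at $P$. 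Hence every (op)lax cocontinuous functor lies in the essential image, and combining the two inclusions shows that $(h_{\bII})_!$ restricts to the asserted equivalence.

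The hard part will be the weighted-colimit identification: I must (i) match the pointwise Kan extension formula over the cartesian-marked lax slice with the weighted colimit $\colim^P F$, keeping careful track of the marking and of the lax-versus-oplax variants throughout, and (ii) verify that the universal property of $\colim^P F$ is sufficiently natural in the weight $P$ to produce the adjunction $\colim^{(-)}F\dashv\bEE(F(-),-)$. Once these are in place, cocontinuity of $(h_{\bII})_!F$ is immediate from \cref{prop:adjpreservelimtis}, and the remaining verifications (full faithfulness, the unit, and the counit) are formal consequences of the materials already assembled.
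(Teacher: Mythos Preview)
Your argument is correct, and your treatment of the reverse inclusion (cocontinuous $G$ lies in the essential image) is essentially identical to the paper's first step. The genuine difference is in how you establish cocontinuity of $(h_{\bII})_!F$. You identify $(h_{\bII})_!F$ with the weighted-colimit functor $P\mapsto\colim^P F$ and then argue that this is left adjoint to $e\mapsto\bEE(F(-),e)$, invoking \cref{prop:adjpreservelimtis}. The paper instead avoids constructing a right adjoint directly: it factors $(h_{\bII})_!F$ as the composite $\PSh_{\Cat}(\bII)\xrightarrow{F_!}\Sml_{\CAT}^{\laxoplax}(\bEE)\xrightarrow{\lambda}\bEE$, where $F_!$ is left Kan extension between (large) presheaf categories restricted to small presheaves and $\lambda$ is the reflection of \cref{prop:smallPresheavesReflection}; both factors are left adjoints, hence cocontinuous, and the first part of the proof identifies the composite with $(h_{\bII})_!F$.

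What each buys: your route is conceptually cleaner and yields the right adjoint immediately, but it shifts the work to verifying that the weighted-colimit universal property is natural in the weight $P$ (your flagged ``hard part~(ii)''); note also that without assuming $\bEE$ locally small the putative right adjoint need not land in $\PSh_{\Cat}(\bII)$, so one should argue via the mapping-out characterisation rather than literally producing an adjoint. The paper's detour through $\Sml_{\CAT}^{\laxoplax}(\bEE)$ sidesteps both issues by composing two explicit left adjoints, and only afterwards (in \cref{rem:universalAdjointFunctorTheorem}) reads off the right adjoint $e\mapsto\bEE(F(-),e)$ from the factorisation $\lambda\circ F_!$.
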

\begin{proof}
	We begin by showing that every (op)lax cocontinuous functor $g\colon \PSh_{\Cat}(\bII)\to\bEE$ is a left Kan extension of its restriction along $h_{\bII}$. We first show the lax case. If $F\colon \bII^\op\to\Cat$ is an arbitrary presheaf, we obtain a pullback square
	\begin{equation*}
		\begin{tikzcd}
			\LaxOver{\bII}{F}\arrow[r, hookrightarrow, "\LaxOver{(h_{\bII})}{F}"]\arrow[d, "q"] & \LaxOver{\PSh_{\Cat}(\bII)}{F}\arrow[d, "p"]\\
			\bII\arrow[r, hookrightarrow, "h_{\bII}"] & \PSh_{\Cat}(\bII),
		\end{tikzcd}
	\end{equation*}
	and by \cref{rem:formulaLKE} we only need to show that the inclusion $\LaxOver{(h_{\bII})}{F}$ induces
	an equivalence
	\begin{equation*}
		{\colim}^{\arglax{\cart}}_{\LaxOver{\bII}{F}} g \LaxOver{(h_{\bII})}{F}\simeq {\colim}^{\arglax{\cart}}_{\LaxOver{\PSh_{\Cat}(\bII)}{F}} g p.
	\end{equation*}
	As $g$ is by assumption lax cocontinuous, it suffices to show that the map
	\begin{equation*}
		{\colim}^{\arglax{\cart}}_{\LaxOver{\bII}{F}} p\LaxOver{(h_{\bII})}{F}\to {\colim}^{\arglax{\cart}}_{\LaxOver{\PSh_{\Cat}(\bII)}{F}} p.
	\end{equation*}
	is an equivalence, which is an immediate consequence of the first part of  \cref{prop:coYoneda}. In the oplax case, the second part of \cref{prop:coYoneda} allow us to use the very same argument to deduce the claim also in this situation.
	
	To complete the proof, we need to show that if $f\colon\bII\to\bEE$ is a functor, its left Kan extension $(h_{\bII})_!(f)$ is (op)lax cocontinuous. To see this, note that by \cref{ex:LKEPresheaves} and the fact that the functor of left Kan extension $f_!\colon\PSh_{\CAT}(\bII)\to\PSh_{\CAT}(\bEE)$ is cocontinuous, this functor restricts to a map $f_!\colon \Sml_{\CAT}^{\laxoplax}(\bII)\to\Sml_{\CAT}^{\laxoplax}(\bEE)$. Combining this observation with the fact that by \cref{prop:coYoneda} we have an inclusion $\PSh_{\Cat}(\bII)\into\Sml_{\CAT}^{\laxoplax}(\bII)$, we end up with a commutative square
	\begin{equation*}
		\begin{tikzcd}
			\PSh_{\Cat}(\bII)\arrow[d, hookrightarrow] \arrow[r, "f_!"] & \Sml_{\CAT}^{\laxoplax}(\bEE)\arrow[d, hookrightarrow]\\
			\PSh_{\CAT}(\bII)\arrow[r, "f_!"] & \PSh_{\CAT}(\bEE).
		\end{tikzcd}
	\end{equation*}
	Since both the left vertical inclusion and the lower horizontal map are cocontinuous and the right vertical map is fully faithful, the upper horizontal map must be cocontinuous as well. Thus, by combining this functor with the left adjoint $\lambda\colon \Sml_{\CAT}^{\laxoplax}(\bEE)\to\bEE$ from \cref{prop:smallPresheavesReflection}, we end up with a cocontinuous functor $\lambda f_!\colon \PSh_{\Cat}(\bII)\to\bEE$. By its very construction and \cref{ex:LKEPresheaves}, the restriction of this map along $h_{\bII}$ can be identified with $f$. Hence the first part of the proof shows that we must have $(h_{\bII})_!(f)\simeq \lambda f_!$, so that $(h_{\bII})_!(f)$ is (op)lax cocontinuous, as claimed.
\end{proof}

\begin{corollary}
	For any 2-category $\bII$, the Yoneda embedding $h_{\bII}\colon\bII\into\PSh_{\Cat}(\bII)$ exhibits $\PSh_{\Cat}(\bII)$ as the free cocompletion of $\bII$, in the sense that $\PSh_{\Cat}(\bII)$ is cocomplete and the functor of left Kan extension $(h_{\bII})_!\colon\FUN(\bII,\bDD)\into\FUN(\PSh_{\Cat}(\bII),\bDD)$ induces an equivalence
	\begin{equation*}
		\FUN(\bII,\bDD)\simeq\FUN^{\cocont{2}}(\PSh_{\Cat}(\bII),\bDD)
	\end{equation*}
	for any cocomplete 2-category $\bDD$, where the right-hand side is the full sub-2-category of $\FUN(\PSh_{\Cat}(\bII),\bDD)$ spanned by the cocontinuous functors.\qed
\end{corollary}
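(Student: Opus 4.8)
The plan is to obtain this from \cref{thm:laxCocompletion} by two formal reductions: upgrading the two separate lax and oplax statements of that theorem to the single statement about genuinely cocontinuous functors into a cocomplete target, and afterwards removing the smallness assumption on $\bII$ by a universe enlargement.

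For the first reduction, assume temporarily that $\bII$ is small and that $\bDD$ is cocomplete, so that by \cref{def:oplaxComplete} a functor out of $\PSh_{\Cat}(\bII)$ is cocontinuous exactly when it is both lax and oplax cocontinuous. Given $f\colon\bII\to\bDD$, the lax instance of \cref{thm:laxCocompletion} shows that $(h_{\bII})_!(f)$ is lax cocontinuous and the oplax instance shows it is oplax cocontinuous; hence $(h_{\bII})_!$ lands in the full sub-2-category of cocontinuous functors. Conversely, a cocontinuous $g\colon\PSh_{\Cat}(\bII)\to\bDD$ is in particular lax cocontinuous, so the lax instance of \cref{thm:laxCocompletion} identifies it with the left Kan extension $(h_{\bII})_!h_{\bII}^\ast(g)$ of its restriction. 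Since $(h_{\bII})_!$ is fully faithful by \cref{cor:existenceLKE}, combining these two facts shows that $(h_{\bII})_!$ and $h_{\bII}^\ast$ restrict to mutually inverse equivalences between $\FUN(\bII,\bDD)$ and $\FUN^{\cocont{2}}(\PSh_{\Cat}(\bII),\bDD)$, which is the desired conclusion when $\bII$ is small.

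To remove the smallness assumption, I would pass to a larger universe: choosing a regular cardinal $\kappa$ for which $\bII$ is $\kappa$-small (cf.\ \cref{def:kappasmall2cat}), one regards $\bII$ as small relative to the enlarged universe of $\kappa$-small objects, and accordingly reads $\Cat$ as the 2-category of categories that are small in that universe. Under this convention every ingredient of the previous paragraph persists: $\PSh_{\Cat}(\bII)$ remains cocomplete since its colimits are computed pointwise in $\Cat$, the functor $(h_{\bII})_!$ exists and is fully faithful by \cref{cor:existenceLKE}, and the left Kan extension formula rests on the coYoneda identity of \cref{prop:coYoneda}, which is insensitive to the ambient universe. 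Applying the small case in the enlarged setting then yields the statement for arbitrary $\bII$.

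I expect the main obstacle to be exactly this size bookkeeping. For genuinely large $\bII$ the slice 2-categories $\LaxOver{\bII}{F}$ entering the Kan extension formula of \cref{rem:formulaLKE} are large, so one must keep track of the fact that $\PSh_{\Cat}(\bII)$ is the free cocompletion under colimits of shapes comparable in size to $\bII$ rather than under all small colimits. The universe enlargement matches these sizes automatically; without it one would instead be forced to replace $\PSh_{\Cat}(\bII)$ by the sub-2-category of small presheaves $\Sml_{\CAT}^{\laxoplax}(\bII)$ of \cref{def:smallPresheaves}, which is the correct free cocompletion under small colimits. Everything else in the argument is purely formal once \cref{thm:laxCocompletion} is in hand.
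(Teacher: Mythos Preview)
Your first reduction is exactly right and is all the paper intends: the \qed signals that the corollary is immediate from \cref{thm:laxCocompletion}, and the only content is that a cocomplete target is both lax and oplax cocomplete, so one may apply both instances of the theorem to see that $(h_{\bII})_!(f)$ is lax \emph{and} oplax cocontinuous, while conversely any cocontinuous functor is in particular lax cocontinuous and hence a left Kan extension.

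Your second reduction is unnecessary. The phrase ``For any 2-category $\bII$'' in the corollary should be read as ``For any small 2-category $\bII$'', matching the hypothesis of \cref{thm:laxCocompletion}; the omission of ``small'' is a minor slip rather than a genuine strengthening. The paper does not claim, and does not prove, a version for large $\bII$: note that $\PSh_{\Cat}(\bII)$ (with small $\Cat$) would not even be the correct object in that generality, as you yourself observe when you bring up $\Sml_{\CAT}^{\laxoplax}(\bII)$. So the universe-enlargement discussion, while not wrong in spirit, is addressing a statement the paper does not make.
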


\begin{remark}\label{rem:universalAdjointFunctorTheorem}
	let $f\colon\bCC\to\bEE$ be a functor where $\bCC$ is small and $\bEE$ is (op)lax cocomplete. Assume furthermore that $\bEE$ is locally small. Then $(h_{\bCC})_!(f)\colon \PSh(\bCC)\to \bEE$ admits a right adjoint that is explicitly given by the composition
	\begin{equation*}
		\bEE\xrightarrow{h_{\bEE}}\PSh_{\Cat}(\bEE)\xrightarrow{f^\ast}\PSh_{\Cat}(\bCC).
	\end{equation*} 
	In fact, this follows immediately from the fact (see the argument in the proof of \cref{thm:laxCocompletion}) that $(h_{\bCC})_!(f)$ is given by the composition $\lambda f_!\colon \PSh_{\Cat}(\bCC)\to\Sml_{\CAT}^{\laxoplax}(\bEE)\to \bEE$.
\end{remark}

\subsection{The canonical $\Cat$-tensoring of (op)lax cocomplete 2-categories}
\label{sec:tensoring}
Let $\bCC$ be an (op)lax cocomplete 2-category. Then $\FUN(\bCC,\bCC)$ is (op)lax cocomplete as well. Therefore, \cref{thm:laxCocompletion} implies that there is a unique (op)lax cocontinuous functor $\Cat\to \FUN(\bCC,\bCC)$ that carries the point $[0]\in\Cat$ to $\id_{\bCC}$. Moreover, since $\id_{\bCC}$ is (op)lax cocontinuous and since $\FUN^{\cocont{\laxoplax}}(\bCC,\bCC)$ is closed under partially (op)lax colimits in $\Fun(\bCC,\bCC)$\footnote{this easily follows from the fact that partially lax colimits can be computed point-wise}, this functor takes values in $\FUN^\cc(\bCC,\bCC)$. Consequently, by transposing this map, one obtain a functor 
\begin{equation*}
	-\otimes -\colon \Cat \times\bCC\to\bCC
\end{equation*}
that is (op)lax cocontinuous in either variable. Furthermore, this functor fits into an equivalence
\begin{equation*}
	\bCC(-\otimes -, -)\simeq \Fun (-, \bCC(-,-)).
\end{equation*}
In fact, by \cref{thm:laxCocompletion} and (op)lax cocontinuity of $-\otimes -$ in the first variable, such an equivalence exists already if there is an equivalence between both sides once we evaluate at $[0]\in\Cat$, which follows from the fact that by construction we have $[0]\otimes -\simeq \id_{\bEE}$. We thus conclude:

\begin{proposition}\label{prop:tensoring}
	For every (op)lax cocomplete 2-category $\bCC$, there is a unique functor
	\begin{equation*}
		-\otimes -\colon \Cat \times\bCC\to\bCC
	\end{equation*}
	that is (op)lax cocontinuous in either variable and that comes with an equivalence $[0]\otimes -\simeq\id_{\bCC}$. Furthermore, this functor fits into an equivalence
	\begin{equation*}
		\bCC(-\otimes -, -)\simeq \Fun(-, \bCC(-,-)).
	\end{equation*}
	of functors $\Cat^\op\times\bCC^\op\times\bCC\to\Cat$.\qed
\end{proposition}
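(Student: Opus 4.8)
The plan is to extract both the functor and its universal property directly from the free cocompletion theorem \cref{thm:laxCocompletion}, using the key observation that $\Cat\simeq\PSh_{\Cat}([0])$ is the free (op)lax cocompletion of the point. First I would note that, by \cref{prop:colimfunctorcat} together with \cref{rem:cat2complete}, the endofunctor 2-category $\FUN(\bCC,\bCC)$ is (op)lax cocomplete whenever $\bCC$ is. Applying \cref{thm:laxCocompletion} with target $\FUN(\bCC,\bCC)$, evaluation at the point gives an equivalence $\FUN^{\cocont{\laxoplax}}(\Cat,\FUN(\bCC,\bCC))\simeq\FUN(\bCC,\bCC)$. I would then let $\Phi\colon\Cat\to\FUN(\bCC,\bCC)$ be the essentially unique (op)lax cocontinuous functor corresponding to $\id_{\bCC}$ and set $-\otimes-$ to be its transpose $\Cat\times\bCC\to\bCC$. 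By construction $\Phi$ is (op)lax cocontinuous, which is exactly (op)lax cocontinuity of $-\otimes-$ in the $\Cat$-variable, and the displayed equivalence simultaneously encodes the existence and the uniqueness of such a functor equipped with $[0]\otimes-\simeq\id_{\bCC}$.

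Next I would upgrade $\Phi$ to a functor valued in the cocontinuous endofunctors. Since $[0]$ generates $\Cat$ under partially (op)lax colimits, the co-Yoneda lemma \cref{prop:coYoneda} exhibits every object of $\Cat\simeq\PSh_{\Cat}([0])$ as an (op)lax colimit of copies of $[0]$. As $\Phi$ is (op)lax cocontinuous with $\Phi([0])\simeq\id_{\bCC}$ already (op)lax cocontinuous, and as $\FUN^{\cc}(\bCC,\bCC)$ is closed under partially (op)lax colimits inside $\FUN(\bCC,\bCC)$ — these being computed pointwise, cf.\ \cref{prop:colimfunctorcat} — the functor $\Phi$ factors through $\FUN^{\cc}(\bCC,\bCC)$. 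Transposing, this is precisely (op)lax cocontinuity of $-\otimes-$ in the $\bCC$-variable, which completes the first assertion.

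For the hom-equivalence I would compare $F=\bCC(-\otimes-,-)$ and $G=\Fun(-,\bCC(-,-))$, viewing each as a functor of the $\Cat$-variable valued in $\MM=\FUN(\bCC^\op\times\bCC,\Cat)$. The plan is to show that both send (op)lax colimits in $\Cat$ to (op)lax limits in $\MM$: for $F$ this follows from (op)lax cocontinuity of $-\otimes-$ in the first variable combined with the fact that $\bCC(-,-)$ turns (op)lax colimits into (op)lax limits, as in \cref{prop:yonedacont} and the universal property of \cref{def:elaxlimit}; for $G$ it is a manifestation of the closed structure of the Gray tensor product, namely that the internal hom $\Fun(-,D)$ converts (op)lax colimits of a given shape into the oppositely-marked (op)lax limits. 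Reinterpreting this continuity as genuine (op)lax cocontinuity into the appropriate $(-)^\op$/$(-)^\co$-dual of $\MM$, the universal property of \cref{thm:laxCocompletion} reduces the desired equivalence $F\simeq G$ to an equivalence of their restrictions along $h_{[0]}\colon[0]\into\Cat$, i.e.\ of their values at $[0]$. There both evaluate to $\bCC(c,d)$, since $[0]\otimes-\simeq\id_{\bCC}$ yields $\bCC([0]\otimes c,d)\simeq\bCC(c,d)$ and $\Fun([0],\bCC(c,d))\simeq\bCC(c,d)$ naturally in $c,d$. I expect the main obstacle to be exactly this last step: checking carefully that $G$ really does take (op)lax colimits to (op)lax limits, which hinges on the closed-monoidal behaviour of the Gray tensor product and on correctly matching the markings together with the $\op$ and $\co$ dualities, so that continuity in the $\Cat^\op$-variable becomes bona fide (op)lax cocontinuity into a target to which \cref{thm:laxCocompletion} applies.
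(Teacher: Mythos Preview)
Your proposal is correct and follows essentially the same route as the paper: both invoke \cref{thm:laxCocompletion} with target $\FUN(\bCC,\bCC)$ to produce the unique (op)lax cocontinuous $\Phi$ with $\Phi([0])\simeq\id_{\bCC}$, argue the factorisation through $\FUN^{\cc}(\bCC,\bCC)$ via closure under (op)lax colimits, and then reduce the hom-equivalence to its value at $[0]$ using the same universal property.

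One minor comment: your anticipated obstacle concerning $G$ is not actually an obstacle, and the Gray tensor product plays no role there. The functor $G(-,c,d)=\Fun(-,\bCC(c,d))$ is literally the representable $\Cat(-,\bCC(c,d))$, so it carries partially (op)lax colimits in $\Cat$ to partially (op)lax limits by the very definition of (op)lax colimits in \cref{def:elaxlimit}; no marking-duality bookkeeping or closed-monoidal structure is required. The paper accordingly dispatches this step in a single clause.
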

By dualising \cref{prop:tensoring}, we furthermore obtain:
\begin{corollary}\label{cor:cotensoring}
	For every (op)lax complete 2-category $\bCC$, there is a unique functor
	\begin{equation*}
		(-)^{(-)}\colon \Cat^\op\times \bCC\to\bCC
	\end{equation*}
	that is (op)lax continuous in either variable and that comes with an equivalence $(-)^{[0]}\simeq\id_{\bCC}$. Furthermore, this functor fits into an equivalence
	\begin{equation*}
		\bCC(-, (-)^{(-)})\simeq \Fun(-, \bCC(-,-))
	\end{equation*}
	of functors $\Cat^\op\times\bCC^\op\times\bCC\to\Cat$.\qed
\end{corollary}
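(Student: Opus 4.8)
The plan is to deduce the statement directly from \cref{prop:tensoring} applied to the opposite 2-category $\bCC^\op$, and then to transport the resulting tensoring back along the $\op$-duality of \cref{def:coandop}. First I would record the key duality underlying this strategy: since $(-)^\op$ carries a diagram $F\colon\bII\to\bCC$ to $F^\op\colon\bII^\op\to\bCC^\op$ and exchanges partially (op)lax limits in $\bCC$ with partially (op)lax colimits in $\bCC^\op$, the hypothesis that $\bCC$ is (op)lax complete is equivalent to $\bCC^\op$ being (op)lax cocomplete. The only delicate point here is the bookkeeping of the lax/oplax decoration under $(-)^\op$; as one checks by translating a lax cone in $\bCC$ into a cocone in $\bCC^\op$, the $\op$-duality may interchange the lax and oplax flavours, but since the assertion is made uniformly in the (op)lax decoration this is harmless, merely matching the two parallel statements to one another.

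With this in hand I would invoke \cref{prop:tensoring} for $\bCC^\op$ to obtain a functor $-\otimes-\colon\Cat\times\bCC^\op\to\bCC^\op$ that is (op)lax cocontinuous in either variable, satisfies $[0]\otimes-\simeq\id_{\bCC^\op}$, and fits into a natural equivalence $\bCC^\op(-\otimes-,-)\simeq\Fun(-,\bCC^\op(-,-))$. I would then define the cotensor on objects by $c^K:=K\otimes c$. Because $-\otimes c$ is a covariant, (op)lax cocontinuous functor into $\bCC^\op$, the very same data reinterpreted in $\bCC$ is a contravariant, (op)lax continuous functor $\Cat^\op\to\bCC$; this is exactly what produces the domain $\Cat^\op$ and the continuity in the $\Cat$-variable demanded by the statement, while $K\otimes-$ being (op)lax cocontinuous on $\bCC^\op$ renders $(-)^K$ (op)lax continuous on $\bCC$. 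Rewriting the universal property through the identity $\bCC^\op(x,y)\simeq\bCC(y,x)$ turns $\bCC^\op(K\otimes c,d)\simeq\Fun(K,\bCC^\op(c,d))$ into $\bCC(d,c^K)\simeq\Fun(K,\bCC(d,c))$, which is precisely the claimed equivalence $\bCC(-,(-)^{(-)})\simeq\Fun(-,\bCC(-,-))$ of functors $\Cat^\op\times\bCC^\op\times\bCC\to\Cat$; likewise the normalisation $[0]\otimes-\simeq\id_{\bCC^\op}$ becomes $(-)^{[0]}\simeq\id_{\bCC}$.

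Finally, uniqueness transfers along the same dictionary: any functor $(-)^{(-)}\colon\Cat^\op\times\bCC\to\bCC$ with the listed properties corresponds, under $(-)^\op$, to a functor $\Cat\times\bCC^\op\to\bCC^\op$ satisfying the hypotheses of \cref{prop:tensoring}, and such a tensoring is unique there. I expect the only genuine obstacle to be the verification of the first step, namely that $(-)^\op$ really does interchange the relevant flavours of partially (op)lax (co)limits with the correct decorations and is compatible with all the variances, so that the contravariant $\Cat^\op$ and the equivalence of hom-category functors appear on the nose; once this dictionary is pinned down, every remaining step is a purely formal rewriting.
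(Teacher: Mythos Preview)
Your proposal is correct and takes essentially the same approach as the paper: the paper's entire proof is the single phrase ``By dualising \cref{prop:tensoring}'', and what you have written is exactly this dualisation spelled out in detail.
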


\begin{remark}\label{rem:variantsTensoringCotensoring}
	If $\bCC$ is an arbitrary 2-category, we always find an embedding $\bCC\into\bEE$ where $\bEE$ is cocomplete and the embedding preserves all partially (op)lax colimits that exist in $\bCC$. In fact, such an embedding is provided by the map $\bCC\into \bEE=\PSh(\bCC^\op)^\op$ that is provided by the Yoneda embedding. Thus \cref{prop:tensoring} implies that we have a tensoring
	\begin{equation*}
		-\otimes -\colon \Cat\times\bEE\to\bEE,
	\end{equation*}
	and given $\II\in\Cat$ and $c\in\bCC\subset\bEE$, the object $\II\otimes c$ is contained in $\bCC$ precisely if the fully lax colimit of the constant diagram $\II\to\bCC$ (or equivalently the fully oplax colimit of the constant diagram $\bII^\op\to \bCC$) with value $c$ exists in $\bCC$.
	The dual statement for (op)lax limits holds as well.
\end{remark}

\section{Presentable 2-categories}\label{sec:2presentable}
In this chapter, we develop a few aspects of the notion of presentability in 2-category theory. Recall that a 1-category is defined to be presentable if it is cocomplete and accessible. We will likewise define a 2-category to be \emph{presentable} if it is cocomplete and if the underlying 1-category is accessible. In other words, we understand accessibility as an entirely 1-dimensional concept, which allows us to leverage much of the well-developed theory of presentable 1-categories to keep our arguments rather concise. 

We begin in \cref{subsec:localObjects} by briefly discussing 2-dimensional Bousfield localisations. We use these results in \ref{subsec:2present} to characterise presentable 2-categories, in a similar fashion as the Lurie-Simpson characterisation of presentable 1-categories (see \cite[Theorem~5.5.1.1]{LurieHTT}). Lastly, we derive 2-dimensional adjoint functor theorems in \cref{subsec:adjointFunctorTheorems}.

\subsection{Local objects in a 2-category}\label{subsec:localObjects}

\begin{definition}\label{def:localObjects}
	Let $\bCC$ be a 2-category and let $S$ be a set of morphisms in $\bCC$. An object $c\in\bCC$ is said to be \emph{$S$-local} if for every map $s\colon x\to y$ in $S$ the functor
	\begin{equation*}
		s^\ast\colon\bCC(y,c)\to\bCC(x,c)
	\end{equation*}
	is an equivalence. We denote by $\Loc_S(\bCC)$ the full sub-2-category of $\bCC$ that is spanned by the $S$-local objects.
\end{definition}

\begin{lemma}\label{lem:LocalObjects1vs2dimensional}
	Let $\bCC$ be an (op)lax cocomplete 2-category, and let $S$ be a set of maps in $\bCC$ that is preserved by the functor $[1]\otimes -$. Then we have $\Loc_S(\bCC)^{\leq 1}=\Loc_S(\bCC^{\leq 1})$.
\end{lemma}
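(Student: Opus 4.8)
The plan is to show that an object $c\in\bCC$ is $S$-local in the sense of \cref{def:localObjects} if and only if it is $S$-local when regarded as an object of the underlying $1$-category $\bCC^{\leq 1}$. Since both $\Loc_S(\bCC)^{\leq 1}$ and $\Loc_S(\bCC^{\leq 1})$ are \emph{full} sub-$1$-categories of $\bCC^{\leq 1}$ with the same ambient objects, this equality of object sets yields the claimed equality of $1$-categories. Throughout I will use that the underlying $1$-category has the same objects as $\bCC$ and mapping spaces given by cores of mapping categories, i.e.\ $\bCC^{\leq 1}(a,b)\simeq\bCC(a,b)^{\core}$ (cf.\ \cref{def:underlying1cat}), so that $c$ is $S$-local in $\bCC^{\leq 1}$ precisely if for every $s\colon x\to y$ in $S$ the map $s^\ast\colon\bCC(y,c)^{\core}\to\bCC(x,c)^{\core}$ is an equivalence of spaces. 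One implication is immediate: if $s^\ast\colon\bCC(y,c)\to\bCC(x,c)$ is an equivalence of categories for every $s\in S$, then passing to cores shows that $c$ is $S$-local in $\bCC^{\leq 1}$.

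For the converse, fix $c$ that is $S$-local in $\bCC^{\leq 1}$ and fix $s\colon x\to y$ in $S$; I want to upgrade the equivalence on cores to an equivalence of categories $s^\ast\colon\bCC(y,c)\to\bCC(x,c)$. Modelling categories as complete Segal spaces, a functor is an equivalence as soon as it induces equivalences on the spaces of $0$- and $1$-simplices, i.e.\ on $(-)^{\core}$ and on $\Fun([1],-)^{\core}$ (all higher simplices are then forced by the Segal condition, and completeness upgrades the resulting levelwise equivalence to an equivalence). The map on $(-)^{\core}$ is exactly $s^\ast\colon\bCC(y,c)^{\core}\to\bCC(x,c)^{\core}$, which is an equivalence since $s\in S$ and $c$ is $S$-local in $\bCC^{\leq 1}$. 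For the map on $\Fun([1],-)^{\core}$, I invoke the canonical tensoring of \cref{prop:tensoring}, which supplies a natural equivalence $\bCC([1]\otimes a,c)\simeq\Fun([1],\bCC(a,c))$; taking cores identifies $\Fun([1],\bCC(a,c))^{\core}$ with $\bCC^{\leq 1}([1]\otimes a,c)$, and under this identification the map induced by $s$ becomes $([1]\otimes s)^\ast\colon\bCC^{\leq 1}([1]\otimes y,c)\to\bCC^{\leq 1}([1]\otimes x,c)$. By hypothesis $[1]\otimes-$ preserves $S$, so $[1]\otimes s\in S$, and hence this map is again an equivalence by $S$-locality of $c$ in $\bCC^{\leq 1}$. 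Thus $s^\ast$ is an equivalence of categories, so $c$ is $S$-local in $\bCC$, completing the converse.

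The only real content is the second paragraph, and the step I expect to require the most care is combining the complete Segal space detection criterion with the naturality of the tensoring equivalence: I must ensure that the map $\Fun([1],\bCC(y,c))^{\core}\to\Fun([1],\bCC(x,c))^{\core}$ induced by $s^\ast$ corresponds, under $\bCC([1]\otimes-,c)\simeq\Fun([1],\bCC(-,c))$, precisely to precomposition with $[1]\otimes s$. This is exactly the naturality of the equivalence in \cref{prop:tensoring} in the $\bCC$-variable, so no extra work is needed beyond unwinding it. One should also confirm that the tensoring used to form $[1]\otimes-$ in the hypothesis coincides with the one provided by \cref{prop:tensoring} for the ambient flavour of (op)lax cocompleteness, which is automatic from the uniqueness clause there.
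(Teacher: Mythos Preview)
Your proof is correct and follows essentially the same approach as the paper: both reduce the question to checking that $s^\ast$ induces equivalences on the spaces of $0$- and $1$-simplices of the mapping categories, and both use the tensoring adjunction from \cref{prop:tensoring} to identify the $1$-simplex condition with $S$-locality in $\bCC^{\leq 1}$ against $[1]\otimes s$. The paper's version is more terse, but the argument is the same.
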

\begin{proof}
	An object $c\in\bCC$ is $S$-local in $\bCC$ if and only if the two maps
	\begin{equation*}
		\bCC(y, c)^\core\to\bCC(x, c)^\core\qquad\qquad \Cat([1], \bCC(y, c))^\core \to \Cat([1], \bCC(x, c))^\core
	\end{equation*}
	are equivalences, for all $s\colon y\to x$ in $S$. Now the left map can be identified with $\bCC^{\leq 1}(y, c)\to\bCC^{\leq 1}(x,c)$, and the right map can be identified with $\bCC^{\leq 1}([1]\otimes y, c)\to\bCC^{\leq 1}([1]\otimes x, c)$ (see \cref{prop:tensoring}). Since by assumption $[1]\otimes s\in S$, the claim follows.
\end{proof}

\begin{remark}\label{rem:SaturationLocalObjects}
	Suppose that $\bCC$ is an (op)lax cocomplete 2-category, and let $S$ be a set of maps in $\bCC$. Let $T$ be the smallest collection of maps in $\bCC$ that contains $S$ and that is preserved by $[1]\otimes -$. Then $\Loc_S(\bCC)=\Loc_T(\bCC)$. In fact, given $c\in\bCC$, the functor $\bCC(-, c)\colon \bCC\to\Cat^\op$ is cocontinuous, which immediately implies that the class of maps in $\bCC$ that are inverted by $\bCC(-,c)$ is closed under all partially (op)lax colimits in $\AR_{\bCC}$. This is sufficient to deduce the claim. Moreover, note that the class $T$ is still small. To see this, let us set $S_0 = S$, and let us inductively define $S_n$ as the union of $S_{n-1}$ with the image of $S_{n-1}$ along $[1]\otimes -$. Then it is easy to see that we have $T=\bigcup_n S_n$, so that $T$ must be small as well. Together with \cref{lem:LocalObjects1vs2dimensional}, we thus conclude that $\Loc_S(\bCC)^{\leq 1}=\Loc_T(\bCC^{\leq 1})$.
\end{remark}

\begin{lemma}\label{lem:adjunctions1vs2dimensional}
	Let $\bCC$ be an (op)lax cocomplete 2-category, and let $f\colon \bCC\to\bDD$ be an (op)lax cocontinuous functor. Suppose that $f^{\leq 1}$ is a left adjoint. Then $f$ is a left adjoint as well.
\end{lemma}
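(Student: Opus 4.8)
The plan is to verify the representability criterion for left adjoints recalled earlier: $f$ is a left adjoint as soon as for every $d\in\bDD$ the functor $G_d:=\bDD(f(-),d)\colon\bCC^\op\to\Cat$ is representable. So I fix $d$ and exhibit a representing object. Let $g$ be the right adjoint of the $1$-categorical left adjoint $f^{\leq 1}\colon\bCC^{\leq 1}\to\bDD^{\leq 1}$, with counit $\epsilon\colon f^{\leq 1}g\Rightarrow\id$, and set $c:=g(d)$ (objects of $\bCC$ and $\bCC^{\leq 1}$ agree). Then $\epsilon_d$ is a $1$-morphism $f(c)\to d$, i.e.\ an object of $G_d(c)$, so by the Yoneda lemma it corresponds to a natural transformation
\[
	\theta\colon \bCC(-,c)\Rightarrow G_d,\qquad \theta_x = (\epsilon_d)_\ast\circ f_{x,c}\colon \bCC(x,c)\to\bDD(f(x),f(c))\to\bDD(f(x),d).
\]
The goal is then to show that $\theta$ is a natural equivalence, as this exhibits $G_d\simeq\bCC(-,c)$ as representable.

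Since equivalences of $\Cat$-valued functors are detected objectwise, it suffices to show each $\theta_x$ is an equivalence of categories. I would use here that a functor of $1$-categories is an equivalence if and only if it induces equivalences on the cores $\Fun(K,-)^\core$ for $K\in\{[0],[1]\}$ (by the Segal and completeness conditions, exactly the mechanism used in the proofs of \cref{lem:1dimensionalsuffices} and \cref{lem:LocalObjects1vs2dimensional}). The case $K=[0]$ is the underlying map on cores: recalling $\bCC(x,c)^\core=\bCC^{\leq 1}(x,c)$ and $f^{\leq 1}_{x,y}=f_{x,y}^\core$, the map $\theta_x^\core$ sends $u\mapsto\epsilon_d\circ f^{\leq 1}(u)$ and is precisely the adjunction equivalence $\bCC^{\leq 1}(x,c)\simeq\bDD^{\leq 1}(f^{\leq 1}(x),d)$ of $f^{\leq 1}\dashv g$, hence an equivalence.

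For general $K$ I would reduce $\Fun(K,\theta_x)^\core$ to the previous case at a different object. The point is that both $\bCC(-,c)$ and $G_d$ carry the tensor $K\otimes x$ to the cotensor $\Fun(K,-)$. By \cref{rem:variantsTensoringCotensoring} the object $K\otimes x$ is the fully (op)lax colimit $\colim^{\laxoplax}_K\underline x$, which exists because $\bCC$ is (op)lax cocomplete, so \cref{prop:tensoring} gives $\bCC(K\otimes x,c)\simeq\Fun(K,\bCC(x,c))$; and since $f$ is (op)lax cocontinuous we have $f(K\otimes x)\simeq\colim^{\laxoplax}_K\underline{f(x)}$ in $\bDD$, whence by the universal property of (op)lax colimits (\cref{def:elaxlimit}) and the description of (op)lax transformations between constant functors,
\[
	G_d(K\otimes x)=\bDD\bigl(\textstyle\colim^{\laxoplax}_K\underline{f(x)},d\bigr)\simeq\Nat^{\laxoplax}_{K,\bDD}(\underline{f(x)},\underline d)\simeq\Fun(K,\bDD(f(x),d))=\Fun(K,G_d(x)).
\]
Both identifications are natural in $x$, and because $\theta$ is built by applying $f$ and postcomposing with $\epsilon_d$ — operations compatible with the comparison $f(K\otimes x)\simeq\colim^{\laxoplax}_K\underline{f(x)}$ — they intertwine $\Fun(K,\theta_x)$ with the evaluation $\theta_{K\otimes x}$. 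Passing to cores, $\Fun(K,\theta_x)^\core\simeq\theta_{K\otimes x}^\core$ is again the adjunction equivalence $\bCC^{\leq 1}(K\otimes x,c)\simeq\bDD^{\leq 1}(f^{\leq 1}(K\otimes x),d)$, hence an equivalence. Thus every $\theta_x$ is an equivalence of categories, as needed.

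I expect the main obstacle to be the last identification $\Fun(K,\theta_x)\simeq\theta_{K\otimes x}$: one must check that the tensoring equivalence on the source and the cocontinuity‑induced equivalence on the target are compatible with $\theta$, i.e.\ that applying $f$ to a morphism $K\otimes x\to c$ and reading it through $f(K\otimes x)\simeq\colim^{\laxoplax}_K\underline{f(x)}$ agrees with applying $\Fun(K,-)$ to $f_{x,c}$. This is a naturality statement which follows from cocontinuity of $f$ together with the fact that the comparison equivalence is assembled from the colimit cocone; once it is in place, everything else is a formal consequence of the representability criterion and the $\Cat$‑tensoring of \cref{prop:tensoring}.
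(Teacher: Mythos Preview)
Your proposal is correct and follows essentially the same approach as the paper: set up the candidate adjunction map via the counit of $f^{\leq 1}\dashv g$, observe it is an equivalence on cores by the $1$-categorical adjunction, and reduce the $\Fun([1],-)^\core$ check to the core case at $[1]\otimes x$ using the tensoring of \cref{prop:tensoring} together with (op)lax cocontinuity of $f$. The paper's proof is just a more compressed version of exactly this argument (and only treats $K=[1]$, which is all that is needed).
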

\begin{proof}
 	Let $g\colon \bDD^{\leq 1}\to\bCC^{\leq 1}$ be the right adjoint of $f^{\leq 1}$, and let $\epsilon\colon f^{\leq 1}g\to \id$ be the adjunction counit. We need to show that for every object $d\in\bDD$ and every $c\in\bCC$, the map
 	\begin{equation*}
 		\bCC(c, g(d))\to \bDD(f(c), fg(d))\xrightarrow{\epsilon_\ast} \bCC(f(c), d)
 	\end{equation*}
 	is an equivalence. By assumption, it induces an equivalence on core groupoids, so it suffices to check that the induced map 
 	\begin{equation*}
 		\Cat([1], \bCC(c, g(d)))^\core\to \Cat([1], \bCC(f(c), d))^\core
 	\end{equation*}
 	is an equivalence as well. By \cref{prop:tensoring}, \cref{rem:variantsTensoringCotensoring} and the assumption on $f$, this map can be identified with the composition
 	 \begin{equation*}
 		\bCC([1]\otimes c, g(d))^\core\to \bDD(f([1]\otimes c), fg(d))^\core\xrightarrow{\epsilon_\ast}\bCC(f([1]\otimes c), d)^\core,
 	\end{equation*}
 	hence the claim follows.
\end{proof}

\begin{lemma}\label{lem:PresheavesPresentable}
	For any small 2-category $\bII$, the $1$-category $\PSh_{\Cat}(\bII)^{\leq 1}$ is presentable.
\end{lemma}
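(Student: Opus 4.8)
The plan is to reduce to the known presentability of $\Cat^{\leq 1}$ by decomposing $\bII$ into the elementary shapes of $\Theta_2$ and exhibiting $\PSh_{\Cat}(\bII)^{\leq 1}$ as a small limit of presentable categories. First, since $\bII$ is small, so is $\bII^{\op}$, and by \cref{def:kappasmall2cat} we may write $\bII^{\op}\simeq\colim_{\JJ}(\iota_\Theta e)$ for a small $1$-category $\JJ$ and a diagram $e\colon\JJ\to\Theta_2$, setting $\theta_j := e(j)$. Because the cartesian product on $\CCat$ is closed with internal hom $\FUN(-,-)$, the (contravariant) functor $\FUN(-,\Cat)$ carries this colimit to a limit, and since $(-)^{\leq 1}$ is a right adjoint (\cref{def:underlying1cat}) it preserves limits; hence
\[
\PSh_{\Cat}(\bII)^{\leq 1}=\FUN(\bII^{\op},\Cat)^{\leq 1}\simeq\lim_{j\in\JJ^{\op}}\FUN(\theta_j,\Cat)^{\leq 1}.
\]
It therefore suffices to prove that each $\FUN(\theta,\Cat)^{\leq 1}$ (for $\theta\in\Theta_2$) is presentable, and that the transition functors assemble this into a limit of presentable categories.

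For the base case I would induct on the number of objects of $\theta=[n]([i_1],\dots,[i_n])$. When $n=0$ we have $\FUN([0],\Cat)^{\leq 1}=\Cat^{\leq 1}$, which is presentable. The Segal and completeness conditions of \cref{def:theta2} express $\theta$ as the pushout $[n-1]([i_1],\dots,[i_{n-1}])\sqcup_{[0]}[1]([i_n])$ in $\CCat$; applying $\FUN(-,\Cat)^{\leq 1}$ as above turns this into a pullback of the form $\FUN([n-1](\cdots),\Cat)^{\leq 1}\times_{\Cat^{\leq 1}}\FUN([1]([i_n]),\Cat)^{\leq 1}$ along the evaluation functors. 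By induction the first factor is presentable, so the remaining point is that $\FUN([1]([i]),\Cat)^{\leq 1}$ is presentable. Unwinding the definitions, an object of this category is a pair of categories $A,B$ together with a functor $[i]\to\Fun(A,B)$; using that $\FUN(K,\Cat)^{\leq 1}\simeq\Fun(K,\Cat^{\leq 1})$ for a $1$-category $K$ (by \cref{obs:base1cat} and \cref{thm:str}), this exhibits $\FUN([1]([i]),\Cat)^{\leq 1}$ as a finite limit of copies of $\Fun(K,\Cat^{\leq 1})$ cut out by the condition that the two boundary evaluations be the constant diagrams on $A$ and $B$. As all the maps involved are evaluations and hence limit-preserving, this presents $\FUN([1]([i]),\Cat)^{\leq 1}$ as a limit of presentable categories, and so it is presentable.

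Finally I would assemble the global limit. Each transition functor in the diagram $j\mapsto\FUN(\theta_j,\Cat)^{\leq 1}$ is restriction along a map of $\Theta_2$-objects; limits and filtered colimits in these functor categories are computed pointwise in $\Cat^{\leq 1}$, so every such functor is limit-preserving and accessible, and therefore a right adjoint between presentable categories by the adjoint functor theorem. Thus the diagram factors through $\RPr$, which is closed under small limits and whose forgetful functor to $\widehat{\Cat}$ preserves them; consequently $\lim_{j\in\JJ^{\op}}\FUN(\theta_j,\Cat)^{\leq 1}$ is presentable, which proves the claim. I expect the main obstacle to be the base-case bookkeeping: checking that the Segal pushout for $\theta$ is genuinely preserved by $\FUN(-,\Cat)^{\leq 1}$, that $\FUN([1]([i]),\Cat)^{\leq 1}$ really is a limit of presentable categories with limit-preserving structure maps, and that all transition functors land in $\RPr$ so that the ambient limit may be computed there.
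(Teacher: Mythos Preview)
Your overall architecture is sound and genuinely different from the paper's proof, which is a one-line citation: $\PSh_{\Cat}(\bII)^{\leq 1}$ is the underlying $\infty$-category of a combinatorial simplicial model category (Lurie's scaled simplicial sets, \cite[Theorem~3.8.1]{LurieGoodwillie}), hence presentable by \cite[Proposition~A.3.7.6]{LurieHTT}. Your route via a $\Theta_2$-decomposition and a limit in $\RPr$ avoids any model-categorical input, which is appealing. Steps (1), (2), and the final assembly are correct: $\FUN(-,\Cat)^{\leq 1}$ does carry the colimit decomposition of $\bII^{\op}$ to a limit, the transition maps are restrictions and hence accessible right adjoints, and $\RPr$ is closed under limits.

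The gap is exactly where you suspect it: the base case $\FUN([1]([i]),\Cat)^{\leq 1}$ for $i\geq 1$. Your sentence ``this exhibits $\FUN([1]([i]),\Cat)^{\leq 1}$ as a finite limit of copies of $\Fun(K,\Cat^{\leq 1})$'' does not go through as written, because $[1]([i])$ carries genuine non-invertible $2$-morphisms and is \emph{not} a colimit of $1$-categories in $\CCat$; consequently the identification $\FUN(K,\Cat)^{\leq 1}\simeq\Fun(K,\Cat^{\leq 1})$, which is only available for $1$-categorical $K$, cannot be invoked. Describing the objects as triples $(A,B,[i]\to\Fun(A,B))$ is correct but says nothing about morphisms, and it is the morphisms that carry the $2$-categorical content.

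This gap can be filled, but it takes more than bookkeeping. One option is to observe that $(A,B,[i]\to\Fun(A,B))$ is equivalently $(A,B,A\to B^{[i]})$, and then check directly that
\[
\FUN([1]([i]),\Cat)^{\leq 1}\;\simeq\;(\Cat^{\leq 1})^{[1]}\times_{\ev_1,\,\Cat^{\leq 1},\,(-)^{[i]}}\Cat^{\leq 1},
\]
which is a pullback along accessible right adjoints and therefore lies in $\RPr$. But producing this identification as an equivalence of $\infty$-categories (not just a bijection on objects) requires an actual argument, for instance by exhibiting $[1]([i])$ as a suitable pushout involving Gray tensors as in the proof of \cref{prop:mapcatlaxarrow}. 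Alternatively, one can reduce further via the vertical Segal decomposition $[1]([i])\simeq C_2\sqcup_{[1]}\cdots\sqcup_{[1]} C_2$ and then handle $C_2$ directly. Either way, you owe a genuine identification here; the hand-wave to ``boundary evaluations being constant'' does not supply one.
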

\begin{proof}
	By \cite[Theorem~3.8.1]{LurieGoodwillie}, the $1$-category $\PSh_{\Cat}(\bII)^{\leq 1}$ can be presented by a combinatorial simplicial model category and is thus presentable by \cite[Proposition~A.3.7.6]{LurieHTT}.
\end{proof}

\begin{proposition}\label{prop:localObjectsPresheaves}
	Let $\bII$ be a small 2-category, and let $S$ be a set of maps in $\PSh_{\Cat}(\bII)$. Then the inclusion $\iota\colon\Loc_S(\PSh_{\Cat}(\bII))\subset\PSh_{\Cat}(\bII)$ admits a left adjoint $l$. Moreover, $\iota$ preserves $\kappa$-filtered colimits for some regular cardinal $\kappa$.
\end{proposition}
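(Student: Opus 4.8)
The plan is to reduce the whole statement to the theory of accessible localizations of the presentable $1$-category $\PSh_{\Cat}(\bII)^{\leq 1}$ (\cref{lem:PresheavesPresentable}) and then to lift the resulting reflection to the $2$-categorical level using the canonical tensoring. Write $\bCC=\PSh_{\Cat}(\bII)$, which is complete and cocomplete by \cref{ex:presheavesComplete}. First I would replace $S$ by the set $T$ of \cref{rem:SaturationLocalObjects}: it is still small, it is closed under $[1]\otimes-$, it satisfies $\Loc_S(\bCC)=\Loc_T(\bCC)$, and crucially $\Loc_S(\bCC)^{\leq 1}=\Loc_T(\bCC^{\leq 1})$. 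Since $\bCC^{\leq 1}$ is presentable, the standard theory of localizations (e.g.\ \cite[Proposition~5.5.4.15]{LurieHTT}) yields an accessible reflective localization $l\colon\bCC^{\leq 1}\to\Loc_T(\bCC^{\leq 1})$ with fully faithful inclusion $\iota^{\leq 1}$; for each $c$ the unit $\eta_c\colon c\to l(c)$ lies in the strongly saturated class $\bar T$ generated by $T$ (the class of $T$-local equivalences), and $l(c)$ is $T$-local.

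The main work is to promote this to a genuine $2$-categorical reflection. By the corepresentability criterion characterizing right adjoints of $2$-categories, it suffices to show that for every $c\in\bCC$ and every $S$-local $x$ the precomposition functor $\eta_c^\ast\colon\bCC(l(c),x)\to\bCC(c,x)$ is an equivalence of categories; since $\iota$ is fully faithful and $\eta_c^\ast$ is natural in $x$, this exhibits $l(c)$ as corepresenting $x\mapsto\bCC(c,\iota x)$ and hence produces the left adjoint. Viewing mapping categories as complete Segal spaces, $\eta_c^\ast$ is an equivalence as soon as it induces equivalences on $\Fun([n],-)^\core$ for $n\in\{0,1\}$. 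Using the tensoring equivalence $\Fun([n],\bCC(-,x))^\core\simeq\bCC^{\leq 1}([n]\otimes-,x)$ from \cref{prop:tensoring}, these two conditions become the assertions that $\bCC^{\leq 1}([n]\otimes\eta_c,x)$ is an equivalence of spaces for $n=0,1$; as $x$ is $T$-local in $\bCC^{\leq 1}$ (by \cref{rem:SaturationLocalObjects}), it is enough to know that $[n]\otimes\eta_c\in\bar T$.

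The case $n=0$ is simply $\eta_c\in\bar T$. The case $n=1$ is the step I expect to be the main obstacle, and it reduces to the claim that $[1]\otimes\bar T\subseteq\bar T$. Here I would use that the functor $[1]\otimes-\colon\bCC\to\bCC$ is a $2$-categorical left adjoint, with right adjoint the cotensor $(-)^{[1]}$: combining \cref{prop:tensoring} and \cref{cor:cotensoring} gives $\bCC([1]\otimes c,d)\simeq\Fun([1],\bCC(c,d))\simeq\bCC(c,d^{[1]})$. Applying the $2$-functor $(-)^{\leq 1}$ to this adjunction shows that $[1]\otimes-\colon\bCC^{\leq 1}\to\bCC^{\leq 1}$ is a left adjoint of $1$-categories, hence cocontinuous. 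Consequently the preimage $\{f:[1]\otimes f\in\bar T\}$ is a strongly saturated class (a cocontinuous functor pulls back closure under pushouts, transfinite composites and retracts), and it contains $T$ because $T$ is closed under $[1]\otimes-$; therefore it contains all of $\bar T$. In particular $[1]\otimes\eta_c\in\bar T$, which completes the verification that $\eta_c^\ast$ is an equivalence and hence that $\iota$ admits a left adjoint.

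Finally, for the filtered-colimit statement I would argue directly that $\kappa$-filtered colimits of $S$-local objects stay $S$-local. Since $S$ is small, \cref{lem:1dimensionalsuffices} together with presentability of $\bCC^{\leq 1}$ (\cref{lem:PresheavesPresentable}) lets me fix a regular cardinal $\kappa$ for which every domain and codomain of a map in $S$ is $\kappa$-compact in $\bCC$. Given a $\kappa$-filtered diagram $(x_\alpha)$ of $S$-local objects with colimit $x$ and any $s\colon a\to b$ in $S$, the $\kappa$-compactness of $a$ and $b$ (\cref{def:compactobject}) identifies $\bCC(s,x)$ with $\colim_\alpha\bCC(s,x_\alpha)$, a $\kappa$-filtered colimit of equivalences in $\Cat$ and hence itself an equivalence; thus $x$ is $S$-local. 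Because $\Loc_S(\bCC)$ is reflective, its colimits are computed by applying $l$ to the colimit in $\bCC$, so the fact that the $\bCC$-colimit is already local means $\iota$ preserves it. Therefore $\iota$ preserves $\kappa$-filtered colimits, finishing the proof.
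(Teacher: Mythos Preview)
Your proof is correct and follows the same overall strategy as the paper: enlarge $S$ to its saturation $T$ under $[1]\otimes-$ via \cref{rem:SaturationLocalObjects}, reduce to the presentable $1$-category $\PSh_{\Cat}(\bII)^{\leq 1}$ via \cref{lem:LocalObjects1vs2dimensional} and \cref{lem:PresheavesPresentable}, and then lift the resulting reflection to the $2$-categorical level.

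The execution of the lifting step differs. You verify by hand that the unit $\eta_c$ induces equivalences $\bCC(l(c),x)\simeq\bCC(c,x)$ for local $x$, reducing via the tensoring to showing $[1]\otimes\eta_c\in\bar T$, which you deduce from a strongly-saturated-class argument using that $[1]\otimes-$ is cocontinuous on $\bCC^{\leq 1}$. The paper instead observes that $\Loc_S(\bCC)$ is closed under all partially (op)lax limits in $\bCC$ (immediate from the definition of locality), so the inclusion $\iota$ is (op)lax continuous, and then applies the dual of \cref{lem:adjunctions1vs2dimensional} directly: once $\iota^{\leq 1}$ is known to have a left adjoint, so does $\iota$. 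This packages your tensoring computation into a single reusable lemma and avoids any discussion of strongly saturated classes. For the filtered-colimit claim the paper simply invokes the $1$-categorical accessibility of $\Loc_T(\bCC^{\leq 1})\into\bCC^{\leq 1}$, whereas you give a direct compactness argument; both are fine, and yours has the virtue of making the cardinal $\kappa$ explicit.
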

\begin{proof}
	By enlarging $S$ if necessary, we may assume without loss of generality that $S$ is preserved by the functor $[1]\otimes -$, cf.~\cref{rem:SaturationLocalObjects}. In particular, \cref{lem:LocalObjects1vs2dimensional} implies that $\Loc_S(\PSh_{\Cat}(\bII))^{\leq 1}=\Loc_S(\bCC^{\leq 1})$. Since $\PSh_{\Cat}(\bII)^{\leq 1}$ is presentable by \cref{lem:PresheavesPresentable} and $S$ is small, this implies that the inclusion $\Loc_S(\bCC)^{\leq 1}\into\bCC^{\leq 1}$ preserves $\kappa$-filtered colimits (for some regular cardinal $\kappa$) and admits a left adjoint. Now the claim follows from (the dual of) \cref{lem:adjunctions1vs2dimensional} in light of the observation that $\Loc_S(\PSh_{\Cat}(\bII))$ is closed under all partially (op)lax limits in $\PSh_{\Cat}(\bII)$.
\end{proof}

\subsection{2-Presentability}\label{subsec:2present}

\begin{definition}\label{def:presentability}
	A 2-category $\bCC$ is said to be \emph{presentable} if $\bCC$ is cocomplete and the underlying $1$-category $\bCC^{\leq 1}$ is accessible.
\end{definition}

\begin{remark}\label{rem:2presentableImplies1presentable}
	If $\bCC$ is presentable, then $\bCC^{\leq 1}$ is cocomplete and accessible and thus presentable.
\end{remark}

\begin{remark}\label{rem:2presentableLocallySmall}
	If $\bCC$ is (op)lax cocomplete, then $\bCC$ being locally small is equivalent to $\bCC^{\leq 1}$ being locally small. In fact, the condition is trivially necessary, and that it is also sufficient follows from the observation that given objects $c,d\in\bCC$, the 1-category $\bCC(c,d)$ being small is equivalent to $\Cat([1], \bCC(c,d))^\core$ being small and that the latter can be identified with $\bCC^{\leq 1}([1]\otimes c, d)$ by \cref{prop:tensoring}. In particular, every presentable 2-category is locally small.
\end{remark}

\begin{lemma}\label{lem:compactObjects2presentable}
	Let $\bCC$ be an (op)lax cocomplete 2-category, and suppose that $\bCC^{\leq 1}$ is accessible. Then there exists a regular cardinal $\kappa$ such that $\bCC^{\leq 1}$ is $\kappa$-accessible and an object $c\in\bCC^{\leq 1}$ is $\kappa$-compact if and only if it is $\kappa$-compact in $\bCC$, in the sense that the functor
	\begin{equation*}
		\bCC(c,-)\colon\bCC \to \Cat
	\end{equation*} 
	preserves $\kappa$-filtered colimits.
\end{lemma}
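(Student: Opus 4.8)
The plan is to convert the $2$-categorical compactness condition into a purely $1$-categorical statement about $\bCC^{\leq 1}$ via \cref{lem:1dimensionalsuffices}, and then to feed this into the classical theory of accessible functors. Recall that \cref{lem:1dimensionalsuffices} identifies $\kappa$-compactness of $c$ in $\bCC$ with the conjunction of $\kappa$-compactness of $c$ and of $[1]\otimes c$ in $\bCC^{\leq 1}$. Since one implication is automatic, the whole statement reduces to producing a single regular cardinal $\kappa$ for which $\bCC^{\leq 1}$ is $\kappa$-accessible and for which the endofunctor $[1]\otimes-\colon\bCC^{\leq 1}\to\bCC^{\leq 1}$ sends $\kappa$-compact objects to $\kappa$-compact objects; indeed, for such a $\kappa$ the equivalence ``$c$ is $\kappa$-compact in $\bCC^{\leq 1}$'' $\Leftrightarrow$ ``$c$ is $\kappa$-compact in $\bCC$'' follows at once from \cref{lem:1dimensionalsuffices}.

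The first thing I would establish is that $[1]\otimes-\colon\bCC^{\leq 1}\to\bCC^{\leq 1}$ is an accessible functor. By \cref{prop:tensoring} the tensoring $[1]\otimes-\colon\bCC\to\bCC$ is (op)lax cocontinuous, so it preserves strong colimits indexed by $1$-categories. I would then observe that such strong colimits are already computed in the underlying $1$-category: for a diagram $F\colon\II\to\bCC$ indexed by a $1$-category $\II$, the universal property of the strong colimit together with the fact that $(-)^\core\colon\Cat\to\SS$ preserves limits gives, for every $e\in\bCC$,
\begin{equation*}
	\bCC^{\leq 1}(\colim_\II F, e)\simeq \Nat(F,\underline e)^\core\simeq\Big(\lim_{\II^\op}\bCC(F(-),e)\Big)^\core\simeq \lim_{\II^\op}\bCC^{\leq 1}(F(-),e),
\end{equation*}
so that $\colim_\II F$ also computes the colimit of $F$ in $\bCC^{\leq 1}$. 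Since $\bCC$ is cocomplete these strong colimits always exist, and combining this comparison with the cocontinuity of $[1]\otimes-$ on $\bCC$ shows that $[1]\otimes-$ preserves every colimit that exists in $\bCC^{\leq 1}$; in particular it preserves the $\lambda$-filtered colimits witnessing the accessibility of $\bCC^{\leq 1}$, so it is accessible.

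With accessibility in hand, the remainder is standard $1$-categorical input. An accessible endofunctor of an accessible $1$-category preserves $\kappa$-compact objects for all sufficiently large regular cardinals $\kappa$, and one may choose such a $\kappa$ for which $\bCC^{\leq 1}$ is moreover $\kappa$-accessible. For this $\kappa$ I would then conclude as in the first paragraph: if $c$ is $\kappa$-compact in $\bCC$ it is a fortiori $\kappa$-compact in $\bCC^{\leq 1}$, while if $c$ is $\kappa$-compact in $\bCC^{\leq 1}$ then $[1]\otimes c$ is too by the choice of $\kappa$, so \cref{lem:1dimensionalsuffices} makes $c$ $\kappa$-compact in $\bCC$.

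I expect the main obstacle to be the comparison in the second paragraph, namely that strong colimits of $1$-categorical diagrams in $\bCC$ coincide with ordinary colimits in $\bCC^{\leq 1}$: this is precisely what allows the cocontinuity of $[1]\otimes-$ to descend from $\bCC$ to $\bCC^{\leq 1}$ and hence yields accessibility. Once that point is settled, the argument is a direct combination of \cref{lem:1dimensionalsuffices} with the standard behaviour of accessible functors.
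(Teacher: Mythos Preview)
Your proposal is correct and follows essentially the same approach as the paper: both argue that $[1]\otimes-\colon\bCC^{\leq 1}\to\bCC^{\leq 1}$ is cocontinuous (hence accessible), choose $\kappa$ so that it preserves $\kappa$-compact objects, and then deduce the equivalence of compactness notions. The paper states the cocontinuity of $[1]\otimes-$ on $\bCC^{\leq 1}$ as a fact and reproves the content of \cref{lem:1dimensionalsuffices} inline via the identification $\Cat([1],\bCC(c,-))^\core\simeq\bCC^{\leq 1}([1]\otimes c,-)$, whereas you spell out the comparison of strong colimits and invoke \cref{lem:1dimensionalsuffices} directly; these are cosmetic differences.
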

\begin{proof}
	Since $[1]\otimes -\colon\bCC^{\leq 1}\to \bCC^{\leq 1}$ is cocontinuous, it is in particular an accessible functor. Consequently, there exists a regular cardinal $\kappa$ such that $\bCC^{\leq 1}$ is $\kappa$-accessible and $[1]\otimes -$ preserves $\kappa$-compact objects. Thus, if $c\in\bCC^{\leq 1}$ is $\kappa$-compact, the fact that we have an equivalence
	\begin{equation*}
		\Cat([1], \bCC(c,-))^\core\simeq\bCC([1]\otimes c, c)^\core
	\end{equation*}
	(see \cref{prop:tensoring}) implies that $\bCC(c, -)$ preserves $\kappa$-filtered colimits. Hence the claim follows.
\end{proof}

\begin{theorem}\label{thm:characterisationPresentableCategories}
	For a 2-category $\bCC$, the following are equivalent:
	\begin{enumerate}
		\item $\bCC$ is presentable;
		\item $\bCC$ is (op)lax cocomplete and $\bCC^{\leq 1}$ is accessible;
		\item $\bCC$ is (op)lax cocomplete, locally small, and there is a regular cardinal $\kappa$ and a small full sub-2-category $\bCC_0\subset \bCC$ that consists of $\kappa$-compact objects such that every object in $\bCC$ is the colimit of a $\kappa$-filtered diagram with values in $\bCC_0$;
		\item there is a small 2-category $\bII$ such that $\bCC$ arises as a reflective sub-2-category of $\PSh_{\Cat}(\bII)$ in which the inclusion preserves $\kappa$-filtered colimits, for some regular cardinal $\kappa$.
		\item there is a small 2-category $\bII$ and a small set of maps $S$ in $\PSh_{\Cat}(\bII)$ so that $\bCC\simeq \Loc_S(\PSh_{\Cat}(\bII))$;
	\end{enumerate}
\end{theorem}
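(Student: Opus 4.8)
The plan is to run the cycle of implications $(1)\Rightarrow(2)\Rightarrow(3)\Rightarrow(4)\Rightarrow(5)\Rightarrow(1)$, following the classical Lurie--Simpson argument but isolating the genuinely $2$-categorical input to the free cocompletion of \cref{thm:laxCocompletion} and reducing everything else to the one-dimensional bookkeeping of \cref{lem:LocalObjects1vs2dimensional} and \cref{lem:compactObjects2presentable}. The implication $(1)\Rightarrow(2)$ is immediate, since being cocomplete means in particular being (op)lax cocomplete. For $(2)\Rightarrow(3)$ I would first observe that $\bCC^{\leq 1}$ accessible is locally small, so $\bCC$ is locally small by \cref{rem:2presentableLocallySmall}. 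Choosing $\kappa$ as in \cref{lem:compactObjects2presentable}, the $\kappa$-compact objects of $\bCC^{\leq 1}$ agree with the $\kappa$-compact objects of $\bCC$, and I let $\bCC_0\subset\bCC$ be the small full sub-$2$-category they span. The one point needing care is that the $\kappa$-filtered presentation of an object in $\bCC^{\leq 1}$ lifts to a presentation in $\bCC$; this holds because a strong colimit in $\bCC$ restricts to the corresponding colimit in $\bCC^{\leq 1}$, as taking cores commutes with the limits computing mapping categories out of a colimit.

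The heart of the argument is $(3)\Rightarrow(4)$. Taking $\bII=\bCC_0$, the inclusion $f\colon\bCC_0\hookrightarrow\bCC$ extends by \cref{thm:laxCocompletion} to a cocontinuous functor $L=(h_{\bCC_0})_!(f)\colon\PSh_{\Cat}(\bCC_0)\to\bCC$, which by \cref{rem:universalAdjointFunctorTheorem} admits a right adjoint $R$ sending $c$ to $\bCC(f(-),c)$. I would then show $R$ is fully faithful: writing $c\in\bCC$ as a $\kappa$-filtered colimit $\colim_i x_i$ with $x_i\in\bCC_0$ and using $\kappa$-compactness to commute $R$ past this colimit identifies $R(c)$ with $\colim_i h_{\bCC_0}(x_i)$, whence the Yoneda lemma in $\PSh_{\Cat}(\bCC_0)$ and the universal property of colimits give $\Nat(R(c),R(d))\simeq\lim_i\bCC(x_i,d)\simeq\bCC(c,d)$. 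The same $\kappa$-compactness, together with the pointwise computation of filtered colimits of presheaves, shows that $R$ preserves $\kappa$-filtered colimits. Thus $\bCC$ is realized as a reflective sub-$2$-category of $\PSh_{\Cat}(\bCC_0)$ whose inclusion preserves $\kappa$-filtered colimits, which is exactly $(4)$.

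For $(4)\Rightarrow(5)$ I would pass to underlying $1$-categories: the adjunction descends (the relevant bookkeeping being supplied by \cref{lem:adjunctions1vs2dimensional}) to a fully faithful, $\kappa$-filtered-colimit-preserving inclusion $\bCC^{\leq 1}\hookrightarrow\PSh_{\Cat}(\bII)^{\leq 1}$ of the presentable $1$-category of \cref{lem:PresheavesPresentable}. This is an accessible localization, so the one-dimensional theory produces a small set $S_0$ of maps with $\bCC^{\leq 1}\simeq\Loc_{S_0}(\PSh_{\Cat}(\bII)^{\leq 1})$. Saturating $S_0$ under $[1]\otimes-$ gives a still-small set $S$ (\cref{rem:SaturationLocalObjects}) with $\Loc_S(\PSh_{\Cat}(\bII))^{\leq 1}=\bCC^{\leq 1}$ by \cref{lem:LocalObjects1vs2dimensional}; since a full sub-$2$-category is determined by its objects, the full sub-$2$-categories $\Loc_S(\PSh_{\Cat}(\bII))$ and the essential image of $R$ share the same objects and hence coincide, yielding $(5)$. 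Finally $(5)\Rightarrow(1)$ is formal: by \cref{prop:localObjectsPresheaves} the inclusion $\Loc_S(\PSh_{\Cat}(\bII))\subset\PSh_{\Cat}(\bII)$ is reflective, so $\bCC$ inherits full cocompleteness from \cref{ex:presheavesComplete}, while $\bCC^{\leq 1}=\Loc_S(\PSh_{\Cat}(\bII)^{\leq 1})$ is an accessible localization of a presentable $1$-category and is therefore accessible.

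The main obstacle I anticipate is the full faithfulness of $R$ in $(3)\Rightarrow(4)$, that is, verifying that the $\kappa$-compact objects form a dense generating family in the genuinely $2$-categorical sense (as opposed to merely at the level of core groupoids); this is where \cref{prop:coYoneda} and the interplay between $\kappa$-compactness and partially lax colimits must be used carefully. A secondary difficulty is the promotion in $(4)\Rightarrow(5)$ of the purely one-dimensional localization result to an equivalence of $2$-categories, which I would handle by the "full sub-$2$-category is determined by its objects" reduction together with \cref{lem:LocalObjects1vs2dimensional} rather than by any direct $2$-dimensional computation.
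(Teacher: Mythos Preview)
Your proposal is correct and tracks the paper's proof closely. The implications $(1)\Rightarrow(2)\Rightarrow(3)\Rightarrow(4)$ and $(5)\Rightarrow(1)$ are argued exactly as in the paper, and your full faithfulness argument for $R$ in $(3)\Rightarrow(4)$ is the paper's argument read backwards (the paper phrases it as ``reduce to the case where $c$ is $\kappa$-compact and apply Yoneda'', which is equivalent to your ``compute $R(c)$ as a filtered colimit of representables and apply Yoneda termwise''). The mention of \cref{prop:coYoneda} in your closing paragraph is a red herring: neither you nor the paper needs it here.

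The one genuine divergence is $(4)\Rightarrow(5)$. The paper constructs $S$ directly as the set of maps between $\kappa$-compact objects of $\PSh_{\Cat}(\bII)$ that are inverted by the reflector $l$ (having first enlarged $\kappa$ via \cref{lem:compactObjects2presentable} so that $\kappa$-compactness in $\PSh_{\Cat}(\bII)$ and in $\PSh_{\Cat}(\bII)^{\leq 1}$ agree), and then checks by hand that every $S$-local object lies in the image of $R$ by reducing to $\kappa$-compact test objects. Your route instead imports $S_0$ from the $1$-dimensional localization theory and saturates under $[1]\otimes-$. This works, but your sentence ``$\Loc_S(\PSh_{\Cat}(\bII))^{\leq 1}=\bCC^{\leq 1}$ by \cref{lem:LocalObjects1vs2dimensional}'' hides a step: that lemma only yields $\Loc_S(\PSh_{\Cat}(\bII))^{\leq 1}=\Loc_S(\PSh_{\Cat}(\bII)^{\leq 1})$, and you still need the latter to coincide with $\Loc_{S_0}(\PSh_{\Cat}(\bII)^{\leq 1})$. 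The missing observation is that $L$, being a $2$-functor, inverts $S_0$ (equivalences in $\bCC$ and in $\bCC^{\leq 1}$ coincide), so objects of $\bCC$ are already $S_0$-local in the $2$-categorical sense; in fact this gives $\bCC=\Loc_{S_0}(\PSh_{\Cat}(\bII))$ on the nose, making the saturation step superfluous. The paper's direct construction of $S$ sidesteps this detour entirely.
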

\begin{proof}
	Trivially,~(1) implies~(2). Now suppose that (2) is satisfied. Then \cref{rem:2presentableLocallySmall} implies that $\bCC$ is locally small. Moreover, \cref{lem:compactObjects2presentable} implies that there is a regular cardinal $\kappa$ such that the full sub-2-category $\bCC^\kappa$ of $\kappa$-compact objects in $\bCC$ is small and such that every object in $\bCC$ is a $\kappa$-filtered colimit of $\kappa$-compact objects. Hence (3) holds. 
	
	If (3) is satisfied, let $\iota\colon \bCC_0\subset\bCC$ denote the inclusion. Since $\bCC$ is locally small, the Yoneda extension $l=(h_{\bCC^\kappa})_!(\iota)\colon\PSh_{\Cat}(\bCC_0)\to\bCC$ admits a right adjoint $r$ that is explicitly given by sending $c\in\bCC$ to the presheaf $\bCC(\iota(-), c)$ (cf.~\cref{rem:universalAdjointFunctorTheorem}). Since $\bCC_0$ consists of $\kappa$-compact objects in $\bCC$ and since $\kappa$-filtered colimits in $\PSh_{\Cat}(\bCC_0)$ are computed point-wise, we deduce that $r$ preserves $\kappa$-filtered colimits. To deduce that (4) holds, it thus suffices to see that $r$ is fully faithful. So let $c,d\in\bCC$, and consider the natural map
	\begin{equation*}
		\bCC(c,d)\to \PSh(\bCC_0)(r(c), r(d)).
	\end{equation*}
	Since $r$ preserves $\kappa$-filtered colimits and $c$ is a $\kappa$-filtered colimit of objects in $\bCC_0$, we may reduce to the case where $c$ is already $\kappa$-compact. In this case, $r(c)$ is represented by $c$, so that Yoneda's lemma implies the claim.
	
	Now if (4) holds, $\bCC$ is cocomplete, and $\bCC^{\leq 1}$ is an accessible Bousfield localisation of $\PSh_{\Cat}(\bII)^{\leq 1}$. Since the latter is presentable by \cref{lem:PresheavesPresentable}, we find that $\bCC$ is presentable, i.e.\ that (1) holds.
	
	Lastly, we show that (4) and (5) are equivalent. The fact that (5) implies (4) follows immediately from \cref{prop:localObjectsPresheaves}. Conversely, suppose that (4) holds. Since the first part of the proof in particular implies that $\PSh_{\Cat}(\bII)$ is presentable, \cref{lem:compactObjects2presentable} implies that by enlarging $\kappa$ if necessary, we may assume that $\PSh_{\Cat}(\bII)^{\leq 1}$ is $\kappa$-accessible and an object in $\PSh_{\Cat}(\bII)$ is $\kappa$-compact if and only if it is $\kappa$-compact in $\PSh_{\Cat}(\bII)^{\leq 1}$. Define $S$ to be the set of maps between $\kappa$-compact objects in $\PSh_{\Cat}(\bII)$ that are inverted by the localisation functor. Clearly, we have $\bCC\subset \Loc_S(\PSh_{\Cat}(\bCC))$, so that it suffices to show the converse inclusion. So let us pick an $S$-local object $x\in\PSh_{\Cat}(\bII)$, and let $\eta\colon \id\to rl$ be the unit of the adjunction $(l\dashv r)\colon\PSh_{\Cat}(\bII)\leftrightarrows\bCC$. Note that we can identify $\bCC$ with the full sub-2-category of $\PSh_{\Cat}(\bII)$ spanned by the $T$-local objects, where $T$ is the collection of maps of the form $\eta_z\colon z\to rl(z)$ for $z\in\PSh_{\Cat}(\bII)$. Thus, to show that $x$ is contained in $\bCC$, we need to verify that the map
	\begin{equation*}
		\eta_z^\ast\colon \PSh_{\Cat}(\bII)(rl(z), x)\to\PSh_{\Cat}(\bII)(z, x)
	\end{equation*}
	is an equivalence. Since $rl$ preserves $\kappa$-filtered colimits and since every object in $\PSh_{\Cat}(\bII)$ is obtained as a $\kappa$-filtered colimit of $\kappa$-compact objects in $\PSh_{\Cat}(\bII)$, we may assume without loss of generality that $z$ is itself $\kappa$-compact. Since in this case $\eta_z$ is by definition contained in $S$, the claim follows.
\end{proof}

\begin{proposition}\label{prop:localisationPresentable}
	Let $\bCC$ be a presentable 2-category and let $\bDD\into\bCC$ be a reflective sub-2-category. Suppose that the inclusion preserves $\kappa$-filtered colimits for some regular cardinal $\kappa$. Then $\bDD$ is presentable.
\end{proposition}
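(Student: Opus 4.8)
The plan is to deduce the statement directly from the characterisation of presentability in \cref{thm:characterisationPresentableCategories}, by exhibiting $\bDD$ as an accessible reflective sub-$2$-category of a presheaf $2$-category. The point is that both full faithfulness and preservation of $\kappa$-filtered colimits compose well, so there is no need to analyse colimits or compact objects inside $\bDD$ directly.

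Concretely, I would first apply the implication $(1)\Rightarrow(4)$ of \cref{thm:characterisationPresentableCategories} to $\bCC$, obtaining a small $2$-category $\bII$ and a reflective inclusion $j\colon\bDD\to\bCC$ replaced by $j\colon\bCC\into\PSh_{\Cat}(\bII)$ that preserves $\kappa_0$-filtered colimits for some regular cardinal $\kappa_0$ (this $j$ is the inclusion of local objects from \cref{prop:localObjectsPresheaves}). Writing $\iota\colon\bDD\into\bCC$ for the given reflective inclusion, I would consider the composite $j\iota\colon\bDD\into\PSh_{\Cat}(\bII)$ and verify that it again presents $\bDD$ as an accessible reflective sub-$2$-category: full faithfulness is preserved under composition, and since $\iota$ and $j$ each admit a left adjoint their left adjoints compose to a left adjoint of $j\iota$, so $j\iota$ is reflective. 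For accessibility, I would fix a regular cardinal $\kappa'\geq\max(\kappa,\kappa_0)$; because any $\kappa'$-filtered $1$-category is both $\kappa$- and $\kappa_0$-filtered, both $\iota$ (by hypothesis) and $j$ preserve $\kappa'$-filtered colimits, hence so does $j\iota$. Thus $\bDD$ satisfies condition $(4)$ of \cref{thm:characterisationPresentableCategories} with the same $\bII$ and the cardinal $\kappa'$, and the implication $(4)\Rightarrow(1)$ gives that $\bDD$ is presentable.

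The steps I expect to require the most care are precisely the two compositional claims above — that a composite of reflective inclusions is reflective and that preservation of $\kappa$-filtered colimits passes to the composite once a common cardinal is chosen — together with the bookkeeping ensuring that condition $(4)$ applies verbatim with the indexing $2$-category inherited from $\bCC$. I would deliberately avoid the more hands-on alternative of arguing that $\bDD^{\leq 1}$ is an accessible reflective localisation of the presentable $1$-category $\bCC^{\leq 1}$ (\cref{rem:2presentableImplies1presentable}): although this is morally the content of the proposition, it is awkward to execute because $(-)^{\leq 1}$ is a \emph{right} adjoint (\cref{def:underlying1cat}) and so does not visibly commute with the $\kappa$-filtered colimits witnessing accessibility, which would force a comparison between $\kappa$-filtered colimits computed in $\bCC$ and in $\bCC^{\leq 1}$. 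As a sanity check, cocompleteness of $\bDD$ can also be seen independently: by \cref{prop:adjpreservelimtis} the reflection $\bCC\to\bDD$ is cocontinuous, so any partially (op)lax colimit in $\bDD$ is obtained by reflecting the corresponding colimit in the cocomplete $2$-category $\bCC$.
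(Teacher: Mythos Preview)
Your argument is correct. Composing the given reflective inclusion $\iota\colon\bDD\into\bCC$ with a presentation $j\colon\bCC\into\PSh_{\Cat}(\bII)$ obtained from condition~(4) of \cref{thm:characterisationPresentableCategories}, and then choosing a common cardinal $\kappa'\geq\max(\kappa,\kappa_0)$, does yield a reflective inclusion $j\iota$ preserving $\kappa'$-filtered colimits, so condition~(4) applies to $\bDD$ verbatim.

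This differs from the paper's proof only in packaging. The paper verifies the definition of presentability directly: $\bDD$ is cocomplete because it is reflective in $\bCC$, and $\bDD^{\leq 1}\into\bCC^{\leq 1}$ is an accessible Bousfield localisation of the presentable $1$-category $\bCC^{\leq 1}$, hence $\bDD^{\leq 1}$ is accessible. Your route trades this for two invocations of \cref{thm:characterisationPresentableCategories}; both are short.

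Your stated reason for avoiding the paper's route is, however, a misapprehension. For a $1$-category $\JJ$, the $2$-adjunction $\iota\dashv(-)^{\leq 1}$ of \cref{def:underlying1cat} gives $\FUN(\JJ,\bCC)^{\leq 1}\simeq\Fun(\JJ,\bCC^{\leq 1})$, so a strong $\JJ$-colimit in $\bCC$ is automatically a colimit of the same diagram in $\bCC^{\leq 1}$ (take cores in the defining equivalence $\bCC(\colim d,-)\simeq\Nat_{\JJ,\bCC}(d,\underline{(-)})$, noting that $(-)^\core$ preserves limits). Hence once $\bDD$ is known to be cocomplete, the hypothesis that $\iota$ preserves $\kappa$-filtered colimits immediately implies that $\iota^{\leq 1}$ does as well, with no awkward comparison needed. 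The fact that $(-)^{\leq 1}$ is a right adjoint is not an obstruction here; the relevant point is the identification of diagram categories, not preservation of colimits by $(-)^{\leq 1}$.
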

\begin{proof}
	$\bDD$ being a reflective sub-2-category implies that $\bDD$ is cocomplete, and the inclusion preserving $\kappa$-filtered colimits implies that $\bDD^{\leq 1}$ is an accessible Bousfield localisation of $\bCC^{\leq 1}$ and hence accessible.
\end{proof}

\begin{proposition}\label{prop:functorCatPresentable}
	Let $\bCC$ be a presentable 2-category and let $\bII$ be a small 2-category. Then $\Fun(\bII,\bCC)$ is presentable.
\end{proposition}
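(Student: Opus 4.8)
The plan is to realise $\FUN(\bII,\bCC)$ as an accessible reflective localisation of a presheaf $2$-category and then invoke the characterisation of presentability from \cref{thm:characterisationPresentableCategories}. First I would apply condition~(4) of that theorem to $\bCC$ itself: there is a small $2$-category $\bJJ$ and a reflective localisation $L\colon\PSh_{\Cat}(\bJJ)\llra\bCC\colon\iota$ in which $\iota$ is fully faithful and preserves $\kappa$-filtered colimits for some regular cardinal $\kappa$.

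Next I would transport this presentation to the functor $2$-categories by post-composition. Since $\FUN(\bII,-)$ is the internal hom of the cartesian closed $2$-category $\CCat$ (i.e.\ the right adjoint of $\bII\times-$) and in particular a $2$-functor, it preserves adjunctions, so $L\dashv\iota$ induces an adjunction $L_\ast\colon\FUN(\bII,\PSh_{\Cat}(\bJJ))\llra\FUN(\bII,\bCC)\colon\iota_\ast$; moreover $\iota_\ast$ is again fully faithful, because $\FUN(\bII,-)$ carries the invertible counit $L\iota\to\id$ to the counit of $L_\ast\dashv\iota_\ast$. The key identification is then the exponential law: using $\PSh_{\Cat}(\bJJ)=\FUN(\bJJ^\op,\Cat)$ together with $\FUN(\bII,\FUN(\bJJ^\op,\Cat))\simeq\FUN(\bII\times\bJJ^\op,\Cat)$, we obtain
\[
\FUN(\bII,\PSh_{\Cat}(\bJJ))\simeq\PSh_{\Cat}(\bII^\op\times\bJJ),
\]
a presheaf $2$-category on the small $2$-category $\bII^\op\times\bJJ$. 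Thus $\FUN(\bII,\bCC)$ is a reflective sub-$2$-category of $\PSh_{\Cat}(\bII^\op\times\bJJ)$.

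It remains to check that the inclusion $\iota_\ast$ preserves $\kappa$-filtered colimits. Here I would use that a $\kappa$-filtered colimit is a strong colimit indexed by a (1-categorical) $\kappa$-filtered diagram, and that such colimits in a functor $2$-category are computed point-wise (as already exploited in \cref{sec:tensoring}). Since $\iota$ preserves $\kappa$-filtered colimits by construction, evaluating at each object of $\bII$ shows that $\iota_\ast$ does as well. With $\FUN(\bII,\bCC)$ exhibited as a reflective sub-$2$-category of $\PSh_{\Cat}(\bII^\op\times\bJJ)$ whose inclusion preserves $\kappa$-filtered colimits, the implication (4)$\Rightarrow$(1) of \cref{thm:characterisationPresentableCategories} yields that $\FUN(\bII,\bCC)$ is presentable. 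Its cocompleteness, which is subsumed by this conclusion, is in any case available directly from \cref{prop:colimfunctorcat}.

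The hard part will be the exponential-law identification $\FUN(\bII,\PSh_{\Cat}(\bJJ))\simeq\PSh_{\Cat}(\bII^\op\times\bJJ)$ together with the bookkeeping that the reflective localisation, its fully faithfulness, and its preservation of $\kappa$-filtered colimits are all compatible with post-composition; once these formal ingredients are assembled, the presentability of $\FUN(\bII,\bCC)$ is an immediate appeal to \cref{thm:characterisationPresentableCategories}.
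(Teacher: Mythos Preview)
Your proposal is correct and follows essentially the same approach as the paper: present $\bCC$ as an accessible reflective localisation of $\PSh_{\Cat}(\bJJ)$, apply $\FUN(\bII,-)$ and use the exponential law to identify $\FUN(\bII,\PSh_{\Cat}(\bJJ))\simeq\PSh_{\Cat}(\bII^\op\times\bJJ)$, observe that the resulting inclusion is fully faithful and preserves $\kappa$-filtered colimits (computed pointwise), and conclude via \cref{thm:characterisationPresentableCategories}. The paper's proof is the same argument compressed into three sentences.
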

\begin{proof}
	By \cref{thm:characterisationPresentableCategories}, we can find a small 2-category $\bJJ$ such that $\bCC$ is a reflective sub-2-category of $\PSh_{\Cat}(\bJJ)$ where the inclusion preserves $\kappa$-filtered colimits. Hence we obtain a fully faithful right adjoint $\Fun(\bII,\bCC)\into\PSh_{\Cat}(\bII^\op\times\bJJ)$ that preserves $\kappa$-filtered colimits (because they are computed object-wise). Thus another application of \cref{thm:characterisationPresentableCategories} yields the claim.
\end{proof}

\begin{remark}\label{rem:accfunctorcat}
	The result from \cref{prop:functorCatPresentable} can be slightly refined: if $\bII$ is $\kappa$-small in the sense of \cref{def:kappasmall2cat} and $\bCC^{\leq 1}$ is $\kappa$-accessible, then $\FUN(\bII, \bCC)^{\leq 1}$ is also $\kappa$-accessible. In fact, if we set $\II=\bII^{\leq 1}$ and $\CC=\bCC^{\leq 1}$ and if $i\colon \II\subset\bII$ denotes the inclusion, \cref{cor:existenceLKE} gives rise to an adjunction
	\[
		i_! \colon \FUN(\II,\bCC) \llra \FUN(\bII,\bCC) \colon i^*
	\]
	in which $i_!$ preserves $\kappa$-compact objects since $i^\ast$ commutes with $\kappa$-filtered colimits.
	Now as $ \FUN(\II,\bCC)^{\leq 1} \simeq \Fun(\II,\CC)$, it follows from \cite[Proposition 5.4.4.3]{LurieHTT} that $\FUN(\II,\bCC)^{\leq 1}$ is $\kappa$-accessible. We define $\widetilde{\DD}_0 \subset \FUN(\bII,\bCC)^{\leq 1}$ to be the full subcategory generated by objects of the form $i_!F$ for $F$ a $\kappa$-compact object. Finally, let $\DD_0$ be the closure of $\widetilde{\DD}_0$ under $\kappa$-small colimits. Let us point out that $\DD_0$ still consists of $\kappa$-compact objects.
	In order to conclude that $\FUN(\bII,\bCC)^{\leq 1}$ is generated under $\kappa$-filtered colimits by $\DD_0$, we need to show that the mapping space functors $\Nat_{\bII,\bCC}(F,-)^{\simeq}$ are jointly conservative for $F \in \DD_0$  (see \cite[Proposition 1.7]{harr2023}). We will show that the functors $\Nat_{\bII,\bCC}(F,-)^{\simeq}$ are jointly conservative for $F \in \widetilde{\DD}_0$. Indeed, if $G \to L$ becomes an equivalence after applying $\Nat_{\bII,\bCC}(F,-)^{\simeq}$ for $F \in \widetilde{\DD}_0$ it follows that $i^*G \to i^*L$ must be an equivalence in $\Fun(\II,\CC)$ since $\kappa$-compact objects are jointly conservative. Thus, the claim follows from $i^*$ being conservative.
	
	Furthermore, it follows from the colimit formula for left Kan extensions and \cref{cor:smallcolimofcompacts} that every $F\in \DD_0$ factors through $\bCC^{\kappa}$. Hence $\FUN(\bII,\bCC)$ is generated under $\kappa$-filtered colimits from $\FUN(\bII,\bCC^\kappa)$.
\end{remark}

\begin{remark}\label{rem:co2presentable}
	If $\bCC$ is presentable then it follows that $\bCC^\co$ is also again presentable since $(\bCC^\co)^{\leq 1} \simeq \bCC^{\leq 1}$.
\end{remark}

\subsection{Adjoint functor theorems}\label{subsec:adjointFunctorTheorems}

\begin{proposition}\label{prop:adjointFunctorTheoremLeft}
	Let $\bCC$ be a presentable 2-category and let $\bDD$ be a locally small 2-category. Let $f\colon \bCC\to\bDD$ be a functor. Then the following are equivalent:
	\begin{enumerate}
		\item $f$ is a left adjoint;
		\item $f$ is cocontinuous;
		\item $f$ is (op)lax cocontinous.
	\end{enumerate}
\end{proposition}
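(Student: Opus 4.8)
The plan is to prove the cycle $(1)\Rightarrow(2)\Rightarrow(3)\Rightarrow(1)$, with all the substance concentrated in the last implication. For $(1)\Rightarrow(2)$ I would simply invoke \cref{prop:adjpreservelimtis}: a left adjoint preserves every partially (op)lax colimit that exists in its domain, and since $\bCC$ is presentable (hence cocomplete) every such colimit exists, so $f$ is cocontinuous. The implication $(2)\Rightarrow(3)$ is formal, as being cocontinuous means preserving both the lax and the oplax colimits, which in particular gives (op)lax cocontinuity under either reading.

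For $(3)\Rightarrow(1)$, assume $f$ is (op)lax cocontinuous. The strategy is to descend to the underlying $1$-categories and apply \cref{lem:adjunctions1vs2dimensional}, which reduces the problem to showing that $f^{\leq 1}\colon\bCC^{\leq 1}\to\bDD^{\leq 1}$ is a left adjoint of $1$-categories. To that end I would first note that a strong colimit is precisely a partially (op)lax colimit indexed by a $\sharp$-marking $(\bII,\sharp)$ (cf.\ \cref{def:sharpmarkingcolim}), so that $f$, preserving all partially (op)lax colimits, in particular preserves all strong colimits --- and in particular strong colimits indexed by ordinary $1$-categories.

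The crucial point is then that for a diagram $F\colon\II\to\bCC$ over a $1$-category $\II$, the strong colimit $\colim_\II F$ computes the underlying $1$-categorical colimit in $\bCC^{\leq 1}$. Indeed, applying the core functor $(-)^\core\colon\Cat\to\Spc$ to the defining equivalence $\bCC(\colim_\II F,c)\simeq\Nat_{\II,\bCC}(F,\underline{c})$, and using that $(-)^\core$ is a right adjoint and hence commutes with the strong limit appearing on the right-hand side, yields $\bCC^{\leq 1}(\colim_\II F,c)\simeq\lim_{i\in\II}\bCC^{\leq 1}(F(i),c)$ naturally in $c$. Thus the object underlying the strong $2$-colimit represents the $1$-colimit functor, and combining this with the previous paragraph shows that $f^{\leq 1}$ preserves all small $1$-categorical colimits.

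Finally, $\bCC^{\leq 1}$ is presentable by \cref{rem:2presentableImplies1presentable}, and $\bDD^{\leq 1}$ is locally small since $\bDD$ is. A small-colimit-preserving functor out of a presentable $1$-category into a locally small one is automatically accessible, hence admits a right adjoint by the $1$-categorical adjoint functor theorem \cite[Corollary~5.5.2.9]{LurieHTT}; therefore $f^{\leq 1}$ is a left adjoint, and \cref{lem:adjunctions1vs2dimensional} promotes this to the statement that $f$ is a left adjoint. I expect the main obstacle to be exactly the middle step --- passing from preservation of strong $2$-colimits to preservation of ordinary $1$-colimits --- which rests on the right-adjointness of the core functor; once this identification is in place, the remainder is an assembly of results already established.
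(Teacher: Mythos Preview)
Your argument follows essentially the same route as the paper's proof: reduce $(3)\Rightarrow(1)$ to the underlying $1$-categories via \cref{lem:adjunctions1vs2dimensional}, then invoke a $1$-categorical adjoint functor theorem. You spell out more carefully than the paper why $f^{\leq 1}$ preserves ordinary $1$-colimits (the paper leaves this implicit), and that step is correct.

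There is one minor issue with your reference: \cite[Corollary~5.5.2.9]{LurieHTT} requires \emph{both} source and target to be presentable, but here you only know $\bDD^{\leq 1}$ is locally small. The paper instead cites \cite[Theorem~4.1.3]{Nguyen2020}, which handles exactly this more general situation. Alternatively, you could argue directly via representability: for each $d\in\bDD$, the presheaf $\bDD^{\leq 1}(f^{\leq 1}(-),d)$ on $\bCC^{\leq 1}$ sends colimits to limits and takes values in small spaces, hence is representable by \cite[Proposition~5.5.2.2]{LurieHTT}. With the citation adjusted, your proof is complete and matches the paper's approach.
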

\begin{proof}
	The only non-trivial part is showing that (3) implies (1).
	By \cref{lem:adjunctions1vs2dimensional}, it suffices to show that $f^{\leq 1}$ is a left adjoint, which follows from the $1$-categorical adjoint functor theorem, see \cite[Theorem~4.1.3]{Nguyen2020}.
\end{proof}

\begin{corollary}\label{cor:sheavesRepresentable}
	Let $\bCC$ be a presentable 2-category. Then a presheaf $F\colon\bCC^\op\to\Cat$ is representable if and only if it preserves (op)lax limits.
\end{corollary}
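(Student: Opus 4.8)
The plan is to deduce this from the adjoint functor theorem \cref{prop:adjointFunctorTheoremLeft}, following the pattern of the $1$-categorical representability theorem: there one shows that a limit-preserving presheaf $\CC^\op\to\Spc$ induces a colimit-preserving functor $\CC\to\Spc^\op$, which is therefore a left adjoint whose right adjoint, evaluated at the point, represents the original presheaf. Here the role of $\Spc^\op$ is played by $\Cat^\op$, which is locally small, and the role of the $1$-categorical adjoint functor theorem is played by \cref{prop:adjointFunctorTheoremLeft}. Accordingly, given $F\colon\bCC^\op\to\Cat$ I would pass to the opposite functor $F^\op\colon\bCC\to\Cat^\op$ and reformulate both directions of the equivalence in terms of $F^\op$.

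The central observation is the duality between the two sides. The involution $(-)^\op$ on $\CCat$ reverses the order of the Gray tensor product, and hence carries the colimit-cone shapes of \cref{def:elaxcones} to the corresponding limit-cone shapes while interchanging the lax and oplax decorations; concretely it exchanges $\colim^{\elax}$ with $\lim^{\eoplax}$ and $\colim^{\eoplax}$ with $\lim^{\elax}$. Because the hypothesis quantifies over both the lax and the oplax variants, the combined property is self-dual, and one obtains: $F$ preserves all partially (op)lax limits if and only if $F^\op\colon\bCC\to\Cat^\op$ preserves all partially (op)lax colimits, i.e.\ is (op)lax cocontinuous. The colimits in the target $\Cat^\op$ are exactly the limits in $\Cat$ that appear on the $F$-side, so once the cone shapes are matched this is a purely formal translation.

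With this in hand, sufficiency is immediate: if $F$ preserves (op)lax limits then $F^\op$ is (op)lax cocontinuous, and since $\bCC$ is presentable and $\Cat^\op$ is locally small, \cref{prop:adjointFunctorTheoremLeft} shows $F^\op$ is a left adjoint. Writing $R\colon\Cat^\op\to\bCC$ for its right adjoint, the adjunction yields natural equivalences of categories $\bCC(c, R(s))\simeq\Cat^\op(F^\op(c), s)\simeq\Fun(s, F(c))$; evaluating at the terminal category $s=[0]$ gives $\bCC(c, R([0]))\simeq\Fun([0], F(c))\simeq F(c)$ naturally in $c$, so $F$ is represented by $R([0])$. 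For necessity, if $F\simeq\bCC(-,x)$ then $F^\op$ is the functor $c\mapsto\bCC(c,x)$ into $\Cat^\op$, which is a left adjoint with right adjoint the cotensor $x^{(-)}\colon\Cat^\op\to\bCC$ from \cref{cor:cotensoring} (available because a presentable $2$-category is (op)lax complete, being a reflective sub-$2$-category of a presheaf $2$-category that is closed under (op)lax limits); left adjoints preserve partially (op)lax colimits by \cref{prop:adjpreservelimtis}, so $F^\op$ does, and dualising as above shows that $F$ preserves (op)lax limits.

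The main obstacle is the bookkeeping in the duality step: one must verify that passing to $F^\op$ genuinely converts the hypothesis on $F$ into (op)lax cocontinuity of a functor out of $\bCC$ itself (and not out of $\bCC^\op$, which need not be presentable — contrast \cref{rem:co2presentable}), and in particular that the interchange of lax with oplax effected by $(-)^\op$ is absorbed by quantifying over both decorations. Everything else is either a direct appeal to \cref{prop:adjointFunctorTheoremLeft} or the standard extraction of a representing object by evaluating a right adjoint at $[0]$.
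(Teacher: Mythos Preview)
Your proof is correct and follows essentially the same approach as the paper: both deduce the result from \cref{prop:adjointFunctorTheoremLeft} by dualizing to $F^\op\colon\bCC\to\Cat^\op$ and then evaluating the resulting right adjoint at $[0]$. You are more explicit about the dualization step (which the paper leaves implicit in the phrase ``$F$ has a left adjoint $l$''), and for the converse direction the paper simply declares it trivial while you argue via the cotensoring adjunction of \cref{cor:cotensoring}; one could equally cite \cref{prop:yonedacont} directly.
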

\begin{proof}
	By \cref{prop:adjointFunctorTheoremLeft}, $F$ preserves (op)lax limits if and only if it has a left adjoint $l$. If this is the case, the fact that we have $\Cat([0],-)\simeq\id_{\Cat}$ implies that $F$ is representable by $l([0])$. The converse direction is trivial.
\end{proof}

\begin{proposition}\label{prop:adjointFunctorTheoremRight}
	Let $\bDD$ be a presentable 2-category and let $\bCC$ be locally small such that each $c\in\bCC_{\leq1}$ is $\kappa$-compact for some regular cardinal $\kappa$. Let $g\colon \bDD\to\bCC$ be a functor that preserves $\tau$-filtered colimits for some cardinal $\tau$. Then the following are equivalent:
	\begin{enumerate}
		\item $g$ is a right adjoint;
		\item $g$ is continuous;
		\item $g$ is (op)lax continuous.
	\end{enumerate}
\end{proposition}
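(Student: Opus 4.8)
The plan is to mirror the proof of the dual statement \cref{prop:adjointFunctorTheoremLeft}: reduce the $2$-categorical claim to a purely $1$-categorical adjoint functor theorem and then upgrade it by a dual of \cref{lem:adjunctions1vs2dimensional}. The implication (1)$\Rightarrow$(2) is immediate from \cref{prop:adjpreservelimtis}, since a right adjoint preserves every partially (op)lax limit that exists in $\bDD$; and (2)$\Rightarrow$(3) is trivial, as continuity means preservation of both the lax and the oplax limits whereas (op)lax continuity asks for only one of the two flavours. All the content therefore lies in proving (3)$\Rightarrow$(1).

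For this I would first record that $\bDD$ is not only cocomplete but also (op)lax complete. By \cref{thm:characterisationPresentableCategories} we may write $\bDD\simeq\Loc_S(\PSh_{\Cat}(\bII))$, and since $\PSh_{\Cat}(\bII)$ is complete (\cref{ex:presheavesComplete}) and a reflective localisation is closed under all partially (op)lax limits (as used in the proof of \cref{prop:localObjectsPresheaves}), it follows that $\bDD$ admits all partially (op)lax small limits. In particular the cotensor $(-)^{[1]}$ of \cref{cor:cotensoring} exists in $\bDD$, which is exactly the ingredient needed to run the dual of \cref{lem:adjunctions1vs2dimensional}: its proof carries over verbatim with the tensoring $[1]\otimes -$ replaced by the cotensoring $(-)^{[1]}$, yielding that an (op)lax continuous functor $g\colon\bDD\to\bCC$ out of an (op)lax complete $\bDD$ whose truncation $g^{\leq 1}$ is a right adjoint is itself a right adjoint. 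Hence it suffices to show that $g^{\leq 1}\colon\bDD^{\leq 1}\to\bCC^{\leq 1}$ admits a left adjoint.

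To produce this $1$-categorical left adjoint I would argue by pointwise corepresentability. Observe that $\bDD^{\leq 1}$ is presentable (\cref{rem:2presentableImplies1presentable}), that $g^{\leq 1}$ preserves small limits (because $g$ preserves in particular the strong limits indexed by $1$-categories, which after passing to cores truncate to ordinary limits, cf.\ \cref{rem:cat2complete}), and that $g^{\leq 1}$ preserves $\tau$-filtered colimits by hypothesis. Now fix $c\in\bCC^{\leq 1}$ and set $H_c=\bCC^{\leq 1}(c, g^{\leq 1}(-))\colon\bDD^{\leq 1}\to\Spc$. Then $H_c$ preserves small limits, being a composite of limit-preserving functors, and it preserves $\max(\kappa,\tau)$-filtered colimits: here $\bCC^{\leq 1}(c,-)$ preserves $\kappa$-filtered colimits precisely because $c$ is $\kappa$-compact, which is where the standing hypothesis on $\bCC$ enters. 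Thus $H_c$ is an accessible, limit-preserving functor from a presentable $1$-category to $\Spc$, hence corepresentable by the $1$-categorical adjoint functor theorem (as invoked in \cref{prop:adjointFunctorTheoremLeft}). Assembling the corepresenting objects functorially in $c$ then yields the left adjoint of $g^{\leq 1}$.

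The main obstacle is the final paragraph. Since $\bCC$ is only assumed locally small with compact objects, and is \emph{not} presentable, I cannot simply quote the adjoint functor theorem for functors between presentable categories; instead one must check representability of each $H_c$ directly. It is exactly the compactness of the objects of $\bCC$ (which makes $\bCC^{\leq 1}(c,-)$ accessible) together with the assumed preservation of $\tau$-filtered colimits by $g$ that conspire to make $H_c$ accessible, so that the solution-set condition is met; neither hypothesis can be dropped. A secondary point requiring care is the bookkeeping in the dual of \cref{lem:adjunctions1vs2dimensional}, namely that the cotensor $(-)^{[1]}$ genuinely exists in $\bDD$ — which is why establishing completeness of $\bDD$ is a necessary preliminary rather than a cosmetic remark.
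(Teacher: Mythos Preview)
Your proof is correct and follows essentially the same strategy as the paper: reduce (3)$\Rightarrow$(1) to the $1$-categorical level via the dual of \cref{lem:adjunctions1vs2dimensional}, and then invoke a $1$-categorical adjoint functor theorem. The paper simply cites \cite[Theorem~4.1.1]{Nguyen2020} for the latter step, whereas you unpack it as pointwise corepresentability of $H_c=\bCC^{\leq 1}(c,g^{\leq 1}(-))$; you also make explicit the (op)lax completeness of $\bDD$ needed for the dual lemma, which the paper leaves implicit.
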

\begin{proof}
	The only non-trivial part is to show that (3) implies (1). For this, the dual of \cref{lem:adjunctions1vs2dimensional} implies that it suffices to show that $g^{\leq 1}$ is a right adjoint, which follows from the $1$-categorical adjoint functor theorem, see \cite[Theorem~4.1.1]{Nguyen2020}.
\end{proof}
\section{Fibrations in a 2-category}\label{sec:internfib}

In this chapter, we will develop the theory of internal fibrations in an arbitrary 2-category $\bCC$ that admits oriented pullbacks (see \cref{subsec:oriented}). The main idea, which is due to Street, is that the notion of a (co)cartesian fibration of $1$-categories can be entirely expressed using \emph{free} (co)cartesian fibrations and adjunctions. Since free (co)cartesian fibrations can moreover be defined via oriented pullbacks, the theory of fibrations can be built by only making use of abstract 2-categorical constructions and can hence be interpreted internal to any 2-category which admits oriented pullbacks.

In \cref{subsec:freeFibrations}, we provide a few preliminary results on free fibrations in a 2-category, before we define and study what we call \emph{$\epsilon$-fibrations} (for $\epsilon\in\{0,1\}$) in a 2-category in \cref{subsec:fibrations}.
\subsection{Free fibrations}\label{subsec:freeFibrations}
Let us fix a 2-category $\bCC$ that has oriented pullbacks (see \cref{subsec:oriented}). Observe that there is a commutative diagram
\begin{equation*}
	\begin{tikzcd}
		\AR_{\bCC}\arrow[rr, "\iota"]\arrow[dr, "\ev_1"'] &&\ARlax_{\bCC}\arrow[dl, "\ev_1"]\\
		&\bCC.
	\end{tikzcd}
\end{equation*}
\begin{proposition}\label{prop:semiLaxPullbackRightAdjoint}
	The inclusion $\iota\colon\AR_{\bCC}\into\ARlax_{\bCC}$ has a relative right adjoint $\Free^{0}$ over $\bCC$ that carries a map $p\colon x\to c$ to its oriented pullback $\Free^{0}_c(p)\colon\Free^0_c(x) \to c$ along $\id_c$.
\end{proposition}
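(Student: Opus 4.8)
The plan is to produce $\Free^0$ as a genuine right adjoint of $\iota$ and then observe that the whole adjunction lives over $\ev_1\colon\ARlax_\bCC\to\bCC$. Concretely, I would fix $Y=(p\colon x\to c)\in\ARlax_\bCC$ and show that the presheaf
\[
	\AR_\bCC^\op\to\Cat,\qquad X\mapsto\ARlax_\bCC(\iota X, Y)
\]
is representable, with representing object the strong arrow $\Free^0 Y:=(\pi\colon\Free^0_c(x)\to c)$, where $\pi$ is the leg of the oriented pullback square of \cref{cons:freefib} landing on the bottom-left copy of $c$. By the characterisation of left adjoints in terms of representability recalled above, this shows that $\iota$ is a left adjoint and identifies its right adjoint on objects with the free fibration, as claimed; functoriality of $\Free^0$ is then automatic.

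To prove representability I would compare mapping categories using the two formulas already at our disposal. Writing $X=(u\colon a\to b)$, \cref{cor:mapcatstrongarrow} identifies $\AR_\bCC(X,\Free^0 Y)$ with the strong pullback of
\[
	\bCC(b,c)\xrightarrow{\bCC(u,c)}\bCC(a,c)\xleftarrow{\bCC(a,\pi)}\bCC(a,\Free^0_c(x)),
\]
while \cref{prop:mapcatlaxarrow} identifies $\ARlax_\bCC(\iota X, Y)$ with the oriented pullback of
\[
	\bCC(b,c)\xrightarrow{\bCC(u,c)}\bCC(a,c)\xleftarrow{\bCC(a,p)}\bCC(a,x).
\]
The crucial simplification is that the mapping-category functor $\bCC(a,-)$ preserves partially lax limits: this follows from \cref{prop:yonedacont}, since $\bCC(a,-)$ factors as the Yoneda embedding followed by evaluation at $a$, and partially lax limits of presheaves are computed objectwise. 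Applying this to the oriented pullback presenting $\Free^0_c(x)$ yields $\bCC(a,\Free^0_c(x))\simeq\bCC(a,c)\orientedtimeslr_{\bCC(a,c)}\bCC(a,x)$ for the cospan $\bCC(a,c)\xrightarrow{\id}\bCC(a,c)\xleftarrow{\bCC(a,p)}\bCC(a,x)$, under which $\bCC(a,\pi)$ becomes the projection onto the first factor.

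At this point the comparison reduces to an elementary statement in $\Cat$: forming the strong pullback of the first-factor projection of a comma (oriented) pullback along a functor into its base reproduces the comma construction on the composed cospan. Both mapping categories are thereby identified, compatibly with the projection to $\bCC(b,c)$, with the category of triples $(\beta\colon b\to c,\ t\colon a\to x,\ \theta)$ in which $\theta$ is a $2$-cell between $\beta\circ u$ and $p\circ t$; tracing the construction shows this equivalence is exactly the comparison induced by $\iota$ together with the counit $\Free^0_c(x)\to x$. The main obstacle is precisely this last identification: one must keep careful track of the orientation of the $2$-cells as they pass through \cref{prop:mapcatlaxarrow} and the defining square of \cref{cons:freefib}, so as to match the two comma categories on the nose (including on morphisms) rather than merely abstractly. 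This bookkeeping is most safely organised through the pasting law for oriented pullbacks (\cref{prop:pasting}). Finally, since every functor and comparison above lies over $\ev_1$ — in particular $\ev_1\circ\Free^0=\ev_1$ and both unit and counit project to identities — the adjunction $\iota\dashv\Free^0$ refines to an adjunction in $\CCat_{/\bCC}$, i.e.\ $\Free^0$ is a relative right adjoint over $\bCC$, completing the argument.
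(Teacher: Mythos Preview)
Your proposal is correct and follows essentially the same approach as the paper. Both arguments compute $\AR_\bCC(X,\Free^0 Y)$ and $\ARlax_\bCC(\iota X, Y)$ via \cref{cor:mapcatstrongarrow} and \cref{prop:mapcatlaxarrow}, and both reduce the comparison to the pasting law of \cref{prop:pasting}; the paper does this in one step by writing down the two-square diagram directly, whereas you insert the intermediate observation that $\bCC(a,-)$ preserves oriented pullbacks, but this is only a cosmetic difference. For the relativity over $\bCC$, the paper appeals to \cite[Proposition~5.2.2]{AGH24} after noting that the counit lies over the identity, which supplies the precise justification for your final paragraph.
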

\begin{proof}
	Let us fix a map $p\colon x\to c$. By its universal property, the oriented pullback $\Free^{0}_c(p)\colon\Free^{0}_c(x)\to c$ fits into a universal lax square
	\begin{equation*}
		\begin{tikzcd}
			\Free^{0}_c(x)\arrow[d, "\Free^{0}_c(p)"']\arrow[r] & x\arrow[d, "p"]\arrow[dl, Rightarrow, "\sigma"', shorten >=1.5em , shorten <=1.5em ]\\
			c\arrow[r, "\id_c"]& c.
		\end{tikzcd}
	\end{equation*}
	To show that this construction defines a right adjoint of $\iota$, we need to verify that for every $q\colon y\to c$ the composition
	\begin{equation*}
		\AR_{\bCC}(q,\Free^{0}_c(p))\into\ARlax_{\bCC}(q, \Free^{0}_c(p))\xrightarrow{\sigma_\ast}\ARlax_{\bCC}(q, p)
	\end{equation*}
	is an equivalence. Using the description of the mapping categories in $\AR_{\bCC}$ and in $\ARlax_{\bCC}$ as strong and oriented pullbacks (cf.\  \cref{prop:mapcatlaxarrow} and \cref{cor:mapcatstrongarrow}), respectively, this amounts to showing that in the diagram
	\begin{equation*}
		\begin{tikzcd}
			\AR_{\bCC}(q,\Free^{0}_c(p))\arrow[d]\arrow[r] & \bCC(y,\Free^{0}_c(x))\arrow[r]\arrow[d] & \bCC(y,x)\arrow[d] \arrow[dl, Rightarrow, shorten >=1.5em, shorten <=1.5em]\\
			\bCC(d,c)\arrow[r] & \bCC(y,c)\arrow[r]& \bCC(y,c)
		\end{tikzcd}
	\end{equation*}
	in which the left square is a strong pullback and the right square is a oriented pullback, the composite square is an oriented pullback as well. The claim thus follows from \cref{prop:pasting}. Hence, we obtain a right adjoint $\Free^{0}$ of $\iota$ with the property that the adjunction counit $\Free^{0}_c(p)\to p$ recovers the lax square $\sigma$ from above. Since $\ev_1(\sigma)$ recovers the identity on $\id_c$, we deduce from \cite[Proposition 5.2.2.]{AGH24} that $\Free^{0}$ defines a relative right adjoint of $\iota$.
\end{proof}

Recall from \cref{def:laxslices} that for any $c\in\bCC$, the \emph{lax slice} $\bCC_{\downslash c}$ is defined as the pullback
	\begin{equation*}
		\begin{tikzcd}
		\bCC_{\downslash c}\arrow[r]\arrow[d]&{\ARlax_{\bCC}}\arrow[d, "\ev_1"]\\
		{[0]}\arrow[r, "c"] & \bCC.
		\end{tikzcd}
	\end{equation*}
By passing to fibres over $c\in\bCC$, the inclusion $\iota\colon\AR_{\bCC}\into\ARlax_{\bCC}$ now gives rise to a functor
\begin{equation*}
	\iota_c\colon \bCC_{/c}\into \bCC_{\downslash c}
\end{equation*}

Since taking pullbacks defines a functor of 2-categories, \cref{prop:semiLaxPullbackRightAdjoint} now implies:
\begin{corollary}\label{cor:semiLaxPullbackRightAdjointFixedBase}
	If $\bCC$ has oriented pullbacks, then $\iota_c$ admits a right adjoint $\Free^{0}_c$ that is given by the oriented pullback along the identity on $c$.\qed
\end{corollary}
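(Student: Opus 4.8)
The plan is to obtain the desired adjunction by restricting the relative adjunction of \cref{prop:semiLaxPullbackRightAdjoint} to the fibre over $c$. First I would observe that since $\Free^{0}$ is a functor over $\bCC$ (it commutes with $\ev_{1}$), it carries a lax arrow $q\colon y\to c$ with $\ev_{1}(q)=c$ to its oriented pullback $\Free^{0}_{c}(q)\to c$, which again lies over $c$. Passing to fibres over $c\in\bCC$ therefore produces a functor $\Free^{0}_{c}\colon\bCC_{\downslash c}\to\bCC_{/c}$, and by construction this is exactly the map given by oriented pullback along $\id_{c}$. It then remains to exhibit $\Free^{0}_{c}$ as a right adjoint of $\iota_{c}$.

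Next I would restrict the adjunction counit. The counit $\epsilon\colon\iota\Free^{0}\to\id$ produced in \cref{prop:semiLaxPullbackRightAdjoint} satisfies $\ev_{1}(\epsilon)=\id$, i.e.\ it is vertical for $\ev_{1}$. Hence its restriction to the fibre over $c$ defines a natural transformation $\epsilon_{c}\colon\iota_{c}\Free^{0}_{c}\to\id_{\bCC_{\downslash c}}$, which is the candidate counit for the fibrewise adjunction.

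To verify the universal property, I would compute the relevant mapping categories in the slices as fibres over $\id_{c}$ of the mapping categories in the arrow $2$-categories. For $p\colon x\to c$ in $\bCC_{/c}$ and $q\colon y\to c$ in $\bCC_{\downslash c}$, the mapping category $\bCC_{/c}(p,\Free^{0}_{c}q)$ is the fibre over $\id_{c}\in\bCC(c,c)$ of $\AR_{\bCC}(p,\Free^{0}q)\to\bCC(c,c)$, and likewise $\bCC_{\downslash c}(\iota_{c}p,q)$ is the fibre over $\id_{c}$ of $\ARlax_{\bCC}(\iota p,q)\to\bCC(c,c)$; this is the general description of mapping categories in a fibre of $\ev_{1}$. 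Now \cref{prop:semiLaxPullbackRightAdjoint} supplies an equivalence $\AR_{\bCC}(p,\Free^{0}q)\simeq\ARlax_{\bCC}(\iota p,q)$, implemented by postcomposition with $\epsilon$. Since $\ev_{1}(\epsilon)=\id$ and both $\iota$ and $\Free^{0}$ lie over $\bCC$, this equivalence commutes with the two projections to $\bCC(c,c)$. Taking fibres over $\id_{c}$ then yields an equivalence $\bCC_{/c}(p,\Free^{0}_{c}q)\simeq\bCC_{\downslash c}(\iota_{c}p,q)$, natural in $p$ and $q$ and compatible with $\epsilon_{c}$, which is precisely the adjunction $\iota_{c}\dashv\Free^{0}_{c}$. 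Equivalently, one may phrase the entire argument as pulling back the adjunction $\iota\dashv\Free^{0}$ over $\bCC$ along the point $c\colon[0]\to\bCC$, using that this pullback is $2$-functorial and hence preserves adjunctions.

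The main obstacle I anticipate is the bookkeeping in the third step: one must carefully justify that the slice mapping categories are the claimed fibres over $\id_{c}$ and that the adjunction equivalence of \cref{prop:semiLaxPullbackRightAdjoint} genuinely respects the projections to $\bCC(c,c)$. Both points reduce to the verticality $\ev_{1}(\epsilon)=\id$ together with the fact that $\iota$ and $\Free^{0}$ are functors over $\bCC$, so no new input beyond \cref{prop:semiLaxPullbackRightAdjoint} is required.
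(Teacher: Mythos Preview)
Your proposal is correct and takes essentially the same approach as the paper: the paper simply remarks that taking pullbacks is a 2-functor and hence preserves the relative adjunction of \cref{prop:semiLaxPullbackRightAdjoint}, which is exactly the one-line summary you give at the end of your third paragraph. Your more explicit argument via fibres of mapping categories is a correct unpacking of this, but the paper does not spell it out.
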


\begin{remark}\label{rem:freeFibExplicitly}
	If $p\colon x\to c$ is a map, the map $\Free^{0}_c(p)\colon\Free^{0}_c(x)\to c$ fits into a commutative diagram
	\begin{equation*}
		\begin{tikzcd}
			\Free^{0}_c(x)\arrow[d]\arrow[r]\arrow[rr, bend left, "\Free^{0}_c(p)"] & c^{[1]}\arrow[d, "d_0"]\arrow[r, "d_1"] & c\\
			x\arrow[r, "p"] & c &
		\end{tikzcd}
	\end{equation*}
	where the square is a (strong) pullback, $c^{[1]}$ is the oriented pullback of the identity on $c$ along itself and $d_1,d_0\colon c^{[1]}\rightrightarrows c$ are the associated two projections. To see this, one can reduce to the case where $\bCC=\Cat$, in which case this follows by explicit verification.
\end{remark}

\subsection{$\epsilon$-Fibrations}\label{subsec:fibrations}
Recall from \cref{def:reflections} that a map $x\to y$ in $\bCC$ is said to be a \emph{left inclusion} if it is a fully faithful right adjoint. In this case, its left adjoint is referred to as a \emph{left reflection}. We may now define:
\begin{definition}\label{def:fibrations}
	A map $p\colon x\to c$ in $\bCC$ is said to be  a \emph{0-fibration} if the adjunction unit $\eta_p\colon x\to \Free^{0}_c(x)$ is a left inclusion in $\bCC_{/c}$ with left adjoint $l_p$.  A map $p\to q$ in $\AR(\bCC)$ between 0-fibrations 	defines a \emph{morphism of 0-fibrations} if the naturality square 
	\begin{equation*}
		\begin{tikzcd}
			p\arrow[d]\arrow[r, "\eta_p"] & \Free^{0}_c(p)\arrow[d]\\
			q\arrow[r, "\eta_q"] & \Free^{0}_d(q) 
		\end{tikzcd}
	\end{equation*}
	is left adjointable. We denote by $\Fib^{0}_{\bCC}\into\AR_{\bCC}$ the subcategory spanned by the 0-fibrations and the morphisms of 0-fibrations.
\end{definition}
\begin{remark}\label{rem:minimalConditionsFibration}
	A map $p\colon x\to c$ is a 0-fibration if and only if the map $\eta_p\colon x\to \Free^{0}_c(x)$ admits a retraction $l_p$ in $\bCC_{/c}$ which defines a left adjoint in $\bCC$. In fact, this is clearly necessary. Conversely, note that $\eta_p$ is fully faithful (being the canonical map from the strong limit of $p\colon [1]\to\bCC$ to its lax limit). Consequently, if $\eta_p$ admits a retraction $l_p$ in $\bCC_{/c}$ which defines a left adjoint in $\bCC$, then the adjunction counit must be an equivalence and therefore lifts to a $2$-morphism in $\bCC_{/c}$. The adjunction unit, on the other hand, lifts to a $2$-morphism in $\bCC_{/c}$ as it becomes an equivalence when whiskered with $l_p$ and therefore (since $l_p$ is a retraction in $\bCC_{/c}$) a fortiori when whiskered with the structure map $\Free^{0}_{c}(x)\to c$. Hence the claim follows.
\end{remark}
\begin{remark}\label{rem:morphismsOfFibrations}
	A morphism of 0-fibrations is simply a 0-fibration in $\AR_{\bCC}$.
\end{remark}

\begin{example}\label{ex:fibrationsCat}
	In $\Cat$, a 0-fibration is precisely a cocartesian fibration, and a morphism of 0-fibrations is precisely a morphism of cocartesian fibrations, i.e.\ a commutative square in which the vertical maps are cocartesian fibrations and the top horizontal map preserves cocartesian edges.
\end{example}

\begin{example}\label{ex:fibrationsFunctorCategories}
	If $\bII$ is a 2-category, then a map in $\Fun(\bII,\bCC)$ is a 0-fibration precisely if it takes values in $\Fib_{\bCC}^0$ when viewed as a functor $\bII\to \AR_{\bCC}$. Likewise, a commutative square in $\Fun(\bII,\bCC)$ defines a morphism of 0-fibrations if it takes values in $\Fib_{\bCC}^0$ when viewed as a functor $[1]\times\bII\to\AR_{\bCC}$. In other words, it defines a morphism of 0-fibrations precisely if it does so point-wise, i.e.\ if for every $i\in \bII$ the induced square in $\bCC$ defines a morphism of 0-fibrations. In particular, we may describe 0-fibrations and morphisms of 0-fibrations representably: a map $f\colon x\to c$ in $\bCC$ is a 0-fibration if and only if the induced map $f_\ast\colon\bCC(-, x)\to \bCC(-, c)$ takes values in $\Fib_{\Cat}^{0}$ when viewed as a map $\bCC^\op\to \AR_{\Cat}$, and a commutative square in $\bCC$ defines a morphism of 0-fibrations precisely if $\bCC(z,-)$ carries it to a morphism of 0-fibrations in $\Cat$ for every $z\in\bCC$.
\end{example}

\begin{remark}\label{rem:cart2cells}
	Let $p\colon x\to c$ be a 0-fibration, and let $a\in\bCC$ be an arbitrary object. According to \cref{ex:fibrationsFunctorCategories}, the induced map $p_\ast\colon\bCC(a,x)\to\bCC(c,p)$ is a cocartesian fibration of 1-categories. We may explicitly describe the cocartesian morphisms of this fibration: given
	$f,g \colon a \to x$ and a 2-morphism $\alpha \colon f \Rightarrow g$, the universal property of $\Free^{0}_c(x)$ supplies us with a 2-morphism $\Xi_{\alpha} : T_f \Rightarrow{} T_g$, where $T_f,T_g \colon a \to \Free_c^{0}(x)$ and $\Xi_{\alpha}$ is induced by the morphism of cones
	\[\begin{tikzcd}
		a && x \\
		& c.
		\arrow[""{name=0, anchor=center, inner sep=0}, curve={height=6pt}, from=1-1, to=1-3]
		\arrow[""{name=1, anchor=center, inner sep=0}, curve={height=-12pt}, from=1-1, to=1-3]
		\arrow["pg"', from=1-1, to=2-2]
		\arrow["p", from=1-3, to=2-2]
		\arrow["\alpha", shorten <=2pt, shorten >=2pt, Rightarrow, from=1, to=0]
	\end{tikzcd}\]
	Unraveling the definitions, we see that $\alpha$ defines a cocartesian morphism for $p_\ast$ precisely if the whiskering of $\Xi_{\alpha}$ with $\ell_p \colon \Free_c^{0}(x) \to x$ is an invertible 2-morphism.
\end{remark}

\begin{remark}\label{rem:1fib}
	There is an evident dual notion of a \emph{1-fibration} in $\bCC$: a map $x\to c$ is a 1-fibration if it is a 0-fibration in $\bCC^\co$, and a square in $\bCC$ is a morphism of 1-fibrations if it is a morphism of 0-fibrations in $\bCC^\co$. Thus, all statements about 0-fibration dualise in the appropriate way to statements about 1-fibrations, so that we may restrict our attention to 0-fibrations. If we do not wish to specify whether we mean $0$- or 1-fibration, we will use the placeholder $\epsilon\in\{0,1\}$.
\end{remark}

\begin{lemma}\label{lem:freeFibrationIsFibration}
	For every map $p\colon x\to c$, the induced map $\Free^{0}_c(p)\colon\Free^{0}_c(x)\to c$ is a 0-fibration. Moreover, if $p$ is already a 0-fibration, the map $l_p\colon \Free^{0}_c(p)\to p$ defines a morphism of 0-fibrations.
\end{lemma}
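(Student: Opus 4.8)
The plan is to reduce both statements to the corresponding classical facts about free cocartesian fibrations in $\Cat$, using the representable criterion of \cref{ex:fibrationsFunctorCategories}. Recall that this criterion characterises $0$-fibrations by the condition that $f_\ast\colon\bCC(-,x)\to\bCC(-,c)$ factor through $\Fib^{0}_{\Cat}\subset\AR_{\Cat}$, and characterises morphisms of $0$-fibrations by the condition that $\bCC(z,-)$ send the square to a morphism of cocartesian fibrations in $\Cat$ for every $z\in\bCC$. The key input on the representable side is that, since $\Free^{0}_c(x)$ is by definition an oriented pullback and the Yoneda embedding preserves partially (op)lax limits (\cref{prop:yonedacont}, combined with the fact that such limits in presheaf $2$-categories are computed pointwise), we obtain for every $a\in\bCC$ a natural equivalence $\bCC(a,\Free^{0}_c(x))\simeq\Free^{0}_{\bCC(a,c)}(\bCC(a,x))$ over $\bCC(a,c)$. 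This identifies the image of $\Free^{0}_c(p)$ under $\bCC(a,-)$ with the free cocartesian fibration on $p_\ast\colon\bCC(a,x)\to\bCC(a,c)$ (\cref{cons:freefib}, interpreted in $\Cat$).

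For the first statement I would argue as follows. Under the above identification, $\bCC(a,\Free^{0}_c(x))\to\bCC(a,c)$ is the projection of a free cocartesian fibration and hence a cocartesian fibration; this is the classical fact that free cocartesian fibrations are fibrations (\cite{streetfib}, \cite{GHN}). It remains to check that for every $g\colon a\to b$ the induced naturality square is a morphism of cocartesian fibrations. Under the identification, its top map is the functor $\Free^{0}_{\bCC(b,c)}(\bCC(b,x))\to\Free^{0}_{\bCC(a,c)}(\bCC(a,x))$ induced by precomposition with $g$. Since a morphism of a free cocartesian fibration $\Free^{0}_C(X)\to C$ is cocartesian exactly when its component in $X$ is an equivalence, and precomposition with $g$ acts on this component as the functor $\bCC(b,x)\to\bCC(a,x)$ (which preserves equivalences), this square preserves cocartesian edges. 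By \cref{ex:fibrationsFunctorCategories}, $\Free^{0}_c(p)$ is therefore a $0$-fibration.

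For the second statement, assume $p$ is a $0$-fibration, and consider the square in $\AR_{\bCC}$ with horizontal maps $l_p$ and $\id_c$ exhibiting $l_p\colon\Free^{0}_c(p)\to p$ as a map over $c$. I must show that $\bCC(z,-)$ carries this square to a morphism of cocartesian fibrations for every $z\in\bCC$. Its bottom map is $\id_{\bCC(z,c)}$, so (using again that $\bCC(z,-)$ preserves the oriented pullback defining $\Free^{0}_c(x)$) this amounts to showing that the top map $(l_p)_\ast\colon\Free^{0}_{\bCC(z,c)}(\bCC(z,x))\to\bCC(z,x)$ preserves cocartesian morphisms. Now $\bCC(z,-)$, being a $2$-functor, carries the adjunction $l_p\dashv\eta_p$ — which takes place in $\bCC_{/c}$ — to an adjunction relative to $\bCC(z,c)$ whose right adjoint is the canonical comparison $\eta_{p_\ast}$; by uniqueness of adjoints, $(l_p)_\ast$ is the relative left reflection $l_{p_\ast}$ of the cocartesian fibration $p_\ast$ (cocartesian because $p$ is a $0$-fibration). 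The required preservation of cocartesian morphisms is then exactly the classical property that the left reflection of a cocartesian fibration is a morphism of cocartesian fibrations (\cite{streetfib}, \cite{GHN}), and \cref{ex:fibrationsFunctorCategories} concludes the argument.

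The only genuinely non-formal ingredients are the two $1$-categorical facts imported from the theory of free cocartesian fibrations; everything else is bookkeeping that transports the oriented-pullback description of $\Free^{0}$ through the Yoneda embedding. The step requiring the most care is the identification $(l_p)_\ast\simeq l_{p_\ast}$: one must track that the adjunction $l_p\dashv\eta_p$ lives in $\bCC_{/c}$, so that after applying $\bCC(z,-)$ it becomes an adjunction \emph{relative} to $\bCC(z,c)$, which is precisely what is needed to recognise $(l_p)_\ast$ as the left reflection rather than merely an absolute left adjoint.
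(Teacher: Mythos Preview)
Your proposal is correct, but it follows a genuinely different route from the paper's own argument. You reduce everything representably to $\Cat$ via \cref{prop:yonedacont} and \cref{ex:fibrationsFunctorCategories}, then appeal to the classical facts that the free cocartesian fibration is a cocartesian fibration and that its left reflection preserves cocartesian edges. The paper instead argues \emph{intrinsically}: it observes that $\bCC$ is cotensored over $\Cat^\fin$, so the adjunction $\sigma_1\dashv\delta_1$ between $[2]$ and $[1]$ in $\Delta$ induces an adjunction $c^{[1]}\rightleftarrows c^{[2]}$ in $\bCC$, which after identifying $c^{[2]}\simeq\Free^0_c(\Free^0_c(c))$ witnesses $\eta_{\Free^0_c(\id_c)}$ as a left inclusion; the general case is then obtained by pulling back along $p$ (using \cref{rem:freeFibExplicitly}). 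For the second statement, the paper simply remarks that it follows formally from the first.

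Each approach has its merits. The paper's argument is self-contained and exhibits the simplicial origin of the structure maps, which is conceptually illuminating and reused later (e.g.\ in \cref{prop:freeFibrationIsFreeFibration}). Your approach is more uniform---both statements are handled by the same reduction---and makes explicit the naturality bookkeeping (preservation of cocartesian edges under $g^\ast$, the identification $(l_p)_\ast\simeq l_{p_\ast}$), at the cost of importing the $1$-categorical facts as black boxes. The most delicate step in your argument, as you note, is tracking that the adjunction $l_p\dashv\eta_p$ lives in $\bCC_{/c}$ so that $\bCC(z,-)$ transports it to a \emph{relative} adjunction; this is correct, since the induced functor on slices is a $2$-functor and $(\eta_p)_\ast$ is identified with $\eta_{p_\ast}$ by preservation of the oriented pullback.
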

\begin{proof}
	Note that the second claim is a formal consequence of the first, so it suffices to show the first statement. To that end, note that since $\bCC$ has oriented pullbacks, it is cotensored over $\Cat^\fin$, i.e.\ we have a $2$-functor $(-)^{(-)}\colon(\Cat^\fin)^\op\times\bCC\to\bCC$ that sends a finite $\infty$-category $K$ and an object $c\in\bCC$ to the (fully) lax limit of the constant diagram $K\to\bCC$ with value $c$. Consequently, as the map $\sigma_1\colon [2]\to [1]$ admits a right adjoint $\delta_1\colon [1]\into [2]$, the induced map $c^{[1]}\to c^{[2]}$ must be a right adjoint. Now it follows straightforwardly from the definitions that this map is precisely the unit $\eta_{\Free^{0}_c(\id_c)}\colon\Free^{0}_c(c)\to\Free^{0}_c(\Free^{0}_c(c))$. Hence the first claim follows in the special case where $p$ is the identity on $c$ (see \cref{rem:minimalConditionsFibration}). Finally, in the general case, \cref{rem:freeFibExplicitly} implies that the unit $\eta_{\Free^{0}_c(p)}\colon \Free^{0}_c(x)\to\Free^{0}_c(\Free^{0}_c(x))$ is the pullback of $\eta_{\Free^{0}_c(\id_c)}$ along $p$ (where we regard $l_{\Free^{0}(\id_c)}\dashv \eta_{\Free^{0}(\id_c)}$ as an adjunction internally in $\bCC_{/c}$ via the projections $\Free^{0}(\Free^{0}(\id_c))\to \Free^{0}(\id_c)\to c$) and must therefore be a left inclusion as well.
\end{proof}

\begin{lemma}\label{lem:mappingCategoriesFib}
	Let $p\colon x\to z $ and $q\colon y\to d$ be 0-fibrations. Then the inclusion $\Fib_{\bCC}^0(p,q)\into\AR_{\bCC}(p,q)$ fits into a pullback square
	\begin{equation*}
		\begin{tikzcd}[column sep=huge]
			\Fib_{\bCC}^0(p,q)\arrow[d, hookrightarrow]\arrow[r] & \AR_{\bCC}(p,q)\arrow[d, hookrightarrow, "l_p^\ast"]\\
			\AR_{\bCC}(p,q)\arrow[r, "(l_q)_\ast\circ\Free^{0}(-)"] & \AR_{\bCC}(\Free^{0}_c(p),q).
		\end{tikzcd}
	\end{equation*}
\end{lemma}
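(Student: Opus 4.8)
The plan is to exhibit $\Fib^{0}_{\bCC}(p,q)$ as the stated pullback by building an explicit comparison functor and checking it is an equivalence. First I would spell out the two legs of the cospan: the right-hand map is precomposition $l_p^{*}$, carrying $f\in\AR_{\bCC}(p,q)$ to $f\circ l_p$, and the bottom map carries $f$ to $l_q\circ\Free^{0}(f)$; both are functors $\AR_{\bCC}(p,q)\to\AR_{\bCC}(\Free^{0}_c(p),q)$. The naturality square from \cref{def:fibrations} supplies a canonical mate $2$-cell $\theta_f\colon l_q\circ\Free^{0}(f)\Rightarrow f\circ l_p$, natural in $f$, and by that same definition $\Fib^{0}_{\bCC}(p,q)$ is exactly the full subcategory of $\AR_{\bCC}(p,q)$ on those $f$ for which $\theta_f$ is invertible. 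The comparison into the pullback then sends such an $f$ to the triple $(f,f,\theta_f^{-1})$.

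The key structural input is that $\eta_p$ is fully faithful (it is the canonical map from the strong to the lax limit of $p$, cf.\ \cref{rem:minimalConditionsFibration}) and is right adjoint to $l_p$. Applying the representable $2$-functor $\AR_{\bCC}(-,q)$ to the adjunction $l_p\dashv\eta_p$ yields an adjunction $\eta_p^{*}\dashv l_p^{*}$ on mapping categories whose counit $\eta_p^{*}l_p^{*}\Rightarrow\id$ corresponds to the invertible counit $l_p\eta_p\simeq\id$; hence $l_p^{*}$ is fully faithful (matching the decoration in the square) and $\eta_p^{*}l_p^{*}\simeq\id$. Writing $P$ for the pullback, fully faithfulness of $l_p^{*}$ is inherited by its base change along the bottom map, so the projection $P\to\AR_{\bCC}(p,q)$ recording the coordinate $f$ is fully faithful. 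Thus it only remains to identify the essential image of this projection with $\Fib^{0}_{\bCC}(p,q)$.

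The crux is this essential-image computation: showing that a point $(g,f,\beta)$ of $P$ — an equivalence $\beta\colon g\circ l_p\simeq l_q\circ\Free^{0}(f)$ — forces $f$ to be a morphism of $0$-fibrations. The subtlety is that $\beta$ is merely \emph{some} equivalence, while membership in $\Fib^{0}_{\bCC}(p,q)$ requires the \emph{specific} mate $\theta_f$ to be invertible. I would resolve this by composing: the map $\theta_f\circ\beta\colon g\circ l_p\to f\circ l_p$ runs between two objects in the image of the fully faithful $l_p^{*}$, hence equals $l_p^{*}(\delta)$ for a unique $\delta\colon g\to f$. Whiskering with $\eta_p$ and invoking the standard mate identity that $\eta_p^{*}\theta_f$ is \emph{always} an equivalence (a triangle-identity consequence, independent of whether $f$ is a morphism of fibrations), together with $\eta_p^{*}l_p^{*}\simeq\id$, shows $\delta$ is an equivalence. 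Consequently $g\simeq f$ and $\theta_f=l_p^{*}(\delta)\circ\beta^{-1}$ is invertible, so $f\in\Fib^{0}_{\bCC}(p,q)$; the reverse inclusion is immediate via $f\mapsto(f,f,\theta_f^{-1})$. This yields $\Fib^{0}_{\bCC}(p,q)\simeq P$ compatibly with the maps to $\AR_{\bCC}(p,q)$. I expect this mate-theoretic step — extracting invertibility of the canonical $\theta_f$ from an a priori unrelated $\beta$ — to be the main obstacle; should one wish to avoid it, an alternative is to embed via the fully faithful, oriented-pullback-preserving Yoneda embedding into $\PSh_{\Cat}(\bCC)$ and reduce objectwise to the classical statement for cocartesian fibrations in $\Cat$ (\cref{ex:fibrationsCat}), all relevant structure being computed pointwise.
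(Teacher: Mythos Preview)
Your argument is correct and follows the same route as the paper: both hinge on the observation that $l_p^{*}$ is a fully faithful right adjoint (a \emph{right inclusion}), so the pullback is the full subcategory of $\AR_{\bCC}(p,q)$ on those $f$ whose image under the bottom map lies in the essential image of $l_p^{*}$, and this membership condition is exactly the mate being invertible.

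The paper is more economical in one respect: it bypasses your ``subtlety'' about the arbitrary equivalence $\beta$ entirely. Since $l_p^{*}$ is a right inclusion with left adjoint $\eta_p^{*}$, its essential image is \emph{by definition} the locus where the adjunction unit $h\to l_p^{*}\eta_p^{*}h$ (i.e.\ whiskering $h$ with $\id\to\eta_p l_p$) is invertible. Applied to $h=l_q\Free^{0}(f)$, this unit, followed by the always-invertible counit $l_q\eta_q\simeq\id$, is literally the mate $\theta_f$. So the paper goes straight from ``pullback along a right inclusion'' to ``unit invertible'' to ``mate invertible'' in one line, whereas your argument effectively re-derives this essential-image characterization by hand via the auxiliary map $\delta$. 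Your version is not wrong, just longer; the worry you flag as ``the main obstacle'' dissolves once you use the standard fact that the essential image of a reflective inclusion is detected by the canonical (co)unit rather than by an arbitrary equivalence.
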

\begin{proof}
	Since $l_p$ is a left reflection, the functor $l_p^\ast$ is a right inclusion and therefore in particular fully faithful. Hence the claim is equivalent to the statement that a map $\phi\colon p\to q$ in $\AR_{\bCC}$ defines a morphism of 0-fibrations if and only if the whiskering of the adjunction unit $\id_{\Free^{0}_c(p)}\to \eta_pl_p$ with the composition
	\begin{equation*}
		\Free^{0}_c(p)\xrightarrow{\Free^{0}(\alpha)}\Free^{0}_d(q)\xrightarrow{l_q} q
	\end{equation*}
	is an equivalence. Since this precisely recovers the mate of the naturality square 
	\begin{equation*}
		\begin{tikzcd}
			p\arrow[r, "\alpha"]\arrow[d, "\eta_p"] & q\arrow[d, "\eta_q"]\\
			\Free^{0}_c(p)\arrow[r, "\Free^{0}(\alpha)"] & \Free^{0}_d(q),
		\end{tikzcd}
	\end{equation*}
	the claim follows.
\end{proof}

\begin{proposition}\label{prop:freeFibrationIsFreeFibration}
	The functor $\Free^{0}\colon \AR_{\bCC}\to\AR_{\bCC}$ takes values in $\Fib_{\bCC}^{0}$ and moreover defines a functor $\Free^{0}\colon\AR_{\bCC}\to\Fib_{\bCC}^{0}$ that is a relative left adjoint of the inclusion $\Fib_{\bCC}^{0}\into\AR_{\bCC}$ over $\bCC$.
\end{proposition}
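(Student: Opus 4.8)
The plan is to realize $\Free^{0}$ as the reflector of a relative adjunction by exhibiting $\eta_p\colon p\to\Free^{0}_c(p)$ as its unit, and to reduce the whole statement to one clean identity. Write $T:=\Free^{0}\circ\iota\colon\AR_{\bCC}\to\AR_{\bCC}$ for the functor in the statement; by \cref{prop:semiLaxPullbackRightAdjoint} the adjunction $\iota\dashv\Free^{0}$ endows $T$ with the structure of a monad on $\AR_{\bCC}$, with unit $\eta$ (the unit $\eta_p\colon p\to\Free^{0}_c(p)$) and multiplication $\mu=\Free^{0}\sigma\iota$, where $\sigma$ is the lax counit. By \cref{lem:freeFibrationIsFibration}, $T$ sends every object of $\AR_{\bCC}$ into $\Fib^{0}_{\bCC}$. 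The single geometric input I would isolate is the identity
\[
	l_{\Free^{0}_c(p)}\simeq\mu_p,
\]
i.e.\ the left reflection of the free $0$-fibration $\Free^{0}_c(p)$ coincides with the monad multiplication at $p$.

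Granting this identity, the remaining steps are formal monad yoga combined with \cref{lem:mappingCategoriesFib}. First, $T$ lands in $\Fib^{0}_{\bCC}$ also on morphisms: by \cref{lem:mappingCategoriesFib}, a map $\Free^{0}(\phi)\colon\Free^{0}_c(p)\to\Free^{0}_{c'}(p')$ is a morphism of $0$-fibrations precisely if $\Free^{0}(\phi)\circ l_{\Free^{0}_c(p)}\simeq l_{\Free^{0}_{c'}(p')}\circ\Free^{0}(\Free^{0}(\phi))$, and under the identity above this is exactly the naturality square $\Free^{0}(\phi)\circ\mu_p\simeq\mu_{p'}\circ TT(\phi)$ of $\mu$. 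Thus we obtain a genuine functor $\Free^{0}\colon\AR_{\bCC}\to\Fib^{0}_{\bCC}$. The candidate unit of $\Free^{0}\dashv j$ (with $j\colon\Fib^{0}_{\bCC}\into\AR_{\bCC}$ the inclusion) is $\eta_p$; since $\ev_1(\eta_p)=\id_c$, this transformation lies over $\bCC$, so by the same reasoning as at the end of the proof of \cref{prop:semiLaxPullbackRightAdjoint} the adjunction, once established, will automatically be relative over $\bCC$.

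By the mapping-category criterion it then suffices to prove that for every $0$-fibration $q\colon y\to d$ the composite
\[
	\Fib^{0}_{\bCC}(\Free^{0}_c(p),q)\into\AR_{\bCC}(\Free^{0}_c(p),q)\xrightarrow{\;\eta_p^{\ast}\;}\AR_{\bCC}(p,q)
\]
is an equivalence of $1$-categories. I would produce an explicit inverse $s(\phi):=l_q\circ\Free^{0}(\phi)$, which is a morphism of $0$-fibrations since $\Free^{0}(\phi)$ is one (previous step) and $l_q\colon\Free^{0}_d(q)\to q$ is one by the ``moreover'' clause of \cref{lem:freeFibrationIsFibration}. That $\eta_p^{\ast}s\simeq\id$ follows from naturality of $\eta$, giving $\Free^{0}(\phi)\circ\eta_p\simeq\eta_q\circ\phi$, together with $l_q\circ\eta_q\simeq\id_q$ (as $\eta_q$ is a left inclusion with left reflection $l_q$, cf.\ \cref{def:fibrations} and \cref{def:reflections}). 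For the reverse identity $s\circ\eta_p^{\ast}\simeq\id$ on $\Fib^{0}_{\bCC}(\Free^{0}_c(p),q)$: if $\psi$ is a morphism of $0$-fibrations then \cref{lem:mappingCategoriesFib} yields $\psi\circ l_{\Free^{0}_c(p)}\simeq l_q\circ\Free^{0}(\psi)$, whence $s(\eta_p^{\ast}\psi)=l_q\Free^{0}(\psi)\Free^{0}(\eta_p)\simeq\psi\circ\mu_p\circ T(\eta_p)\simeq\psi$, the last step being the monad unit law $\mu\circ T\eta\simeq\id$. This establishes the equivalence and hence the relative adjunction.

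The main obstacle is therefore the isolated identity $l_{\Free^{0}_c(p)}\simeq\mu_p$, which is precisely the ``freeness'' content of the proposition. I expect to prove it by reduction to $\bCC=\Cat$ via the Yoneda embedding $h_{\bCC}$: it preserves oriented pullbacks by \cref{prop:yonedacont}, hence commutes with the formation of $\Free^{0}$, of $\eta$, and of $\mu$; being a $2$-functor it preserves the adjunction $\iota\dashv\Free^{0}$ and (being fully faithful and limit-preserving) both the left-inclusion condition defining $0$-fibrations and the left-adjointability condition defining their morphisms, so it carries left reflections to left reflections. Since mapping categories and oriented pullbacks in $\PSh_{\Cat}(\bCC)$ are computed pointwise, the identity reduces to the case $\bCC=\Cat$, where $\Free^{0}_{\CC}(X)\simeq\Fun([1],\CC)\times_{\CC}X$ (\cite{GHN}) and a direct computation shows both $l_{\Free^{0}_{\CC}(p)}$ and $\mu_p$ act by $((\xi,\alpha),\beta)\mapsto(\xi,\beta\circ\alpha)$; full faithfulness of $h_{\bCC}$ transports the equivalence back to $\bCC$. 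Alternatively, one can argue the identity formally, recognizing $T$ as the free-$0$-fibration monad whose free algebras carry their multiplication as structure map, but the representable reduction is the more robust route.
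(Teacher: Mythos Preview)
Your proposal is correct and takes a genuinely different route from the paper. You organize the argument around the monad $T=\Free^{0}\iota$ and isolate the identity $l_{\Free^{0}_c(p)}\simeq\mu_p$ as the single nontrivial input, after which the monad laws and \cref{lem:mappingCategoriesFib} do all the work; you then prove the identity by Yoneda reduction to $\Cat$. The paper instead argues directly on mapping categories: it shows $\eta_f^{\ast}$ is a one-sided inverse of $(l_p)_{\ast}\Free^{0}(-)$, uses two-out-of-six, and reduces everything to the single identity $l_{\Free^{0}(f)}\circ\Free^{0}(\eta_f)\simeq\id$. This is precisely your monad law $\mu\circ T\eta\simeq\id$ read through $l\simeq\mu$, so the two proofs have the same geometric core. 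The paper, however, proves its identity \emph{internally} to $\bCC$: it reduces via pullback to the case $f=\id_c$ (since $\Free^{0}_c(\Free^{0}_c(x))\to\Free^{0}_c(x)$ is the pullback of $c^{[2]}\to c^{[1]}$ along $x\to c$), and then identifies $\Free^{0}(\eta_{\id_c})$ and $l_{\Free^{0}(\id_c)}$ with the simplicial maps $s_0\colon c^{[1]}\to c^{[2]}$ and $d_1\colon c^{[2]}\to c^{[1]}$, so the identity becomes the simplicial relation $d_1 s_0=\id$ applied to the cosimplicial object $c^{[\bullet]}$. Your monadic framing is cleaner conceptually and makes the bookkeeping (functoriality of $\Free^{0}$ into $\Fib^{0}_{\bCC}$, the two triangle identities) transparent; the paper's cosimplicial argument has the advantage of never leaving $\bCC$. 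You could combine the two: keep your monadic skeleton but prove $l\simeq\mu$ by the paper's pullback-to-$\id_c$ reduction rather than Yoneda, which avoids the care needed to check that $h_{\bCC}$ transports the slice adjunction $l\dashv\eta$ correctly.
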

\begin{proof}
	It follows from \cref{lem:freeFibrationIsFibration} that $\Free^{0}$ takes values in $\Fib_{\bCC}^{0}$. Thus, to complete the proof, it suffices to show that for every map $f\colon x\to c$ and every 0-fibration $p\colon z\to d$ the composition
	\begin{equation*}
		\AR_{\bCC}(f, p)\xrightarrow{\Free^{0}(-)} \Fib_{\bCC}^{0}(\Free^{0}(f), \Free^{0}(p))\xrightarrow{(l_p)_{\ast}}\Fib_{\bCC}^{0}(\Free^{0}(f), p)
	\end{equation*}
	is an equivalence. To see this, observe that we have a commutative diagram
	\begin{equation*}
		\begin{tikzcd}
			\AR_{\bCC}(f,p)\arrow[r, "\Free^{0}(-)"]\arrow[dr, "(\eta_p)_\ast"'] & \AR_{\bCC}(\Free^{0}(f),\Free^{0}(p))\arrow[d, "\eta_f^\ast"] \arrow[r, "(l_p)_\ast"] & \AR_{\bCC}(\Free^{0}(f),p)\arrow[d, "\eta_f^\ast"]\\
			& \AR_{\bCC}(f,\Free^{0}(p))\arrow[r, "(l_p)_\ast"] & \AR_{\bCC}(f,p)
		\end{tikzcd}
	\end{equation*}
	which already shows that $\eta_f^\ast$ defines a left inverse of $(l_p)_\ast\Free^{0}(-)$. By the two-out-of-six property of equivalences, it thus suffices to show that the composition $(l_p)_\ast\Free^{0}(-)\eta_f^\ast$ is an equivalence. To that end, note that by making use of \cref{lem:mappingCategoriesFib}, we obtain a commutative diagram
	\begin{equation*}
		\begin{tikzcd}
			\Fib_{\bCC}^0(\Free^{0}(f), p)\arrow[d, hookrightarrow] \arrow[rr] && \AR_{\bCC}(\Free^{0}(f), p)\arrow[d, hookrightarrow, "l_{\Free^{0}(f)}^\ast"]\\
			\AR_{\bCC}(\Free^{0}(f), p)\arrow[d, "\eta_f^\ast"]\arrow[r, "\Free^{0}(-)"] & \AR_{\bCC}(\Free^{0}(\Free^{0}(f)), \Free^{0}(p))\arrow[d, "\Free^{0}(\eta_f)^\ast"] \arrow[r, "(l_p)_\ast"]& \AR_{\bCC}(\Free^{0}(\Free^{0}(f)), p)\arrow[d, "\Free^{0}(\eta_f)^\ast"]\\
			\AR_{\bCC}(f,p)\arrow[r, "\Free^{0}(-)"] & \AR_{\bCC}(\Free^{0}(f), \Free^{0}(p))\arrow[r, "(l_p)_\ast"] & \AR_{\bCC}(\Free^{0}(f), p).
		\end{tikzcd}
	\end{equation*}
	By the previous commutative diagram, the upper horizontal map can be identified with the canonical inclusion $\Fib_{\bCC}^0(\Free^{0}(f), p)\into \AR_{\bCC}(\Free^{0}(p), q)$. Consequently, the claim follows once we show that the composition $l_{\Free^{0}(f)}\Free^{0}(\eta_f)$ is an equivalence. When viewing this map as a commutative square in $\bCC$, the lower horizontal map is the identity on $c$, so that it suffices to show that the upper horizontal map is an equivalence. Now we observe that both $\Free^{0}(\eta_f)\colon \Free^{0}(x)\to\Free^{0}(\Free^{0}(x))$ and $l_{\Free^{0}(f)}\colon\Free^{0}(\Free^{0}(x))\to\Free^{0}(x)$ are obtained as the pullback of $\Free^{0}(\eta_{\id_c})\colon c^{[1]}\to c^{[2]}$ and $l_{\Free^{0}(\id_c)}\colon c^{[2]}\to c^{[1]}$ along $f\colon x\to c$ (where we regard $\Free^{0}(\eta_{\id_c})$ and $l_{\Free^{0}(\id_c)}$ as maps in $\bCC_{/c}$ via the projections $c^{[2]}\to c$ and $c^{[1]}\to c$ which are induced by the inclusions of the initial vertex in $[2]$ and $[1]$, respectively). Consequently, we may assume without loss of generality that $f=\id_c$. In this case, the claim follows from the observation that $\Free^{0}(\eta_{\id_c})$ can be identified with $s_0\colon c^{[1]}\to c^{[2]}$ and $l_{\Free^{0}(\id_c)}$ with $d_1\colon c^{[2]}\to c^{[1]}$. 
\end{proof}

\begin{lemma}\label{lem:pullbackFibration}
	Let $p\colon x\to c$ be a 0-fibration in $\bCC$, and let
	\begin{equation*}
		\begin{tikzcd}
			y\arrow[d, "q"]\arrow[r] & x\arrow[d, "p"]\\
			d\arrow[r] & c
		\end{tikzcd}
	\end{equation*}
	be a pullback square in $\bCC$. Then $q$ is a 0-fibration, and the square defines a morphism of 0-fibrations.
\end{lemma}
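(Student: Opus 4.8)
The plan is to reduce the statement to the classical one-dimensional fact that cocartesian fibrations are stable under pullback, by testing against representables and exploiting that the Yoneda embedding is continuous. First I would invoke the representable criterion from \cref{ex:fibrationsFunctorCategories}: a map in $\bCC$ is a $0$-fibration, and a commutative square in $\bCC$ is a morphism of $0$-fibrations, precisely if this holds after applying the Yoneda embedding $h_\bCC\colon\bCC\to\PSh_\Cat(\bCC)=\FUN(\bCC^\op,\Cat)$, where $0$-fibrations and their morphisms are in turn detected point-wise. Thus it suffices to prove the analogous statements in $\Cat$, after evaluating at an arbitrary object $a\in\bCC$.

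Next, since the given square is a strong pullback, \cref{prop:yonedacont} guarantees that $h_\bCC$ carries it to a pullback square in $\PSh_\Cat(\bCC)$; as strong limits in presheaf $2$-categories are computed point-wise, for every $a\in\bCC$ the square
\[
\begin{tikzcd}
\bCC(a,y)\arrow[d, "q_\ast"]\arrow[r] & \bCC(a,x)\arrow[d, "p_\ast"]\\
\bCC(a,d)\arrow[r] & \bCC(a,c)
\end{tikzcd}
\]
is a pullback of $1$-categories. Because $p$ is a $0$-fibration, the representable criterion together with \cref{ex:fibrationsCat} shows that $p_\ast$ is a cocartesian fibration for each $a$. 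The classical stability of cocartesian fibrations under pullback then yields that $q_\ast$ is a cocartesian fibration, that the displayed square is a morphism of cocartesian fibrations, and that the cocartesian edges of $\bCC(a,y)$ are exactly those projecting to cocartesian edges of $\bCC(a,x)$.

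Finally I would verify the remaining functoriality: for every morphism $a'\to a$ in $\bCC$ the induced restriction square must be a morphism of cocartesian fibrations, so that $h_\bCC(q)$ is genuinely a $0$-fibration of presheaves and not merely a point-wise one. This follows because $p$ being a $0$-fibration forces the restriction $\bCC(a,x)\to\bCC(a',x)$ to preserve cocartesian edges, while cocartesian edges in the pullbacks $\bCC(a,y)$ and $\bCC(a',y)$ are detected along the projections to $\bCC(a,x)$ and $\bCC(a',x)$. Assembling these point-wise statements recovers, through the representable criterion, both that $q$ is a $0$-fibration and that the square is a morphism of $0$-fibrations.

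I do not expect a serious obstacle here: the only genuine input is the $1$-categorical stability result, which is standard. The one point demanding care is that \cref{ex:fibrationsFunctorCategories} asks one to check the fibration condition both object-wise (that each $q_\ast$ is a cocartesian fibration) and on the functoriality squares (that restriction preserves cocartesian edges), so the plan must supply both; the last paragraph is exactly what handles the latter, subtler half.
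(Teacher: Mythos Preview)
Your argument is correct, and it takes a genuinely different route from the paper.

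The paper argues \emph{internally}: it first observes (via \cref{rem:morphismsOfFibrations}) that the ``morphism of $0$-fibrations'' claim reduces to showing that $q$ itself is a $0$-fibration. For that, it exhibits the unit $\eta_q\colon y\to\Free^0_d(y)$ as a pullback of $\eta_p\colon x\to\Free^0_c(x)$ along suitable maps, and then uses that the adjunction $l_p\dashv\eta_p$ lifts to an object of $\Fun(\Adj,\bCC)$ together with the fact that the forgetful functor $\Fun(\Adj,\bCC)\to\AR_\bCC$ preserves pullbacks. This produces the required left adjoint of $\eta_q$ directly.

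Your approach is \emph{representable}: you transport the problem through the Yoneda embedding to $\Cat$ and invoke the classical stability of cocartesian fibrations under pullback. The paper's argument is more self-contained (it does not rely on the representable criterion of \cref{ex:fibrationsFunctorCategories}, which itself requires some unpacking) and makes transparent \emph{why} the adjunction structure survives pullback. Your argument, by contrast, is shorter once the representable criterion is in hand, and it illustrates that such criteria let one import $1$-categorical facts wholesale. Both are perfectly valid; the paper's choice is consistent with its general preference for arguing with the internal adjunction characterisation of fibrations.
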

\begin{proof}
	To begin with, note that when viewed as a morphism in $\AR_{\bCC}$, the pullback square in the statement of the lemma is itself a pullback of the square
	\begin{equation*}
		\begin{tikzcd}
			x\arrow[d, "p"]\arrow[r, "\id"] & x\arrow[d, "p"]\\
			c\arrow[r, "\id"] & c.
		\end{tikzcd}
	\end{equation*}
	As the latter is trivially a morphism of 0-fibrations, \cref{rem:morphismsOfFibrations} implies (by implicitly replacing $\bCC$ with $\AR_{\bCC}$) that the entire claim already follows once we verify that $q$ is a 0-fibration. To that end, note that there is a commutative diagram
	\begin{equation*}
		\begin{tikzcd}
			& y\arrow[rr]\arrow[dd] \arrow[dl, "\eta_y"]&& x\arrow[dd]\arrow[dl, "\eta_x"]\\
			\Free^{0}_d(y)\arrow[rr, crossing over]\arrow[dd] && \Free^{0}_c(x)\\
			& d\arrow[rr]\arrow[dl, "s_0"] && c\arrow[dl, "s_0"] \\
			d^{[1]}\arrow[rr] && c^{[1]}\arrow[from=uu, crossing over]
		\end{tikzcd}
	\end{equation*}
	in which both the front and the back square are pullbacks. Furthermore, observe that both $s_0\colon d\to d^{[1]}$ and $s_0\colon c\to c^{[1]}$ admit left adjoints given by $d_0\colon d^{[1]}\to d$ and $d_0\colon c^{[1]}\to c$, respectively, so that the bottom square lifts to a morphism in $\Fun(\Adj,\bCC)$. Also, since the left adjoint $l_x$ of $\eta_x$ defines a left adjoint in $\bCC_{/c}$, the right square lifts to a morphism in $\Fun(\Adj, \bCC)$ as well, so that we end up with a cospan in $\Fun(\Adj, \bCC)$. Consequently, by taking the pullback of this cospan and by using that the forgetful functor $\Fun(\Adj, \bCC)\to \AR_{\bCC}$ preserves pullbacks, we conclude that the map $\eta_y$ lifts to an object in $\Fun(\Adj, \bCC)$ and therefore admits a left adjoint, as desired.
\end{proof}

\begin{proposition}\label{prop:fibrations2functorial}
	The functor $\ev_1\colon\Fib_{\bCC}^{0}\to\bCC$ is a $(1,0)$-fibration of 2-categories. Dually, the functor $\ev_1\colon\Fib_{\bCC}^{1}\to\bCC$ is a $(1,1)$-fibration of 2-categories.
\end{proposition}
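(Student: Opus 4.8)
The plan is to verify directly that $\ev_1\colon\Fib^0_\bCC\to\bCC$ satisfies the two defining conditions of a $(1,0)$-fibration: that it admits all $1$-cartesian (i.e.\ cartesian) lifts over $\bCC$, and that it is $0$-cartesian-enriched (i.e.\ cocartesian-enriched). The statement for $\Fib^1_\bCC$ then follows formally by replacing $\bCC$ with $\bCC^\co$: a $1$-fibration in $\bCC$ is by definition a $0$-fibration in $\bCC^\co$ (\cref{rem:1fib}), and passing to $\bCC^\co$ turns a $(1,0)$-fibration into a $(1,1)$-fibration by the $\co$-duality of fibration types.

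For the cartesian lifts, fix a $0$-fibration $q\colon y\to d$ and a morphism $f\colon c\to d$ in $\bCC$, and form the strong pullback of $q$ along $f$. By \cref{lem:pullbackFibration}, the resulting map $f^\ast q\colon f^\ast y\to c$ is again a $0$-fibration and the pullback square is a morphism of $0$-fibrations $\psi\colon f^\ast q\to q$ lying over $f$; this is my candidate cartesian lift. To see that $\psi$ is $\ev_1$-cartesian, I would show that for every $0$-fibration $r\colon z\to e$ the square of mapping $1$-categories
\[
\begin{tikzcd}
\Fib^0_\bCC(r, f^\ast q)\arrow[r, "\psi_\ast"]\arrow[d] & \Fib^0_\bCC(r,q)\arrow[d]\\
\bCC(e,c)\arrow[r, "f_\ast"] & \bCC(e,d)
\end{tikzcd}
\]
is a strong pullback. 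Using \cref{cor:mapcatstrongarrow} to express the mapping categories of $\AR_\bCC$ as strong pullbacks in $\bCC$ and \cref{lem:mappingCategoriesFib} to exhibit $\Fib^0_\bCC(r,-)$ as a full subcategory of $\AR_\bCC(r,-)$, the corresponding square for $\AR_\bCC$ is a pullback by the pasting law (\cref{prop:pasting}). It then remains to check that an object $r\to f^\ast q$ of $\AR_\bCC(r,f^\ast q)$ defines a morphism of $0$-fibrations if and only if its composite $r\to f^\ast q\xrightarrow{\psi} q$ does: the forward direction holds because morphisms of $0$-fibrations compose (they form the subcategory $\Fib^0_\bCC$), and the converse is a cancellation property that follows from $\psi$ being a pullback of $0$-fibrations, so that its top map detects and reflects cocartesian edges fibrewise.

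For cocartesian-enrichment, fix $0$-fibrations $p\colon x\to c$ and $q\colon y\to d$ and consider $\ev_1\colon\Fib^0_\bCC(p,q)\to\bCC(c,d)$. By \cref{cor:mapcatstrongarrow}, $\AR_\bCC(p,q)$ is the strong pullback of $\bCC(x,q)\colon\bCC(x,y)\to\bCC(x,d)$ along $\bCC(p,d)\colon\bCC(c,d)\to\bCC(x,d)$, with $\ev_1$ the projection to $\bCC(c,d)$; since $q$ is a $0$-fibration, $\bCC(x,q)$ is a cocartesian fibration (\cref{ex:fibrationsFunctorCategories}), and hence so is its base change $\ev_1\colon\AR_\bCC(p,q)\to\bCC(c,d)$. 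The key step is then to show that the full subcategory $\Fib^0_\bCC(p,q)\subseteq\AR_\bCC(p,q)$ is closed under cocartesian lifts of objects lying in it; once this is established, $\ev_1\colon\Fib^0_\bCC(p,q)\to\bCC(c,d)$ is itself a cocartesian fibration, and the analogous compatibility of the composition functors is obtained by the same argument.

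The main obstacle is precisely this closure statement, which I would argue as follows. Let $\phi=(f,h)$ be a morphism of $0$-fibrations over $f$ and let $\beta\colon f\Rightarrow f'$ be a $2$-cell; the cocartesian lift produces $\phi'=(f',h')$ where $h\Rightarrow h'$ is a $\bCC(x,q)$-cocartesian $2$-cell over $\beta p$. To verify that $\phi'$ is again a morphism of $0$-fibrations I would use the representable criterion of \cref{ex:fibrationsFunctorCategories}, testing against each $z\in\bCC$: for a $p_\ast$-cocartesian $2$-cell $\delta$ in $\bCC(z,x)$, the interchange law produces a commuting square of $2$-cells whose edges are the whiskerings $h\delta$, $h'\delta$ and the whiskerings of $h\Rightarrow h'$ by the source and target of $\delta$. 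Now $h\delta$ is $q_\ast$-cocartesian because $\phi$ is a morphism of $0$-fibrations, and the two whiskerings of $h\Rightarrow h'$ are $q_\ast$-cocartesian because cocartesian $2$-cells for $q_\ast$ are stable under precomposition along any $z\to x$ (a consequence of the naturality in the test object of the description of cocartesian $2$-cells in \cref{rem:cart2cells}). Thus three of the four edges of the square are cocartesian, and the left-cancellation property of cocartesian morphisms forces $h'\delta$ to be cocartesian as well. This shows that $\phi'$ is a morphism of $0$-fibrations, completing the closure argument and hence the proof.
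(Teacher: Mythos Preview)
Your proof is correct, but takes a different route from the paper's. Both verify the same two conditions (existence of cartesian lifts via pullback, and cocartesian-enrichment on mapping categories), yet the mechanisms differ.

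For cartesian lifts, the paper exploits that $s_0\colon\bCC\to\AR_\bCC$ restricts to a right adjoint of $\ev_1\colon\Fib^0_\bCC\to\bCC$, so that the map $\Fib^0_\bCC(-,-)\to\bCC(\ev_1-,\ev_1-)$ may be identified with $\eta_\ast\colon\Fib^0_\bCC(-,-)\to\Fib^0_\bCC(-,s_0\ev_1(-))$. The pullback square $\sigma\colon f^\ast q\to q$ then yields a pullback square in $\Fib^0_\bCC$ upon applying $\eta$, and representability immediately gives cartesianness. You instead argue directly via \cref{cor:mapcatstrongarrow} and a cancellation property for morphisms of $0$-fibrations through pullback squares; this is equally valid and perhaps more transparent, though slightly longer.

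For cocartesian-enrichment, the contrast is sharper. The paper again leans on the identification with $\eta_\ast$: verifying that this map lands in $\Fib^0_{\Cat}$ unwinds (via \cref{ex:fibrationsFunctorCategories}) to checking that a certain commutative cube in $\bCC$ has its four vertical faces given by morphisms of $0$-fibrations, which is immediate by inspection. Your approach is more hands-on: you observe that $\AR_\bCC(p,q)\to\bCC(c,d)$ is a cocartesian fibration whenever $q$ is a $0$-fibration, then show the full subcategory $\Fib^0_\bCC(p,q)$ is stable under cocartesian transport by an interchange-and-cancellation argument with cocartesian $2$-cells. This argument is correct; the crucial ingredients---stability of $q_\ast$-cocartesian $2$-cells under precomposition, and left-cancellation for cocartesian morphisms---are available exactly as you say. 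Your treatment of the composition compatibility (``by the same argument'') is terse but does go through: cocartesian $2$-cells in $\Fib^0_\bCC(p,q)$ are detected on the top component, precomposition preserves cocartesian edges, and postcomposition by a morphism of $0$-fibrations preserves them by definition.

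In summary: the paper's adjunction trick packages the verification more uniformly and avoids chasing $2$-cells, while your approach is more elementary and makes the cocartesian structure explicit. Both are sound.
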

\begin{proof}
	We deal without loss of generality with the case of 0-fibrations. To begin with, observe the right adjoint $s_0\colon\bCC\to \AR_{\bCC}$ of $\ev_1\colon \AR_{\bCC}\to\bCC$ restricts to a right adjoint of $\ev_1\colon \Fib_{\bCC}^{0}\to \bCC$. In fact, $s_0$ acts by carrying $c\in\bCC$ to $\id_c$, which is trivially a 0-fibration, and the adjunction unit $\eta\colon\id_{\AR_{\bCC}}\to s_0\ev_1$ carries a map $p\colon x\to c$ to the square
	\begin{equation*}
		\begin{tikzcd}
			x\arrow[r, "p"]\arrow[d, "p"] & c\arrow[d, "\id"]\\
			c\arrow[r, "\id"] & c,
		\end{tikzcd}
	\end{equation*}
	which is easily seen to be a morphism of 0-fibrations whenever $p$ is a 0-fibration. As a consequence, the map $\Fib_{\bCC}^{0}(-,-)\to \bCC(\ev_1(-), \ev_1(-))$ that is obtained by the action of $\ev_1$ on mapping categories can be identified with $\eta_\ast\colon \Fib_{\bCC}^{0}(-,-)\to \Fib_{\bCC}^{0}(-, s_0\ev_1(-))$. Now if $p\colon x\to c$ is a 0-fibration, $d\to c$ is a map in $\bCC$ and $q\colon y\to d$ is the pullback of $p$ along this map, we know from \cref{lem:pullbackFibration} that the associated pullback square defines a map $\sigma\colon q\to p$ in $\Fib_{\bCC}^{0}$. Furthermore, the fact that limits in $\Fib_{\bCC}^{0}$ are computed in $\AR_{\bCC}$ (by \cref{prop:freeFibrationIsFreeFibration}) and the explicit description of the adjunction unit $\eta\colon \id_{\AR_{\bCC}}\to s_0\ev_1$ imply that the naturality square
	\begin{equation*}
		\begin{tikzcd}[column sep=large]
			q\arrow[r, "\sigma"]\arrow[d] & p\arrow[d]\\
			s_0\ev_1(q)\arrow[r, "s_0\ev_1(\sigma)"] & s_0\ev_1(p)
		\end{tikzcd}
	\end{equation*}
	is a pullback in $\Fib_{\bCC}^{0}$. As a consequence, the induced square
	\begin{equation*}
		\begin{tikzcd}
			\Fib_{\bCC}^{0}(-, q)\arrow[r]\arrow[d] & \Fib_{\bCC}^{0}(-, p)\arrow[d]\\
			\Fib_{\bCC}^{0}(-, s_0\ev_1(q))\arrow[r] & \Fib_{\bCC}^{0}(-, s_0\ev_1(q))
		\end{tikzcd}
	\end{equation*}
	must be a pullback as well, so that $\sigma\colon q\to p$ is $\ev_1$-cartesian.
	
	To complete the proof, we need to show that $\Fib_{\bCC}^{0}(-,-)\to \Fib_{\bCC}^{0}(-, s_0\ev_1(-))$ takes values in $\Fib_{\Cat}^{0}$ when viewed as a functor $(\Fib_{\bCC}^{0})^\op\times\Fib_{\bCC}^{0}\to\AR_{\Cat}$. By $2$-functoriality of $\Fib_{\bCC}^0(-,-)$ and \cref{ex:fibrationsFunctorCategories}, this amounts to showing that if $p\colon x\to c$ and $q\colon y\to d$ are 0-fibrations and $p\to q$ is a map in $\Fib_{\bCC}^0$, the associated commutative square
	\begin{equation*}
		\begin{tikzcd}
			p\arrow[d]\arrow[r] & q\arrow[d]\\
			s_0\ev_1(p)\arrow[r] & s_0\ev_1(q)
		\end{tikzcd}
	\end{equation*}
	is a morphism of 0-fibrations in $\Fib_{\bCC}^0$ (so that in particular the vertical maps are 0-fibrations). Since the inclusion $\Fib_{\bCC}^0\into\AR_{\bCC}$ is a right adjoint and by again making use of \cref{ex:fibrationsFunctorCategories}, this explicitly means that (1) the two vertical maps in the above square encode morphisms of 0-fibrations in $\bCC$, and that (2) the image of the above square along the two functors $\ev_0,\ev_1\colon \AR_{\bCC}^0\rightrightarrows \bCC$ yields morphisms of 0-fibrations in $\bCC$. In other words, we need to show that in the commutative cube
	\begin{equation*}
		\begin{tikzcd}
			& c\arrow[rr]\arrow[dd, "\id", near end] && d\arrow[dd, "\id", near end]\\
			x\arrow[ur, "p"]\arrow[rr, crossing over]\arrow[dd, "p", near start] && y\arrow[ur, "q"]&\\
			& c\arrow[rr] && d\\
			c\arrow[ur, "\id"]\arrow[rr] && d\arrow[from=uu, crossing over, "q", near start]\arrow[ur, "\id"] &
		\end{tikzcd}
	\end{equation*}
	in $\bCC$, the four vertical faces are morphisms of 0-fibrations. Since this is evidently the case, the claim follows.
\end{proof}

\section{Fibrational descent and 2-topoi}\label{sec:fibdescent}
 This chapter constitutes the core the document. We start by giving the axiom of \emph{fibrational descent} which characterizes the class of 2-topoi among all presentable 2-categories. The key idea behind this axiom is that it provides a local-to-global principle for $\epsilon$-fibrations, akin to how the descent axiom in 1-topos theory provides a local-to-global principle for \emph{all} maps in a 1-topos. We make this precise in \cref{subsec:fibDescent}. In \cref{subsec:const}, we illustrate basic constructions of 2-topoi which provide a wide array of examples, including 2-categories of categorical presheaves. In the final two sections of this chapter, we explore different characterizations of 2-topoi, first in terms of a categorified variant of the Lawvere-Tierney axioms (\cref{subsec:LTaxioms}), and second in terms of certain Bousfield localizations of categorical presheaves (\cref{subsec:giraud}).
 
 \subsection{Fibrational descent}\label{subsec:fibDescent}
 In this section, we introduce and discuss our main axiom of \emph{fibrational descent}. As mentioned above, this axiom aims at providing a local-to-global principle for $\epsilon$-fibrations in a presentable 2-category $\bCC$. We will phrase this axiom using a $2$-dimensional version of \emph{cartesian transformations} of diagrams:
 
\begin{definition}\label{def:Ecarttransf}
	Let $\bCC$ be a presentable 2-category and let $(\bII,E)$ be a marked 2-category. 
	Given functors $p,q\colon \bII \to \bCC$ and $\epsilon\in\{0,1\}$, we say that a strong natural transformation $p\Rightarrow q$ is 
	$(E,\epsilon)$-cartesian if the following conditions are satisfied:
	\begin{enumerate}
		\item For every object $i \in \bII$, the morphism $p(i) \to q(i)$ is an $\epsilon$-fibration (see \cref{def:fibrations} and \cref{rem:1fib}).
		\item For every morphism $u\colon i \to j$ in $\bII$, the corresponding commutative square
		 \[
		  	\begin{tikzcd}
		  		p(i) \arrow[d] \arrow[r] & p(j) \arrow[d] \\\
		  		q(i) \arrow[r] & q(j)
		  	\end{tikzcd}
		 \] 
		 defines a 1-morphism in $\Fib_{\bCC}^{\epsilon}$. Moreover, if $u$ belongs
		  to $E$ we require this diagram to be a pullback square.
		\item  For every 2-morphism $u \xRightarrow{} v$ in $\bII$,
		 the corresponding 2-dimensional diagram
        \[\begin{tikzcd}
	     {p(i)} & {p(j)} & {} \\
	     {q(i)} & {q(j)}
	    \arrow[""{name=0, anchor=center, inner sep=0}, curve = {height=8pt}, from=1-1, to=1-2]
	    \arrow[""{name=1, anchor=center, inner sep=0}, curve={height=-8pt}, from=1-1, to=1-2]
	    \arrow[from=1-1, to=2-1]
	    \arrow[from=1-2, to=2-2]
	    \arrow[""{name=2, anchor=center, inner sep=0}, curve={height=-8pt}, from=2-1, to=2-2]
		\arrow[""{name=3, anchor=center, inner sep=0}, curve={height=8pt}, from=2-1, to=2-2]
		\arrow[shorten <=2pt, shorten >=2pt, Rightarrow, from=1, to=0]
		\arrow[shorten <=2pt, shorten >=2pt, Rightarrow, from=2, to=3]
       \end{tikzcd}\]
       defines an $\epsilon$-cartesian 2-morphism in $\Fib_{\bCC}^{\epsilon}$.
	\end{enumerate}
	\end{definition}

	\begin{remark}
		Let $\bCC$ be a presentable 2-category and let $(\bII,E)$ be a marked 2-category. Then it follows that we can identify $(E,\epsilon)$-cartesian transformations with functors $F \colon \bII \to \Fib_{\bCC}^{\epsilon}$ which send marked edges in $\bII$ to $1$-cartesian edges in $\Fib_{\bCC}^{\epsilon}$ and send every 2-morphisms in $\bII$ to $\epsilon$-cartesian 2-morphisms.
	\end{remark}

	\begin{definition}
		Let $\bCC$ be a presentable 2-category and let $(\bII,E)$ be a marked 2-category. We define $\eCart(\bII,\Fib_{\bCC}^{\epsilon})$ as the full sub-2-category  of $\FUN(\bII,\Fib_{\bCC}^{\epsilon})$ spanned by the $(E,\epsilon)$-cartesian transformations.
	\end{definition}

Given $\epsilon \in \{0,1\}$ we shall write “$\epsilon$lax” to mean either “lax” if $\epsilon=0$ or “oplax” if $\epsilon=1$. Similarly, we will write “$\epsilon\op$” to mean either “$\op$” if “$\epsilon=0$” or “$\coop$” if $\epsilon=1$. We may now define fibrational descent as follows:
\begin{definition}\label{def:fibdescent}
		Let $\bCC$ be a presentable 2-category. We say that $\bCC$ satisfies $\epsilon$-\emph{fibrational descent}
		if for every small marked 2-category $(\bII,E)$, the following conditions hold:
		\begin{itemize}
			\item[F1)] Let $\overline{\alpha}\colon\overline p \xRightarrow{} \overline q$ be a natural transformation
			   between functors
			 $\overline{p},\overline{q}\colon \bII^{\colimcone}_{\epsilonlax} \to \bCC$ (see \cref{rem:conesimplified}) such that the restriction $\alpha\colon p\Rightarrow q$ of $\overline{\alpha}$ along the inclusion $\bII\into \bII^\colimcone_{\epsilonlax}$ is $(E,\epsilon)$-cartesian and such that $\overline{q}$ is an $\Eepsilonlax$ colimit cone.
			 Then
			 the following are equivalent:
			 \begin{enumerate}
			 	\item The functor $\overline{p}$ defines an $\Eepsilonlax$  colimit cone.
			 	\item The natural transformation $\overline{p} \xRightarrow{}\overline{q}$ is 
			 	$(E^{\colimcone}_{\epsilonlax},\epsilon)$-cartesian.
			 \end{enumerate}
			 \item[F2)] Let $\overline\alpha\colon \overline{p} \Rightarrow \overline{k}$ and $\overline{\beta}\colon \overline{k} \Rightarrow \overline{q}$ be 
			 natural transformations between $\epsilonlax$ colimit cones $\overline{p},\overline{k},\overline{q}\colon \bII^\colimcone_{\epsilonlax}\to \bCC$ such that the restrictions $\alpha \circ \beta$ and $\beta$ of $\overline{\alpha}\circ\overline{\beta}$ and $\overline{\beta}$, respectively, along the inclusion $\bII\into\bII^\colimcone_{\epsilonlax}$ are $(E,\epsilon)$-cartesian. 
			 Then $\alpha$ is point-wise a map of $\epsilon$-fibrations if and only if $\overline{\alpha}$ is.
		\end{itemize}
	If $\bCC$ satisfies $\epsilon$-fibrational descent for both $\epsilon=0$ and $\epsilon=1$, we shall simply say that $\bCC$ satisfies \emph{fibrational descent}.
	\end{definition}

	\begin{construction}\label{const:adjointablesq}
		Let $\bCC$ be a presentable 2-category and let $(\bII,E)$ be a small marked 2-category. We consider a commutative diagram
		\[
			\begin{tikzcd}
				\Cart_{E^{\colimcone}_{\epsilonlax}}(\bII^{\colimcone}_{\epsilonlax},\Fib_{\bCC}^{\epsilon}) \arrow[d, hookrightarrow] \arrow[r,"\res^{\epsilon}"] & \eCart(\bII,\Fib_{\bCC}^{\epsilon}) \arrow[d, hookrightarrow] \\
				\FUN(\bII^{\colimcone}_{\epsilonlax},\bCC^{{[}1{]}}) \arrow[d, "(\ev_1)_\ast"] \arrow[r] & \FUN(\bII,\bCC^{{[}1{]}}) \arrow[d, "(\ev_1)_\ast"] \\
				\FUN(\bII^{\colimcone}_{\epsilonlax},\bCC) \arrow[r] & \FUN(\bII,\bCC)
			\end{tikzcd}
		\]
		where the horizontal maps are induced by restriction along $\bII \into \bII^{\colimcone}_{\epsilonlax}$ and where $\ev_1\colon\bCC^{[1]}\to \bCC$ is evaluation at $1\in[1]$. We note that the horizontal morphisms in the bottom square are right adjoints, with corresponding left adjoints given by left Kan extension. Since $\ev_1$ preserves (co)limits, we conclude that the bottom square is horizontally left adjointable.
		
		Now let $q\colon \bII \to \bCC$ be a diagram, and let $\overline{q}\colon \bII^{\colimcone}_{\epsilonlax} \to \bCC$ be the associated $\epsilonlax$ colimit cone. Let us furthermore set $c=q(\ast)$. Then, by passing to the fibres of the vertical maps in the above diagram over $q$ and $\overline{q}$, respectively, we obtain an induced functor
		\[
			\Cart_{E^{\colimcone}_{\epsilonlax}}(\bII^{\colimcone}_{\epsilonlax},\Fib_{\bCC}^{\epsilon})_{\overline{q}} \to \eCart(\bII,\Fib_{\bCC}^{\epsilon})_{q},
		\]
		which by using \cref{rem:cat2complete} we may identify with the canonical map
		\[
			\res^{\epsilon}_q\colon \Fib^{\epsilon}_{/c} \to \lim^{\eoplax}_{\bII^{\epsilon\op}} \Fib^{\epsilon}_{/q(-)}.
		\]
	\end{construction}

	\begin{lemma}\label{lem:glue}
		Let $\bCC$ be a presentable 2-category satisfying $\epsilon$-fibrational descent and let $(\bII,E)$ be a marked 2-category. Then the functor $\res^\epsilon$ from \cref{const:adjointablesq} admits a left adjoint
		\begin{equation*}
			\glue^\epsilon\colon \Cart_{E}(\bII, \Fib_{\bCC}^\epsilon)\to \Cart_{E^{\colimcone}_{\epsilonlax}}(\bII_{\epsilonlax}^\triangleright, \Fib_{\bCC}^\epsilon),
		\end{equation*} 
		and the top square from \cref{cons:freefib} is horizontally left adjointable. 
	\end{lemma}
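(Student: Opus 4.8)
The plan is to construct $\glue^\epsilon$ as a left Kan extension carried out in the arrow $2$-category $\bCC^{[1]}=\AR_{\bCC}$, and then to use the two clauses of fibrational descent to cut it down to the cartesian subcategories. Since $\bCC$ is presentable and hence cocomplete, \cref{prop:colimfunctorcat} shows that $\bCC^{[1]}$ is cocomplete as well, so by \cref{cor:existenceLKE} the restriction functor $\iota^\ast\colon\FUN(\bII^{\colimcone}_{\epsilonlax},\bCC^{[1]})\to\FUN(\bII,\bCC^{[1]})$ along the cone inclusion $\iota\colon\bII\into\bII^{\colimcone}_{\epsilonlax}$ admits a left adjoint $\iota_!$ — this is precisely the left adjoint of the lower horizontal map of the top square in \cref{const:adjointablesq}. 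By \cref{thm:laxcones}, $\iota_!$ is fully faithful with essential image the $\Eepsilonlax$ colimit cones; moreover, since colimits in $\bCC^{[1]}$ are computed pointwise and $\ev_0,\ev_1$ preserve them, $\iota_!$ carries a transformation $p\Rightarrow q$ over $\bII$ to the transformation $\overline p\Rightarrow\overline q$ of the associated colimit cones $\overline p=\iota_! p$, $\overline q=\iota_! q$.

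First I would verify that $\iota_!$ restricts to a functor landing in $\Cart_{E^{\colimcone}_{\epsilonlax}}(\bII^{\colimcone}_{\epsilonlax},\Fib^{\epsilon}_{\bCC})$. Given an $(E,\epsilon)$-cartesian transformation $X$, identified with $p\Rightarrow q$, the image $\iota_! X=(\overline p\Rightarrow\overline q)$ has both legs $\Eepsilonlax$ colimit cones, restricts to $X$ along $\iota$ (so its restriction is $(E,\epsilon)$-cartesian), and has the colimit cone $\overline q$ as codomain. Thus axiom F1 of \cref{def:fibdescent}, in the direction (1)$\Rightarrow$(2), applies and shows that $\iota_! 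X$ is $(E^{\colimcone}_{\epsilonlax},\epsilon)$-cartesian. Setting $\glue^\epsilon\coloneqq\iota_!|_{\eCart(\bII,\Fib^{\epsilon}_{\bCC})}$ this way makes the horizontal left adjointability of the top square immediate: the mate of the commuting top square is the comparison $\iota_!\circ j_0\simeq j_1\circ\glue^\epsilon$ between the two vertical inclusions $j_0,j_1$, and it is an equivalence because both composites compute $\iota_!$ by construction.

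It remains to promote this to a genuine adjunction $\glue^\epsilon\dashv\res^\epsilon$. Because $\eCart(\bII,\Fib^{\epsilon}_{\bCC})$ and $\Cart_{E^{\colimcone}_{\epsilonlax}}(\bII^{\colimcone}_{\epsilonlax},\Fib^{\epsilon}_{\bCC})$ are full sub-$2$-categories of $\FUN(\bII,\Fib^{\epsilon}_{\bCC})$ and $\FUN(\bII^{\colimcone}_{\epsilonlax},\Fib^{\epsilon}_{\bCC})$, which in turn include locally fully faithfully into the corresponding $\bCC^{[1]}$-valued functor $2$-categories, the adjunction equivalence $\Nat_{\bII^{\colimcone}_{\epsilonlax},\bCC^{[1]}}(\iota_! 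X,\overline Y)\simeq\Nat_{\bII,\bCC^{[1]}}(X,\iota^\ast\overline Y)$ supplied by $\iota_!\dashv\iota^\ast$ (and full faithfulness of $\iota_!$) already yields the desired equivalence of mapping categories, provided I show that it restricts to the full subcategories cut out by the condition that a natural transformation be pointwise a morphism of $\epsilon$-fibrations. Concretely, for $\beta\colon\iota_! X\Rightarrow\overline Y$ I must show that $\beta$ is pointwise a morphism of fibrations over all of $\bII^{\colimcone}_{\epsilonlax}$ as soon as its restriction $\iota^\ast\beta$ is so over $\bII$; only the component at the cone point carries content. This is exactly the regime of axiom F2, and the step I expect to be the main obstacle: to apply F2 I would factor the component of $\beta$ through the pullback of $\overline Y$ along the induced map of codomain cones, so that $\beta$ becomes a base-change transformation (pointwise a morphism of fibrations by \cref{lem:pullbackFibration}) composed with a transformation lying over a common $\Eepsilonlax$ colimit codomain cone, to which F2 applies and produces the morphism-of-fibrations property at the tip. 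Granting this, the restricted equivalence of mapping categories is natural in $X$ and $\overline Y$, which establishes $\glue^\epsilon\dashv\res^\epsilon$ and completes the proof.
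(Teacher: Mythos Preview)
Your proposal is correct and follows essentially the same route as the paper: take the left Kan extension $\iota_!$ on $\FUN(-,\bCC^{[1]})$ provided by \cref{cor:existenceLKE}/\cref{thm:laxcones}, use F1 to show it lands on $(E^{\colimcone}_{\epsilonlax},\epsilon)$-cartesian objects, and use F2 to handle the morphism-level statement; horizontal left adjointability of the top square is then immediate since $\glue^\epsilon$ is literally the restriction of $\iota_!$.

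The only difference is one of explicitness. The paper compresses the adjunction check into the single sentence ``a similar argument using axiom F2) shows the claim for morphisms,'' whereas you unpack what must actually be verified: that the ambient equivalence $\Nat_{\bCC^{[1]}}(\iota_! X,\overline Y)\simeq\Nat_{\bCC^{[1]}}(X,\iota^\ast\overline Y)$ restricts to the locally full subcategories cut out by ``pointwise a morphism of $\epsilon$-fibrations,'' and that the nontrivial direction is producing the fibration-morphism condition at the cone point. Your factorization through the pullback of $\overline Y$ along the induced map of codomain cones is exactly the right manoeuvre to place the problem over a single colimit codomain cone so that F2 applies (with F1 used once more, in the direction $(2)\Rightarrow(1)$, to see that the pulled-back domain is again a colimit cone). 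This is a genuine clarification of a step the paper leaves to the reader; the underlying strategy is the same.
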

	\begin{proof}
		It suffices to show that the left adjoint of the restriction functor
		\begin{equation*}
			\FUN(\bII_{\epsilonlax}^\triangleright, \bCC^{[1]})\to\FUN(\bII, \bCC^{[1]})
		\end{equation*}
		restricts to a left adjoint of $\res^\epsilon$. Given $f \in \eCart(\bII,\Fib_{\bCC}^{\epsilon})$, we consider the composite $f \colon\bII \to \Fib_{\bCC}^{\epsilon} \to \bCC^{[1]}$ and let $\overline{f}\colon\bII^{\colimcone}_{\epsilonlax} \to \bCC^{[1]}$ be the associated $\epsilon$lax colimit cone. Using axiom F1) in the definition of $\epsilon$-fibrational descent, we see that $\overline{f}$ factors through $\Fib_{\bCC}^{\epsilon}$ and that it defines an $(E^{\colimcone}_{\epsilonlax},\epsilon)$-cartesian transformation.  A similar argument using axiom F2) shows the claim for morphisms $f \to g$ in $ \eCart(\bII,\Fib_{\bCC}^{\epsilon})$.
	\end{proof}

	\begin{definition}
		We call the left adjoint
		\[
		 	\glue^{\epsilon} \colon \eCart(\bII,\Fib_{\bCC}^{\epsilon}) \to \Cart_{E^{\colimcone}_{\epsilonlax}}(\bII^{\colimcone}_{\epsilonlax},\Fib_{\bCC}^{\epsilon})
		 \] 
		 from \cref{lem:glue} 
		 the \emph{gluing functor}.
	\end{definition}

	\begin{proposition}\label{prop:fibrepresents}
		Let $\bCC$ be a presentable 2-category satisfying $\epsilon$-fibrational descent. Let $(\bII,E)$ be a marked 2-category and consider a functor $q\colon \bII \to \bCC$ with $\epsilonlax$ colimit given by $\overline{q}\colon \bII^{\colimcone}_{\epsilonlax} \to \bCC$. Let us set $c=q(\ast)$. Then the adjunction $\glue^{\epsilon} \dashv \res^{\epsilon}$ becomes an equivalence upon passage to fibres
		\[
			\glue_{q}^{\epsilon}\colon \lim^{\eoplax}_{\bII^{\epsilon\op}}\Fib^{\epsilon}_{/q(-)}   \llra  \Fib^{\epsilon}_{/c} \colon \res^{\epsilon}_q .
		\]
	\end{proposition}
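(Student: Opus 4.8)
The plan is to show that the global adjunction $\glue^\epsilon\dashv\res^\epsilon$ produced in \cref{lem:glue} descends to an adjoint equivalence on the fibres over $q$ and $\overline q$. First I would recall from the proof of \cref{lem:glue} that $\glue^\epsilon$ is the restriction of the left Kan extension functor $\iota_!\colon\FUN(\bII,\bCC^{[1]})\to\FUN(\bII^{\colimcone}_{\epsilonlax},\bCC^{[1]})$ along $\iota\colon\bII\hookrightarrow\bII^{\colimcone}_{\epsilonlax}$. Since the top square in \cref{const:adjointablesq} is horizontally left adjointable and the induced left adjoint on the base $\FUN(\bII,\bCC)\to\FUN(\bII^{\colimcone}_{\epsilonlax},\bCC)$ is again $\iota_!$, which by \cref{thm:laxcones} carries $q$ to its $\Eepsilonlax$ colimit cone $\overline q\simeq\iota_!(q)$, the functor $\glue^\epsilon$ sends the fibre over $q$ into the fibre over $\overline q$. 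This is precisely what yields the fibrewise adjunction $\glue_q^\epsilon\dashv\res_q^\epsilon$, whose unit and counit are the restrictions of the global unit and counit.

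Next I would dispatch the unit. By \cref{thm:laxcones} the functor $\iota_!$ is fully faithful, hence so is its restriction $\glue^\epsilon$ to the relevant full sub-2-categories; equivalently, the global unit $\id\to\res^\epsilon\glue^\epsilon$ is invertible. Restricting this equivalence to the fibre over $q$ shows that $\glue_q^\epsilon$ is fully faithful, so that the unit of $\glue_q^\epsilon\dashv\res_q^\epsilon$ is an equivalence.

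The heart of the argument is the counit, and this is where fibrational descent is used. An object of the fibre $\Cart_{E^{\colimcone}_{\epsilonlax}}(\bII^{\colimcone}_{\epsilonlax},\Fib_{\bCC}^{\epsilon})_{\overline q}\simeq\Fib^\epsilon_{/c}$ is, by \cref{const:adjointablesq}, exactly an $(E^{\colimcone}_{\epsilonlax},\epsilon)$-cartesian transformation $\overline p\Rightarrow\overline q$ whose target $\overline q$ is an $\Eepsilonlax$ colimit cone. This is precisely the situation governed by condition (2) of axiom F1) in \cref{def:fibdescent}, so the implication (2)$\Rightarrow$(1) delivers that $\overline p$ is itself an $\Eepsilonlax$ colimit cone. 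By the description of the essential image of $\iota_!$ in \cref{thm:laxcones}, the cone $\overline p$ therefore lies in the essential image of $\glue^\epsilon$, and the comparison map $\glue^\epsilon\res^\epsilon(\overline p)\to\overline p$, i.e.\ the counit, is an equivalence. Restricting to the fibre over $\overline q$, the counit of $\glue_q^\epsilon\dashv\res_q^\epsilon$ is an equivalence.

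With both unit and counit invertible, $\glue_q^\epsilon\dashv\res_q^\epsilon$ is an adjoint equivalence, which is the assertion. I expect the main obstacle to be bookkeeping rather than conceptual: one must carefully verify that the global adjunction genuinely restricts to the indicated fibres — that $\glue^\epsilon$ covers $\iota_!$ on the base, so that the fibre over $q$ maps to the fibre over $\overline q$, and that the global unit and counit restrict to the fibrewise ones — which is exactly the role played by the horizontal left adjointability established in \cref{lem:glue}. It is worth emphasising that fibrational descent enters here only through the single implication (2)$\Rightarrow$(1) of F1) applied to the colimit cone $\overline q$; the converse implication in F1) and axiom F2) have already been consumed in \cref{lem:glue} in order to produce the adjunction in the first place.
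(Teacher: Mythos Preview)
Your proof is correct and follows essentially the same route as the paper's: fully faithfulness of $\glue_q^\epsilon$ from fully faithfulness of $\iota_!$, and invertibility of the counit from the implication (2)$\Rightarrow$(1) of F1). The only cosmetic difference is that you invoke \cref{thm:laxcones} to identify the essential image of $\iota_!$ with colimit cones, whereas the paper unpacks the counit directly as a map of colimit cones induced by a point-wise equivalence; these are the same argument.
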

	\begin{proof}
		Since the restriction functor $\bII \to \bII^{\colimcone}_{\epsilonlax}$ is fully faithful, it follows from \cref{cor:existenceLKE} that $\glue_q^{\epsilon}$ is fully faithful. To finish the proof, we need to show that the counit $\glue_{q}^{\epsilon}\res_q^{\epsilon} \to \id$ is an equivalence. Given any $p\colon x \to c$ in $\Fib^{\epsilon}_{/c} \simeq \Cart_{E^{\colimcone}_{\epsilonlax}}(\bII^{\colimcone}_{\epsilonlax},\Fib_{\bCC}^{\epsilon})_{\overline{q}}$, F1) implies that $p$ is automatically a colimit cone. Unpacking the definitions, we see that the map 
		\[
			\glue_{q}^{\epsilon}\res_q^{\epsilon}(p \colon x \to c) \to (p \colon x \to c)
		\]
		is a morphism of colimit cones induced by a natural transformation between the corresponding diagrams which is a point-wise equivalence. The claim now follows.
	\end{proof}

	\begin{definition}\label{def:2topoimorph}
		We say that a presentable 2-category $\bCC$ is a \emph{2-topos } if it satifies fibrational descent. A morphism of 2-topoi $L \colon \bCC \to \bDD$ is defined to be a left adjoint satisfying the following properties:
		\begin{enumerate}
			\item The functor $L$ preserves oriented pullbacks (cf.\ \cref{subsec:oriented}).
			\item The functor $L$ preserves the terminal object. 
		\end{enumerate}
		We denote the corresponding locally-full sub-2-category of $\CCAT$ by $\LTTop$.
	\end{definition}

    \begin{remark}\label{rem:dubucjoyal}
    	Our definition of morphism of 2-topoi is perhaps surprising since we are not requiring $L$ to preserve all finite partially (op)lax limits, as was expected to be the case \cite[pg. 269]{sigmalim}. This choice will be justified by our 2-categorical version of Giraud's theorem (see \cref{thm:giraud}) together with \cref{ex:idempotentCompleteCategories} and \cref{rem:pullbackidem}, where we give a fundamental example of a morphism of 2-topoi which does not preserve strong pullbacks.
    \end{remark}
    
\subsection{Constructions of 2-topoi}\label{subsec:const}
In this section, we provide certain stability results for the class of 2-topoi that can be used to construct a wealth of examples. We begin by establishing that what out to be the archetypical example of a 2-topos, the 2-category $\Cat$ of 1-categories, is indeed a 2-topos:

\begin{theorem}\label{thm:cat2topos}
	The 2-category $\Cat$ of 1-categories is a 2-topos.
\end{theorem}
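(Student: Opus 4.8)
The plan is to verify the two defining properties of a 2-topos in turn: presentability, and fibrational descent for both $\epsilon=0$ and $\epsilon=1$. Presentability is essentially immediate. By \cref{rem:cat2complete} the 2-category $\Cat$ admits all partially (op)lax (co)limits and is in particular cocomplete, while its underlying 1-category $\Cat^{\leq 1}$ is the ordinary 1-category of small categories, which is presentable and hence accessible. Thus $\Cat$ is presentable in the sense of \cref{def:presentability}.

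For fibrational descent I would first reduce to the case $\epsilon=0$. The opposite-category 2-functor $(-)^{\op}\colon\Cat\to\Cat$ is an equivalence onto $\Cat^{\co}$: on objects it is the involution $\CC\mapsto\CC^{\op}$, and on each mapping category it induces $\Fun(\CC,\DD)\xrightarrow{\simeq}\Fun(\CC^{\op},\DD^{\op})^{\op}=\Cat^{\co}(\CC^{\op},\DD^{\op})$, which is exactly the reversal of 2-cells encoded by $(-)^{\co}$ in \cref{def:coandop}. Hence $\Cat^{\co}\simeq\Cat$, and since by definition $1$-fibrational descent for $\Cat$ means $0$-fibrational descent for $\Cat^{\co}$, it suffices to establish $0$-fibrational descent for $\Cat$ itself.

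To check axioms F1 and F2 for $\epsilon=0$, the idea is to translate everything through the straightening equivalence of \cref{thm:str}, using that in $\Cat$ a $0$-fibration is precisely a cocartesian fibration (\cref{ex:fibrationsCat}) and that $E$-lax colimits are computed by the explicit Grothendieck-construction-and-localize recipe of \cref{rem:cat2complete}. Given an $(E,0)$-cartesian transformation $\alpha\colon p\Rightarrow q$ of diagrams $\bII\to\Cat$, straightening produces the total-space fibrations $\int_\bII p\to\bII$ and $\int_\bII q\to\bII$, and the induced map $\int_\bII p\to\int_\bII q$ over $\bII$ is again a cocartesian fibration by \cref{prop:carttransun}. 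This relative Grothendieck construction — the total space of a cartesian family of cocartesian fibrations is fibered over the total space of the base diagram — is the structural heart of the argument. Forming the $E$-lax colimits $c=\colim^{\elax}_\bII q$ and $x=\colim^{\elax}_\bII p$ amounts to localizing $\int_\bII q$ and $\int_\bII p$ at the prescribed $0$-cartesian $1$-morphisms over $E$ and at the cartesian $2$-morphisms, and \cref{prop:locfinal} identifies these localizations with the colimit cones. Tracking how the cocartesian-fibration structure and the cartesianness of $\alpha$ descend along these localizations then yields F1 (that $\overline p$ is a colimit cone if and only if $\overline\alpha$ is $(E^{\colimcone}_{\lax},0)$-cartesian) and F2 (that the morphism-of-fibrations condition is both preserved and reflected). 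Conceptually this is the statement that the functor $\Fib^0_{/-}\simeq\Fun(-,\Cat)$ carries $E$-lax colimits to $E$-oplax limits.

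The hard part will be precisely the localization step: one must show that passing to the $E$-lax colimit neither creates nor destroys the cocartesian-fibration structure on $x\to c$, and neither creates nor destroys the cartesianness of the comparison transformation and its morphism-of-fibrations condition. To keep this tractable I would first invoke \cref{prop:minimalshapescolim} to reduce the verification of F1 and F2 to two elementary shapes — strong colimits indexed by a 1-category, and the lax colimit indexed by $[1]$ — for which the Grothendieck construction and the relevant localizations admit completely explicit descriptions and the compatibility with the fibration structure can be checked directly. The $[1]$-indexed lax colimit, which models the mapping cylinder / lax gluing of a single functor, is where the genuine $2$-categorical phenomenon lives, and I expect it to be the crux of the whole computation.
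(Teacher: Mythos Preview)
Your setup is essentially the paper's: presentability is immediate, the reduction of $1$-fibrational descent to $0$-fibrational descent via $(-)^{\op}\colon\Cat\simeq\Cat^{\co}$ is exactly what the paper means by ``formally dual'', and the structural core --- straighten $\alpha\colon p\Rightarrow q$ to a map of $(0,1)$-fibrations $\Un_{\bII}(p)\to\Un_{\bII}(q)$, invoke \cref{prop:carttransun} to see this map is itself a $(0,1)$-fibration, then pass to the localizations computing the $E$-lax colimits --- is precisely the paper's argument. You have also correctly identified the crux: one must show the functor $F\colon\Un_{\bII}(q)\to\CCat$ straightening $\pi_\alpha$ descends through the localization $\Un_{\bII}(q)\to L_E(\Un_{\bII}(q))$, which is where the $(E,0)$-cartesianness of $\alpha$ is used.

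Where you diverge from the paper is the proposed reduction via \cref{prop:minimalshapescolim}, and this step is not justified as stated. That proposition says a \emph{functor} preserves partially (op)lax colimits as soon as it preserves strong $1$-categorical colimits and (op)lax $[1]$-colimits; but axioms F1 and F2 are not of that form. They are assertions about colimit \emph{cones} and $(E,\epsilon)$-cartesian transformations between them, and it is not clear how to bootstrap these from special shapes: decomposing $(\bII,E)$ does not obviously decompose $\bII^{\colimcone}_{\lax}$ or the cartesianness conditions on the extended transformation $\overline\alpha$. Your remark that ``conceptually this is the statement that $\Fib^0_{/-}$ carries $E$-lax colimits to $E$-oplax limits'' captures only part of fibrational descent --- as the introduction around equation~($\ast$) explains, one also needs that the inverse of the comparison equivalence is given by taking $E$-lax colimits, and this extra datum is what F1 packages and what does not reduce via \cref{prop:minimalshapescolim}.

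The paper simply does not perform this reduction. It carries out the localization analysis you describe --- verifying that $F$ inverts the relevant $2$-morphisms and marked $0$-cartesian edges --- directly for an arbitrary $(\bII,E)$, and this turns out to be no harder than the special cases. So drop the reduction step: the argument you outlined for ``tracking how the cocartesian-fibration structure and the cartesianness of $\alpha$ descend along these localizations'' works uniformly in $(\bII,E)$, and that is the whole proof.
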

\begin{proof}
	It is clear that $\Cat$ is 2-presentable, so we only need to show that $\Cat$ satisfies $\epsilon$-fibrational descent for $\epsilon \in \{0,1\}$.
	To begin with, recall that $0$-fibrations in $\Cat$ are precisely cocartesian fibrations and $1$-fibrations correspond to cartesian fibrations. We will only verify the case of $0$-fibrational descent since the remaining case is formally dual.

	Let $\alpha \colon p \to q$ be an $E$-cartesian transformation between functors $p,q \colon \bII \to \Cat$ (where $(\bII,E)$ is a small marked 2-category) which we view as a map of $(0,1)$-fibrations $\pi_{\alpha}\colon \Un_{\bII}(p) \to \Un_{\bII}(q)$ over $\bII$. It follows from \cref{prop:carttransun} that $\pi_{\alpha}$ is a $(0,1)$-fibration with corresponding functor $F \colon \Un_{\bII}(q) \to \Cat_2$. Let us remind the reader that an edge is 0-cartesian for $\pi_{\alpha}$ if it can be written as a composition $v \circ u$, where $u$ is cocartesian with respect to the projection to $\bII$ and $v$ lies over the identity on some $i\in\bII$ and is cocartesian with respect to the fibration $p(i) \to q(i)$.
	
	We now establish some properties of $F$:
	\begin{enumerate}
		\item Let $(i,x \in q(i)) \in  \Un_{\bII}(q)$. Then it follows that $F(i)= p(i) \times_{q(i)} \{x\}$, which is a 1-category by assumption.
		\item Let $\nu \colon f \to g$ be a 2-morphism in $\Un_{\bII}(q)$ between $f,g\colon a\rightrightarrows b$. We claim that $F(\nu)$ is invertible. To see this, let us set $a=(i,x \in q(i))$ and $b=(j,y \in q(j))$ and let $f=(u \colon i \to j,f_u \colon q(u)(x) \to y)$ and similarly $g=(v \colon i \to j,f_v \colon q(v)(x) \to y)$. We identify $\nu$ with the data of a 2-morphism $\overline{\nu} \colon u \to v$ and a commutative diagram 
		\[\begin{tikzcd}
			{q(u)(x)} & {y} \\
			{q(v)(x)}.
			\arrow["{f_u}", from=1-1, to=1-2]
			\arrow["{q(\overline{\nu})}"', from=1-1, to=2-1]
			\arrow["{f_v}"', from=2-1, to=1-2]
		\end{tikzcd}\]
		
		To prove the claim, it suffices to show that the components of the natural transformation $F(\nu) \colon F(f) \to F(g)$ between the functors $F(f),F(g) \colon F(i)=p(i)\times_{q(i)} \{x\} \to p(j) \times_{q(j)} \{y\}=F(j)$ are invertible. For $\ell \in F(i)$ it follows that we have a commutative diagram in $p(j)$ of the form
		\[
		\begin{tikzcd}
			p(u)(l) \arrow[d,swap,"p(\overline{\nu})"] \arrow[r] & \hat{\ell}_u \arrow[d] \\
			p(v)(l) \arrow[r] & \hat{\ell}_v
		\end{tikzcd}
		\]
		where the horizontal morphisms are cartesian morphisms for the map $p(j) \to q(j)$ and lie over $f_u$ and $f_v$ respectively. Since $\alpha$ is $E$-cartesian it follows that $p(\overline{\nu})$ must be a natural transformation which is point-wise given by cocartesian edges (cf.\ \cref{rem:cart2cells}). We conclude that $\hat{\ell}_u \to \hat{\ell}_v$ is a cartesian morphism over the identity morphism on $y$ and is therefore an equivalence.

		\item Let $\varphi$ be a 1-morphism in $ \Un_{\bII}(q)$ which lies over an edge in $E$ and is $0$-cartesian with respect to the projection $ \Un_{\bII}(q) \to \bII$. We claim that $F(\varphi)$ is invertible in $\Cat$. To see this, let $\varphi:(i,a_i) \to (j,b_j)$ be given by $\varphi=(u \colon i \to j, q(u)(a_i) \simeq b_j)$. We observe that we have a pullback diagram
		\[
		\begin{tikzcd}
			p(i) \arrow[r,"p(u)"] \arrow[d] & p(j) \arrow[d] \\
			q(i) \arrow[r,"q(u)"] & q(j),
		\end{tikzcd}
		\]
		and we furthermore observe that $F(\varphi)$ is identified with the induced map on fibres
		\[
		p(i)\times_{q(i)}\{a_i\} \xrightarrow{\simeq} p(j)\times_{q(j)}\{p(u)(a_i)\}
		\]
		which is an equivalence since the diagram is pullback.
	\end{enumerate}
	The claims (1)-(3) guarantee that we have a factorization
	\begin{equation*}
		\begin{tikzcd}
			\Un_{\bII}(q)\arrow[r, "\lambda"] \arrow[rr, bend left, "F"] & L_{E}(\Un_{\bII}(q))\arrow[r, "\hat F"] & \Cat
		\end{tikzcd}
	\end{equation*}
	where $\lambda$ denotes the localization of $\Un_{\bII}(q)$ at every 2-morphism and at the $0$-cartesian morphisms lying over $E$. Let $\rho_\alpha\colon X\to L_E(\Un_{\bII}(q))$ be the unstraightening of $\hat F$. We now have a pullback diagram
	\[
	\begin{tikzcd}
		\Un_{\bII}(p) \arrow[d, "\pi_{\alpha}"] \arrow[r] &  X \arrow[d,"\rho_{\alpha}"] \\
		\Un_{\bII}(q) \arrow[r, "\gamma"] & L_{E}(\Un_{\bII}(q))
	\end{tikzcd}
	\]
	where the vertical maps are $(0,1)$-fibrations. Following \cref{rem:cat2complete}, we see that $L_{E}(\Un_{\bII}(q))$ is the $E$-lax colimit of $q$. We further observe that $X$ is a 1-category (since $L_{E}(\Un_{\bII}(q)$ is a 1-category) and that by construction we have a factorization
	\[
		\Un_{\bII}(p) \to L_{E}(\Un_{\bII}(p)) \xrightarrow{\varphi} X.
	\]
	We are now ready to prove the main statement:
	\begin{itemize}
		\item Note that $X$ is the fully lax colimit of $\hat F\colon \L_E(\Un_{\bII}(q))\to\Cat$.
		Invoking the fact that $\gamma$ is a localization and therefore a final map of 2-categories (\cref{prop:locfinal}) and that $L_{E}(\Un_{\bII}(p))$ is the $\overline{E}$-$\lax$ colimit of $F \colon \Un_{\bII}(q) \to \Cat$, where $\overline{E}$ is the collection of $0$-cartesian edges lying over $E$, we conclude that $\varphi$ must be an equivalence. To see that we have produced an $E^{\colimcone}$-cartesian transformation we note that the outer square in the diagram
		\[
		\begin{tikzcd}
			p(i) \arrow[r] \arrow[d] & \Un_{\bII}(p) \arrow[d] \arrow[r] &   L_{E}(\Un_{\bII}(p))\simeq X \arrow[d] \\
			q(i) \arrow[r] & \Un_{\bII}(q) \arrow[r] & L_{E}(\Un_{\bII}(q))
		\end{tikzcd}
		\]
		is a pullback square since both inner squares are.

		\item Given a commutative diagram
		\[
		\begin{tikzcd}
			\Un_{\bII}(p) \arrow[d] \arrow[r] &  L \arrow[d,"\rho"] \\
			\Un_{\bII}(q) \arrow[r] & L_{E}(\Un_{\bII}(q))
		\end{tikzcd}
		\]
		such that $\rho$ is a $(0,1)$-fibration and such that for every $i \in \bII$ the outer square
		\[
		\begin{tikzcd}
			p(i) \arrow[r] \arrow[d] & \Un_{\bII}(p) \arrow[d] \arrow[r] &  L \arrow[d] \\
			q(i) \arrow[r] & \Un_{\bII}(q) \arrow[r] & L_{E}(\Un_{\bII}(q))
		\end{tikzcd}
		\]
		is a pullback, we claim that $L$ must be the $E$-lax colimit of $p$. Using a finality argument as before, this amounts to showing that the right square is a pullback square. This follows easily from our assumptions since the vertical morphisms are fibrations which allows us to use the fibre-wise criterion given in \cite[Proposition 3.2.11]{AGH24}.
		\item Given a map $\Un_{\bII}(p) \to \Un_{\bII}(\ell) \to \Un_{\bII}(q)$ associated to a map of $E$-cartesian fibrations as in F2) in \cref{def:fibdescent}, we wish to show that the the corresponding diagram
		\[
		\begin{tikzcd}
			L_{E}(\Un_{\bII}(p)) \arrow[rr] \arrow[dr] & & L_{E}(\Un_{\bII}(\ell)) \arrow[dl] \\
			& L_{E}(\Un_{\bII}(q)) &
		\end{tikzcd}
		\]
		is a map of fibrations. To see this, we note that we have a map of fibrations over $\Un_{\bII}(q)$ before localizing with amounts to a natural transformation $F \to G$ between functors $\Un_{\bII}(q) \to \Cat$. It is clear that this natural transformation descends to the localization which proves our claim.
	\end{itemize}
	Consequently, $\Cat$ satisfies the conditions of \cref{def:fibdescent} and is therefore a 2-topos.
\end{proof}

\begin{proposition}\label{prop:fun2topos}
	Let $\bCC$ be a 2-topos and let $\bSS$ be a small 2-category. Then $\FUN(\bSS,\bCC)$ is again a 2-topos.
\end{proposition}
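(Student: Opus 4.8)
The plan is to verify the two defining properties of a 2-topos separately. Presentability of $\FUN(\bSS,\bCC)$ is immediate from \cref{prop:functorCatPresentable}, so the entire content of the statement is that $\FUN(\bSS,\bCC)$ satisfies conditions F1) and F2) of \cref{def:fibdescent} for each $\epsilon\in\{0,1\}$. The guiding principle is that every notion entering these axioms is detected and preserved by the jointly conservative family of evaluation functors $\ev_s\colon\FUN(\bSS,\bCC)\to\bCC$ indexed by the objects $s\in\bSS$. I would therefore reduce each axiom for $\FUN(\bSS,\bCC)$ to the corresponding axiom for $\bCC$, applied after evaluating at every $s$.

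First I would assemble the relevant pointwise principles. Since $\bCC$ is cocomplete, \cref{prop:colimfunctorcat} shows that $\FUN(\bSS,\bCC)$ admits all partially (op)lax colimits and that these, together with the strong limits needed for pullbacks, are computed objectwise in $\bSS$; hence each $\ev_s$ preserves them and the family jointly detects colimit cones (via the characterization of colimit cones as left Kan extensions in \cref{thm:laxcones} together with the pointwise nature of equivalences in $\FUN(\bSS,\bCC)$). Next, \cref{ex:fibrationsFunctorCategories} shows that a morphism, resp.\ a commutative square, in $\FUN(\bSS,\bCC)$ is an $\epsilon$-fibration, resp.\ a morphism of $\epsilon$-fibrations, precisely when each $\ev_s$ carries it to one in $\bCC$. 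As $\Fib^\epsilon_{\bCC}$ is locally full in $\AR_{\bCC}$ and $\AR_{\FUN(\bSS,\bCC)}\simeq\FUN(\bSS,\AR_{\bCC})$ by the exponential law, this upgrades to an identification $\Fib^\epsilon_{\FUN(\bSS,\bCC)}\simeq\FUN(\bSS,\Fib^\epsilon_{\bCC})$ under which the $1$-cartesian edges and $\epsilon$-cartesian $2$-cells of the $(1,\epsilon)$-fibration $\ev_1\colon\Fib^\epsilon_{\bCC}\to\bCC$ (see \cref{prop:fibrations2functorial}) are again detected pointwise in $\bSS$. Combining these observations, a strong natural transformation in $\FUN(\bSS,\bCC)$ is $(E,\epsilon)$-cartesian, resp.\ $(E^{\colimcone}_{\epsilonlax},\epsilon)$-cartesian, resp.\ pointwise a map of $\epsilon$-fibrations, if and only if its image under every $\ev_s$ is.

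With these principles in place, both axioms follow formally. For F1), given $\overline\alpha\colon\overline p\Rightarrow\overline q$ satisfying the hypotheses, I would apply each $\ev_s$ to obtain $\ev_s\overline\alpha\colon\ev_s\overline p\Rightarrow\ev_s\overline q$ satisfying the hypotheses of F1) for $\bCC$ (the restriction $\ev_s\alpha$ is $(E,\epsilon)$-cartesian and $\ev_s\overline q$ is an $\Eepsilonlax$ colimit cone). The equivalence of (1) and (2) then holds in $\bCC$ for every $s$, and conjoining over all $s$ --- legitimate because both colimit cones and $(E^{\colimcone}_{\epsilonlax},\epsilon)$-cartesianness are detected by the jointly conservative family $\{\ev_s\}$ --- yields the equivalence in $\FUN(\bSS,\bCC)$. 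The argument for F2) is identical: the hypotheses transport to each $\ev_s$, the biconditional of F2) holds in $\bCC$ for every $s$, and conjoining over $s$ gives the statement for $\FUN(\bSS,\bCC)$.

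The only genuine work lies in the second paragraph, namely in nailing down the pointwise detection principles --- in particular the identification $\Fib^\epsilon_{\FUN(\bSS,\bCC)}\simeq\FUN(\bSS,\Fib^\epsilon_{\bCC})$ and the resulting pointwise characterization of $\epsilon$-cartesian $2$-cells and of $\Eepsilonlax$ colimit cones. Once these are established, the deduction of F1) and F2) is a purely formal manipulation of the (co)continuous, jointly conservative family of evaluation functors, with no further obstacle.
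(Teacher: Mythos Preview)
Your reduction strategy has a genuine gap at the step where you assert that ``a morphism in $\FUN(\bSS,\bCC)$ is an $\epsilon$-fibration precisely when each $\ev_s$ carries it to one in $\bCC$.'' This is a misreading of \cref{ex:fibrationsFunctorCategories}: what that example says is that a map $X\to Y$ in $\FUN(\bSS,\bCC)$ is an $\epsilon$-fibration if and only if the associated functor $\bSS\to\AR_{\bCC}$ factors through the \emph{locally full}, not full, sub-2-category $\Fib^\epsilon_{\bCC}$. Concretely this means that in addition to each $X_s\to Y_s$ being an $\epsilon$-fibration, every arrow $s\to t$ in $\bSS$ must induce a \emph{morphism of $\epsilon$-fibrations}. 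The second condition is invisible to the family $\{\ev_s\}$, so $(E^{\colimcone}_{\epsilonlax},\epsilon)$-cartesianness is \emph{not} detected objectwise and your conjoining argument for F1) does not go through.

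The place where this bites is the implication $(1)\Rightarrow(2)$ in F1). Knowing that $\overline p$ is a colimit cone and applying F1) in $\bCC$ at each $s$ gives you that $p(\ast)_s\to q(\ast)_s$ is an $\epsilon$-fibration, but it does \emph{not} give you that the naturality square associated to an arrow $s\to t$ is a morphism of $\epsilon$-fibrations. In the paper's proof this is precisely the extra work: one rewrites the condition as asking that the map $p(\ast)_s\to p(\ast)_t\times_{q(\ast)_t}q(\ast)_s$ be a map of fibrations over $q(\ast)_s$, and then verifies this using axiom F2) in $\bCC$ after pulling back along each $q(i)_s\to q(\ast)_s$. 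So the proof of F1) for the functor category genuinely consumes F2) for $\bCC$; your purely formal reduction omits this step.
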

\begin{proof}
	First, we observe that it follows from \cref{prop:functorCatPresentable} that $\bXX=\Fun(\bSS,\bCC)$ is again 2-presentable. Before we proceed to show that $\bXX$ satisfies $\epsilon$-fibrational descent,we recall (cf.\ \cref{ex:fibrationsFunctorCategories}) that a map $X\to Y$ in $\bXX$ is an $\epsilon$-fibration if it is point-wise an $\epsilon$-fibration in $\bCC$ and for every $s \to t$ the associated diagram
	\[
	\begin{tikzcd}
		X_s \arrow[r] \arrow[d] & X_t \arrow[d] \\
		Y_s \arrow[r] & Y_t
	\end{tikzcd}
	\]  
	defines a morphism in $\Fib_{\bCC}^{\epsilon}$. Moreover, a map $X \to Y$ over some $S\in\bXX$ defines a map of $\epsilon$-fibrations over $S$ if and only if its point-wise a map of fibrations.
	
	Let $(\bII,E)$ be a marked 2-category and consider a diagram $q \colon \bII \to \bXX$ with associated $\Eepsilonlax$ colimit cone $\overline{q} \colon\bII^{\colimcone}_{\epsilonlax} \to \bXX$. Suppose that we are given an $(E,\epsilon)$-cartesian transformation $\alpha \colon p \to q$ and an extension $\overline{\alpha}\colon \overline{p}\to \overline{q}$. 
	If $\overline{\alpha}$ is $(E^{\colimcone}_{\epsilonlax},\epsilon)$-cartesian, then it must also be so point-wise. Since colimits in presheaf 2-categories are computed point-wise and since $\bCC$ is by assumption a 2-topos, we conclude that $\overline{p}$ is an $\Eepsilonlax$ colimit cone. 

	Conversely, let us suppose that $\overline{p}$ is an $\Eepsilonlax$ colimit cone and let us show that $\overline{\alpha}$ is $(E^{\colimcone}_{\epsilonlax},\epsilon)$-cartesian. Again, as (co)limits are computing point-wise, it follows that $\overline{\alpha}$ is point-wise given by a morphism of $E$-$\epsilon$lax colimit cones and therefore point-wise $(E^{\colimcone}_{\epsilonlax},\epsilon)$-cartesian. Thus, we see that for every map $i \to *$ in $\bII^{\colimcone}_{\epsilonlax}$ we have a pullback square in $\bXX$ of the form
	\[
	\begin{tikzcd}
		p(i) \arrow[r] \arrow[d]  & p(\ast) \arrow[d] \\
		q(i) \arrow[r] & q(\ast).
	\end{tikzcd}
	\]
	Similarly, since $\epsilon$-cartesian 2-morphisms can be detected point-wise (cf.\ \cref{rem:cart2cells}), it follows that $\overline{p}$ sends each 2-morphism in $\bII^{\colimcone}_{\epsilonlax}$ to an $\epsilon$-cartesian 2-morphism in $\Fib_{\bXX}^{\epsilon}$. Thus, to conclude that $\overline{\alpha}$ is $(E^{\colimcone}_{\epsilonlax},\epsilon)$-cartesian, we only need to show that $p(\ast)\to q(\ast)$ is an $\epsilon$-fibration, which in turn amounts to showing that given $s \to t$ in $\bSS$ the commutative square
	\[
	\begin{tikzcd}
		p(\ast)_s \arrow[r] \arrow[d]  & p(\ast)_t \arrow[d] \\
		q(\ast)_s \arrow[r] & q(\ast)_t
	\end{tikzcd}
	\]
	defines a morphism in $\Fib_{\bCC}^{\epsilon}$. 
	Note that it suffices to show that the induced map $p(\ast)_s \to p(\ast)_t \times_{q(\ast)_t}q(\ast)_s$ is a map of fibrations over $q(\ast)_s$. Using axiom F2) in \cref{def:fibdescent}, this follows once we show that this map becomes a morphism of fibrations once pulled back along $q(i)_{s} \to q(\ast)_s$ for each $i \in \bII$. Using naturality, we can identify the latter with the map $p(i)_s \to p(i)_t \times_{q(i)_t}q(i)_s$, which is indeed a map of fibrations.
	
	Finally, to show that F2) holds, we note that this condition requires a certain 2-morphism in $\bXX$ to be invertible, which can again be checked point-wise. We conclude that F2) holds since $\bCC$ is by assumption a 2-topos. 
\end{proof}

\begin{proposition}\label{prop:inducing2toposstructure}
	Let $\bCC$ be a 2-topos and consider an adjunction 
	\[
	L\colon \bCC \llra \bDD \colon R
	\] 
	where $R$ is fully faithful and preserves $\kappa$-filtered colimits for some regular cardinal $\kappa$. Suppose that $L$ preserves oriented pullbacks. then it follows that $\bDD$ is a 2-topos and that $L$ is a morphism of 2-topoi.
\end{proposition}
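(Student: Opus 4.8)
The first assertion is immediate: since $R$ is a fully faithful right adjoint that preserves $\kappa$-filtered colimits, $\bDD$ is a reflective sub-2-category of $\bCC$ satisfying the hypotheses of \cref{prop:localisationPresentable}, hence presentable. It remains to verify fibrational descent for $\bDD$ and that $L$ is a morphism of 2-topoi. The plan is to first assemble a toolkit describing how $\epsilon$-fibrations interact with the adjunction. Using the representable criterion of \cref{ex:fibrationsFunctorCategories} together with the natural equivalences $\bDD(w,x)\simeq\bCC(Rw,Rx)$ (from $R$ being fully faithful) and $\bCC(z,Rx)\simeq\bDD(Lz,x)$ (from the adjunction), one checks that a map $p$ in $\bDD$ is an $\epsilon$-fibration if and only if $Rp$ is one in $\bCC$, and similarly for morphisms of $\epsilon$-fibrations and $\epsilon$-cartesian 2-morphisms; consequently $R$ preserves and detects $(E,\epsilon)$-cartesian transformations (it also preserves strong pullbacks, being a right adjoint). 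On the other side, since $L$ preserves oriented pullbacks it carries $\Free^{\epsilon}_c(x)$ to $\Free^{\epsilon}_{Lc}(Lx)$, sending units to units; as $L$ moreover preserves adjunctions, \cref{rem:minimalConditionsFibration} shows that $L$ preserves $\epsilon$-fibrations and morphisms thereof, and via \cref{rem:cart2cells} also $\epsilon$-cartesian 2-morphisms.

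The central technical point — and the main obstacle — is that $L$ preserves \emph{strong} pullbacks of $\epsilon$-fibrations, even though a priori it preserves only oriented pullbacks. I would prove this by combining two observations. First, $L$ preserves left inclusions: if $i$ is a fully faithful right adjoint then $L$ preserves the adjunction $r\dashv i$ and the equivalence of its counit, so $Li$ is again a fully faithful right adjoint. Second, by an argument in the spirit of \cref{lem:pullbackFibration}, the strong pullback $y=d\times_c x$ of an $\epsilon$-fibration $p\colon x\to c$ along $g\colon d\to c$ arises as the base change along $g$ of the left inclusion $x\into\Free^{\epsilon}_c(x)$, and therefore sits as a left inclusion $y\into d\orientedtimesrl_c x$ inside the oriented pullback (using \cref{cor:pullbackoffree} to identify $d\orientedtimesrl_c x$ with the strong pullback of $\Free^{\epsilon}_c(x)$ along $g$). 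Since $L$ preserves the oriented pullback and the left inclusion, and since the same description holds in $\bDD$ for the fibration $Lp$, one concludes $L(d\times_c x)\simeq Ld\times_{Lc}Lx$. It follows that $L$ preserves $(E^{\colimcone}_{\epsilonlax},\epsilon)$-cartesian transformations and, being a left adjoint, $\Eepsilonlax$ colimit cones.

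With this toolkit I would verify F1 and F2 of \cref{def:fibdescent} for $\bDD$ by transporting along the adjunction. Given the data of F1 in $\bDD$, apply $R$ to obtain an $(E,\epsilon)$-cartesian transformation $R\alpha\colon Rp\Rightarrow Rq$ in $\bCC$, and let $\tilde q$ be the $\bCC$-colimit cone of $Rq$, with cone point $\tilde c$; since $L$ preserves colimits and $LR\simeq\id$, one has $L\tilde q\simeq\bar q$, and the adjunction unit furnishes a comparison $\tilde q\to R\bar q$ which is an equivalence on $\bII$ (as each $Rq(i)$ is local). For the implication (2)$\Rightarrow$(1), pulling the $(E^{\colimcone}_{\epsilonlax},\epsilon)$-cartesian transformation $R\bar\alpha$ back along this comparison yields such a transformation into the genuine colimit cone $\tilde q$; F1 for $\bCC$ then identifies its source as a colimit cone, so that $\tilde c\times_{Rc}Rp(\ast)\simeq\colim^{\Eepsilonlax}_\bCC Rp$, and applying $L$ — using the Key Lemma to preserve the pullback and the triangle identity $L\eta\simeq\id$ — gives $p(\ast)\simeq\colim^{\Eepsilonlax}_\bDD p$, i.e.\ $\bar p$ is a colimit cone. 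For (1)$\Rightarrow$(2), I would compare $\bar\alpha$ with the transformation $L\,\glue^{\epsilon}(R\alpha)$ produced by the gluing functor of \cref{lem:glue} in $\bCC$: both are $(E^{\colimcone}_{\epsilonlax},\epsilon)$-cartesian transformations between $\Eepsilonlax$ colimit cones restricting to $\alpha$, hence agree by the fully faithfulness of left Kan extension onto colimit cones (\cref{thm:laxcones}), so $\bar\alpha$ is cartesian. Axiom F2 follows the same pattern: lift $\alpha,\beta$ along $R$, extend to colimit cones in $\bCC$ via \cref{thm:laxcones}, apply F2 for $\bCC$ to see that the cone-point component of $\tilde\alpha$ is a map of $\epsilon$-fibrations, and push forward along $L$ (which preserves maps of fibrations) to deduce the same for $\bar\alpha$.

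Finally, $L$ is a morphism of 2-topoi in the sense of \cref{def:2topoimorph}: it is a left adjoint preserving oriented pullbacks by hypothesis, and it preserves the terminal object because $R$ (a right adjoint) sends $\ast_\bDD$ to $\ast_\bCC$, whence $L(\ast_\bCC)\simeq LR(\ast_\bDD)\simeq\ast_\bDD$. I expect the crux of the proof to be the Key Lemma on preservation of pullbacks of fibrations, as it is precisely what allows the strong-pullback conditions built into $(E,\epsilon)$-cartesianness to survive application of $L$; everything else amounts to bookkeeping of the adjunction together with the descent properties already available in $\bCC$.
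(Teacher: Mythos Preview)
Your proposal is correct and follows essentially the same strategy as the paper's proof: transport fibrational descent along the adjunction, using that $L$ preserves oriented pullbacks to deduce it preserves strong pullbacks of $\epsilon$-fibrations, and hence $(E^{\colimcone}_{\epsilonlax},\epsilon)$-cartesian transformations. The only cosmetic difference is that where you phrase the Key Lemma via left inclusions into the oriented pullback, the paper states it as a one-line retract argument (an $\epsilon$-fibration is a retract of its free fibration, and $L$ preserves strong pullbacks of free fibrations by \cref{cor:pullbackoffree}); your more detailed toolkit on $R$ detecting fibrations is left implicit in the paper but is used in the same way.
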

\begin{proof}
	Note that $\bDD$ is presentable by \cref{prop:localisationPresentable}. It is also clear that $L$ preserves the terminal object (cf.\ \cref{def:2topoimorph}) since $R$ is fully faithful and right adjoints preserve terminal objects. Therefore, the only thing left to show is that $\bDD$ satisfies fibrational descent. Before going into the proof, let us remind the reader that our hypotheses guarantee that $L$ preserves free $\epsilon$-fibrations, and consequently fibrations and morphisms thereof.

	We will show that $\bDD$ satisfies $\epsilon$-fibrational descent for $\epsilon=0$ (the case $\epsilon=1$ is completely dual). Let $(\bII,E)$ be marked 2-category and consider a diagram $q \colon \bII \to \bDD$ with associated $E$-lax colimit cone $\overline{q} \colon \bII^{\colimcone}_{\lax} \to \bDD$. We consider an $(E,0)$-cartesian transformation $\alpha \colon p \to q$ and an extension to $\overline{\alpha} \colon \overline{p} \to \overline{q}$.
	
	First, let us suppose that $\overline{p}$ is a colimit cone, and let us show that $\alpha$ is $(E^{\colimcone}_{\lax},0)$-cartesian. Note that if $\overline{R\alpha}$ is the map induced from $R\alpha$ by passing to $E$-lax colimit cones, we may recover $\overline{\alpha}$ as the image of $\overline{R\alpha}$ along the localization functor $L$. Moreover, since $\bCC$ is a 2-topos, the latter is $(E^{\colimcone}_{\lax},0)$-cartesian, so that the claim follows once we verify that $L$ preserves $(E^{\colimcone}_{\lax},0)$-cartesian transformations. But since $L$ by assumption preserves oriented pullbacks, \cref{cor:pullbackoffree} implies that $L$ also preserves strong pullbacks of free fibrations and thus also (by a straightforward retract argument) strong pullbacks of all 0-fibrations, from which the claim follows immediately.
	
	Next, we show the converse. If $\overline{\alpha} \colon \overline{p} \to \overline{q}$ is an $(E^{\colimcone}_{\lax},0)$-cartesian transformation, then so is $R\overline{\alpha}$. Consider the pullback diagram
	\[
	\begin{tikzcd}
		P \arrow[d] \arrow[r] & R \overline{p} \arrow[d, "R\overline{\alpha}"] \\
		\overline{Rq} \arrow[r] & R \overline{q}, 
	\end{tikzcd}
	\]
	and observe that the horizontal maps become equivalence once restricted along $\bII\into \bII_{\elax}^\triangleright$. Moreover, since the vertical map on the right is $(E^{\colimcone}_{\lax},0)$-cartesian, so must be the one on the left. Hence $P$ is colimit cone, by fibrational descent in $\bCC$.  
	Note that the vertical maps in the above diagram are point-wise given by 0-fibrations. Thus, since we already observed above that $L$ preserves strong pullbacks of 0-fibrations, we find that the image of the above square along $L$ remains a pullback diagram.
	Since $L$ inverts the lower horizontal map, it must thus also invert the one on top, so that $\overline{p}$ is a colimit cone.
	
	To finish the proof, we consider $(E,0)$-cartesian transformations $p \to q$ and $\ell \to q$, together with a point-wise map of fibrations $p \to \ell$, and wish to show that this extends to a point-wise map of fibrations $\overline{p} \to \overline{\ell}$ on colimit cones.
	
	A similar argument as above shows that we can produce a commutative diagram
	\[\begin{tikzcd}
		& {R\overline{p}} && {R\overline{\ell}} \\
		{\overline{Rp}} && {\overline{R\ell}} \\
		& {R\overline{q}} && {R\overline{q}} \\
		{\overline{Rq}} && {\overline{Rq}}
		\arrow[from=1-2, to=1-4]
		\arrow[from=1-2, to=3-2]
		\arrow[from=1-4, to=3-4]
		\arrow[from=2-1, to=1-2]
		\arrow[from=2-1, to=2-3]
		\arrow[from=2-1, to=4-1]
		\arrow[from=2-3, to=1-4]
		\arrow[from=2-3, to=4-3]
		\arrow["{\operatorname{id}}"{description, pos=0.2}, from=3-2, to=3-4]
		\arrow[from=4-1, to=3-2]
		\arrow["{\operatorname{id}}"{description}, from=4-1, to=4-3]
		\arrow[from=4-3, to=3-4]
	\end{tikzcd}\]
	where the backwards-pointing morphisms are inverted by $L$. By construction, the map $\overline{Rp}\to \overline{R\ell}$ is a point-wise map of fibrations. The result now follows from the observation that $L$ preserves maps of fibrations.
\end{proof}

\begin{example}\label{ex:idempotentCompleteCategories}
	Let $\Catidem$ be the 2-category of idempotent complete 1-categories.  The canonical inclusion $R \colon \Catidem \to \Cat$ admits a left adjoint $L$ which sends a category to its idempotent completion \cite[Proposition 5.1.4.9]{LurieHTT}. We claim that $\Catidem$ is a 2-topos, and that the idempotent completion functor $L$ is a morphism of 2-topoi. In fact, as $R$ is accessible, the claim follows from combining \cref{thm:cat2topos} with \cref{prop:inducing2toposstructure} once we can verify that $L$ preserves oriented pullbacks.
	
	For a category $c \in \Cat$, let us set $L(c)=c^{\vee}$. We need to show that the canonical map
	\[
	\varphi \colon a \orientedtimesrl_{c}b  \to a^{\vee} \orientedtimesrl_{c^{\vee}}b^{\vee}
	\]
	exhibits $a^{\vee} \orientedtimesrl_{c^{\vee}}b^{\vee}$ as the idempotent completion of $a \orientedtimesrl_{c}b$. Before diving into the proof, we make some remarks:
	\begin{enumerate}
		\item Given an $\epsilon$-fibration $\pi \colon x \to c$ such that $c$ is idempotent complete, it follows from \cite[Corollary 4.3.1.11]{LurieHTT} that $x$ is idempotent complete if and only if the fibres of $\pi$ are idempotent complete.
		\item Let $H\colon \Lambda^2_2 \times [1] \to \Cat$ be a natural transformation which is point-wise fully faithful. Then the induced morphism between the associated oriented pullbacks $a_0 \orientedtimesrl_{c_0} b_0 \to a_1 \orientedtimesrl_{c_1}b_1$ is fully faithful as well. 
	\end{enumerate}
    We commence by showing that $a^{\vee} \orientedtimesrl_{c^{\vee}}b^{\vee}$ is idempotent complete. Note that due to \cref{cor:pullbackoffree} and our first observation it will be enough to prove that the fibres of $\Free^{\epsilon}_{c^{\vee}}(p^{\vee}) \to c^{\vee}$ are idempotent complete.  Let $x \colon [0] \to c^{\vee}$. Then it follows that we have a pullback square
	\[
	\begin{tikzcd}
		\Free_{c^\vee}^{(1-\epsilon)}(x)\times_{c^{\vee}}a^{\vee} \arrow[r] \arrow[d] & \Free^{\epsilon}_{c^{\vee}}(p^{\vee}) \arrow[d] \\
		{[0]} \arrow[r,"x"] & c^{\vee}
	\end{tikzcd}
	\]
	Finally, we observe that the fibres of the $(1-\epsilon)$-fibration $\Free_{c^\vee}^{(1-\epsilon)}(x)\times_{c^{\vee}}a^{\vee} \to a^{\vee}$ are spaces. We conclude after noting that spaces are always idempotent complete.
	
	To finish the proof we need to show that $\varphi$ is fully faithful and that every object in $a^{\vee} \orientedtimesrl_{c^{\vee}}b^{\vee}$ is a retract of an object of the form $\varphi(\tau)$. The claim regarding fully faithfulness follows easily from remark (2). Let $p^{\vee}(\alpha) \to  q^{\vee}(\beta)$ be an object in $a^{\vee} \orientedtimesrl_{c^{\vee}}b^{\vee}$ where $p\colon a \to c $ and $q  \colon b \to c$. We express $\alpha$ as a retraction of $j_a(u)$ and $\beta$ as a retraction of $j_b(v)$ where $j_a$ (resp.\ $j_b$) denote the corresponding Yoneda embeddings. Unpacking the definitions we obtain a retraction diagram
	\[
	\begin{tikzcd}
		p^{\vee}(\alpha) \arrow[d] \arrow[r] & j_c(p(u))\simeq p^{\vee}(j_a(u)) \arrow[d] \arrow[r] &  p^{\vee}(\alpha) \arrow[d] \\
		q^{\vee}(\beta) \arrow[r] & j_c(q(v)) \simeq q^{\vee}(j_b(v)) \arrow[r] & q^{\vee}(\beta)
	\end{tikzcd}
	\]
	To finish the proof we note that fully faithfulness of $j_c$ provides us with a map $p(u) \to q(v)$, so the result follows.
\end{example}

\begin{remark}\label{rem:pullbackidem}
		In the situation of \cref{ex:idempotentCompleteCategories}, we would like to point out that $L$ does not preserve strong pullbacks in general. Let $\operatorname{Ret}= [2] \coprod_{\{0,2\}} [0]$ and let $\tau \colon \operatorname{Idem} \to \operatorname{Ret}$ be the full subcategory on the object $1$, so that $\tau$ is the inclusion of the free-walking idempotent into the free-walking split idempotent. Then we have a pullback square
		\[
		\begin{tikzcd}
			{[0]} \arrow[r] \arrow[d] &  {[2]} \arrow[d] \\
			\operatorname{Idem} \arrow[r] &  \operatorname{Ret}
		\end{tikzcd}
		\]
		in $\Cat$ which the idempotent completion functor carries to the commutative diagram
		\[
		\begin{tikzcd}
			{[0]} \arrow[r] \arrow[d] &  {[2]} \arrow[d] \\
			\operatorname{Ret} \arrow[r,"\id"] &  \operatorname{Ret},
		\end{tikzcd}
		\]
		which is clearly not a pullback.
\end{remark}

\begin{proposition}
	Let $\bCC$ be a 2-topos, then $\bCC^{\co}$ is again a 2-topos which we call the \emph{mirror} 2-topos of $\bCC$.
\end{proposition}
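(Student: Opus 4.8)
The presentability of $\bCC^{\co}$ is immediate from \cref{rem:co2presentable}, so the entire content of the statement lies in verifying that $\bCC^{\co}$ satisfies fibrational descent. The plan is to establish the following duality principle: for any presentable 2-category $\bDD$ and any $\epsilon\in\{0,1\}$, the 2-category $\bDD$ satisfies $\epsilon$-fibrational descent if and only if $\bDD^{\co}$ satisfies $(1-\epsilon)$-fibrational descent. Granting this, I would apply it with $\bDD=\bCC^{\co}$: since $(\bCC^{\co})^{\co}\simeq\bCC$ satisfies both $0$- and $1$-fibrational descent by hypothesis, the principle forces $\bCC^{\co}$ to satisfy both as well, which is exactly fibrational descent.

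To prove the duality principle, I would first assemble the dictionary relating the ingredients of \cref{def:fibdescent} under the involution $(-)^{\co}$, which is an auto-equivalence of $\CCat$ and hence preserves all strong limits and colimits (in particular pullbacks and the defining pushouts of \cref{def:elaxcones}). Since $[1]^{\co}=[1]$, one obtains an equivalence $(\AR_{\bCC})^{\co}\simeq\AR_{\bCC^{\co}}$, and by \cref{rem:1fib} this restricts to $(\Fib^{\epsilon}_{\bCC})^{\co}\simeq\Fib^{1-\epsilon}_{\bCC^{\co}}$, carrying $\epsilon$-cartesian $2$-morphisms to $(1-\epsilon)$-cartesian ones (as $(-)^{\co}$ reverses $2$-cells and exchanges cocartesian- and cartesian-enrichment). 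Next, using the compatibility of the Gray tensor product with the $\co$-duality, namely $(\bAA\otimes_{\natural,E}\bBB)^{\co}\simeq\bBB^{\co}\otimes_{E,\natural}\bAA^{\co}$ (see \cite{Abellan2023}), I would identify $(\bII^{\colimcone}_{\elax})^{\co}\simeq(\bII^{\co})^{\colimcone}_{\eoplax}$ compatibly with the induced markings. Because $(-)^{\co}$ fixes objects and preserves the direction of $1$-morphisms, it sends $\elax$ colimit cones to $\eoplax$ colimit cones and conversely; only the lax/oplax flavour is exchanged, not the colimit character of the cone.

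With this dictionary, the translation of \cref{def:fibdescent} is bookkeeping. A functor $p\colon\bII\to\bCC^{\co}$ with a strong transformation $\alpha$ corresponds under $(-)^{\co}$ to $p^{\co}\colon\bII^{\co}\to\bCC$ with transformation $\alpha^{\co}$, and by \cref{def:Ecarttransf} together with the above, $\alpha$ is $(E,\epsilon)$-cartesian in $\bCC^{\co}$ precisely when $\alpha^{\co}$ is $(E,1-\epsilon)$-cartesian in $\bCC$: pullback squares are preserved, the fibration condition is swapped via $(\Fib^{\epsilon}_{\bCC^{\co}})^{\co}\simeq\Fib^{1-\epsilon}_{\bCC}$, and the $2$-cell condition transforms correctly. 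Consequently the two assertions in F1) and the biconditional in F2) for $\bCC^{\co}$ at parameter $\epsilon$ become, verbatim, the corresponding assertions for $\bCC$ at parameter $1-\epsilon$, now indexed by $(\bII^{\co},E)$. Since $(\bII,E)\mapsto(\bII^{\co},E)$ is a self-bijection of small marked 2-categories, quantifying over all of them on one side coincides with quantifying over all of them on the other, which yields the duality principle.

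The routine steps—presentability, the reindexing of the quantifier, and the preservation of pullbacks—are painless. The main obstacle is the careful verification of the Gray-tensor duality at the level of the marked cone constructions of \cref{def:elaxcones} (matching both the underlying 2-categories and their markings $E^{\colimcone}_{\elax}$ versus $E^{\colimcone}_{\eoplax}$), and the claim that $(-)^{\co}$ exchanges $\epsilon$-cartesian $2$-morphisms in $\Fib^{\epsilon}$ with $(1-\epsilon)$-cartesian ones. These are the only points requiring genuine structural input about the $\co$-involution; everything else is a formal consequence.
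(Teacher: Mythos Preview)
Your proposal is correct and follows the same approach as the paper, which simply observes that presentability is \cref{rem:co2presentable} and that fibrational descent is ``obvious'' under $(-)^{\co}$. The duality principle you carefully unpack is precisely what the paper takes as immediate from the way the definitions are set up (e.g.\ \cref{rem:1fib} defines $1$-fibrations as $0$-fibrations in $\bCC^{\co}$, and the $\epsilon$lax/oplax conventions are designed to dualize correctly), so your argument is a fully spelled-out version of their one-line proof.
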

\begin{proof}
	We have that $\bCC^{\co}$ is again 2-presentable (cf.\ \cref{rem:co2presentable}), and it is obvious that it satisfies $\epsilon$-fibrational descent.
\end{proof}

\begin{definition}\label{def:involutive}
	A 2-topos $\bCC$ is said to be \emph{involutive} if there exists an equivalence $\Psi \colon \bCC \xrightarrow{\simeq} \bCC^{\co}$ such that $\Psi^{\co} \circ \Psi \simeq \id$.
\end{definition}

\begin{remark}
	The definition of an involutive 2-topos serves as an abstraction of the opposite category involution $(-)^\op \colon \Cat \to \Cat^{\co}$. Even though involutive 2-topoi constitute a large class of 2-topoi (see \cref{prop:localicinvolutive}) which is well-suited to the study of synthetic category theory, it is important to remark that many interesting 2-topoi lack this property, as we will illustrate in the next example. 
\end{remark}

\begin{example}\label{ex:monad}
	The 2-category $\FUN(\Mnd,\Cat)$ of monads in $\Cat$ is a 2-topos by \cref{thm:cat2topos} and \cref{prop:fun2topos}. We claim that this 2-topos is not involutive. In fact, let us assume for contradiction that we have an involution $\Psi \colon \FUN(\Mnd,\Cat) \xrightarrow{\simeq} \FUN(\Mnd,\Cat)^{\co}$. Using the canonical involution in $\Cat$ we obtain another equivalence $\Xi \colon \FUN(\Mnd,\Cat) \simeq \FUN(\Mnd^{\co},\Cat)$. We will show that every such equivalence must send the unique correpresentable functor $\Mnd(+,-)$ to $\Mnd^\co(+,-)$ (where $+\in\Mnd$ is the unique object). Once this is shown, it will follow from Yoneda's lemma that we have an equivalence $\Mnd \simeq \Mnd^{\co}$. This will provides us with the desired contradiction, since by definition we have $\Mnd(+,+)=\Delta_{+}$ (where $+\in\Mnd$ is the unique object and where $\Delta_+$ denotes the augmented simplex category), and it is easy to see that $\Delta_{+}$ is not self-dual, i.e.\  $\Delta_{+}^{\op}\not\simeq \Delta_{+}$.\footnote{Any equivalence $f\colon\Delta_+\simeq\Delta_+^\op$ must satisfy $f([0])=\varnothing$ (since terminal objects must be preserved). We then observe that we must have $\Delta_{+}([0],[1])\simeq \Delta_{+}(f([1]),f([0]))=\Delta_+(f([1]),\varnothing)=\varnothing$ which is a contradiction.} 
	
	Let us say that a functor $F \colon \Mnd \to \Cat$ is \emph{tiny} if the correpresentable functor 
	\[
	\Nat_{\Mnd,\Cat}(F,-) \colon \FUN(\Mnd,\Cat) \to \Cat
	\] 
	preserves all small partially (op)lax colimits. Before continuing, we make a couple of observations:
	\begin{enumerate}
		\item The equivalence $\Xi$ preserves tiny objects.
		\item By an analogous argument to the one found in \cite[Proposition 5.1.6.8]{LurieHTT}, the tiny functors are precisely the retracts of the corepresentable functor $\Mnd(+,-)$.
	\end{enumerate}
	
	To complete the argument, we thus only need to verify that there is no tiny object besides $\Mnd(+,-)$. To see this, consider a retract diagram $F \to \Mnd(+,-) \to F$, providing us with an idempotent $\Mnd(+,-)\to\Mnd(+,-)$ that corresponds by Yoneda's lemma to an idempotent $+\to +$ in $\Mnd$, i.e.\ an object $[a]\in \Delta_+$ satisfying $[a]\oplus [a]= [a]$, where $\oplus$ denotes ordinal sum. Clearly, the only possible choice for this is $[a]=\varnothing$, which implies that the idempotent $\Mnd(+,-)\to\Mnd(+,-)$ must be the identity. Since this implies $F\simeq\Mnd(+,-)$, the claim follows.

\end{example}

\subsection{Lawvere-Tierney axioms for 2-topoi}\label{subsec:LTaxioms}
In 1-topos theory, the \emph{Lawvere-Tierney axioms} characterise 1-topoi as presentable 1-categories which are locally cartesian closed and which admit object classifiers. In this section, we establish analogous axioms for the theory of 2-topoi. 

To that end, we introduce the notion of a $\kappa$-compact $\epsilon$-fibration in a 2-topos, and we show that for sufficiently large cardinals $\kappa$, there exist classifiers $\Omega^{\epsilon,\kappa}$ for the class of $\kappa$-compact $\epsilon$-fibrations. Then, we show that $\epsilon$-fibrations are \emph{exponentiable} in a 2-topos. Finally, the main theorem in this section (\cref{thm:defcheck}) asserts that 2-topoi are characterised among presentable 2-categories by these two properties.

\begin{definition}\label{def:compactfib}
	Let $\bCC$ be a presentable 2-category. We say that an $\epsilon$-fibration $p \colon x \to s$ in $\bCC$ is $\kappa$-compact if for every pullback diagram
	\[
	\begin{tikzcd}
		y \arrow[r] \arrow[d]  & x \arrow[d,"p"] \\
		t \arrow[r] & s
	\end{tikzcd}
	\]
	where $t$ is a $\kappa$-compact object in $\bCC$, the object $y$ is $\kappa$-compact as well. We denote by $\Fib^{\epsilon,\kappa}_{\bCC}$ the full-sub-2-category of $\Fib^{\epsilon}_{\bCC}$ spanned by the $\kappa$-compact fibrations.
\end{definition}

\begin{lemma}\label{lem:smallkappafib}
	Let $\bCC$ be a 2-topos and let $\kappa$ be a regular cardinal such that $\bCC$ is $\kappa$-accessible and such that $\kappa$-compact objects are stable under oriented pullbacks. Let $c\in\bCC$ be an arbitrary object, and choose a $\kappa$-filtered diagram $d\colon\JJ\to\bCC^\kappa$ with $\colim d\simeq c$. Then the canonical map
	\[
	\left(\Fib^{\epsilon,\kappa}_{\bCC}\right)_{/c} \to \lim_{\JJ} \left(\Fib^{\epsilon,\kappa}_{\bCC}\right)_{/d(-)}
	\]
	is an equivalence. In particular, the left-hand side is small.
\end{lemma}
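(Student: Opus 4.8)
The key input is fibrational descent in the form of \cref{prop:fibrepresents}, and the plan is to apply it to the strong filtered colimit $c\simeq \colim_\JJ d$ and then cut the resulting equivalence down to the $\kappa$-compact subcategories. First I would apply \cref{prop:fibrepresents} with $\bII=\JJ$ and $E=\sharp$. Since $\JJ$ is a $1$-category we have $\JJ^{\epsilon\op}=\JJ^\op$ and, with every edge marked, the $\Eepsilonlax$ colimit cone is simply the strong colimit $c$. This yields an equivalence
\[
	\res^\epsilon_d\colon \Fib^\epsilon_{/c}\xrightarrow{\;\simeq\;} \lim_\JJ \Fib^\epsilon_{/d(-)},\qquad \glue^\epsilon_d\dashv \res^\epsilon_d,
\]
where $\res^\epsilon_d$ carries a fibration $p\colon x\to c$ to the compatible family of its pullbacks $p\times_c d(j)\colon x\times_c d(j)\to d(j)$ (cf.\ \cref{rem:cat2complete} for the identification of the partially oplax limit with this limit of slice categories).

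The crux is to characterise which fibrations over $c$ are $\kappa$-compact in terms of this family: I claim $p$ is $\kappa$-compact if and only if each $p\times_c d(j)$ is. For the forward direction, note that $\kappa$-compact fibrations are stable under arbitrary pullback, since for a map $c'\to c$, a pullback of $p\times_c c'$ along some $t\to c'$ agrees with the pullback of $p$ along $t\to c'\to c$ by the pasting law; hence each $p\times_c d(j)$ is $\kappa$-compact. For the converse, given any $t\to c$ with $t$ a $\kappa$-compact object, $\kappa$-compactness of $t$ together with $\kappa$-filteredness of $\JJ$ forces $t\to c$ to factor through some $d(j_0)$, and then $x\times_c t\simeq (x\times_c d(j_0))\times_{d(j_0)} t$ is $\kappa$-compact because $p\times_c d(j_0)$ is a $\kappa$-compact fibration and $t$ is $\kappa$-compact. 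Here the hypothesis that $\kappa$-compact objects are stable under oriented pullbacks — and hence, via \cref{cor:pullbackoffree} and \cref{prop:pasting}, under strong pullbacks — is what guarantees that all the relevant pullbacks again lie in $\bCC^\kappa$; in particular, taking $t=d(j)$ shows that the total spaces $x\times_c d(j)$ of the family are themselves $\kappa$-compact objects.

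With the characterisation in hand, restricting the equivalence is formal. Because $\Fib^{\epsilon,\kappa}_{\bCC}$ is a full sub-2-category (\cref{def:compactfib}) stable under pullback, the assignment $j\mapsto \Fib^{\epsilon,\kappa}_{/d(j)}$ is a subdiagram of $j\mapsto\Fib^\epsilon_{/d(j)}$ whose transition functors restrict, so that $\lim_\JJ \Fib^{\epsilon,\kappa}_{/d(-)}$ is precisely the full sub-2-category of $\lim_\JJ \Fib^\epsilon_{/d(-)}$ spanned by the point-wise $\kappa$-compact families. The characterisation of the previous paragraph says exactly that $\res^\epsilon_d$ identifies this full sub-2-category with $(\Fib^{\epsilon,\kappa}_{\bCC})_{/c}$, giving the asserted equivalence. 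Finally, for the smallness claim one takes $\JJ$ to be small (for instance a small cofinal diagram, or the essentially small $(\bCC^\kappa)_{/c}$): each term $\Fib^{\epsilon,\kappa}_{/d(j)}$ has objects whose base and total space both lie in the small category $\bCC^\kappa$, so each term is small and the limit over the small index $\JJ$ — hence the left-hand side — is small as well.

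The main obstacle is the characterisation of $\kappa$-compact fibrations over $c$, and within it the backward direction: one must produce the factorisation of an arbitrary $\kappa$-compact test map $t\to c$ through some stage $d(j_0)$ and then invoke the pasting law, while keeping careful track that the oriented-pullback stability of $\kappa$-compact objects is genuinely needed to ensure strong pullbacks do not leave $\bCC^\kappa$. Everything else — the descent equivalence and the passage to full subcategories — is a direct application of results already established.
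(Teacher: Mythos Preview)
Your proposal is correct and follows essentially the same strategy as the paper: apply fibrational descent to identify $\Fib^\epsilon_{/c}$ with the limit, then show that this equivalence restricts to the $\kappa$-compact subcategories by using that any test map $t\to c$ from a $\kappa$-compact object factors through some stage $d(j_0)$. One minor point: the hypothesis that $\kappa$-compact objects are stable under oriented pullbacks is not actually needed in the backward direction of your characterisation---the conclusion $x\times_c t\in\bCC^\kappa$ follows directly from \cref{def:compactfib} applied to the $\kappa$-compact fibration $p\times_c d(j_0)$ over $d(j_0)$, so you are slightly over-attributing the role of that hypothesis here.
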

\begin{proof}
	The second claim follows immediately from the first since $\JJ$ is small and the collection of $\kappa$-compact $\epsilon$-fibrations over a $\kappa$-compact base is clearly small as well. Thus, it suffices to show that the first claim holds.
	
	To that end, by using fibrational descent, it suffices to show that if $p \colon x \to c$ is an $\epsilon$-fibration such that its restriction to $d(i)$ is $\kappa$-compact for each $i\in I$, then $p$ is $\kappa$-compact as well. Since every map $c^\prime\to d$ from a $\kappa$-compact object $c^\prime$ necessarily factors through one of the maps $d(i)\to c$ for some $i\in \II$, this is evident.
\end{proof}

\begin{proposition}\label{prop:compactsheaf}
	Let $\bCC$ be a 2-topos and let $\kappa$ be a regular cardinal such that $\bCC$ is $\kappa$-accessible and the $\kappa$-compact objects of $\bCC$ are stable under oriented pullbacks. Then we have functors
	\[
	\left(\Fib_{\bCC}^{0,\kappa}\right)_{/(-)} \colon \bCC^{\op} \to \CCat, \quad \quad \left(\Fib_{\bCC}^{1,\kappa}\right)_{/(-)} \colon \bCC^{\coop} \to \CCat
	\]
	which send partially $\epsilon$lax colimits to partially oplax limits.
\end{proposition}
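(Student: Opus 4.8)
The plan is to reduce the claim that $\left(\Fib_{\bCC}^{\epsilon,\kappa}\right)_{/(-)}$ sends partially $\epsilon$lax colimits to partially oplax limits to the already-established fibrational descent property together with the compactness constraint from \cref{lem:smallkappafib}. I will treat only the case $\epsilon=0$, since the case $\epsilon=1$ follows by passing to the mirror 2-topos $\bCC^{\co}$ (using \cref{rem:1fib} and the fact, recorded in \cref{subsec:const}, that $\bCC^\co$ is again a 2-topos). First I would observe that \cref{prop:fibrepresents} already produces, for any small marked 2-category $(\bII,E)$ and any diagram $q\colon\bII\to\bCC$ with $E$-lax colimit cone $\overline q$ and tip $c=q(\ast)$, an equivalence
\[
	\glue_{q}^{0}\colon \lim^{\oplax}_{\bII^{\op}}\Fib^{0}_{/q(-)} \xrightarrow{\ \simeq\ } \Fib^{0}_{/c}.
\]
So the \emph{unrestricted} functor $\Fib^0_{/(-)}$ already sends $\epsilon$lax colimits to oplax limits; the only genuine content here is to check that this equivalence restricts to the full sub-2-categories of $\kappa$-\emph{compact} fibrations on both sides.

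The key step is therefore to verify the two-sided compatibility of $\glue_{q}^{0}$ with $\kappa$-compactness. On the one hand, if $p\colon x\to c$ is a $\kappa$-compact $0$-fibration over the colimit $c$, then each restriction $x_i\to q(i)$ (obtained by pulling back along the leg $q(i)\to c$) is again $\kappa$-compact: pulling back a $\kappa$-compact fibration along any map preserves $\kappa$-compactness of the total fibration, since $\kappa$-compactness is tested by further pullback along $\kappa$-compact objects and such pullbacks factor through the original one. Conversely, suppose a compatible family $(x_i\to q(i))_i$ consists of $\kappa$-compact $0$-fibrations and let $p\colon x\to c$ be its image under $\glue_q^0$. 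To see $p$ is $\kappa$-compact I would use \cref{lem:smallkappafib}: by \cref{prop:2catsasfilteredcolimits} I may write $c$ itself as a $\kappa$-filtered colimit of $\kappa$-compact objects, and any map $t\to c$ from a $\kappa$-compact $t$ factors (after enlarging $\kappa$ so that $\kappa$-compact objects are stable under oriented pullbacks, exactly the standing hypothesis) through one of the legs $q(i)\to c$ up to the relevant filtered cofinality; the pullback of $p$ to $t$ is then a pullback of the already-$\kappa$-compact $x_i\to q(i)$, hence $\kappa$-compact. This shows $\glue_q^0$ carries $\kappa$-compact-valued cartesian cones to $\kappa$-compact fibrations and conversely, so it restricts to an equivalence
\[
	\left(\Fib_{\bCC}^{0,\kappa}\right)_{/c} \xrightarrow{\ \simeq\ } \lim^{\oplax}_{\bII^{\op}}\left(\Fib_{\bCC}^{0,\kappa}\right)_{/q(-)}.
\]

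To assemble this into the statement that the functor preserves the relevant (op)lax limits, I would invoke \cref{prop:minimalshapescolim}: it suffices to check preservation on two generating shapes, namely $\kappa$-small strong colimits indexed by $1$-categories and (op)lax colimits indexed by $[1]$. Both of these are instances of the $\epsilon$lax colimit cones handled above, so the pointwise equivalences $\glue_q^\epsilon$ immediately give preservation on these shapes. The functoriality and naturality of the comparison map is inherited from that of $\res^\epsilon$ and $\glue^\epsilon$ in \cref{const:adjointablesq} and \cref{lem:glue}, so no separate coherence argument is needed. The \textbf{main obstacle} I anticipate is the converse direction of the compactness check: pinning down precisely that a map out of a $\kappa$-compact object into the lax colimit $c$ factors through a finite stage, given that $c$ is built not as a plain filtered colimit but as an $E$-lax colimit cone over $\bII$. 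Here I expect to lean on the explicit construction of $E$-lax colimits in \cref{rem:cat2complete} as a localization of the unstraightening, combined with the stability of $\kappa$-compactness under the oriented pullbacks appearing in \cref{cons:freefib}, to reduce the factorization problem to the ordinary $\kappa$-filtered case already controlled by \cref{lem:smallkappafib}.
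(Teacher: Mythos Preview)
Your overall strategy is right: fibrational descent gives the equivalence for the unrestricted functors $\Fib^0_{/(-)}$, and the content is to check that $\kappa$-compactness is preserved in both directions under $\glue_q^0$ and $\res_q^0$. The restriction direction is indeed trivial. The gap is exactly where you flag it: the converse direction.

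Your argument that a map $t\to c$ from a $\kappa$-compact $t$ factors through some leg $q(i)\to c$ is not valid. That factorisation property holds for $\kappa$-filtered colimits, but $c$ is an $E$-lax colimit over an \emph{arbitrary} small marked $(\bII,E)$, which need not be $\kappa$-filtered. Writing $c$ separately as a $\kappa$-filtered colimit of $\kappa$-compact objects via accessibility does not help, since those approximants bear no a priori relation to the $q(i)$. Your fallback to \cref{rem:cat2complete} does not apply either: that remark describes lax colimits in $\Cat$, not in $\bCC$. Likewise, invoking \cref{prop:minimalshapescolim} only reduces to $\kappa$-small strong colimits and $[1]$-indexed lax colimits, but even $\kappa$-small strong colimits of non-$\kappa$-compact objects present the same difficulty.

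The paper resolves this with a four-step bootstrap rather than a single factorisation argument. First, if $\bII$ is $\kappa$-small and $q$ lands in $\bCC^\kappa$, then the glued total space is a $\kappa$-small partially lax colimit of $\kappa$-compact objects and hence $\kappa$-compact by \cref{cor:smallcolimofcompacts}. Second, the $\kappa$-filtered case with $E=\sharp$ is precisely \cref{lem:smallkappafib}. Third, for $\bII$ $\kappa$-small but $q$ arbitrary, one uses \cref{rem:accfunctorcat} to write $q$ itself as a $\kappa$-filtered colimit in $\Fun(\bII,\bCC^\kappa)$, so the cone $\overline q$ is a $\kappa$-filtered colimit of cones handled in step one; then step two applies. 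Fourth, for general $\bII$ one decomposes $\bII$ as a $\kappa$-filtered colimit of $\kappa$-small marked $2$-categories via \cref{prop:2catsasfilteredcolimits} and \cref{prop:diagramdecomposition}, reducing to steps two and three. The missing idea in your sketch is this layering: you need to vary both the shape $\bII$ and the values $q(i)$ through their respective filtered approximations, and \cref{rem:accfunctorcat} is the key input that lets you do the latter.
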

\begin{proof}
	The existence of the two functors follows from \cref{lem:smallkappafib}. For the second claim, we assume without loss of generality $\epsilon=0$. Let $(\bII,E)$ be a (small) marked 2-category and consider a functor $q \colon  \bII \to \bCC$ with $E$-lax colimit $c$. Our goal is to show that we have a pullback diagram
	\[
	\begin{tikzcd}
		\left(\Fib_{\bCC}^{0,\kappa}\right)_{/c} \arrow[r] \arrow[d] & \left(\Fib_{\bCC}^{0}\right)_{/c} \arrow[d,"\simeq"] \\
		\lim^{\eoplax}_{\bII^{\op}}\left(\Fib_{\bCC}^{0,\kappa}\right)_{/d(-)} \arrow[r] & \lim^{\eoplax}_{\bII^{\op}}\left(\Fib_{\bCC}^{0}\right)_{/d(-)}.
	\end{tikzcd}
	\]
	We proceed in several steps:
	\begin{enumerate}
		\item Let us assume that the diagram $q$ factors through $\bCC^{\kappa}$ and that $\bII$ is $\kappa$-small. In this case the result follows from the fact that $\kappa$-compact objects are stable under $\kappa$-small partially lax colimits (cf.\ \cref{cor:smallcolimofcompacts}).

		\item Let us assume that $(\bII,E)$ is a $\kappa$-filtered 1-category and that $E=\sharp$. This was done already in \cref{lem:smallkappafib}.
		\item Let us assume that $\bII$ is $\kappa$-small. Then it follows from \cref{rem:accfunctorcat} that there exists a $\kappa$-filtered diagram $\JJ \to \Fun(\bII,\bCC^{\kappa})$ whose (strong) colimit is $q$. Invoking \cref{cor:existenceLKE} we observe that the E-lax colimit cone $\overline{q} \colon \bII^{\colimcone}_{\elax} \to \bCC$ can be expressed as a $\kappa$-filtered colimit $\overline{q}= \colim_{j\in\JJ} \overline{q}_j$. The result thus follows form \cref{prop:limitscommute} and the previous steps.
		
		\item For the general case, we invoke \cref{prop:2catsasfilteredcolimits} and \cref{prop:diagramdecomposition} which allows us to decompose the diagram $q \colon \bII \to \bCC$ as a filtered colimit of $\kappa$-small diagrams. We conclude using \cref{prop:limitscommute} together with the previous discussion.
	\end{enumerate}
	The result now follows.
\end{proof}

\begin{corollary}\label{cor:universeexists}
	Let $\bCC$ be a 2-topos. Then for sufficiently large regular cardinals $\kappa$, the functors 
	\[
	\left(\Fib_{\bCC}^{0,\kappa}\right)_{/(-)}^{\leq 1} \colon \bCC^{\op} \to \Cat, \quad \quad \left(\Fib_{\bCC}^{1,\kappa}\right)_{/(-)}^{\leq 1} \colon \bCC^{\coop} \to \Cat
	\]
	are representable by objects $\Omega^{\epsilon,\kappa}\in\bCC$. 
\end{corollary}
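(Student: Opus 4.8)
The plan is to deduce representability directly from \cref{prop:compactsheaf} via the representability criterion of \cref{cor:sheavesRepresentable}, after checking that the hypotheses of \cref{prop:compactsheaf} can be met for sufficiently large $\kappa$. First I would fix a regular cardinal $\kappa$ for which $\bCC$ is $\kappa$-accessible and the $\kappa$-compact objects of $\bCC$ are stable under oriented pullbacks. That such $\kappa$ exist is the first thing to verify: $\bCC$ is $\kappa$-accessible for a cofinal class of cardinals since it is presentable, and by \cref{cor:pullbackoffree} and \cref{rem:freeFibExplicitly} the oriented pullback $a \orientedtimesrl_c b$ is assembled from finite products, the cotensor $(-)^{[1]}$, and strong pullbacks. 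The cotensor $(-)^{[1]}$ is right adjoint to $[1]\otimes-$ (\cref{prop:tensoring}, \cref{cor:cotensoring}), which preserves $\kappa$-compact objects for large $\kappa$ by the argument of \cref{lem:compactObjects2presentable}; hence $(-)^{[1]}$ preserves $\kappa$-filtered colimits, and since finite strong limits commute with $\kappa$-filtered colimits in the presentable $1$-category $\bCC^{\leq 1}$ for large $\kappa$, the oriented pullback functor preserves $\kappa$-filtered colimits and therefore $\kappa$-compact objects once $\kappa$ is large enough. Intersecting the two cofinal conditions produces the required $\kappa$.

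For such $\kappa$, \cref{lem:smallkappafib} shows that $\bigl(\Fib^{\epsilon,\kappa}_{\bCC}\bigr)_{/c}$ is small for every $c$, so that passing to underlying $1$-categories genuinely yields $\Cat$-valued presheaves. By \cref{prop:compactsheaf}, the $\CCat$-valued functors $\bigl(\Fib^{\epsilon,\kappa}_{\bCC}\bigr)_{/(-)}$ carry partially $\epsilonlax$ colimits to partially oplax limits; since $(-)^{\leq 1}\colon\CCat\to\Cat$ is a right adjoint (\cref{def:underlying1cat}) and hence preserves all limits, the $\Cat$-valued functors
\[
\bigl(\Fib_{\bCC}^{0,\kappa}\bigr)_{/(-)}^{\leq 1}\colon \bCC^{\op}\to\Cat,\qquad \bigl(\Fib_{\bCC}^{1,\kappa}\bigr)_{/(-)}^{\leq 1}\colon \bCC^{\coop}\to\Cat
\]
inherit the same property. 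Finally I would apply \cref{cor:sheavesRepresentable} to the presheaf on the presentable $2$-category $\bCC$ (for $\epsilon=0$) and to the presheaf on $\bCC^{\co}$, which is presentable by \cref{rem:co2presentable} (for $\epsilon=1$); the preservation just established is exactly the preservation of partially (op)lax limits demanded by that criterion, once the source $\epsilonlax$ colimits in $\bCC$ (resp.\ $\bCC^{\co}$) are rewritten as partially (op)lax limits in $\bCC^{\op}$ (resp.\ $\bCC^{\coop}$) via the $\op$- and $\co$-dualities. This yields representing objects $\Omega^{0,\kappa}\in\bCC$ and $\Omega^{1,\kappa}\in\bCC^{\co}\simeq\bCC$.

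The step I expect to require the most care is this last matching of variances: one must confirm that the single flavour of colimit preservation provided by \cref{prop:compactsheaf} translates, under the dualities, into exactly the flavour of limit preservation that \cref{cor:sheavesRepresentable} needs. This is unproblematic precisely because \cref{prop:adjointFunctorTheoremLeft} shows that a single consistent reading of ``(op)lax cocontinuous'' already forces a functor out of a presentable $2$-category to be a left adjoint, so no preservation beyond what \cref{prop:compactsheaf} supplies is needed; verifying the existence of a suitable $\kappa$ in the first paragraph is the only other point that is not entirely formal.
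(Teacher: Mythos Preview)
Your approach is exactly the paper's: the proof there is the single line ``Combine \cref{prop:compactsheaf} with \cref{cor:sheavesRepresentable},'' and everything you write is a faithful unpacking of that. The extra details you supply (existence of suitable $\kappa$, the passage through $(-)^{\leq 1}$, and the variance matching via \cref{prop:adjointFunctorTheoremLeft}) are all left implicit in the paper; in particular the paper never verifies that $\kappa$-compact objects can be made stable under oriented pullbacks, it simply asserts this here and again in the proof of \cref{thm:giraud}.

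One small point of phrasing: the clause ``preserves $\kappa$-filtered colimits and therefore $\kappa$-compact objects'' is not a valid implication for fixed $\kappa$; what you want is that the oriented pullback functor is accessible (which your argument does establish), and then invoke the standard fact that an accessible functor between presentable categories preserves $\lambda$-compact objects for a cofinal class of regular cardinals $\lambda$. Your qualifier ``once $\kappa$ is large enough'' suggests you have this in mind, so this is a matter of wording rather than a genuine gap.
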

\begin{proof}
	Combine \cref{prop:compactsheaf} with \cref{cor:sheavesRepresentable}.
\end{proof}

\begin{definition}
	In the situation of \cref{cor:universeexists}, we call the objects $\Omega^{\kappa,\epsilon}$ the \emph{classifiers} of $\kappa$-compact $\epsilon$-fibrations.
\end{definition}

\begin{remark}\label{rem:universalFibration}
	As a consequence of \cref{cor:universeexists} and Yoneda's lemma, there exist \emph{universal} $\kappa$-compact $\epsilon$-fibrations
	\[
	U^{\epsilon,\kappa} \colon \Omega_*^{\epsilon,\kappa} \to \Omega^{\epsilon,\kappa},
	\]
	defined by the property that every $\kappa$-compact $\epsilon$-fibration $p\colon x\to s$ uniquely fits into a pullback square
	\[
	\begin{tikzcd}
		x \arrow[r] \arrow[d,swap,"p"] & \Omega_*^{\epsilon,\kappa} \arrow[d,"U^{\epsilon,\kappa}"] \\
		s \arrow[r] & \Omega^{\epsilon,\kappa}.
	\end{tikzcd}
	\]
\end{remark}

We now move on to the second half of the Lawvere-Tierney axioms:
\begin{definition}\label{def:exponentiability}
	A morphism $f \colon c \to d$ in $\bCC$ is said to be \emph{exponentiable} if the pullback functor (whenever defined)
	\[
	f^* \colon \bCC_{/d} \to \bCC_{/c}
	\]
	admits a right adjoint. We say that $\bCC$ is \emph{locally cartesian closed} if $\epsilon$-fibrations are exponentiable.
\end{definition}

\begin{proposition}\label{prop:condussy}
	Let $\bCC$ be a presentable 2-category satisfying $\epsilon$-fibrational descent. Then $\epsilon$-fibrations are exponentiable (cf.\ \cref{def:exponentiability}).
\end{proposition}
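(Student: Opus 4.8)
The strategy is to deduce exponentiability from the two-dimensional adjoint functor theorem (\cref{prop:adjointFunctorTheoremLeft}), by identifying the relevant cocontinuity of the pullback functor with fibrational descent. I treat the case $\epsilon=0$ in detail; the case $\epsilon=1$ is entirely parallel, with \emph{oplax} replacing \emph{lax} throughout and using \cref{prop:fibrepresents} for $\epsilon=1$. Fix a $0$-fibration $p\colon x\to c$. Since $\bCC$ is presentable it is complete (being reflective in the complete presheaf 2-category, cf.\ \cref{thm:characterisationPresentableCategories} and \cref{ex:presheavesComplete}), so the pullback functor $p^\ast\colon\bCC_{/c}\to\bCC_{/x}$ is defined. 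Both slices are well-behaved: $\bCC_{/x}$ is locally small because $\bCC$ is (\cref{rem:2presentableLocallySmall}), and $\bCC_{/c}$ is presentable. Indeed, $(\bCC_{/c})^{\leq 1}\simeq(\bCC^{\leq 1})_{/c}$ is accessible as a slice of an accessible $1$-category, and $\bCC_{/c}$ is cocomplete since partially (op)lax colimits of a diagram $\bII\to\bCC_{/c}$ are computed by forming the colimit of the underlying diagram in $\bCC$ and equipping it with the induced structure map to $c$; concretely, the forgetful functor $\ev_0\colon\bCC_{/c}\to\bCC$ is a left adjoint, with right adjoint $b\mapsto(b\times c\to c)$, and so preserves partially (op)lax colimits by \cref{prop:adjpreservelimtis}.

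By \cref{prop:adjointFunctorTheoremLeft} it now suffices to show that $p^\ast$ is lax cocontinuous: it will then be a left adjoint and in particular admit a right adjoint, so that $p$ is exponentiable. Fix a small marked 2-category $(\bII,E)$ and a diagram $D\colon\bII\to\bCC_{/c}$ with underlying diagram $a_\bullet\colon\bII\to\bCC$, and write $a=\colim^{\elax}_\bII a_\bullet$ with induced map $u\colon a\to c$. By the description of colimits in the slices recalled above, $\colim^{\elax}_\bII D$ is the object $(u\colon a\to c)$ while $\colim^{\elax}_\bII(p^\ast D)$ is the object $\bigl(\colim^{\elax}_\bII(x\times_c a_i)\to x\bigr)$. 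Thus $p^\ast$ preserves this colimit precisely if the canonical comparison map
\begin{equation*}
	\colim^{\elax}_\bII(x\times_c a_i)\to x\times_c a
\end{equation*}
is an equivalence in $\bCC$.

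To verify this I would apply \cref{prop:fibrepresents} to the diagram $a_\bullet$, whose $E$-lax colimit is $a$: it supplies mutually inverse equivalences $\res^0_{a_\bullet}$ and $\glue^0_{a_\bullet}$ between $\Fib^0_{/a}$ and $\lim^{\eoplax}_{\bII^\op}\Fib^0_{/a_\bullet}$, where $\glue^0_{a_\bullet}$ carries a compatible family of $0$-fibrations to its $E$-lax colimit (\cref{lem:glue}). Consider the $0$-fibration $P\colon x\times_c a\to a$ obtained by pulling $p$ back along $u$, which is a $0$-fibration by \cref{lem:pullbackFibration}. Its image under $\res^0_{a_\bullet}$ is the family of restrictions along the cocone maps $a_i\to a$, which by the pasting law for pullbacks is precisely the family $(x\times_c a_i\to a_i)_{i\in\bII}$. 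Applying the inverse $\glue^0_{a_\bullet}$ therefore recovers $P$ as the $E$-lax colimit of this family; passing to total spaces (which is compatible with $E$-lax colimits, as these are computed on sources in the arrow 2-category) yields exactly the equivalence $\colim^{\elax}_\bII(x\times_c a_i)\simeq x\times_c a$. Hence $p^\ast$ is lax cocontinuous and the proof is complete.

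The conceptual content is concentrated in the single invocation of \cref{prop:fibrepresents}, so I expect the main obstacle to be bookkeeping rather than a genuine new difficulty: one must keep straight the two roles of the object $x\times_c a_i$ — as the pullback of the fixed fibration $p$ along $a_i\to c$ (which is what $\res^0_{a_\bullet}$ produces) and as the pullback of $a_i\to c$ along $p$ (which is what $p^\ast$ produces) — and verify that the colimits appearing in the two slices $\bCC_{/c}$ and $\bCC_{/x}$ are both computed in $\bCC$, so that the equivalence furnished by $\glue^0_{a_\bullet}$ and $\res^0_{a_\bullet}$ is literally the comparison map whose invertibility is required.
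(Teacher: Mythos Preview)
Your argument is correct and follows essentially the same line as the paper's: reduce to (op)lax cocontinuity of $p^\ast$ via the adjoint functor theorem, then deduce that pullback of the fibration along a partially lax colimit cone preserves the colimit using fibrational descent. The only difference is cosmetic: the paper invokes axiom F1 directly on the $(E^\triangleright_{\epsilonlax},\epsilon)$-cartesian transformation obtained by pulling the fibration back along the colimit cone, whereas you route through the packaged consequence \cref{prop:fibrepresents}; these are the same maneuver at two levels of abstraction.
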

\begin{proof}
	Let $p\colon c \to d$ be an $\epsilon$-fibration. We wish to produce a right adjoint to the pullback functor $p^* \colon \bCC_{/d} \to \bCC_{/c}$. By \cref{prop:adjointFunctorTheoremLeft}, it is enough to show that $p^\ast$ is lax cocontinuous. To that end, let $(\bII, E)$ be a small marked 2-category, let $d\colon \bII\to \Over{\bCC}{d}$ be a diagram with $E$-lax colimit $x\to d$ and consider the pullback square
	\[
	\begin{tikzcd}
		y \arrow[r] \arrow[d] & c \arrow[d] \\
		\colim_{\bII}^{\epsilonlax} d \arrow[r] & d.
	\end{tikzcd}
	\]
	Observe that the vertical map on the left is again an $\epsilon$-fibration. Hence the claim follows immediately from $\epsilon$-fibrational descent.
\end{proof}

\begin{corollary}\label{cor:freePreservesColimits}
	Let $\bCC$ be a 2-topos and let $c\in\bCC$ be an arbitrary object. Then the endofunctor $\Free_c^\epsilon\colon\Over{\bCC}{c}\to\Over{\bCC}{c}$ preserves partially (op)lax colimits.
\end{corollary}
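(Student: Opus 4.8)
The plan is to exhibit $\Free_c^0$ as a composite of two left adjoints and then invoke that left adjoints preserve partially (op)lax colimits (\cref{prop:adjpreservelimtis}); the case $\epsilon=1$ will then follow formally. The starting point is the explicit description of the free fibration from \cref{rem:freeFibExplicitly}: for a map $p\colon x\to c$, the object $\Free^0_c(p)\colon\Free^0_c(x)\to c$ of $\Over{\bCC}{c}$ is obtained by first forming the strong pullback of $p$ along $d_0\colon c^{[1]}\to c$ and then postcomposing with $d_1\colon c^{[1]}\to c$. Writing $d_0^\ast\colon\Over{\bCC}{c}\to\Over{\bCC}{c^{[1]}}$ for pullback along $d_0$ and $\Sigma_{d_1}\colon\Over{\bCC}{c^{[1]}}\to\Over{\bCC}{c}$ for postcomposition with $d_1$, this yields a factorisation $\Free^0_c\simeq\Sigma_{d_1}\circ d_0^\ast$, and it suffices to check that each factor preserves partially (op)lax colimits.

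First I would argue that $d_0^\ast$ is a left adjoint. By (the dual of) \cref{rem:freeFibExplicitly}, the map $d_0\colon c^{[1]}\to c$ is the free $1$-fibration $\Free^1_c(\id_c)$ on the identity, and is therefore a $1$-fibration by the dual of \cref{lem:freeFibrationIsFibration}. Since $\bCC$ is a $2$-topos it satisfies $1$-fibrational descent, so \cref{prop:condussy} shows that $d_0$ is exponentiable, i.e.\ that $d_0^\ast$ admits a right adjoint. In particular $d_0^\ast$ is itself a left adjoint and hence preserves all partially (op)lax colimits that exist, by \cref{prop:adjpreservelimtis}.

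Next, $\Sigma_{d_1}$ is a left adjoint as well: as $\bCC$ is presentable it admits all strong pullbacks, so pullback along $d_1$ defines a functor $d_1^\ast\colon\Over{\bCC}{c}\to\Over{\bCC}{c^{[1]}}$, and the standard dependent-sum/base-change adjunction gives $\Sigma_{d_1}\dashv d_1^\ast$ (this holds in any $2$-category with pullbacks and needs no descent hypothesis). Thus both factors of $\Free^0_c\simeq\Sigma_{d_1}\circ d_0^\ast$ preserve partially (op)lax colimits, and hence so does their composite. Applying the same argument in the mirror $2$-topos $\bCC^\co$, where $\Free^1_c$ is identified with $\Free^0$ and partially lax colimits are exchanged with partially oplax ones, settles the case $\epsilon=1$.

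The only genuine subtlety, and the step I would take most care over, is the exponentiability input: one must verify that the map $d_0$ along which one pulls back in the formula $\Free^0_c\simeq\Sigma_{d_1}\circ d_0^\ast$ really is an $\epsilon$-fibration, so that \cref{prop:condussy} applies. This is precisely why the corollary sits immediately after \cref{prop:condussy}: the preservation of partially (op)lax colimits by $\Free^0_c$ is essentially a repackaging of the exponentiability of free fibrations, while the factor $\Sigma_{d_1}\dashv d_1^\ast$ is entirely formal.
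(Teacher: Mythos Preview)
Your proposal is correct and follows essentially the same approach as the paper. The paper's proof is a more compressed version of your argument: it observes that $\Free_c^\epsilon$ viewed as a functor $\Over{\bCC}{c}\to\bCC$ is given by pullback along the $(1{-}\epsilon)$-fibration $c^{[1]}\to c$, and then invokes \cref{prop:condussy} directly, whereas you make the factorisation $\Sigma_{d_1}\circ d_0^\ast$ explicit and handle the two factors separately.
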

\begin{proof}
	It suffices to show that $\Free_c$ preserves partially (op)lax colimits when viewed as a functor $\Over{\bCC}{c}\to\bCC$. To that end, choose a marked 2-category $(\bII, E)$ and let $d\colon\bII\to\Over{\bCC}{c}$ be a diagram. Then $\Free_c^\epsilon(\colim_{\bII}^{\elaxoplax} d)$ fits into a pullback square 
	\[
	\begin{tikzcd}
		\Free_{c}^{\epsilon}(\colim_{\bII}^{\elaxoplax} d) \arrow[r] \arrow[d] & c^{[1]} \arrow[d]  \\
		\colim_{\bII}^{\elaxoplax} d \arrow[r] & c,
	\end{tikzcd}
	\]
	hence the claim follows from \cref{prop:condussy} in light of the observation that the right vertical map is a $(1-\epsilon)$-fibration.
\end{proof}

\begin{theorem}\label{thm:defcheck}
	Let $\bCC$ be a presentable 2-category. Then the following are equivalent:
	\begin{enumerate}
		\item The 2-category $\bCC$ satisfies fibrational descent.
		\item The following conditions hold:
		\begin{enumerate}
			\item[i)] For sufficiently large cardinals $\kappa$, we have classifiers for $\kappa$-compact $\epsilon$-fibrations.
			\item[ii)] Every $\epsilon$-fibration is exponentiable.
		\end{enumerate} 
	\end{enumerate}
\end{theorem}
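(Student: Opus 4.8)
The implication \cref{thm:defcheck}(1)$\Rightarrow$(2) requires no new work: condition i) is exactly \cref{cor:universeexists} and condition ii) is \cref{prop:condussy}, both of which take fibrational descent as their hypothesis. The entire content therefore lies in the converse (2)$\Rightarrow$(1), which I would prove by verifying axioms F1) and F2) of \cref{def:fibdescent} directly, with the two hypotheses playing complementary roles: exponentiability ii) supplies \emph{universality} of partially $\epsilon$lax colimits for $\epsilon$-fibrations, while the classifiers i) supply \emph{effectivity}.

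For universality, fix $q\colon\bII\to\bCC$ with $E$-$\epsilon$lax colimit $c$. The colimit cone lifts $q$ to a diagram in $\bCC_{/c}$ (using \cref{lem:pullbackFibration} for the functoriality) whose $E$-$\epsilon$lax colimit is the terminal object $\id_c$. If $p\colon x\to c$ is an $\epsilon$-fibration, then exponentiability makes $p^\ast\colon\bCC_{/c}\to\bCC_{/x}$ a left adjoint, hence partially $\epsilon$lax cocontinuous by \cref{prop:adjpreservelimtis}; since $p^\ast(\id_c)=\id_x$ and the forgetful functor $\bCC_{/x}\to\bCC$ creates colimits, applying $p^\ast$ to $\id_c\simeq\colim^{E\text{-}\epsilon\text{lax}}q$ yields $x\simeq\colim^{E\text{-}\epsilon\text{lax}}(q(i)\times_c x)$. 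This already settles the implication (2)$\Rightarrow$(1) of F1): if $\overline\alpha$ is $(E^{\colimcone}_{\epsilonlax},\epsilon)$-cartesian, then $p(\ast)\to q(\ast)$ is an $\epsilon$-fibration and $p(i)\simeq q(i)\times_{q(\ast)}p(\ast)$, so the displayed computation identifies $p(\ast)$ with $\colim^{E\text{-}\epsilon\text{lax}}p(i)$, i.e.\ $\overline p$ is a colimit cone.

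For the effectivity direction of F1) and for F2) I would use the classifiers. By hypothesis i), for $\kappa$ large the functor $\bigl(\Fib_{\bCC}^{\epsilon,\kappa}\bigr)_{/(-)}^{\leq 1}$ is represented by $\Omega^{\epsilon,\kappa}$, i.e.\ there is a $2$-natural equivalence of categories $\bigl(\Fib_{\bCC}^{\epsilon,\kappa}\bigr)_{/s}^{\leq 1}\simeq\bCC(s,\Omega^{\epsilon,\kappa})$. Combining this with the universal property of the $E$-$\epsilon$lax colimit (\cref{def:elaxlimit}, together with \cref{rem:cat2complete}), which gives $\bCC(c,\Omega^{\epsilon,\kappa})\simeq\lim^{\eoplax}_{\bII^{\epsilon\op}}\bCC(q(i),\Omega^{\epsilon,\kappa})$, an $(E,\epsilon)$-cartesian family of $\kappa$-compact fibrations is the same datum as a single map $c\to\Omega^{\epsilon,\kappa}$, hence as one $\kappa$-compact $\epsilon$-fibration $\tilde p\colon\tilde x\to c$. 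Faithfulness of the classifier identifies $\tilde x\times_c q(i)$ with $p(i)$, and universality gives $\tilde x\simeq\colim^{E\text{-}\epsilon\text{lax}}p(i)$; when $\overline p$ is assumed to be a colimit cone, uniqueness of colimits forces $p(\ast)\simeq\tilde x$ to be an $\epsilon$-fibration with $p(i)\simeq q(i)\times_{q(\ast)}p(\ast)$, proving (1)$\Rightarrow$(2) of F1). Axiom F2) follows by the same translation, now reading off \emph{morphisms} rather than objects under the representability equivalence. To remove the $\kappa$-compactness restriction one notes that every $\epsilon$-fibration is relatively $\kappa$-compact for $\kappa$ large (a routine accessibility fact), that the base and all fibers are $\kappa$-filtered colimits of $\kappa$-compact objects (\cref{prop:2catsasfilteredcolimits}), and that both sides of the comparison commute with these colimits by \cref{prop:filteredcolimitscommute} and \cref{prop:limitscommute}, universality guaranteeing compatibility of the pullback and fiber constructions. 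In the language of \cref{const:adjointablesq} this amounts to showing that $\res^{\epsilon}_q$ is an equivalence whose inverse forms $E$-$\epsilon$lax colimits.

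The main obstacle is the passage from the \emph{one}-dimensional content of the classifier to the genuinely \emph{two}-dimensional descent datum. Representability only pins down fibrations and morphisms of fibrations, i.e.\ the underlying $1$-category $\bigl(\Fib^{\epsilon,\kappa}_{\bCC}\bigr)^{\leq 1}$, whereas clause (3) of \cref{def:Ecarttransf} concerns $\epsilon$-cartesian $2$-morphisms of fibrations. I would handle this exactly as in \cref{subsec:localObjects}: the $2$-cell layer is detected by tensoring with $[1]$ (\cref{prop:tensoring} and \cref{lem:adjunctions1vs2dimensional}), and since $[1]\otimes c$ is again the $E$-$\epsilon$lax colimit of the $[1]\otimes q(i)$ while the representability equivalence is $2$-natural, evaluating it on the tensored objects upgrades the $1$-categorical descent equivalence to the full $2$-categorical one. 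Verifying that this tensoring is compatible with the marking $E^{\colimcone}_{\epsilonlax}$ on the cone and with the characterization of $\epsilon$-cartesian $2$-cells from \cref{rem:cart2cells} is the most delicate bookkeeping in the argument.
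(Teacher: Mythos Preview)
Your overall architecture is the same as the paper's: the classifier produces a unique $(E^{\colimcone}_{\epsilonlax},\epsilon)$-cartesian extension of any $(E,\epsilon)$-cartesian $\alpha$, exponentiability shows that for this extension $\overline p$ is an $E$-$\epsilon$lax colimit cone, and both directions of F1) then follow by comparing the two uniqueness statements (of cartesian extensions and of colimit cones). The paper simply writes this more tersely and treats F2) by the same mechanism.

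Where you work harder than necessary is in your ``main obstacle'' paragraph. Condition~(3) of \cref{def:Ecarttransf} does not concern the non-invertible $2$-cells \emph{inside} the fibres $\Fib^{\epsilon}_{/s}$ (which are indeed lost by $(-)^{\leq 1}$); it says that every $2$-morphism of $\bII$ is sent to an $\epsilon$-cartesian $2$-morphism of the total space $\Fib^{\epsilon}_{\bCC}$. But restricting $\Fib^{\epsilon}_{\bCC}$ to its $\epsilon$-cartesian $2$-morphisms is \emph{exactly} the passage to the unstraightening of $\bigl(\Fib^{\epsilon}_{/(-)}\bigr)^{\leq 1}$: for the $(1,\epsilon)$-fibration of \cref{prop:fibrations2functorial}, $\epsilon$-cartesian $2$-cells over an identity are precisely the invertible $2$-cells of the fibre. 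Hence an $(E,\epsilon)$-cartesian transformation over $q$ is nothing but a section of the unstraightening of $\bigl(\Fib^{\epsilon,\kappa}_{/q(-)}\bigr)^{\leq 1}\simeq\bCC(q(-),\Omega^{\epsilon,\kappa})$ sending marked edges to cartesian ones, i.e.\ an $E$-$\epsilon$lax cone $q\Rightarrow\underline{\Omega^{\epsilon,\kappa}}$, i.e.\ a map $c\to\Omega^{\epsilon,\kappa}$. The $2$-naturality of the representability already encodes condition~(3); your $[1]$-tensoring detour is correct in spirit but superfluous.

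The filtered-colimit manoeuvre at the end is also unneeded: since $\bII$ is small, one chooses a single regular $\kappa$ large enough that $\bII$ is $\kappa$-small, each $p(i)\to q(i)$ is a $\kappa$-compact $\epsilon$-fibration, and $\Omega^{\epsilon,\kappa}$ exists. This is exactly what the paper does, and no limiting procedure over $\kappa$ is required.
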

\begin{proof}
	The implication $1) \implies 2)$ follows from \cref{prop:fibrepresents} and \cref{prop:condussy}. We now show the converse. 
	
	Let $q \colon \bII \to \bCC$ be a diagram, where $(\bII, E)$ is a small marked 2-category, and let $\overline{q}\colon\bII^{\colimcone}_{\epsilonlax} \to \bCC$ be the associated $E$-$\epsilon$lax colimit cone. Let $\alpha \colon p \to q$ be $(E,\epsilon)$-cartesian. Since $\bII$ is small, we can find a suitably large cardinal $\kappa$ such that $\bII$ is $\kappa$-small and $\alpha$ is point-wise given by $\kappa$-compact $\epsilon$-fibrations. Using $i)$, we now deduce that there is a unique extension $\overline{\alpha}\colon \overline{p}\to\overline{q}$ of $\alpha$ that is $(E^\triangleright_{\epsilonlax}, \epsilon)$-cartesian. Moreover, applying $ii)$ to the $\epsilon$-fibration $\overline{p}(\ast)\to \overline{q}(\ast)$, we deduce that $\overline{p}$ must be an  $E$-$\epsilon$lax colimit cone. This is already sufficient to deduce that F1) holds. 
	The verification of axiom F2) proceeds in a completely analogous manner.
\end{proof}

\subsection{Giraud's theorem for 2-topoi}\label{subsec:giraud}
In 1-topos theory, Giraud's theorem characterises 1-topoi, as defined axiomatically via Giraud's axioms or via the notion of descent, as those categories that arise as left exact and accessible localizations of presheaf categories \cite[Theorem 6.1.0.6]{LurieHTT}. The goal of this section is to prove the following categorified version of this result:

\begin{theorem}\label{thm:giraud}
	For a 2-category $\bCC$, the following are equivalent:
	\begin{enumerate}
		\item $\bCC$ is a 2-topos;
		\item there exists a small 2-category $\bSS$ and an adjunction
		\[
		L \colon \PSh_{\Cat}(\bSS) \llra \bCC \colon R
		\]
		such that $R$ is fully faithful and commutes with $\kappa$-filtered colimits for some regular cardinal $\kappa$ and such that $L$ preserves oriented pullbacks.
	\end{enumerate}
\end{theorem}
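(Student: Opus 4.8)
The plan is to prove the two implications separately; the reverse implication (2)$\Rightarrow$(1) is short given the groundwork, while the forward implication (1)$\Rightarrow$(2) carries the real content.

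For (2)$\Rightarrow$(1): by \cref{thm:cat2topos} the 2-category $\Cat$ is a 2-topos, and since $\bSS^\op$ is small, \cref{prop:fun2topos} shows that $\PSh_{\Cat}(\bSS)\simeq\FUN(\bSS^\op,\Cat)$ is a 2-topos as well. The adjunction in (2) then meets the hypotheses of \cref{prop:inducing2toposstructure} verbatim (with source 2-topos $\PSh_{\Cat}(\bSS)$): $R$ is fully faithful and preserves $\kappa$-filtered colimits, and $L$ preserves oriented pullbacks. That proposition immediately yields that $\bCC$ is a 2-topos and that $L$ is a morphism of 2-topoi.

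For (1)$\Rightarrow$(2): assume $\bCC$ is a 2-topos, so in particular presentable. I would fix a regular cardinal $\kappa$ large enough that $\bCC^{\leq 1}$ is $\kappa$-accessible and the $\kappa$-compact objects are closed under oriented pullbacks---such $\kappa$ exist since the oriented-pullback functor, being a (partially lax) limit, is accessible, as used already in \cref{cor:universeexists}. Then I would set $\bSS=\bCC^\kappa$ and take $L=(h_{\bSS})_!(\iota)$ to be the Yoneda extension of the inclusion $\iota\colon\bSS\into\bCC$, with right adjoint $R=\bCC(\iota(-),-)$ as in \cref{rem:universalAdjointFunctorTheorem}. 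The proof of \cref{thm:characterisationPresentableCategories} shows that $R$ is fully faithful and preserves $\kappa$-filtered colimits, so the only remaining point---and the crux---is that $L$ preserves oriented pullbacks.

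I would establish this in two stages. \emph{First, on representables:} for $\kappa$-compact $a,b,c$ and a cospan $a\to c\leftarrow b$, the oriented pullback $a\orientedtimesrl_c b$ is again $\kappa$-compact by the choice of $\kappa$, and since the Yoneda embedding preserves partially lax limits (\cref{prop:yonedacont}) we get $h_a\orientedtimesrl_{h_c}h_b\simeq h_{a\orientedtimesrl_c b}$ in $\PSh_{\Cat}(\bSS)$; applying $L$ and using $Lh_s\simeq s$ gives $L(h_a\orientedtimesrl_{h_c}h_b)\simeq a\orientedtimesrl_c b\simeq Lh_a\orientedtimesrl_{Lh_c}Lh_b$. \emph{Second, bootstrapping to arbitrary cospans:} by \cref{cor:pullbackoffree} every oriented pullback is a strong pullback of a free fibration, and in any 2-topos the free-fibration functor $\Free^\epsilon_c(-)=(-)\times_c c^{[1]}$ preserves partially lax colimits (\cref{cor:freePreservesColimits}). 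Presenting an arbitrary cospan as a partially lax colimit of cospans of representables (possible by co-Yoneda, \cref{prop:coYoneda}), and using the cocontinuity of $L$ together with the first stage, one reduces the comparison map $L(\Free^\epsilon_c(x))\to\Free^\epsilon_{Lc}(Lx)$ to the representable case, separately in the total space $x$ and in the base $c$.

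The main obstacle is the base-variable step of this bootstrap. The functor $\Free^\epsilon_c(-)$ depends on $c$ in a way that makes a naive ``colimit in the base'' argument circular, since $c^{[1]}$ itself involves $c$. The reduction must therefore route through the descent equivalence $\Fib^\epsilon_{/c}\simeq\lim^{\eoplax}_{\bII^{\epsilon\op}}\Fib^\epsilon_{/q(-)}$ of \cref{prop:fibrepresents} and the gluing functor of \cref{lem:glue}, which reconstruct a fibration over a lax colimit $c=\colim^{\elax}_{\bII}q$ from a compatible family of fibrations over the $q(i)$. The content of fibrational descent enters precisely here, in the guise of the universality of partially lax colimits along $\epsilon$-fibrations---equivalently, the exponentiability of fibrations from \cref{prop:condussy}---which guarantees that forming $\Free^\epsilon_c(x)$ commutes with such a presentation of $c$ on both sides of the adjunction. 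I expect verifying that $L$ intertwines these gluing data on $\PSh_{\Cat}(\bSS)$ and on $\bCC$, keeping careful track of the markings, to be the technical heart of the argument; once it is in place, combining the two stages completes (1)$\Rightarrow$(2).
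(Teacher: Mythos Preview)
Your overall strategy matches the paper's: (2)$\Rightarrow$(1) via \cref{prop:inducing2toposstructure}, and for (1)$\Rightarrow$(2) take $\bSS=\bCC^\kappa$ with $\kappa$-compacts closed under oriented pullbacks and then argue that $L$ preserves oriented pullbacks. The paper isolates this last step as a standalone result (\cref{prop:preGiraud}: if $\hat L=L\circ h_{\bSS}$ preserves oriented pullbacks then so does $L$), and your first stage on representables is exactly how its proof begins.

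The gap is in your second stage. To invoke the descent equivalence on the $\bCC$ side---that is, to use $\Fib^\epsilon_{/Lc}\simeq\lim^{\eoplax}\Fib^\epsilon_{/Lq(-)}$ for a presentation $c\simeq\colim^{\elax}q$---you must already know that $L$ carries $\epsilon$-fibrations and their morphisms in $\PSh_{\Cat}(\bSS)$ to $\epsilon$-fibrations in $\bCC$, which is precisely what is in question. The paper breaks this circularity by a careful ordering that you do not anticipate: after the representable-base case it (i) shows $L$ preserves pullbacks of fibrations along maps \emph{between representables} (reducing to free fibrations by a retract argument, then using \cref{prop:rightkan}); (ii) uses descent \emph{only on the source side} to write any fibration as the colimit of an $(E,\epsilon)$-cartesian family with representable base, thereby deducing that $L$ preserves fibrations and their morphisms over arbitrary base; (iii) then handles pullbacks of fibrations over arbitrary base, now permitted to use descent on both sides; and (iv) treats $\Free^\epsilon_c$ for general $c$ not by presenting $c$ as a colimit, but by constructing an induced adjunction $L_c\dashv R_c$ between fibration slices and factoring $L_c\simeq L_{RL(c)}\circ(\eta_c)_!$ via \cref{thm:basechangetheorem} and \cref{lem:kanextendingfree}, which reduces to the case where the base lies in the image of $R$ and step (iii) applies directly to $R(L(c)^{[1]})\simeq RL(c)^{[1]}$. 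Note also that your phrase ``presenting an arbitrary cospan as a partially lax colimit of cospans of representables'' is not what \cref{prop:coYoneda} provides---it presents each object separately, not a diagram of shape $\Lambda^2_2$---and the paper's argument never attempts such a simultaneous presentation.
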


In order to prove \cref{thm:giraud}, we need to first establish a few auxiliary results:

\begin{proposition}\label{prop:cofree}
	Let $\bCC$ be a 2-topos, and let $c\in\bCC$ be an arbitrary object. Then the inclusion 
	\[
	U_c \colon \Fib_{/c}^{\epsilon} \to \bCC_{/c}
	\]
	creates partially (op)lax colimits.
\end{proposition}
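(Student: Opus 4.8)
The plan is to reduce everything to a single mechanism: on the slice $\bCC_{/c}$ the free $\epsilon$-fibration endofunctor $\Free^{\epsilon}_c$ is cocontinuous (\cref{cor:freePreservesColimits}), and the partially (op)lax colimit functor is a $2$-functor (being a left adjoint, \cref{rem:limfunctor}), hence preserves adjunctions. Replacing $\bCC$ by its mirror $\bCC^\co$ (\cref{rem:1fib}) I may assume $\epsilon=0$. First I would record the underlying colimit: since $\bCC$ is presentable it is cocomplete, so $\bCC_{/c}$ is cocomplete with partially (op)lax colimits computed on total objects through the cocontinuous projection $\ev_0\colon\bCC_{/c}\to\bCC$ (a left adjoint, its right adjoint being $a\mapsto(a\times c\to c)$, so \cref{prop:adjpreservelimtis} applies). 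Thus for a diagram $d\colon(\bII,E)\to\Fib^{0}_{/c}$ with $d(i)=(x_i\xrightarrow{p_i}c)$, the colimit $X:=\colim^{\elax}_\bII(U_cd)$ exists in $\bCC_{/c}$, with structure map $\pi\colon x\to c$ whose total object is $\colim^{\elax}_\bII x_i$. It then remains to prove (A) that $\pi$ is a $0$-fibration and (B) that $X$, with its colimit cocone, computes the colimit of $d$ inside $\Fib^{0}_{/c}$ and is preserved by $U_c$.

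For (A) the point is that the pointwise left inclusions $\eta_{p_i}\colon x_i\to\Free^{0}_c(x_i)$ and their left adjoints $\ell_{p_i}$ assemble into an adjunction in the functor $2$-category. By \cref{def:fibrations}, a morphism of $0$-fibrations is precisely a square whose $\eta$-naturality square is left adjointable; this is exactly the datum upgrading $d$ to a diagram $\hat d\colon\bII\to\FUN(\Adj,\bCC_{/c})$ of adjunctions, carrying each $i$ to $\ell_{p_i}\dashv\eta_{p_i}$. Transposing across $\FUN(\bII,\FUN(\Adj,\bCC_{/c}))\simeq\FUN(\Adj,\FUN(\bII,\bCC_{/c}))$ produces an adjunction $\ell_d\dashv\eta_d$ in $\FUN(\bII,\bCC_{/c})$, where $\eta_d\colon d\Rightarrow\Free^{0}_c\circ d$ is the pointwise unit. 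Applying the colimit $2$-functor $\colim^{\elax}_\bII$ and identifying $\colim^{\elax}_\bII(\Free^{0}_c\circ d)\simeq\Free^{0}_c(X)$ via \cref{cor:freePreservesColimits}, while naturality of $\eta\colon\id\Rightarrow\Free^{0}_c$ identifies $\colim^{\elax}_\bII(\eta_d)$ with $\eta_\pi$, I obtain an adjunction $\ell_X\dashv\eta_\pi$ in $\bCC_{/c}$ whose counit is the colimit of the invertible counits $\ell_{p_i}\eta_{p_i}\xrightarrow{\simeq}\id$ and is therefore invertible. Hence $\eta_\pi$ is a fully faithful right adjoint, i.e.\ a left inclusion (\cref{def:reflections}), so $\pi$ is a $0$-fibration by \cref{def:fibrations} (cf.\ \cref{rem:minimalConditionsFibration}).

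For (B) I would run the same transport on morphisms. The cocone legs $\lambda_i\colon x_i\to X$ arise as the colimit legs of the adjunction $\ell_d\dashv\eta_d$, so their $\eta$-naturality squares are adjointable and each $\lambda_i$ is a morphism of $0$-fibrations, lifting the cocone to $\Fib^{0}_{/c}$. Finally, for any $0$-fibration $q\to c$, a map $g\colon X\to q$ over $c$ is a morphism of $0$-fibrations iff its $\eta$-naturality square is left adjointable, i.e.\ iff the corresponding mate $2$-morphism is invertible; since $X\simeq\colim^{\elax}_\bII x_i$ and $\Free^{0}_c(X)\simeq\colim^{\elax}_\bII\Free^{0}_c(x_i)$, the relevant mapping categories are (op)lax limits on which the legs act jointly conservatively on $2$-morphisms (cf.\ \cref{prop:conservative}), so invertibility of the mate can be tested after restricting along each $\lambda_i$, where it reduces to the mate for $g\lambda_i$. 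Thus $g$ is a morphism of $0$-fibrations iff every $g\lambda_i$ is, and combined with the colimit identification $\bCC_{/c}(X,q)\simeq\Nat^{\elax}_{\bII,\bCC_{/c}}(U_cd,\underline q)$ this yields $\Fib^{0}_{/c}(X,q)\simeq\Nat^{\elax}_{\bII,\Fib^{0}_{/c}}(d,\underline q)$, exhibiting $X$ as the colimit of $d$ in $\Fib^{0}_{/c}$, preserved by $U_c$; conservativity and local fullness of $U_c$ make the lift essentially unique, so $U_c$ creates partially (op)lax colimits. The main obstacle I anticipate is the coherence in (A): promoting the pointwise retractions $\ell_{p_i}$ and the adjointable morphism-of-fibration squares to a genuinely (coherently) natural diagram of adjunctions $\hat d$, so that transposition delivers an honest adjunction in $\FUN(\bII,\bCC_{/c})$; once this is secured, everything else is a formal consequence of $2$-functoriality of $\colim^{\elax}_\bII$ together with the cocontinuity of $\Free^{0}_c$.
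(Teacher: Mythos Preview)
Your approach is essentially identical to the paper's: both pass the pointwise adjunction $\ell_{p_i}\dashv\eta_{p_i}$ to an adjunction in $\FUN(\bII,\bCC_{/c})$, transport it along the colimit/left Kan extension $2$-functor, and then use cocontinuity of $\Free^{\epsilon}_c$ (\cref{cor:freePreservesColimits}) to identify the result with $\eta_\pi$; the universal property is checked by restricting the relevant mate to the cocone legs. The only packaging difference is that the paper left Kan extends along $\bII\hookrightarrow\bII^{\colimcone}_{\elaxoplax}$ rather than applying $\colim^{\elaxoplax}_{\bII}$ directly, which makes the fact that the cocone legs are morphisms of fibrations automatic.

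Regarding your stated obstacle: you do not need to assemble $\hat d\colon\bII\to\FUN(\Adj,\bCC_{/c})$ from pointwise data. The transformation $\eta_d\colon d\Rightarrow\Free^{\epsilon}_c\circ d$ is \emph{already} a coherent $1$-morphism in $\FUN(\bII,\bCC_{/c})$, being the whisker of the global unit $\id\Rightarrow\Free^{\epsilon}_c$ (the unit of the adjunction in \cref{cor:semiLaxPullbackRightAdjointFixedBase}) with $d$. What remains is to check that this given natural transformation is a right adjoint in the functor $2$-category, and for that one invokes the pointwise criterion (each component is a right adjoint and each naturality square is left adjointable, the latter being precisely the morphism-of-fibrations condition). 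The paper makes the same move, asserting it in one line.
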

\begin{proof}
	Let $q\colon \bII \to 
	\Fib_{/c}^{\epsilon} \to \bCC_{/c}$ be a diagram, where $(\bII,E)$ is a marked 2-category, and let us set $F(q)=\Free_{c}^\epsilon q$.
	We observe that we have a canonical transformation $q \to F(q)$ in $\FUN(\bII,\bCC_{/c})$, which is a right (resp.\ left) adjoint as $q$ factors through $\Fib_{/c}^{\epsilon}$. Via left Kan extension, we thus obtain a right (left) adjoint $\overline{q} \to \overline{F(q)}$ in $\FUN(\bII^{\colimcone}_{\elaxoplax},\bCC_{/c})$. As \cref{cor:freePreservesColimits} moreover implies that the canonical map $\overline{F(q)}\to F(\overline{q})$ is an equivalence, we conclude that $\overline{q}$ factors through $\Fib_{/c}^\epsilon$.
	
	We claim that $\overline{q}$ is a colimit cone in $\Fib^{\epsilon}_{/c}$. To see this, consider an $E$-(op)lax transformation $\alpha \colon q \to \underline{p}$, where the codomain is the constant diagram on an $\epsilon$-fibration $p\colon y \to c$. Let $\overline{q}(*)$ be given by a fibration $q\colon x \to c$ and where the transformation is point-wise given by morphisms of $\epsilon$-fibrations. To finish the proof, we need to show that the induced map $\overline{q}(\ast)\to p$ is a morphism of $\epsilon$-fibrations as well. In other words, we need to show that the (op)lax diagram
	\[
	\begin{tikzcd}
		\Free_{c}^{\epsilon}(q(\ast)) \arrow[r] \arrow[d] & \Free_{c}^{\epsilon}(p)  \arrow[d] \\
		q(\ast) \arrow[r] & p
	\end{tikzcd}
	\]
	is commutative. Since the vertical map on the left is obtained as the colimit of the map $F(q)\to q$ that is left (right) adjoint to the morphism $q\to F(q)$ considered above, it suffices to check that the above diagram commutes after restriction to each of the maps $F(q)(i)\to q(i)$ for $i\in\bII$. As the latter commutes by assumption, the claim follows. 
\end{proof}

\begin{corollary}\label{cor:Fib2Presentable}
	Let $\bCC$ be a 2-topos, and let $c\in\bCC$ be an arbitrary object. Then the 2-category $\Fib_{/c}^{\epsilon}$ is presentable.
\end{corollary}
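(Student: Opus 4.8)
The plan is to verify the two defining properties of presentability from \cref{def:presentability}: that $\Fib_{/c}^{\epsilon}$ is cocomplete and that its underlying $1$-category is accessible. The engine for both is \cref{prop:cofree}, which tells us that the forgetful functor $U_c\colon \Fib_{/c}^{\epsilon}\to\bCC_{/c}$ creates partially (op)lax colimits, together with the free--forgetful adjunction $\Free_c^{\epsilon}\dashv U_c$ supplied by \cref{prop:freeFibrationIsFreeFibration}. As a preliminary step, I would first record that the strong slice $\bCC_{/c}$ is presentable: it is cocomplete, since partially (op)lax colimits are computed in the cocomplete $\bCC$ and then equipped with the structure map to $c$ induced by the colimit cocone, and its underlying $1$-category is equivalent to $(\bCC^{\leq 1})_{/c}$ (objects are maps into $c$, and morphism spaces are fibres of postcomposition), which is a slice of the presentable $1$-category $\bCC^{\leq 1}$ (see \cref{rem:2presentableImplies1presentable}) and hence presentable.

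Granting this, cocompleteness of $\Fib_{/c}^{\epsilon}$ is immediate: since $\bCC_{/c}$ admits all small partially (op)lax colimits and $U_c$ creates them by \cref{prop:cofree}, the $2$-category $\Fib_{/c}^{\epsilon}$ admits them as well. Local smallness is also clear, as $U_c$ is locally fully faithful and $\bCC_{/c}$ is locally small. It thus remains to show that $(\Fib_{/c}^{\epsilon})^{\leq 1}$ is accessible.

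Here the main obstacle is that $U_c$ is \emph{not} fully faithful: the morphisms of $\epsilon$-fibrations form a non-full subcategory of $\AR_{\bCC}$ (compare \cref{def:fibrations}), so $\Fib_{/c}^{\epsilon}$ is not a reflective sub-$2$-category of $\bCC_{/c}$ and \cref{prop:localisationPresentable} does not apply directly. To circumvent this, I would work at the level of underlying $1$-categories and invoke monadicity. Applying $(-)^{\leq 1}$ turns $\Free_c^{\epsilon}\dashv U_c$ into a $1$-categorical adjunction over the presentable category $(\bCC_{/c})^{\leq 1}$. The right adjoint $U_c^{\leq 1}$ is conservative, because the inverse in $\bCC_{/c}$ of an equivalence that is a morphism of $\epsilon$-fibrations is again a morphism of $\epsilon$-fibrations, and it preserves $\kappa$-filtered colimits and geometric realisations, since these are instances of the partially (op)lax colimits created by $U_c$ (again \cref{prop:cofree}). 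By the Barr--Beck--Lurie criterion $U_c^{\leq 1}$ is therefore monadic, exhibiting $(\Fib_{/c}^{\epsilon})^{\leq 1}$ as the category of algebras for the monad $U_c^{\leq 1}(\Free_c^{\epsilon})^{\leq 1}$. As $\Free_c^{\epsilon}$ is a left adjoint and $U_c$ preserves $\kappa$-filtered colimits, this monad is accessible, and the category of algebras for an accessible monad on a presentable $1$-category is presentable; in particular $(\Fib_{/c}^{\epsilon})^{\leq 1}$ is accessible. Together with cocompleteness this yields presentability of $\Fib_{/c}^{\epsilon}$. The delicate point to get right is the bookkeeping of which colimits descend to the underlying $1$-categories, so that the hypotheses of the monadicity theorem are genuinely met.
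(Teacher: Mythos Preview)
Your proposal is correct and follows essentially the same strategy as the paper: cocompleteness via \cref{prop:cofree}, then monadicity of $U_c^{\leq 1}$ (using conservativity and colimit-preservation from the adjunction $\Free_c^{\epsilon}\dashv U_c$) to deduce that the underlying $1$-category is presentable. The paper is slightly terser—it notes that $U_c$ is fully cocontinuous, so the monad preserves \emph{all} colimits (not just $\kappa$-filtered ones), and then invokes \cite[Corollary~4.2.3.7]{LurieHA} directly—but this is a cosmetic difference, not a different argument.
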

\begin{proof}
	By \cref{prop:cofree}, the 2-category $\Fib_{/c}^{\epsilon}$ is cocomplete, so it suffices to show that the underlying 1-category is presentable. Since the inclusion $U_c\colon \Fib_{/c}^{\epsilon} \to \bCC_{/c}$ admits a left adjoint $\Free_c$ and since $U_c$ is cocontinuous by \cref{prop:cofree} and is moreover conservative, the underlying adjunction of 1-categories is monadic, and the associated monad preserves colimits. Thus we may apply \cite[Corollary~4.2.3.7]{LurieHA} to deduce the desired result.
\end{proof}

\begin{proposition}\label{prop:rightkan}
	Let $\bCC$ be a 2-topos a consider a morphism $f\colon c \to d$ in $\bCC$. Then the pullback functor
	\[
	f^* \colon \Fib_{/d}^{\epsilon} \to \Fib_{/c}^{\epsilon}
	\]
	is cocontinuous.
\end{proposition}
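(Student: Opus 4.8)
The plan is to reduce the statement, via the forgetful functors, to a universality property for colimits of $\epsilon$-fibrations in $\bCC$, to settle the case where $f$ is itself a fibration by a direct appeal to fibrational descent, and then to reduce the general case to this one by factoring $f$ through a product projection. First I would invoke \cref{prop:cofree}: the inclusions $U_c\colon \Fib^\epsilon_{/c}\to\Over{\bCC}{c}$ and $U_d\colon \Fib^\epsilon_{/d}\to\Over{\bCC}{d}$ create partially (op)lax colimits, and pullback along $f$ is compatible with forgetting the fibration structure, so that $U_c f^\ast\simeq f^\ast U_d$. Since colimits in the strong slices are created by the domain projections to $\bCC$, the claim reduces to showing that for any diagram $q\colon\bII\to\Fib^\epsilon_{/d}$ of $\epsilon$-fibrations $x_i\to d$ with colimit $x\to d$ (again an $\epsilon$-fibration, by \cref{prop:cofree}), the canonical comparison
\[
	\colim^{\elaxoplax}_{\bII}(c\times_d x_i)\longrightarrow c\times_d x
\]
is an equivalence in $\bCC$; in other words, that $c\times_d(-)$ preserves colimits of $\epsilon$-fibrations.

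When $f$ is itself an $\epsilon$-fibration this follows immediately from fibrational descent, exactly as in the proof of \cref{prop:condussy}. Indeed, by \cref{lem:pullbackFibration} the projections $c\times_d x_i\to x_i$ and $c\times_d x\to x$ are $\epsilon$-fibrations, and using the identification $c\times_d x_i\simeq x_i\times_x(c\times_d x)$ the squares over the cone are strong pullbacks; hence the evident transformation $(c\times_d x_i, c\times_d x)\Rightarrow (x_i, x)$ onto the colimit cone is $(E^\colimcone_{\epsilonlax},\epsilon)$-cartesian. Axiom F1 of \cref{def:fibdescent} then identifies $(c\times_d x_i, c\times_d x)$ with a colimit cone, which is exactly the desired equivalence.

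For general $f$ I would factor it as $c\xrightarrow{\Gamma_f} c\times d\xrightarrow{\pi_d} d$ through its graph and the projection, so that $f^\ast\simeq \Gamma_f^\ast\pi_d^\ast$. The projection $\pi_d$ is an $\epsilon$-fibration, being the base change along $d\to\ast$ of the $\epsilon$-fibration $c\to\ast$ (any map to the terminal object is an $\epsilon$-fibration), so $\pi_d^\ast$ is cocontinuous by the fibration case. It therefore remains to treat the graph $\Gamma_f$, which is the base change of the diagonal $\Delta_d\colon d\to d\times d$; equivalently, one is reduced to cocontinuity of pullback along $\Delta_d$. The hard part is precisely this last step: the diagonal is \emph{not} an $\epsilon$-fibration, so descent does not apply to it directly, and this is exactly where the non-exponentiability of general maps in a 2-topos makes itself felt. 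I would resolve it by observing, through \cref{cor:pullbackoffree}, that the pullback of a free fibration along $\Delta_d$ is computed by an oriented pullback, whose formation preserves partially (op)lax colimits by \cref{cor:freePreservesColimits}; since the monadic adjunction $\Free_{d\times d}\dashv U_{d\times d}$ (cf.\ the proof of \cref{cor:Fib2Presentable}) exhibits every $\epsilon$-fibration over $d\times d$ as a partially (op)lax colimit of free ones, this is enough to conclude that $\Delta_d^\ast$, and hence $\Gamma_f^\ast$ and $f^\ast$, are cocontinuous.
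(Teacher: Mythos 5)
Your reduction via \cref{prop:cofree} and your treatment of the case where $f$ is itself an $\epsilon$-fibration are both correct (the latter is essentially \cref{prop:condussy}, which you could cite directly). The problems are concentrated in the last step, and they are genuine. First, cocontinuity of $\Delta_d^\ast$ does not formally transfer to its base change $\Gamma_f^\ast$: the pullback square relating the two only gives $\Gamma_f^\ast\circ(f\times\id)^\ast\simeq f^\ast\circ\Delta_d^\ast$, which presupposes control of $f^\ast$ --- the very thing you are proving. Second, \cref{cor:freePreservesColimits} concerns the endofunctor $\Free^\epsilon_c$, i.e.\ the oriented pullback along the \emph{identity}; it says nothing about the functor $y\mapsto \Delta_d^\ast\Free^\epsilon_{d\times d}(y)$, which is the oriented pullback of $y$ against $\Delta_d$. (That functor is in fact cocontinuous, but because it is the strong pullback along the free $(1-\epsilon)$-fibration on $\Delta_d$, so that \cref{prop:condussy} applies --- a different mechanism from the one you cite.) Third, and most seriously, knowing that $\Delta_d^\ast\Free^\epsilon_{d\times d}(-)$ is cocontinuous and that every $\epsilon$-fibration is a colimit of free ones does not yield cocontinuity of $\Delta_d^\ast$ on all of $\Fib^\epsilon_{/d\times d}$: you would still have to commute $\Delta_d^\ast$ past the realization of the bar resolution, which requires an additional input (for instance, absoluteness of split geometric realizations, using that the resolution is split after applying the forgetful functor). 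As written, the argument does not close.

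The paper avoids this entire detour with one observation. It factors $f$ as $c\xrightarrow{\eta_d}\Free^{1-\epsilon}_d(c)\xrightarrow{\Free^{1-\epsilon}_d(f)}d$. The second map is a $(1-\epsilon)$-fibration, so pullback along it is cocontinuous by \cref{prop:cofree} and \cref{prop:condussy}, exactly as in your fibration case. The key point is the first map: $\eta_d$ is a \emph{right adjoint in $\bCC$}, with left adjoint the canonical projection $\Free^{1-\epsilon}_d(c)\to c$, so 2-functoriality of $\Fib^\epsilon_{/(-)}$ (\cref{prop:fibrations2functorial}) makes $\eta_d^\ast$ a \emph{left} adjoint and hence cocontinuous. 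This single step replaces your graph/diagonal/free-resolution argument. If you wish to salvage your route, the cleanest repair is to apply this adjunction observation to $\Gamma_f$ (or to $f$) directly rather than attempting to resolve by free fibrations.
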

\begin{proof}
	We will only consider the case $\epsilon=0$ since the remaining case is strictly dual. We factor $f$ as 
	\begin{equation*}
		c \xrightarrow{\eta_d} \Free_{d}^1(c) \xrightarrow{\Free_d^1(f)} d.
	\end{equation*}
	Observe that since $\Free_d^1(f)$ is a fibration, it follows from \cref{prop:cofree} and \cref{prop:condussy} that pulling back along $\Free_d^1(f)$ preserves colimits, so that it suffices to show that pullback along $\eta_d$ preserves colimits. But the latter is a \emph{right} adjoint, with left adjoint given by the canonical projection $\Free_d^1(c)\to c$, so that $\eta_d^\ast$ must be a left adjoint and therefore cocontinuous as well.
\end{proof}

\begin{proposition}\label{prop:preGiraud}
	Let $\bCC$ be a 2-topos and consider an adjunction
	\[
	L \colon \PSh_{\Cat}(\bSS) \llra \bCC \colon R
	\]
	such that the right adjoint $R$ is fully faithful. Suppose that $\bSS$ has oriented pullbacks and let $\hat{L} \colon \bSS \to \bCC$ be the restriction of $L$ along the Yoneda embedding. Then if $\hat{L}$ preserves oriented pullbacks, so does $L$.
\end{proposition}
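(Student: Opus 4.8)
The plan is to reduce the preservation of an arbitrary oriented pullback to the case of a cospan of representables, where it holds by the hypothesis on $\hat L$. The conceptual input is \cref{cor:pullbackoffree}: every oriented pullback $a\orientedtimesrl_c b$ is the strong pullback $a\times_c\Free^{\epsilon}_c(b)$ of the free fibration $\Free^{\epsilon}_c(b)\to c$ along the leg $a\to c$. Since $L$ is a left adjoint it preserves all partially (op)lax colimits by \cref{prop:adjpreservelimtis}, so the entire strategy is to rewrite oriented pullbacks as partially lax colimits of oriented pullbacks of representable cospans and to transport these colimits across $L$, using that both $\PSh_{\Cat}(\bSS)$ and $\bCC$ are $2$-topoi and hence that the free-fibration and pullback constructions enjoy fibrational descent.

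First I would dispose of the two legs while keeping the base fixed. For fixed $c$, the functor $b\mapsto a\orientedtimesrl_c b$ is lax cocontinuous: it factors as $\Free^{\epsilon}_c(-)$, which preserves partially (op)lax colimits by \cref{cor:freePreservesColimits}, followed by pullback of the resulting fibration along $a\to c$ (lax cocontinuous by \cref{prop:rightkan} together with the fact that the inclusion $\Fib^{\epsilon}_{/a}\to\bCC_{/a}$ creates such colimits, \cref{prop:cofree}), followed by the forgetful functor $\bCC_{/a}\to\bCC$, which is a left adjoint to $a\times-$ and hence lax cocontinuous by \cref{prop:adjpreservelimtis}. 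Symmetrically, $a\mapsto a\orientedtimesrl_c b$ is lax cocontinuous, this time using that pullback along the fibration $\Free^{\epsilon}_c(b)\to c$ preserves partially lax colimits, which is exactly the content of \cref{prop:condussy}. Writing the legs $a\to c$ and $b\to c$ as partially lax colimits of representables over $c$ by co-Yoneda (\cref{prop:coYoneda}) and using that $L$ preserves these colimits, this reduces the claim to cospans whose two legs are representable while the base is still arbitrary. In particular it already settles the case of a \emph{representable base}, since there the fully reduced situation is a cospan of representables and the comparison map is an equivalence by hypothesis on $\hat L$.

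The remaining and genuinely delicate point is to descend along the base. Decomposing $c\simeq\colim^{\elax}_{j}h_{r_j}$ by co-Yoneda and applying fibrational descent to the fibration $\Free^{\epsilon}_c(b)\to c$, one expresses $\Free^{\epsilon}_c(b)$ as the partially lax colimit of its restrictions $\Free^{\epsilon}_c(b)\times_c h_{r_j}\simeq\Free^{\epsilon}_{h_{r_j}}(b\times_c h_{r_j})$, and hence $a\orientedtimesrl_c b$ as a partially lax colimit of oriented pullbacks living over the representable bases $h_{r_j}$. Applying $L$ and invoking the representable-base case term by term, the only thing left to verify is that $L$ is compatible with the base-change pullbacks $-\times_c h_{r_j}$ appearing in this decomposition, i.e. that the comparison map assembles to an equivalence against the corresponding fibrational-descent decomposition of $L(a)\orientedtimesrl_{L(c)}L(b)$ in $\bCC$. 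This compatibility with base change is the main obstacle, as it does not follow formally from cocontinuity of $L$.

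I expect to resolve the obstacle by passing to the global, saturation-theoretic formulation rather than matching the two descent decompositions by hand. Since $R$ is fully faithful, \cref{thm:characterisationPresentableCategories} presents $\bCC\simeq\Loc_S(\PSh_{\Cat}(\bSS))$ as a localization, and the statement ``$L$ preserves oriented pullbacks'' is equivalent to the assertion that the strongly saturated class $W$ of maps inverted by $L$ is stable under the operation of pulling back free fibrations (equivalently, stable under the monad $RL$ applied to an oriented-pullback square). The two lax-cocontinuity computations above show that this stability is closed under the partially lax colimit manipulations, and fibrational descent shows it is closed under decomposing the base; moreover, every $\epsilon$-fibration is a retract of a free one (the unit $\eta_p$ is a split inclusion, \cref{rem:minimalConditionsFibration}), so stability is also closed under retracts. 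Because $W$ is generated as a saturated class by maps between compact objects, it therefore suffices to check the stability on generators, and there it is precisely the hypothesis that $\hat L$ preserves the oriented pullbacks that exist in $\bSS$. This reduces the base-change compatibility to the representable case and closes the argument.
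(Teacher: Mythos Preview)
Your reduction of the two legs to representables is correct and pleasant: the cocontinuity of $a\mapsto a\orientedtimesrl_c b$ and $b\mapsto a\orientedtimesrl_c b$ in both $\PSh_{\Cat}(\bSS)$ and $\bCC$ follows exactly as you say from \cref{cor:freePreservesColimits}, \cref{prop:condussy}, \cref{prop:rightkan} and \cref{prop:cofree}, and together with co-Yoneda this reduces the problem to cospans $h_s\to c\leftarrow h_t$ with $c$ arbitrary. The representable-base case is then immediate from the hypothesis on $\hat L$.

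The gap is in your handling of the general base. The saturation argument you sketch does not close it. First, the statement of the proposition does not assume $R$ is accessible, so you cannot assert that the class $W$ of $L$-local equivalences is generated by maps between compact objects. Second, and more seriously, even if it were so generated there is no connection between those generators and the hypothesis on $\hat L$: the hypothesis says that $\hat L$ preserves the oriented pullbacks that \emph{exist in $\bSS$}, which is a statement about the structure of $\bSS$ and says nothing whatsoever about which maps of presheaves $L$ inverts. Your final sentence conflates two unrelated things. Third, the asserted equivalence between ``$L$ preserves oriented pullbacks'' and ``$W$ is stable under pulling back free fibrations'' is not established and is not obvious; the 1-categorical analogue you have in mind is a theorem, not a definition, and its 2-categorical version would itself require proof. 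Finally, the fibrational-descent decomposition of $\Free^\epsilon_c(h_t)$ along $c\simeq\colim^{\elax}_j h_{r_j}$ introduces the strong pullbacks $h_t\times_c h_{r_j}$, whose preservation by $L$ is exactly the kind of statement you are trying to prove, so the argument is circular at that point.

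The paper avoids this circularity by a more careful bootstrap. Rather than decomposing the oriented pullback itself along the base, it first proves directly that $L$ preserves free $\epsilon$-fibrations over a \emph{representable} base (this is essentially your leg-reduction, applied to $\Free^\epsilon_{h_s}(-)$), then uses this together with fibrational descent in $\bCC$ to show that $L$ carries arbitrary $\epsilon$-fibrations to $\epsilon$-fibrations and preserves strong pullbacks along them. The remaining step---that $L$ preserves \emph{free} $\epsilon$-fibrations over an arbitrary base---is handled by constructing an explicit adjunction
\[
L_c\colon\Fib^\epsilon_{/c}\ \leftrightarrows\ \Fib^\epsilon_{/L(c)}\colon R_c
\]
on slices, factoring $L_c\simeq L_{RL(c)}\circ(\eta_c)_!$, and using \cref{lem:kanextendingfree} to see that $(\eta_c)_!$ preserves free fibrations. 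This is the genuinely new idea that your saturation heuristic would have to replace, and it does not.
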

\begin{proof}
	Let $h_{\bSS} \colon \bSS \to \Fun(\bSS^{\op},\Cat)$ denote the Yoneda embedding. The proof will procede in several steps:
	\begin{description}
		\item[Free fibrations over a representable base]
		We first show that given $s\in\bSS$, the canonical map
		\begin{equation*}
			L\Free^\epsilon_{h_{\bSS}(s)}(-)\to \Free_{L h_{\bSS}(s)}^\epsilon(-)
		\end{equation*}
		is an equivalence. As both $\bCC$ and $\PSh_{\Cat}(\bSS)$ are 2-topoi (the latter by \cref{prop:fun2topos}), we deduce from the fact that $L$ is cocontinuous and \cref{cor:freePreservesColimits} that both sides are cocontinuous as functors $\Over{\PSh_{\Cat}(\bSS)}{h_{\bSS}(s)}\to \Over{\bCC}{Lh_{\bSS}(s)}$. Together with \cref{prop:coYoneda}, this implies that the above map is an equivalence already if it is an equivalence when restricted along the inclusion $\Over{\bSS}{s}\into\Over{\PSh_{\Cat}(\bSS)}{h_{\bSS}(s)}$ that is induced by the Yoneda embedding. Since this restriction is an equivalence by the very assumption on $\hat L$ to preserve oriented pullbacks, the claim follows.
		
		\item[Pullback of a fibration along a map of representables]
		Next, we show that $L$ preserves pullback squares of the form
		\[
		\begin{tikzcd}
			x_{|t} \arrow[r] \arrow[d] & x \arrow[d,"p"] \\
			h_{\bSS}(t) \arrow[r] & h_{\bSS}(s)
		\end{tikzcd}
		\]
		where the vertical arrows are $\epsilon$-fibrations. Note that since $p$ is an $\epsilon$-fibration, we have a retract diagram $  p \to \Free^{\epsilon}_{h_{\bSS}(s)}(p) \to p$ in which the inclusion is right (resp.\ left) adjoint to the retraction.
		Note that together with the previous step, this already implies that $L(p)$ is an $\epsilon$-fibration. Moreover, together with functoriality of pullbacks, this implies that we may reduce to the case where $p$ is a free fibration. Now the claim follows from the previous step together with \cref{prop:cofree} and \cref{prop:rightkan} in light of the assumption that $\hat L$ preserves oriented pullbacks (and therefore strong pullbacks of fibrations).
		
		\item[Fibrations over a general base]
		Let $p \colon x \to c$ be an $\epsilon$-fibration, and let us show that $L(p)$ is still an $\epsilon$-fibration. By combining \cref{prop:coYoneda} with fibrational descent in $\PSh_{\Cat}(\bSS)$, we obtain an $E$-cartesian transformation $\alpha\colon h\to k$ with colimit $p$ such that $k$ takes values in $\bSS$. Using the previous step, the image of $\alpha$ along $L$ is still an $E$-cartesian transformation. Since $L$ preserves colimits and by using fibrational descent in $\bCC$, we conclude that $L(p)$ is an $\epsilon$-fibration.
		Let us point out that the same argument can be used to show that $L$ preserves morphisms of $\epsilon$-fibrations.
		
		\item[Pullbacks along a fibration]
		Consider a pullback diagram
		\[
		\begin{tikzcd}
			x_{|s} \arrow[r] \arrow[d] & x \arrow[d,"p"] \\
			d \arrow[r] & c
		\end{tikzcd}
		\]
		in $\PSh_{\Cat}(\bSS)$,
		where the vertical maps are $\epsilon$-fibrations. We claim that the canonical map $\varphi \colon L(x)_{|s} \to L(x_{|s})$ (which is by construction and the previous step a map of $\epsilon$-fibrations) is an equivalence. Invoking fibrational descent in $\bCC$ as well as \cref{prop:coYoneda}, it suffices to show that this map becomes invertible after being pulled back along each map $L(h_{\bSS}(s)) \to L(d)$. It follows that we can reduce to the case where $d=h_{\bSS}(s)$. But then the above pullback square is one of the canonical squares that exhibit $p$ as a partially lax colimit of the $E$-cartesian transformation from the previous step. Consequently, the very same argument as in the previous step also implies that the image of this square along $L$ is still a pullback.
		
		\item[Free fibrations over a general base]
		For ease of notation let us denote $\bXX= \PSh_{\Cat}(\bSS)$. First, we wish to show that for every $c \in \bXX$ we have an adjunction
		\[
		L_c \colon  \left(\Fib^{\epsilon}_{\bXX}\right)_{/c} \llra \left(\Fib^{\epsilon}_{\bCC}\right)_{/L(c)} \colon R_c
		\]
		where $L_c$ is the canonical map induced from $L$ on the slices (by the previous step) and where $R_c$ is given by the composition
		\[
		\left(\Fib^{\epsilon}_{\bCC}\right)_{/L(c)} \to \left(\Fib^{\epsilon}_{\bXX}\right)_{/RL(c)} \to \left(\Fib^{\epsilon}_{\bXX}\right)_{/c}. 
		\]
		Here, the first functor is simply given by applying $R$ and the second functor is given by pulling back along the unit $\eta_c \colon c \to RL(c)$.
		Unwinding the definitions, the assertion follows once we verify that for every $\epsilon$-fibration $p\colon x \to s$ the diagram
		\[
		\begin{tikzcd}
			x \arrow[r, "\eta_x"] \arrow[d] & RL(x) \arrow[d] \\
			c \arrow[r,"\eta_c"] & RL(c)
		\end{tikzcd}
		\]
		is a map of $\epsilon$-fibrations. Equivalently, we need to show that the map $x \to RL(x)\times_{RL(c)} c$ is a map of $\epsilon$-fibrations over $c$. Invoking fibrational descent, we only need to verify our claim after pulling back along a representable $h_{\bSS}(s) \to c$, so that the previous step allows us to reduce to the case where $c$ is representable. In this case, the first step shows that the canonical map $RL(\Free_c^\epsilon(p))\to\Free^\epsilon_{RL(c)}(RL(p))$ is an equivalence, which in turn implies that the (op)lax square
		\begin{equation*}
			\begin{tikzcd}
				\Free_c^\epsilon(x)\arrow[d]\arrow[r] & \Free^\epsilon_{RL(c)}(RL(x))\arrow[d]\\
				c\arrow[r] &RL(c)
			\end{tikzcd}
		\end{equation*}
		is simply the naturality square associated with $\Free_c^\epsilon(x)\to c$ and therefore in particular commutative.
		
		Let us remark that fully faithfulness of $R$ implies that if $c= R(c^\prime)$ for some $c^\prime\in\bCC$, the functor $R_c$ is simply given by applying $R$ to the slices. Thus, by invoking \cref{thm:basechangetheorem} below, we obtain an equivalence $L_c \simeq L_{RL(c)} \circ (\eta_c)_{!}$ (where $(\eta_c)_!$ is left adjoint to $\eta_c^\ast$). Note that due to \cref{lem:kanextendingfree} below, the functor $(\eta_c)_{!}$ carries free $\epsilon$-fibrations over $c$ to free $\epsilon$-fibrations over $RL(c)$. Therefore, to show that $L_c$ preserves free $\epsilon$-fibrations, we only need to show that $L_{RL(c)}$ has the same property. So let $p \colon x \to RL(c)$ be a map. We obtain a commutative diagram
		\[
		\begin{tikzcd}
			\Free^{\epsilon}_{RL(c)}(x) \arrow[d] \arrow[r] & \Free^{\epsilon}_{RL(c)}(RL(x)) \arrow[d] \arrow[r] & RL(c)^{[1]} \arrow[d] \\
			x \arrow[r] & RL(x) \arrow[r] & RL(c)
		\end{tikzcd}
		\]
		in which both squares are pullbacks. Since $RL(c)^{[1]} \simeq R\left(L(c)^{[1]}\right)$, it follows from the previous step that $L\left(\Free^{\epsilon}_{RL(s)}(p)\right) \simeq \Free^{\epsilon}_{L(s)}(L(p))$, as desired.
	\end{description}
	We conclude by invoking \cref{cor:pullbackoffree}.
\end{proof}

\begin{proof}[{Proof of \cref{thm:giraud}}]
	The fact that (2) implies (1) follows immediately from \cref{prop:inducing2toposstructure}, so it suffices to show that (1) implies (2).
	Since $\bCC$ is presentable there exists a regular cardinal $\kappa$ and an adjunction (cf.\ \cref{subsec:2present}) 
	\[
	L \colon \PSh_{\Cat}(\bCC^\kappa) \llra \bCC \colon R
	\]
	such that $R$ is fully faithful and commutes with $\kappa$-filtered colimits. Enlarging $\kappa$ if necessary, we can assume that $\kappa$-compact objects are stable under oriented pullbacks. Hence the claim follows from \cref{prop:preGiraud}.
\end{proof}

\begin{remark}\label{rem:felix}
	A careful analysis of \cref{prop:preGiraud} reveals that in order to deduce that $L$ preserves oriented pullbacks it suffices for $\bCC$ to satisfy $\epsilon$-fibrational descent for \emph{one} of the variances, in addition to being locally cartesian closed (\cref{def:exponentiability}).
\end{remark}

\begin{corollary}
	$\Cat$ is the initial 2-topos. In other words, for any 2-topos $\bCC$ there exists a unique morphism of 2-topoi
	\[
	\const \colon \Cat \to \bCC.
	\]
\end{corollary}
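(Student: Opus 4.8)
The plan is to realise $\const$ as the unique cocontinuous functor singled out by the universal property of $\Cat$ as a free cocompletion, and then to verify that it is a morphism of 2-topoi by factoring it through a Giraud presentation of $\bCC$. The starting observation is that $\Cat\simeq\PSh_{\Cat}([0])$, so that \cref{thm:laxCocompletion} (in its strong-cocompletion form) exhibits $\Cat$ as the free cocompletion of the point: for any cocomplete 2-category $\bDD$, restriction along the Yoneda embedding induces an equivalence $\FUN^{\cocont{2}}(\Cat,\bDD)\simeq\FUN([0],\bDD)\simeq\bDD$, carrying a cocontinuous functor $F$ to its value $F([0])$ on the terminal category $[0]=h_{[0]}(\ast)$. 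Taking $\bDD=\bCC$ (cocomplete since presentable), I would \emph{define} $\const\colon\Cat\to\bCC$ to be the unique cocontinuous functor with $\const([0])\simeq 1_{\bCC}$; concretely it is the tensoring $\II\mapsto\II\otimes 1_{\bCC}$ of \cref{prop:tensoring}. This already yields uniqueness: any morphism of 2-topoi $M\colon\Cat\to\bCC$ is by \cref{def:2topoimorph} a left adjoint, hence cocontinuous by \cref{prop:adjpreservelimtis}, and preserves the terminal object, so $M([0])\simeq 1_{\bCC}$; the displayed equivalence then forces $M\simeq\const$.

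It then remains to check that $\const$ is itself a morphism of 2-topoi. It preserves the terminal object by construction, and since $\Cat$ is presentable (by \cref{thm:cat2topos}) and $\bCC$ is locally small, \cref{prop:adjointFunctorTheoremLeft} upgrades cocontinuity of $\const$ to the statement that it is a left adjoint. The one substantial point—the 2-categorical analogue of left-exactness of the constant-sheaf functor—is that $\const$ preserves oriented pullbacks. This is the main obstacle, and it is not visible directly from the universal property (which only controls colimits), since $\const$ has no reason to preserve limits in general.

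To overcome it I would factor $\const$ through a Giraud presentation. By \cref{thm:giraud} choose an adjunction $L_0\colon\PSh_{\Cat}(\bSS)\llra\bCC\colon R_0$ with $R_0$ fully faithful and accessible and $L_0$ preserving oriented pullbacks; by \cref{prop:inducing2toposstructure} the functor $L_0$ is a morphism of 2-topoi, so it also preserves the terminal object. Let $\pi\colon\bSS^\op\to[0]$ and let $\pi^\ast\colon\Cat\to\PSh_{\Cat}(\bSS)$ be the constant-presheaf functor. Since $\bSS$ is small and $\Cat$ is complete and cocomplete, both Kan extensions $\pi_!\dashv\pi^\ast\dashv\pi_\ast$ exist (\cref{cor:existenceLKE} and \cref{thm:existenceRKE}), so $\pi^\ast$ is simultaneously a left and a right adjoint and therefore preserves all partially (op)lax limits and colimits by \cref{prop:adjpreservelimtis}; in particular it is cocontinuous and preserves both oriented pullbacks and the terminal object. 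Consequently $L_0\circ\pi^\ast$ is cocontinuous with value $L_0(\top)\simeq 1_{\bCC}$ on $[0]$, so $L_0\circ\pi^\ast\simeq\const$ by the uniqueness established above. As both $\pi^\ast$ and $L_0$ preserve oriented pullbacks, so does their composite, and hence so does $\const$. This shows $\const$ is a morphism of 2-topoi, which together with the uniqueness argument proves that it is the unique such morphism $\Cat\to\bCC$. The entire difficulty is thus concentrated in the preservation of oriented pullbacks, which the Giraud factorization reduces to the already-proven fact that the localization $L_0$ from \cref{thm:giraud} is oriented-pullback-preserving.
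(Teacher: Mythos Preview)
Your proof is correct and follows essentially the same approach as the paper's. The paper's proof is more terse: it invokes \cref{thm:laxCocompletion} for existence and uniqueness of $\const$ as a cocontinuous functor sending $[0]$ to the terminal object, then uses \cref{thm:giraud} to reduce the verification that $\const$ preserves oriented pullbacks to the case $\bCC=\PSh_{\Cat}(\bSS)$, where $\const$ is the diagonal (constant-presheaf) functor; you spell out this reduction explicitly as the factorisation $\const\simeq L_0\circ\pi^\ast$ and observe that $\pi^\ast$ is a right adjoint.
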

\begin{proof}
	In light of \cref{thm:laxCocompletion}, the only thing that we need to show is that the unique left adjoint $\const\colon \Cat\to\bCC$ that is determined by sending $[0]\in\Cat$ to the terminal object in $\bCC$ defines a morphism of 2-topoi, i.e.\ preserves oriented pullbacks.
	By \cref{thm:giraud} we can reduce to the case where $\bCC=\PSh_{\Cat}(\bSS)$, which allows us to identify $\const$ with the diagonal map. The result follows.
\end{proof}

\begin{proposition}\label{prop:fibslice}
	Let $L \colon \bCC \to \bDD$ be a morphism of 2-topoi. Then for every object $c \in \bCC$ the induced functor 
	\[
	L_{c} \colon \left(\Fib^{\epsilon}_{\bCC}\right)_{/c} \to  \left(\Fib^{\epsilon}_{\bDD}\right)_{/L(c)} 
	\]
	is a morphism of 2-topoi.
\end{proposition}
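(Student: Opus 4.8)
The plan is to show that $L_c$, which sends an $\epsilon$-fibration $p\colon x\to c$ to $L(p)\colon L(x)\to L(c)$, is well-defined and satisfies the three clauses of \cref{def:2topoimorph}. The first task is well-definedness: $L$ must carry $\epsilon$-fibrations to $\epsilon$-fibrations and morphisms of $\epsilon$-fibrations to morphisms of $\epsilon$-fibrations. This is precisely the content of the intermediate steps in the proof of \cref{prop:preGiraud}, which were carried out there for a morphism of 2-topoi whose source is a presheaf 2-topos. To transfer them to a general morphism $L\colon\bCC\to\bDD$, I would use \cref{thm:giraud} to present $\bCC$ as a localization $\ell\colon\PSh_{\Cat}(\bSS)\rightleftarrows\bCC\colon R_0$ with $R_0$ fully faithful; since $R_0$ is a right adjoint it preserves oriented pullbacks, hence free fibrations and the defining left inclusions, so $R_0$ sends $\epsilon$-fibrations to $\epsilon$-fibrations. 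Thus every $\epsilon$-fibration $p$ in $\bCC$ is $\ell(R_0 p)$ with $R_0 p$ a fibration in $\PSh_{\Cat}(\bSS)$, and $L(p)=(L\ell)(R_0 p)$ where $L\ell$ is a morphism of 2-topoi with presheaf source; applying the relevant step of \cref{prop:preGiraud} to $L\ell$ then yields that $L$ preserves $\epsilon$-fibrations, morphisms of $\epsilon$-fibrations, strong pullbacks of $\epsilon$-fibrations, and free $\epsilon$-fibrations (i.e.\ $L\Free^{\epsilon}_c(x)\simeq\Free^{\epsilon}_{L(c)}(L(x))$). I would also record here that both $(\Fib^{\epsilon}_{\bCC})_{/c}$ and $(\Fib^{\epsilon}_{\bDD})_{/L(c)}$ are 2-topoi (presentable by \cref{cor:Fib2Presentable}, and 2-topoi by the structural result on fibration slices), so that the statement is well-posed.

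Next I would verify that $L_c$ is a left adjoint and preserves the terminal object. By \cref{prop:cofree} the forgetful functor $U_c\colon(\Fib^{\epsilon}_{\bCC})_{/c}\to\bCC_{/c}$ creates partially (op)lax colimits, so a colimit in the fibration slice is computed as a colimit in $\bCC_{/c}$, and hence in $\bCC$. As $L$ is cocontinuous and, by \cref{prop:cofree} applied in $\bDD$, the image reassembles into the corresponding colimit in $(\Fib^{\epsilon}_{\bDD})_{/L(c)}$, the functor $L_c$ preserves all partially (op)lax colimits. Since both slices are presentable, \cref{prop:adjointFunctorTheoremLeft} then shows $L_c$ is a left adjoint. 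Preservation of the terminal object is immediate: the terminal object of $(\Fib^{\epsilon}_{\bCC})_{/c}$ is $\id_c$, which $L_c$ sends to $\id_{L(c)}$, the terminal object of $(\Fib^{\epsilon}_{\bDD})_{/L(c)}$.

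The remaining and decisive clause is that $L_c$ preserves oriented pullbacks. I would set up the commuting square of forgetful functors $U_c$ and $U'_{L(c)}$ over the bottom functor $L_{/c}\colon\bCC_{/c}\to\bDD_{/L(c)}$ induced by $L$ on domains. By \cref{prop:freeFibrationIsFreeFibration} the inclusion $U_c$ is a right adjoint of $\Free^{\epsilon}_c$, hence preserves oriented pullbacks, and it is conservative; the same holds for $U'_{L(c)}$. Given a cospan $\mathcal K$ of $\epsilon$-fibrations, applying the conservative functor $U'_{L(c)}$ to the comparison map $L_c(\lim^{\mathrm{or}}\mathcal K)\to\lim^{\mathrm{or}}(L_c\mathcal K)$ and using that $U_c,U'_{L(c)}$ preserve oriented pullbacks together with commutativity of the square reduces the problem to showing that $L_{/c}$ preserves the oriented pullback (taken in $\bCC_{/c}$) of the cospan $U_c\mathcal K$ of $\epsilon$-fibrations over $c$. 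By the slice version of \cref{cor:pullbackoffree}, such an oriented pullback is a strong pullback of a free $\epsilon$-fibration, and since $L$ (hence $L_{/c}$) preserves free $\epsilon$-fibrations and strong pullbacks of $\epsilon$-fibrations by the first paragraph, $L_{/c}$ preserves it; conservativity of $U'_{L(c)}$ then upgrades this to an equivalence for $L_c$.

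The hard part will be exactly this last step. The subtlety is that the oriented pullback living in the fibration 2-category $(\Fib^{\epsilon})_{/c}$, the oriented pullback in the strong slice $\bCC_{/c}$ (where the comparison $2$-cell is constrained to be vertical over $c$), and the ambient oriented pullback $\orientedtimesrl$ in $\bCC$ are a priori three different objects, and one must route the computation through the operations that $L$ is actually known to preserve — oriented pullbacks, strong pullbacks of fibrations, and free fibrations — rather than through arbitrary strong pullbacks, which $L$ does \emph{not} preserve (cf.\ \cref{rem:pullbackidem}). The conservativity and the bi-adjointness of $U_c$ (right adjoint to $\Free^{\epsilon}_c$ by \cref{prop:freeFibrationIsFreeFibration}, and a colimit-creating inclusion by \cref{prop:cofree}) is what makes it possible to carry out this bookkeeping without ever computing the relevant limits explicitly.
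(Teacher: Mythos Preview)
Your reduction via the conservative forgetful functors $U_c,U'_{L(c)}$ is sound, and the set-up agrees with the paper through the point where one must show that $L_{/c}$ preserves the relevant oriented pullback computed in the slice. The gap is in the final step: when you invoke ``the slice version of \cref{cor:pullbackoffree}'' you land on a strong pullback of the free $\epsilon$-fibration \emph{internal to} $\bCC_{/c}$, and this is not one of the free fibrations $\Free^{\epsilon}_c(-)$ in $\bCC$ that your first paragraph shows $L$ preserves. Concretely, the free fibration in $\bCC_{/c}$ on a map into $q\colon y\to c$ involves the cotensor $q^{[1]}$ taken in $\bCC_{/c}$, which is $y^{[1]}\times_{c^{[1]}}c$; to know that $L$ preserves this you would need $y^{[1]}\to c^{[1]}$ to be a fibration, i.e.\ exactly the kind of statement you have not yet established. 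So the appeal to ``$L$ preserves free $\epsilon$-fibrations and strong pullbacks of $\epsilon$-fibrations'' does not close the argument.

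The paper bypasses this by computing the oriented pullback in $\big(\Fib^{\epsilon}_{\bCC}\big)_{/c}$ explicitly: for a cospan $p\to q\leftarrow\pi$ of $\epsilon$-fibrations over $c$ (with domains $x,y,z$), it is the strong pullback in $\bCC$ of the ambient oriented pullback $x\orientedtimesrl_y z\to c^{[1]}$ along the degeneracy $c\to c^{[1]}$. Since $L$ preserves $x\orientedtimesrl_y z$ and $c^{[1]}$, the whole question reduces to showing that $x\orientedtimesrl_y z\to c^{[1]}$ is an $\epsilon$-fibration. This is the genuine content: the reflections $\ell_x,\ell_y,\ell_z$ coming from the fibration structures assemble into a left adjoint cospan over the constant cospan $\underline{c}$, and taking oriented pullbacks (limits commute with limits) yields a left adjoint $\Free^{0}_{c}(p)\orientedtimesrl_{\Free^{0}_{c}(q)}\Free^{0}_{c}(\pi)\simeq\Free^{0}_{c^{[1]}}\!\big(x\orientedtimesrl_y z\big)\to x\orientedtimesrl_y z$ in $\bCC_{/c^{[1]}}$. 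This is precisely the missing lemma your last paragraph anticipates but does not supply. A secondary point: you assume at the outset that the fibration slices are already 2-topoi ``by the structural result on fibration slices'', but in the paper this is part of what \cref{prop:fibslice} establishes, bootstrapped from the presheaf case via \cref{prop:inducing2toposstructure}; so that citation is circular.
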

\begin{proof}
	We will consider the case $\epsilon=0$ without loss of generality. To begin with, note that we already know from \cref{cor:Fib2Presentable} and \cref{prop:cofree} that $L_c$ is a cocontinuous functor between presentable 2-categories. Suppose for a moment that we already knew that $L_c$ preserves the terminal object and oriented pullbacks. Then this would also be the case for the map
	\begin{equation*}
		\Over{\left(\Fib_{\PSh_{\Cat}(\bSS)}^0\right)}{Rc}\to \Over{\left(\Fib_{\bCC}^0\right)}{c}
	\end{equation*}
	that is induced by the localization of 2-topoi $\PSh_{\Cat}(\bSS)\to\bCC$ provided by \cref{thm:giraud}. Since \cref{thm:str} and \cref{prop:carttransun} imply that we have an equivalence
	\[
	\Over{\left(\Fib_{\PSh_{\Cat}(\bSS)}^0\right)}{Rc} \simeq \Fun(\Un^{0}_{\bSS}(Rc),\Cat)
	\] 
	and the right-hand side is a 2-topos by \cref{prop:fun2topos}, we would be able to apply \cref{prop:inducing2toposstructure} to deduce that $\Over{\left(\Fib_{\bCC}^0\right)}{c}$  (and by symmetry of the argument also $\Over{\left(\Fib_{\bDD}^0\right)}{L(c)}$) is a 2-topos. In other words, the proof is finished once we verify that $L_c$ preserves the terminal object and oriented pullbacks.
	
	The part of the claim regarding preservation of the terminal object is trivially true, so are left to verify that oriented pullbacks are preserved. Let us consider a cospan of morphisms of 0-fibrations
	\[
	\begin{tikzcd}
		x \arrow[dr,swap,"p"] \arrow[r,"f"] & y \arrow[d,"q"] & z \arrow[l,swap,"g"] \arrow[dl,"\pi"] \\
		& c &.
	\end{tikzcd}
	\]	
	Unpacking the definitions, it follows that the oriented pullback $p \orientedtimesrl_{q} \pi$ in $\left(\Fib^{0}_{\bCC}\right)_{/c}$ is obtained as the (strong) pullback
	\[
	\begin{tikzcd}
		p \orientedtimesrl_{q}\pi \arrow[r] \arrow[d] & x \orientedtimesrl_{y} z \arrow[d] \\
		c \arrow[r] & c^{[1]}
	\end{tikzcd}
	\]
	where the right-most vertical map is the canonical map induced by the structure maps to $cc$ and the bottom horizontal map is induced by the morphism $s \colon [1] \to [0]$. Since $L$ is a morphism of 2-topoi, it will thus be enough to show that the map $x \orientedtimesrl_{y} z \to c^{[1]}$ is a 0-fibration. 
	
	Let us consider the diagram
	\[
	\begin{tikzcd}
		\Free_{c}^{0}(p) \arrow[d,swap,"l_x"] \arrow[r] & \Free_{c}^{0}(q) \arrow[d,"l_y"] & \Free_{c}^{0}(\pi) \arrow[d,"l_z"] \arrow[l] \\
		x \arrow[d,swap,"p"] \arrow[r,"f"]  & y \arrow[d,"q"] & z \arrow[l,swap,"g"] \arrow[d,"\pi"] \\
		c \arrow[r] & c  & c \arrow[l]
	\end{tikzcd}
	\]
	where the maps $l_x$, $l_y$ and $l_z$ are the corresponding reflections given by the fact that our cospan lives in $\left(\Fib^{0}_{\bCC}\right)_{/c}$. We observe that the diagram above defines a left adjoint map in $\Fun(\Lambda^2_2,\bCC)_{\underline{c}}$, where $\underline{c}$ is the constant cospan on $c$. Taking oriented pullbacks, we obtain a left adjoint map in $\Over{\bCC}{c^{[1]}}$ between $x \orientedtimesrl_{y} z $ and  $\Free^0_{c}(p) \orientedtimesrl_{\Free^{0}_c(q)} \Free^0_{c}(\pi) \simeq \Free^{0}_{c^{[1]}}\left(x \orientedtimesrl_{y} z\right)$, where the later equivalence is a consequence of the fact that limits commute with one another. The claim now follows.
\end{proof}

\begin{corollary}
	For every 2-topos $\bCC$ we have functors
	\[
	\Fib^{0}_{/(-)}\colon \bCC^{\op} \to \LTTop, \quad \quad \Fib^{1}_{/(-)}\colon \bCC^{\coop} \to \LTTop.
	\]
\end{corollary}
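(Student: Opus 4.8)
The plan is to upgrade the $\CCat$-valued functors $\Fib^0_{/(-)}\colon\bCC^\op\to\CCat$ and $\Fib^1_{/(-)}\colon\bCC^\coop\to\CCat$, whose existence I first record, to functors landing in the locally full sub-2-category $\LTTop\subset\CCAT$. These underlying functors are obtained from the preceding theory as follows: by \cref{prop:fibrations2functorial} the evaluation $\ev_1\colon\Fib^0_\bCC\to\bCC$ is a $(1,0)$-fibration and $\ev_1\colon\Fib^1_\bCC\to\bCC$ is a $(1,1)$-fibration, so by the straightening equivalences of \cref{thm:str} they are classified by functors $\bCC^\op\to\CCat$ and $\bCC^\coop\to\CCat$, respectively. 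On objects these send $c$ to the fibre $\Fib^\epsilon_{/c}$, and on a morphism $f\colon c\to d$ they send $f$ to the pullback functor $f^\ast\colon\Fib^\epsilon_{/d}\to\Fib^\epsilon_{/c}$, which is well defined by \cref{lem:pullbackFibration} (pullbacks of $\epsilon$-fibrations are $\epsilon$-fibrations and pullback squares are morphisms of fibrations).

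Since $\LTTop$ is by definition a \emph{locally full} sub-2-category of $\CCAT$ (cf.\ \cref{def:2topoimorph}), in order to factor these functors through $\LTTop$ it suffices to check that each object $\Fib^\epsilon_{/c}$ is a 2-topos and that each $f^\ast$ is a morphism of 2-topoi; the 2-morphisms are then included automatically. The first point is established within the proof of \cref{prop:fibslice} (where $\Fib^\epsilon_{/c}$ is exhibited as a 2-topos), its presentability being \cref{cor:Fib2Presentable}. It therefore remains to verify the three defining conditions for a morphism of 2-topoi for each $f^\ast$. First, $f^\ast$ is cocontinuous by \cref{prop:rightkan}; as both source and target are presentable by \cref{cor:Fib2Presentable}, \cref{prop:adjointFunctorTheoremLeft} shows that $f^\ast$ is a left adjoint. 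Second, $f^\ast$ preserves the terminal object, since the terminal object of $\Fib^\epsilon_{/d}$ is $\id_d$ and $f^\ast\id_d\simeq\id_c$.

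Third, and this is the crux, $f^\ast$ must preserve oriented pullbacks. Here I would reuse the computation carried out in the proof of \cref{prop:fibslice}, which presents the oriented pullback of a cospan $p\to q\leftarrow\pi$ in $\Fib^\epsilon_{/d}$ as the strong pullback of the total-space oriented pullback $x\orientedtimesrl_y z\to d^{[1]}$ (which is an $\epsilon$-fibration) along the degeneracy $d\to d^{[1]}$. That proof reduces preservation of oriented pullbacks in the fibration slices to two properties of the functor involved: preservation of strong pullbacks, and preservation of $\epsilon$-fibrations together with the auxiliary objects $x\orientedtimesrl_y z$ and the cotensor $(-)^{[1]}$. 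For $f^\ast$ both hold: as pullback along $f$ is a right adjoint on the full slices $\bCC_{/d}\to\bCC_{/c}$, it preserves all partially (op)lax limits by \cref{prop:adjpreservelimtis}, in particular strong pullbacks, the oriented pullbacks $x\orientedtimesrl_y z$ and the cotensors; and it carries $\epsilon$-fibrations to $\epsilon$-fibrations by \cref{lem:pullbackFibration}. Thus $f^\ast$ transports the strong-pullback presentation over $(d,d^{[1]})$ to the corresponding one over $(c,c^{[1]})$, identifying $f^\ast$ of the oriented pullback in $\Fib^\epsilon_{/d}$ with the oriented pullback in $\Fib^\epsilon_{/c}$.

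The main obstacle is precisely this last identification: one must confirm that the limit computation of \cref{prop:fibslice} is genuinely natural in the base, i.e.\ that pullback along $f$ is compatible with the cotensor $(-)^{[1]}$ and the total-space oriented pullback in a way respecting the structure maps to $d^{[1]}$ and $d$. This is a formal consequence of $f^\ast$ being limit-preserving, but it carries the bulk of the bookkeeping. Finally, the $\Fib^1$ statement over $\bCC^\coop$ is handled dually by passing to the mirror 2-topos $\bCC^\co$, under which $1$-fibrations become $0$-fibrations and $\bCC^\coop=(\bCC^\co)^\op$, so that the $\epsilon=0$ argument applies verbatim.
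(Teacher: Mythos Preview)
Your overall strategy is correct: you obtain the underlying $\CCat$-valued functors via straightening (\cref{prop:fibrations2functorial} and \cref{thm:str}), and then upgrade to $\LTTop$ by checking that each fibre is a 2-topos and each transition map $f^\ast$ is a morphism of 2-topoi, exploiting that $\LTTop\subset\CCAT$ is locally full. This matches the paper.

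Where you diverge is in the verification that $f^\ast$ preserves oriented pullbacks. The paper's one-line proof combines \cref{prop:fibslice} with \cref{thm:basechangetheorem}, the point being that the latter provides the two-sided adjunction $f_!\dashv f^\ast\dashv f_\ast$. From $f^\ast\dashv f_\ast$ one gets that $f^\ast$ is a left adjoint; from $f_!\dashv f^\ast$ one gets that $f^\ast$ is a right adjoint, hence preserves \emph{all} partially (op)lax limits, in particular oriented pullbacks and the terminal object. You effectively reprove the first half (via \cref{prop:rightkan} and the adjoint functor theorem), but for the second half you reach instead for the explicit formula from the proof of \cref{prop:fibslice} and attempt to transport it along $f^\ast$. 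This works, but the bookkeeping you flag as ``the main obstacle'' is genuine: the total-space oriented pullback $x\orientedtimesrl_y z$ lives in $\bCC$ rather than in $\bCC_{/d}$, so making sense of ``$f^\ast$ preserves $x\orientedtimesrl_y z$'' requires passing through $\bCC_{/d^{[1]}}$ and checking compatibilities with $c^{[1]}\to d^{[1]}$. All of this is subsumed by the one-line observation that $f^\ast$ is continuous, which is exactly \cref{prop:leftkan} (and is what feeds into \cref{thm:basechangetheorem}). Citing that directly would eliminate the entire third paragraph of your argument.
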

\begin{proof}
	Combine \cref{prop:fibslice} and \cref{thm:basechangetheorem} below.
\end{proof} 

\section{Synthetic category theory in a 2-topos}\label{sec:synth}
In this section we develop some basic aspects of synthetic category theory in a 2-topos. In \cref{subsec:groupoids}, we study internal groupoids in a 2-topos and show that 2-topoi admit internal notions of groupoidal cores and geometric realization (cf.\  \cref{cor:cores} and \cref{prop:groupoidification}). In \cref{subsec:univalence}, we construct internal mapping objects and show that 2-topoi satisfy a directed analogue of the univalence axiom (cf.\ \cref{thm:Univalence}). In \cref{subsec:basechange} we develop the basics of a theory of partially lax Kan extensions internal to a 2-topos.
Finally, we study two-variable fibrations in \cref{subsec:yoneda}, which give rise to an internal version of Yoneda's lemma (cf.\ \cref{thm:Yoneda}). 
  
\subsection{Groupoids in a 2-topos}\label{subsec:groupoids}
Recall that the 1-category $\Spc$ of spaces sits reflectively inside the 2-category $\Cat$. In other words, the inclusion $\Spc\into\Cat$ admits a left adjoint $\lvert-\rvert$ that carries a category $\CC$ to its \emph{groupoidification} $\lvert \CC\rvert$. Moreover, the inclusion $\Spc\into\Cat^{\leq 1}$ admits a \emph{right} adjoint $(-)^\core$ sending a category $\CC$ to its groupoid core $\CC^\core$. In this section, we provide analogues of these result for a general 2-topos.

\begin{definition}\label{def:groupoids}
	Let $\bCC$ be a 2-topos. An object $c\in\bCC$ is said to be a \emph{$\bCC$-groupoid} if the presheaf $\bCC(-,c)$ takes values in $\Spc\subset\Cat$. We denote the full sub-2-category of $\bCC$ that is spanned by the $\bCC$-groupoids by $\Grpd(\bCC)$.
\end{definition}

\begin{proposition}\label{prop:characterisationGroupoids}
	Let $\bCC$ be a 2-topos, and let $c\in\bCC$ be an object. Then the following are equivalent:
	\begin{enumerate}
		\item $c$ is a $\bCC$-groupoid;
		\item the canonical map $c\to c^{[1]}$ is an equivalence;
		\item the inclusion $\Fib_{/c}^\epsilon\to\bCC_{/c}$ is an equivalence.
	\end{enumerate}
\end{proposition}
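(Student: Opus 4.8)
The plan is to prove the cycle $(1)\Rightarrow(2)\Rightarrow(3)\Rightarrow(1)$, using the Yoneda philosophy to reduce everything to mapping-category computations and the characterisation of groupoids inside $\Cat$. Throughout I would exploit that $c^{[1]}$ is the oriented pullback of $\id_c$ along itself, so that by \cref{prop:yonedacont} and the representable description of fibrations in \cref{ex:fibrationsFunctorCategories}, each of the three conditions can be tested after applying $\bCC(a,-)$ for all $a\in\bCC$. The key external fact, as recalled at the start of the section, is that a $1$-category $\CC$ is an $\infty$-groupoid if and only if the canonical functor $\CC\to\Fun([1],\CC)$ is an equivalence (i.e.\ $\CC$ has no non-invertible arrows), and equivalently if and only if every cocartesian (or cartesian) fibration over $\CC$ is already a \emph{right} (resp.\ left) fibration — which is precisely the statement that the inclusion $\Fib^\epsilon_{/\CC}\to\Cat_{/\CC}$ is an equivalence in the $1$-categorical setting.

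First I would show $(1)\Leftrightarrow(2)$. By definition $c$ is a $\bCC$-groupoid exactly when $\bCC(a,c)\in\Spc$ for every $a$. Applying $\bCC(a,-)$ to the canonical map $c\to c^{[1]}$ and using that $\bCC(a,-)$ preserves the oriented pullback defining $c^{[1]}$ (it is continuous, being representable), I would identify $\bCC(a,c)\to\bCC(a,c^{[1]})$ with the canonical map $\bCC(a,c)\to\Fun([1],\bCC(a,c))=\bCC(a,c)^{[1]}$ in $\Cat$, via \cref{cor:cotensoring}. This map is an equivalence for all $a$ if and only if $\bCC(a,c)$ is a groupoid for all $a$, i.e.\ condition $(1)$; and since $c\to c^{[1]}$ is an equivalence iff it is so representably (Yoneda), this is exactly condition $(2)$.

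Next, for $(1)\Rightarrow(3)$ I would argue that when $c$ is a $\bCC$-groupoid, the free fibration functor $\Free^\epsilon_c$ is inverse to the inclusion. The cleanest route is to observe that $\Free^\epsilon_c(x)$ is defined as the oriented pullback of $p\colon x\to c$ along $\id_c$, and that by \cref{cor:pullbackoffree} together with $(2)$ (so $c\simeq c^{[1]}$) the structural projection $c^{[1]}\to c$ becomes an equivalence; pulling back, the unit $\eta_p\colon x\to\Free^\epsilon_c(x)$ is then forced to be an equivalence, so every object of $\bCC_{/c}$ is tautologically a fibration and the inclusion $\Fib^\epsilon_{/c}\to\bCC_{/c}$ is essentially surjective. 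Fully faithfulness on mapping categories I would extract from \cref{lem:mappingCategoriesFib}: the pullback square there degenerates once $l_p$ and $\eta_p$ are mutually inverse, forcing $\Fib^\epsilon_{\bCC}(p,q)\xrightarrow{\simeq}\AR_{\bCC}(p,q)=\bCC_{/c}(p,q)$. Finally, for $(3)\Rightarrow(1)$ I would specialise: if the inclusion is an equivalence then in particular $\id_c\in\bCC_{/c}$ has the property that \emph{every} object of $\bCC_{/c}$ is a fibration, which representably says every object of $\bCC(a,c)_{/\ast}\simeq\bCC(a,c)$-indexed fibration is a right/left fibration; translating through \cref{ex:fibrationsFunctorCategories} and the $1$-categorical characterisation, this collapses to $\bCC(a,c)$ being a groupoid for every $a$, i.e.\ $(1)$.

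The main obstacle I anticipate is the implication $(3)\Rightarrow(1)$, because condition $(3)$ is genuinely $2$-categorical data (an equivalence of $2$-categories, not merely of underlying $1$-categories) and one must be careful that the representable reduction correctly captures the local, mapping-category-level information. Here I expect to need \cref{ex:fibrationsFunctorCategories} in full force: a map $f\colon a\to c$ is a $0$-fibration iff $f_\ast\colon\bCC(-,a)\to\bCC(-,c)$ lands in $\Fib^0_{\Cat}$, so asserting $\Fib^\epsilon_{/c}\simeq\bCC_{/c}$ must be unwound to say that for every $a$ the inclusion $\Fib^\epsilon_{/\bCC(a,c)}\to\Cat_{/\bCC(a,c)}$ is an equivalence in $\Cat$, at which point the classical fact closes the argument. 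The delicate point is ensuring the equivalence in $(3)$ descends to these slices uniformly in $a$; I would handle this by testing $(3)$ against the specific object $\eta_c\colon c\to c^{[1]}$ (or equivalently the generic point of $c^{[1]}$) so that the failure of $c$ to be a groupoid produces an honest non-fibration, contradicting $(3)$.
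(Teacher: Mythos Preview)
Your handling of $(1)\Leftrightarrow(2)$ and $(2)\Rightarrow(3)$ is essentially the paper's argument. The gap is in $(3)\Rightarrow(1)$.

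Your proposed reduction ``asserting $\Fib^\epsilon_{/c}\simeq\bCC_{/c}$ must be unwound to say that for every $a$ the inclusion $\Fib^\epsilon_{/\bCC(a,c)}\to\Cat_{/\bCC(a,c)}$ is an equivalence'' is false: condition~(3) only tells you that maps of the form $\bCC(a,x)\to\bCC(a,c)$ (for $x\in\bCC$) are $\epsilon$-fibrations, not that \emph{every} functor $\DD\to\bCC(a,c)$ is one. You acknowledge this and propose to test against $\eta_c\colon c\to c^{[1]}$, but the framing ``produces an honest non-fibration'' is still off. The projections $c^{[1]}\rightrightarrows c$ are \emph{always} $\epsilon$-fibrations (for either $\epsilon$), so you will not find a non-fibration object this way. (Incidentally, your aside that ``$\CC$ is a groupoid iff every cocartesian fibration over $\CC$ is a left fibration'' is also not the right external characterisation; what you want is ``iff every functor into $\CC$ is a cocartesian fibration''.)

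The paper extracts information from the \emph{morphism} level of~(3): since the inclusion is an equivalence of $2$-categories, the map $s_0\colon c\to c^{[1]}$ (from $\id_c$ to one of the projections $c^{[1]}\to c$) is forced to be a \emph{morphism} of $\epsilon$-fibrations over $c$. Representably, for each $a$ the functor $s_0\colon\bCC(a,c)\to\bCC(a,c)^{[1]}$ then preserves cocartesian edges over $\bCC(a,c)$. But every edge of the domain is cocartesian over itself, while the cocartesian edges of $\bCC(a,c)^{[1]}$ (for the appropriate projection) are exactly those sent to equivalences by the \emph{other} evaluation; since $s_0$ followed by either evaluation is the identity, this forces every edge of $\bCC(a,c)$ to be invertible. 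That is the missing step.
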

\begin{proof}
	The functor $\bCC(-, c)$ takes values in spaces precisely if the map
	\begin{equation*}
		\bCC(-, c)\to\Cat([1],\bCC(-,c))
	\end{equation*}
	is an equivalence. By \cref{prop:tensoring}, we may identify this map with
	\begin{equation*}
		\bCC(-, c)\to\bCC(-, c^{[1]}),
	\end{equation*}
	so that Yoneda's lemma implies that (1) and (2) are equivalent. If (2) holds, then for every map $p\colon x\to c$, the map $\eta_x\colon x\to \Free_c(x)$ must be an equivalence, which immediately implies (3). Conversely, if (3) is satisfied, then the map $c\to c^{[1]}$ is a morphism of $\epsilon$-fibrations. This already implies that this map must be an equivalence. In fact, by working representably, we can reduce to the case $\bCC=\Cat$. Now if $\CC$ is a 1-category and $\CC\to \CC^{[1]}$ is the canonical map (which sends $c\in\CC$ to $\id_c$), the fact that the (co)cartesian edges in $\CC^{[1]}$ are precisely those that are sent to equivalences by the evaluation functor $\ev_\epsilon$ immediately tells us that \emph{every} edge in $\CC$ must be an equivalence, so that $\CC$ is a space.
\end{proof}

\begin{proposition}\label{prop:groupoidification}
	For any 2-topos $\bCC$, the inclusion $\iota\colon\Grpd(\bCC)\into\bCC$ preserves small (partially lax and oplax) limits and $\kappa$-filtered colimits for some regular cardinal $\kappa$. Consequently, $\iota$ admits a left adjoint $\left|-\right|\colon\bCC\to\Grpd(\bCC)$.
\end{proposition}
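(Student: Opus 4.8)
The plan is to prove the three assertions in sequence: first that $\iota$ preserves small partially lax and oplax limits, then that it preserves $\kappa$-filtered colimits for a suitable $\kappa$, and finally to invoke the adjoint functor theorem to produce the left adjoint. Since $\Grpd(\bCC)$ is a full sub-2-category, limits computed in $\bCC$ that happen to land in $\Grpd(\bCC)$ are automatically limits there; so the real content of the first claim is that $\Grpd(\bCC)$ is closed under small partially (op)lax limits in $\bCC$. To verify this, I would use the characterisation from \cref{prop:characterisationGroupoids} that $c$ is a $\bCC$-groupoid precisely when the canonical map $c\to c^{[1]}$ is an equivalence, equivalently when $\bCC(-,c)$ takes values in $\Spc\subset\Cat$. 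The cotensor functor $(-)^{[1]}\colon\bCC\to\bCC$ is (op)lax continuous by \cref{cor:cotensoring} (it is the cotensor by the fixed object $[1]\in\Cat$), so given a diagram $F\colon\bII\to\Grpd(\bCC)$ with partially (op)lax limit $c=\lim^{\elaxoplax}_\bII\iota F$ in $\bCC$, the comparison map $c\to c^{[1]}$ is the (op)lax limit of the pointwise comparison maps $F(i)\to F(i)^{[1]}$, each of which is an equivalence by \cref{prop:characterisationGroupoids}. Hence $c\to c^{[1]}$ is an equivalence and $c\in\Grpd(\bCC)$.

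For the second claim, I would argue entirely on the level of the underlying 1-category, exploiting that $\Grpd(\bCC)$ is a 1-category. Using \cref{prop:tensoring}, the condition that $c$ be a $\bCC$-groupoid can be tested by the two maps $\bCC(-,c)^\core\to\Cat([1],\bCC(-,c))^\core$ and the fact that $c$ is a groupoid iff $c\to c^{[1]}$ is invertible, i.e.\ iff the object $c$ is local with respect to the single map $s\colon [1]\otimes c\to c$ (the collapse $[1]\to[0]$ tensored appropriately), or more robustly, iff $c$ is $S$-local for the set $S$ consisting of the canonical maps $[1]\otimes a\to a$ ranging over a generating set of objects $a$. Since $\bCC^{\leq 1}$ is presentable by \cref{rem:2presentableImplies1presentable}, the localisation theory of \cref{prop:localObjectsPresheaves} and \cref{thm:characterisationPresentableCategories} applies: $\Grpd(\bCC)\subset\bCC$ is, on underlying 1-categories, an accessible Bousfield localisation presented by a small set of maps, so the inclusion $\Grpd(\bCC)^{\leq 1}\into\bCC^{\leq 1}$ preserves $\kappa$-filtered colimits for some regular cardinal $\kappa$. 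Because $\Grpd(\bCC)$ is a 1-category, $\kappa$-filtered colimits in $\Grpd(\bCC)$ and $\bCC$ are detected on underlying 1-categories, which gives the claim.

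Finally, to produce the left adjoint $\lvert-\rvert$, I would apply the dual of \cref{lem:adjunctions1vs2dimensional} together with the 1-categorical adjoint functor theorem. Having shown that the inclusion is (op)lax continuous and preserves $\kappa$-filtered colimits, and knowing $\bCC$ is presentable (so $\bCC^{\leq 1}$ is presentable by \cref{rem:2presentableImplies1presentable}), the underlying functor $\iota^{\leq 1}\colon\Grpd(\bCC)^{\leq 1}\into\bCC^{\leq 1}$ is an accessible, limit-preserving functor between presentable 1-categories and hence admits a left adjoint by the adjoint functor theorem of \cite[Theorem~4.1.1]{Nguyen2020}. Then \cref{lem:adjunctions1vs2dimensional} (in its form upgrading a 1-categorical left adjoint to a genuine 2-categorical one, applied to the reflection) upgrades this to the desired 2-categorical adjunction $\lvert-\rvert\dashv\iota$.

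I expect the main obstacle to be the bookkeeping in the second step: one must ensure that the localising set of maps detecting $\bCC$-groupoids is genuinely small and stable under $[1]\otimes-$, so that \cref{rem:SaturationLocalObjects} and \cref{prop:localObjectsPresheaves} apply cleanly. The subtlety is that being a $\bCC$-groupoid is phrased via the mapping-category-valued presheaf landing in $\Spc$, and one needs to repackage this as locality against a set of maps in $\bCC$ itself; the identification $\bCC(-,c)\simeq\bCC(-,c^{[1]})$ together with the tensoring adjunction is the key move that makes this a 1-dimensional condition amenable to presentable 1-category theory.
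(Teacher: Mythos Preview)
Your proof is correct, and steps one and three essentially agree with the paper's (closure under partially (op)lax limits via the representable/cotensor characterisation, and the adjoint via the 1-categorical adjoint functor theorem upgraded by \cref{lem:adjunctions1vs2dimensional}). The genuine difference is in the second step. You express $\Grpd(\bCC)$ as $S$-local objects for a small set $S$ of maps $[1]\otimes a\to a$ with $a$ ranging over generators, and then invoke accessible Bousfield localisation; this works, but, as you note, it requires care to check that locality against the generating set detects being a $\bCC$-groupoid. The paper sidesteps this entirely: it chooses $\kappa$ so that $\bCC$ sits as a reflective sub-2-category of $\PSh_{\Cat}(\bII)$ with the inclusion preserving $\kappa$-filtered colimits (\cref{thm:characterisationPresentableCategories}), observes that $c\in\bCC$ is a $\bCC$-groupoid precisely when its underlying presheaf lands in $\Spc$, and concludes closure under $\kappa$-filtered colimits directly from the fact that $\Spc\into\Cat$ preserves all strong colimits. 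This bypasses any small-object or saturation argument and makes the bookkeeping you flagged disappear; your route, on the other hand, stays intrinsic to $\bCC$ and does not require an auxiliary presheaf embedding.
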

\begin{proof}
	By the 1-categorical reflection theorem (see \cite[Theorem~6.2]{reflectionthm}) and \cref{lem:adjunctions1vs2dimensional}, the first claim immediately implies the second. To show the first claim, first note that by the very definition of $\bCC$-groupoids and the fact that the inclusion $\Spc\into\Cat$ admits a left adjoint, the sub-2-category $\Grpd(\bCC)\into\bCC$ is closed under small partially lax and oplax limits. Moreover, let $\kappa$ be a regular cardinal such that $\bCC$ arises as a reflective sub-2-category of $\PSh_{\Cat}(\bII)$ (for some small 2-category $\bII$) where the inclusion $i\colon\bCC\into\PSh_{\Cat}(\bII)$ preserves $\kappa$-filtered colimits (see \cref{thm:characterisationPresentableCategories}). Then then the inclusion $\Grpd(\bCC)\into\bCC$ is closed under $\kappa$-filtered colimits. In fact, in this case an object $c\in\bCC$ is a $\bCC$-groupoid precisely if the underlying presheaf in $\PSh_{\Cat}(\bII)$ takes values in spaces. Since $i$ preserves $\kappa$-filtered colimits and $\Spc\into\Cat$ preserves all strong colimits, this implies the claim.
\end{proof}

\begin{corollary}\label{cor:grppresentable}
	Let $\bCC$ be a 2-topos. Then $\Grp(\bCC)$ is presentable.\qed
\end{corollary}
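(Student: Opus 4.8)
The plan is to deduce the statement directly from \cref{prop:groupoidification} together with \cref{prop:localisationPresentable}; no new ideas are required. First I would observe that since $\bCC$ is a 2-topos, it is in particular a presentable 2-category, as this is built into \cref{def:2topoimorph}. Thus it remains only to verify the hypotheses of \cref{prop:localisationPresentable} for the inclusion $\iota\colon\Grpd(\bCC)\into\bCC$.

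By \cref{def:groupoids}, $\Grpd(\bCC)$ is a \emph{full} sub-2-category of $\bCC$, so $\iota$ is fully faithful. \cref{prop:groupoidification} supplies a left adjoint $\lvert-\rvert\colon\bCC\to\Grpd(\bCC)$ to $\iota$, so that $\Grpd(\bCC)$ is a reflective sub-2-category of $\bCC$, and the same proposition asserts that $\iota$ preserves $\kappa$-filtered colimits for a suitable regular cardinal $\kappa$. These are precisely the hypotheses of \cref{prop:localisationPresentable}, which I would then invoke with $\bDD=\Grpd(\bCC)$ to conclude that $\Grpd(\bCC)$ is presentable.

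There is no genuine obstacle here, as every ingredient has already been established and the argument amounts to a short citation chain. The only point I would flag for the reader is that presentability of $\Grpd(\bCC)$ is understood in the sense of \cref{def:presentability}; however, since \cref{prop:characterisationGroupoids} characterises the objects of $\Grpd(\bCC)$ as those $c$ for which $c\to c^{[1]}$ is an equivalence --- equivalently, objects whose mapping presheaves take values in $\Spc$ --- the 2-category $\Grpd(\bCC)$ is in fact a 1-category. Consequently $\Grpd(\bCC)^{\leq 1}\simeq\Grpd(\bCC)$, and \cref{def:presentability} reduces to the usual notion of presentability for a 1-category, so no additional care is needed.
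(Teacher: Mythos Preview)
Your proposal is correct and matches the paper's intended argument exactly: the corollary is marked with \qed and follows immediately from \cref{prop:groupoidification} via \cref{prop:localisationPresentable}, which is precisely the citation chain you describe. Your additional remark that $\Grpd(\bCC)$ is a 1-category is accurate and harmless, though not strictly needed for the deduction.
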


\begin{lemma}\label{lem:grpdstrongcolim}
	Let $\bCC$ be a 2-topos. Then the canonical inclusion $\iota \colon \Grp(\bCC) \to \bCC$ preserves strong colimits.
\end{lemma}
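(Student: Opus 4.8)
The plan is to prove the statement first in the free case $\bXX=\PSh_{\Cat}(\bSS)$ and then to transport it to an arbitrary 2-topos along the localization furnished by Giraud's theorem (\cref{thm:giraud}). Throughout I will lean on the characterisation of groupoids in \cref{prop:characterisationGroupoids}: an object $c$ is a $\bCC$-groupoid precisely when the canonical map $c\to c^{[1]}$ is an equivalence, where the cotensor $c^{[1]}$ (\cref{cor:cotensoring}) is built from an oriented pullback by \cref{cons:freefib}. The whole point of passing through this characterisation is that it is preserved by functors that respect the cotensor, which both legs of a Giraud adjunction do.

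For the free case, I would first identify $\Grpd(\PSh_{\Cat}(\bSS))$ with the full sub-2-category $\Fun(\bSS^{\op},\Spc)$ of pointwise groupoid-valued presheaves; this is obtained by testing the condition that $\bXX(-,F)$ land in $\Spc$ against representables and their tensors with $[1]$. Since partially lax colimits in a functor 2-category are computed pointwise (\cref{prop:colimfunctorcat} and the footnote there), this reduces the free case to the base case $\bSS=[0]$, namely that the inclusion $\Spc=\Grpd(\Cat)\into\Cat$ preserves strong colimits. This last assertion is the 2-categorical shadow of the classical fact that $\Spc\into\Cat^{\leq 1}$ preserves colimits: a strong colimit in $\Cat$ is detected on the underlying $1$-category $\Cat^{\leq 1}$ as the ordinary colimit indexed by the $1$-truncation $\lvert\bII\rvert_1$ of its shape, and $\Spc$ is closed under such colimits inside $\Cat^{\leq 1}$.

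To transport, let $L\colon\PSh_{\Cat}(\bSS)\llra\bCC\colon R$ be as in \cref{thm:giraud}, and set $\bXX=\PSh_{\Cat}(\bSS)$. The key observation is that both $L$ and $R$ preserve the cotensor $(-)^{[1]}$: the functor $R$ does so because it is a right adjoint and $(-)^{[1]}$ is an (op)lax limit (\cref{prop:adjpreservelimtis}), while $L$ does so because $c^{[1]}$ is an oriented pullback and $L$ preserves these by the definition of a morphism of 2-topoi (\cref{def:2topoimorph}). By the characterisation above, $L$ and $R$ therefore carry groupoids to groupoids, so the adjunction restricts to $L'\colon\Grpd(\bXX)\llra\Grpd(\bCC)\colon R'$ with $R'$ fully faithful and $L'R'\simeq\id$ (as $LRc\simeq c$ for $c\in\Grpd(\bCC)$), exhibiting $\Grpd(\bCC)$ as a localization of $\Grpd(\bXX)$. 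Then, given a diagram $d\colon\bII\to\Grpd(\bCC)$ admitting a strong colimit, I would lift it along $R'$ and compute, using that the left adjoints $L,L'$ preserve strong colimits (\cref{prop:adjpreservelimtis}), the identity $\iota_{\bCC}\circ L'=L\circ\iota_{\bXX}$, and the free case,
\[
\iota_{\bCC}\bigl(\colim_{\bII} d\bigr)\simeq \iota_{\bCC}L'\bigl(\colim_{\bII} R'd\bigr)\simeq L\,\iota_{\bXX}\bigl(\colim_{\bII} R'd\bigr)\simeq L\bigl(\colim_{\bII}\iota_{\bXX}R'd\bigr)\simeq \colim_{\bII}\bigl(\iota_{\bCC} d\bigr),
\]
where every colimit is the strong one.

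The main obstacle I anticipate lies in the free case rather than the transport, which is formal: it is the careful bookkeeping required to make precise that strong (pseudo) colimits in $\Cat$ restrict, on underlying $1$-categories, to ordinary colimits over the $1$-truncation of the indexing shape, so that closure of $\Spc$ under strong colimits genuinely follows from its purely $1$-categorical closure under colimits inside $\Cat^{\leq 1}$. Once this compatibility between the $2$-categorical and $1$-categorical colimits is secured — together with the pointwise computation of colimits in functor $2$-categories — the remainder of the argument is a routine manipulation of the restricted adjunction $L'\dashv R'$.
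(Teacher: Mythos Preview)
Your proposal is correct and follows essentially the same route as the paper: both invoke \cref{thm:giraud} to reduce to $\PSh_{\Cat}(\bSS)$, identify $\Grpd(\PSh_{\Cat}(\bSS))$ with $\PSh_{\Spc}(\bSS)$ where closure under strong colimits is pointwise and reduces to the known fact for $\Spc\subset\Cat$, and then transport back using that $L$ preserves $(-)^{[1]}$ (as an oriented pullback) and hence groupoids. The paper is no more detailed than you on the base case---it simply asserts that $\Spc\subset\Cat$ is closed under strong colimits---so your anxiety there is somewhat misplaced; the transport step, which you spell out as a chain of equivalences, is exactly what the paper's last sentence encodes.
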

\begin{proof}
	We invoke \cref{thm:giraud} to produce an adjunction
	\[
	L \colon \PSh_{\Cat}(\bII) \llra \bCC \colon R
	\]
	where $L$ preserves oriented pullbacks and $R$ is fully faithful.
	Unpacking the definitions, we see that $\Grp(\bCC)$ embeds fully faithfully into $\Grpd(\PSh_{\Cat}(\bII))=\PSh_{\Spc}(\bII)$. Now the inclusion $\PSh_{\Spc}(\bII)\into\PSh_{\Cat}(\bII)$ preserves strong colimits as they are computed object-wise and as $\Spc\subset\Cat$ is closed under strong colimits. Consequently, the claim follows once we show that the left adjoint $L$ restricts to a map $\Grpd(\Fun(\bSS^\op,\Cat))\to\Grpd(\bCC)$. This follows immediately from \cref{prop:characterisationGroupoids} since $L$ preserves oriented pullbacks and hence cotensors with $[1]$.
\end{proof}

\begin{corollary}\label{cor:cores}
	Let $\bCC$ be a 2-topos. Then the inclusion $\Grpd(\bCC)\into\bCC^{\leq 1}$ has a right adjoint $(-)^\core$, referred to as the \emph{groupoid core functor}.\qed
\end{corollary}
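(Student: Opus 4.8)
The plan is to produce the right adjoint by a purely $1$-categorical adjoint functor theorem, so that the whole argument takes place at the level of underlying $1$-categories. First I would record that both categories in sight are presentable: $\Grpd(\bCC)$ is presentable by \cref{cor:grppresentable}, and $\bCC^{\leq 1}$ is presentable by \cref{rem:2presentableImplies1presentable}, using that $\bCC$ is presentable by virtue of being a $2$-topos. Granting this, it remains only to check that the inclusion $\Grpd(\bCC)\into\bCC^{\leq 1}$ preserves small colimits; the existence of a right adjoint $(-)^\core$ then follows from the adjoint functor theorem for presentable $1$-categories (for instance \cite[Corollary~5.5.2.9]{LurieHTT} or \cite[Theorem~4.1.1]{Nguyen2020}).

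The heart of the matter is thus colimit preservation, and the one point that needs care is the translation between strong colimits in the $2$-category $\bCC$ and ordinary colimits in its underlying $1$-category $\bCC^{\leq 1}$. Since $\Grpd(\bCC)$ is a $1$-category, every colimit in it agrees with the corresponding strong colimit, which by \cref{lem:grpdstrongcolim} is preserved by $\iota\colon\Grpd(\bCC)\into\bCC$. I would then note that a strong colimit in $\bCC$ indexed by a small $1$-category $\II$ is automatically a colimit in $\bCC^{\leq 1}$: from the defining equivalence
\[
	\bCC\Bigl(\colim_{\II} D,\, c\Bigr)\simeq \lim_{\II^\op}\bCC(D(-),\, c)
\]
of categories, one applies the groupoid-core functor $(-)^\core\colon\Cat\to\Spc$, which preserves limits as it is a right adjoint, to obtain
\[
	\bCC^{\leq 1}\Bigl(\colim_{\II} D,\, c\Bigr)\simeq \lim_{\II^\op}\bCC^{\leq 1}(D(-),\, c),
\]
namely the universal property of the colimit in $\bCC^{\leq 1}$. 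Composing these two facts shows that $\Grpd(\bCC)\into\bCC^{\leq 1}$ sends colimits to colimits.

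I do not anticipate a real obstacle here: once colimit preservation is reduced to the interaction between strong colimits and cores displayed above, the conclusion is formal. The only subtlety worth flagging is the insistence on working with $\bCC^{\leq 1}$ rather than with $\bCC$ as a $2$-category---indeed the core functor is genuinely only a $1$-categorical right adjoint, mirroring the classical fact that $(-)^\core\colon\Cat^{\leq 1}\to\Spc$ is right adjoint to $\Spc\into\Cat^{\leq 1}$ but does not upgrade to a $2$-functorial adjunction.
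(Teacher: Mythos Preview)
Your proposal is correct and follows exactly the route the paper intends: the corollary is stated with a bare \qed immediately after \cref{lem:grpdstrongcolim}, so the paper's implicit argument is precisely ``inclusion preserves strong colimits, both sides are presentable, apply the $1$-categorical adjoint functor theorem.'' You have simply spelled out the details, including the passage from strong colimits in $\bCC$ to colimits in $\bCC^{\leq 1}$ via $(-)^\core$, which the paper leaves to the reader.
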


\begin{remark}\label{rem:emptycores}
	The behaviour of the groupoid core functor can be somewhat exotic in a general 2-topos, as we hope to illustrate in the following example. Let us consider the 2-topos $\bXX=\FUN(\Adj,\Cat)$ and note that 
	\[
	\Grp(\bXX)= \FUN(\Adj,\Spc) \simeq \Spc.
	\]
	In other words, an adjunction $F=(L\dashv R)\colon\CC\leftrightarrows\DD$ is an $\bXX$-groupoid precisely if both $\CC$ and $\DD$ are spaces (so that necessarily both $L$ and $R$ are equivalences). Now let us in particular set $F=(\id_{[0]}\dashv \id_{[0]})$, and let $\alpha\colon F\to G$ be a morphism in $\bXX$, where $G$ is given by an adjunction $(L\dashv R)\colon \AA\leftrightarrows\BB$. Such a datum necessarily means that the object $\alpha_+([0])\in\AA$ is a fixed point for the adjunction $L\dashv R$. Therefore, if $G$ is an adjunction with no fixed points (which can happen easily), then it follows that $G^{\core}= \varnothing$ is the initial functor. In other words, the groupoid core of an object in a 2-topos can be empty even if that object is not initial itself.
\end{remark}

\subsection{Directed univalence}\label{subsec:univalence}
In synthetic category theory, the axiom of directed univalence asserts that if $\CC$ and $\DD$ are synthetic categories, the (synthetic) groupoid of functors $\Fun(\CC,\DD)^\core$ can be identified with the (synthetic) mapping groupoid $\Hom_{\Omega}(\CC,\DD)$ of the universe $\Omega$. In this section, we establish this result for 2-topoi.

\begin{lemma}\label{lem:inthom}
		Let $\bCC$ be a 2-topos. Then for every object $c \in \bCC$, the functor $c\times -$ has a right adjoint $\underline{\bCC}(c,-) \colon \bCC \to \bCC$.
\end{lemma}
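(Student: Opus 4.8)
The plan is to realize $c\times-$ as a composite of two left adjoints and thereby produce its right adjoint directly, rather than verifying cocontinuity by hand. Write $\ast$ for the terminal object of $\bCC$ (which exists, since a $2$-topos is complete, being a reflective sub-$2$-category of some $\PSh_{\Cat}(\bSS)$). The canonical equivalence $\bCC_{/\ast}\simeq\bCC$ identifies the product functor with the composite
\[
\bCC\simeq\bCC_{/\ast}\xrightarrow{\ \pi_c^\ast\ }\bCC_{/c}\xrightarrow{\ \operatorname{dom}\ }\bCC ,
\]
where $\pi_c\colon c\to\ast$ is the terminal projection, $\pi_c^\ast$ denotes pullback along $\pi_c$, and $\operatorname{dom}$ is the domain (forgetful) functor $(y\to c)\mapsto y$. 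Indeed, pulling back $x\to\ast$ along $c\to\ast$ yields the projection $c\times x\to c$, and forgetting its structure map returns $c\times x$.

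The key step is to show that $\pi_c\colon c\to\ast$ is a $0$-fibration in the sense of \cref{def:fibrations}, so that \cref{prop:condussy} becomes available. By \cref{rem:freeFibExplicitly}, the free fibration $\Free^{0}_{\ast}(c)$ is the strong pullback of the projection $\ast^{[1]}\to\ast$ along $\pi_c$. Since $\ast$ is terminal we have $\bCC(a,\ast^{[1]})\simeq\Fun([1],\bCC(a,\ast))\simeq\ast$ for every $a\in\bCC$, so $\ast^{[1]}\simeq\ast$ by the Yoneda lemma, and hence $\Free^{0}_{\ast}(c)\simeq c\times_\ast\ast\simeq c$. Under this identification the unit $\eta_{\pi_c}\colon c\to\Free^{0}_{\ast}(c)$ becomes an equivalence (this can be checked representably, reducing to $\Cat$, where it is the identity), and an equivalence is in particular a left inclusion in $\bCC_{/\ast}$. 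Thus $\pi_c$ is a $0$-fibration, and \cref{prop:condussy} shows it is exponentiable, i.e.\ $\pi_c^\ast$ admits a right adjoint. In particular $\pi_c^\ast$ is itself a left adjoint.

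It then remains to note that $\operatorname{dom}\colon\bCC_{/c}\to\bCC$ is a left adjoint, which is purely formal: its right adjoint carries $y\in\bCC$ to the projection $c\times y\to c$, because a map $(x\xrightarrow{f}c)\to(c\times y\to c)$ in the strong slice is the same datum as a map $x\to y$ (its component over $c$ being forced to equal $f$), compatibly with $2$-morphisms. Composing the two right adjoints produces the desired right adjoint $\underline{\bCC}(c,-)$ of $c\times-=\operatorname{dom}\circ\pi_c^\ast$.

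The main obstacle, and the only genuinely $2$-topos-theoretic input, is the identification of $\pi_c$ as a fibration together with the appeal to exponentiability; here one must be careful to compute $\Free^{0}_{\ast}(c)$ and the unit $\eta_{\pi_c}$ in the slice $\bCC_{/\ast}$ rather than merely in $\bCC$. The rest is formal manipulation of adjunctions. Alternatively, one could avoid the explicit decomposition and instead verify that $c\times-$ preserves partially (op)lax colimits---again by reducing to the fact that pullback along the fibration $\pi_c$ does so---and then invoke the adjoint functor theorem \cref{prop:adjointFunctorTheoremLeft}.
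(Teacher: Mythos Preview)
Your proof is correct and follows essentially the same route as the paper. Both factor $c\times-$ as the composite of pullback along $\pi_c\colon c\to\ast$ and the forgetful functor $\bCC_{/c}\to\bCC$, and both rely on $\pi_c$ being an $\epsilon$-fibration so that \cref{prop:condussy} applies; the paper phrases this last point as ``$\ast$ is a $\bCC$-groupoid'' and invokes \cref{prop:characterisationGroupoids}, which amounts to exactly your computation $\ast^{[1]}\simeq\ast$. The only cosmetic difference is that you compose the two right adjoints directly, whereas the paper shows cocontinuity of the composite and invokes the adjoint functor theorem---the alternative you yourself mention at the end.
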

\begin{proof}
		Since $\bCC$ is 2-presentable it will suffice to show that $c \times -$ preserves  partially lax colimits. Since this functor can be rewritten as the composite
		\begin{equation*}
			\bCC\xrightarrow{t^\ast}\Over{\bCC}{c}\xrightarrow{t_!}\bCC
		\end{equation*}
		in which $t\colon c\to \ast$ is the unique map to the terminal object, the claim follows from \cref{prop:condussy} in light of the fact that $t$ is always an $\epsilon$-fibration since $\ast\in\bCC$ is a $\bCC$-groupoid.
	\end{proof}
	
	As a consequence of \cref{lem:inthom} and Yoneda's lemma, we find:
	\begin{proposition}\label{prop:inthom}
		Every 2-topos $\bCC$ is cartesian closed, in the sense that there is a bifunctor $\underline{\bCC}(-,-)\colon\bCC^\op\times\bCC\to\bCC$ that fits into an equivalence
		\begin{equation*}
			\bCC(-\times -, -)\simeq\bCC(-, \underline{\bCC}(-,-)).
		\end{equation*}
		We refer to this bifunctor as the \emph{internal mapping} functor.\qed
	\end{proposition}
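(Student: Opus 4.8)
The plan is to promote the pointwise right adjoints produced in \cref{lem:inthom} to a genuine bifunctor by the standard device of factoring a presheaf-valued functor through the Yoneda embedding. First I would assemble the relevant presheaf-valued functor: the product $-\times-\colon\bCC\times\bCC\to\bCC$ and the mapping functor $\bCC(-,-)\colon\bCC^\op\times\bCC\to\Cat$ compose (by precomposing the source variable of $\bCC(-,-)$ with the product) to a functor $\bCC^\op\times\bCC^\op\times\bCC\to\Cat$ given on objects by $(a,c,d)\mapsto\bCC(a\times c,d)$. Since $\bCC$ is presentable it is locally small (\cref{rem:2presentableLocallySmall}), so these mapping categories are small and currying the variable $a$ yields a functor
\[
	\Phi\colon\bCC^\op\times\bCC\to\PSh_{\Cat}(\bCC),\qquad (c,d)\mapsto\bCC(-\times c,d),
\]
contravariant in $c$ and covariant in $d$.

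Next I would invoke \cref{lem:inthom}: for each fixed $c$ the adjunction $c\times-\dashv\underline{\bCC}(c,-)$ provides an equivalence $\bCC(a\times c,d)\simeq\bCC(a,\underline{\bCC}(c,d))$ natural in $a$, i.e.\ $\Phi(c,d)\simeq h_{\bCC}(\underline{\bCC}(c,d))$. Thus $\Phi$ takes values in the full, replete sub-2-category of $\PSh_{\Cat}(\bCC)$ spanned by the representable presheaves. Because the Yoneda embedding $h_{\bCC}\colon\bCC\into\PSh_{\Cat}(\bCC)$ is fully faithful, it induces an equivalence of $\bCC$ onto this sub-2-category, so $\Phi$ factors, essentially uniquely, as $\Phi\simeq h_{\bCC}\circ\underline{\bCC}(-,-)$ for a functor $\underline{\bCC}(-,-)\colon\bCC^\op\times\bCC\to\bCC$. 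Unwinding, the equivalence $h_{\bCC}\underline{\bCC}(c,d)\simeq\bCC(-\times c,d)$ is exactly the claimed natural equivalence $\bCC(-,\underline{\bCC}(-,-))\simeq\bCC(-\times-,-)$ of functors $\bCC^\op\times\bCC^\op\times\bCC\to\Cat$.

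The only nontrivial point — the ``hard part'' — is the upgrade from pointwise representability to a functorial factorization; this is where full faithfulness of the 2-categorical Yoneda embedding and repleteness of the essential image of representables are essential, and it is what forces $\underline{\bCC}(-,-)$ to be a bona fide functor of both variables rather than merely a family of right adjoints. Everything else is formal: the cartesian closedness of $\CCat$ (used for the currying step) and the universal property of \cref{lem:inthom}. No separate verification of naturality is then needed, since the natural equivalence is obtained directly from the defining equivalence of the factorization.
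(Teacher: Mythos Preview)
Your proposal is correct and matches the paper's intended argument: the paper states the proposition as an immediate consequence of \cref{lem:inthom} and Yoneda's lemma (marked with \qed and no further proof), and your write-up is precisely a careful unpacking of that sentence, factoring the representable-valued presheaf functor $(c,d)\mapsto\bCC(-\times c,d)$ through the fully faithful Yoneda embedding.
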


	\begin{remark}\label{rem:cotensorinternal}
		It follows easily from the universal property of the internal mapping functor that for every category $t \in \Cat$ and every object $c \in \bCC$ the power $c^t$ is given by $\underline{\bCC}(t,c)$.
	\end{remark}

	\begin{proposition}
		Let $\bCC$ be a 2-topos, and let $p \colon a \to b$ be an $\epsilon$-fibration. Then, for every $x\in\bCC$, the morphism $p_* \colon \underline{\bCC}(x,a) \to \underline{\bCC}(x,b)$ is again an $\epsilon$-fibration.
	\end{proposition}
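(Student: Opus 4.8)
The plan is to verify the $\epsilon$-fibration property \emph{representably}, via the characterisation from \cref{ex:fibrationsFunctorCategories}: a map $f\colon u\to v$ in $\bCC$ is an $\epsilon$-fibration if and only if the induced natural transformation $f_\ast\colon\bCC(-,u)\to\bCC(-,v)$, regarded as a functor $\bCC^\op\to\AR_{\Cat}$, takes values in $\Fib_{\Cat}^{\epsilon}$. (For $\epsilon=1$ this is the case $\epsilon=0$ applied to $\bCC^\co$ together with \cref{rem:1fib}, using that $\bCC^\co(z,w)=\bCC(z,w)^\op$; so the criterion holds uniformly in $\epsilon$.) I would apply this criterion to $f=\underline{\bCC}(x,p)\colon\underline{\bCC}(x,a)\to\underline{\bCC}(x,b)$.

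First I would rewrite the transformation $\big(\underline{\bCC}(x,p)\big)_\ast$ using the internal mapping adjunction. By \cref{prop:inthom}, there is an equivalence $\bCC(-\times x,-)\simeq\bCC(-,\underline{\bCC}(x,-))$ of functors $\bCC^\op\times\bCC\to\Cat$, natural in both remaining variables. Applying naturality in the target slot to the map $p\colon a\to b$ identifies the functor $\bCC^\op\to\AR_{\Cat}$ encoding $\big(\underline{\bCC}(x,p)\big)_\ast$ with the functor $z\mapsto\big(p_\ast\colon\bCC(z\times x,a)\to\bCC(z\times x,b)\big)$. In other words, $\big(\underline{\bCC}(x,p)\big)_\ast$ is the whiskering of $p_\ast\colon\bCC(-,a)\to\bCC(-,b)$ with the opposite of the product $2$-functor $(-\times x)\colon\bCC\to\bCC$ (which is a $2$-functor, being the composite $\bCC\xrightarrow{t^\ast}\Over{\bCC}{x}\xrightarrow{t_!}\bCC$ as in the proof of \cref{lem:inthom}).

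Now I would invoke the hypothesis: since $p$ is an $\epsilon$-fibration, \cref{ex:fibrationsFunctorCategories} shows that $p_\ast\colon\bCC(-,a)\to\bCC(-,b)$, regarded as a functor $G\colon\bCC^\op\to\AR_{\Cat}$, factors through $\Fib_{\Cat}^{\epsilon}$. Writing $H=(-\times x)^\op\colon\bCC^\op\to\bCC^\op$, the previous step gives $\big(\underline{\bCC}(x,p)\big)_\ast\simeq G\circ H$. As $G$ factors through $\Fib_{\Cat}^{\epsilon}\hookrightarrow\AR_{\Cat}$ and $H$ is a functor of $2$-categories, the composite $G\circ H$ factors through $\Fib_{\Cat}^{\epsilon}$ as well. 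Hence $\big(\underline{\bCC}(x,p)\big)_\ast$ takes values in $\Fib_{\Cat}^{\epsilon}$, and \cref{ex:fibrationsFunctorCategories} lets me conclude that $\underline{\bCC}(x,p)$ is an $\epsilon$-fibration, as claimed.

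I do not expect a genuinely hard step here; the result is a formal consequence of the representable characterisation of fibrations together with the fact that the internal mapping object is defined by an adjunction. The only points demanding care are: (i) checking that the identification of $\big(\underline{\bCC}(x,p)\big)_\ast$ with the whiskering $G\circ H$ respects all of the $2$-categorical data required by \cref{ex:fibrationsFunctorCategories} — that naturality squares over morphisms of $\bCC^\op$ are morphisms of fibrations and that $2$-cells induce $\epsilon$-cartesian $2$-morphisms — which is exactly what the $2$-naturality of the equivalence in \cref{prop:inthom} provides; and (ii) confirming that the criterion applies to $\epsilon=1$, handled by the passage to $\bCC^\co$ recorded above.
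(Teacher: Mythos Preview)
Your argument is correct, but it takes a different route from the paper. The paper's proof is a single line: since $\underline{\bCC}(x,-)$ is right adjoint to $x\times-$, it preserves all partially (op)lax limits and in particular oriented pullbacks; since the free-fibration construction $\Free^\epsilon$ is an oriented pullback and any $2$-functor preserves adjunctions, it follows at once that $\underline{\bCC}(x,-)$ carries $\epsilon$-fibrations to $\epsilon$-fibrations. Your approach instead stays at the level of mapping categories, using the representable criterion of \cref{ex:fibrationsFunctorCategories} together with the natural equivalence $\bCC(-,\underline{\bCC}(x,-))\simeq\bCC(-\times x,-)$ to identify $(\underline{\bCC}(x,p))_\ast$ with a precomposition of $p_\ast$ by $(-\times x)^\op$. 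The paper's argument is slightly more conceptual (it isolates exactly which structural feature is being used, namely preservation of oriented pullbacks) and generalises immediately to any $2$-functor with that property; your argument is perhaps more hands-on and avoids unpacking the internal definition of $\epsilon$-fibration, relying only on the external characterisation. Both are equally short once written out.
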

	\begin{proof}
		Since $\underline{\bCC}(x,-)$ is a right adjoint and therefore preserves oriented pullbacks, this is immediate.
	\end{proof}

	\begin{definition}
		Let $\bCC$ be a 2-topos and let $c \in \bCC$ be an object. Given a pair of morphisms $a,b \colon \ast \rightrightarrows c$, we denote by $\Hom_c(a,b)$ the oriented pullback
		\[\begin{tikzcd}
	{\Hom_c(a,b)} & \ast \\
	\ast & c
	\arrow[from=1-1, to=1-2]
	\arrow[from=1-1, to=2-1]
	\arrow["b", from=1-2, to=2-2]
	\arrow[Rightarrow,shorten <=15pt, shorten >=15pt, from=2-1, to=1-2]
	\arrow["a"', from=2-1, to=2-2]
\end{tikzcd}\]
		Note that $\Hom_c(a,b)$ is always a $\bCC$-groupoid, which we will call the \emph{mapping groupoid} between $a$ and $b$.
	\end{definition}

	\begin{theorem}[Univalence]\label{thm:Univalence}
		Let $\bCC$ be a 2-topos and let $x,y \in \bCC$.  Then we have a natural equivalence $\underline{\bCC}(x,y)^{\simeq} \simeq \Hom_{\Omega^{\epsilon,\kappa}}(x,y)$ for sufficiently large regular cardinals $\kappa$.
	\end{theorem}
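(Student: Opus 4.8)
The plan is to show that the two $\bCC$-groupoids $\underline{\bCC}(x,y)^{\core}$ and $\Hom_{\Omega^{\epsilon,\kappa}}(x,y)$ represent the same presheaf on $\bCC$ and then invoke Yoneda. Since the former is a $\bCC$-groupoid by \cref{cor:cores} and the latter by the remark accompanying its definition, both represented functors $\bCC(z,-)$ take values in spaces and factor through the groupoidification $\lvert z\rvert$ of \cref{prop:groupoidification}; concretely $\bCC(z,\iota g)\simeq\bCC(\lvert z\rvert, g)$ for every $\bCC$-groupoid $g$. Hence it suffices to produce a natural equivalence of the two represented presheaves after restricting to $\bCC$-groupoids $g\in\Grpd(\bCC)$, i.e.\ to compute $\bCC(g,\underline\bCC(x,y)^{\core})$ and $\bCC(g,\Hom_{\Omega^{\epsilon,\kappa}}(x,y))$ and match them naturally in $g$ (and in $x,y$).

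For the left-hand side I would combine the core adjunction of \cref{cor:cores} with the cartesian closure of \cref{prop:inthom}: for a $\bCC$-groupoid $g$,
\[
\bCC(g,\underline\bCC(x,y)^{\core})\simeq\bCC(g,\underline\bCC(x,y))^{\core}\simeq\bCC(g\times x,y)^{\core},
\]
where $(-)^{\core}$ on the right now denotes the core of the mapping category. Running the same chain for an arbitrary $z$ yields $\bCC(\lvert z\rvert\times x,y)^{\core}$, which is exactly the shape needed for the reduction above.

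For the right-hand side, I would first use that by \cref{prop:yonedacont} the corepresentable functor $\bCC(g,-)$ preserves the oriented pullback defining $\Hom_{\Omega^{\epsilon,\kappa}}(x,y)$, so that $\bCC(g,\Hom_{\Omega^{\epsilon,\kappa}}(x,y))$ is the mapping space $\Map_{\bCC(g,\Omega^{\epsilon,\kappa})}(x_g,y_g)$ between the images of the classifying points $x,y\colon\ast\to\Omega^{\epsilon,\kappa}$. Choosing $\kappa$ large enough that $x,y$ are $\kappa$-compact and that $\kappa$-compact objects are stable under binary products (so that $x\to\ast$, $y\to\ast$ and all their pullbacks are $\kappa$-compact $\epsilon$-fibrations in the sense of \cref{def:compactfib}), the representability of $\Omega^{\epsilon,\kappa}$ from \cref{cor:universeexists} identifies $\bCC(g,\Omega^{\epsilon,\kappa})\simeq(\Fib^{\epsilon,\kappa}_{\bCC})^{\leq 1}_{/g}$ and sends $x_g,y_g$ to the trivial fibrations $g\times x\to g$ and $g\times y\to g$. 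The decisive point is that $g$ is a $\bCC$-groupoid, so \cref{prop:characterisationGroupoids} yields an equivalence $\Fib^{\epsilon}_{/g}\simeq\bCC_{/g}$; the fibration condition thereby evaporates and
\[
\bCC(g,\Hom_{\Omega^{\epsilon,\kappa}}(x,y))\simeq\bCC_{/g}(g\times x,g\times y)^{\core}\simeq\bCC(g\times x,y)^{\core},
\]
the last step being the universal property of the product $g\times y\to g$. This matches the left-hand computation naturally in $g$ and in $x,y$, and feeding it through the reduction along $\lvert z\rvert$ together with Yoneda's lemma concludes.

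The main obstacle is not a single hard estimate but the correct bookkeeping between the internal core $(-)^{\core}$ and external mapping spaces, together with the reduction to groupoidal test objects: the genuinely content-bearing observation is that restricting the represented presheaves to $\bCC$-groupoids trivializes the $\epsilon$-fibration condition via \cref{prop:characterisationGroupoids}, collapsing the a priori subtle object $\Hom_{\Omega^{\epsilon,\kappa}}(x,y)$ to the same product-mapping computation that governs $\underline\bCC(x,y)^{\core}$. A secondary point to check carefully is that the orientation of the oriented pullback defining $\Hom_{\Omega^{\epsilon,\kappa}}(x,y)$ is compatible with the variance of $\underline\bCC(x,y)$ (so that one obtains functors from $x$ to $y$ rather than from $y$ to $x$), and that the choice of $\kappa$ is uniform enough to place the classifying maps $x,y\colon\ast\to\Omega^{\epsilon,\kappa}$ and all relevant pullbacks among the $\kappa$-compact fibrations.
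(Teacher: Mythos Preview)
Your proposal is correct and follows essentially the same approach as the paper: reduce to testing against $\bCC$-groupoids $s$, then compute both sides as $\bCC(s\times x,y)^{\core}$ using the core adjunction and cartesian closure on one side and the representability of $\Omega^{\epsilon,\kappa}$ together with the equivalence $\Fib^{\epsilon}_{/s}\simeq\bCC_{/s}$ from \cref{prop:characterisationGroupoids} on the other, concluding by Yoneda. The paper's proof is more terse but identical in substance.
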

	\begin{proof}
		We pick a cardinal $\kappa$ such that $x,y$ are $\kappa$-compact the universe $\Omega^{\epsilon,\kappa}$ exists. Then it follows that the unique maps $t_x \colon x \to \ast$ and $t_y\colon y\to\ast$ to the terminal object are $\kappa$-compact $\epsilon$-fibrations corresponding to maps $x, y \colon \ast \rightrightarrows \Omega^{\epsilon,\kappa}$. Now for every $s\in\Grpd(\bCC)$, we have a chain of natural equivalences,
		\[
			\Grp(\bCC)(s,\underline{\bCC}(x,y)^{\simeq}) \simeq \bCC(s \times x,y)^{\simeq} \simeq \Fib^{\epsilon}_{/s}(t_s^\ast x,t_s^\ast y)^{\simeq}\simeq \Grp(\bCC)(s, \Hom_{\Omega^{\epsilon,\kappa}}(x,y)),
		\]
		which together with Yoneda's lemma yields the claim.
	\end{proof}

  \subsection{Synthetic Kan extensions}\label{subsec:basechange}
  In this section, we establish a theory of \emph{synthetic Kan extension} internal to a 2-topos $\bCC$. That is, given a map $f\colon c\to d$ in $\bCC$, we establish internal left and right adjoints of the pullback map $f^\ast\colon\bCC(d,\Omega^{\epsilon,\kappa})\to \bCC(c,\Omega^{\epsilon,\kappa})$ (for a suitable cardinal $\kappa$). Moreover, we also provide an internal version of \emph{lax} Kan extensions. 
  
    \begin{proposition}\label{prop:leftkan}
    	Let $\bCC$ be a 2-topos a consider a morphism $f\colon c \to d$. Then the pullback functor
    	\[
    		f^* \colon \Fib_{/d}^{\epsilon} \to \Fib_{/c}^{\epsilon}
    	\]
    	is continuous.
    \end{proposition}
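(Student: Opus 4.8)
The goal is to show that for a morphism $f\colon c\to d$ in a 2-topos $\bCC$, the pullback functor $f^\ast\colon\Fib^\epsilon_{/d}\to\Fib^\epsilon_{/c}$ preserves all partially (op)lax \emph{limits}. The strategy mirrors the proof of the dual statement \cref{prop:rightkan} (which established cocontinuity), but run through the other variance and using the adjoint functor theorem in its ``right adjoint'' form. As before, I would exploit the factorization of $f$ through a free fibration in order to reduce the problem to two manageable pieces.

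First I would factor $f$ as
\begin{equation*}
	c\xrightarrow{\eta_d} \Free_d^1(c)\xrightarrow{\Free_d^1(f)} d,
\end{equation*}
exactly as in \cref{prop:rightkan}. The map $\Free_d^1(f)$ is a $1$-fibration by \cref{lem:freeFibrationIsFibration}, and $\eta_d$ is (by \cref{def:fibrations} and the construction of $\Free^1$) a \emph{right} inclusion, i.e.\ a fully faithful left adjoint, with left adjoint given by the canonical projection $\Free_d^1(c)\to c$. Since $f^\ast=\eta_d^\ast\circ\Free_d^1(f)^\ast$, it suffices to treat the two factors separately. Because $\eta_d$ is a left adjoint, its associated pullback functor $\eta_d^\ast$ is a \emph{right} adjoint on the relevant slices, hence automatically continuous; this is the same observation used in \cref{prop:rightkan}, only with the roles of left/right exchanged. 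So the entire content is concentrated in the factor $\Free_d^1(f)^\ast$, i.e.\ pullback along a fibration.

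For the remaining factor I would invoke the adjoint functor theorem of \cref{prop:adjointFunctorTheoremRight}: to show that pullback along the fibration $\Free_d^1(f)$ is continuous (equivalently, a right adjoint), it is enough to know that it preserves partially (op)lax limits, and that requires producing a left adjoint. Here I expect the cleanest route is to observe that pulling back along a fibration admits a left adjoint given by composition (the forgetful map), exactly as in the proof of \cref{lem:PresheavesProductBilinear}: for an $\epsilon$-fibration $g\colon e\to d$, the pullback functor $g^\ast\colon\Fib^\epsilon_{/d}\to\Fib^\epsilon_{/e}$ has a left adjoint $g_!$ given by postcomposition with $g$, using that pulling back a fibration along a fibration again yields a morphism of fibrations (\cref{lem:pullbackFibration}) and that the slices $\Fib^\epsilon_{/(-)}$ are presentable (\cref{cor:Fib2Presentable}). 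A right adjoint to a functor between presentable 2-categories, being continuous, is what we want; alternatively one checks continuity directly and applies \cref{prop:adjointFunctorTheoremRight}.

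\textbf{Main obstacle.} The genuinely delicate point is not the formal factorization but verifying that pullback along the free fibration $\Free_d^1(f)$ genuinely preserves partially (op)lax limits in $\Fib^\epsilon_{/d}$, as opposed to merely strong limits; the subtlety is that limits in $\Fib^\epsilon_{/c}$ are computed in $\AR_{\bCC}$ (via \cref{prop:freeFibrationIsFreeFibration}) and one must confirm that pullback along a fibration is compatible with this computation, i.e.\ that the requisite $\epsilon$-cartesian morphisms and $\epsilon$-cartesian $2$-cells are preserved. I would handle this by working representably and reducing to $\bCC=\Cat$ via \cref{prop:yonedacont} and \cref{ex:fibrationsFunctorCategories}, where the statement becomes the classical fact that base change along a (co)cartesian fibration preserves limits of diagrams of fibrations. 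Once this compatibility is secured, continuity of $\Free_d^1(f)^\ast$ follows, and combined with the triviality of the $\eta_d^\ast$ factor, we conclude that $f^\ast$ is continuous.
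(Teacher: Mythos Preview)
Your factorization approach contains a variance error that breaks the argument as written. You copy the factorization $c\xrightarrow{\eta_d}\Free_d^1(c)\xrightarrow{\Free_d^1(f)}d$ verbatim from \cref{prop:rightkan}, but for $\Free^1$ the map $\eta_d$ is a \emph{right} adjoint (this is exactly what the paper uses in \cref{prop:rightkan}), not a left adjoint as you claim. Your sentence is in fact internally inconsistent: you call $\eta_d$ a ``fully faithful left adjoint'' and then say its ``left adjoint'' is the projection. To make your argument work you would need to swap to $\Free_d^\epsilon$ (the \emph{same} variance as the fibrations you are pulling back): then $\eta_d$ is indeed a left adjoint, $\eta_d^\ast$ is continuous, and $\Free_d^\epsilon(f)$ is an $\epsilon$-fibration so that postcomposition with it lands in $\Fib^\epsilon_{/d}$ and furnishes the left adjoint you want. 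With that correction your outline does go through.

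That said, the paper's proof is far more direct and bypasses the factorization entirely. One simply contemplates the commutative square
\[
\begin{tikzcd}
\Fib_{/d}^{\epsilon} \arrow[r, "f^\ast"] \arrow[d, hookrightarrow] & \Fib_{/c}^{\epsilon} \arrow[d, hookrightarrow] \\
\bCC_{/d} \arrow[r, "f^\ast"] & \bCC_{/c}
\end{tikzcd}
\]
and observes that the vertical inclusions create partially (op)lax limits (they are right adjoints by \cref{prop:freeFibrationIsFreeFibration}) while the bottom $f^\ast$ is continuous because it is right adjoint to postcomposition $f_!\colon\bCC_{/c}\to\bCC_{/d}$. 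The asymmetry with \cref{prop:rightkan} is precisely that pullback on slices is \emph{always} a right adjoint, whereas it is a left adjoint only along exponentiable maps; this is why the factorization trick was needed for cocontinuity but is superfluous here. Your ``main obstacle'' and the reduction to $\Cat$ are therefore unnecessary.
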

    \begin{proof}
    	 By means of the commutative square 
    	\[
    		\begin{tikzcd}
    			\Fib_{/d}^{\epsilon} \arrow[r, "f^\ast"] \arrow[d, hookrightarrow] & \Fib_{/c}^{\epsilon} \arrow[d, hookrightarrow] \\
    			\bCC_{/d} \arrow[r, "f^\ast"] & \bCC_{/c},
    		\end{tikzcd}
    	\]
    	the claim follows from the observation that the vertical maps create partially (op)lax limits and the lower horizontal map is continuous.
    \end{proof}

   \begin{theorem}\label{thm:basechangetheorem}
   	Let $\bCC$ be a 2-topos. Then we have the following adjunctions:
   	\begin{enumerate}
   		\item For every $c \in \bCC$, an adjunction $U_c \colon \Fib^{\epsilon}_{/c} \llra \bCC_{/c}: \Cofree_c$, where $U_c$ is the forgetful functor. We call $\Cofree_c$ the \emph{cofree} $\epsilon$-fibration functor.
   		\item For every  map $f \colon c \to d$ in $\bCC$, an adjunction $f_! \colon \Fib^{\epsilon}_{/c} \llra \Fib^{\epsilon}_{/d} \colon f^*$. We call $f_!$ the \emph{functor of left Kan extension} along $f$.
   		\item For every map $f \colon c \to d$ in $\bCC$, an adjunction $f^* \colon \Fib^{\epsilon}_{/d} \llra \Fib^{\epsilon}_{/c} \colon f_*$. We call $f_\ast$ the \emph{functor of right Kan extension} along $f$.
   	\end{enumerate}
   \end{theorem}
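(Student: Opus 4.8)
The plan is to derive all three adjunctions uniformly from the presentability of the slice 2-categories $\Fib^{\epsilon}_{/c}$, established in \cref{cor:Fib2Presentable}, together with the two-dimensional adjoint functor theorems of \cref{subsec:adjointFunctorTheorems}, once the relevant forgetful and pullback functors are known to be suitably (co)continuous. The three inputs I would draw on are that $U_c$ creates partially (op)lax colimits (\cref{prop:cofree}), that $f^\ast$ is cocontinuous (\cref{prop:rightkan}), and that $f^\ast$ is continuous (\cref{prop:leftkan}).

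For (1), since $\Fib^{\epsilon}_{/c}$ is cocomplete and $U_c$ creates partially (op)lax colimits, the functor $U_c$ is cocontinuous; as its domain is presentable by \cref{cor:Fib2Presentable} and its codomain $\bCC_{/c}$ is locally small (being a slice of the presentable, hence locally small, 2-category $\bCC$), \cref{prop:adjointFunctorTheoremLeft} exhibits $U_c$ as a left adjoint, and I would define $\Cofree_c$ to be its right adjoint. Part (3) is entirely analogous: by \cref{prop:rightkan} the functor $f^\ast\colon \Fib^{\epsilon}_{/d}\to\Fib^{\epsilon}_{/c}$ is cocontinuous, both source and target are presentable, and so \cref{prop:adjointFunctorTheoremLeft} presents $f^\ast$ as a left adjoint, with $f_\ast$ defined as its right adjoint.

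The genuinely different, and harder, part is (2), where I must produce a \emph{left} adjoint $f_!$ to $f^\ast$, i.e.\ show that $f^\ast$ is a right adjoint. Here \cref{prop:adjointFunctorTheoremRight} does not apply directly, since its hypotheses require every object of the codomain $\Fib^{\epsilon}_{/c}$ to be $\kappa$-compact, which fails for a general presentable 2-category. Instead I would reproduce the strategy of that theorem's proof: by \cref{prop:leftkan} the functor $f^\ast$ is (op)lax continuous, and its domain $\Fib^{\epsilon}_{/d}$ is presentable and hence (op)lax complete, so the dual of \cref{lem:adjunctions1vs2dimensional} reduces the claim to showing that the underlying functor $(f^\ast)^{\leq 1}$ of the underlying presentable $1$-categories (presentable by \cref{rem:2presentableImplies1presentable}) is a right adjoint. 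This $1$-categorical functor preserves limits, as $f^\ast$ is continuous, and it is accessible, since $f^\ast$ is cocontinuous by \cref{prop:rightkan} and therefore preserves $\kappa$-filtered colimits; the $1$-categorical adjoint functor theorem then produces a left adjoint to $(f^\ast)^{\leq 1}$, which the dual of \cref{lem:adjunctions1vs2dimensional} upgrades to a left adjoint $f_!$ of $f^\ast$. The main obstacle I anticipate is precisely this interplay of the two halves: it is the \emph{simultaneous} continuity (from \cref{prop:leftkan}) and cocontinuity (from \cref{prop:rightkan}) of $f^\ast$ that makes $f_!$ exist, and the one point requiring care is checking that $(f^\ast)^{\leq 1}$ inherits both limit-preservation and accessibility at the level of underlying $1$-categories so that the classical presentable adjoint functor theorem can be invoked.
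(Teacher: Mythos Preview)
Your proof is correct and follows essentially the same route as the paper, which simply cites \cref{prop:cofree}, \cref{prop:rightkan}, \cref{prop:leftkan}, \cref{prop:adjointFunctorTheoremLeft}, \cref{prop:adjointFunctorTheoremRight} and \cref{cor:Fib2Presentable}.

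The one point worth correcting is your claim that \cref{prop:adjointFunctorTheoremRight} ``does not apply directly'' to part~(2). Its hypothesis on the codomain is that \emph{each} object be $\kappa$-compact for \emph{some} regular cardinal $\kappa$ (depending on the object), not that a single $\kappa$ work uniformly. This is automatic in any presentable 1-category, hence in $(\Fib^{\epsilon}_{/c})^{\leq 1}$ by \cref{cor:Fib2Presentable} and \cref{rem:2presentableImplies1presentable}. The remaining hypothesis, that $f^\ast$ preserve $\tau$-filtered colimits for some $\tau$, is supplied by \cref{prop:rightkan}. So \cref{prop:adjointFunctorTheoremRight} applies directly, and your unrolling of its proof via the dual of \cref{lem:adjunctions1vs2dimensional} is unnecessary---though it is exactly what that proposition does internally, so no harm done.
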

   \begin{proof}
   	Combine \cref{prop:cofree}, \cref{prop:rightkan}, \cref{prop:leftkan} together with \cref{prop:adjointFunctorTheoremLeft}, \cref{prop:adjointFunctorTheoremRight} and \cref{cor:Fib2Presentable}.
   \end{proof}

   \begin{lemma}\label{lem:kanextendingfree}
   		Let $\bCC$ be a 2-topos, and consider a morphism $f\colon c \to d$ in $\bCC$. Given $p \colon x \to c$, the canonical map $f_{!}(\Free^{\epsilon}_{c}(p)) \to \Free^{\epsilon}_{d}(f p)$ is an equivalence.
   \end{lemma}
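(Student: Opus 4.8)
We only treat the case $\epsilon=0$, the case $\epsilon=1$ being obtained by passing to the mirror $2$-topos $\bCC^\co$ (cf.\ \cref{rem:1fib}). The plan is to identify $f_!(\Free_c^0(p))$ with $\Free_d^0(fp)$ by showing that both corepresent the same functor on $\Fib_{/d}^0$, the comparison being precisely the canonical map in the statement. Concretely, I would assemble three adjunctions: the free--forgetful adjunction $\Free_c^0\dashv U_c$ on the slices over $c$ and over $d$ (\cref{cor:Fib2Presentable}), the base-change adjunction $f_!\dashv f^*$ on fibrations (\cref{thm:basechangetheorem}(2)), and the base-change adjunction $\Sigma_f\dashv f^*$ on the full strong slices, where $\Sigma_f\colon\bCC_{/c}\to\bCC_{/d}$ denotes postcomposition with $f$ (so that $\Sigma_f(p)=fp$) and $f^*\colon\bCC_{/d}\to\bCC_{/c}$ is pullback, which exists since $\bCC$ admits pullbacks (cf.\ \cref{def:exponentiability}).

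The heart of the argument is the commuting square of right adjoints
\[
\begin{tikzcd}
\Fib_{/d}^0\arrow[r, "f^*"]\arrow[d, "U_d"'] & \Fib_{/c}^0\arrow[d, "U_c"]\\
\bCC_{/d}\arrow[r, "f^*"'] & \bCC_{/c},
\end{tikzcd}
\]
that is, the natural equivalence $U_c f^*\simeq f^* U_d$ expressing that the forgetful functor intertwines the two pullback functors. This is exactly \cref{lem:pullbackFibration}: the pullback of a $0$-fibration along $f$ is a $0$-fibration whose underlying map is the strong pullback computed in $\bCC$ and whose defining square is a morphism of $0$-fibrations, so the underlying object of $f^*q$ agrees with $f^*(U_d q)$ naturally in $q$. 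Taking left adjoints around this square, and using that a composite of left adjoints is left adjoint to the composite of the corresponding right adjoints, yields for every $q\in\Fib_{/d}^0$ a chain of natural equivalences
\begin{align*}
\Fib_{/d}^0(f_!\Free_c^0(p), q)
&\simeq \Fib_{/c}^0(\Free_c^0(p), f^* q)
\simeq \bCC_{/c}(p, U_c f^* q)\\
&\simeq \bCC_{/c}(p, f^* U_d q)
\simeq \bCC_{/d}(fp, U_d q)
\simeq \Fib_{/d}^0(\Free_d^0(fp), q).
\end{align*}
Since $\Fib_{/d}^0$ is presentable (\cref{cor:Fib2Presentable}) and hence locally small, the $2$-categorical Yoneda lemma promotes this natural equivalence into an equivalence $f_!\Free_c^0(p)\simeq\Free_d^0(fp)$, and tracing through the identity of $q=\Free_d^0(fp)$ shows that it is induced by the canonical comparison map.

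The only point that is not entirely formal is the base-change adjunction $\Sigma_f\dashv f^*$ on the full strong slices together with the identification $\Sigma_f(p)=fp$; I expect this to be the main technical obstacle, since, unlike the fibration-theoretic adjunctions, it is not recorded earlier in the text. It can be established directly from the $2$-dimensional universal property of the strong pullback: the unit $p\to f^*(fp)$ is the map into $c\times_d x$ induced by $(p,\id_x)$, the counit $\Sigma_f f^* q\to q$ is the second projection of the defining pullback square, and the triangle identities follow from the uniqueness clause of that universal property. Equivalently, one verifies the natural equivalence of mapping categories $\bCC_{/d}(fp,q)\simeq\bCC_{/c}(p,f^*q)$, which reduces via \cref{prop:yonedacont} and \cref{cor:mapcatstrongarrow} to the pasting law for strong pullbacks. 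Once this adjunction is in hand, the mate calculus above is purely formal and the lemma follows.
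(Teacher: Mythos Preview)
Your proof is correct and follows essentially the same approach as the paper: the paper's one-line argument ``this follows from the observation that the pullback functor $f^\ast$ preserves $\epsilon$-fibrations upon passing to left adjoints'' is precisely your commuting square $U_cf^\ast\simeq f^\ast U_d$ together with the passage to left adjoints. Your write-up simply unpacks this in detail. One minor correction: the free--forgetful adjunction $\Free_c^0\dashv U_c$ is established in \cref{prop:freeFibrationIsFreeFibration}, not \cref{cor:Fib2Presentable} (the latter gives presentability).
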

   \begin{proof}
   	This follows from the observation that the pullback functor $f^\ast$ preserves $\epsilon$-fibrations upon passing to left adjoints.
   \end{proof}

   \begin{proposition}\label{prop:functkanext}
   	Let $L \colon \bCC \to \bDD$ be a morphism of 2-topoi and consider a morphism $f \colon a \to b$ in $\bCC$. Then we have commutative diagrams
   	\[
   		\begin{tikzcd}
   			\left(\Fib_{\bCC}^{\epsilon}\right)_{/a} \arrow[d,"f_!"] \arrow[r,"L_a"] & \left(\Fib_{\bDD}^{\epsilon}\right)_{/La} \arrow[d,"(Lf)_!"] \\
   			\left(\Fib_{\bCC}^{\epsilon}\right)_{/b} \arrow[r,"L_b"] & \left(\Fib_{\bDD}^{\epsilon}\right)_{/Lb},
   		\end{tikzcd} \enspace \enspace
   		\begin{tikzcd}
   			\left(\Fib_{\bDD}^{\epsilon}\right)_{/La} \arrow[d,"(Lf)_*"] \arrow[r,"R_a"] & \left(\Fib_{\bCC}^{\epsilon}\right)_{/a} \arrow[d,"f_*"] \\
   			\left(\Fib_{\bDD}^{\epsilon}\right)_{/Lb} \arrow[r,"R_b"] & \left(\Fib_{\bCC}^{\epsilon}\right)_{/b}
   		\end{tikzcd} 
   	\]
   	where $L_a$ (resp.\ its adjoint $R_a$) is the morphism of 2-topoi given in \cref{prop:fibslice}. 
   \end{proposition}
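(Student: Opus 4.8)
The plan is to reduce both squares to the single geometric fact that $L$, being a morphism of 2-topoi, preserves oriented pullbacks, and then to propagate this through the adjunctions supplied by \cref{thm:basechangetheorem}. I treat the case $\epsilon=0$; the case $\epsilon=1$ follows by passing to $\bCC^\co$ and $\bDD^\co$ (cf.\ \cref{rem:1fib}). Write $A=\left(\Fib_{\bCC}^{\epsilon}\right)_{/a}$, $A'=\left(\Fib_{\bDD}^{\epsilon}\right)_{/La}$, and analogously $B,B'$ over $b,Lb$. I record the adjunctions in play: $L_a\dashv R_a$ and $L_b\dashv R_b$ from \cref{prop:fibslice}, together with the triples $f_!\dashv f^\ast\dashv f_\ast$ and $(Lf)_!\dashv (Lf)^\ast\dashv (Lf)_\ast$ from \cref{thm:basechangetheorem}. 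The key preliminary observation, which I will call $(\star)$, is the base equivalence
\begin{equation*}
	L_a\circ f^\ast\simeq (Lf)^\ast\circ L_b\colon B\to A'.
\end{equation*}
Indeed, for a $0$-fibration $p\colon x\to b$ the object $f^\ast p$ is the strong pullback of $p$ along $f$ (again a fibration by \cref{lem:pullbackFibration}); since $L$ preserves oriented pullbacks it also preserves strong pullbacks of $\epsilon$-fibrations (cf.\ the retract argument in the proof of \cref{prop:inducing2toposstructure}), so $L$ carries the defining pullback square of $f^\ast p$ to that of $(Lf)^\ast(Lp)$. This gives $(\star)$, naturally in $p$.

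For the second (right Kan extension) square I argue by uniqueness of adjoints. The composite $f_\ast\circ R_a\colon A'\to B$ is a right adjoint, and since the left adjoint of $R_a$ is $L_a$ and that of $f_\ast$ is $f^\ast$, its left adjoint is $L_a\circ f^\ast$. Likewise $R_b\circ (Lf)_\ast\colon A'\to B$ is a right adjoint with left adjoint $(Lf)^\ast\circ L_b$. By $(\star)$ these two left adjoints are equivalent, hence so are their right adjoints, which is precisely the asserted equivalence $f_\ast R_a\simeq R_b (Lf)_\ast$.

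For the first (left Kan extension) square I check the canonical comparison on generators. Both $(Lf)_!\circ L_a$ and $L_b\circ f_!$ are cocontinuous functors $A\to B'$, being composites of left adjoints and morphisms of 2-topoi. Moreover, by \cref{prop:cofree} and \cref{cor:Fib2Presentable} the free–forgetful adjunction $\Free_a^\epsilon\dashv U_a$ is monadic with colimit-preserving monad, so every object of $A$ is a colimit of free $\epsilon$-fibrations $\Free_a^\epsilon(p)$. It therefore suffices to produce a natural equivalence on such objects. On the one hand,
\begin{equation*}
	L_b f_!\,\Free_a^\epsilon(p)\simeq L_b\,\Free_b^\epsilon(fp)\simeq \Free_{Lb}^\epsilon\big(L(fp)\big),
\end{equation*}
using \cref{lem:kanextendingfree} and the fact that $L$ preserves free fibrations (immediate from \cref{cons:freefib}, since $L$ preserves oriented pullbacks). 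On the other hand, by the same two facts,
\begin{equation*}
	(Lf)_! L_a\,\Free_a^\epsilon(p)\simeq (Lf)_!\,\Free_{La}^\epsilon(Lp)\simeq \Free_{Lb}^\epsilon\big((Lf)(Lp)\big).
\end{equation*}
As $L(fp)=(Lf)(Lp)$, the two composites agree on free fibrations, and the canonical comparison $2$-cell (the mate of $(\star)$ under $f_!\dashv f^\ast$ and $(Lf)_!\dashv (Lf)^\ast$) restricts to this equivalence; cocontinuity of both sides then propagates it to all of $A$.

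The main obstacle is bookkeeping rather than conceptual. One must verify that the canonical comparison transformation is genuinely the map being inverted on generators (so that invertibility there implies invertibility everywhere, the point being that the comparison is \emph{not} an equivalence by Beck–Chevalley alone), and that the formal principles "the adjoint of an equivalence between adjoint functors is an equivalence'' and the mate calculus are legitimate in the present framework of adjunctions of 2-categories; for these I would appeal to the machinery of \cite{AGH24}.
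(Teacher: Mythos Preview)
Your argument for the second square is exactly the paper's: establish $(\star)$ from the fact that $L$ preserves pullbacks of fibrations, and deduce $f_\ast R_a\simeq R_b(Lf)_\ast$ by uniqueness of right adjoints.

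For the first square you take an unnecessarily laborious detour. The paper simply observes that there is a \emph{second} easy commutativity
\[
	R_a\circ (Lf)^\ast\;\simeq\; f^\ast\circ R_b\colon\quad \left(\Fib_{\bDD}^{\epsilon}\right)_{/Lb}\to \left(\Fib_{\bCC}^{\epsilon}\right)_{/a},
\]
which holds because $R_a$ is ``apply $R$, then pull back along the unit $a\to RLa$'', $R$ preserves limits (hence pullbacks), and the unit is natural in $a\to b$. Then one passes to \emph{left} adjoints of all four functors to obtain $(Lf)_! L_a\simeq L_b f_!$. This is the very same uniqueness-of-adjoints trick you used for the second square, applied dually; you could have written one line instead of a page.

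Your route via free fibrations is not incorrect, but the ``obstacle'' you flag at the end---checking that the mate of $(\star)$ is really the map your computation on $\Free_a^\epsilon(p)$ identifies---is a genuine verification you have not carried out, and it is entirely avoidable by the symmetric argument above. In short: your proof is morally fine but leaves a bookkeeping debt that the paper's approach never incurs.
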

   \begin{proof}
   It is clear from our assumptions that we have commutative diagrams
   	\[
   		\begin{tikzcd}
   			\left(\Fib_{\bCC}^{\epsilon}\right)_{/b} \arrow[d,"f^*"] \arrow[r,"L_a"] & \left(\Fib_{\bDD}^{\epsilon}\right)_{/Lb} \arrow[d,"(Lf)^*"] \\
   			\left(\Fib_{\bCC}^{\epsilon}\right)_{/a} \arrow[r,"L_b"] & \left(\Fib_{\bDD}^{\epsilon}\right)_{/La},
   		\end{tikzcd} \enspace \enspace
   		\begin{tikzcd}
   			\left(\Fib_{\bDD}^{\epsilon}\right)_{/Lb} \arrow[d,"(Lf)^{*}"] \arrow[r,"R_a"] & \left(\Fib_{\bCC}^{\epsilon}\right)_{/b} \arrow[d,"f^*"] \\
   			\left(\Fib_{\bDD}^{\epsilon}\right)_{/La} \arrow[r,"R_b"] & \left(\Fib_{\bCC}^{\epsilon}\right)_{/a}
   		\end{tikzcd} 
   	\]
   	where the vertical maps are left (resp.\  right) adjoints by \cref{thm:basechangetheorem}. Therefore, the claim follows by passing to adjoints.
   \end{proof}

   \begin{theorem}\label{thm:kaninternal}
   	Let $\bCC$ be a 2-topos and let $f \colon a \to b$ be a morphism. Then for sufficiently large regular cardinals $\kappa$ we have:
   	\begin{enumerate}
   		\item An internal adjunction
   		\[
   		 	f_! \colon \underline{\bCC}(a, \Omega^{\kappa,\epsilon}) \llra  \underline{\bCC}(b, \Omega^{\kappa,\epsilon}) \colon : f^*.
   		\]
   		We call $f_!$ the \emph{internal functor of left Kan extension} along $f$.
   		\item An adjunction
   		\[
   		 	f^* \colon \underline{\bCC}(b, \Omega^{\kappa,\epsilon}) \llra  \underline{\bCC}(a, \Omega^{\kappa,\epsilon}) \colon f_*.
   		\]
   		We call $f_\ast$ the \emph{internal functor of right Kan extension} along $f$.
   	\end{enumerate}
   \end{theorem}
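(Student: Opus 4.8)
The plan is to reduce the construction of both internal adjunctions to the external adjunctions of \cref{thm:basechangetheorem} by passing through the Yoneda embedding, using that $\bCC$ is cartesian closed and that $\Omega^{\kappa,\epsilon}$ represents $\kappa$-compact $\epsilon$-fibrations. First I would observe that one of the two functors is essentially free of charge: by contravariant functoriality of the internal mapping object in its first variable (\cref{prop:inthom}), the map $f\colon a\to b$ induces an internal precomposition morphism $f^\ast\colon\underline{\bCC}(b,\Omega^{\kappa,\epsilon})\to\underline{\bCC}(a,\Omega^{\kappa,\epsilon})$. Both statements of the theorem then amount to producing an internal left adjoint $f_!$ and an internal right adjoint $f_\ast$ to this single morphism $f^\ast$.

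To produce these adjoints I would work in $\Cat$-valued presheaves via the fully faithful Yoneda embedding $h_{\bCC}\colon\bCC\to\PSh_{\Cat}(\bCC)$, which reflects adjunctions between representable objects. For a test object $s\in\bCC$, \cref{prop:inthom} together with \cref{cor:universeexists} yields natural equivalences $\bCC(s,\underline{\bCC}(a,\Omega^{\kappa,\epsilon}))\simeq\bCC(s\times a,\Omega^{\kappa,\epsilon})\simeq(\Fib^{\epsilon,\kappa}_{\bCC})^{\leq 1}_{/s\times a}$, and since a map into $\Omega^{\kappa,\epsilon}$ classifies a fibration by pullback of the universal fibration (\cref{rem:universalFibration}), under these identifications the component $h_{\bCC}(f^\ast)_s$ is precisely the pullback functor $(\id_s\times f)^\ast$. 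Applying \cref{thm:basechangetheorem} to the morphism $\id_s\times f$ in $\bCC$ shows that $(\id_s\times f)^\ast$ admits both a left adjoint $(\id_s\times f)_!$ and a right adjoint $(\id_s\times f)_\ast$; after enlarging $\kappa$ so that $a,b$ are $\kappa$-compact, $\Omega^{\kappa,\epsilon}$ exists, $\kappa$-compact objects are stable under oriented pullback, and these adjoints preserve $\kappa$-compact $\epsilon$-fibrations (a boundedness argument, using that the right adjoint $(\id_s\times f)^\ast$ preserves $\kappa$-filtered colimits), they restrict to the $\kappa$-compact slices.

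A natural transformation of $\Cat$-valued presheaves is a left (resp.\ right) adjoint in $\PSh_{\Cat}(\bCC)$ precisely when each of its components is and the naturality squares are right (resp.\ left) adjointable. Thus it remains to verify the Beck--Chevalley conditions (for both the left and the right adjoints) for the mates of the squares associated with a morphism $g\colon s'\to s$, whose underlying square of objects $s'\times a=(s\times a)\times_{s\times b}(s'\times b)$ is a pullback. This is where I expect the main difficulty to lie. I would establish it from stability of $\epsilon$-fibrations under pullback (\cref{lem:pullbackFibration}) together with fibrational descent; alternatively, I would realise the $s$-direction pullback functors as morphisms of $2$-topoi between the slices $\Fib^{\epsilon}_{/s}$ and invoke the compatibility of $f_!$ and $f_\ast$ with morphisms of $2$-topoi from \cref{prop:functkanext}. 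Granting Beck--Chevalley, the components $(\id_s\times f)_!$ and $(\id_s\times f)_\ast$ assemble into a left and a right adjoint of $h_{\bCC}(f^\ast)$ in $\PSh_{\Cat}(\bCC)$; these are maps between the representable presheaves $\bCC(-,\underline{\bCC}(a,\Omega^{\kappa,\epsilon}))$ and $\bCC(-,\underline{\bCC}(b,\Omega^{\kappa,\epsilon}))$, so by full faithfulness of $h_{\bCC}$ they descend, together with their unit and counit data, to morphisms $f_!,f_\ast\colon\underline{\bCC}(a,\Omega^{\kappa,\epsilon})\to\underline{\bCC}(b,\Omega^{\kappa,\epsilon})$ in $\bCC$, yielding the internal adjunctions $f_!\dashv f^\ast\dashv f_\ast$.
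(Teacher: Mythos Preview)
Your approach matches the paper's: both pass to $\Cat$-valued presheaves (the paper via \cite[Cor.~5.2.12]{AGH24}, you via the Yoneda embedding), identify the components of $f^\ast$ with the pullback functors $(\id_s\times f)^\ast$ between fibration slices, supply componentwise adjoints via \cref{thm:basechangetheorem}, and reduce to an adjointability (Beck--Chevalley) check on the naturality squares indexed by $g\colon s'\to s$.

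The one place where the paper is more specific than your sketch is precisely that Beck--Chevalley check, which you rightly flag as the crux. Rather than arguing directly from fibrational descent, or applying \cref{prop:functkanext} with the $s$-direction pullback viewed as a morphism of $2$-topoi between slice $2$-topoi (which would require an identification of iterated fibration slices $(\Fib^{\epsilon}_{\Fib^{\epsilon}_{/s}})_{/(s\times a)}\simeq(\Fib^{\epsilon}_{\bCC})_{/s\times a}$ that the paper does not supply), the paper proceeds by a double reduction: it first invokes Giraud's theorem (\cref{thm:giraud}) and uses \cref{prop:functkanext} to transport the adjointability question along the localisation $L\colon\PSh_{\Cat}(\bSS)\to\bCC$, reducing to the presheaf case; it then straightens to identify $(\Fib^{\epsilon}_{\PSh_{\Cat}(\bSS)})_{/s}\simeq\Fun(\Un^{\epsilon}_{\bSS}(s),\Cat)$, reducing further to ordinary Kan extensions in $\Cat$, where adjointability follows from the pointwise formula of \cref{sec:kancocompletion}. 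Your alternative routes are plausible but would need more work to make precise; the paper's reduction is more direct.
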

   \begin{proof}
   	We deal with the case of $f_!$ without loss of generality. We commence by noting that due to \cref{thm:defcheck} we have natural equivalences
   	\begin{equation*}\tag{$\star$}
   		\bCC\left(x, \underline{\bCC}(b, \Omega^{\kappa,\epsilon}) \right)\simeq \bCC(x \times b,\Omega^{\kappa,\epsilon})\simeq \Fib^{\epsilon,\kappa}_{/x \times b}.
   	\end{equation*}
     Enlarging $\kappa$ if necessary we can assume that for every $x \in \bCC$ the map  $\bCC(x, \underline{\bCC}(b, \Omega^{\kappa,\epsilon})) \to \bCC(x, \underline{\bCC}(a, \Omega^{\kappa,\epsilon})) $ given by post-composition with $f^*$ is a right adjoint. In order to conclude that $f^*$ admits a left adjoint, it suffices to check by \cite[Corollary 5.2.12]{AGH24} that for every $u \colon x \to y$ in $\bCC$ the commutative diagram
   	\[
   		\begin{tikzcd}
   			\bCC(y,\underline{\bCC}(b, \Omega^{\kappa,\epsilon})) \arrow[r] \arrow[d] & \bCC(y,\underline{\bCC}(a, \Omega^{\kappa,\epsilon})) \arrow[d] \\
   			\bCC(x,\underline{\bCC}(b, \Omega^{\kappa,\epsilon})) \arrow[r] & \bCC(x,\underline{\bCC}(a, \Omega^{\kappa,\epsilon}))
   		\end{tikzcd} 
   	\]
   	is horizontally adjointable. We invoke \cref{thm:giraud} to obtain an adjunction $L\colon \bXX=\FUN(\bSS^{\op},\Cat) \llra \bCC \colon R$ where $R$ is fully faithful and accessible. First, we observe that the functor $L_{s} \colon \left(\Fib^{\epsilon}_{\bXX}\right)_{/s} \to  \left(\Fib^{\epsilon}_{\bCC}\right)_{/Ls}$ is compatible with pullback and left Kan extension by \cref{prop:functkanext} and that the 2-morphism we wish to show is invertible in the mate square is obtained as a whiskering of units and counits. We conclude that we can reduce to $\bCC=\bXX$. Since $
	\left(\Fib^{\epsilon}_{\bXX}\right)_{/s}$ is equivalent to  $\left(\Fib^{\epsilon}_{\Cat}\right)_{/\Un_{\bSS}^{\epsilon}(s)}$, we can further reduce our problem to a computation which usual Kan extensions. The result now follows as an easy application of the point-wise formula given in \cref{sec:kancocompletion}.
   \end{proof}

We conclude this section with a discussion of synthetic \emph{lax} Kan extensions:

   \begin{definition}
   	Let $c \in \bCC$ where $\bCC$ is a 2-topos. A \emph{lax datum} is a collection $\Xi$ of morphisms in $\bCC$ with target $c$.
   \end{definition}

   \begin{definition}
   	Let $\Xi$ be a lax datum on $c \in \bCC$. We define $\Fib_{/c}^{\epsilon,\Xi}$ to be locally full sub-2-category of $\bCC_{/c}$ defined as follows:
   	\begin{itemize}
   		\item Objects are given by $\epsilon$-fibrations.
   		\item A map $\varphi \colon x \to y$ over $\bCC$ between $\epsilon$-fibrations belongs to $\Fib_{/c}^{\epsilon,\Xi}$ if for every $(a \to c) \in \Xi$ the pullback map
   		\[
   			x \times_{c} a \xrightarrow{\varphi_{|a}} y \times_{c} a \
   		\]
   		is a morphism of fibrations over $a$.
   	\end{itemize}
   	If $\Xi = \varnothing$ we will denote $\Fib_{/c}^{\epsilon,\Xi}$ as $\Fib_{/c}^{\arglax{\epsilon}}$.
   \end{definition}

   \begin{remark}
   By construction, we always have an inclusion $\Fib_{/c}^\epsilon\into\Fib_{/c}^{\epsilon,\Xi}$. In some cases, this inclusion is an equivalence, for example in the case where the identity on $c$ belongs to $\Xi$. 
   \end{remark}

   \begin{proposition}\label{prop:fiblaxkan}
   	 Let $\bCC$ be a 2-topos and let $\Xi$ be a lax datum on $c$. Then the inclusion
   	 \[
   	 	\Fib_{/c}^{\epsilon} \into \Fib^{\epsilon,\Xi}_{/c} 
   	 \]
   	 detects limits and colimits.
   \end{proposition}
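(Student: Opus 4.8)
The plan is to reduce everything to a computation with mapping categories, exploiting that $\Fib_{/c}^{\epsilon}$ and $\Fib^{\epsilon,\Xi}_{/c}$ have the \emph{same objects} and that, for fixed objects $p,q$ (which are $\epsilon$-fibrations over $c$), both $\Fib_{/c}^{\epsilon}(p,q)$ and $\Fib^{\epsilon,\Xi}_{/c}(p,q)$ are \emph{full} subcategories of $\bCC_{/c}(p,q)$ — the former by \cref{lem:mappingCategoriesFib} (it is a pullback of the fully faithful $l_p^\ast$), the latter by definition of the locally full sub-2-category. I treat colimits; limits are dual. Fix a diagram $D\colon\bII\to\Fib_{/c}^{\epsilon}$ with $(\bII,E)$ marked. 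Since $\Fib_{/c}^{\epsilon}$ is cocomplete (\cref{cor:Fib2Presentable}) I may form its $\elaxoplax$ colimit cocone $\overline{D}$, with apex $L$ and legs $\lambda_i$; by \cref{prop:cofree} this colimit is created by the forgetful functor $U_c$, so that $L=\colim^{\elaxoplax}U_cD$ already in $\bCC_{/c}$ and the $\lambda_i$ are morphisms of $\epsilon$-fibrations.

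The heart of the argument is that $\iota\overline{D}$ is again a colimit cocone in $\Fib^{\epsilon,\Xi}_{/c}$. Testing against an arbitrary $Y\in\Fib^\epsilon_{/c}$ and using that $L$ is a colimit in $\bCC_{/c}$ (so that $\bCC_{/c}(L,Y)$ is the defining $\elaxoplax$ limit of the $\bCC_{/c}(D(-),Y)$), the full-subcategory observation reduces the claim to a single pointwise statement: a map $\varphi\colon L\to Y$ over $c$ is $\Xi$-lax if and only if each composite $\varphi\lambda_i$ is $\Xi$-lax. I establish this one $a\in\Xi$ at a time. For the map $a\to c$, \cref{thm:basechangetheorem} supplies adjunctions $f_!\dashv f^\ast\dashv f_\ast$ in which $f^\ast=(-)_{|a}\colon\Fib_{/c}^{\epsilon}\to\Fib_{/a}^{\epsilon}$ is simultaneously a left and a right adjoint; in particular pullback along the \emph{arbitrary} map $a\to c$ preserves $\elaxoplax$ colimits of $\epsilon$-fibrations, so $L_{|a}=\colim^{\elaxoplax}(D(-)_{|a})$ with legs $(\lambda_i)_{|a}$. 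Because $U_a$ creates this colimit (\cref{prop:cofree}), it is computed simultaneously in $\Fib_{/a}^{\epsilon}$ and in $\bCC_{/a}$, giving a commuting square of mapping categories whose horizontal maps (induced by the $(\lambda_i)_{|a}$) are equivalences and whose vertical maps are the full inclusions $\Fib_{/a}^{\epsilon}(-,Y_{|a})\hookrightarrow\bCC_{/a}(-,Y_{|a})$. Since $\elaxoplax$ limits of full-subcategory inclusions are again full and are detected componentwise, this shows that $\varphi_{|a}$ is a morphism of $\epsilon$-fibrations exactly when every $(\varphi\lambda_i)_{|a}=\varphi_{|a}(\lambda_i)_{|a}$ is. Intersecting over $a\in\Xi$ yields the required equivalence, hence the universal property of $L$ in $\Fib^{\epsilon,\Xi}_{/c}$; this is the preservation half.

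Reflection then follows formally. Given any cocone in $\Fib_{/c}^{\epsilon}$ whose image is an $\elaxoplax$ colimit cocone in $\Fib^{\epsilon,\Xi}_{/c}$, I compare it with the genuine colimit cocone $\overline{D}$ produced above: by the preservation half both are colimit cocones of $\iota D$ in $\Fib^{\epsilon,\Xi}_{/c}$, so the induced comparison of apices is an equivalence there. Applying the locally fully faithful forgetful functor $V_c\colon\Fib^{\epsilon,\Xi}_{/c}\to\bCC_{/c}$, which preserves equivalences, shows this comparison is an equivalence in $\bCC_{/c}$, and since morphisms of $\epsilon$-fibrations are closed under equivalences it is an equivalence in $\Fib_{/c}^{\epsilon}$; thus the original cocone is a colimit there. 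The limit case is obtained verbatim after replacing creation of colimits by creation of $\elaxoplax$ limits (cf.\ the proof of \cref{prop:leftkan}), using that $(-)_{|a}$ is a \emph{right} adjoint, and testing maps \emph{into} the limit through the projection legs.

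I expect the main obstacle to be the pointwise reduction in the second paragraph, and specifically the input that pullback along the arbitrary maps of $\Xi$ preserves $\elaxoplax$ (co)limits of $\epsilon$-fibrations — this is false for general maps in a $2$-topos and is available here only because on fibration $2$-categories $f^\ast$ sits in a triple adjunction (\cref{thm:basechangetheorem}). Combined with the bookkeeping that $\elaxoplax$ limits of full-subcategory inclusions are detected componentwise, this is where all the real content sits, the remainder being formal manipulation of universal properties.
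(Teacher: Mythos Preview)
Your proof is correct and follows essentially the same approach as the paper. Both arguments hinge on the same ingredient: for each $a\to c$ in $\Xi$, the pullback $(-)_{|a}\colon\Fib^{\epsilon}_{/c}\to\Fib^{\epsilon}_{/a}$ preserves partially (op)lax colimits (the paper cites \cref{prop:rightkan}, you cite the triple adjunction of \cref{thm:basechangetheorem}, which amounts to the same thing), so that the $\Xi$-lax condition on a map out of the colimit can be tested on the legs. Your presentation is somewhat more explicit---you phrase the key step as a comparison of full subcategories of $\bCC_{/c}(L,Y)$ and you separate the preservation and reflection halves of ``detects''---whereas the paper argues directly and leaves reflection implicit (it follows formally from preservation plus cocompleteness of $\Fib^{\epsilon}_{/c}$ and conservativity of the inclusion).
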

   \begin{proof}
   	We verify the case of colimits, since the computation for limits is dual. Let $x=\colim_{i \in \bII} x_i \to c$ be the partially (op)lax colimit for a functor $q: \bII \to \Fib_{/c}^{\epsilon}$ where $(\bII,E)$ is a marked 2-category. Let $\overline{q}$ be an $E$-(op)lax cone over $q$ in $\Fib^{\epsilon,\Xi}_{/c}$. Since this is also a cone in $\bCC_{/c}$ and since the inclusion $\Fib_{/c}^{\epsilon} \into \bCC $ preserves colimits,  we obtain a map $\alpha \colon x \to y=\overline{q}(\ast)$ over $c$.

   	To finish the proof we need to show that $\alpha$ defines a morphism in $\Fib^{\epsilon,\Xi}_{/c}$. Given $f \colon a \to c$ in $\Xi$, we wish to show that the morphism
   	\[
   		f_{a} \colon x \times_{c} a \to y \times_{c} a
   	\]
   	is a map of $\epsilon$-fibrations over $a$. Since $f^*$ preserves colimits by \cref{prop:rightkan}, it follows that $x \times_c a \simeq \colim_{i \in \bII}x_i \times_{c} a$ so that $f_a$ can be identified by the map induced by the cone $\{x_i \times_c a \to y \times_c a\}_{i \in \bII}$. Note that by definition of our cone, each of these morphisms is a map of fibrations over $a$ so it follows that $f_a$ is a map of fibrations. Thus, we conclude that $x \to c$ is a colimit of $q$ in $\Fib^{\epsilon,\Xi}_{/c} $. 
   \end{proof}
    
    As a direct consequence of the previous propsition together with our adjoint functor theorems, we obtain the following theorem:

   \begin{theorem}\label{thm:laxkan}
   	Let $\bCC$ be a 2-topos and let $\Xi$ be a lax datum on $c$. Given a morphism $f \colon c \to d$ in $\bCC$, we denote by $f_{\Xi}^{*}$ the composite
   	\[
   		\Fib^{\epsilon}_{/d} \xrightarrow{f^*} \Fib^{\epsilon}_{/c} \to \Fib^{\epsilon,\Xi}_{/c}.
   	\]
   	Then we have:
   	\begin{enumerate}
   		\item An adjunction $f_{\Xi}^{*} \colon \Fib^{\epsilon}_{d} \llra \Fib^{\epsilon,\Xi}_{/c} \colon f_{\Xi,*}$. We call $f_{\Xi,\ast}$ the \emph{functor of right $\Xi$-$\epsilon$lax Kan extension} along $f$.
   		\item An adjunction $f_{\Xi,!} \colon \Fib^{\epsilon,\Xi}_{/c}  \llra \Fib^{\epsilon}_{d}  \colon f_{\Xi}^{*}$. We call $f_{\Xi, !}$ the \emph{functor of left $\Xi$-$\epsilon$lax Kan extension} along $f$.\qed
   	\end{enumerate}
   \end{theorem}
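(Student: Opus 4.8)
**

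The plan is to deduce the theorem directly from the adjoint functor theorems for presentable 2-categories, using \cref{prop:fiblaxkan} as the crucial structural input. The key observation is that \cref{prop:fiblaxkan} tells us $\Fib_{/c}^{\epsilon,\Xi}$ has the same partially (op)lax limits and colimits as $\Fib_{/c}^\epsilon$, computed via the inclusion into $\bCC_{/c}$. First I would establish that $\Fib_{/c}^{\epsilon,\Xi}$ is presentable: since by \cref{cor:Fib2Presentable} the 2-category $\Fib_{/c}^\epsilon$ is presentable, and \cref{prop:fiblaxkan} shows the inclusion $\Fib_{/c}^\epsilon\into\Fib_{/c}^{\epsilon,\Xi}$ detects (hence the target creates) both limits and colimits, one argues that $\Fib_{/c}^{\epsilon,\Xi}$ is cocomplete with accessible underlying 1-category, so \cref{def:presentability} applies.

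The adjunction in part (1) is the more immediate of the two. By construction $f_\Xi^\ast$ is the composite of $f^\ast\colon\Fib_{/d}^\epsilon\to\Fib_{/c}^\epsilon$ with the inclusion $j\colon\Fib_{/c}^\epsilon\into\Fib_{/c}^{\epsilon,\Xi}$. To produce the right adjoint $f_{\Xi,\ast}$ via \cref{prop:adjointFunctorTheoremRight}, I would check that $f_\Xi^\ast$ is (op)lax continuous and preserves $\tau$-filtered colimits for some $\tau$; continuity follows from \cref{prop:leftkan} (which gives continuity of $f^\ast$) together with the fact that $j$ preserves limits by \cref{prop:fiblaxkan}, and the filtered-colimit hypothesis is handled by enlarging $\tau$ using presentability of the source. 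For part (2), to obtain $f_{\Xi,!}$ as the left adjoint of $f_\Xi^\ast$ via \cref{prop:adjointFunctorTheoremLeft}, I would instead verify that $f_\Xi^\ast$ is (op)lax cocontinuous: here $f^\ast$ is cocontinuous by \cref{prop:rightkan} (for the relevant variance, i.e.\ pullback along $\epsilon$-fibrations and general maps as established there), and again $j$ preserves partially (op)lax colimits by \cref{prop:fiblaxkan}, so the composite is cocontinuous.

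The main subtlety will be bookkeeping around the two variances $\epsilon\in\{0,1\}$ and making sure the continuity/cocontinuity hypotheses actually match what \cref{prop:leftkan} and \cref{prop:rightkan} supply: \cref{prop:rightkan} gives cocontinuity of $f^\ast$ on $\Fib^\epsilon$ while \cref{prop:leftkan} gives continuity, and both are needed, one for each part. I would treat $\epsilon=0$ and invoke duality (passing to $\bCC^\co$, as in \cref{rem:1fib}) for $\epsilon=1$. A second point requiring care is local smallness of $\Fib_{/c}^{\epsilon,\Xi}$, needed to apply the adjoint functor theorems; since it is a locally full sub-2-category of $\bCC_{/c}$ and $\bCC$ is presentable hence locally small (\cref{rem:2presentableLocallySmall}), this is automatic. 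Once presentability, local smallness, and the appropriate (co)continuity are in place, both adjunctions follow formally, so the genuine content is entirely concentrated in \cref{prop:fiblaxkan} together with the base-change results \cref{prop:leftkan} and \cref{prop:rightkan}.
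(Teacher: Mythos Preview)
Your approach is exactly the paper's: the statement is marked with a \qed{} and the paper simply says it follows from \cref{prop:fiblaxkan} together with the adjoint functor theorems. You have correctly identified all the ingredients.

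However, you have swapped the two parts. In part~(1) the adjunction reads $f_\Xi^\ast \dashv f_{\Xi,\ast}$, so $f_\Xi^\ast$ is the \emph{left} adjoint and you must verify it is (op)lax \emph{cocontinuous}; this is what \cref{prop:rightkan} (not \cref{prop:leftkan}) supplies for $f^\ast$, together with the colimit-preservation half of \cref{prop:fiblaxkan}, and the relevant adjoint functor theorem is \cref{prop:adjointFunctorTheoremLeft}. Dually, in part~(2) the adjunction reads $f_{\Xi,!}\dashv f_\Xi^\ast$, so $f_\Xi^\ast$ is the \emph{right} adjoint and you must verify it is (op)lax \emph{continuous} plus accessible; here you use \cref{prop:leftkan} and the limit-preservation half of \cref{prop:fiblaxkan}, applying \cref{prop:adjointFunctorTheoremRight}. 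Once you interchange the two halves of your argument, everything goes through as written.
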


   \subsection{The Yoneda embedding}\label{subsec:yoneda}

   \begin{definition}
   	Let $\bCC$ be a 2-topos. Given $s,t \in \bCC$ we define the 2-category $\BFib^{\epsilon}_{/(s,t)}$ as the locally full sub-2-category of $\left(\Fib^{\epsilon}_{/s}\right)_{/s \times t}$ where:
   	\begin{enumerate}
   		\item Objects are given by morphisms of $\epsilon$-fibrations $p \colon x \to s \times t$ over $s$, such that the composite $x \to s \times t \to t$ defines a $(1-\epsilon)$-fibration satisfying the following property:
   		\begin{itemize}
   			\item[A1)] The adjunction $l_x \colon \Free_{t}^{1-\epsilon}(x) \llra x:\eta_x$ over $t$ witnessing $x \to t$ as an $(1-\epsilon)$-fibration defines an adjunction in $\Fib^{\epsilon}_{/s}$
   				\[\begin{tikzcd}
				{\Free_{t}^{1-\epsilon}(x) } && x \\
								& s
				\arrow["{l_x}"', shorten <=5pt, shorten >=5pt, from=1-1, to=1-3]
				\arrow["\pi"', from=1-1, to=2-2]
				\arrow["{\eta_x}"', shift right=2, shorten <=5pt, shorten >=5pt, from=1-3, to=1-1]
				\arrow[from=1-3, to=2-2]
				\end{tikzcd}\]
   			where $\pi$ is the $\epsilon$-fibration obtained as the composite $\Free_{t}^{1-\epsilon}(x) \to x \to s$.
   		\end{itemize}
   		\item Morphisms are given by commutative triangles  $x \to y \to s \times t$ over $s$, such that the associated composite  $x \xrightarrow{f} y \to t$ defines a morphism of $(1-\epsilon)$-fibrations over $t$ which makes the following diagram 
   		\[
   			\begin{tikzcd}
   				\Free_{t}^{1-\epsilon}(x) \arrow[r] \arrow[d,"\ell_x"] & \Free_{t}^{1-\epsilon}(y) \arrow[d,"\ell_y"] \\
   				 x \arrow[r] & y
   			\end{tikzcd}
   		\]
   		commute over $s$.
   	\end{enumerate}
   		We call  $\BFib^{\epsilon}_{/(s,t)}$ the 2-category of $\epsilon$-bifibrations.
   	\end{definition}
   	\begin{lemma}\label{lem:bifibfunctor}
   		Let $\bCC$ be a 2-topos and let $t \in \bCC$. Then we have functors 
   		\[
   			\BFib^{0}_{/(-,t)} \colon \bCC^{\op} \to \widehat{\Cat}_2,\enspace s \mapsto \BFib^{0}_{/(s,t)},
   		\]
   		\[
   			\BFib^{1}_{/(-,t)} \colon \bCC^{\coop} \to \widehat{\Cat}_2,\enspace s \mapsto \BFib^{1}_{/(s,t)}.
   		\]
   	\end{lemma}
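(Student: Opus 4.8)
The plan is to let the action on morphisms be given by base change. I treat $\epsilon=0$ in detail; the case $\epsilon=1$ follows by replacing $\bCC$ with $\bCC^{\co}$. Recall from \cref{prop:fibslice} and \cref{thm:basechangetheorem} that there is a functor $\Fib^{\epsilon}_{/(-)}\colon\bCC^{\op}\to\LTTop$ sending $f\colon s'\to s$ to the pullback functor $f^{*}\colon\Fib^{\epsilon}_{/s}\to\Fib^{\epsilon}_{/s'}$, which is a morphism of 2-topoi. Writing $\pi_s\colon s\to\ast$, the object $s\times t\in\Fib^{\epsilon}_{/s}$ is $\pi_s^{*}(t)$ (using $\Fib^{\epsilon}_{/\ast}\simeq\bCC$, since every map to the terminal object is an $\epsilon$-fibration), and since $\pi_{s'}^{*}=f^{*}\pi_s^{*}$ this assembles into a cartesian natural section picking out $s\times t$. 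Composing $\Fib^{\epsilon}_{/(-)}$ with the (pseudo)functor ``strong slice over the distinguished object'' therefore produces a functor
\[
s\longmapsto \left(\Fib^{\epsilon}_{/s}\right)_{/s\times t},\qquad f\longmapsto (f^{*})_{/(s\times t)},
\]
whose transition maps send an object $x\to s\times t$ to its pullback $x'\to s'\times t$ along $f\times\id_t$ (equivalently, to $f^{*}x\to f^{*}(s\times t)=s'\times t$). As $\BFib^{\epsilon}_{/(s,t)}$ is a locally full sub-2-category of $(\Fib^{\epsilon}_{/s})_{/s\times t}$, it suffices to show that each transition map $(f^{*})_{/(s\times t)}$ carries $\BFib^{\epsilon}_{/(s,t)}$ into $\BFib^{\epsilon}_{/(s',t)}$; the resulting subfunctor is then the desired $\BFib^{\epsilon}_{/(-,t)}$.

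\textbf{The key commutation.} Let $x\to s\times t$ lie in $\BFib^{\epsilon}_{/(s,t)}$ and set $x'=f^{*}x$. The condition that $x'\to s'\times t$ is a morphism of $\epsilon$-fibrations over $s'$ is automatic, since it only records that $(f^{*})_{/(s\times t)}$ lands in $(\Fib^{\epsilon}_{/s'})_{/s'\times t}$. The content is that $x'\to t$ is again a $(1-\epsilon)$-fibration and that it satisfies \textbf{A1)}. For this I would establish the identification
\[
f^{*}\bigl(\Free^{1-\epsilon}_{t}(x)\bigr)\;\simeq\;\Free^{1-\epsilon}_{t}(x')
\]
compatibly with the respective units. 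Concretely, using the description of the free fibration as the strong pullback $\Free^{1-\epsilon}_{t}(x)\simeq x\times_{t}t^{[1]}$ from \cref{rem:freeFibExplicitly}, both sides compute the limit of one and the same diagram
\[
t^{[1]}\longrightarrow t\longleftarrow x\longrightarrow s\longleftarrow s',
\]
so the equivalence is an instance of the commutation of limits (\cref{prop:limitscommute}); this is exactly the statement that base change in the $s$-variable commutes with formation of the free $(1-\epsilon)$-fibration in the $t$-variable. Granting this, the adjunction $l_x\dashv\eta_x$ witnessing $x\to t$ as a $(1-\epsilon)$-fibration is, by \textbf{A1)}, an adjunction \emph{in} $\Fib^{\epsilon}_{/s}$; applying the functor $f^{*}\colon\Fib^{\epsilon}_{/s}\to\Fib^{\epsilon}_{/s'}$ --- which preserves adjunctions, being a functor of 2-categories --- yields an adjunction in $\Fib^{\epsilon}_{/s'}$ that, under the identification above, takes the form $l_{x'}\dashv\eta_{x'}$ with $\eta_{x'}\colon x'\to\Free^{1-\epsilon}_{t}(x')$. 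By \cref{rem:minimalConditionsFibration} this exhibits $x'\to t$ as a $(1-\epsilon)$-fibration, and by construction the whole adjunction lives in $\Fib^{\epsilon}_{/s'}$, which is precisely \textbf{A1)} for $x'$.

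\textbf{Morphisms and coherence.} A morphism of $\BFib^{\epsilon}_{/(s,t)}$ is a triangle $x\to y\to s\times t$ over $s$ whose composite $x\to y\to t$ is a morphism of $(1-\epsilon)$-fibrations compatible with the reflections $\ell_x,\ell_y$. Applying $f^{*}$ and invoking again that $f^{*}$ preserves morphisms of $\epsilon$-fibrations, together with the naturality of the identification $f^{*}\Free^{1-\epsilon}_{t}(-)\simeq\Free^{1-\epsilon}_{t}(f^{*}(-))$, shows that the image triangle again defines a morphism in $\BFib^{\epsilon}_{/(s',t)}$. Finally, the coherence making $s\mapsto\BFib^{\epsilon}_{/(s,t)}$ into an honest (pseudo)functor is inherited from the pseudofunctoriality of $\Fib^{\epsilon}_{/(-)}$ and the naturality of the pointing $s\mapsto s\times t$ recorded in the first paragraph, since $\BFib^{\epsilon}_{/(-,t)}$ is, by the above, a sub-(pseudo)functor of an already-constructed one. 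I expect the main obstacle to be not the existence of the object-level identification $f^{*}\Free^{1-\epsilon}_{t}(x)\simeq\Free^{1-\epsilon}_{t}(x')$ but its \emph{coherent} compatibility with the full adjunction data and with the $\epsilon$-fibration-over-$s$ structures --- i.e.\ checking that $f^{*}$ carries $\eta_x$, $l_x$ and the triangle identities to those of $x'$ --- which is what upgrades the underlying equivalence to the assertion that $\BFib^{\epsilon}$ is preserved.
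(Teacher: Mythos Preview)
Your approach is essentially the same as the paper's: realize $\BFib^{\epsilon}_{/(-,t)}$ as a sub-(pseudo)functor of $s\mapsto(\Fib^{\epsilon}_{/s})_{/s\times t}$ by checking that base change along $f\colon s'\to s$ preserves the bifibration conditions. Your treatment of the key identification $f^{*}\Free^{1-\epsilon}_{t}(x)\simeq\Free^{1-\epsilon}_{t}(f^{*}x)$ via \cref{rem:freeFibExplicitly} and commutation of limits is in fact more explicit than the paper's, which simply appeals to ``functoriality of the pullback functor''.

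There is, however, one step the paper carries out that you omit. Since $\BFib^{\epsilon}_{/(s,t)}\subset(\Fib^{\epsilon}_{/s})_{/s\times t}$ is only \emph{locally} full, it is not enough to check that the transition \emph{functors} $f^{*}$ preserve $\BFib$; you must also check that for every 2-morphism $\Xi\colon\alpha\Rightarrow\beta$ between $\alpha,\beta\colon q\rightrightarrows s$, the components of the induced natural transformation $\alpha^{*}\Rightarrow\beta^{*}$ are \emph{1-morphisms} in $\BFib^{\epsilon}_{/(q,t)}$ (i.e.\ the maps $\alpha^{*}x\to\beta^{*}x$ are compatible with the $(1-\epsilon)$-fibration structure over $t$). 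The paper does this explicitly in its final paragraph. Your ``subfunctor'' reduction in the first paragraph asserts this is automatic, but it is not: locally full subcategories need not be closed under the components of natural transformations living in the ambient 2-category. The verification is short and uses the same pullback-square reasoning as for 1-morphisms, so this is a minor gap rather than a structural one.
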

   	\begin{proof}
   		We observe that since $\Fib^{\epsilon}_{/-}$ is a functor and our choice of slicing object $(-) \times t \to (-)$, is functorial we only need to show that $\BFib^{\epsilon}_{_{/(-,t)}}$ is stable under the functoriality of $\Fib^{\epsilon}_{/-}$.

   		Let $\varphi \colon q \to s$. We claim that $\varphi^*x \to q$ defines an object in $\BFib^{\epsilon}_{/(q,t)}$. To this end we consider the commutative diagram, consisting of pullback squares,
   		  \[\begin{tikzcd}
		{\Free_{t}^{1-\epsilon}(\varphi^*x)} & {\Free_{t}^{1-\epsilon}(x)} & {} \\
		{\varphi^* x} & x & {} \\
		q & s
		\arrow[from=1-1, to=1-2]
		\arrow["{\ell_{\varphi^*x}}"',swap, from=1-1, to=2-1]
		\arrow["{\ell_x}", from=1-2, to=2-2]
		\arrow[from=2-1, to=2-2]
		\arrow[from=2-1, to=3-1]
		\arrow[from=2-2, to=3-2]
		\arrow[from=3-1, to=3-2]
		\end{tikzcd}\]
		and observe that by functoriality of the pullback functor we obtain the desired adjunction over $q$. It is easy to see that since the original adjunction $\ell_x \colon \Free_{t}^{1-\epsilon}(x) \llra x:\eta_x$ is an adjunction over $t$, so will the adjunction obtained via pullback. 
		Let $f \colon x \to y$ be a morphism in $ \BFib^{\epsilon}_{/(s,t)}$, with associated commutative diagram 
	    \[
   			\begin{tikzcd}
   				\Free_{t}^{1-\epsilon}(x) \arrow[r] \arrow[d,"\ell_x"] & \Free_{t}^{1-\epsilon}(y) \arrow[d,"\ell_y"] \\
   				 x \arrow[r] & y.
   			\end{tikzcd}
   		\]
   		Our construction above guarantees that after pullback along $\varphi$, we obtain a commutative diagram
   		\[
   			\begin{tikzcd}
   				\Free_{t}^{1-\epsilon}(\varphi^{*}x) \arrow[r] \arrow[d,"\ell_{\varphi^{*}x}"] & \Free_{t}^{1-\epsilon}(\varphi^{*}y) \arrow[d,"\ell_{\varphi^{*}y}"] \\
   				 \varphi^{*}x\arrow[r] & \varphi^{*}y
   			\end{tikzcd}
   		\]
   		with commutes both over $q$ and over $t$. Now let $\Xi \colon \alpha \to \beta$ be a 2-morphism between 1-morphisms $\alpha,\beta \colon q \to s$. To finish the proof we must show that the components of the associated natural transformation land in $\BFib^{\epsilon}_{/(q,t)}$. Let us assume without loss of generality that $\epsilon=0$. Appealing to functoriality on 2-morphisms, we obtain a commutative diagram of $0$-fibrations over $q$,
   		 \[
   		 	\begin{tikzcd}
   		 		\Free_{t}^{1}(\alpha^*x) \arrow[d] \arrow[r] & \Free_{t}^{1}(\beta^*x) \arrow[d] \\
   		 		\alpha^*x \arrow[r] & \beta^* x
   		 	\end{tikzcd}
   		 \]
   		 where the vertical maps are the morphisms witnessing $\alpha^*x \to t$ and $\beta^* x \to t$ as $1$-fibrations. The result now follows.
   	\end{proof}

   	\begin{proposition}\label{prop:bifibsheaf}
   		For a 2-topos $\bCC$, the two functors
   		\[
   			\BFib^{0}_{/(-,t)} \colon \bCC^{\op} \to \widehat{\Cat}_2,\enspace s \mapsto \left(\BFib^{0}_{/(s,t)}\right),
   		\]
   		\[
   			\BFib^{1}_{/(-,t)} \colon \bCC^{\coop} \to \widehat{\Cat}_2,\enspace s \mapsto \left(\BFib^{1}_{/(s,t)}\right),
 		\]
   		from \cref{lem:bifibfunctor} send partially $\epsilon$lax colimits of partially oplax limits.
   	\end{proposition}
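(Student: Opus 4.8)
The plan is to treat the case $\epsilon=0$ and to deduce the case $\epsilon=1$ at the very end by passing to the mirror 2-topos $\bCC^{\co}$, under which $1$-fibrations become $0$-fibrations and the variance $\coop$ becomes $\op$. So fix a marked 2-category $(\bII,E)$ and a diagram $q\colon\bII\to\bCC$ with $E$-lax colimit $c=\colim^{\elax}_{\bII}q$. By the functoriality established in \cref{lem:bifibfunctor} there is a canonical comparison
\[
\BFib^{0}_{/(c,t)}\longrightarrow\lim^{\eoplax}_{\bII^{\op}}\BFib^{0}_{/(q(-),t)},
\]
and the goal is to prove that it is an equivalence.

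The clean route I would take is to recognise the 2-category of bifibrations as an \emph{ordinary} fibration-slice inside an auxiliary 2-topos. Set $\bDD:=\Fib^{1}_{/t}$, which is a 2-topos by \cref{cor:Fib2Presentable}, and let $\Phi\colon\bCC\to\bDD$ be the functor $s\mapsto(s\times t\to t)$. Since $t\to\ast$ is a $1$-fibration (as $\ast$ is a $\bCC$-groupoid), $\Phi$ is the pullback functor $\pi_t^{\ast}\colon\bCC\simeq\Fib^{1}_{/\ast}\to\Fib^{1}_{/t}$ along $\pi_t\colon t\to\ast$, and is therefore cocontinuous by \cref{prop:rightkan}. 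The key lemma I would establish is a natural equivalence
\[
\BFib^{0}_{/(s,t)}\;\simeq\;\bigl(\Fib^{0}_{\bDD}\bigr)_{/\Phi(s)}.
\]
Granting this, the proof finishes formally: because $\Phi$ preserves $E$-lax colimits we have $\Phi(c)=\colim^{\elax}_{\bII}\Phi(q(-))$ in $\bDD$, and because $\bDD$ is a 2-topos, fibrational descent in $\bDD$ (\cref{prop:fibrepresents}, or equivalently \cref{prop:compactsheaf} applied to $\bDD$) identifies $(\Fib^{0}_{\bDD})_{/\Phi(c)}$ with the partially oplax limit $\lim^{\eoplax}_{\bII^{\op}}(\Fib^{0}_{\bDD})_{/\Phi(q(-))}$; transporting along the key lemma then yields the desired equivalence.

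The main obstacle is therefore the key lemma itself, i.e.\ matching the hand-crafted conditions defining a bifibration with the internal notion of a $0$-fibration in $\bDD$. Unwinding the right-hand side, an object there is a $1$-fibration $x\to t$ together with a map $x\to s\times t$ over $t$ that is a $0$-fibration \emph{in} $\bDD$; to compare this with the defining clauses of $\BFib^{0}_{/(s,t)}$ I would compute oriented pullbacks and free $0$-fibrations in $\bDD$ in terms of those in $\bCC$, exactly as in the proof of \cref{prop:fibslice}, where oriented pullbacks in a fibration-slice are obtained as strong pullbacks over $c^{[1]}$ of the ambient oriented pullback. This computation should exhibit the left reflection witnessing the $0$-fibration structure in $\bDD$ as precisely the datum $\mathrm{A1}$, and the morphism clause as the adjointability of the corresponding naturality square; the subtle point is that the reflection must be an adjunction in $\Fib^{0}_{/s}$ and not merely in $\bCC$, which is exactly what the $\bDD$-internal formulation encodes. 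To make this matching watertight I expect to reduce to $\bCC=\Cat$: by \cref{thm:giraud} I may present $\bCC$ as a localisation $L\colon\PSh_{\Cat}(\bSS)\leftrightarrows\bCC\colon R$ with $L$ preserving oriented pullbacks, hence $\epsilon$-fibrations and their morphisms, and then compute objectwise (using \cref{prop:fun2topos}), where the straightening equivalence of \cref{thm:str} together with \cref{prop:carttransun} identifies both sides with the 2-category of two-variable fibrations $s\to\Cart(t)$ and the equivalence becomes the classical Grothendieck-construction description recalled in the introduction.

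Finally, the statement for $\BFib^{1}_{/(-,t)}$ follows by applying the case just proved to the mirror 2-topos $\bCC^{\co}$, using that $\epsilon$-fibrations and the two variances interchange under $(-)^{\co}$.
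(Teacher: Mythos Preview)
Your approach is genuinely different from the paper's, and more conceptual, but the key lemma you defer is where essentially all of the content lies, and it is harder than your sketch suggests. Unwinding both sides: the definition of $\BFib^{0}_{/(s,t)}$ records that $x\to s\times t$ is a morphism of $0$-fibrations over $s$ and that the adjunction $\Free^{1}_{t}(x)\rightleftarrows x$ witnessing the $1$-fibration over $t$ lifts to $\Fib^{0}_{/s}$ (this is A1). An object of $(\Fib^{0}_{\bDD})_{/\Phi(s)}$ with $\bDD=\Fib^{1}_{/t}$, on the other hand, records that $x\to s\times t$ is a morphism of $1$-fibrations over $t$ and---once one computes $\Free^{0,\bDD}_{\Phi(s)}(x)\simeq\Free^{0}_{s}(x)$ via the recipe of \cref{prop:fibslice}---that the adjunction $\Free^{0}_{s}(x)\rightleftarrows x$ lifts to $\Fib^{1}_{/t}$. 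These two packages are \emph{dual}, not identical; their equivalence is precisely the symmetry of two-variable fibrations (curved orthofibrations in the sense of \cite{TwoVariable}), which is a genuine theorem already in $\Cat$ and is never established in the paper for a general 2-topos. Your reduction via \cref{thm:giraud} would have to supply this symmetry and then transport it along the localisation; that is a real piece of work, not a formality. (A small correction: that $\bDD$ is a 2-topos is extracted from the proof of \cref{prop:fibslice}; the reference \cref{cor:Fib2Presentable} only gives presentability.)

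The paper's route sidesteps all of this. Since $\BFib^{0}_{/(s,t)}$ is by construction a locally full sub-2-category of $(\Fib^{0}_{/s})_{/s\times t}$, for which descent is already known from \cref{prop:fibrepresents} together with cocontinuity of $-\times t$, the only thing to check is that the \emph{inverse} of restriction (gluing via $E$-lax colimits) lands in bifibrations. The point-wise adjunctions $\Free^{1}_{t}(x_{i})\rightleftarrows x_{i}$ in $\Fib^{0}_{/q(i)}$ assemble into an adjunction $\Free^{1}_{t}(x)\rightleftarrows x$ over $q(\ast)$ because free fibrations commute with the relevant colimits (cf.\ \cref{cor:freePreservesColimits}), and axiom F2) of \cref{def:fibdescent} upgrades the glued maps to morphisms of $0$-fibrations, so that the adjunction lives in $\Fib^{0}_{/q(\ast)}$ as required by A1. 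This is the whole argument, and it never touches any symmetry statement for bifibrations.
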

   	\begin{proof}
   		We deal with the case $\epsilon=0$ without loss of generality. Let $q: \bII \to \bCC$ where $(\bII,E)$ is a marked 2-category and let $\overline{q}: \bII^{\colimcone}_{\elax} \to \bCC$ be an $E$-lax colimit cone. We need to show that the colimit functor
   		\[
   			\lim_{\bII^{\op}}^{E-\oplax}\BFib^{0}_{/(q(-),t))} \to (\Fib^{0}_{/s})_{/q(\ast) \times t}
   		\]
   		factors through $\BFib^{0}_{/(s,t)}$. By assumption, we have a compatible family of adjunctions $\Free_{t}^{1-\epsilon}(x_i) \llra x_i$ over $q(i)$. Using fibrational descent and the fact that free fibrations commute with colimits it follows that we get an adjunction $\Free_{t}^{1-\epsilon}(x) \llra x$ over $q(\ast)$. Moreover, F2) in \cref{def:fibdescent} guarantees that $\ell_x \colon \Free_{t}^{1-\epsilon}(x) \to x$ (resp.\ $\eta_x \colon x \to \Free_{t}^{1-\epsilon}(x)$) is a morphism of fibrations.

   		It is clear by construction that this adjunction is also an adjunction over $t$, so it follows that the colimit $x \to q(\ast)$ lands in $\BFib^{0}_{/(q(\ast),t)}$. The claim about morphisms is analogous and left to the reader. 
   	\end{proof}

   	\begin{definition}
   		Let $\kappa$ be a regular cardinal. We say that an $\epsilon$-bifibration is $\kappa$-compact if it is underlying $\epsilon$-fibration is $\kappa$-compact. We denote by $\BFib^{\epsilon,\kappa}_{/(s,t)}$ the full-sub-2-category on $\kappa$-compact bifibrations.
   	\end{definition}

   	\begin{lemma}\label{lem:kappabifib}
   		Let $\kappa$ be a regular cardinal. Then the 2-category $\BFib^{\epsilon,\kappa}_{/(s,t)}$ is essentially small, provided that $\kappa$ fullfils the following conditions:
   		\begin{enumerate}
   		 	\item The object $t$ is $\kappa$-compact.
   		 	\item The terminal object is $\kappa$-compact.
   		 	\item The set of $\kappa$-compact objects is stable under oriented pullbacks. 
   		 \end{enumerate} 
   	\end{lemma}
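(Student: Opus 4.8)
The plan is to exhibit $\BFib^{\epsilon,\kappa}_{/(s,t)}$ as a locally full sub-2-category of a slice of $\Fib^{\epsilon,\kappa}_{\bCC}$ and then invoke \cref{lem:smallkappafib}. Recall that the underlying datum of a $\kappa$-compact $\epsilon$-bifibration is a $\kappa$-compact $\epsilon$-fibration $x\to s$ together with a morphism of $\epsilon$-fibrations $x\to s\times t$ over $s$; the conditions A1) and the conditions on morphisms only carve out a locally full sub-2-category, so they do not affect essential smallness. Hence, granting that the projection $s\times t\to s$ is itself a $\kappa$-compact $\epsilon$-fibration, I would identify $\BFib^{\epsilon,\kappa}_{/(s,t)}$ with a locally full sub-2-category of the iterated slice $\bigl(\left(\Fib^{\epsilon,\kappa}_{\bCC}\right)_{/s}\bigr)_{/(s\times t)}$.

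The key step, and the place where all three hypotheses are used, is to verify that $s\times t\to s$ is a $\kappa$-compact $\epsilon$-fibration. First, since the terminal object $\ast$ is a $\bCC$-groupoid, \cref{prop:characterisationGroupoids} shows that the unique map $t\to\ast$ is an $\epsilon$-fibration; pulling it back along $s\to\ast$ and applying \cref{lem:pullbackFibration}, I conclude that $s\times t\to s$ is an $\epsilon$-fibration. For $\kappa$-compactness, by \cref{def:compactfib} I must check that the pullback of $s\times t\to s$ along any map $t'\to s$ with $t'$ a $\kappa$-compact object, namely $t'\times t$, is again $\kappa$-compact. The crucial observation is that, since $\ast^{[1]}\simeq\ast$ (again because $\ast$ is a groupoid), the product $t'\times t$ agrees with the oriented pullback $t'\orientedtimesrl_{\ast}t$. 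As $t'$, $\ast$ and $t$ are $\kappa$-compact by the choice of $t'$ and by conditions (1) and (2), condition (3) (stability of $\kappa$-compact objects under oriented pullbacks) forces $t'\times t$ to be $\kappa$-compact. This is exactly why all three conditions are required.

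With this in hand, I would finish as follows. By \cref{lem:smallkappafib} (whose hypotheses are supplied by condition (3) together with $\kappa$-accessibility of $\bCC$), the 2-category $\left(\Fib^{\epsilon,\kappa}_{\bCC}\right)_{/s}$ is essentially small; its slice over $s\times t$ is then essentially small as well, and a locally full sub-2-category of an essentially small 2-category is again essentially small. I expect the main obstacle to be the bookkeeping of the first step, namely confirming that a $\kappa$-compact $\epsilon$-bifibration is genuinely recovered up to equivalence by its image in $\bigl(\left(\Fib^{\epsilon,\kappa}_{\bCC}\right)_{/s}\bigr)_{/(s\times t)}$ and that the defining fibrational data impose only locally full constraints rather than contribute new objects. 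The product-versus-oriented-pullback identification is the one genuinely non-formal input, but it follows cleanly from the groupoid characterization of $\ast$.
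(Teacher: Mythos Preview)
Your proof is correct and takes a route that differs mildly from the paper's. The paper argues as follows: when $s$ is $\kappa$-compact, conditions (1)--(3) force both $x$ and $s\times t$ to be $\kappa$-compact, so the bifibrations are bounded by maps between $\kappa$-compact objects; for general $s$, the paper invokes \cref{prop:bifibsheaf} (the sheaf property of $\BFib^\epsilon_{/(-,t)}$) to decompose $s$ as a $\kappa$-filtered colimit of $\kappa$-compact objects and adapt the argument of \cref{lem:smallkappafib}. Your approach instead proves once and for all that $s\times t\to s$ is a $\kappa$-compact $\epsilon$-fibration (your product-versus-oriented-pullback identification over the terminal object is the clean way to use all three hypotheses), and then embeds $\BFib^{\epsilon,\kappa}_{/(s,t)}$ into a slice of $\left(\Fib^{\epsilon,\kappa}_{\bCC}\right)_{/s}$, whose smallness is already supplied by \cref{lem:smallkappafib}. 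This has the advantage of reusing \cref{lem:smallkappafib} wholesale rather than re-running its argument for bifibrations via \cref{prop:bifibsheaf}; the paper's approach has the minor advantage that it exercises the bifibrational descent machinery that is needed anyway for \cref{prop:compactbifibsheaf}. Both proofs tacitly use $\kappa$-accessibility of $\bCC$ (as you correctly flag), which is an implicit standing assumption in this section.
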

   	\begin{proof}
   		The size of our 2-category of $\kappa$-compact $\epsilon$-bifibrations is bounded by the size of the class of maps $x \to s \times t$. If $s$ is $\kappa$-compact our assumptions guarantee that \emph{both} $x$ and $s \times t$ are themselves $\kappa$-compact which implies the result. More generally, we use \cref{prop:bifibsheaf} to adapt the argument given in \cref{lem:smallkappafib}.
   	\end{proof}

   	\begin{proposition}\label{prop:compactbifibsheaf}
   		Let $\bCC$ be a 2-topos and let $t \in \bCC$. Given a regular cardinal $\kappa$ satisfying the hypothesis of \cref{lem:kappabifib}, we have functors
   		\[
   			\BFib^{0,\kappa}_{/(-,t)} \colon \bCC^{\op} \to \CCat, \quad \quad   \BFib^{0,\kappa}_{/(-,t)} \colon \bCC^{\coop} \to \CCat
   		\]
   		which send partially $\epsilon$lax colimits to partially oplax limits.
   	\end{proposition}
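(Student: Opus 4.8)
The plan is to mirror the proof of \cref{prop:compactsheaf}, using \cref{prop:bifibsheaf} in place of fibrational descent and \cref{lem:kappabifib} in place of \cref{lem:smallkappafib}. The existence of the two functors is immediate from \cref{lem:kappabifib}, which guarantees that $\BFib^{\epsilon,\kappa}_{/(s,t)}$ is essentially small under hypotheses (1)--(3). Treating the case $\epsilon=0$ without loss of generality, I would fix a diagram $q\colon\bII\to\bCC$ with $E$-lax colimit $\overline q\colon\bII^{\colimcone}_{\elax}\to\bCC$, set $c=q(\ast)$, and reduce the descent claim to showing that the comparison square
\[
\begin{tikzcd}
\BFib^{0,\kappa}_{/(c,t)} \arrow[r] \arrow[d] & \BFib^{0}_{/(c,t)} \arrow[d,"\simeq"] \\
\lim^{\eoplax}_{\bII^{\op}}\BFib^{0,\kappa}_{/(q(-),t)} \arrow[r] & \lim^{\eoplax}_{\bII^{\op}}\BFib^{0}_{/(q(-),t)}
\end{tikzcd}
\]
is a pullback, the right-hand vertical map being an equivalence by \cref{prop:bifibsheaf}. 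Since both horizontal maps are inclusions of full sub-2-categories, this amounts to showing that a bifibration over $c\times t$ is $\kappa$-compact precisely when all its restrictions along the structure maps $q(i)\to c$ are.

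I would then run the same case analysis as in \cref{prop:compactsheaf}. First, when $q$ factors through $\bCC^\kappa$ and $\bII$ is $\kappa$-small, the claim follows from stability of $\kappa$-compact objects under $\kappa$-small partially lax colimits (\cref{cor:smallcolimofcompacts}), together with the fact that $\kappa$-compactness of a bifibration refers only to its underlying total space. Second, the case of a $\kappa$-filtered $1$-category with $E=\sharp$ is exactly the content of the bifibration analogue of \cref{lem:smallkappafib} that was already established within the proof of \cref{lem:kappabifib}. Third, for general $\kappa$-small $\bII$, I would invoke \cref{rem:accfunctorcat} to write $q$ as a $\kappa$-filtered colimit of diagrams valued in $\bCC^\kappa$, so that \cref{cor:existenceLKE} presents $\overline q$ as a $\kappa$-filtered colimit of $E$-lax colimit cones; \cref{prop:limitscommute} then reduces this to the first two cases. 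Finally, for arbitrary $\bII$, I would decompose $q$ as a filtered colimit of $\kappa$-small diagrams using \cref{prop:2catsasfilteredcolimits} and \cref{prop:diagramdecomposition}, and conclude via \cref{prop:limitscommute}.

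The main obstacle I anticipate is verifying the reverse implication of the pullback criterion in the filtered and general cases: that a bifibration whose level-wise restrictions are all $\kappa$-compact is itself $\kappa$-compact. The fibration-theoretic part of this is handled exactly as in \cref{lem:smallkappafib}, since every map from a $\kappa$-compact object into a $\kappa$-filtered colimit factors through a stage; what requires care is that the additional structure of a bifibration---the auxiliary $(1-\epsilon)$-fibration to $t$ together with condition A1)---is compatible with these (co)limit manipulations. Here I would lean on the proof of \cref{prop:bifibsheaf}, which already shows that this structure is created by fibrational descent and preserved under the gluing of free fibrations, so that the compactness bookkeeping genuinely reduces to that of the underlying fibrations. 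At that point hypotheses (1)--(3) of \cref{lem:kappabifib}, in particular stability of $\kappa$-compact objects under oriented pullbacks, supply exactly the closure properties needed to conclude.
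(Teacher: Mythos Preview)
Your approach is correct, but it is more elaborate than necessary and essentially re-proves \cref{prop:compactsheaf} inside the bifibration context rather than invoking it. The paper's proof is much shorter: since by definition an $\epsilon$-bifibration is $\kappa$-compact precisely when its \emph{underlying} $\epsilon$-fibration is, the pullback square you wrote down follows at once from the corresponding pullback square in the proof of \cref{prop:compactsheaf} (applied to the underlying fibration $x\to c$), together with \cref{prop:bifibsheaf} for the right-hand equivalence. No case analysis is needed.

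This also dissolves the obstacle you anticipated. The compatibility of the bifibration structure with the colimit manipulations is entirely handled by \cref{prop:bifibsheaf}; the $\kappa$-compactness bookkeeping is entirely handled by \cref{prop:compactsheaf}. Because the definition of $\kappa$-compact bifibration refers only to the underlying fibration, these two concerns are orthogonal and can simply be combined, rather than interleaved through a four-step induction.
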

   	\begin{proof}
   		The question regardness smallness was already studied in \cref{lem:kappabifib} and the functoriality follows \cref{lem:bifibfunctor} together with our definition of $\kappa$-compactness of bifibrations. The only thing to check is that our discussion in \cref{prop:bifibsheaf} restrict to $\kappa$-compact bifibrations. This is clear from our definitions together with \cref{prop:compactsheaf}.
   	\end{proof}

   	\begin{corollary}\label{cor:internalfib}
   		Let $\bCC$ be a 2-topos and $t \in \bCC$. Then for sufficiently large $\kappa$ it follows that that the functor $\left(\BFib^{\epsilon,\kappa}_{/(-,t)}\right)^{\leq 1}$ is represented by an object $\underline{\Fib}^{1-\epsilon,\kappa}_{/t}\in\bCC$.\qed
   	\end{corollary}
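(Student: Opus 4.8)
The plan is to deduce representability from the general criterion of \cref{cor:sheavesRepresentable}, in exactly the same way that \cref{cor:universeexists} was derived from \cref{prop:compactsheaf}. The substantive work has already been done in \cref{prop:compactbifibsheaf}, so the corollary should follow by a purely formal argument; the only things to keep track of are the cardinal bookkeeping and the variance.

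First I would fix a regular cardinal $\kappa$ large enough that the hypotheses of \cref{lem:kappabifib} hold, namely that both $t$ and the terminal object are $\kappa$-compact and that $\kappa$-compact objects are stable under oriented pullbacks. This ensures that $\BFib^{\epsilon,\kappa}_{/(-,t)}$ is a well-defined functor valued in essentially small 2-categories, and by \cref{prop:compactbifibsheaf} this functor sends partially $\epsilon$lax colimits in $\bCC$ to partially oplax limits in $\CCat$ (on $\bCC^{\op}$ for $\epsilon=0$ and on $\bCC^{\coop}$ for $\epsilon=1$).

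Next I would post-compose with the truncation functor $(-)^{\leq 1}\colon\CCat\to\Cat$. Since this functor is a right adjoint by \cref{def:underlying1cat}, \cref{prop:adjpreservelimtis} guarantees that it preserves all partially (op)lax limits that exist. Hence $\left(\BFib^{\epsilon,\kappa}_{/(-,t)}\right)^{\leq 1}$ still carries partially $\epsilon$lax colimits to partially oplax limits; that is, as a $\Cat$-valued presheaf it preserves the relevant (op)lax limits.

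Finally, for $\epsilon=0$ the functor is a presheaf $\bCC^{\op}\to\Cat$, so \cref{cor:sheavesRepresentable} yields a representing object, which I would name $\underline{\Fib}^{1,\kappa}_{/t}$. For $\epsilon=1$ the functor is defined on $\bCC^{\coop}=(\bCC^\co)^{\op}$; since $\bCC^\co$ is again a 2-topos (its mirror), applying \cref{cor:sheavesRepresentable} to $\bCC^\co$ produces the representing object $\underline{\Fib}^{0,\kappa}_{/t}$, whose underlying datum lives in $\bCC$. Because everything reduces to \cref{prop:compactbifibsheaf} together with the representability criterion, there is no real obstacle here; the only subtlety worth flagging explicitly is the passage to the mirror 2-topos in the $\epsilon=1$ case, which accounts for the index shift $\epsilon\mapsto 1-\epsilon$ in the representing object.
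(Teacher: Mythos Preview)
Your proposal is correct and matches the paper's approach: the corollary is stated with a bare \qed, as it follows from \cref{prop:compactbifibsheaf} and \cref{cor:sheavesRepresentable} in exactly the same way that \cref{cor:universeexists} follows from \cref{prop:compactsheaf}. Your handling of the $\epsilon=1$ case via the mirror 2-topos $\bCC^\co$ is the right way to unpack the $\bCC^{\coop}$ variance and makes explicit what the paper leaves implicit.
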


   	\begin{definition}
   		The map $\ev_{1-\epsilon} \colon c^{[1]} \to c$ defines an object $\BFib^{\epsilon}_{/(c,c)}$ (for a sufficiently large regular cardinal $\kappa$). Invoking \cref{cor:internalfib} we obtain morphisms
   		\[
   			\mathcal{Y}_c \colon c \to \underline{\Fib}^{1,\kappa}_{/c}, \enspace \mathcal{Y}^{\co}_c \colon c \to \underline{\Fib}^{0,\kappa}_{/c}
   		\]
   		which we call the Yoneda embedding (resp.\ coYoneda embedding).
   	\end{definition}


   	\begin{remark}
   		We would like to point out that if we specialize to $\bCC=\Cat$ then $\underline{\Fib}_{/c}^0 \simeq \Cocart(c)^{\op}$, so that the coYoneda embedding can be identified with a map $\mathcal{Y}^{\co}_c \colon c^\op \to \Cocart(c)$.
   	\end{remark}

   	\begin{theorem}\label{thm:Yoneda}
   		The Yoneda (resp.\ coYoneda) embedding $\mathcal{Y}_c \colon c \to \underline{\Fib}^{1,\kappa}_{/c}$ is fully faithful.
   	\end{theorem}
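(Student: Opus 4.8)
The plan is to unwind full faithfulness into an instance of the internal Yoneda lemma and then evaluate the relevant mapping spaces by descent to the $1$-categorical Yoneda lemma. I will treat the case $\epsilon=0$ (the embedding $\mathcal{Y}_c\colon c\to\underline{\Fib}^{1,\kappa}_{/c}$); the statement for $\mathcal{Y}^{\co}_c$ is dual. By \cref{def:ffinternal}, it suffices to show that for every $a\in\bCC$ the functor
\[
	(\mathcal{Y}_c)_\ast\colon \bCC(a,c)\to \bCC(a,\underline{\Fib}^{1,\kappa}_{/c})
\]
is fully faithful. Using the representability of \cref{cor:internalfib} I would identify the target with the underlying $1$-category $\left(\BFib^{0,\kappa}_{/(a,c)}\right)^{\leq 1}$, and unwind the definition of $\mathcal{Y}_c$ (which corresponds to the bifibration $\ev_0\times\ev_1\colon c^{[1]}\to c\times c$ under $\bCC(c,\underline{\Fib}^{1,\kappa}_{/c})\simeq\left(\BFib^{0,\kappa}_{/(c,c)}\right)^{\leq 1}$) to see, by naturality in the contravariant variable, that $(\mathcal{Y}_c)_\ast$ carries $x\colon a\to c$ to the \emph{representable bifibration} $\mathbf{y}_a(x):=(x\times\id_c)^\ast(c^{[1]})$ and a $2$-cell $x\Rightarrow y$ to the induced map of bifibrations. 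With this identification, full faithfulness is exactly the assertion that for all $x,y\colon a\to c$ the canonical map
\[
	\bCC(a,c)(x,y)\to \BFib^{0,\kappa}_{/(a,c)}(\mathbf{y}_a(x),\mathbf{y}_a(y))
\]
is an equivalence of spaces, i.e.\ the internal Yoneda lemma.

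To establish this I would argue by descent. Invoking \cref{thm:giraud}, fix a morphism of $2$-topoi $L\colon\PSh_{\Cat}(\bSS)\leftrightarrows\bCC\colon R$ with $R$ fully faithful and $L$ preserving oriented pullbacks. Using \cref{prop:compactbifibsheaf} together with the functoriality of $\Fib$ and $\BFib$ under morphisms of $2$-topoi (\cref{prop:fibslice}, \cref{prop:functkanext}), I would show that $L$ sends representable bifibrations to representable bifibrations and hence intertwines the two Yoneda embeddings; combined with the conservativity provided by fibrational descent this reduces the claim to $\bCC=\PSh_{\Cat}(\bSS)$. In a presheaf $2$-topos the mapping categories and the formation of (bi)fibrations are computed objectwise (\cref{prop:fun2topos}, \cref{ex:fibrationsFunctorCategories}, \cref{prop:carttransun}), so the statement reduces further to the base case $\bCC=\Cat$. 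There, via the Grothendieck construction \cref{thm:str} the $2$-category $\underline{\Fib}^{1}_{/c}$ is identified with $\Cart(c)^{\op}$ and $\mathbf{y}_a$ with the external Yoneda embedding $a\mapsto a\times_c\Fun([1],c)$; the required equivalence is then precisely the ordinary $1$-categorical Yoneda lemma for $\Cart(c)$.

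The main obstacle is the descent/naturality step: making rigorous that the representing objects $\underline{\Fib}^{1,\kappa}_{/c}$ and the embeddings $\mathcal{Y}_c$ are compatible with the localization $L$, including the $\kappa$-compactness bookkeeping needed so that $L$ matches up the universal bifibrations on the two sides. A cleaner, fully internal alternative that sidesteps this is to prove the internal co-Yoneda formula
\[
	\BFib^{0,\kappa}_{/(a,c)}(\mathbf{y}_a(x),G)\simeq G_x
\]
directly — identifying maps out of a representable bifibration with its fibre at $x$ — by combining the co-Yoneda decomposition of \cref{prop:coYoneda} with the Kan-extension adjunctions of \cref{thm:basechangetheorem}, and then specializing $G=\mathbf{y}_a(y)$ and using directed univalence (\cref{thm:Univalence}) to evaluate the resulting fibre as $\bCC(a,c)(x,y)$. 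I expect the bulk of the work to lie in this co-Yoneda identification and in keeping track of the two distinct fibration structures (the $0$-fibration over $a$ and the $1$-fibration over $c$, linked by the adjointability condition A1)) carried by a bifibration throughout the argument.
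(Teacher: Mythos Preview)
Your unwinding to the internal Yoneda statement—that $\bCC(a,c)(x,y)\to\BFib^{0,\kappa}_{/(a,c)}(\mathbf{y}_a(x),\mathbf{y}_a(y))$ should be an equivalence—is correct, and the paper arrives at the same target. However, the paper takes neither of your proposed routes. Instead of descending to $\Cat$ via Giraud or invoking a co-Yoneda formula, it computes the mapping categories in $\BFib^{0}_{/(a,c)}$ \emph{directly}, using two adjunctions already established earlier: the free-fibration adjunction (\cref{prop:freeFibrationIsFreeFibration}) and the strong/lax slice adjunction (\cref{prop:semiLaxPullbackRightAdjoint}). The crucial observation you are missing is that the representable bifibration $\mathbf{y}_a(x)=(x\times\id_c)^\ast(c^{[1]})$ is precisely the \emph{free} $1$-fibration $\Free^1_c(x)$. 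A map of bifibrations $\Free^1_c(x)\to\Free^1_c(y)$ then transposes first to a map $a\to\Free^1_c(y)$ in $\bCC_{/c}$, and then to a lax triangle over $c$, i.e.\ a $2$-cell $x\Rightarrow y$; the bifibration compatibility with the projection to $a$ forces the residual datum to be the identity. The paper packages this via \cref{lem:characterizationffcotensors} as showing that $c^{[1]}\to(\underline{\Fib}^{1,\kappa}_{/c})^{[1]}$ is a pullback over the product square, verified pointwise at each $a\in\bCC$; both essential surjectivity and full faithfulness of the comparison $\bCC(a,c)^{[1]}\to P(a)$ drop out of the two transpositions above.

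The descent obstacle you identify is genuine and the paper simply sidesteps it: there is no reason for $L$ to carry the classifier $\underline{\Fib}^{1,\kappa}_{/Rc}$ to $\underline{\Fib}^{1,\kappa}_{/c}$, and making $\mathcal{Y}_c$ compatible with the localisation would essentially require reproving the result. Your co-Yoneda alternative is closer in spirit, but the paper's argument never invokes \cref{prop:coYoneda}, \cref{thm:basechangetheorem}, or \cref{thm:Univalence}—only the two slice adjunctions above—and thereby avoids both the $\kappa$-bookkeeping and the delicate simultaneous tracking of the two fibration structures that you anticipate as the main difficulty.
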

   	\begin{proof}
   		We will give a proof for the Yoneda embedding, since the proof for the coYoneda embedding is formally dual. For the rest of the proof we will abuse notation by denoting $\left(\BFib^{0}_{(-,c)}\right)^{\leq 1}$ simply as $\BFib^{0}_{(-,c)}$.

   		 We wish to show that we have a pullback diagram
   		\[
   			\begin{tikzcd}
   				c^{[1]} \arrow[d] \arrow[r] & \left(\underline{\Fib}_{/c}^{1,\kappa}\right)^{[1]} \arrow[d] \\
   				c \times c \arrow[r] & \underline{\Fib}_{/c}^{1,\kappa} \times \underline{\Fib}_{/c}^{1,\kappa}.
   			\end{tikzcd}
   		\]
   		 More generally and regardless of cardinality assumptions, we will show that we have a pullback diagram of functors $\bCC^{\op} \to \CAT$
   		\[
   			\begin{tikzcd}
   				\bCC(-,c^{[1]}) \arrow[d] \arrow[r] & \left(\BFib^{0}_{/(-,c)}\right)^{[1]} \arrow[d] \\
   				\bCC(-,c \times c) \arrow[r] & \BFib^{0}_{(-,c)} \times \BFib^{0}_{/(-,c)}.
   			\end{tikzcd}
   		\]
   		We let $P$ denote the pullback of the diagram above and let $\Xi \colon \bCC(-, c^{[1]}) \to P$ be associated morphism. Our goal is to show that $\Xi$ is an equivalence. We commence by observing:
   		\begin{itemize}
   			\item The category $P(a)$ is given by a pullback diagram
   			  \[
   			  	\begin{tikzcd}
   			  		P(a) \arrow[r] \arrow[d] & \BFib^{0}_{/(a \times [1],c)} \arrow[d] \\
   			  	\bCC(a,c) \times \bCC(a,c) \arrow[r] & \BFib^{0}_{/(a,c)} \times \BFib^{0}_{/(a,c)}
   			  	\end{tikzcd}
   			  \]
   			  where the right-most vertical map is given by taking pullbacks along the maps $a \times \{i\} \to a \times [1]$ for $i\in \{0,1\}$. We now look at the map $\bCC(a,c) \to \BFib^{0}_{/(a,c)}$ which is given by
   			  \[
   			  	\bCC(a,c) \times [0] \xrightarrow{\id \times s} \bCC(a,c) \times \BFib^{0}_{/(c,c)} \to \BFib^{0}_{/(a,c)}
   			  \]
   			  where the map $s$ selects the object $c^{[1]} \in \BFib^{0}_{/(c,c)}$ and the second functor is simply induced by the functoriality studied in \cref{lem:bifibfunctor}.
   			  \item Note that since $a \times [1]$ is given by the lax colimit of the constant functor $[1] \to \bCC$ on $a$, we obtain an equivalence
   		      \[
   			     \BFib^{0}_{/(a \times [1],c)} \simeq \left(\BFib^{0}_{/(a,c)}\right)^{[1]}.
   			  \]
   		\end{itemize}
   		We are now ready to show that for every $a \in \bCC$ the induced map $\Xi_a \colon \bCC(a,c)^{[1]} \to P(a)$ is an equivalence.
   		\begin{itemize}
   			\item \textbf{Essential surjectivity:}  By construction an object in $P(a)$ is given by a pair of maps $f,g \colon a \to c$ and a map $\varphi \colon \Free^{1}_{c}(f) \to \Free^{1}_{c}(g)$ such that:
   			\begin{itemize}
   				\item The map $\varphi$ commutes with both projections to $a$ and to $c$.
   				\item The map $\varphi$ is a map of $0$-fibrations over $a$ and a map of $1$-fibrations over $c$.
   			\end{itemize}
   			We appeal to the free fibration adjunction (cf.\ \cref{prop:freeFibrationIsFreeFibration}) to see that the map $\varphi$ can be identified with a map $\hat{\varphi} \colon a \to \Free^1_{c}(g)$ in $\bCC_{/c}$. We can further use the adjunction between the lax and strong slice (\cref{prop:semiLaxPullbackRightAdjoint}) to see that $\hat{\varphi}$ corresponds to a diagram of the form 
   			\[
   				\begin{tikzcd}
				a & a \\
				& c
			\arrow["u", from=1-1, to=1-2]
			\arrow[""{name=0, anchor=center, inner sep=0}, "f"', from=1-1, to=2-2]
			\arrow["g", from=1-2, to=2-2]
			\arrow[shorten >=2pt, Rightarrow, from=0, to=1-2].
			\end{tikzcd}
   			\]
   			The fact that $\varphi$ commutes with the projection to $a$ guarantees that $u=\id$. We conclude that the objects of $P(a)$ are given by morphisms $[1] \to \bCC(a,c)$, so by objects of $\bCC(a,c)^{[1]}$. Unpacking the definitions, we conclude that $\Xi_a$ is essentially surjective.
   			\item \textbf{Fully-faithfulness:} Let $\varphi_i \colon \colon \Free^{1}_{c}(f_i) \to \Free^{1}_{c}(g_i)$ for $i \in \{0,1\}$ be a pair of objects in $P(a)$. Further unraveling reveals that we have a pullback diagram
   			\[
   				\begin{tikzcd}
   					P(a)(\varphi_0,\varphi_1) \arrow[r] \arrow[d] & \BFib^{0}_{/(a,c)}( \Free^{1}_{c}(f_0) , \Free^{1}_{c}(f_1) ) \arrow[d,"\varphi_1 \circ -"] \\
   					\BFib^{0}_{/(a,c)}( \Free^{1}_{c}(g_0), \Free^{1}_{c}(g_1)  ) \arrow[r,"-\circ \varphi_0"] & \BFib^{0}_{/(a,c)}( \Free^{1}_{c}(f_0), \Free^{1}_{c}(g_1)  ). 
   				\end{tikzcd}
   			\]
   			The same argument as above shows that we have natural equivalences
   			\[
   				\BFib^{0}_{/(a,c)}( \Free^{1}_{c}(u), \Free^{1}_{c}(v) )\simeq \bCC(a,c)(u,v).
   			\]
   			Finally, let $\psi_0,\psi_1$ be the objects in $\bCC(a,c)^{[1]}$ corresponding to $\varphi_0$ and $\varphi_1$, respectively. Then our pullback diagram implies that we have an equivalence
   			\[
   			 	\bCC(a,c)^{[1]}(\psi_0,\psi_1) \simeq P(a)(\varphi_0,\varphi_1), 
   			 \] 
   			 which is induced by $\Xi_a$. Consequently, it follows that $\Xi_a$ is fully faithful.
   		\end{itemize}
       We conclude that we have the desired pullback diagram. To finish the proof we observe that once we fix a sufficiently large cardinal $\kappa$, the map $\bCC(-,c) \to \BFib^{0}_{/(-,c)}$ factors through the sub-functor $\BFib^{0,\kappa}_{/(-,c)}$. We thus obtain a sequence of maps
       \[
        	\bCC(-,c^{[1]})  \to \BFib^{0,\kappa}_{/(-,c)} \to \BFib^{0}_{/(-,c)}
       \]
       where both the right map and the composite  map are fully faithful. The result follows.
   	\end{proof}



\section{$1$-Localic 2-topoi}\label{sec:localic}
The goal of this chapter is to study the relationship between 2-topoi and 1-topoi. Recall from \cref{subsec:groupoids} that if $\bCC$ is a 2-topos, we obtain an associated 1-category $\Grpd(\bCC)$ of $\bCC$-groupoids. We will show in \cref{subsec:underlying1topos} that this 1-category is a 1-topos. Moreover, the assignment $\bCC\mapsto \Grpd(\bCC)$ defines a functor $\LTTop\to\LTop$ from the 2-category of 2-topoi to the 2-category of 1-topoi. The main result of this chapter, which we establish in \cref{subsec:localicReflection}, is to show that this functor admits a fully faithful left adjoint $\Shv_{\Cat}(-)$ that carries a 1-topos $\CC$ to the 2-topos $\Shv_{\Cat}(\bCC)$ of \emph{sheaves} on $\bCC$, which is by definition the 2-category of limit-preserving categorical presheaves on $\bCC$. We will refer to a 2-topos that is contained in the essential image of this functor as being \emph{1-localic}. Thus, our result implies that every 2-topos admits a universal approximation by a 1-localic 2-topos. 

\subsection{The underlying 1-topos of a 2-topos}\label{subsec:underlying1topos}
Recall from \cref{def:groupoids} that if $\bCC$ is a 2-topos, we let $\Grpd(\bCC)$ be the full sub-2-category of $\bCC$ that is spanned by the \emph{1-truncated} objects, i.e.\ by those objects $x\in\bCC$ for which $\bCC(-,x)$ takes values in $\SS\subset\Cat$. By its very construction, this defines a 1-category. We now find:

\begin{theorem}\label{thm:underlying1topos}
	Let $\bCC$ be a 2-topos. Then $\Grpd(\bCC)$ is a 1-topos.
\end{theorem}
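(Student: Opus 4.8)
The plan is to verify the Giraud-type characterization of $1$-topoi for $\Grpd(\bCC)$, using the fact that $\bCC$ is a $2$-topos. By \cref{cor:grppresentable}, the $1$-category $\Grpd(\bCC)$ is presentable, so the main content is to establish descent in the sense of Lurie-Rezk: that the functor $\Grpd(\bCC)_{/-}\colon\Grpd(\bCC)^\op\to\Cat$ sends colimits to limits, and that colimits are universal (every map is exponentiable). The key bridge between the $1$-dimensional and $2$-dimensional worlds is the identification of slices: for a $\bCC$-groupoid $x$, the slice $\Grpd(\bCC)_{/x}$ should coincide with $\Grpd(\bCC_{/x})$, and by \cref{prop:characterisationGroupoids}(3), the inclusion $\Fib^\epsilon_{/x}\to\bCC_{/x}$ is an equivalence whenever $x$ is a $\bCC$-groupoid (since the free fibration functor becomes the identity). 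This is the crucial leverage point: descent for \emph{all} maps over a groupoidal base reduces to \emph{fibrational} descent in $\bCC$, which we already have.

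**First I would** set up the reduction carefully. Given a colimit $x=\colim_{i\in\II} x_i$ in $\Grpd(\bCC)$ indexed by a (strong, $1$-categorical) diagram, I want to show $\Grpd(\bCC)_{/x}\simeq\lim_{i\in\II^\op}\Grpd(\bCC)_{/x_i}$. By \cref{lem:grpdstrongcolim}, the inclusion $\Grpd(\bCC)\into\bCC$ preserves strong colimits, so $x$ is also the colimit of the $x_i$ in $\bCC$ (viewed with the trivial marking $E=\sharp$, i.e.\ as a strong colimit indexed by a $1$-category). Now for each $i$, since $x_i$ is a $\bCC$-groupoid, \cref{prop:characterisationGroupoids} gives $\Fib^0_{/x_i}\simeq\bCC_{/x_i}$, and the same over $x$. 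Applying \cref{prop:compactsheaf} (fibrational descent for the functor $(\Fib^{0,\kappa}_{\bCC})_{/-}$) to the strong colimit diagram, the comparison map $(\Fib^{0,\kappa}_{\bCC})_{/x}\to\lim^{\oplax}_{\II^\op}(\Fib^{0,\kappa}_{\bCC})_{/x_i}$ is an equivalence; but over groupoidal objects these oplax limits collapse to strong limits and the fibration categories collapse to ordinary slices. Passing to the groupoidal (i.e.\ $(\infty,0)$-truncated) objects and to the underlying $1$-categories then yields the desired descent equivalence for $\Grpd(\bCC)$. Universality of colimits follows analogously from \cref{prop:condussy}: every map between $\bCC$-groupoids is an $\epsilon$-fibration (as the target has trivial free fibration by \cref{prop:characterisationGroupoids}), hence exponentiable, and pullback along it preserves colimits.

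**The main obstacle I expect** is bookkeeping the interaction between the three reductions at once: (i) passing from the partially \emph{oplax} limit appearing in fibrational descent to an honest \emph{strong} limit, which is legitimate only because the marking is $\sharp$ and all the fibers are groupoidal so that $\epsilon$-cartesian $2$-morphisms and cartesian edges become invertible; (ii) passing from $\Fib^\epsilon$ to the full slice $\bCC_{/-}$, valid over groupoidal bases by \cref{prop:characterisationGroupoids}; and (iii) restricting to the sub-$1$-categories of groupoidal objects in each slice and checking these restrictions are compatible with the limit/colimit comparison maps. I would want to confirm that an object over $x$ in $\bCC$ is a $\bCC$-groupoid (equivalently that its structure map is a groupoidal-fiber fibration) if and only if it is so fiberwise over each $x_i$; this is again a local-to-global statement that should follow from \cref{prop:characterisationGroupoids}(2) applied fiberwise together with \cref{lem:grpdstrongcolim}. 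Once the descent equivalence and universality are in place, I would invoke the $1$-categorical Giraud theorem (\cite[Theorem~6.1.0.6]{LurieHTT}): a presentable $1$-category satisfying descent is a $1$-topos, concluding that $\Grpd(\bCC)$ is a $1$-topos.
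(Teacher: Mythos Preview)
Your proposal is correct and follows essentially the same route as the paper: presentability from \cref{cor:grppresentable}, passage of the colimit into $\bCC$ via \cref{lem:grpdstrongcolim}, the identification $\Fib^\epsilon_{/x}\simeq\bCC_{/x}$ over groupoidal bases from \cref{prop:characterisationGroupoids}, fibrational descent to get the limit decomposition, and the local-to-global step (if all $x_i$ are $\bCC$-groupoids then so is their colimit $x$). Two small points: the paper cites fibrational descent directly (as in \cref{prop:fibrepresents}) rather than the $\kappa$-compact version \cref{prop:compactsheaf}, which is cleaner here since no cardinal bound is needed; and the paper does not separately argue universality of colimits via \cref{prop:condussy}, since the full Rezk--Lurie descent equivalence $\Grpd(\bCC)_{/x}\simeq\lim_i\Grpd(\bCC)_{/x_i}$ already subsumes it.
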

\begin{proof}
	We start by noting that since $\Grpd(\bCC)$ presentable \cref{cor:grppresentable}, the only thing left to show is that $\Grpd(\bCC)$ satisfies descent.
	
	Let $q: \II \to \Grpd(\bCC)$ be a diagram with colimit cone $\overline{q}$. Note that by \cref{lem:grpdstrongcolim}, this is also a colimit in $\bCC$. Now consider the commutative diagram
	\[
	\begin{tikzcd}
		\Grpd(\bCC)_{/\overline{q}(\ast)} \arrow[r, hookrightarrow] \arrow[d] & \left(\Fib^0_{/\overline{q}(\ast)}\right)^{\leq 1} \arrow[d,"\simeq"] \\
		\lim_{\II^\op}\Grpd(\bCC)_{/q(-)} \arrow[r] & \lim_{\II^\op}\left(\Fib^0_{/q(-)}\right)^{\leq 1}
	\end{tikzcd}
	\]
	in which the horizontal functors are fully faithful and the right vertical map is an equivalence on account of fibrational descent in $\bCC$. To finish the proof, we need to show that the above diagram above is a pullback, which amounts to showing that if  $x\to \overline{q}(\ast)$ is a map in $\bCC$ (which is automatically an $\epsilon$-fibration by part (3) in \cref{prop:characterisationGroupoids}) such that for each  $i \in \bII$ the object $x_i=x\times_{q(i)}\overline{q}(\ast)$ is a $\bCC$-groupoid, then $x$ is a $\bCC$-groupoid as well. Since we can recover $x$ as the colimit of the diagram $\II\to \bCC,~i\mapsto x_i$, this is a consequence of \cref{lem:grpdstrongcolim}.
\end{proof}

\begin{remark}\label{rem:restrictionofmapstogrp}
	Let $L \colon \bCC \to \bDD$ be a morphism of 2-topoi. Then the same argument as the one given in \cref{lem:grpdstrongcolim} shows that $L$ preserves groupoid objects and thus induces a colimit-preserving functor $\hat{L} \colon \Grpd(\bCC) \to \Grpd(\bDD)$. Moreover, since every pullback square in $\Grpd(\bCC)$ is an oriented pullback in $\bCC$ and since the terminal object is always a $\bCC$-groupoid, the map $\hat{L}$ is left exact and therefore defines a morphism of $1$-topoi
\end{remark}

\begin{definition}\label{def:Grpd2Top1Top}
	We denote by $\Grpd(-) \colon \LTTop \to \LTop$ the functor of 2-categories which assigns to every 2-topos $\bCC$ its underlying 1-topos $\Grpd(\bCC)$ of $\bCC$-groupoids.
\end{definition}

\subsection{$1$-Localic reflection}\label{subsec:localicReflection}
The goal of this section is to construct a fully faithful left adjoint to the functor $\Grpd(-)\colon\LTTop\to\LTop$. We begin with the following definition:

\begin{definition}\label{def:sheaves1topos}
	Let $\AA$ be a 1-topos. We define the category of \emph{sheaves on }$\AA$, denoted by $\Shv_{\Cat}(\AA)$,
	 as the full sub-2-category of $\Fun(\AA^\op,\Cat)$ consisting of limit-preserving functors.
\end{definition}

\begin{remark}\label{rem:SheavesAsKappaSheaves}
	Let $\AA$ be a $1$-topos, and let $\kappa$ be a regular cardinal such that $\AA$ is $\kappa$-accessible. Let $\iota\colon \AA^\kappa\into\AA$ be the full subcategory of $\kappa$-compact objects. By \cref{thm:laxCocompletion}, we obtain a fully faithful functor
	\begin{equation*}
		(h_{\AA^\kappa})_!(h_{\AA}\iota)\colon \PSh_{\Cat}(\AA^\kappa)\into\PSh_{\Cat}(\AA).
	\end{equation*}
	We claim that this functor restricts to an equivalence
	\begin{equation*}
		\Shv_{\Cat}^\kappa(\AA^\kappa)\simeq \Shv_{\Cat}(\AA),
	\end{equation*}
	where the left-hand side denotes the full subcategory of $\PSh_{\Cat}(\AA^\kappa)$ that preserves $\kappa$-small limits. In fact, it is well-known that this is the case on the underlying $1$-categories, which is sufficient to deduce the claim. 
\end{remark}

\begin{proposition}\label{prop:sheaves2topos}
	Let $\AA$ be a 1-topos. Then the category $\Shv_{\Cat}(\AA)$ is a 2-topos.
\end{proposition}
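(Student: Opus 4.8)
The goal is to show that $\Shv_{\Cat}(\AA)$, the full sub-2-category of $\PSh_{\Cat}(\AA)$ on the limit-preserving presheaves, is a 2-topos. By \cref{thm:giraud}, it suffices to exhibit an adjunction $L\colon \PSh_{\Cat}(\bSS)\rightleftarrows\Shv_{\Cat}(\AA)\colon R$ for some small 2-category $\bSS$, where $R$ is fully faithful and $\kappa$-accessible and $L$ preserves oriented pullbacks. The natural strategy is to reduce to a presheaf 2-category on a \emph{small} site and then verify that the sheafification localization preserves oriented pullbacks.

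First I would pick a regular cardinal $\kappa$ with $\AA$ $\kappa$-accessible, so that by \cref{rem:SheavesAsKappaSheaves} we have an equivalence $\Shv_{\Cat}(\AA)\simeq\Shv^\kappa_{\Cat}(\AA^\kappa)$, the full sub-2-category of $\PSh_{\Cat}(\bSS)$ on presheaves preserving $\kappa$-small limits, where $\bSS=\AA^\kappa$ is small. Thus $\Shv_{\Cat}(\AA)$ is realized as $\Loc_S(\PSh_{\Cat}(\bSS))$ for the set $S$ consisting of the canonical maps $\colim(\text{diagram over a $\kappa$-small limit cone})\to h_{\bSS}(\lim)$ whose $S$-local objects are exactly the $\kappa$-small-limit-preserving presheaves. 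By \cref{prop:localObjectsPresheaves}, the inclusion $R\colon\Loc_S(\PSh_{\Cat}(\bSS))\into\PSh_{\Cat}(\bSS)$ admits a left adjoint $L$ and preserves $\lambda$-filtered colimits for some regular cardinal $\lambda$; moreover $R$ is fully faithful by construction. By \cref{prop:localisationPresentable}, $\Shv_{\Cat}(\AA)$ is presentable. It therefore remains only to verify that $L$ preserves oriented pullbacks, which by \cref{thm:giraud} upgrades this reflective localization into a 2-topos.

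The main obstacle is precisely this last point: showing that the reflection $L\colon\PSh_{\Cat}(\bSS)\to\Shv_{\Cat}(\AA)$ preserves oriented pullbacks. By \cref{cor:pullbackoffree} every oriented pullback is a strong pullback of a free fibration, and $L$ automatically preserves free fibrations over representables once it commutes with the relevant colimits (as in the first step of \cref{prop:preGiraud}); so the task reduces to showing that $L$ is left exact enough to carry the universal lax square \eqref{eq:opullback} to an oriented pullback. The crucial input is that the class $S$ of localizing maps is generated by maps built from \emph{$\kappa$-small limit cones} in $\AA^\kappa$, and that in the $1$-topos $\AA$ these finite (or $\kappa$-small) limits interact well with the descent/colimit structure. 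Concretely, I would verify that sheafification commutes with the cotensor $(-)^{[1]}$ and with strong pullbacks of fibrations, using that $\Spc\subset\Cat$ is closed under the relevant limits and that the $1$-topos $\AA$ satisfies descent so that $S$-localization is left exact on the groupoidal part. Since oriented pullbacks are assembled from $(-)^{[1]}$ and strong pullbacks by \cref{prop:pasting} and \cref{cor:pullbackoffree}, preservation of these two constituents yields preservation of oriented pullbacks.

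An alternative, perhaps cleaner route that I would pursue in parallel is to avoid checking left exactness of $L$ directly and instead invoke the stability results already available: $\PSh_{\Cat}(\AA^\kappa)$ is a 2-topos by \cref{prop:fun2topos} and \cref{thm:cat2topos}, and I would try to realize $\Shv_{\Cat}(\AA)$ as arising from a morphism of 2-topoi via \cref{prop:inducing2toposstructure}, whose hypotheses are exactly that $R$ is fully faithful and $\kappa$-filtered-colimit-preserving (already established) and that $L$ preserves oriented pullbacks. Either way the substantive content funnels into the single verification that sheafification preserves oriented pullbacks, and I expect the argument to mirror the key steps of \cref{prop:preGiraud} applied to the localization $\PSh_{\Cat}(\AA^\kappa)\to\Shv_{\Cat}(\AA)$, with the left exactness of the underlying $1$-topos localization $\PSh_{\Spc}(\AA^\kappa)\to\AA$ providing the groupoidal base case.
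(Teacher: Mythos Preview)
Your overall framework matches the paper's: pass to $\Shv_{\Cat}^\kappa(\AA^\kappa)$ via \cref{rem:SheavesAsKappaSheaves}, exhibit the reflective localization of $\PSh_{\Cat}(\AA^\kappa)$, and invoke \cref{prop:inducing2toposstructure} once you know the reflector preserves oriented pullbacks. You also correctly isolate the crux as preservation of $(-)^{[1]}$ and of strong pullbacks, and you correctly name the underlying $1$-topos localization $\PSh_{\Spc}(\AA^\kappa)\to\AA$ as the essential input.

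Where your proposal has a gap is in the mechanism linking that $1$-categorical left exactness to the $2$-categorical claim. Your suggestion to ``mirror the key steps of \cref{prop:preGiraud}'' will not go through: every one of those steps (pullback along a map of representables, fibrations over a general base, etc.) invokes fibrational descent in the target $\bCC$, which is precisely what you are trying to establish for $\Shv_{\Cat}(\AA)$. So that route is circular.

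The paper's device is to model $\Cat$-valued presheaves as complete Segal objects in $\Spc$-valued presheaves. Under this identification, the inclusion $i_{\leq 1}\colon \Shv_{\Cat}^\kappa(\AA^\kappa)_{\leq 1}\into\PSh_{\Cat}(\AA^\kappa)_{\leq 1}$ is the restriction to complete Segal objects of the level-wise map $i'_*\colon\Fun(\Delta^\op,\Shv_{\Spc}^\kappa(\AA^\kappa))\into\Fun(\Delta^\op,\PSh_{\Spc}(\AA^\kappa))$, and likewise the reflector $l$ is the restriction of the level-wise $l'_*$. Since $l'$ is left exact (here one uses that $\kappa$ was chosen so that $\AA^\kappa$ is closed under finite limits, cf.\ \cite[Proposition~6.1.5.2]{LurieHTT}), $l'_*$ preserves finite limits of simplicial objects; this immediately gives preservation of strong pullbacks by $l$. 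For $(-)^{[1]}$, one computes the simplicial levels directly: if $x_\bullet$ is the complete Segal object associated with $x$, then $(x^{[1]})_0=x_1$ and $(x^{[1]})_1=x_2\times_{x_0}x_2$, both finite limits of levels of $x_\bullet$, so $l(x^{[1]})\to l(x)^{[1]}$ is an equivalence again by left exactness of $l'$. This is the missing bridge in your sketch; once you have it, \cref{prop:inducing2toposstructure} finishes the proof exactly as you outlined.
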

\begin{proof}
	Choose a cardinal $\kappa$ such that $\AA$ is $\kappa$-accessible, and let $\iota\colon\AA^\kappa\subset\AA$ be the full subcategory of $\kappa$-compact objects. Assume furthermore (by enlarging $\kappa$ if necessary) that $\AA^\kappa$ is closed under finite limits in $\AA$. By \cref{rem:SheavesAsKappaSheaves}, we obtain an equivalence $\Shv_{\Cat}(\AA)\simeq\Shv_{\Cat}^\kappa(\AA^\kappa)$. Thus, it suffices to show that $\Shv_{\Cat}^\kappa(\AA^\kappa)$ is a 2-topos. Note that $\Shv_{\SS}^\kappa(\AA^\kappa)\simeq \Ind_\kappa(\AA^\kappa)\simeq\AA$, and we have an adjunction
	\begin{equation*}
	(l^\prime\dashv i^\prime)\colon \PSh_{\SS}(\AA^\kappa)\leftrightarrows\Shv_{\SS}^\kappa(\AA^\kappa)
	\end{equation*}
	where $i$ preserves $\kappa$-filtered colimits and where $l$ is left exact since $\AA^\kappa$ is closed under finite limits in $\AA$ (see \cite[Proposition~6.1.5.2]{LurieHTT}). Furthermore, observe that the inclusion $i_{\leq 1}\colon\Shv_{\Cat}^\kappa(\AA^\kappa)_{\leq 1}\into\PSh_{\Cat}(\AA^\kappa)_{\leq 1}$ can be identified with the restriction of
	\begin{equation*}
		i^\prime_\ast\colon\Fun(\Delta^\op,\Shv_{\SS}^\kappa(\AA^\kappa))\into\Fun(\Delta^\op,\PSh_{\SS}(\AA^\kappa))
	\end{equation*}
	to complete Segal objects. Consequently, we deduce from \cref{lem:adjunctions1vs2dimensional} that we have an adjunction
	\begin{equation*}
		(l\dashv i)\colon \PSh_{\Cat}(\AA^\kappa)\leftrightarrows\Shv_{\Cat}^\kappa(\AA^\kappa)
	\end{equation*}
	in which the inclusion $i$ preserves $\kappa$-filtered colimits
	and where $l_{\leq 1}$ can be identified with the restriction of
	\begin{equation*}
		l^\prime_\ast\colon\Fun(\Delta^\op,\PSh_{\SS}(\AA^\kappa))\to\Fun(\Delta^\op,\Shv_{\SS}^\kappa(\AA^\kappa))
	\end{equation*}
	to complete Segal objects.
	Using \cref{prop:inducing2toposstructure}, it now suffices to verify that $l$ preserves oriented pullbacks. Note that as limits of simplicial objects are computed level-wise, the functor $l^\prime_\ast$ is preserves finite limits. Hence $l$ preserves strong pullbacks. Consequently, the proof is finished once we verify that for every $x\in\PSh_{\Cat}(\AA^\kappa)$, the canonical map $l(x^{[1]})\to l(x)^{[1]}$ is an equivalence. Upon identifying this map as a morphism of complete Segal objects in $\Shv_{\SS}^\kappa(\AA^\kappa)$ and using the Segal conditions, it will be enough to see that this map induces equivalences in $\Shv_{\SS}^\kappa(\AA^\kappa)$ on level $0$ and level $1$. But if $x_{\bullet}$ denotes the complete Segal object in $\PSh_{\SS}(\AA^\kappa)$ that is associated with $x$, then we compute $(x^{[1]})_0= x_1$ and $(x^{[1]})_1= (x^{[1]\times [1]})_0=x_2\times_{x_0} x_2$. Thus the claim also follows from $l^\prime$ preserving finite limits.
\end{proof}

\begin{proposition}\label{prop:UMPSheaves}
	Let $\bCC$ be a 2-topos, and let $\AA$ be a $1$-topos. Then the Yoneda embedding $h_{\AA}\colon\AA\into\Shv_{\Cat}(\AA)$ induces an equivalence $\AA\simeq\Grpd(\Shv_{\Cat}(\AA))$, and precomposition with $h_{\AA}$ gives rise to an equivalence
	\begin{equation*}
		\LTTop(\Shv_{\Cat}(\AA),\bCC)\simeq \LTop(\AA,\Grpd(\bCC)).
	\end{equation*}
\end{proposition}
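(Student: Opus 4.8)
The plan is to establish the two claims separately, starting with the identification $\AA\simeq\Grpd(\Shv_{\Cat}(\AA))$, which is the cleaner of the two. An object $F\in\Shv_{\Cat}(\AA)$ is a $\Shv_{\Cat}(\AA)$-groupoid precisely if $\Shv_{\Cat}(\AA)(-,F)$ takes values in $\SS$, which by the Yoneda lemma inside $\PSh_{\Cat}(\AA^\op)$ together with the fact that limits in $\Shv_{\Cat}(\AA)$ are computed pointwise amounts to $F$ taking values in $\SS\subset\Cat$. Thus $\Grpd(\Shv_{\Cat}(\AA))$ is the full subcategory of limit-preserving functors $\AA^\op\to\SS$, which is exactly $\Shv_{\SS}(\AA)$. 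Since $\AA$ is a 1-topos it is generated under colimits by itself and the Yoneda embedding $h_\AA\colon\AA\to\Shv_{\SS}(\AA)$ is an equivalence (this is the statement that a presentable 1-category is its own category of limit-preserving space-valued presheaves, via \cite[Proposition~6.1.5.2]{LurieHTT} or directly from $\Shv_{\SS}^\kappa(\AA^\kappa)\simeq\Ind_\kappa(\AA^\kappa)\simeq\AA$ as used in \cref{prop:sheaves2topos}). This gives the first equivalence and simultaneously shows that $h_\AA$ factors through $\Grpd(\Shv_{\Cat}(\AA))$, so that the restriction-along-$h_\AA$ map lands in $\LTop(\AA,\Grpd(\bCC))$ after applying $\Grpd(-)$.

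For the adjunction equivalence, first I would reduce to a $\kappa$-compact presentation. Choose $\kappa$ with $\AA$ $\kappa$-accessible and $\AA^\kappa$ closed under finite limits, so that \cref{rem:SheavesAsKappaSheaves} gives $\Shv_{\Cat}(\AA)\simeq\Shv_{\Cat}^\kappa(\AA^\kappa)$, and the latter is a Bousfield localization $l\colon\PSh_{\Cat}(\AA^\kappa)\leftrightarrows\Shv_{\Cat}^\kappa(\AA^\kappa)\colon i$ with $l$ preserving oriented pullbacks, as established in \cref{prop:sheaves2topos}. A morphism of 2-topoi $\Shv_{\Cat}(\AA)\to\bCC$ is a left adjoint preserving oriented pullbacks and the terminal object; precomposing with $l$ and using the universal property of the free cocompletion (\cref{thm:laxCocompletion}) together with \cref{thm:giraud}, such morphisms out of the localization correspond to oriented-pullback-preserving, terminal-object-preserving left adjoints out of $\PSh_{\Cat}(\AA^\kappa)$ that invert the localizing maps $S$. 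By \cref{thm:laxCocompletion} these are classified by functors $\AA^\kappa\to\bCC$ (the restriction along $h_{\AA^\kappa}$), subject to the conditions that the induced cocontinuous extension preserve oriented pullbacks and send $S$ to equivalences.

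The heart of the argument—and the step I expect to be the main obstacle—is to show that these two conditions on a functor $g\colon\AA^\kappa\to\bCC$ are jointly equivalent to $g$ factoring through $\Grpd(\bCC)$ as a \emph{left-exact} functor, i.e.\ to the data of a morphism of 1-topoi $\AA\to\Grpd(\bCC)$. In one direction, if the cocontinuous extension $\bar g\colon\PSh_{\Cat}(\AA^\kappa)\to\bCC$ preserves oriented pullbacks and the terminal object, then since every object of $\AA^\kappa$ is a $\PSh$-groupoid and $\bar g$ preserves cotensors by $[1]$ (as it preserves oriented pullbacks, hence sends $c^{[1]}\to c$ equivalences to equivalences by \cref{prop:characterisationGroupoids}), $g$ must land in $\Grpd(\bCC)$; left-exactness then follows because finite limits in $\AA^\kappa$ are oriented pullbacks among groupoids and are preserved. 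Conversely, given a morphism of 1-topoi $\AA\to\Grpd(\bCC)$, I would extend it via $\Shv_{\Cat}(\AA)\simeq\Shv_{\Cat}^\kappa(\AA^\kappa)$ and verify oriented-pullback preservation by the same Segal-object/complete-Segal analysis used in \cref{prop:sheaves2topos}, reducing the cotensor computation $(x^{[1]})_0=x_1$, $(x^{[1]})_1=x_2\times_{x_0}x_2$ to left-exactness on the level of underlying groupoids. Assembling these identifications naturally in $\bCC$ yields the desired equivalence $\LTTop(\Shv_{\Cat}(\AA),\bCC)\simeq\LTop(\AA,\Grpd(\bCC))$, and tracking the construction shows the unit is precomposition with $h_\AA$ as claimed.
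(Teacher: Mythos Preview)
Your plan for the first claim and for full faithfulness is essentially the paper's approach: reduce to $\AA^\kappa$, use the free cocompletion theorem, and note that restriction along $h_{\AA^\kappa}$ embeds $\LTTop(\PSh_{\Cat}(\AA^\kappa),\bCC)$ into functors $\AA^\kappa\to\Grpd(\bCC)$. The paper packages this slightly more cleanly as a commutative square with three fully faithful sides, but the content is the same.

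The genuine gap is in your essential surjectivity argument. Given a morphism of 1-topoi $f\colon\AA\to\Grpd(\bCC)$, you need to produce a morphism of 2-topoi $\Shv_{\Cat}(\AA)\to\bCC$, and the crux is checking that the Yoneda extension of $f\iota\colon\AA^\kappa\to\bCC$ preserves oriented pullbacks. Your proposal is to rerun the Segal-object computation from \cref{prop:sheaves2topos}, but that argument worked because both source and target were categories of complete Segal objects in a 1-topos, so the cotensor $x^{[1]}$ could be computed level-wise and everything reduced to left-exactness of the underlying map on spaces. For a general 2-topos $\bCC$ you have no such description: complete Segal objects in $\Grpd(\bCC)$ need not recover $\bCC$ (this fails precisely when $\bCC$ is not 1-localic), so there is no way to ``reduce to left-exactness on underlying groupoids.''

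The paper handles this with \cref{prop:preGiraud}, which is the key input you are missing. That proposition says that if $\hat L\colon\bSS\to\bCC$ preserves oriented pullbacks and $\bSS$ has them, then the full Yoneda extension $\PSh_{\Cat}(\bSS)\to\bCC$ preserves oriented pullbacks; its proof uses fibrational descent in $\bCC$ in an essential way. Applying it with $\bSS=\AA^\kappa$ and $\hat L=f\iota$ (which preserves oriented pullbacks because on groupoids these are just strong pullbacks, and $f$ is left exact) gives the morphism of 2-topoi out of $\PSh_{\Cat}(\AA^\kappa)$. The descent to the localization $\Shv_{\Cat}^\kappa(\AA^\kappa)$ is then handled not by checking that $S$ is inverted, but by observing that the right adjoint $c\mapsto\bCC(f\iota(-),c)$ already lands in $\Shv_{\Cat}^\kappa(\AA^\kappa)$, since $f\iota$ preserves $\kappa$-small colimits by \cref{lem:grpdstrongcolim}.
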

\begin{proof}
	The first claim is clear on account of the fact that we have $\Grpd(\Shv_{\Cat}(\AA))\simeq\Shv_{\SS}(\AA)$, together with the $1$-categorical adjoint functor theorem. As for the second, claim, pick a regular cardinal $\kappa$ so that the inclusion $\iota\colon\AA^\kappa\subset\AA$ is closed under finite limits and $\AA$ is $\kappa$-accessible. By \cref{rem:SheavesAsKappaSheaves}, we may identify $\Shv_{\Cat}(\AA)\simeq\Shv_{\Cat}^{\kappa}(\AA^\kappa)$. As in the proof of \cref{prop:sheaves2topos}, we obtain an adjunction
	\begin{equation*}
		(l\dashv i)\colon \PSh_{\Cat}(\AA^\kappa)\leftrightarrows\Shv_{\Cat}^\kappa(\AA^\kappa)
	\end{equation*}
	in which $l$ is a morphism of 2-topoi and $i$ is fully faithful and preserves $\kappa$-filtered colimits. We now obtain a commutative diagram
	\begin{equation*}
		\begin{tikzcd}
			\LTTop(\Shv_{\Cat}(\AA),\bCC)\arrow[r, "h_{\AA}^\ast"]\arrow[d, "l^\ast", hookrightarrow] & \LTop(\AA,\Grpd(\bCC))\arrow[d, "\iota^\ast", hookrightarrow]\\
			\LTTop(\PSh_{\Cat}(\AA^\kappa), \bCC)\arrow[r, hookrightarrow, "h_{\AA^\kappa}^\ast"] & \Cat(\AA^\kappa, \Grpd(\bCC)),
		\end{tikzcd}
	\end{equation*}
	which already implies that $h_{\AA}^\ast$ is fully faithful since all of the other maps are fully faithful. To complete the proof, it now suffices to verify that every map of $1$-topoi $f\colon \AA\to\Grpd(\bCC)$ is in the essential image of $h_{\AA}^\ast$. Now $f\iota\colon \AA^\kappa\to \Grpd(\bCC)\into\bCC$ preserves oriented pullbacks, hence we deduce from \cref{prop:preGiraud} that its Yoneda extension $\PSh_{\Cat}(\AA^\kappa)\to\bCC$ is a morphism of 2-topoi. By \cref{rem:universalAdjointFunctorTheorem}, its right adjoint is given by the composition
	\begin{equation*}
	\bCC\xrightarrow{h_{\bCC}}\PSh_{\Cat}(\bCC)\xrightarrow{(f\iota)^\ast}\PSh_{\Cat}(\AA^\kappa).
	\end{equation*}
	In particular, since $f\iota$ preserves $\kappa$-small colimits (see \cref{lem:grpdstrongcolim}), this right adjoint takes values in  $\Shv_{\Cat}^\kappa(\AA^\kappa)$. Since the inclusion $\Shv_{\Cat}^\kappa(\AA^\kappa)\subset\PSh_{\Cat}(\AA^\kappa)$ is continuous, we thus obtain a morphism of 2-topoi
	\begin{equation*}
		\Shv_{\Cat}(\AA)\to\bCC
	\end{equation*}
	that recovers $f$ when we restrict along the Yoneda embedding $h_{\AA}$. Hence the claim follows.
\end{proof}

\begin{theorem}\label{thm:localicReflection}
	The assignment $\AA\mapsto \Shv_{\Cat}(\AA)$ determines a fully faithful functor $\LTop \to\LTTop$ that is left adjoint to $\Grpd(-)$.
\end{theorem}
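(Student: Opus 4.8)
The plan is to deduce the statement almost formally from the two immediately preceding results: \cref{prop:sheaves2topos}, which guarantees that $\Shv_{\Cat}(\AA)$ is a $2$-topos and hence an object of $\LTTop$ for every $1$-topos $\AA$, and the object-wise universal property recorded in \cref{prop:UMPSheaves}. The latter supplies, for every $2$-topos $\bCC$, an equivalence of categories
\begin{equation*}
	\Phi_{\AA,\bCC}\colon\LTTop(\Shv_{\Cat}(\AA),\bCC)\xrightarrow{\simeq}\LTop(\AA,\Grpd(\bCC)),\qquad F\mapsto\Grpd(F)\circ h_\AA,
\end{equation*}
where $h_\AA$ is read as the equivalence $\AA\simeq\Grpd(\Shv_{\Cat}(\AA))$ from the first part of \cref{prop:UMPSheaves}. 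The entire content of the theorem is then to upgrade this family of equivalences to an adjunction $\Shv_{\Cat}(-)\dashv\Grpd(-)$ with fully faithful left adjoint.

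First I would verify that $\Phi_{\AA,\bCC}$ is natural in the variable $\bCC$. For a morphism of $2$-topoi $G\colon\bCC\to\bDD$, the identity $\Grpd(G\circ F)\circ h_\AA=\Grpd(G)\circ(\Grpd(F)\circ h_\AA)$ (which holds since $\Grpd(-)$ is a functor) expresses exactly that postcomposition with $G$ on the source corresponds under $\Phi$ to postcomposition with $\Grpd(G)$ on the target. Hence, for each fixed $\AA$, the functor $\bCC\mapsto\LTop(\AA,\Grpd(\bCC))$ on $\LTTop$ is corepresented by $\Shv_{\Cat}(\AA)$. By the $2$-categorical adjoint functor criterion recalled earlier—namely that $R\colon\bDD\to\bCC$ is a right adjoint precisely when each functor $x\mapsto\bCC(c,R(x))$ is corepresentable—I would conclude that $\Grpd(-)\colon\LTTop\to\LTop$ admits a left adjoint $L$. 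Uniqueness of corepresenting objects identifies $L$ on objects with $\Shv_{\Cat}(-)$, and we take the resulting functoriality of $L$ as the functorial structure on the assignment $\AA\mapsto\Shv_{\Cat}(\AA)$.

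Finally, to obtain full faithfulness I would use that a left adjoint between $2$-categories is fully faithful exactly when the adjunction unit is an equivalence. The component of the unit at $\AA$ is extracted as $\Phi_{\AA,\Shv_{\Cat}(\AA)}(\id_{\Shv_{\Cat}(\AA)})=\Grpd(\id)\circ h_\AA=h_\AA\colon\AA\to\Grpd(\Shv_{\Cat}(\AA))$, which is an equivalence by the first assertion of \cref{prop:UMPSheaves}. I expect no serious obstacle here: the genuinely non-formal inputs—presentability and fibrational descent of $\Shv_{\Cat}(\AA)$, and the computation of its groupoid objects—have all been absorbed into \cref{prop:sheaves2topos} and \cref{prop:UMPSheaves}. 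The only point requiring real care is bookkeeping: confirming that the equivalences $\Phi_{\AA,\bCC}$ are genuinely natural in $\bCC$ (so that honest corepresentability, rather than a mere object-wise bijection on equivalence classes, is available), and checking that the unit produced by the adjunction coincides on the nose with the Yoneda map $h_\AA$, which is what makes the full-faithfulness argument go through.
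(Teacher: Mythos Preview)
Your proposal is correct and follows exactly the paper's approach: the paper's proof is the single sentence ``This follows immediately from~\cref{prop:UMPSheaves},'' and you have spelled out precisely the standard details behind that word ``immediately'' (naturality in $\bCC$, the corepresentability criterion for the existence of a left adjoint, and the identification of the unit with $h_\AA$).
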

\begin{proof}
	This follows immediately from~\cref{prop:UMPSheaves}.
\end{proof}

\begin{remark}
	The proof of \cref{prop:UMPSheaves} shows that if $\bCC$ is a 2-topos, the counit $\Shv_{\Cat}(\Grpd(\bCC))\to\bCC$ of the adjunction $\Shv_{\Cat}(-)\dashv \Grpd(-)$ can be explicitly described as follows: choose a regular cardinal $\kappa$ so that $\Grpd(\bCC)$ is $\kappa$-accessible and the collection of $\kappa$-compact objects in $\Grpd(\bCC)$ is closed under finite limits. Then one has $\Shv_{\Cat}(\Grpd(\bCC))\simeq\Shv_{\Cat}^\kappa(\Grpd(\bCC)^\kappa)$, and the counit can be obtained as the left Kan extension of the inclusion $\Grpd(\bCC)^\kappa\into\bCC$ along $h_{\Grpd(\bCC)^\kappa}\colon\Grpd(\bCC)^\kappa\into\Shv_{\Cat}^\kappa(\Grpd(\bCC)^\kappa)$. In particular, \cref{rem:universalAdjointFunctorTheorem} implies that the right adjoint of the counit is given by
	\begin{equation*}
	 \bCC\simeq\Fun^{\cont}(\bCC^\op,\Cat)\xrightarrow{\iota^\ast}\Shv_{\Cat}(\Grpd(\bCC)),
	\end{equation*}
	where $\Fun^{\cont}(\bCC^\op,\Cat)$ denotes the 2-category of continuous functors and the left equivalence is obtained by the Yoneda embedding, see \cref{cor:sheavesRepresentable}.
\end{remark}

\begin{definition}\label{def:localic2topoi}
	We say that a 2-topos $\bCC$ is \emph{1-localic} if lies in the essential image of $\Shv_{\Cat}(-)$.
\end{definition}

\begin{remark}
	It is easy to encounter 2-topoi in nature which are not 1-localic. Coming back to the example $\bCC=\Fun(\Adj,\Cat)$ given in \cref{rem:emptycores}, we observe that the associated localic reflection adjunction recovers $\Cat$, so that $\bCC$ cannot possibly be 1-localic.

	A second non-1-localic example is the 2-topos $\Catidem$ from  \cref{ex:idempotentCompleteCategories}. Again, as every space is idempotent complete, it follows that we have $\Grpd(\Catidem)= \Spc$, so that again the localic reflection of this 2-topos is $\Cat$. 
\end{remark}

\begin{proposition}\label{prop:localicinvolutive}
	Every localic 2-topos $\bCC$ is involutive (see \cref{def:involutive}).
\end{proposition}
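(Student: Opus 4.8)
The plan is to exhibit the required involution directly from the opposite–category involution on $\Cat$. Write $\bCC\simeq\Shv_{\Cat}(\AA)$ for a $1$-topos $\AA$ (using \cref{def:localic2topoi}), and recall that $\Shv_{\Cat}(\AA)$ is the full sub-$2$-category of $\Fun(\AA^\op,\Cat)$ on the limit-preserving functors (\cref{def:sheaves1topos}). The basic input is that the opposite-category assignment $D\colon\Cat\to\Cat^{\co}$, $C\mapsto C^\op$, is an equivalence of $2$-categories: it is the identity on objects and $1$-morphisms but reverses $2$-morphisms, and on hom-categories the induced functor $\Fun(C,D)\to\Cat^\co(C^\op,D^\op)=\Fun(C^\op,D^\op)^\op\simeq\Fun(C,D)$ is an equivalence. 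Moreover $D^{\co}\circ D\simeq\id_{\Cat}$ because $(C^\op)^\op\simeq C$ naturally, so $\Cat$ is itself involutive. Note that \cref{def:involutive} only requires an equivalence of $2$-categories, not a morphism of $2$-topoi, so no preservation of oriented pullbacks needs to be checked.

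First I would transport $D$ across the functor $2$-category. Since $\AA^\op$ is a $1$-category we have $(\AA^\op)^\co=\AA^\op$ (\cref{def:coandop}), and the standard duality for functor $2$-categories, which follows directly from the definitions of the $\co$-duality, gives a natural equivalence
\[
	\Fun(\AA^\op,\Cat)^{\co}\simeq\Fun\big((\AA^\op)^\co,\Cat^\co\big)=\Fun(\AA^\op,\Cat^\co).
\]
As $D$ is an equivalence of $2$-categories, postcomposition with it is an equivalence, and composing yields $\Psi_0\colon\Fun(\AA^\op,\Cat)\xrightarrow{\simeq}\Fun(\AA^\op,\Cat^\co)\simeq\Fun(\AA^\op,\Cat)^{\co}$, which on objects sends a presheaf $F$ to $F(-)^\op$.

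Next I would check that $\Psi_0$ restricts to the sheaf sub-$2$-categories. Because $D$ is an equivalence of $2$-categories it preserves the strong limits indexed by $1$-categories that appear in the sheaf condition, and one verifies that such strong limits in $\Cat^\co$ are computed by the same objects as in $\Cat$; hence $F$ preserves limits if and only if $D\circ F$ does. Since $\co$ does not alter the underlying objects, $\Shv_{\Cat}(\AA)^\co$ is the full sub-$2$-category of $\Fun(\AA^\op,\Cat)^\co$ on the limit-preserving functors, so $\Psi_0$ cuts down to an equivalence
\[
	\Psi\colon\Shv_{\Cat}(\AA)\xrightarrow{\;\simeq\;}\Shv_{\Cat}(\AA)^{\co},\qquad F\mapsto F(-)^\op.
\]

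Finally I would verify the involutivity condition $\Psi^{\co}\circ\Psi\simeq\id$. Tracing through the identifications, $\Psi^{\co}$ is again given by postcomposition with $D$ (the construction is manifestly $\co$-self-similar), so $\Psi^\co\circ\Psi$ is postcomposition with $D^{\co}\circ D\simeq\id_{\Cat}$, i.e.\ $F\mapsto F(-)^{\op\op}\simeq F$, as required by \cref{def:involutive}. The only genuinely delicate point is the bookkeeping in the last two steps — confirming that the functor-category duality matches the limit-preserving objects on the two sides and that the $\co$-dual of $\Psi$ is computed by the same postcomposition — but both reduce to the observation that all $2$-cells entering strong limits and strong natural transformations are invertible, so reversing them via $\co$ loses no information.
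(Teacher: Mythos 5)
Your proposal is correct and follows essentially the same route as the paper: both transport the opposite-category involution of $\Cat$ through the functor $2$-category duality $\Fun(\AA^\op,\Cat)^\co\simeq\Fun((\AA^\op)^\co,\Cat^\co)$, using that the $\co$-duality is trivial on the $1$-category $\AA^\op$, and then restrict to the limit-preserving functors. The paper states this as a one-line chain of equivalences $\Shv_{\Cat}(\CC)\simeq\Shv_{\Cat^\co}(\CC)\simeq\Shv_{\Cat^\co}(\CC^\co)\simeq\Shv_{\Cat}(\CC)^\co$, whereas you spell out the restriction to sheaves and the verification that $\Psi^\co\circ\Psi\simeq\id$, which the paper leaves implicit.
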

\begin{proof}
	By definition, we may identify $\bCC= \Shv_{\Cat}(\CC)$ for some 1-topos $\CC$. Thus the canonical involution of $\Cat$ induces an involution
	\[
		\Shv_{\Cat}(\CC) \xrightarrow{\simeq} \Shv_{\Cat^\co}(\CC) \simeq \Shv_{\Cat^\co}(\CC^\co) \xrightarrow{\simeq} \Shv_{\Cat}(\CC)^{\co},
	\]
	so that $\bCC$ is indeed involutive.
\end{proof}

\bibliographystyle{halpha}
\bibliography{references.bib}

\newcommand{\etalchar}[1]{$^{#1}$}
\begin{thebibliography}{CCNW24}

\bibitem[Abe23]{Abellan2023}
Fernando Abellán.
\newblock Comparing lax functors of $(\infty,2)$-categories.
\newblock November 2023,  \href{http://arxiv.org/abs/2311.12746}{{\ttfamily
  arXiv:2311.12746}}.

\bibitem[AGH24]{AGH24}
Fernando Abell{\'a}n, Andrea Gagna, and Rune Haugseng.
\newblock Straightening for lax transformations and adjunctions of
  $(\infty,2)$-categories.
\newblock April 2024,  \href{http://arxiv.org/abs/2404.03971}{{\ttfamily
  arXiv:2404.03971}}.

\bibitem[AS23a]{ASI23}
Fernando Abellán and Walker~H. Stern.
\newblock 2-cartesian fibrations {II}: A grothendieck construction for
  $\infty$-bicategories.
\newblock 2023,  \href{http://arxiv.org/abs/2201.09589}{{\ttfamily
  arXiv:2201.09589}}.

\bibitem[AS23b]{AS23}
Fernando Abellán and Walker~H. Stern.
\newblock On cofinal functors of $\infty$-bicategories.
\newblock 2023,  \href{http://arxiv.org/abs/2304.07028}{{\ttfamily
  arXiv:2304.07028}}.

\bibitem[BGG{\etalchar{+}}72]{LT}
I.~Bucur, J.~Giraud, N.~Goodman, J.~Myhill, L.~Illusie, J.~Lambek, D.~S. Scott,
  and M.~Tierney.
\newblock {\em Toposes, Algebraic Geometry and Logic}.
\newblock Springer Berlin, Heidelberg, 1972.

\bibitem[CCNW24]{formalization}
Denis-Charles Cisinski, Bastian Cnossen, Kim Nguyen, and Tashi Walde.
\newblock Formalization of {H}igher {C}ategories.
\newblock 2024.

\bibitem[DDS18]{sigmalim}
M.E. Descotte, E.J. Dubuc, and M.~Szyld.
\newblock Sigma limits in 2-categories and flat pseudofunctors.
\newblock {\em Advances in Mathematics}, 333:266--313, 2018.

\bibitem[GHL20]{GHLGray}
Andrea Gagna, Yonatan Harpaz, and Edoardo Lanari.
\newblock Gray tensor products and lax functors of $(\infty,2)$-categories.
\newblock 2020,  \href{http://arxiv.org/abs/2006.14495}{{\ttfamily
  arXiv:2006.14495}}.

\bibitem[GHL21]{GagnaHarpazLanariLaxLim}
Andrea Gagna, Yonatan Harpaz, and Edoardo Lanari.
\newblock Fibrations and lax limits of $(\infty,2)$-categories.
\newblock 2021,  \href{http://arxiv.org/abs/2012.04537}{{\ttfamily
  arXiv:2012.04537}}.

\bibitem[GHL24]{GHLFib}
Andrea Gagna, Yonatan Harpaz, and Edoardo Lanari.
\newblock Cartesian fibrations of $(\infty,2)$-categories.
\newblock 2024,  \href{http://arxiv.org/abs/2107.12356}{{\ttfamily
  arXiv:2107.12356}}.

\bibitem[GHN17]{GHN}
David Gepner, Rune Haugseng, and Thomas Nikolaus.
\newblock Lax colimits and free fibrations in $\infty$-categories.
\newblock {\em Documenta Mathematica}, 22:225--1266, 2017.

\bibitem[GV64]{GVTopos}
Alexander Grothendieck and Jean-Louis Verdier.
\newblock {\em Theorie des Topos et Cohomologie Etale des Schemas. SGA 4}.
\newblock Springer Berlin, Heidelberg, 1963-1964.

\bibitem[GWB24]{simplicialhomotopy}
Daniel Gratzer, Jonathan Weinberger, and Ulrik Buchholtz.
\newblock Directed univalence in simplicial homotopy type theory.
\newblock 2024,  \href{http://arxiv.org/abs/2407.09146}{{\ttfamily
  arXiv:2407.09146}}.

\bibitem[Har23]{harr2023}
Oscar Harr.
\newblock The derived category of a locally compact space is rarely smooth.
\newblock 2023,  \href{http://arxiv.org/abs/2311.03121}{{\ttfamily
  arXiv:2311.03121}}.

\bibitem[Hei24]{heineweighted}
Hadrian Heine.
\newblock The higher algebra of weighted colimits.
\newblock 2024,  \href{http://arxiv.org/abs/2406.08925}{{\ttfamily
  arXiv:2406.08925}}.

\bibitem[Hel24]{helfer}
Joseph Helfer.
\newblock Internal 1-topoi in 2-topoi, 2024,
  \href{http://arxiv.org/abs/2407.19358}{{\ttfamily arXiv:2407.19358}}.

\bibitem[HHLN23]{TwoVariable}
Rune Haugseng, Fabian Hebestreit, Sil Linskens, and Joost Nuiten.
\newblock Lax monoidal adjunctions, two-variable fibrations and the calculus of
  mates.
\newblock {\em Proc. Lond. Math. Soc. (3)}, 127(4):889--957, 2023,
  \href{http://arxiv.org/abs/2011.0880}{{\ttfamily arXiv:arXiv:2011.0880}}.

\bibitem[Lur09a]{LurieHTT}
Jacob Lurie.
\newblock {\em Higher topos theory}, volume 170 of {\em Annals of Mathematics
  Studies}.
\newblock Princeton University Press, Princeton, NJ, 2009.

\bibitem[Lur09b]{LurieGoodwillie}
Jacob Lurie.
\newblock $(\infty,2)$-categories and the {G}oodwillie calculus {I}.
\newblock {\em arXiv preprint}, 2009,
  \href{http://arxiv.org/abs/0905.0462}{{\ttfamily arXiv:0905.0462}}.

\bibitem[Lur17]{LurieHA}
Jacob Lurie.
\newblock {\em Higher algebra}.
\newblock preprint, 2017.
\newblock \url{https://www.math.ias.edu/~lurie/papers/HA.pdf}.

\bibitem[Mar22]{Martini2022}
Louis Martini.
\newblock Cocartesian fibrations and straightening internal to an
  $\infty$-topos.
\newblock {\em arXiv preprint}, April 2022,
  \href{http://arxiv.org/abs/2204.00295}{{\ttfamily arXiv:2204.00295}}.

\bibitem[Mes24]{mesiti}
Luca Mesiti.
\newblock 2-classifiers via dense generators and {H}ofmann-{S}treicher universe
  in stacks, 2024,  \href{http://arxiv.org/abs/2401.16900}{{\ttfamily
  arXiv:2401.16900}}.

\bibitem[MGS24]{secondaryK}
Aaron Mazel-Gee and Reuben Stern.
\newblock A universal characterization of noncommutative motives and secondary
  algebraic k-theory.
\newblock {\em Annals of K-Theory}, 9:369–--445, 2024.

\bibitem[MW22]{MWPres}
Louis Martini and Sebastian Wolf.
\newblock Presentable categories internal to an $\infty$-topos.
\newblock 2022,  \href{http://arxiv.org/abs/2209.05103}{{\ttfamily
  arXiv:2209.05103}}.

\bibitem[MW23]{MWIntTop}
Louis Martini and Sebastian Wolf.
\newblock Internal higher topos theory.
\newblock 2023,  \href{http://arxiv.org/abs/2303.06437}{{\ttfamily
  arXiv:2303.06437}}.

\bibitem[NRS20]{Nguyen2020}
Hoang~Kim Nguyen, George Raptis, and Christoph Schrade.
\newblock Adjoint functor theorems for {$\infty$}-categories.
\newblock {\em Journal of the London Mathematical Society. Second Series},
  101(2):659--681, 2020.

\bibitem[Nui24]{Nuiten}
Joost Nuiten.
\newblock On straightening for segal spaces.
\newblock {\em Compositio Mathematica}, 160(3):586–656, 2024.

\bibitem[Rez10]{RezkThetaN}
Charles Rezk.
\newblock A cartesian presentation of weak $n$-categories.
\newblock {\em Geom. Topol.}, 14(1):521--571, 2010.

\bibitem[RS17]{riehlshulman}
Emily Riehl and Michael Shulman.
\newblock A type theory for synthetic $\infty$-categories.
\newblock {\em Higher Structures}, 1:147--224, 2017.

\bibitem[RS22]{reflectionthm}
Shaul Ragimov and Tomer~M. Schlank.
\newblock The $\infty$-categorical reflection theorem and applications.
\newblock 2022,  \href{http://arxiv.org/abs/2207.09244}{{\ttfamily
  arXiv:2207.09244}}.

\bibitem[Shu19]{shulman2019}
Michael Shulman.
\newblock All $(\infty,1)$-toposes have strict univalent universes.
\newblock 2019,  \href{http://arxiv.org/abs/1904.07004}{{\ttfamily
  arXiv:1904.07004}}.

\bibitem[Str74]{streetfib}
Ross Street.
\newblock Fibrations and yoneda's lemma in a 2-category.
\newblock In Gregory~M. Kelly, editor, {\em Category Seminar}, pages 104--133,
  Berlin, Heidelberg, 1974. Springer Berlin Heidelberg.

\bibitem[{Uni}13]{hottbook}
The {Univalent Foundations Program}.
\newblock {\em Homotopy Type Theory: Univalent Foundations of Mathematics}.
\newblock \url{https://homotopytypetheory.org/book}, Institute for Advanced
  Study, 2013.

\bibitem[VR21]{riehl2021}
Dominic Verity and Emily Riehl.
\newblock {\em Elements of $\infty$-category theory}.
\newblock preprint, 2021.
\newblock \url{https://math.jhu.edu/~eriehl/elements.pdf}.

\bibitem[Web07]{Web}
Mark Weber.
\newblock Yoneda structures from 2-toposes.
\newblock {\em Applied Categorical Structures}, 15:259--323, 2007.

\end{thebibliography}

\end{document}